\newtheorem{theorem}{Theorem}[chapter]
\newtheorem{corollary}[theorem]{Corollary}
\newtheorem{lemma}[theorem]{Lemma}
\newtheorem{proposition}[theorem]{Proposition}
\newtheorem{example}[theorem]{Example}
\newtheorem{examples}[theorem]{Examples}
\newtheorem*{definition}{Definition}
\newtheorem*{remark}{Remark}
\newtheorem*{remarks}{Remarks}
\numberwithin{equation}{chapter}
\begin{document}

\maketitle

\frontmatter

\begin{Acknowledgements}
I would like to express my sincere gratitude to my supervisor Susanne Pumpl{\"u}n for all her help, guidance and enthusiasm throughout the last three years. When times were difficult and progress was slow, she gave me the support I needed to carry on and this thesis would not have been possible without her. 

My thanks also go to Esther and my parents, for their continuing love and support.
\end{Acknowledgements}

\begin{Preface}
First introduced by Ore \cite{ore1933theory} in 1933, skew polynomial rings are one of the earliest examples in the theory of noncommutative algebra. A skew polynomial ring $R = D[t;\sigma,\delta]$ consists of a unital associative ring $D$, an injective endomorphism $\sigma$ of $D$, a left $\sigma$-derivation $\delta$ of $D$, and an indeterminate $t$ satisfying the commutation rule $ta = \sigma(a)t + \delta(a)$ for all $a \in D$. Since their introduction, skew polynomial rings have been extensively studied, and their properties are well understood (see for instance \cite[Chapter 2]{cohn1995skew} and \cite[Chapter 1]{jacobson1996finite}).

We now assume $D$ is a division ring. In this case, it is well-known $R$ possesses a right division algorithm, that is, for all $f(t), g(t) \in R$ with $f(t) \neq 0$ there exist unique $q(t), r(t) \in R$ with $\mathrm{deg}(r(t)) < \mathrm{deg}(f(t))$ such that $g(t) = q(t)f(t) + r(t)$. The existence of this right division algorithm allows us to construct a class of nonassociative algebras following a little known paper by Petit \cite{Petit1966-1967}: Let $f(t) \in R$ be of degree $m \geq 2$ and consider the additive subgroup $R_m = \{ g \in R \ \vert \ \mathrm{deg}(g) < m \}$ of $R$. Then $R_m$ together with the multiplication $a \circ b = ab \ \mathrm{mod}_r f$ yields a nonassociative algebra $S_f = (R_m, \circ)$ over $F = \{ c \in D \ \vert \ c \circ h = h \circ c \text{ for all } h \in S_f \}$. Here the juxtaposition $ab$ denotes multiplication in $R$, and $\mathrm{mod}_r f$ denotes the remainder after right division by $f(t)$. We call these algebras Petit algebras and also denote them $R/Rf$ when we wish to make clear the ring $R$ is used in their construction. After their introduction by Petit in 1967, these algebras were largely ignored until Wene \cite{wene2000finite} and more recently Lavrauw and Sheekey \cite{lavrauw2013semifields} studied them in the context of finite semifields. Earlier in 1906, the algebra $S_f$ with $f(t) = t^2-i \in \mathbb{C}[t;^{-}]$, $^{-}$ complex conjugation, appeared as the first example of a nonassociative division algebra in a paper by L.E. Dickson \cite{dickson1906commutative}. The structure of this thesis is as follows:

\vspace*{4mm}
In Chapter \ref{chapter:Preliminaries} we state the necessary definitions and notations. We describe the construction of Petit algebras and discuss how this relates to other known constructions of algebras. For example, if $D$ is a finite-dimensional central division algebra over a field $C$, $\sigma \vert_C$ is an automorphism of finite order $m$ and $f(t) = t^m - a \in D[t;\sigma]$, $a \in \mathrm{Fix}(\sigma)^{\times}$, is right invariant, then the associative algebra $S_f = D[t;\sigma]/D[t;\sigma]f$ is called a generalised cyclic algebra and denoted $(D,\sigma,a)$ \cite[\S 1.4]{jacobson1996finite}. This happens to be the quotient algebra.

A unital finite nonassociative division ring is called a semifield in the literature. It is well-known that every associative semifield is in fact a field, however, there are many examples of semifields which are not associative. These are called proper semifields. An important example of semifields which appear throughout this thesis are Jha-Johnson semifields (also called cyclic semifields) \cite{jha1989analog}, which generalise the Hughes-Kleinfeld and Sandler semifields. In a recent paper, Lavrauw and Sheekey proved that if $K$ is a finite field, $\sigma$ is an automorphism of $K$ and $f(t) \in K[t;\sigma]$ is irreducible, then $S_f$ is a Jha-Johnson semifield \cite[Theorem 15]{lavrauw2013semifields}. While each Jha-Johnson semifield is isotopic to some algebra $S_f$ \cite[Theorem 16]{lavrauw2013semifields}, it is not itself necessarily isomorphic to such an algebra $S_f$. In this thesis we will focus on those Jha-Johnson semifields which arise from Petit's algebra construction.

\vspace*{4mm}
In Chapter \ref{chapter:The Structure of Petit Algebras} we will move on to study the properties of Petit algebras, and prove results concerning their nuclei, center, eigenring, zero divisors and associativity. We pay particular attention to the question of when Petit algebras are division algebras, and show this is closely related to whether the polynomial $f(t) \in R$ used in their construction is irreducible. Indeed, we will prove $f(t)$ is irreducible if and only if $S_f$ is a (right) division algebra (Theorems \ref{thm:f(t) irreducible iff S_f right division} and \ref{thm:S_f_division_iff_irreducible}). We will also show that when $\sigma$ is not surjective and $f(t)$ is irreducible, then $S_f$ is a right but not left division algebra (Corollary \ref{cor:S_f right but not left division algebra} and Example \ref{example:S_f right but not left division algebra}).

\vspace*{4mm}
The connection between $f(t)$ being irreducible and $S_f$ being a (right) division algebra motivates the study of irreducibility criteria in skew polynomial rings in Chapter \ref{chapter:Irreducibility Criteria for Polynomials in a Skew Polynomial Ring}. The results we obtain, in conjunction with Theorems \ref{thm:f(t) irreducible iff S_f right division} and \ref{thm:S_f_division_iff_irreducible}, immediately yield criteria for some Petit algebras to be (right) division algebras. 

Irreducibility and factorisation in skew polynomial rings have been investigated before and algorithms for factoring skew polynomials over finite fields and $\mathbb{F}_q(y)$ have appeared already in \cite{caruso2012some}, \cite{caruso2017new}, \cite{giesbrecht1998factoring}, \cite{gomez2013computing} and \cite{leroy2012noncommutative}. We also mention the papers of Churchill and Zhang \cite{churchill2009irreducibility}, and Granja, Martinez and Rodriguez \cite{granja2014real}, which employ valuation theory to obtain an analogue of the Eisenstein criteria for skew polynomial rings. The methods we use in Chapter \ref{chapter:Irreducibility Criteria for Polynomials in a Skew Polynomial Ring}, however, are purely algebraic and build upon the ideas of Lam and Leroy \cite[Lemma 2.4]{lam1988vandermonde}, Jacobson \cite[\S 1.3]{jacobson1996finite} and Petit \cite[(17), (18), (19)]{Petit1966-1967}.

\vspace*{4mm}
In Chapter \ref{chapter:Isomorphisms Between some Petit Algebras} we investigate isomorphisms between Petit algebras $S_f$ and $S_g$, with $f(t), g(t) \in R = D[t;\sigma,\delta]$ and $\sigma$ an automorphism of $D$. We apply these results to study in detail the automorphism group of Petit algebras in Chapter \ref{chapter:Automorphisms of S_f}, focussing on the case where $S_f$ is a nonassociative cyclic algebra in Chapter \ref{chapter:Automorphisms of Nonassociative Cyclic Algebras}. Many of the results appearing in Chapter's \ref{chapter:Automorphisms of S_f} and \ref{chapter:Automorphisms of Nonassociative Cyclic Algebras} recently appeared in \cite{brownautomorphism2017}.

One of the main motivations for studying automorphisms of Petit algebras comes from the question how the automorphism groups of Jha-Johnson semifields look like. We are also motivated by a question by Hering \cite{hering1991fibrations}: Given a finite group $G$, does there exist a semifield such that $G$ is a subgroup of its automorphism group?

It is well-known that two semifields coordinatize the same Desarguesian projective plane if and only if they are isotopic, hence semifields are usually classified up to isotopy rather than up to isomorphism and in many cases their automorphism group is not known. We apply our results to obtain information on the automorphism groups of some Jha-Johnson semifields in Section \ref{section:Automorphisms of Jha-Johnson Semifields Obtained from Skew Polynomial Rings}, and in the special case where $S_f$ is a nonassociative cyclic algebra over a finite field in Section \ref{section:Automorphisms of Nonassociative Cyclic Algebras over Finite Fields}. In particular, we completely determine the automorphism group of a nonassociative cyclic algebra of prime degree over a finite field (Theorem \ref{thm:Automorphisms of nonassociative cyclic algebras over finite fields prime}): it is either a cyclic group, a dicyclic group, or the semidirect product of two cyclic groups.

\vspace*{4mm}
Next we look at a generalisation of Petit's algebra construction using the skew polynomial ring $S[t;\sigma,\delta]$, where $S$ is any associative unital ring, $\sigma$ is an injective endomorphism of $S$, and $\delta$ is a left $\sigma$-derivation of $S$. While $S[t;\sigma,\delta]$ is in general not right Euclidean (unless $S$ is a division ring), we are still able to right divide by polynomials $f(t) \in S[t;\sigma,\delta]$ whose leading coefficient is invertible (Theorem \ref{thm:generalised S_f euclidean division}). Therefore, when $f(t)$ has an invertible leading coefficient, it is possible to define the same algebra construction. We briefly study some of the properties of these algebras including their center, zero divisors and nuclei.

\vspace*{4mm}
Recall that a central simple algebra $A$ of degree $n$ over a field $F$ is a $G$-crossed product algebra if it contains a maximal subfield $M$ (i.e. $[M:F] = n$) that is a Galois extension of $F$ with Galois group $G$. Moreover, we say $A$ is a solvable $G$-crossed product algebra if $G = \mathrm{Gal}(M/F)$ is a solvable group. In Chapter \ref{chapter:G-Admissible Groups and Crossed Products}, we revisit a result on the structure of solvable crossed product algebras, due to both Petit \cite[\S 7]{Petit1966-1967} and a careful reading of Albert \cite[p.~186]{albert1939structure}. We write up a proof of Albert's result using generalised cyclic algebras following the approach of Petit. We note that none of Petit's results are proved in \cite[\S 7]{Petit1966-1967}. To do this we extend the definition of classical generalised cyclic algebras $(D,\sigma,d)$ to where $D$ need not be a division algebra. More specifically, we show a $G$-crossed product algebra is solvable if and only if it can be constructed as a finite chain of generalised cyclic algebras satisfying certain conditions.
We describe how the structure of the solvable group, that is its chain of normal subgroups, relates to the structure of the crossed product algebra. We also generalise \cite[\S 7]{Petit1966-1967} to central simple algebras which need neither be crossed product or division algebras.

We finish the Chapter by giving a recipe for constructing central division algebras containing a given finite abelian Galois field extension by forming a chain of generalised cyclic algebras. This generalises a result by Albert \cite[p.~186]{albert1939structure}, see also \cite[Theorem 2.9.55]{jacobson1996finite}, in which $G = \mathbb{Z}_2 \times \mathbb{Z}_2$.
\end{Preface}

\tableofcontents

\mainmatter

\reversemarginpar
\chapter{Preliminaries} \label{chapter:Preliminaries}
\section{Some Basic Definitions} \label{section:Some Basic Definitions}

Let $F$ be a field. An \textbf{algebra} $A$ over $F$ is an $F$-vector space together with a bilinear map $A \times A \rightarrow A, (x,y) \mapsto xy$, which we call the \textbf{multiplication} of $A$. $A$ is \textbf{associative} if the associative law $(xy)z = x(yz)$ holds for all $x,y,z \in A$. Our algebras are  \textbf{nonassociative} in the sense that we do not assume this law and we call algebras in which the associative law fails \textbf{not associative}. $A$ is called \textbf{unital} if it contains a multiplicative identity $1$. We assume throughout this thesis that our algebras are unital without explicitly saying so. Given an algebra $A$, the \textbf{opposite algebra} $A^{\mathrm{op}}$ is the algebra with the same elements and addition operator as $A$, but where multiplication is performed in the reverse order.

We say an algebra $0 \neq A$ is a \textbf{left} (resp. \textbf{right}) \textbf{division algebra}, if the left multiplication $L_a : x \mapsto ax$ (resp. the right multiplication $R_a : x \mapsto xa$) is bijective for all $0 \neq a \in A$ and $A$ is a \textbf{division algebra} if it is both a left and a right division algebra. A finite-dimensional algebra is a division algebra if and only if it has no non-trivial zero divisors \cite[p.~12]{schafer1966introduction}, that is $a b = 0$ implies $a = 0$ or $b = 0$ for all $a, b \in A$. Finite division algebras are also called \textbf{finite semifields} in the literature.

The \textbf{associator} of three elements of an algebra $A$ is defined to be $[x,y,z] = (xy)z-x(yz)$. We then define the \textbf{left nucleus} to be $\mathrm{Nuc}_l (A) = \{ x \in A \ \vert \ [x,A,A] = 0 \}$, the \textbf{middle nucleus} to be $\mathrm{Nuc}_m (A) = \{ x \in A \ \vert \ [A,x,A] = 0 \}$ and the \textbf{right nucleus} to be $\mathrm{Nuc}_r (A) = \{ x \in A \ \vert \ [A,A,x] = 0 \}$; their intersection $\mathrm{Nuc}(A) = \{ x \in A \ \vert \ [A,A,x] = [A,x,A] = [x,A,A] = 0 \}$ is the \textbf{nucleus} of $A$. $\mathrm{Nuc}(A)$, $\mathrm{Nuc}_l(A)$, $\mathrm{Nuc}_m(A)$ and $\mathrm{Nuc}_r(A)$ are all associative subalgebras of $A$. The \textbf{commutator} of $A$ is the set of all elements which commute with every other element,
$$\mathrm{Comm}(A) = \{ x \in A \ \vert \ xy = yx \text{ for all } y \in A \}.$$
The \textbf{center} of $A$ is $\mathrm{Cent}(A) = \mathrm{Nuc}(A) \cap \mathrm{Comm}(A).$

If $D$ is an associative division ring, an automorphism $\sigma: D \rightarrow D$ is called an \textbf{inner automorphism} if $\sigma = I_u : x \mapsto u x u^{-1}$ for some $u \in D^{\times}$. The \textbf{inner order} of an automorphism $\sigma$ of $D$, is the smallest positive integer $n$ such that $\sigma^n$ is an inner automorphism. If no such $n$ exists we say $\sigma$ has \textbf{infinite inner order}.

A \textbf{left (right) principal ideal domain} is a domain $R$ such that every left (right) ideal in $R$ is of the form $Rf$ ($fR$) for some $f \in R$. We say $R$ is a \textbf{principal ideal domain}, if it is both a left and a right principal ideal domain.

Let $R$ be a left principal ideal domain and $0 \neq a, b \in R$. Then there exists $d \in R$ such that $Ra + Rb = Rd$. This implies $a = c_1 d$ and $b = c_2 d$ for some $c_1, c_2 \in R$, so $d$ is a right factor of both $a$ and $b$. We denote this by writing $d \vert_r a$ and $d \vert_r b$. In addition, if $e \vert_r a$ and $e \vert_r b$, then $Ra \subset Re$, $Rb \subset Re$, hence $Rd \subset Re$ and so $e \vert_r d$. Therefore we call $d$ a \textbf{right greatest common divisor} of $a$ and $b$.
Furthermore, there exists $g \in R$ such that $Ra \cap Rb = Rg$. Then $a \vert_r g$ and $b \vert_r g$. Moreover, if $a \vert_r n$ and $b \vert_r n$ then $Rn \subset Rg$ and so $g \vert_r n$. We call $g$ the \textbf{least common left multiple} of $a$ and $b$.

The \textbf{field norm} $N_{K/F} : K \rightarrow F$ of a finite Galois field extension $K/F$ is given by
$$N_{K/F}(k) = \prod_{\sigma \in \mathrm{Gal}(K/F)} \sigma(k).$$
In particular, if $K/F$ is a cyclic Galois field extension of degree $m$ with Galois group generated by $\sigma$, then the field norm has the form
\begin{equation*}
N_{K/F}(k) = k \sigma(k) \cdots \sigma^{m-1}(k).
\end{equation*}


\section{Skew Polynomial Rings} \label{section:Skew Polynomial Rings}

Let $D$ be an associative ring, $\sigma$ be an injective endomorphism of $D$ and $\delta$ be a \textbf{left $\sigma$-derivation} of D, i.e. $\delta: D \rightarrow D$ is an additive map and satisfies 
$$\delta(ab) = \sigma(a) \delta(b) + \delta(a) b,$$
for all $a,b \in D$, in particular $\delta(1) = 0$. Furthermore, an easy induction yields
\begin{equation} \label{eqn:goodearl}
\delta(a^n) = \sum_{i=0}^{n-1} \sigma(a)^i \delta(a)a^{n-1-i},
\end{equation}
for all $a \in D$, $n \in \mathbb{N}$ \cite[Lemma 1.1]{goodearl1992prime}. The set $\mathrm{Const}(\delta) = \{ d \in D \ \vert \ \delta(d) = 0 \}$ of \textbf{$\delta$-constants} forms a subring of $D$, moreover if $D$ is a division ring, this is a division subring of $D$ \cite[p.~7]{jacobson1996finite}.

The following definition is due to Ore \cite{ore1933theory}:
\begin{definition}
The \textbf{skew polynomial ring} $R = D[t;\sigma,\delta]$ is the set of left polynomials $a_0 + a_1t + a_2 t^2 + \ldots + a_m t^m$ with $a_i \in D$, where addition is defined term-wise, and multiplication by $ta = \sigma(a)t + \delta(a)$.
\end{definition}

This multiplication makes $R$ into an associative ring \cite[p.~2-3]{jacobson1996finite}. If $\delta = 0$, then $D[t;\sigma,0] = D[t;\sigma]$ is called a \textbf{twisted polynomial ring}, and if $\sigma$ is the identity map, then $D[t;\mathrm{id},\delta] = D[t;\delta]$ is called a \textbf{differential polynomial ring}. For the special case that $\delta = 0$ and $\sigma = \mathrm{id}$, we obtain the usual left polynomial ring $D[t] = D[t;\mathrm{id},0]$. 
Skew polynomial rings are also called Ore extensions in the literature and their properties are well understood. For a thorough introduction to skew polynomial rings see for example \cite[Chapter 2]{cohn1995skew}, \cite[Chapter 1]{jacobson1996finite} and \cite{ore1933theory}. 

We briefly mention some definitions and properties of skew polynomials which will be useful to us: The associative and distributive laws in $R = D[t;\sigma,\delta]$ yield
\begin{equation} \label{eqn:mult in S_f 1}
t^n b = \sum_{j=0}^n S_{n,j}(b)t^j, \quad n \geq 0,
\end{equation}
and
\begin{equation} \label{eqn:mult in S_f 2}
(bt^n)(ct^m) = \sum_{j=0}^n b(S_{n,j}(c))t^{j+m},
\end{equation} 
for all $b, c \in D$, where the maps $S_{n,j}: D \rightarrow D$ are defined by the recursion formula
\begin{equation} \label{eqn:mult in S_f 3}
S_{n,j} = \delta S_{n-1,j} + \sigma S_{n-1,j-1},
\end{equation}
with $S_{0,0} = \mathrm{id}_D, S_{1,0} = \delta$, $S_{1,1} = \sigma$ and $S_{n,0} = \delta^n$ \cite[p.~2]{jacobson1996finite}. This means $S_{n,j}$ is the sum of all monomials in $\sigma$ and $\delta$ that are of degree $j$ in $\sigma$ and of degree $n-j$ in $\delta$. In particular $S_{n,n} = \sigma^n$ and if $\delta = 0$ then $S_{n,j} = 0$ for $n \neq j$.

We say $f(t) \in R$ is \textbf{right invariant} if $Rf$ is a two-sided ideal in $R$, $f(t)$ is \textbf{left invariant} if $fR$ is a two-sided ideal in $R$, and $f(t)$ is \textbf{invariant} if it is both right and left invariant. We define the \textbf{degree} of a polynomial $f(t) = a_m t^m + \ldots + a_1 t + a_0 \in R$ with $a_m \neq 0$ to be $\mathrm{deg}(f(t)) = m$ and $\mathrm{deg}(0) = - \infty$. We call $a_m$ the \textbf{leading coefficient} of $f(t)$. Then $\mathrm{deg}(g(t)h(t)) \leq \mathrm{deg}(g(t)) + \mathrm{deg}(h(t))$ for all $g(t), h(t) \in R$, with equality if $D$ is a domain. This implies that when $D$ is a domain, $D[t;\sigma,\delta]$ is also a domain.

Henceforth we assume $D$ is a division ring and remark that every endomorphism of $D$ is necessarily injective. Then $R = D[t;\sigma,\delta]$ is a left principal ideal domain and there is a \textbf{right division algorithm} in $R$ \cite[p.~3]{jacobson1996finite}. That is, for all $f(t), g(t) \in R$ with $f(t) \neq 0$ there exist unique $r(t), q(t) \in R$ with $\mathrm{deg}(r(t)) < \mathrm{deg}(f(t))$, such that $g(t) = q(t) f(t) + r(t)$. Here $r(t)$ is the remainder after right division by $f(t)$, and if $r(t) = 0$ we say $f(t)$ \textbf{right divides} $g(t)$ and write $f(t) \vert_r g(t)$. A polynomial $f(t) \in R$ is \textbf{irreducible} if it is not a unit and has no proper factors, i.e. there do not exist $g(t), h(t) \in R$ with $\mathrm{deg}(g(t)), \ \mathrm{deg}(h(t)) < \mathrm{deg}(f(t))$ such that $f(t) = g(t)h(t)$. Two non-zero $f(t), g(t) \in R$ are \textbf{similar} if there exist unique $h, q, u \in R$ such that $1 = hf + qg$ and $u'f = gu$ for some $u' \in R$. Notice if $f(t)$ is similar to $g(t)$ then $\mathrm{deg}(f(t)) = \mathrm{deg}(g(t))$ \cite[p.~14]{jacobson1996finite}.

If $\sigma$ is a ring automorphism, then $R$ is also a right principal ideal domain, (hence a principal ideal domain) \cite[Proposition 1.1.14]{jacobson1996finite}, and there exists a left division algorithm in $R$ \cite[p.~3 and Proposition 1.1.14]{jacobson1996finite}. \label{sigma automorphism right polynomial ring} In this case any right invariant polynomial $f(t)$ is invariant \cite[p.~6]{jacobson1996finite}, furthermore we can also view $R$ as the set of right polynomials with multiplication defined by $at = t \sigma^{-1}(a) - \delta(\sigma^{-1}(a))$ for all $a \in D$ \cite[(1.1.15)]{jacobson1996finite}.


\section{Petit's Algebra Construction} \label{section:Petit's Algebra Construction}

In this Section, we describe the construction of a family of nonassociative algebras $S_f$ built using skew polynomial rings. These algebras will be the focus of study of this thesis. They were first introduced in 1966 by Petit \cite{Petit1966-1967}, \cite{petit1968quasi}, and laregly ignored until Wene \cite{wene2000finite} and more recently Lavrauw and Sheekey \cite{lavrauw2013semifields} studied them in the context of semifields.

Let $D$ be an associative division ring with center $C$, $\sigma$ be an endomorphism of $D$ and $\delta$ be a left $\sigma$-derivation of $D$.

\begin{definition}[Petit \cite{Petit1966-1967}]
Let $f(t) \in R = D[t;\sigma,\delta]$ be of degree $m$ and $$R_m = \{ g \in R \ \vert \ \mathrm{deg}(g) < m \} .$$
Define a multiplication $\circ$ on $R_m$ by  $a \circ b = ab \ \mathrm{mod}_r f,$ where the juxtaposition $ab$ denotes multiplication in R, and $\mathrm{mod}_r f$ denotes the remainder after right division by $f(t)$. Then $S_f = (R_m, \circ)$ is a nonassociative algebra over $F = \{ c \in D \ \vert \ c \circ h = h \circ c \text{ for all } h \in S_f \}$. We also call the algebras $S_f$ \textbf{Petit algebras} and denote them by $R/Rf$ if we want to make it clear which ring $R$ is used in the construction.
\end{definition}

W.l.o.g., we may assume $f(t)$ is monic, since the algebras $S_f$ and $S_{df}$ are equal for all $d \in D^{\times}$. We obtain the following straightforward observations:

\begin{remarks}
\begin{itemize}
\item[(i)] If $f(t)$ is right invariant, then $S_f$ is the associative quotient algebra obtained by factoring out the two-sided ideal $Rf$.
\item[(ii)] If $\mathrm{deg}(g(t)) + \mathrm{deg}(h(t)) < m$, then the multiplication $g \circ h$ is the usual multiplication of polynomials in $R$.
\item[(iii)] If $\mathrm{deg}(f(t)) = 1$ then $R_1 = D$ and $S_f \cong D$. We will assume throughout this thesis that $\mathrm{deg}(f(t)) \geq 2$.
\end{itemize}
\end{remarks}

Note that $F$ is a subfield of $D$ \cite[(7)]{Petit1966-1967}. It is straightforward to see that $F = C \cap \mathrm{Fix}(\sigma) \cap \mathrm{Const}(\delta)$. Indeed, if $c \in F$ then $c \in D$ and $c \circ h = h \circ c$ for all $h \in S_f$, in particular this means $c \in C$. Furthermore, we have $c \circ t = t \circ c$ so that $ct = \sigma(c) t + \delta(c)$, hence $\sigma(c) = c$ and $\delta(c) = 0$ and thus $F \subset C \cap \mathrm{Fix}(\sigma) \cap \mathrm{Const}(\delta)$.

Conversely, if $c \in C \cap \mathrm{Fix}(\sigma) \cap \mathrm{Const}(\delta)$ and $h = \sum_{i=0}^{m-1} h_i t^i \in S_f$, then
\begin{align*}
hc &= \sum_{i=0}^{m-1} h_i t^i c = \sum_{i=0}^{m-1} h_i \sum_{j=0}^{i} S_{i,j}(c) t^j = \sum_{i=0}^{m-1} h_i c t^i = \sum_{i=0}^{m-1} c h_i t^i = ch,
\end{align*}
because $S_{i,j}(c) = 0$ for $i \neq j$ and $S_{i,i}(c) = \sigma^i(c) = c$. Therefore $c \in F$ as required. \label{page:F=Fix sigma} In the special case where $D$ is commutative and $\delta = 0$ then $F = \mathrm{Fix}(\sigma)$.

\begin{examples}
\begin{itemize}
\item[(i)] Let $f(t) = t^m - a \in D[t;\sigma]$ where $a \neq 0$. Then the multiplication in $S_f$ is given by
$$(bt^i) \circ (ct^j) = 
\begin{cases}
b \sigma^i (c) t^{i+j} & \text{ if } i + j < m, \\
b \sigma^i (c) \sigma^{i+j-m}(a) t^{i+j-m} & \text{ if } i + j \geq m,
\end{cases}$$
for all $b, c \in D$ and $i,j \in \{ 0, \ldots, m-1 \}$, then linearly extended.
\item[(ii)] Let $f(t) = t^2 - a_1 t - a_0 \in D[t;\sigma]$. Then multiplication in $S_f$ is given by 
\begin{align*}
(x + yt) \circ (u + vt) &= \big( x u + y \sigma(v) a_0 \big) + \big( x v + y \sigma(u) + y \sigma(v) a_1 \big) t,
\end{align*}
for all $x, y, u, v \in D$. By identifying $x + y t = (x,y)$ and $u + v t = (u,v)$, the multiplication in $S_f$ can also be written as
$$(x + yt) \circ (u + vt) = (x , y) \begin{pmatrix}
u & v \\
\sigma(v) a_0 & \sigma(u) + \sigma(v) a_1
\end{pmatrix}.$$
\item[(iii)] Let $f(t) = t^2 - a \in D[t;\sigma, \delta]$, then multiplication in $S_f$ is given by
$$(x + yt) \circ (u + vt) = (x , y) \begin{pmatrix}
u & v \\
\sigma(v) a + \delta(u) & \sigma(u) + \delta(v)
\end{pmatrix},$$
for all $x, y, u, v \in D$.
\end{itemize}
\end{examples}

When $\sigma$ is an automorphism there is also a left division algorithm in $R$ and can define a second algebra construction: Let $f(t) \in R$ be of degree $m$ and denote by $\mathrm{mod}_l f$ the remainder after left division by $f(t)$. Then $R_m$ together with the multiplication $a \prescript{}{f}\circ \ b = ab \ \mathrm{mod}_l f,$ becomes a nonassociative algebra $\prescript{}{f}S$ over $F$, also denoted $R/fR$. It suffices to study the algebras $S_f$, as every algebra $\prescript{}{f}S$ is the opposite algebra of some Petit algebra:

\begin{proposition} \label{prop:_fS opposite algebra of some S_g}
(\cite[(1)]{Petit1966-1967}).
Suppose $\sigma \in \mathrm{Aut}(D)$ and $f(t) \in D[t;\sigma,\delta]$. The canonical anti-isomorphism
$$\psi: D[t;\sigma,\delta] \rightarrow D^{\mathrm{op}}[t;\sigma^{-1},-\delta \sigma^{-1}], \ \ \sum_{i=0}^{n} a_i t^i \mapsto \sum_{i=0}^{n} \Big( \sum_{j=0}^{i} S_{n,j}(a_i) \Big) t^i,$$
between the skew polynomial rings $D[t;\sigma,\delta]$ and $D^{\mathrm{op}}[t;\sigma^{-1},-\delta \sigma^{-1}]$, induces an anti-isomorphism between $S_f = D[t;\sigma,\delta]/D[t;\sigma,\delta]f$, and
$$_{\psi(f)}S = D^{\mathrm{op}}[t;\sigma^{-1},-\delta \sigma^{-1}]/\psi(f)D^{\mathrm{op}}[t;\sigma^{-1},-\delta \sigma^{-1}].$$
\end{proposition}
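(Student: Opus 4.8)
The plan is to verify that the anti-isomorphism $\psi$ of skew polynomial rings descends to the Petit algebras, turning the right-division multiplication of $S_f$ into the left-division multiplication of $\prescript{}{\psi(f)}S$; the interchange of left and right is exactly what an anti-isomorphism should produce. Throughout I abbreviate $R = D[t;\sigma,\delta]$ and $R' = D^{\mathrm{op}}[t;\sigma^{-1},-\delta\sigma^{-1}]$, and I use that $\psi$ is the unique anti-homomorphism with $\psi\vert_D = \mathrm{id}$ and $\psi(t) = t$, so in particular $\psi(1) = 1$. Since $\psi$ is an anti-homomorphism between domains fixing $D$ elementwise and sending $t$ to $t$, an easy induction on degree shows it preserves degrees (the image of the leading term $ct^n$ is $t^n\psi(c) = t^n c$, whose leading coefficient $\sigma^{-n}(c)$ is nonzero as $\sigma$ is an automorphism), and being bijective it is a degree-preserving anti-isomorphism.

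First I would check that $\psi$ is genuinely an anti-homomorphism onto $R'$, which reduces to the single generator identity $\psi(ta) = \psi(a)\psi(t)$ reproducing the commutation rule of $R'$. Here the right-polynomial form $at = t\sigma^{-1}(a) - \delta(\sigma^{-1}(a))$ of multiplication in $R$, recorded in Section \ref{section:Skew Polynomial Rings}, does the work: read as the multiplication rule of $R^{\mathrm{op}}$ and combined with the reversal of the coefficient product in passing to $D^{\mathrm{op}}$, it yields precisely the structure constants $\sigma^{-1}$ and $-\delta\sigma^{-1}$ of $R'$.

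Next, since $\psi$ is a surjective anti-homomorphism, it maps the left ideal $Rf$ onto the right ideal $\psi(f)R'$, because $\psi(rf) = \psi(f)\psi(r)$ and $\psi$ is onto. Hence $\psi$ descends to an additive bijection $\overline{\psi}\colon R/Rf \to R'/\psi(f)R'$. As $\psi$ preserves degree, for any $g \in R_m$ the image $\psi(g)$ already has degree $<m$ and so is the reduced representative of its coset; identifying $S_f$ and $\prescript{}{\psi(f)}S$ with their degree-$<m$ representatives, $\overline{\psi}$ is just the restriction $g \mapsto \psi(g)$, a bijection $R_m \to R'_m$ fixing $1$.

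Finally I would verify anti-multiplicativity. For $a,b \in R_m$, write the right division $ab = qf + (a \circ b)$ in $R$ with $\deg(a\circ b) < m$. Applying $\psi$ gives $\psi(b)\psi(a) = \psi(f)\psi(q) + \psi(a\circ b)$, where $\psi(f)\psi(q) \in \psi(f)R'$ and $\deg(\psi(a\circ b)) < m$; by uniqueness in the left-division algorithm of $R'$ this is exactly the decomposition computing $\psi(b)\psi(a)\ \mathrm{mod}_l\ \psi(f)$, so $\psi(a\circ b) = \psi(b)\ \prescript{}{\psi(f)}\!\circ\ \psi(a)$. Thus $\overline{\psi}(a\circ b) = \overline{\psi}(b)\ \prescript{}{\psi(f)}\!\circ\ \overline{\psi}(a)$, and $\overline{\psi}$ is an anti-isomorphism of algebras. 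I expect the main obstacle to be the generator-level computation of the second paragraph, namely confirming that the target structure constants are precisely $\sigma^{-1}$ and $-\delta\sigma^{-1}$; once that and degree preservation are secured, the descent to the quotients is the formal argument above.
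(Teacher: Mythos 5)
Your proof is correct, and it is the natural (and essentially only) argument: verify the anti-homomorphism on the generator relation via the right-polynomial form $at = t\sigma^{-1}(a)-\delta(\sigma^{-1}(a))$, note degree preservation, and let uniqueness of left division in $D^{\mathrm{op}}[t;\sigma^{-1},-\delta\sigma^{-1}]$ carry $ab = qf + (a\circ b)$ to $\psi(b)\psi(a) = \psi(f)\psi(q) + \psi(a\circ b)$. The paper itself states this proposition without proof (citing Petit), so there is nothing to compare against; the only caveat worth recording is that the displayed formula for $\psi$ in the statement is garbled (it does not reproduce $\psi(at)=\sigma^{-1}(a)t-\delta(\sigma^{-1}(a))$), and your reading of $\psi$ as the unique anti-homomorphism fixing $D$ elementwise and sending $t\mapsto t$ is the correct interpretation of ``the canonical anti-isomorphism.''
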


\section{Relation of \texorpdfstring{$S_f$}{S\_f} to other Known Constructions} \label{section:Relation of S_f to Other Known Constructions}

We now show connections between Petit algebras and some other known constructions of algebras.

\subsection{Nonassociative Cyclic Algebras} \label{section:Nonassociative Cyclic Algebras}

Let $K/F$ be a cyclic Galois field extension of degree $m$ with $\mathrm{Gal}(K/F) = \langle \sigma \rangle$ and $f(t) = t^m - a \in K[t;\sigma]$. Then $$A = (K/F,\sigma,a) = K[t;\sigma]/K[t;\sigma]f(t)$$ is called a \textbf{nonassociative cyclic algebra of degree $m$} over $F$. The multiplication in $A$ is associative if and only if $a \in F$, in which case $A$ is a classical associative cyclic algebra over $F$. 

Nonassociative cyclic algebras were studied in detail by Steele in his Ph.D. thesis \cite{AndrewPhD}. We remark that our definition of nonassociative cyclic algebras yields the opposite algebras to the ones studied by Steele in \cite{AndrewPhD}. Moreover, if $K$ is a finite field, then $A$ is an example of a Sandler semifield \cite{sandler1962autotopism}. 

Nonassociative cyclic algebras of degree $2$ are \textbf{nonassociative quaternion algebras}. These algebras were first studied in 1935 by Dickson \cite{dickson1935linear} and subsequently by Althoen, Hansen and Kugler \cite{althoen1986c} over $\mathbb{R}$, however, the first systematic study was carried out by Waterhouse \cite{waterhouse}. 

Classical associative quaternion algebras of characteristic not $2$ are precisely associative cyclic algebras of degree $2$. That is, they have the form $(K/F,\sigma,a)$ where $K/F$ is a quadratic separable field extension with non-trivial automorphism $\sigma$, $\mathrm{char}(F) \neq 2$ and $a \in F^{\times}$. Thus the only difference in defining nonassociative quaternion algebras, is that the element $a$ belongs to the larger field $K$.

\subsection{(Nonassociative) Generalised Cyclic Algebras} \label{section:Generalised Cyclic Algebras}

Let $D$ be an associative division algebra of degree $n$ over its center $C$ and $\sigma$ be an automorphism of $D$ such that $\sigma \vert_C$ has finite order $m$ and fixed field $F = C \cap \mathrm{Fix}(\sigma)$. A \textbf{nonassociative generalised cyclic algebra} of degree $mn$, is an algebra $S_f = D[t;\sigma]/D[t;\sigma]f(t)$ over $F$ with $f(t) = t^m-a \in D[t;\sigma]$, $a \in D^{\times}$. We denote this algebra $(D,\sigma,a)$ and note that it has dimension $n^2 m^2$ over $F$. Note that when $D = K$ is a field, and $K/F$ is a cyclic field extension with Galois group generated by $\sigma$, we obtain the nonassociative cyclic algebra $(K/F,\sigma,a)$.

In the special case where $f(t) = t^m-a \in D[t;\sigma]$, $d \in F^{\times}$, then $f(t)$ is invariant and $(D,\sigma,a)$ is the associative \textbf{generalised cyclic algebra} defined by Jacobson \cite[p.~19]{jacobson1996finite}.

\subsection{(Nonassociative) Generalised Differential Algebras}

Let $C$ be a field of characteristic $p$ and $D$ be an associative central division algebra over $C$ of degree $n$. Suppose $\delta$ is a derivation of $D$ such that $\delta \vert_C$ is algebraic, that is there exists a $p$-polynomial
$$g(t) = t^{p^e} + c_1 t^{p^{e-1}} + \ldots + c_et \in F[t],$$
where $F = C \cap \mathrm{Const}(\delta)$ such that $g(\delta) = 0$. Suppose $g(t)$ is chosen with minimal $e$, $d \in D$ and $f(t) = g(t) - d \in D[t;\delta]$, then the algebra $S_f = D[t;\delta]/D[t;\delta]f(t)$ is called a \textbf{(nonassociative) generalised differential algebra} and also denoted $(D,\delta,d)$ \cite{pumpluen2016nonassociative}. $(D,\delta,d)$ is a nonassociative algebra over $F$ of dimension $p^{2e} n^2$, moreover $(D,\delta,d)$ is associative if and only if $d \in F$.

When $d \in F$, then $(D,\delta,d)$ is central simple over $F$ and is called the \textbf{generalised differential extension} of $D$ in \cite[p.~23]{jacobson1996finite}.

\vspace*{4mm}
\noindent For the remainder of this Section we consider examples of finite-dimensional division algebras over finite fields. These are called (finite) semifields in the literature. 

\subsection{Hughes-Kleinfeld and Knuth Semifields} \label{section:Hughes-Kleinfeld and Knuth Semifields}

Let $K$ be a finite field and $\sigma$ be a non-trivial automorphism of $K$. Choose $a_0, a_1 \in K$ such that the equation $w \sigma(w) + a_1w - a_0 = 0$ has no solution $w \in K$. In \cite[p.~215]{knuth1965finite}, Knuth defined four classes of semifields two-dimensional over $K$ with unit element $(1,0)$. Their multiplications are defined by
\begin{align*}
& (\text{K}1): (x,y)(u,v) = \big( x u + a_0 \sigma^{-2}(y) \sigma(v) , \ x v + y \sigma(u) + a_1 \sigma^{-1}(y) \sigma(v) \big) , \\
& (\text{K}2): (x,y)(u,v) = \big( x u + a_0 \sigma^{-2}(y) \sigma^{-1}(v) , \ x v + y \sigma(u) + a_1 \sigma^{-1}(y) v \big) , \\
& (\text{K}3): (x,y)(u,v) = \big( x u + a_0 y \sigma^{-1}(v) , \ x v + y \sigma(u) + a_1 y v \big) , \\
& (\text{K}4): (x,y)(u,v) = \big( x u + a_0 y \sigma(v) , \ x v + y \sigma(u) + a_1 y \sigma(v) \big) .
\end{align*}
The class of semifields defined by the multiplication $(\text{K}4)$ were first discovered by Hughes and Kleinfeld \cite{hughes1960seminuclear} and are called \textbf{Hughes-Kleinfeld semifields}.

The classes of semifields defined by the multiplications $(\text{K}2)$ and $(\text{K}4)$ can be obtained using Petit's construction:

\begin{theorem} \label{thm:S_f as Hughes Kleinfeld and Knuth Semifield}
(\cite[Theorem 5.15]{sheekey2011rank}).
Let $f(t) = t^2 - a_1 t - a_0 \in K[t;\sigma]$ where $w \sigma(w) + a_1w - a_0 \neq 0$ for all $w \in K$.
\begin{itemize}
\item[(i)] $\prescript{}{f}S$ is isomorphic to the semifield $(\text{K}2)$.
\item[(ii)] $S_f$ is isomorphic to the semifield $(\text{K}4)$.
\end{itemize}
\end{theorem}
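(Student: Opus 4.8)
The plan is to write down the multiplication in each of $\prescript{}{f}{S}$ and $S_f$ explicitly and to verify that, under the identification $x+yt \leftrightarrow (x,y)$, it agrees term by term with the Knuth multiplications $(\mathrm{K}2)$ and $(\mathrm{K}4)$. The isomorphism of the underlying nonassociative algebras is unconditional; the hypothesis $w\sigma(w)+a_1w-a_0 \neq 0$ enters only as Knuth's condition guaranteeing that $(\mathrm{K}2)$ and $(\mathrm{K}4)$ are semifields, so that once the multiplications are matched the semifield property transfers to the Petit algebras and the identity map furnishes the desired isomorphism.

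Part (ii) is the easy half. Expanding $(x+yt)(u+vt)$ in $R=K[t;\sigma]$ gives $xu + (xv+y\sigma(u))t + y\sigma(v)t^2$. Since $t^2 \equiv a_0 + a_1t \pmod{f}$ and the scalar $y\sigma(v)$ stands to the left, where $\mathrm{mod}_r f$ commutes with left multiplication by elements of $K$, the reduction reproduces the multiplication formula already recorded for $S_f$ in the Examples above. As $K$ is commutative the coefficients $a_0,a_1$ may be brought to the front, and what results is literally the multiplication $(\mathrm{K}4)$; hence $S_f=(\mathrm{K}4)$.

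Part (i) carries the real content, and the main obstacle is that $\mathrm{mod}_l f$ is compatible only with right multiplication by scalars, that is $(gc)\,\mathrm{mod}_l f = (g\,\mathrm{mod}_l f)c$ for $c\in K$, whereas a scalar standing to the \emph{left} of a power of $t$ cannot be pulled through the reduction. Thus in $(x+yt)\,\prescript{}{f}{\circ}\,(u+vt)$ the term $y\sigma(v)t^2$ must be left-divided by $f$ outright. The idea is to seek a constant quotient $q_0\in K$ with $fq_0 = \sigma^2(q_0)t^2 - a_1\sigma(q_0)t - a_0q_0$ whose leading coefficient is $y\sigma(v)$; this forces $q_0=\sigma^{-2}(y)\sigma^{-1}(v)$ and yields $y\sigma(v)t^2 \equiv a_0\sigma^{-2}(y)\sigma^{-1}(v) + a_1\sigma^{-1}(y)v\,t \pmod{f}$ under left division. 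It is precisely this step that generates the $\sigma^{-1}$ and $\sigma^{-2}$ twists peculiar to $(\mathrm{K}2)$. Combining with the degree $<2$ part $xu + (xv+y\sigma(u))t$ gives first coordinate $xu + a_0\sigma^{-2}(y)\sigma^{-1}(v)$ and second coordinate $xv + y\sigma(u) + a_1\sigma^{-1}(y)v$, which is exactly $(\mathrm{K}2)$.

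Finally, to see why the stated hypothesis is the natural one, a short right-division computation shows that $f$ admits a monic linear right factor $t-d$ precisely when $d\sigma(d) - a_1d - a_0 = 0$; the substitution $d\mapsto -w$, together with $\sigma(-w)=-\sigma(w)$, rewrites this as $w\sigma(w)+a_1w-a_0=0$, so the hypothesis is equivalent to the irreducibility of $f$. This explains the appearance of Knuth's condition and connects the result to the irreducibility criteria of Chapter \ref{chapter:The Structure of Petit Algebras}. I expect essentially all of the care to be concentrated in the left-division step of part (i) and in keeping the order of the noncommuting factors straight throughout.
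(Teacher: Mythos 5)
Your proposal is correct and follows essentially the same route as the paper: expand the product in $R$, reduce the degree-$2$ term, and match coefficients with $(\mathrm{K}2)$ and $(\mathrm{K}4)$; your explicit determination of the constant left quotient $q_0=\sigma^{-2}(y)\sigma^{-1}(v)$ is exactly the computation the paper performs by rewriting $y\sigma(v)t^2$ as $t^2\sigma^{-2}(y)\sigma^{-1}(v)$ and substituting $t^2\mapsto a_1t+a_0$. The closing observation that the hypothesis is Knuth's irreducibility condition (via $b=-w$ in Theorem \ref{thm:Petit_factor}(i)) is a correct bonus not present in the paper's proof.
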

There is a small mistake in \cite[p.~63 (5.1) and (5.2)]{sheekey2011rank} where the multiplication of two elements of $S_f$ is stated incorrectly. We give the full proof to avoid confusion.
\begin{proof}

\begin{itemize}
\item[(i)] The multiplication in $\prescript{}{f}S$ is given by
\begin{align*}
(x + y t) \prescript{}{f}\circ &(u + v t) = x u + x v t + y \sigma(u) t + t^2 \sigma^{-2}(y)\sigma^{-1}(v) \\
&= x u + x v t + y \sigma(u) t + (a_1 t + a_0) \sigma^{-2}(y)\sigma^{-1}(v) \\
&= x u + x v t + y \sigma(u) t + a_1 \sigma^{-1}(y) v t + a_0 \sigma^{-2}(y)\sigma^{-1}(v) \\
&= \big( x u + a_0 \sigma^{-2}(y)\sigma^{-1}(v) \big) + \big( x v + y \sigma(u) + a_1 \sigma^{-1}(y) v \big) t,
\end{align*}
for all $x, y, u, v \in K$. Therefore the map $\psi: \prescript{}{f}S \rightarrow (\text{K}2), \ x + y t \mapsto (x,y)$ can easily be seen to be an isomorphism.
\item[(ii)] The multiplication in $S_f$ is given by
\begin{align*}
(x + y t) \circ (u + vt) &= x u + x v t + y \sigma(u) t + y \sigma(v) t^2 \\
&= x u + x v t + y \sigma(u) t + y \sigma(v) (a_1 t + a_0) \\
&= \big( x u + y \sigma(v) a_0 \big) + \big( x v + y \sigma(u) + y \sigma(v) a_1 \big) t,
\end{align*}
for all $x, y, u, v \in K$. Therefore the map $\phi: S_f \rightarrow (\text{K}4), \ x + y t \mapsto (x,y)$ can readily be seen to be an isomorphism.
\end{itemize}
\end{proof}

\subsection{Jha-Johnson Semifields}

Jha-Johnson semifields, also called cyclic semifields, are built using irreducible semilinear transformations and generalise the Sandler and Hughes-Kleinfield semifields.

\begin{definition}
Let $K$ be a field. An additive map $T: V \rightarrow V$ on a vector space $V = K^m$ is called a \textbf{semilinear transformation} if there exists $\sigma \in \mathrm{Aut}(K)$ such that $T(\lambda v) = \sigma(\lambda)T(v),$ for all $\lambda \in K, v \in V$. The set of invertible semilinear transformations on $V$ forms a group called the \textbf{general semilinear group} and is denoted by $\Gamma \text{L}(V)$. An element $T \in \Gamma \text{L}(V)$ is said to be \textbf{irreducible}, if the only $T$-invariant subspaces of $V$ are $V$ and $\{ 0 \}$.
\end{definition}

Suppose now $K$ is a finite field.

\begin{definition}[\cite{jha1989analog}]
\newcommand*{\LargerCdot}{\raisebox{-0.25ex}{\scalebox{1.2}{$\cdot$}}}
Let $T \in \Gamma \text{L}(V)$ be irreducible and fix a $K$-basis $\{ e_0, \ldots , e_{m-1} \}$ of $V$. Define a multiplication $\LargerCdot$ on $V$ by
$$a \LargerCdot b = a(T)b = \sum_{i=0}^{m-1} a_i T^i(b),$$
where $a = \sum_{i=0}^{m-1} a_i e_i$. Then $\mathbb{S}_T = (V, \LargerCdot)$ defines a \textbf{Jha-Johnson semifield}.
\end{definition}

Let $L_{t,f}$ denote the semilinear transformation $v \mapsto tv \ \mathrm{mod}_r f$. 

\begin{theorem}  \label{thm:Jha-Johnson_is_S_f}
(\cite[ Theorems 15 and 16]{lavrauw2013semifields}).
If $\sigma$ is an automorphism of $K$ and $f(t) \in K[t;\sigma]$ is irreducible then $S_f \cong \mathbb{S}_{L_{t,f}}$. Conversely, if $T$ is any irreducible element of $\Gamma \text{L}(V)$ with automorphism $\sigma$, $\mathbb{S}_T$ is isotopic to $S_f$ for some irreducible $f(t) \in K[t;\sigma]$.
\end{theorem}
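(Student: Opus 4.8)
The plan is to prove the two halves of Theorem~\ref{thm:Jha-Johnson_is_S_f} separately, since they are essentially unrelated assertions packaged together. For the first statement, I want to exhibit an explicit isomorphism $S_f \to \mathbb{S}_{L_{t,f}}$, and the natural candidate is simply the identity map on the underlying set $R_m = \{g \in R \mid \deg(g) < m\}$, which is a $K$-vector space with basis $\{1, t, \ldots, t^{m-1}\}$. The semilinear transformation $T = L_{t,f}$ is defined by $v \mapsto tv \ \mathrm{mod}_r f$, so its associated automorphism is exactly $\sigma$ (since $t \lambda = \sigma(\lambda)t$ reduces mod $f$ to $\sigma(\lambda)$ times the image of $t$, giving $T(\lambda v) = \sigma(\lambda)T(v)$). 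First I would check that $T$ is irreducible in $\Gamma\mathrm{L}(V)$: a $T$-invariant subspace would correspond to a proper right factor of $f(t)$, so irreducibility of $f(t)$ (using that $K$ is a division ring and hence $R$ has a right division algorithm, as in the excerpt) forces $T$ to be irreducible. Then I would verify that the multiplications agree, that is, that $a \circ b = a(T)b$ for $a = \sum_{i=0}^{m-1} a_i t^i$: writing $a \circ b = ab \ \mathrm{mod}_r f$ and expanding $a b = \sum_i a_i t^i b = \sum_i a_i (t^i b)$, reducing mod $f$ termwise gives $\sum_i a_i T^i(b)$, since $t^i b \ \mathrm{mod}_r f = T^i(b)$ by iterating the definition of $T$. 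This is a direct computation once one observes that right-reduction mod $f$ commutes appropriately with left multiplication by the scalars $a_i$.

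For the converse, the hypothesis is that $T$ is an arbitrary irreducible semilinear transformation with automorphism $\sigma$, and the goal is only isotopy (not isomorphism) to some $S_f$ with $f(t) \in K[t;\sigma]$ irreducible. The strategy is to use the irreducibility of $T$ to produce a cyclic vector: since $V = K^m$ and the only $T$-invariant subspaces are $\{0\}$ and $V$, for any nonzero $v$ the iterates $v, T(v), \ldots, T^{m-1}(v)$ must span $V$ (their span is $T$-invariant and nonzero). Fixing such a $v$, there is a relation $T^m(v) = \sum_{i=0}^{m-1} c_i T^i(v)$ for some $c_i \in K$, which defines a polynomial $f(t) = t^m - \sum_{i=0}^{m-1} c_i t^i \in K[t;\sigma]$. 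I would then argue that $T$ is conjugate (as a semilinear map) to $L_{t,f}$ under the basis change sending $e_i \mapsto T^i(v)$, and that $f(t)$ is irreducible precisely because $T$ has no proper invariant subspaces. The passage from this conjugacy to an isotopy of the semifield multiplications is where one invokes that $\mathbb{S}_T$ and $\mathbb{S}_{L_{t,f}} \cong S_f$ coordinatize the same object up to isotopy.

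The main obstacle I anticipate is the converse direction, specifically making the isotopy rigorous rather than the isomorphism. Changing the basis conjugates $T$ to $L_{t,f}$, but under the Jha-Johnson multiplication $a \mathbin{\LargerCdot} b = a(T)b$, a change of basis on $V$ does not in general preserve the multiplication on the nose; it only preserves it up to an isotopy, because the coefficient vector $a$ and the argument $b$ transform via different linear maps. Pinning down the three linear maps constituting the isotopy, and verifying the isotopy identity, is the delicate step, and one must be careful that the cyclic-vector relation genuinely yields an \emph{irreducible} $f(t)$ and not merely some $f(t)$ of degree $m$. I would lean on the cited result \cite[Theorems 15 and 16]{lavrauw2013semifields} to handle the bookkeeping of the isotopy, since the forward direction already establishes the key dictionary $S_f \cong \mathbb{S}_{L_{t,f}}$ between Petit algebras and the semilinear picture.
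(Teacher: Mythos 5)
The thesis does not prove this theorem: it is quoted verbatim from Lavrauw--Sheekey with a citation, so there is no in-paper argument to compare against. Your reconstruction is essentially the standard proof and is sound. For the forward direction, the identity map on $R_m$ with basis $\{1,t,\ldots,t^{m-1}\}$ does work, and the key identity $t^i b \ \mathrm{mod}_r f = L_{t,f}^i(b)$ that you invoke is exactly the equality $L_{t^i}=L_t^i$ which the thesis itself proves by induction in the proof of Theorem \ref{thm:L_t surjective iff sigma surjective}; the correspondence between $T$-invariant $K$-subspaces and left submodules $Rg/Rf$, hence right divisors $g$ of $f$, settles irreducibility of $T$. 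Two small points you should tighten. First, in the converse, the span of $v,T(v),\ldots,T^{m-1}(v)$ is not obviously $T$-invariant on its own; take instead the span of \emph{all} iterates $T^i(v)$, $i\ge 0$ (which is visibly $T$-invariant and nonzero, hence all of $V$), or argue via the minimal $k$ with $T^k(v)\in\mathrm{span}(v,\ldots,T^{k-1}(v))$ and check that this span is $T$-invariant, forcing $k=m$. Second, the isotopy you defer to the reference can be written down explicitly and costs nothing: let $\phi$ be the $K$-linear bijection $T^i(v)\mapsto t^i$, which conjugates $T$ to $L_{t,f}$, and let $\psi$ be the $K$-linear bijection $e_i\mapsto t^i$; then $\phi(a\cdot_T b)=\sum_i a_i L_{t,f}^i(\phi(b))=\psi(a)\circ\phi(b)$, so the triple $(\psi,\phi,\phi)$ is the required isotopy $\mathbb{S}_T\to S_f$, and it degenerates to an isomorphism precisely when $\psi=\phi$, i.e.\ when the fixed basis is already $\{T^i(v)\}$. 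Irreducibility of the resulting $f$ follows from the same submodule correspondence transported through $\phi$.
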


This means that every Petit algebra $S_f$ with $f(t) \in K[t;\sigma]$ irreducible is a Jha-Johnson semifield, and every Jha-Johnson semifield is isotopic to some $S_f$.

\chapter{The Structure of Petit Algebras} \label{chapter:The Structure of Petit Algebras}

In the following, let $D$ be an associative division ring with center $C$, $\sigma$ be an endomorphism of $D$, $\delta$ be a left $\sigma$-derivation of $D$ and $f(t) \in R = D[t;\sigma,\delta]$. Recall $S_f$ is a nonassociative algebra over $F = C \cap \mathrm{Fix}(\sigma) \cap \mathrm{Const}(\delta)$.

\section{Some Structure Theory} \label{section:Some Structure Theory}

In this Section we investigate the structure theory of Petit algebras. We begin by summarising some of the structure results stated by Petit in \cite{Petit1966-1967}:

\begin{theorem} \label{thm:Properties of S_f petit}
(\cite[(2), (5), (1), (14), (15)]{Petit1966-1967}). 
Let $f(t) \in R$ be of degree $m$.
\begin{itemize}
\item[(i)] If $S_f$ is not associative then
$$\mathrm{Nuc}_l(S_f) = \mathrm{Nuc}_m(S_f) = D,$$
and
$$\mathrm{Nuc}_r(S_f) = \{ g \in R \ \vert \ \mathrm{deg}(g) < m \text{ and } fg \in Rf \}.$$
\item[(ii)] The powers of $t$ are associative if and only if $t^m \circ t = t \circ t^m$ if and only if $t \in \mathrm{Nuc}_r(S_f)$.
\item[(iii)] $S_f$ is associative if and only if $f(t)$ is right invariant.
\item[(iv)] Suppose $\delta = 0$, then $\mathrm{Comm}(S_f)$ contains the set
\begin{equation} \label{eqn:Comm(S_f)}
\Big\{ \sum_{i=0}^{m-1} c_i t^i \ \vert \ c_i \in \mathrm{Fix}(\sigma) \text{ and } d c_i = c_i \sigma^i(d) \text{ for all } d \in D, \ i = 0, \ldots, m-1 \Big\}.
\end{equation}
If $t$ is left invertible the two sets are equal.
\item[(v)] Suppose $\delta = 0$ and $f(t) = t^m - \sum_{i=0}^{m-1} a_i t^i \in D[t;\sigma]$. Then $f(t)$ is right invariant if and only if $\sigma^m (z) a_i = a_i \sigma^i(z)$ and $\sigma(a_i) = a_i$ for all $z \in D$, $i \in \{ 0, \ldots, m-1 \}$.
\end{itemize}
\end{theorem}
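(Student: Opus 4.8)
The plan is to reduce the whole theorem to one computation with right division. First I would record the reduction identity $(ab)\,\mathrm{mod}_r f = \big(a\,(b\,\mathrm{mod}_r f)\big)\,\mathrm{mod}_r f$ for $a,b\in R$, valid because $Rf$ is a left ideal, so any left multiple of $f$ produced while reducing the right factor is annihilated by the outer reduction. Writing $ab=q_{ab}f+(a\circ b)$ for the right division of the $R$-product by $f$, this identity gives $a\circ(b\circ c)=(abc)\,\mathrm{mod}_r f$ and $(a\circ b)\circ c=(abc)\,\mathrm{mod}_r f-(q_{ab}fc)\,\mathrm{mod}_r f$ for all $a,b,c\in R_m$, hence the associator formula $[a,b,c]=-(q_{ab}fc)\,\mathrm{mod}_r f$. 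Every assertion is read off from this.

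For (i): if $a\in D$ or $b\in D$ then $\deg(ab)<m$, so $q_{ab}=0$ and $[a,b,c]=0$, giving $D\subseteq\mathrm{Nuc}_l(S_f)$ and $D\subseteq\mathrm{Nuc}_m(S_f)$. For the right nucleus, $c\in\mathrm{Nuc}_r(S_f)$ means $q_{ab}fc\in Rf$ for all $a,b$; taking $a=t^{m-1}$, $b=t$ gives $q_{ab}=1$ and hence $fc\in Rf$, while $fc=gf$ conversely gives $q_{ab}fc=q_{ab}gf\in Rf$, so $\mathrm{Nuc}_r(S_f)=\{g\in R_m: fg\in Rf\}$. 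The real content is that the left and middle nuclei are no larger than $D$: if $x=\sum_{i\le k}x_it^i\in\mathrm{Nuc}_l(S_f)$ with $k=\deg x\ge1$, the choice $y=t^{m-k}\in R_m$ makes $xy$ have leading term $x_kt^m$ and constant quotient $q_{xy}=x_k\in D^\times$, so the nucleus condition forces $x_kfc\in Rf$, i.e. $fc\in Rf$, for all $c\in R_m$; the same with $a=t^{m-k}$ (where $q_{ax}=\sigma^{m-k}(x_k)\ne0$) handles $\mathrm{Nuc}_m(S_f)$. But $fc\in Rf$ for all $c\in R_m$ says $\mathrm{Nuc}_r(S_f)=R_m$, i.e. $S_f$ is associative, so contrapositively both nuclei equal $D$ when $S_f$ is not associative. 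Part (iii) is now immediate: $S_f$ is associative iff $fc\in Rf$ for all $c\in R_m$, and since $R$ is generated as a ring by $D$ and $t$ (both in $R_m$ as $m\ge2$) and $\{c:fc\in Rf\}$ is a subring, this is equivalent to $fR\subseteq Rf$, i.e. right invariance of $f$. For (ii), writing the monic $f=t^m-r_0$ with $r_0\in R_m$ gives $t^{\circ m}=t^m\,\mathrm{mod}_r f=r_0$ and the identity $ft=tf+(tr_0-r_0t)$; since $tf\in Rf$ we get $ft\in Rf$ iff $t^{\circ m}\circ t=t\circ t^{\circ m}$ iff $t\in\mathrm{Nuc}_r(S_f)$, the full associativity of the powers of $t$ following by a routine induction from $[a,b,t]=0$.

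For (iv) and (v) I would set $\delta=0$. For (iv), by bilinearity $x=\sum c_it^i\in\mathrm{Comm}(S_f)$ iff $x$ commutes with every $dt^j$, $d\in D$; if $c_i\in\mathrm{Fix}(\sigma)$ and $dc_i=c_i\sigma^i(d)$ for all $d$, a direct check gives $x\cdot(dt^j)=d\sum_ic_it^{i+j}=(dt^j)\cdot x$ already in $R$, proving the stated inclusion. For equality one reads the two conditions back off the cases $h\in D$ and $h=t$: commuting with $d$ forces $dc_i=c_i\sigma^i(d)$, while $x\circ t=t\circ x$ makes $xt-tx=\big(\sum(c_i-\sigma(c_i))t^i\big)t\in Rf$ a constant multiple $pf$ that is right-divisible by $t$, so $p\ne0$ would give $t\mid_r f$; as $t$ is left invertible exactly when $t\nmid_r f$, left invertibility forces $p=0$ and hence $\sigma(c_i)=c_i$. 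For (v), right invariance is $fR\subseteq Rf$, which by the generation argument of (iii) reduces to $fz\in Rf$ for all $z\in D$ together with $ft\in Rf$; right-dividing $fz=\sigma^m(z)t^m-\sum a_i\sigma^i(z)t^i$ by $f$ makes the former exactly $\sigma^m(z)a_i=a_i\sigma^i(z)$, while reducing $ft-tf=\sum(\sigma(a_i)-a_i)t^{i+1}$ modulo $f$ yields a triangular system in the differences $\sigma(a_i)-a_i$ forcing $\sigma(a_i)=a_i$ for all $i$.

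The step I expect to be the main obstacle is the reverse inclusion in (i): spotting the test elements $y=t^{m-k}$ (resp. $a=t^{m-k}$) that collapse the opaque condition $q_{xy}fc\in Rf$ into the clean $fc\in Rf$, and recognising the latter as associativity itself; once this is found the remaining parts are essentially bookkeeping. The only other delicate point is ruling out a degenerate solution of the triangular system in (v), which I would dispatch by a short induction showing that a nonzero leading discrepancy $\sigma(a_{m-1})-a_{m-1}$ propagates to force all $a_i=0$, contradicting that discrepancy.
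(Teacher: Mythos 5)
The paper does not actually prove this theorem: it is quoted from Petit with a citation and introduced only as a summary of results ``stated by Petit'', so there is no in-text argument to compare yours against. Judged on its own, your proof is correct and complete. The central device --- the associator identity $[a,b,c]=-(q_{ab}fc)\,\mathrm{mod}_r f$, from which $D\subseteq\mathrm{Nuc}_l\cap\mathrm{Nuc}_m$ and $\mathrm{Nuc}_r(S_f)=\{g\in R_m : fg\in Rf\}$ fall out immediately --- is sound, and the test elements $y=t^{m-k}$ (with $q_{xy}=x_k$, resp.\ $q_{ax}=\sigma^{m-k}(x_k)$, a unit of $D$) do correctly collapse the nucleus condition to $fc\in Rf$ for all $c\in R_m$, i.e.\ to associativity; the generation argument via the idealizer being a subring containing $D$ and $t$ settles (iii) and the reduction in (v). The two delicate points you flag are handled correctly: in (iv) the identity $xt-tx=gt$ forces $pf=gt$, so $p\neq 0$ would give $t\mid_r f$, which is excluded exactly by left invertibility of $t$; and in (v) the triangular system $\sigma(a_i)-a_i=-pa_{i+1}$, $pa_0=0$, $p=\sigma(a_{m-1})-a_{m-1}$ does force $p=0$ by the propagation argument you sketch. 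This is essentially the standard proof of these facts, and I see no gap.
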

The nuclei of $S_f$ were also calculated for special cases by Dempwolff in \cite[Proposition 3.3]{dempwolff2011autotopism}. 

\begin{remark}
(\cite[Remark 9]{pumplun2015finite}). If $f(t) = t^m - \sum_{i=0}^{m-1} a_i t^i \in D[t;\sigma]$, then $t$ is left invertible is equivalent to $a_0 \neq 0$. Indeed, if $a_0 = 0$ and there exists $g(t) \in R_m$ and $q(t) \in R$ such that $g(t)t = q(t)f(t)+1$, then the left side of the equation has constant term $0$, while the right hand side has constant term $1$, a contradiction. 

Conversely, if $a_0 \neq 0$ then defining $g(t) = a_0^{-1} t^{m-1} - \sum_{i=0}^{m-2} a_0^{-1} a_{i+1} t^i,$ we conclude $g(t)t = a_0^{-1}f(t) + 1$, therefore $g(t) \circ t = 1$ and $t$ is left invertible in $S_f$.
\end{remark}

\begin{corollary} \label{cor:Comm(S_f) = F}
Suppose $\sigma \in \mathrm{Aut}(D)$ is such that $\sigma \vert_C$ has order at least $m$ or infinite order, $f(t) \in D[t;\sigma]$ has degree $m$ and $t \in S_f$ is left invertible. Then
$\mathrm{Comm}(S_f) = F = C \cap \mathrm{Fix}(\sigma).$
\end{corollary}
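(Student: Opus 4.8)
The plan is to read the result directly off Theorem \ref{thm:Properties of S_f petit}(iv). Since $f(t) \in D[t;\sigma]$ we have $\delta = 0$, and by hypothesis $t$ is left invertible, so the final sentence of that part of the theorem applies and tells us that $\mathrm{Comm}(S_f)$ is \emph{equal} to (not merely contains) the set in \eqref{eqn:Comm(S_f)}. One inclusion is then immediate: $F \subseteq \mathrm{Comm}(S_f)$ holds because $F$ is the base field over which $S_f$ is an algebra (alternatively, take $c_0 \in F \subseteq \mathrm{Fix}(\sigma)$ and all other $c_i = 0$, noting $d c_0 = c_0 d$ since $c_0 \in C$). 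So the substance of the proof is the reverse inclusion.

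For the reverse inclusion I would take a typical element $\sum_{i=0}^{m-1} c_i t^i$ of the set \eqref{eqn:Comm(S_f)} and aim to show that $c_i = 0$ for every $i$ with $1 \le i \le m-1$. The defining relation $d c_i = c_i \sigma^i(d)$ for all $d \in D$ rewrites, whenever $c_i \neq 0$, as $\sigma^i(d) = c_i^{-1} d c_i$, that is, $\sigma^i = I_{c_i^{-1}}$ is an inner automorphism of $D$.

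The key step is the elementary observation that an inner automorphism of $D$ restricts to the identity on the center $C$. Hence, if some $c_i \neq 0$ with $1 \le i \le m-1$, then $\sigma^i|_C = \mathrm{id}_C$, which forces the order of $\sigma|_C$ to be at most $i < m$ (and to be finite). This contradicts the hypothesis that $\sigma|_C$ has order at least $m$, or infinite order. Therefore $c_i = 0$ for all $i \ge 1$, and the element reduces to its constant term $c_0$, which satisfies $c_0 \in \mathrm{Fix}(\sigma)$ together with $d c_0 = c_0 d$ for all $d \in D$, i.e. $c_0 \in C$. Thus $c_0 \in C \cap \mathrm{Fix}(\sigma) = F$, giving $\mathrm{Comm}(S_f) \subseteq F$.

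I do not expect a genuine obstacle here. The only points requiring care are the translation between the commutation relation $d c_i = c_i \sigma^i(d)$ and the inner-ness of $\sigma^i$, and the fact that inner automorphisms fix $C$ pointwise; it is precisely this fact that converts the hypothesis on the order of $\sigma|_C$ into the vanishing of the higher coefficients $c_i$. The remaining manipulations are bookkeeping, using $\delta = 0$ to identify $F = C \cap \mathrm{Fix}(\sigma)$.
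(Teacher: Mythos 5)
Your proposal is correct and follows essentially the same route as the paper: both invoke Theorem \ref{thm:Properties of S_f petit}(iv) (using left invertibility of $t$ to get equality with the set \eqref{eqn:Comm(S_f)}) and then kill the higher coefficients by restricting the relation $d c_i = c_i \sigma^i(d)$ to $d \in C$, contradicting the order hypothesis on $\sigma\vert_C$. Your phrasing via ``$\sigma^i$ is inner, hence fixes $C$ pointwise'' is just a repackaging of the paper's direct computation $(b - \sigma^i(b))c_i = 0$ for $b \in C$.
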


\begin{proof}
$\mathrm{Comm}(S_f)$ is equal to the set \eqref{eqn:Comm(S_f)} by Theorem \ref{thm:Properties of S_f petit}(iv), in particular $F = C \cap \mathrm{Fix}(\sigma) \subseteq \mathrm{Comm}(S_f)$. Let now $\sum_{i=0}^{m-1} c_i t^i \in \mathrm{Comm}(S_f)$ and suppose, for contradiction, $c_j \neq 0$ for some $j \in \{ 1, \ldots, m-1 \}$. Then $b c_j = c_j \sigma^j(b)$ for all $b \in D$, thus $(b-\sigma^j(b))c_j = 0$ for all $b \in C$ and so $b - \sigma^j(b) = 0$ for all $b \in C$, a contradiction since $\sigma \vert_C$ has order $\geq m$. Therefore $\sum_{i=0}^{m-1} c_i t^i = c_0$ and $c_0 \in F$ by \eqref{eqn:Comm(S_f)}.
\end{proof}

\begin{proposition}
Let $L$ be a division subring of $D$ such that $\sigma \vert_L$ is an endomorphism of $L$ and $\delta \vert_L$ is a $\sigma \vert_L$-derivation of $L$. If $f(t) \in L[t;\sigma \vert_L,\delta \vert_L]$ then $L[t;\sigma \vert_L, \delta \vert_L] / L[t;\sigma \vert_L, \delta \vert_L]f(t)$ is a subring of $S_f = D[t;\sigma,\delta]/D[t;\sigma,\delta]f(t)$.
\end{proposition}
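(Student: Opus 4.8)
The plan is to realise the obvious set-theoretic inclusion as a ring embedding. Write $R' = L[t;\sigma\vert_L,\delta\vert_L]$ and $R'_m = \{g \in R' \mid \mathrm{deg}(g) < m\}$, so that the claimed subring is $S'_f = R'/R'f$. The first thing I would record is that $R'$ is genuinely a subring of $R = D[t;\sigma,\delta]$: because $\sigma\vert_L$ and $\delta\vert_L$ are just the restrictions of $\sigma$ and $\delta$, the commutation rule $ta = \sigma(a)t + \delta(a)$ in $R'$ is identical to the one in $R$ for every $a \in L$, and the hypotheses that $\sigma\vert_L$ maps $L$ into $L$ and $\delta\vert_L$ maps $L$ into $L$ ensure, via \eqref{eqn:mult in S_f 2}, that a product of two polynomials with coefficients in $L$ again has all its coefficients in $L$. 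Hence $R'_m \subseteq R_m$ as additive groups, and the inclusion fixes the identity $1$.

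The crux is to check that, for $a, b \in R'_m$, the product $a \circ b$ taken in $S_f$ coincides with the product taken in $S'_f$; equivalently, that right division by $f$ produces the same remainder whether carried out in $R'$ or in $R$. Since $L$ is a division ring and $f \in R'$ is nonzero, the right division algorithm in $R'$ gives $ab = q'f + r'$ with $q', r' \in R'$ and $\mathrm{deg}(r') < m$. But this identity is simultaneously an identity in $R$, because $R' \subseteq R$; so by the \emph{uniqueness} part of the right division algorithm in $R$ (Section \ref{section:Skew Polynomial Rings}), the pair $(q',r')$ is exactly the quotient and remainder obtained when $ab$ is divided in $R$. Therefore the remainder $r' = ab \ \mathrm{mod}_r f$ is the same in both settings, and the two multiplications agree on $R'_m$.

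With the multiplications identified, the inclusion $\iota : S'_f \to S_f$, $g \mapsto g$, is additive, injective, sends $1$ to $1$, and satisfies $\iota(a \circ b) = \iota(a) \circ \iota(b)$, so it exhibits $S'_f$ as a subring of $S_f$. I expect the only delicate point to be the remainder-agreement step, and the whole argument turns on the uniqueness of right division, which lets one transport a division performed over the smaller coefficient ring $L$ up to $D$ with no recomputation. I would also remark that no relation between the respective base fields is required, since only a containment of rings (not of algebras) is asserted.
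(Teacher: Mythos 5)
Your proof is correct and follows essentially the same route as the paper: both arguments reduce the whole statement to the observation that a right division $ab = q'f + r'$ carried out in $L[t;\sigma\vert_L,\delta\vert_L]$ is also a valid right division in $D[t;\sigma,\delta]$, so uniqueness of quotient and remainder in the larger ring forces the two $\mathrm{mod}_r f$ operations to agree. The paper states this more tersely ("by the uniqueness of right division in $L[t;\sigma\vert_L,\delta\vert_L]$ and in $D[t;\sigma,\delta]$"), while you spell out the intermediate closure facts, but the key idea is identical.
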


\begin{proof}
Clearly $L[t;\sigma \vert_L, \delta \vert_L] / L[t;\sigma \vert_L, \delta \vert_L]f(t)$ is a subset of $S_f$ and is a ring in its own right. Additionally $L[t;\sigma \vert_L, \delta \vert_L] / L[t;\sigma \vert_L, \delta \vert_L]f(t)$ inherits the multiplication in $S_f$ by the uniqueness of right division in $L[t;\sigma \vert_L, \delta \vert_L]$ and in $D[t;\sigma,\delta]$.
\end{proof}

Given $f(t) \in R = D[t;\sigma,\delta]$ of degree $m$, the \textbf{idealizer} $I(f) = \{ g \in R \ \vert \ fg \in Rf \}$ is the largest subalgebra of $R$ in which $Rf$ is a two-sided ideal. We then define the \textbf{eigenring} of $f(t)$ as the quotient $E(f) = I(f)/Rf$. Therefore the eigenring
$$E(f) = \{ g \in R \ \vert \ \mathrm{deg}(g) < m \text{ and } fg \in Rf \}$$
is equal to the right nucleus $\mathrm{Nuc}_r(S_f)$ by Theorem \ref{thm:Properties of S_f petit}(i), which as the right nucleus, is an associative subalgebra of $S_f$. By Theorem \ref{thm:Properties of S_f petit}(i) we obtain:

\begin{corollary} \label{cor:E(f)=S_f iff associative, S_f central}
Let $f(t) \in R$.
\begin{itemize}
\item[(i)] $E(f) = S_f$ if and only if $f(t)$ is right invariant if and only if $S_f$ is associative.
\item[(ii)] If $f(t)$ is not right invariant then $\mathrm{Cent}(S_f) = F$.
\end{itemize}
\end{corollary}

\begin{proof}
\begin{itemize}
\item[(i)] If $f(t)$ is not right invariant then $E(f) = \mathrm{Nuc}_r(S_f) \neq S_f$ by Theorem \ref{thm:Properties of S_f petit}(i). On the other hand if $f(t)$ is right invariant, then $Rf$ is a two-sided ideal, hence $f \vert_r fg$ for all $g \in R_m$ and so $E(f) = S_f$.
\item[(ii)] We have
\begin{align*}
\mathrm{Cent}(S_f) &= \mathrm{Comm}(S_f) \cap \mathrm{Nuc}(S_f) = \mathrm{Comm}(S_f) \cap D \cap E(f) \\ &= F \cap E(f) = F,
\end{align*}
by Theorem \ref{thm:Properties of S_f petit}.
\end{itemize}
\end{proof}

\begin{example}
If $K$ is a finite field, $\sigma$ is an automorphism of $K$ and $f(t) \in K[t;\sigma]$ is irreducible of degree $m$, then
$$E(f) = \{ u \ \mathrm{mod}_r f \ \vert \ u \in \mathrm{Cent}(K[t;\sigma]) \} \cong \mathbb{F}_{q^m}$$
by \cite[p.~9]{lavrauw2013semifields}.
\end{example}

\begin{theorem} \label{thm:Fix(sigma) right nucleus}
Let $f(t) = t^m - \sum_{i=0}^{m-1} a_i t^i \in D[t;\sigma,\delta]$ be such that $f(t)$ is not right invariant and $a_j \in \mathrm{Fix}(\sigma) \cap \mathrm{Const}(\delta)$ for all $j \in \{ 0, \ldots, m-1 \}$. Then the set
\begin{equation} \label{eqn:Fix(sigma) right nucleus}
\Big\{ \sum_{i=0}^{m-1} k_i t^i \ \vert \ k_i \in F = C \cap \mathrm{Fix}(\sigma) \cap \mathrm{Const}(\delta) \Big\},
\end{equation}
is contained in $\mathrm{Nuc}_r(S_f)$.
\end{theorem}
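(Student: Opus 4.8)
The plan is to use the explicit description of the right nucleus from Theorem \ref{thm:Properties of S_f petit}(i). Since $f(t)$ is not right invariant, Corollary \ref{cor:E(f)=S_f iff associative, S_f central}(i) tells us that $S_f$ is not associative, so that theorem applies and gives
$$\mathrm{Nuc}_r(S_f) = \{ g \in R \ \vert \ \mathrm{deg}(g) < m \text{ and } fg \in Rf \}.$$
Thus it suffices to show that for every $g(t) = \sum_{i=0}^{m-1} k_i t^i$ with $k_i \in F$ we have $fg \in Rf$.

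First I would observe that each coefficient $k_i \in F = C \cap \mathrm{Fix}(\sigma) \cap \mathrm{Const}(\delta)$ is central in $R$. Indeed, $k_i \in C$ means it commutes with every element of $D$, while $\sigma(k_i) = k_i$ and $\delta(k_i) = 0$ give $t k_i = \sigma(k_i) t + \delta(k_i) = k_i t$, so $k_i$ also commutes with $t$; since $R$ is generated by $D$ and $t$, the element $k_i$ lies in the center of $R$.

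The key computation is that the hypotheses on the $a_j$ force $f(t)$ to commute with $t$. Writing out $tf$ and using $\sigma(a_j) = a_j$ and $\delta(a_j) = 0$ for all $j$, one checks directly that $tf = t^{m+1} - \sum_j a_j t^{j+1} = ft$; a short induction then yields $f t^i = t^i f$ for all $i \geq 0$. Combining the two observations gives $fg = \sum_{i=0}^{m-1} k_i\, f t^i = \sum_{i=0}^{m-1} k_i\, t^i f = g f \in Rf$, where the central position of the $k_i$ lets us move them past $f$. Hence $g \in \mathrm{Nuc}_r(S_f)$, as required.

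I do not expect a genuine obstacle here: the whole argument rests on translating the coefficient conditions into the commutation relations $k_i t = t k_i$ and $ft = tf$ inside $R$. The only point that needs care is the verification of $tf = ft$, which genuinely uses \emph{both} $a_j \in \mathrm{Fix}(\sigma)$ and $a_j \in \mathrm{Const}(\delta)$; dropping either hypothesis would introduce an extra $\sigma$- or $\delta$-term and break the identity.
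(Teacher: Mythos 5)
Your proof is correct, and it takes a mildly but genuinely different route from the paper's. The paper first notes $F \subseteq \mathrm{Nuc}_r(S_f)$, then uses the fact that the right nucleus is a subalgebra to reduce the whole claim to showing $t \in \mathrm{Nuc}_r(S_f)$, and verifies that single membership by computing $t^m \circ t = t \circ t^m$ in $S_f$ and invoking the powers-of-$t$ criterion of Theorem \ref{thm:Properties of S_f petit}(ii). You instead work directly with the eigenring description $\mathrm{Nuc}_r(S_f) = E(f) = \{ g \in R \ \vert \ \mathrm{deg}(g) < m, \ fg \in Rf \}$ from Theorem \ref{thm:Properties of S_f petit}(i) and check $fg \in Rf$ for every element of the set at once, via the two observations that each $k_i \in F$ is central in $R$ and that $ft = tf$. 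The underlying computation is the same in both arguments — everything hinges on $ta_j = \sigma(a_j)t + \delta(a_j) = a_j t$ — but your version is slightly more self-contained: it avoids both the powers-of-$t$ criterion and the appeal to the subalgebra structure of the nucleus, at the cost of the (easy) verification that elements of $F$ commute with $t$ and hence lie in the center of $R$. The citation of Corollary \ref{cor:E(f)=S_f iff associative, S_f central}(i) to pass from ``$f$ not right invariant'' to ``$S_f$ not associative'' (so that the description of $\mathrm{Nuc}_r(S_f)$ applies) is legitimate; Theorem \ref{thm:Properties of S_f petit}(iii) would serve equally well. No gaps.
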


\begin{proof}
Clearly $F \subseteq \mathrm{Nuc}_r(S_f)$, therefore if we can show $t \in \mathrm{Nuc}_r(S_f)$, then \eqref{eqn:Fix(sigma) right nucleus} is contained in $\mathrm{Nuc}_r(S_f)$.
To this end, we calculate
$$t^m \circ t = \Big( \sum_{i=0}^{m-1} a_i t^i \Big) \circ t = \sum_{i=0}^{m-2} a_i t^{i+1} + a_{m-1} \sum_{i=0}^{m-1} a_i t^i,$$
and
\begin{align*}
t \circ t^m &= t \circ \Big( \sum_{i=0}^{m-1} a_i t^i \Big) = t \circ a_{m-1} t^{m-1} + t \circ \sum_{i=0}^{m-2} a_i t^i \\
&= \big( \sigma(a_{m-1})t + \delta(a_{m-1}) \big) \circ t^{m-1} + \sum_{i=0}^{m-2} \big( \sigma(a_i)t + \delta(a_i) \big) t^i \\
&= a_{m-1} \sum_{i=0}^{m-1} a_i t^i \ + \ \sum_{i=0}^{m-2} a_i t^{i+1},
\end{align*}
since $a_0, a_1, \ldots, a_{m-1} \in \mathrm{Fix}(\sigma) \cap \mathrm{Const}(\delta)$. Therefore $t^m \circ t = t \circ t^m$, which yields $t \in \mathrm{Nuc}_r(S_f)$ by Theorem \ref{thm:Properties of S_f petit}(ii).
\end{proof}

If we assume additionally $a_j \in C$ in Theorem \ref{thm:Fix(sigma) right nucleus}, i.e. we assume $f(t) \in F[t]$, then we obtain:

\begin{corollary} \label{cor:F right nucleus}
Let $f(t) \in F[t] = F[t;\sigma,\delta] \subset D[t;\sigma,\delta]$ be of degree $m$ and not right invariant. Then the set \eqref{eqn:Fix(sigma) right nucleus} is a commutative subalgebra of $\mathrm{Nuc}_r(S_f)$. Here \eqref{eqn:Fix(sigma) right nucleus} equals $F[t]/F[t]f(t)$. Furthermore, if $f(t)$ is irreducible in $F[t]$, the set \eqref{eqn:Fix(sigma) right nucleus} is a field.
\end{corollary}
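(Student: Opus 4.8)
The plan is to deduce this corollary directly from Theorem \ref{thm:Fix(sigma) right nucleus} together with the Proposition on division subrings stated above, so that the real work reduces to checking hypotheses and correctly identifying the relevant quotient ring. First I would observe that since $F = C \cap \mathrm{Fix}(\sigma) \cap \mathrm{Const}(\delta)$ is a subfield of $D$ on which $\sigma$ restricts to the identity and $\delta$ restricts to zero, we have $\sigma\vert_F = \mathrm{id}_F$ and $\delta\vert_F = 0$; hence $F[t;\sigma\vert_F,\delta\vert_F] = F[t;\mathrm{id},0] = F[t]$ is the ordinary commutative polynomial ring, which justifies the notation in the statement. Taking $f(t)$ monic (harmless by the earlier observation that $S_f = S_{df}$ for $d \in D^{\times}$), its coefficients lie in $F \subseteq \mathrm{Fix}(\sigma) \cap \mathrm{Const}(\delta)$, so the hypotheses of Theorem \ref{thm:Fix(sigma) right nucleus} are satisfied and the set \eqref{eqn:Fix(sigma) right nucleus} is contained in $\mathrm{Nuc}_r(S_f)$.

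Next I would apply the Proposition above with $L = F$. Since $F$ is a division subring of $D$, $\sigma\vert_F$ is an endomorphism of $F$ and $\delta\vert_F$ a $\sigma\vert_F$-derivation of $F$, and $f(t) \in F[t]$, the Proposition gives that $F[t]/F[t]f(t)$ is a subring of $S_f$ inheriting the multiplication $\circ$. As a set, $F[t]/F[t]f(t)$ consists precisely of the polynomials of degree $< m$ with coefficients in $F$, that is, it is exactly the set \eqref{eqn:Fix(sigma) right nucleus}; and since $F[t]$ is commutative, so is its quotient. Combining this with the previous paragraph shows \eqref{eqn:Fix(sigma) right nucleus} is a commutative subalgebra of $\mathrm{Nuc}_r(S_f)$ which equals $F[t]/F[t]f(t)$.

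For the final assertion I would use that $F[t]$ is a principal ideal domain: if $f(t)$ is irreducible in $F[t]$, then the ideal $F[t]f(t)$ is maximal, so the quotient $F[t]/F[t]f(t)$ is a field, which is the claim.

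I expect no serious obstacle here. The only points requiring care are the identification $F[t;\sigma\vert_F,\delta\vert_F] = F[t]$ and the fact that the restriction of $\circ$ to \eqref{eqn:Fix(sigma) right nucleus} coincides with ordinary multiplication modulo $f$ in $F[t]$; both follow from the triviality of $\sigma$ and $\delta$ on $F$, and in particular from the fact that right division of an element of $F[t]$ by $f(t)$, when carried out in $R$, returns the same remainder as division inside $F[t]$, which is guaranteed by the uniqueness of remainders together with $F[t] \subseteq R$ (this is exactly the mechanism underlying the Proposition).
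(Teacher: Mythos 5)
Your proof is correct and follows essentially the same route as the paper's: it combines the subring Proposition (applied with $L=F$), Theorem \ref{thm:Fix(sigma) right nucleus} for containment in the right nucleus, and the standard fact that $F[t]/F[t]f(t)$ is a field when $f(t)$ is irreducible in the commutative polynomial ring $F[t]$. Your version is simply more explicit about the identification $F[t;\sigma\vert_F,\delta\vert_F]=F[t]$ and the compatibility of remainders, which the paper leaves implicit.
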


\begin{proof}
$S_f$ contains the commutative subalgebra $F[t]/F[t]f(t)$ which is isomorphic to \eqref{eqn:Fix(sigma) right nucleus} because $f(t) \in F[t]$. Now, \eqref{eqn:Fix(sigma) right nucleus} is contained in $\mathrm{Nuc}_r(S_f)$ by Theorem \ref{thm:Fix(sigma) right nucleus} and thus if $f(t)$ is irreducible in $F[t]$, then $F[t]/F[t]f(t)$ is a field.
\end{proof}
When $\sigma = \mathrm{id}$, Corollary \ref{cor:F right nucleus} is precisely \cite[Proposition 2]{pumpluen2016nonassociative}.

\begin{remark}
Suppose $K$ is a finite field, $\sigma$ is an automorphism of $K$ and $F = \mathrm{Fix}(\sigma)$. If $f(t) \in F[t] \subset K[t;\sigma]$ is irreducible and not right invariant, then \eqref{eqn:Fix(sigma) right nucleus} is equal to $\mathrm{Nuc}_r(S_f)$ \cite[Theorem 3.2]{wene2000finite}.
\end{remark}

\section{So-called Right Semi-Invariant Polynomials} \label{section:Right Semi-Invariant Polynomials}

As in the previous Section, suppose $D$ is a division ring with center $C$, $\sigma$ is an endomorphism of $D$, $\delta$ is a left $\sigma$-derivation of $D$ and $f(t) \in R = D[t;\sigma,\delta]$. We now investigate conditions for $D$ to be contained in the right nucleus of $S_f$, therefore either $S_f$ is associative or $\mathrm{Nuc}(S_f) = D$ by Theorem \ref{thm:Properties of S_f petit}(i). We do this by looking at so-called right semi-invariant polynomials:

\begin{definition}
(\cite{lam1988algebraic}, \cite{lam1989invariant}).
A polynomial $f(t) \in R$ is called \textbf{right semi-invariant} if $f(t)D \subseteq Df(t)$. Similarly, $f(t)$ is \textbf{left semi-invariant} if $Df(t) \subseteq f(t)D$.
\end{definition}

We have $f(t)$ is right semi-invariant if and only if $df(t)$ is right semi-invariant for all $d \in D^{\times}$ \cite[p.~8]{lam1988algebraic}. For this reason it suffices to only consider monic $f(t)$. Furthermore, if $\sigma$ is an automorphism, then $f(t)$ is right semi-invariant if and only if it is left semi-invariant if and only if $f(t)D = Df(t)$ \cite[Proposition 2.7]{lam1988algebraic}.

For a thorough background on right semi-invariant polynomials we refer the reader to \cite{lam1988algebraic} and \cite{lam1989invariant}. Our interest in right semi-invariant polynomials stems from the following result:

\begin{theorem} \label{thm:semi-invariant iff D contained in E(f)}
$f(t) \in R$ is right semi-invariant if and only if $D \subseteq \mathrm{Nuc}_r(S_f)$. In particular, if $f(t)$ is right semi-invariant, then either $\mathrm{Nuc}(S_f) = D$ or $S_f$ is associative.
\end{theorem}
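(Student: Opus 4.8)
The plan is to reduce both implications to the description of the right nucleus as the eigenring. Recall that $\mathrm{Nuc}_r(S_f) = E(f) = \{ g \in R \mid \mathrm{deg}(g) < m \text{ and } fg \in Rf \}$, which holds whether or not $S_f$ is associative (by Theorem \ref{thm:Properties of S_f petit}(i) in the non-associative case, and because both sides equal $S_f$ when $S_f$ is associative, by Corollary \ref{cor:E(f)=S_f iff associative, S_f central}(i)). Since $D \subseteq R_m$, the statement $D \subseteq \mathrm{Nuc}_r(S_f)$ is thus equivalent to the condition that $fd \in Rf$ for every $d \in D$, and the whole proof hinges on comparing this with the defining condition $f(t)D \subseteq Df(t)$ of right semi-invariance.

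For the forward direction I would assume $f(t)$ is right semi-invariant, so that for each $d \in D$ there is some $d' \in D$ with $fd = d'f$; since $d' \in D \subseteq R$ this immediately gives $fd \in Rf$, hence $d \in \mathrm{Nuc}_r(S_f)$, and therefore $D \subseteq \mathrm{Nuc}_r(S_f)$. For the converse I would assume $D \subseteq \mathrm{Nuc}_r(S_f)$ and fix a nonzero $d \in D$, so that $fd = qf$ for some $q \in R$. Comparing degrees and using that $D$ is a domain (so degrees are additive and $\mathrm{deg}(fd) = \mathrm{deg}(f) = m$), one gets $\mathrm{deg}(q) + m = m$, forcing $\mathrm{deg}(q) = 0$ and hence $q \in D$. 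Then $fd = qf \in Df$, giving $f(t)D \subseteq Df(t)$.

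This degree argument is the one step requiring more than unwinding definitions, and it is precisely where the division-ring hypothesis on $D$ is used: multiplication by a nonzero scalar preserves degree, which is what pins $q$ down to a scalar and upgrades the weaker membership $fd \in Rf$ to the stronger $fd \in Df$. I expect this to be the only genuine content of the proof, the rest being bookkeeping.

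For the final assertion I would suppose $f(t)$ is right semi-invariant, so $D \subseteq \mathrm{Nuc}_r(S_f)$ by the equivalence just established. If $S_f$ is associative we are in the second alternative. Otherwise Theorem \ref{thm:Properties of S_f petit}(i) gives $\mathrm{Nuc}_l(S_f) = \mathrm{Nuc}_m(S_f) = D$, so $D \subseteq \mathrm{Nuc}_l(S_f) \cap \mathrm{Nuc}_m(S_f) \cap \mathrm{Nuc}_r(S_f) = \mathrm{Nuc}(S_f)$; combined with $\mathrm{Nuc}(S_f) \subseteq \mathrm{Nuc}_l(S_f) = D$ this yields $\mathrm{Nuc}(S_f) = D$, the first alternative.
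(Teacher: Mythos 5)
Your proof is correct and follows essentially the same route as the paper's: the forward direction is the inclusion $f(t)D \subseteq Df(t) \subseteq Rf(t)$, the converse is the degree comparison forcing $q(t) \in D$, and the final assertion comes from Theorem \ref{thm:Properties of S_f petit}(i). Your extra remark justifying $\mathrm{Nuc}_r(S_f) = E(f)$ in both the associative and non-associative cases is a harmless refinement of what the paper leaves implicit.
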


\begin{proof}
If $f(t) \in R$ is right semi-invariant, $f(t)D \subseteq Df(t) \subseteq Rf(t)$ and hence $D \subseteq E(f) = \mathrm{Nuc}_r(S_f)$. Conversely, if $D \subseteq \mathrm{Nuc}_r(S_f) = E(f)$ then for all $d \in D$, there exists $q(t) \in R$ such that $f(t)d = q(t)f(t)$. Comparing degrees, we see $q(t) \in D$ and thus $f(t)D \subseteq Df(t)$.

The second assertion follows by Theorem \ref{thm:Properties of S_f petit}(i).   
\end{proof}

\noindent The following result on the existence of a non-constant right semi-invariant polynomial is due to Lemonnier \cite{lemonnier1978dimension}:

\begin{proposition} \label{prop:lemonnier semi-invariant}
(\cite[(9.21)]{lemonnier1978dimension}).
Suppose $\sigma$ is an automorphism of $D$, then the following are equivalent:
\begin{itemize}
\item[(i)] There exists a non-constant right semi-invariant polynomial in $R$.
\item[(ii)] $R$ is not simple.
\item[(iii)] There exist $b_0, \ldots, b_n \in D$ with $b_n \neq 0$ such that $b_0 \delta_{c, \theta} + \sum_{i=1}^{n} b_i \delta^i = 0$, where $\theta$ is an endomorphism of $D$ and $\delta_{c,\theta}$ denotes the $\theta$-derivation of $D$ sending $x \in D$ to $c x - \theta(x) c$.
\end{itemize}
\end{proposition}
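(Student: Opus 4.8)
The three conditions suggest proving the cycle $(ii)\Rightarrow(i)\Rightarrow(iii)\Rightarrow(i)\Rightarrow(ii)$, and the first thing I would do is restate $(ii)$ conveniently. Since $\sigma$ is an automorphism, $R$ is a principal ideal domain, so every two-sided ideal has the form $Rg=gR$ with $g$ invariant; as the units of $R$ are exactly $D^{\times}$ (degrees add in the domain $R$), the ring $R$ fails to be simple if and only if there exists a \emph{non-constant} invariant polynomial. Granting this, $(ii)\Rightarrow(i)$ is immediate: a non-constant invariant $g$ satisfies $gD\subseteq gR=Rg$, and comparing degrees in $gd=qg$ forces $q\in D$, so $gD\subseteq Dg$ and $g$ is a non-constant right semi-invariant polynomial.

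For $(i)\Rightarrow(iii)$ I would take a monic non-constant right semi-invariant polynomial $f(t)=t^{n}+a_{n-1}t^{n-1}+\dots+a_{0}$. Comparing leading coefficients in the inclusion $f(t)D\subseteq Df(t)$ shows $f(t)d=\sigma^{n}(d)f(t)$ for every $d\in D$. Expanding both sides by \eqref{eqn:mult in S_f 1} and reading off the coefficient of $t^{0}$, using $S_{i,0}=\delta^{i}$, yields $\sum_{i=0}^{n}a_{i}\delta^{i}(d)=\sigma^{n}(d)a_{0}$ for all $d$, i.e. $\sum_{i=1}^{n}a_{i}\delta^{i}+\delta_{a_{0},\sigma^{n}}=0$ as operators on $D$, where $\delta_{a_{0},\sigma^{n}}(x)=a_{0}x-\sigma^{n}(x)a_{0}$. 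This is precisely $(iii)$ with $b_{0}=1$, $c=a_{0}$, $\theta=\sigma^{n}$ and $b_{i}=a_{i}$; here $b_{n}=1\neq0$ because $f$ is non-constant.

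The substantial implication is $(iii)\Rightarrow(i)$, and this is where I expect the main difficulty. The natural idea is to reverse the previous paragraph: after left-multiplying the relation in $(iii)$ by $b_{0}^{-1}$ (when $b_{0}\neq0$) one may assume $b_{0}=1$, and a prospective polynomial $f$ should then have constant term $c$ and higher coefficients built from the $b_{i}$, with twist $\theta=\sigma^{n}$. By construction such an $f$ satisfies the constant-coefficient equation of semi-invariance, but the obstacle is that it must also satisfy the remaining equations $\sum_{i\ge j}a_{i}S_{i,j}(d)=\sigma^{n}(d)a_{j}$ for $1\le j\le n-1$, and the leading coefficient has to be compatible with the twist; showing that the single operator identity $(iii)$ can be \emph{completed} to a genuine right semi-invariant polynomial (with the degenerate case $b_{0}=0$, where $\delta$ satisfies $\sum_{i\ge1}b_{i}\delta^{i}=0$, treated separately) is the heart of Lemonnier's result. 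Finally, to close the cycle via $(i)\Rightarrow(ii)$, I would upgrade a semi-invariant polynomial to an invariant one by a minimal-degree argument: choose a non-constant right semi-invariant $f$ of least degree $n$ and write $f(t)t=q(t)f(t)+r(t)$ with $\deg r<n$. Using $f(t)c=\sigma^{n}(c)f(t)$, the rule $td=\sigma(d)t+\delta(d)$, and a comparison of degrees, a short computation gives $rd=\sigma^{n+1}(d)r$ for all $d$, so $r$ is again right semi-invariant of degree $<n$; minimality forces $r\in D$. If $r=0$ then $f(t)R\subseteq Rf$, so $f$ is right invariant, hence invariant as $\sigma$ is an automorphism, and $Rf$ is a proper non-zero two-sided ideal, yielding $(ii)$. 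The residual case $r\in D^{\times}$, which makes $\sigma^{n+1}$ inner, must be handled on its own, and this bookkeeping together with the completion step above is where the real work lies.
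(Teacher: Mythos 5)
The paper does not actually prove this proposition: it is quoted verbatim from Lemonnier \cite[(9.21)]{lemonnier1978dimension} and used as a black box, so there is no in-paper argument to compare yours against. Judged on its own terms, your proposal correctly disposes of the easy arrows. The reduction of (ii) to the existence of a non-constant invariant polynomial (via $R$ being a principal ideal domain with unit group $D^{\times}$, and right invariant $=$ invariant when $\sigma$ is an automorphism) is right, and so is $(ii)\Rightarrow(i)$. Your $(i)\Rightarrow(iii)$ is also correct and is essentially the constant-term case $j=0$ of equation \eqref{eqn:right semi-invariant 1}: from $f(t)d=\sigma^{n}(d)f(t)$ and $S_{i,0}=\delta^{i}$ you get $\delta_{a_{0},\sigma^{n}}+\sum_{i=1}^{n}a_{i}\delta^{i}=0$ with $b_{n}=a_{n}=1\neq 0$.

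However, what you have written is not a proof, and you say so yourself: the two substantive implications are left open. For $(iii)\Rightarrow(i)$ you only describe the obstacle — that the single operator identity fixes the coefficient equation for $j=0$ but says nothing about the remaining constraints $\sum_{i\ge j}a_{i}S_{i,j}(d)=\sigma^{n}(d)a_{j}$ for $1\le j\le n-1$ — without resolving it; this "completion" step is precisely Lemonnier's contribution and cannot be waved at. For $(i)\Rightarrow(ii)$, your minimal-degree division argument correctly shows the remainder $r$ in $f(t)t=q(t)f(t)+r(t)$ lies in $D$, and the case $r=0$ does yield an invariant polynomial; but the case $r\in D^{\times}$ is a genuine gap, not bookkeeping. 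In that case you only learn that $\sigma^{n+1}=I_{r}$ is inner, which by itself does not make $R$ non-simple (differential polynomial rings with $\sigma=\mathrm{id}$ can be simple), so one must still manufacture an invariant polynomial — in the literature this is done by showing every semi-invariant polynomial is a "polynomial in" the minimal one $h$ and then locating an invariant combination $h^{k}+c_{k-1}h^{k-1}+\cdots$. Since both of the hard arrows are missing, the attempt establishes only $(ii)\Rightarrow(i)\Rightarrow(iii)$ and a partial converse, not the stated equivalence.
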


Combining Theorem \ref{thm:semi-invariant iff D contained in E(f)} and Proposition \ref{prop:lemonnier semi-invariant} we conclude:

\begin{corollary} \label{cor:simple and semi-invariant implies invariant}
Suppose $\sigma$ is an automorphism of $D$ and $R$ is simple. Then there are no nonassociative algebras $S_f$ with $D \subseteq \mathrm{Nuc}_r(S_f)$. In particular there are no nonassociative algebras $S_f$ with $D \subseteq \mathrm{Nuc}(S_f)$.
\end{corollary}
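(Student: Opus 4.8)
The plan is to argue by contradiction, feeding a hypothetical such algebra into Theorem \ref{thm:semi-invariant iff D contained in E(f)} to manufacture a non-constant right semi-invariant polynomial, and then to invoke Proposition \ref{prop:lemonnier semi-invariant} to contradict the simplicity of $R$. The corollary is essentially a splice of these two quoted results, so the work is in lining up their hypotheses.

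First I would suppose, for contradiction, that there exists a nonassociative Petit algebra $S_f$ with $D \subseteq \mathrm{Nuc}_r(S_f)$. Since every $S_f$ considered here is built from a polynomial $f(t)$ of degree $m \geq 2$, the polynomial $f(t)$ is in particular non-constant. By Theorem \ref{thm:semi-invariant iff D contained in E(f)}, the inclusion $D \subseteq \mathrm{Nuc}_r(S_f)$ is equivalent to $f(t)$ being right semi-invariant, so $f(t)$ is a non-constant right semi-invariant polynomial in $R$.

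Next I would apply Proposition \ref{prop:lemonnier semi-invariant}, which, using that $\sigma$ is an automorphism of $D$, gives the equivalence of (i) the existence of a non-constant right semi-invariant polynomial in $R$ and (ii) $R$ not being simple. Having produced such a polynomial, statement (i) holds, hence (ii) holds, i.e. $R$ is not simple, contradicting the hypothesis. Therefore no such nonassociative $S_f$ can exist. For the final assertion I would simply observe that $\mathrm{Nuc}(S_f) \subseteq \mathrm{Nuc}_r(S_f)$, so any $S_f$ with $D \subseteq \mathrm{Nuc}(S_f)$ would in particular satisfy $D \subseteq \mathrm{Nuc}_r(S_f)$, a case already excluded.

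There is no genuine computational obstacle; the only points requiring care are ensuring the extracted polynomial is honestly non-constant so that Proposition \ref{prop:lemonnier semi-invariant} applies, which is guaranteed by the standing assumption $\deg f \geq 2$, and being precise about the directions used, namely Theorem \ref{thm:semi-invariant iff D contained in E(f)} from the nuclear inclusion to right semi-invariance, and Proposition \ref{prop:lemonnier semi-invariant} from (i) to (ii).
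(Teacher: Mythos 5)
Your proposal is correct and follows essentially the same route as the paper: the paper's own proof is the one-line combination of Proposition \ref{prop:lemonnier semi-invariant} (no non-constant right semi-invariant polynomial exists when $R$ is simple) with Theorem \ref{thm:semi-invariant iff D contained in E(f)} ($D \subseteq \mathrm{Nuc}_r(S_f)$ forces $f$ to be right semi-invariant), which is exactly your argument phrased contrapositively. Your extra care about $\deg f \geq 2$ and the inclusion $\mathrm{Nuc}(S_f) \subseteq \mathrm{Nuc}_r(S_f)$ is sound and only makes explicit what the paper leaves implicit.
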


\begin{proof}
$R$ is not simple if and only if there exists a non-constant right semi-invariant polynomial in $R$ by Proposition \ref{prop:lemonnier semi-invariant}, and hence the assertion follows by Theorem \ref{thm:semi-invariant iff D contained in E(f)}.
\end{proof}

Theorem \ref{thm:semi-invariant iff D contained in E(f)} allows us to rephrase some of the results on semi-invariant polynomials in \cite{lam1988algebraic} and \cite{lam1989invariant}, in terms of the right nucleus of $S_f$:

\begin{theorem} \label{thm:right semi invariant conditions}
(\cite[Lemma 2.2, Corollary 2.12, Propositions 2.3 and 2.4]{lam1988algebraic}, \cite[Corollary 2.6]{lam1989invariant}).
Let $f(t) = \sum_{i=0}^{m} a_i t^i \in R$ be monic of degree $m$.
\begin{itemize}
\item[(i)] $D \subseteq \mathrm{Nuc}_r(S_f)$ if and only if $f(t)c = \sigma^m(c) f(t)$ for all $c \in D$, if and only if 
\begin{equation} \label{eqn:right semi-invariant 1}
\sigma^m(c)a_j = \sum_{i=j}^{m} a_i S_{i,j}(c)
\end{equation}
for all $c \in D$ and $j \in \{ 0, \ldots, m-1 \}$, where $S_{i,j}$ is defined as in \eqref{eqn:mult in S_f 3}.
\item[(ii)] Suppose $\sigma$ is an automorphism of $D$ of infinite inner order. Then $D \subseteq \mathrm{Nuc}_r(S_f)$ implies $S_f$ is associative.
\item[(iii)] Suppose $\delta = 0$. Then $D \subseteq \mathrm{Nuc}_r(S_f)$ if and only if
\begin{equation} \label{eqn:right semi-invariant 2}
\sigma^m(c) = a_j \sigma^j(c) a_j^{-1}
\end{equation}
for all $c \in D$ and all $j \in \{ 0, \ldots, m-1 \}$ with $a_j \neq 0$. Furthermore, $S_f$ is associative if and only if $f(t)$ satisfies \eqref{eqn:right semi-invariant 2} and $a_j \in \mathrm{Fix}(\sigma)$ for all $j \in \{ 0, \ldots, m-1 \}$.
\item[(iv)] Suppose $\delta = 0$ and $\sigma$ is an automorphism of $D$ of finite inner order $k$, i.e. $\sigma^k = I_u$ for some $u \in D^{\times}$. The polynomials $g(t) \in D[t;\sigma]$ such that $D \subseteq \mathrm{Nuc}_r(S_g)$ are precisely those of the form
\begin{equation} \label{eqn:right semi-invariant 4}
b \sum_{j=0}^{n} c_j u^{n-j}t^{jk},
\end{equation}
where $n \in \mathbb{N}$, $c_n = 1$, $c_j \in C$ and $b \in D^{\times}$. Furthermore, $S_g$ is associative if and only if $g(t)$ has the form \eqref{eqn:right semi-invariant 4} and $c_j u^{n-j} \in \mathrm{Fix}(\sigma)$ for all $j \in \{ 0, \ldots, n \}$. 
\item[(v)] Suppose $\sigma = \mathrm{id}$. Then $D \subseteq \mathrm{Nuc}_r(S_f)$ is equivalent to
\begin{equation} \label{eqn:right semi-invariant 3}
c a_j = \sum_{i=j}^{m} \binom{i}{j} a_i \delta^{i-j}(c),
\end{equation}
for all $c \in D$, $j \in  \{ 0, \ldots, m-1 \}$. Furthermore, $S_f$ is associative if and only if $f(t)$ satisfies \eqref{eqn:right semi-invariant 3} and $a_j \in \mathrm{Const}(\delta)$ for all $j \in \{ 0, \ldots, m-1 \}$.
\end{itemize}
\end{theorem}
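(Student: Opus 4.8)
The unifying idea is Theorem \ref{thm:semi-invariant iff D contained in E(f)}: $D \subseteq \mathrm{Nuc}_r(S_f)$ holds precisely when $f(t)$ is right semi-invariant, i.e. $f(t)D \subseteq Df(t)$. Every assertion below is therefore a restatement of the corresponding characterisation of right semi-invariance, and the plan is to carry out the translation together with the coefficient bookkeeping supplied by \eqref{eqn:mult in S_f 1}, importing the deeper inner-order analysis from \cite{lam1988algebraic} and \cite{lam1989invariant} only where a direct computation does not suffice.

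For (i) I would first note that since $f(t)$ is monic of degree $m$, right semi-invariance $f(t)c \in Df(t)$ forces $f(t)c = d\,f(t)$ for a unique $d \in D$; comparing the coefficients of $t^m$ via $t^m c = \sum_j S_{m,j}(c)t^j$ and $S_{m,m} = \sigma^m$ gives $d = \sigma^m(c)$, whence the equivalence with $f(t)c = \sigma^m(c)f(t)$. Expanding $f(t)c = \sum_{i=0}^m a_i\big(\sum_{j=0}^i S_{i,j}(c)t^j\big)$ by \eqref{eqn:mult in S_f 1} and collecting the coefficient of $t^j$ against $\sigma^m(c)a_j$ then yields \eqref{eqn:right semi-invariant 1} for $j \leq m-1$; the $j = m$ equation is trivial since $a_m = 1$.

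Parts (iii) and (v) are specialisations of (i). When $\delta = 0$ we have $S_{i,j} = 0$ for $i \neq j$ and $S_{j,j} = \sigma^j$, so only the $i = j$ term survives and \eqref{eqn:right semi-invariant 1} collapses to $\sigma^m(c)a_j = a_j \sigma^j(c)$, which for $a_j \neq 0$ rearranges to \eqref{eqn:right semi-invariant 2}; when $\sigma = \mathrm{id}$ we substitute $S_{i,j} = \binom{i}{j}\delta^{i-j}$ to obtain \eqref{eqn:right semi-invariant 3}. In both cases the associativity clause follows by combining Theorem \ref{thm:Properties of S_f petit}(iii), associativity $\Leftrightarrow$ right invariance, with the right-invariance criterion of Theorem \ref{thm:Properties of S_f petit}(v): right invariance is right semi-invariance together with the extra constraint $a_j \in \mathrm{Fix}(\sigma)$ (resp. $a_j \in \mathrm{Const}(\delta)$ in the $\sigma = \mathrm{id}$ setting).

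The harder parts are (ii) and (iv), which rest on the inner-order structure theory rather than on bookkeeping, and here I would invoke \cite[Corollary 2.12, Propositions 2.3 and 2.4]{lam1988algebraic} and \cite[Corollary 2.6]{lam1989invariant} through Theorem \ref{thm:semi-invariant iff D contained in E(f)}. For (ii), infinite inner order forces every right semi-invariant polynomial to be invariant, so $D \subseteq \mathrm{Nuc}_r(S_f)$ already makes $f(t)$ right invariant and hence $S_f$ associative. For (iv), reading off $\sigma^{m-j} = I_{a_j}$ from \eqref{eqn:right semi-invariant 2} shows $a_j \neq 0$ only when $k \mid m-j$, and pinning down the surviving coefficients against powers of the conjugating element $u$ is exactly the classification \eqref{eqn:right semi-invariant 4}, with the associativity refinement again adjoining the $\mathrm{Fix}(\sigma)$ condition. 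The main obstacle is thus not the computation in (i), (iii), (v), but correctly importing the structural description in (iv), where the interplay between the inner order $k$ and the element $u$ with $\sigma^k = I_u$ must be matched term by term.
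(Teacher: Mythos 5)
Your proposal is correct and takes essentially the same route as the paper: the thesis gives no independent proof of this theorem, presenting it as the Lam--Leroy results on right semi-invariant polynomials translated through Theorem \ref{thm:semi-invariant iff D contained in E(f)}, and your coefficient computations for (i), (iii) and (v) are precisely the ones the paper writes out in the more general $L$-weak semi-invariant setting of Proposition \ref{prop:L-weak semi invariant conditions}. Deferring (ii) and (iv) to the cited inner-order structure theory of \cite{lam1988algebraic} and \cite{lam1989invariant} is likewise exactly what the paper does.
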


Theorem \ref{thm:right semi invariant conditions}(iii) provides us with an alternate proof of \cite[Corollary 3.2.6]{AndrewPhD} about the nucleus of nonassociative cyclic algebras:

\begin{corollary} \label{cor:Nucleus of Nonassociative cyclic algebra}
(\cite[Corollary 3.2.6]{AndrewPhD}).
Let $A = (K/F,\sigma,a)$ be a nonassociative cyclic algebra of degree $m$ for some $a \in K \setminus F$. Then $\mathrm{Nuc}(A) = K$.
\end{corollary}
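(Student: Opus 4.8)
The plan is to show directly that $K = D \subseteq \mathrm{Nuc}_r(A)$; once this is established the result follows quickly. Indeed $A$ is not associative (recall $A$ is associative if and only if $a \in F$, and here $a \in K \setminus F$), so Theorem \ref{thm:Properties of S_f petit}(i) already gives $\mathrm{Nuc}_l(A) = \mathrm{Nuc}_m(A) = D = K$. Intersecting the three nuclei then yields $\mathrm{Nuc}(A) = D \cap \mathrm{Nuc}_r(A)$, which equals $K$ precisely when $K \subseteq \mathrm{Nuc}_r(A)$.

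To verify $D \subseteq \mathrm{Nuc}_r(A)$ I would specialise Theorem \ref{thm:right semi invariant conditions}(iii) to the present situation. Here $D = K$ is a field, hence commutative; $\delta = 0$; and $\sigma$ generates $\mathrm{Gal}(K/F)$, so $\sigma$ has order $m$ and $\sigma^m = \mathrm{id}$. Writing $f(t) = t^m - a = \sum_{i=0}^{m} a_i t^i$, the only coefficient with index $j \in \{0, \ldots, m-1\}$ that is nonzero is $a_0 = -a$ (note $a \neq 0$, since $0 \in F$ but $a \notin F$). Thus condition \eqref{eqn:right semi-invariant 2} needs to be checked only for $j = 0$, where it reads $\sigma^m(c) = a_0\, c\, a_0^{-1}$ for all $c \in K$.

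Since $K$ is commutative the right-hand side collapses to $c$, and since $\sigma^m = \mathrm{id}$ the left-hand side is also $c$; hence \eqref{eqn:right semi-invariant 2} holds identically. Theorem \ref{thm:right semi invariant conditions}(iii) therefore gives $K = D \subseteq \mathrm{Nuc}_r(A)$, and combining with the opening paragraph we conclude $\mathrm{Nuc}(A) = K$. Equivalently, one may observe that $f(t)$ is then right semi-invariant, so Theorem \ref{thm:semi-invariant iff D contained in E(f)} together with the non-associativity of $A$ directly forces $\mathrm{Nuc}(A) = D$.

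I do not expect any genuine obstacle: the entire content is the observation that commutativity of $K$ together with $\sigma^m = \mathrm{id}$ makes the semi-invariance condition \eqref{eqn:right semi-invariant 2} trivially satisfied, with non-associativity (guaranteed by $a \notin F$) being the only hypothesis needed to rule out the associative branch of Theorem \ref{thm:right semi invariant conditions}(iii) and of Theorem \ref{thm:Properties of S_f petit}(i). The one point requiring a moment's care is simply the bookkeeping of the coefficients of $t^m - a$, so that only the index $j = 0$ contributes.
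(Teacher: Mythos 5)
Your proof is correct and follows essentially the same route as the paper: both arguments establish that $t^m-a$ is right semi-invariant via Theorem \ref{thm:right semi invariant conditions}(iii) and then invoke Theorem \ref{thm:semi-invariant iff D contained in E(f)} together with the non-associativity of $A$ (which holds since $a\notin F$) to conclude $\mathrm{Nuc}(A)=K$. You merely spell out the verification of condition \eqref{eqn:right semi-invariant 2} (only $j=0$ contributes, and commutativity of $K$ plus $\sigma^m=\mathrm{id}$ make it trivial), which the paper leaves implicit.
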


\begin{proof}
Notice $A = K[t;\sigma]/K[t;\sigma](t^m -a)$ and $t^m -a$ is right semi-invariant by Theorem \ref{thm:right semi invariant conditions}(iii). Hence $K \subseteq \mathrm{Nuc}_r(A)$ by Theorem \ref{thm:semi-invariant iff D contained in E(f)} since $A$ is not associative.
\end{proof}

%

Let $L$ be a division subring of $D$. Then we can look for conditions for $L \subseteq \mathrm{Nuc}_r(S_f)$ by generalising the definition of right semi-invariant polynomials as follows: We say $f(t) \in D[t;\sigma,\delta]$ \textbf{$L$-weak semi-invariant} if $f(t)L \subseteq D f(t)$. Clearly any right semi-invariant polynomial is also $L$-weak semi-invariant for every division subring $L$ of $D$. Moreover we obtain:

\begin{proposition} \label{prop:L-weak semi-invariant iff L subset Nuc_r(S_f)}
$f(t)$ is $L$-weak semi-invariant if and only if $L \subseteq E(f) = \mathrm{Nuc}_r(S_f)$. If $f(t)$ is $L$-weak semi-invariant but not right invariant, then $L \subseteq \mathrm{Nuc}(S_f) \subseteq D$.
\end{proposition}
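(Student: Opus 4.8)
The plan is to mirror the argument used for Theorem~\ref{thm:semi-invariant iff D contained in E(f)}, simply replacing the full division ring $D$ by the subring $L$ on the right of $f(t)$, since $L$-weak semi-invariance is precisely the analogue of right semi-invariance but with $L$ (rather than $D$) feeding into $f(t)$ from the right.

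For the forward direction I would take $f(t)$ to be $L$-weak semi-invariant, so $f(t)L \subseteq Df(t)$. Then for each $\ell \in L$ we have $f(t)\ell \in Df(t) \subseteq Rf$; as $\ell$ has degree $< m$ it lies in $R_m$, so the description $E(f) = \{ g \in R \mid \deg(g) < m,\ fg \in Rf \}$ yields $\ell \in E(f)$, whence $L \subseteq E(f) = \mathrm{Nuc}_r(S_f)$.

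For the converse, suppose $L \subseteq E(f)$. Then for every $\ell \in L$ there is $q(t) \in R$ with $f(t)\ell = q(t)f(t)$, and the only genuine computation is a degree count: because $D$ is a division ring, $R$ is a domain and $\deg$ is additive, and since $f(t)$ is monic of degree $m$ while $\sigma$ is injective, $\deg(f(t)\ell) = m$ for $\ell \neq 0$; comparing with $\deg(q(t)f(t)) = \deg q + m$ forces $\deg q = 0$, i.e. $q \in D$. Hence $f(t)\ell \in Df(t)$ for all $\ell \in L$ (the case $\ell = 0$ being trivial), so $f(t)$ is $L$-weak semi-invariant.

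For the final assertion, assume $f(t)$ is $L$-weak semi-invariant but not right invariant. By the first part $L \subseteq \mathrm{Nuc}_r(S_f)$, and since $f(t)$ is not right invariant $S_f$ is not associative by Theorem~\ref{thm:Properties of S_f petit}(iii); thus Theorem~\ref{thm:Properties of S_f petit}(i) gives $\mathrm{Nuc}_l(S_f) = \mathrm{Nuc}_m(S_f) = D$, so $\mathrm{Nuc}(S_f) = D \cap \mathrm{Nuc}_r(S_f)$. As $L$ is a subring of $D$ with $L \subseteq \mathrm{Nuc}_r(S_f)$, we conclude $L \subseteq D \cap \mathrm{Nuc}_r(S_f) = \mathrm{Nuc}(S_f) \subseteq D$. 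I do not expect any serious obstacle: the whole proof is a routine adaptation of the $D$-case, and the single point needing a line of care is the degree comparison, which relies on $D$ being a domain and $\sigma$ injective.
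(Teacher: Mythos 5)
Your proof is correct and follows essentially the same route as the paper's: the forward direction via $f(t)L \subseteq Df(t) \subseteq Rf(t)$ and the description of $E(f)$, the converse via the degree comparison forcing $q(t) \in D$, and the final assertion via $\mathrm{Nuc}(S_f) = E(f) \cap D$ from Theorem \ref{thm:Properties of S_f petit}. No gaps; the extra care you take over the degree count (additivity of $\deg$ and injectivity of $\sigma$) is exactly the point the paper leaves implicit.
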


\begin{proof}
If $f(t) \in R$ is $L$-weak semi-invariant, $f(t)L \subseteq Df(t) \subseteq Rf(t)$ and hence $L \subseteq E(f)$. Conversely, if $L \subseteq E(f)$ then for all $l \in L$, there exists $q(t) \in R$ such that $f(t)l = q(t)f(t)$. Comparing degrees, we see $q(t) \in D$ and thus $f(t)L \subseteq Df(t)$.

Hence if $f(t)$ is $L$-weak semi-invariant but not right invariant, then 
$$L \subseteq \mathrm{Nuc}(S_f) = E(f) \cap D \subseteq D$$
by Theorem \ref{thm:Properties of S_f petit}, which yields the second assertion.
\end{proof}

\begin{example}
Let $K$ be a field, $\sigma$ be a non-trivial automorphism of $K$, $L = \mathrm{Fix}(\sigma^j)$ be the fixed field of $\sigma^j$ for some $j >1$ and $f(t) = \sum_{i=0}^{n} a_i t^{ij} \in K[t;\sigma]$. Then
\begin{align*}
f(t)l &= \sum_{i=0}^{n} a_i t^{ij} l = \sum_{i=0}^{n} a_i \sigma^{ij}(l) t^{ij} = \sum_{i=0}^{n} a_i l t^{ij} = l f(t),
\end{align*}
for all $l \in L$ and hence $f(t) L \subseteq L f(t)$. In particular, $f(t)$ is $L$-weak semi-invariant.
\end{example}

It turns out that results similar to Theorem \ref{thm:right semi invariant conditions}(i), (iii) and (v) also hold for $L$-weak semi-invariant polynomials:

\begin{proposition} \label{prop:L-weak semi invariant conditions}
Let $f(t) = \sum_{i=0}^{m} a_i t^i \in D[t;\sigma,\delta]$ be monic of degree $m$ and $L$ be a division subring of $D$.
\begin{itemize}
\item[(i)] $f(t)$ is $L$-weak semi-invariant if and only if $f(t)c = \sigma^m(c)f(t)$ for all $c \in L$, if and only if
\begin{equation} \label{eqn:L-weak semi invariant conditions 1}
\sigma^m(c) a_j = \sum_{i=j}^{m} a_i S_{i,j}(c)
\end{equation}
for all $c \in L$, $j \in \{ 0, \ldots, m-1 \}$.
\item[(ii)] Suppose $\delta = 0$. Then $f(t)$ is $L$-weak semi-invariant if and only if  $\sigma^m(c) a_j = a_j \sigma^j(c)$ for all $c \in L$, $j \in \{ 0, \ldots, m-1 \}$.
\item[(iii)] Suppose $\sigma = \mathrm{id}$. Then $f(t)$ is $L$-weak semi-invariant if and only if
\begin{equation} \label{eqn:L-weak semi invariant conditions 2}
c a_j = \sum_{i=j}^{m} \binom{i}{j} a_i \delta^{i-j}(c)
\end{equation}
for all $c \in L$, $j \in \{ 0, \ldots, m-1 \}$.
\end{itemize} 
\end{proposition}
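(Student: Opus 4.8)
The plan is to mimic the proof of the corresponding statement for (fully) right semi-invariant polynomials, Theorem \ref{thm:right semi invariant conditions}, the only change being that the scalar $c$ now ranges over the subring $L$ rather than over all of $D$. The starting point is the defining condition $f(t)L \subseteq Df(t)$, and I would first reduce it to the cleaner relation $f(t)c = \sigma^m(c)f(t)$ for every $c \in L$. Indeed, $L$-weak semi-invariance says that for each $c \in L$ there is some $d \in D$ with $f(t)c = df(t)$. Since $D$ is a division ring and $c \neq 0$, both sides have degree $m$, which forces $d$ to be a constant; comparing leading coefficients, and using that $f$ is monic so that the top term of $f(t)c$ is $\sigma^m(c)t^m$ (coming from $t^m c = \sigma^m(c)t^m + \cdots$), gives $d = \sigma^m(c)$. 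The reverse implication is immediate since $\sigma^m(c) \in D$.

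For the second equivalence in (i), I would expand both sides of $f(t)c = \sigma^m(c)f(t)$ using the multiplication rule \eqref{eqn:mult in S_f 1}, namely $t^i c = \sum_{j=0}^i S_{i,j}(c)t^j$. Rewriting the left-hand side as $\sum_{j=0}^m \big( \sum_{i=j}^m a_i S_{i,j}(c) \big) t^j$ and the right-hand side as $\sum_{j=0}^m \sigma^m(c) a_j t^j$, I would compare coefficients of $t^j$. The coefficient of $t^m$ reduces to the tautology $\sigma^m(c) = \sigma^m(c)$ (using $S_{m,m} = \sigma^m$ and $a_m = 1$) and so imposes no condition, leaving exactly the stated relations \eqref{eqn:L-weak semi invariant conditions 1} for $j = 0, \ldots, m-1$.

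Parts (ii) and (iii) then follow by specialising the operators $S_{i,j}$ in \eqref{eqn:L-weak semi invariant conditions 1}. When $\delta = 0$ one has $S_{i,j} = 0$ for $i \neq j$ and $S_{j,j} = \sigma^j$, so the sum collapses to the single term $a_j \sigma^j(c)$, giving (ii). When $\sigma = \mathrm{id}$, the operator $S_{i,j}$ is the sum of all monomials of degree $j$ in $\mathrm{id}$ and degree $i-j$ in $\delta$; since $\mathrm{id}$ contributes trivially there are $\binom{i}{j}$ such monomials, each equal to $\delta^{i-j}$, whence $S_{i,j} = \binom{i}{j}\delta^{i-j}$ and $\sigma^m(c) = c$, yielding (iii).

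I do not anticipate any genuine obstacle: the argument is a direct degree-and-coefficient computation combined with the known specialisations of the $S_{i,j}$. The only point needing a little care is the combinatorial identity $S_{i,j} = \binom{i}{j}\delta^{i-j}$ in the case $\sigma = \mathrm{id}$, which I would justify either by a short induction on $i$ from the recursion \eqref{eqn:mult in S_f 3} or simply by invoking the description of $S_{i,j}$ as a sum of monomials recorded immediately after \eqref{eqn:mult in S_f 3}.
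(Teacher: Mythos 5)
Your proposal is correct and follows essentially the same route as the paper's proof: expand $f(t)c$ via $t^ic=\sum_{j\le i}S_{i,j}(c)t^j$, read off the leading coefficient $\sigma^m(c)$ to pin down the constant on the left, compare the remaining coefficients to get \eqref{eqn:L-weak semi invariant conditions 1}, and then specialise $S_{i,j}$ to $\sigma^j$ (when $\delta=0$) or $\binom{i}{j}\delta^{i-j}$ (when $\sigma=\mathrm{id}$). The only cosmetic difference is that the paper quotes the identity $t^ic=\sum_{j=0}^{i}\binom{i}{j}\delta^{i-j}(c)t^j$ from Jacobson for part (iii) rather than re-deriving $S_{i,j}=\binom{i}{j}\delta^{i-j}$, which is what you propose; both are equivalent.
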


\begin{proof}
\begin{itemize}
\item[(i)] We have
\begin{equation} \label{eqn:L-weak semi invariant conditions 3}
f(t)c = \sum_{i=0}^{m} a_i t^i c = \sum_{i=0}^{m} a_i \sum_{j=0}^{i} S_{i,j}(c) t^j = \sum_{j=0}^{m} \sum_{i=j}^{m} a_i S_{i,j}(c) t^j
\end{equation}
for all $c \in L$, hence the $t^m$ coefficient of $f(t)c$ is $S_{m,m}(c) = \sigma^m(c)$, and so $f(t)$ is $L$-weak semi-invariant if and only if $f(t)c = \sigma^m(c)f(t)$ for all $c \in L$. Comparing the $t^j$ coefficient of \eqref{eqn:L-weak semi invariant conditions 3} and $\sigma^m(c)f(t)$ for all $j \in \{ 0, \ldots, m-1 \}$ yields \eqref{eqn:L-weak semi invariant conditions 1}.
\item[(ii)] When $\delta = 0$, $S_{i,j} = 0$ unless $i = j$ in which case $S_{j,j} = \sigma^j$. Therefore \eqref{eqn:L-weak semi invariant conditions 1} simplifies to $\sigma^m(c) a_j = a_j \sigma^j(c)$ for all $c \in L$, $j \in \{ 0, \ldots, m-1 \}$.
\item[(iii)] When $\sigma = \mathrm{id}$ we have
$$t^i c = \sum_{j=0}^{i} \binom{i}{j} \delta^{i-j}(c)$$
for all $c \in D$ by \cite[(1.1.26)]{jacobson1996finite} and thus
\begin{equation} \label{eqn:L-weak semi invariant conditions 4}
f(t) c = \sum_{i=0}^{m} a_i t^i c = \sum_{i=0}^{m} a_i \sum_{j=0}^{i} \binom{i}{j} \delta^{i-j}(c) t^j = \sum_{j=0}^{m} \sum_{i=j}^{m} \binom{i}{j} a_i \delta^{i-j}(c) t^j
\end{equation}
for all $c \in L$. Furthermore $f(t)$ is $L$-weak semi-invariant is equivalent to $f(t) c = c f(t)$ for all $c \in L$ by (i). Comparing the $t^j$ coefficient of \eqref{eqn:L-weak semi invariant conditions 4} and $c f(t) = \sum_{i=0}^{m} c a_i t^i$ for all $c \in L$, $j \in \{ 0, \ldots, m-1 \}$ yields \eqref{eqn:L-weak semi invariant conditions 2}.
\end{itemize}
\end{proof}

\section{When are Petit Algebras Division Algebras?} \label{section:When is S_f a Division Algebra?}

In this Section we look at conditions for Petit algebras to be right or left division algebras. This is closely linked to whether the polynomial $f(t)$ used in their construction is irreducible.

Given $f(t) \in R = D[t;\sigma,\delta]$, recall $S_f$ is a \textbf{right} (resp. \textbf{left}) \textbf{division algebra}, if the right multiplication $R_a : S_f \rightarrow S_f, \ x \mapsto x \circ a$, (resp. the left multiplication $L_a : S_f \rightarrow S_f, \ x \mapsto a \circ x$), is bijective for all $0 \neq a \in S_f$. Furthermore $S_f$ is a \textbf{division algebra} if it is both a right and a left division algebra. If $S_f$ is finite-dimensional over $F$, then $S_f$ is a division algebra if and only if it has no zero divisors \cite[p.~12]{schafer1966introduction}.

We say $f(t) \in R$ is \textbf{bounded} if there exists $0 \neq f^* \in R$ such that $Rf^* = f^* R$ is the largest two-sided ideal of $R$ contained in $Rf$. The element $f^*$ is determined by $f$ up to multiplication on the left by elements of $D^{\times}$.


%

The link between factors of $f(t)$ and zero divisors in the eigenring $E(f)$ is well-known:

\begin{proposition}
Let $f(t) \in R$.
\begin{itemize}
\item[(i)] (\cite[Proposition 4]{gomez2014basic}). If $f(t)$ is irreducible then $E(f)$ has no non-trivial zero divisors.
\item[(ii)] (\cite[Proposition 4]{gomez2014basic}). Suppose $\sigma$ is an automorphism and $f(t)$ is bounded. Then $f(t)$ is irreducible if and only if $E(f)$ has no non-trivial zero divisors.
\item[(iii)] (\cite[Theorem 3.3]{giesbrecht1998factoring}). If $D = \mathbb{F}$ is a finite field and $\delta = 0$, all polynomials are bounded and hence $f(t)$ is irreducible if and only if $E(f)$ is a finite field.
\end{itemize}
\end{proposition}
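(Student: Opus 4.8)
The plan is to realise the eigenring as (the opposite of) an endomorphism ring and then read off zero divisors from the module structure of $M = R/Rf$, viewed as a cyclic left $R$-module. First I would record the identification $E(f) \cong \mathrm{End}_R(M)^{\mathrm{op}}$: right multiplication $\rho_g\colon \bar x \mapsto \overline{xg}$ is a well-defined left $R$-module endomorphism of $M$ exactly when $fg \in Rf$, that is, when $g \in I(f)$; it vanishes precisely when $g \in Rf$; and every endomorphism of the cyclic module $M$ is determined by the image of $\bar 1$. Since $\rho_g\rho_h = \rho_{hg}$, the assignment $g \mapsto \rho_g$ is an anti-isomorphism, and because passing to the opposite ring preserves the presence or absence of non-trivial zero divisors, it suffices to argue about $\mathrm{End}_R(M)$.

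For (i), the left submodules of $M$ are the $Rg/Rf$ with $g \vert_r f$, so $f$ irreducible forces $M$ to be a simple module. Schur's lemma then gives that $\mathrm{End}_R(M)$, and hence $E(f)$, is a division ring; in particular it has no non-trivial zero divisors.

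For (ii) one implication is (i), and for the converse I would argue contrapositively. Assume $\sigma \in \mathrm{Aut}(D)$, so that $R$ is a two-sided principal ideal domain, and that $f$ is bounded with bound $f^{*}$; supposing $f$ reducible I produce zero divisors in $\mathrm{End}_R(M)$. Boundedness makes $M$ a module of finite length annihilated by the nonzero two-sided ideal $Rf^{*}=f^{*}R$, so I decompose $M$ into indecomposable summands. If there are at least two summands, the associated projection idempotents are non-trivial zero divisors. If $M$ is indecomposable, then reducibility of $f$ makes $M$ non-simple, and the structure theory of bounded elements forces $M$ to be primary, so every composition factor is isomorphic to a single simple module $S = R/Rp$; in particular the socle lies inside $\mathrm{rad}\,M$ (otherwise a copy of $S$ would split off, contradicting indecomposability) and there is a nonzero homomorphism $M/\mathrm{rad}\,M \to \mathrm{soc}(M)$. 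Pulling this back along $M \twoheadrightarrow M/\mathrm{rad}\,M$ and pushing it along $\mathrm{soc}(M)\hookrightarrow M$ yields a nonzero endomorphism that annihilates $\mathrm{rad}\,M$ and has image inside $\mathrm{soc}(M) \subseteq \mathrm{rad}\,M$; such a map squares to zero and is therefore a non-trivial zero divisor.

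Finally, for (iii) with $D = \mathbb{F}$ finite and $\delta = 0$, the ring $R = \mathbb{F}[t;\sigma]$ is a finitely generated module over its centre, so $Rf$ meets the centre nontrivially and $f$ divides a nonzero central, hence two-sided, polynomial; thus every $f$ is bounded and (ii) applies. Since $E(f) = \mathrm{Nuc}_r(S_f)$ is a finite associative unital ring, absence of non-trivial zero divisors makes each left and right multiplication injective and hence, by finiteness, bijective, so $E(f)$ is a finite division ring, which is a field by Wedderburn's little theorem; conversely a field has no zero divisors. Combining these, $f$ is irreducible if and only if $E(f)$ has no non-trivial zero divisors if and only if $E(f)$ is a finite field. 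I expect the genuine difficulty to lie in the single indecomposable case of (ii): one must know that a cyclic, non-simple, indecomposable module over the skew principal ideal domain really has all its composition factors in one similarity class, so that its top and socle share a simple constituent. This is exactly the point at which boundedness of $f$ and $\sigma$ being an automorphism are indispensable.
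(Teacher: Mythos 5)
Your proposal is correct, but it is worth noting that the paper does not actually prove this proposition: all three parts are attributed to the literature (G\'omez-Torrecillas et al.\ for (i)--(ii), Giesbrecht for (iii)). The closest the paper comes is the immediately following Proposition \ref{prop:f irreducible implies E(f) division} ($f$ irreducible $\Rightarrow$ $E(f)$ is a division ring), which it proves exactly as you prove (i): identify $E(f)$ with $\mathrm{End}_R(R/Rf)$ (up to passing to the opposite ring, which is harmless for zero divisors), observe that irreducibility makes $R/Rf$ simple, and apply Schur's Lemma. So your (i) coincides with the paper's method. Your (ii) and (iii) go beyond the paper by supplying actual arguments. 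For (ii) the decomposable case via idempotents is fine, and your treatment of the indecomposable case is sound: the splitting argument showing $\mathrm{soc}\,M \subseteq \mathrm{rad}\,M$ for an indecomposable non-simple finite-length module is correct, and the nilpotent endomorphism $M \twoheadrightarrow M/\mathrm{rad}\,M \to \mathrm{soc}\,M \hookrightarrow M$ does the job. As you say, the load-bearing input is that a bounded indecomposable cyclic module over the two-sided PID $R$ is primary, so that all its composition factors are isomorphic (the bound is a power of a single two-sided prime $p^\ast$, every composition factor is a simple module over the simple artinian ring $R/Rp^\ast$, and these are all isomorphic); this is classical (Jacobson, \emph{The Theory of Rings}), and it is precisely where boundedness and $\sigma \in \mathrm{Aut}(D)$ enter, as you correctly flag. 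Your (iii) is also correct: the boundedness of every $f$ follows because $R$ is finite over its center $\mathrm{Fix}(\sigma)[t^n]$, so $f$ divides a nonzero central element, and then finiteness of $E(f)$ plus Wedderburn's little theorem upgrades ``no zero divisors'' to ``finite field.'' I see no genuine gap.
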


In general, the statement $f(t)$ is irreducible if and only if $E(f)$ has no non-trivial zero divisors is not true. Examples of reducible skew polynomials whose eigenrings are division algebras are given in \cite[Example 3]{gomez2014basic} and \cite{singer1996testing}. We prove the following result, stated but not proved by Petit in \cite[p.~13-07]{Petit1966-1967}:

\begin{proposition} \label{prop:f irreducible implies E(f) division}
If $f(t) \in R$ is irreducible then $E(f)$ is a division ring.
\end{proposition}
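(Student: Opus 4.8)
The plan is to identify the eigenring $E(f)$ with the endomorphism ring of the left $R$-module $M = R/Rf$ and then invoke Schur's Lemma. Since $D$ is a division ring, $R = D[t;\sigma,\delta]$ is a left principal ideal domain, so every left ideal containing $Rf$ has the form $Rg$ with $g \vert_r f$. Irreducibility of $f(t)$ forces any such right divisor $g$ to be either a unit or an associate of $f$, so $Rf$ is a maximal left ideal and $M = R/Rf$ is a \emph{simple} left $R$-module. This step is where irreducibility and the left-PID property of $R$ enter, and it does not require $\sigma$ to be an automorphism.

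Next I would set up the correspondence between $E(f)$ and $\mathrm{End}_R(M)$. For $g \in I(f)$ define $\phi_g : M \to M$ by $\bar x \mapsto \overline{xg}$. A short check shows $\phi_g$ is well-defined precisely because $fg \in Rf$ (so that $(Rf)g \subseteq Rf$), and that $\phi_g$ is left $R$-linear since right multiplication by $g$ commutes with the left action of $R$. Conversely, any $\psi \in \mathrm{End}_R(M)$ is determined by $\bar h = \psi(\bar 1)$ via $\psi(\bar x) = x\,\psi(\bar 1) = \overline{xh}$; applying $\psi$ to $\bar f = \bar 0$ forces $fh \in Rf$, i.e. $h \in I(f)$, so $\psi = \phi_h$ with $\bar h \in E(f)$. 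Tracking compositions gives $\phi_a \phi_b = \phi_{ba}$, so the assignment $\bar g \mapsto \phi_g$ is an anti-isomorphism of $F$-algebras from $E(f)$ onto $\mathrm{End}_R(M)$, the multiplication on $E(f)$ being $\bar a \circ \bar b = \overline{ab}$ as recorded in Theorem \ref{thm:Properties of S_f petit}.

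Finally I would conclude via Schur's Lemma: since $M$ is simple, $\mathrm{End}_R(M)$ is a division ring, and hence so is its opposite $E(f)$. Concretely, given $0 \neq \bar g \in E(f)$ the endomorphism $\phi_g$ is nonzero, hence bijective, so its inverse equals $\phi_h$ for some $\bar h \in E(f)$; evaluating $\phi_g \phi_h = \phi_h \phi_g = \mathrm{id}_M$ at $\bar 1$ yields $g \circ h = h \circ g = 1$, exhibiting a two-sided inverse of $\bar g$ in $E(f)$.

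The well-definedness and $R$-linearity of $\phi_g$, and the surjectivity of $\bar g \mapsto \phi_g$ onto $\mathrm{End}_R(M)$, are routine but essential bookkeeping. The genuine content, and the reason an appeal to finite-dimensionality together with the no-zero-divisor criterion can be bypassed, is the dimension-free simplicity of $M$ combined with Schur's Lemma: this settles the case where $D$ is infinite-dimensional over $F$, in which the finite-dimensional argument (no zero divisors $\Rightarrow$ division algebra) is unavailable.
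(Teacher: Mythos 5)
Your proposal is correct and follows essentially the same route as the paper: establish that $R/Rf$ is a simple left $R$-module because $f$ is irreducible and $R$ is a left principal ideal domain, identify $E(f)$ with $\mathrm{End}_R(R/Rf)$, and conclude by Schur's Lemma. The only difference is that you work out in detail (including the observation that the identification is an anti-isomorphism, which is harmless since the opposite of a division ring is a division ring) the two steps the paper delegates to citations of G\'omez-Torrecillas et al.
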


\begin{proof}
Let $\mathrm{End}_R(R/Rf)$ denote the endomorphism ring of the left $R$-module $R/Rf$, that is $\mathrm{End}_R(R/Rf)$ consists of all maps $\phi: R/Rf \rightarrow R/Rf$ such that $\phi(rh + r'h') = r \phi(h) + r' \phi(h')$ for all $r,r' \in R$, $h, h' \in R/Rf$.

Now $f(t)$ irreducible implies $R/Rf$ is a simple left $R$-module \cite[p.~15]{gomez2014basic}, therefore $\mathrm{End}_R(R/Rf)$ is an associative division ring by Schur's Lemma \cite[p.~33]{lam2013first}. Finally $E(f)$ is isomorphic to the ring $\mathrm{End}_R(R/Rf)$ \cite[p.~18-19]{gomez2014basic} and thus $E(f)$ is also an associative division ring.
\end{proof}

We now look at conditions for $S_f$ to be a right division algebra.

\begin{lemma} \label{lem:f(t) reducible implies S_f not division}
If $f(t) \in R$ is reducible, then $S_f$ contains zero divisors. In particular, $S_f$ is neither a left nor right division algebra.
\end{lemma}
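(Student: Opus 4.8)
The plan is to show that a non-trivial factorisation of $f(t)$ produces an explicit pair of zero divisors in $S_f$. Suppose $f(t) = g(t)h(t)$ with $\mathrm{deg}(g(t)), \mathrm{deg}(h(t)) < m = \mathrm{deg}(f(t))$, both factors non-zero. Since $g(t)$ and $h(t)$ each have degree strictly less than $m$, they belong to $R_m$ and hence represent non-zero elements of $S_f$. The key observation is that in $S_f$ the product $g \circ h$ is computed as $g(t)h(t) \bmod_r f$, and since $g(t)h(t) = f(t)$ is exactly divisible by $f(t)$ on the right (with zero remainder), we get $g \circ h = 0$ in $S_f$. Thus $g$ and $h$ are non-trivial zero divisors.

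First I would record that, because $f(t)$ is assumed monic and $\mathrm{deg}(g(t)) + \mathrm{deg}(h(t)) = m$ in a domain, the leading terms behave as expected so that $g(t)h(t)$ genuinely equals $f(t)$ (up to the monic normalisation already assumed), and in particular $f(t) \mid_r g(t)h(t)$ with remainder $0$. Then I would invoke the definition of $\circ$ directly: $g \circ h = g(t)h(t) \bmod_r f = 0$. Since neither $g$ nor $h$ is zero in $S_f$, this exhibits non-trivial zero divisors, which is the first claim.

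For the second sentence, I would deduce the failure of the division-algebra property from the existence of these zero divisors. From $g \circ h = 0$ with $g, h \neq 0$, the right multiplication map $R_h : x \mapsto x \circ h$ sends the non-zero element $g$ to $0$, so $R_h$ is not injective and hence not bijective; therefore $S_f$ is not a right division algebra. Symmetrically, the left multiplication map $L_g : x \mapsto g \circ x$ sends the non-zero element $h$ to $0$, so $L_g$ is not injective and $S_f$ is not a left division algebra. Either failure already shows $S_f$ is not a division algebra.

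I do not expect a serious obstacle here, since the result is essentially immediate from the definition of the multiplication $\circ$ once the factorisation is written down. The only point requiring a little care is the degree bookkeeping: one must confirm that the factors $g(t)$ and $h(t)$ really do have degree less than $m$ (so they lie in $R_m$ and are legitimate non-zero elements of $S_f$), and that $g(t)h(t)$ has zero remainder upon right division by $f(t)$, which is precisely the statement $f(t) \mid_r f(t)$. Both follow from the definition of reducibility given in the excerpt and the fact that $R = D[t;\sigma,\delta]$ is a domain, so degrees of products add.
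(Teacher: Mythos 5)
Your proof is correct and follows exactly the same route as the paper: a factorisation $f = gh$ with both factors of degree less than $m$ gives $g \circ h = gh \ \mathrm{mod}_r f = 0$, exhibiting non-trivial zero divisors. The paper's own proof is just this one line, so your additional remarks on degree bookkeeping and the non-injectivity of $R_h$ and $L_g$ merely spell out what the paper leaves implicit.
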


\begin{proof}
Suppose $f(t) = g(t) h(t)$ for some $g(t), h(t) \in R$ with $\mathrm{deg}(g(t))$, $\mathrm{deg}(h(t)) < \mathrm{deg}(f(t))$, then $g(t) \circ h(t) = g(t)h(t) \ \mathrm{mod}_r f = 0$.
\end{proof}

Notice $S_f$ is a free left $D$-module of finite rank $m = \mathrm{deg}(f(t))$ and let $0 \neq a \in S_f$. Then $R_a(y+z) = (y+z) \circ a = (y \circ a) + (z \circ a) = R_a(y) + R_a(z)$ and
\begin{equation*} \label{eqn:R_a left D-linear}
R_a(k \circ z) = (k \circ z) \circ a = k \circ (z \circ a) = k \circ R_a(z),
\end{equation*}
for all $k \in D$, $y, z \in S_f$, since either $S_f$ is associative or has left nucleus equal to $D$ by Theorem \ref{thm:Properties of S_f petit}. Thus $R_a$ is left $D$-linear. We will require the following well-known Rank-Nullity Theorem:

\begin{theorem} \label{thm:Rank-Nullity}
(See for example \cite[Chapter IV, Corollary 2.14]{hungerford1980algebra}).
Let $S$ be a free left (resp. right) $D$-module of finite rank $m$ and $\phi:S \rightarrow S$ be a left (resp. right) $D$-linear map.
Then
\begin{equation*}
\mathrm{dim}(\mathrm{Ker}(\phi)) + \mathrm{dim}(\mathrm{Im}(\phi)) = m,
\end{equation*}
in particular, $\phi$ is injective if and only if it is surjective.
\end{theorem}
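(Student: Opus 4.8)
The plan is to treat the free left $D$-module $S$ exactly as one treats a finite-dimensional vector space, since every piece of elementary linear algebra carries over verbatim to modules over a division ring. The only feature needing attention is the noncommutativity of $D$, so I would first isolate the three structural facts that make the classical argument legitimate: a finitely generated free left $D$-module has a well-defined rank (i.e. $D$ has invariant basis number), every submodule of such a module is again free, and any left $D$-linearly independent subset extends to a basis. These are standard for division rings and are proved by the usual Steinitz exchange arguments, none of which use commutativity of $D$; I would cite \cite[Chapter IV]{hungerford1980algebra} for them rather than reprove them.

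Granting these facts, the cleanest route is by basis extension. First I would note that $\mathrm{Ker}(\phi)$ and $\mathrm{Im}(\phi)$ are left $D$-submodules of $S$, hence both free of finite rank. Choose a basis $\{ b_1, \ldots, b_k \}$ of $\mathrm{Ker}(\phi)$ and extend it to a basis $\{ b_1, \ldots, b_m \}$ of $S$, so that $\mathrm{dim}(\mathrm{Ker}(\phi)) = k$. I claim $\{ \phi(b_{k+1}), \ldots, \phi(b_m) \}$ is a basis of $\mathrm{Im}(\phi)$. It spans, because $\phi(b_i) = 0$ for $i \leq k$ and the $b_i$ span $S$; and it is left $D$-independent, because a nontrivial relation $\sum_{i>k} d_i \phi(b_i) = 0$ would place the nonzero element $\sum_{i>k} d_i b_i$ in $\mathrm{Ker}(\phi)$, contradicting that $b_{k+1}, \ldots, b_m$ are independent modulo $\mathrm{Ker}(\phi)$. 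Hence $\mathrm{dim}(\mathrm{Im}(\phi)) = m - k$, which is the asserted identity. (Equivalently, one may invoke the first isomorphism theorem $S/\mathrm{Ker}(\phi) \cong \mathrm{Im}(\phi)$ together with additivity of dimension under the short exact sequence.)

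For the final clause I would argue directly from the formula. If $\phi$ is injective then $\mathrm{dim}(\mathrm{Ker}(\phi)) = 0$, so $\mathrm{dim}(\mathrm{Im}(\phi)) = m$; since a submodule of a rank-$m$ free module that itself has full rank $m$ must equal the whole module, $\phi$ is surjective. Conversely, if $\phi$ is surjective then $\mathrm{dim}(\mathrm{Im}(\phi)) = m$, forcing $\mathrm{dim}(\mathrm{Ker}(\phi)) = 0$ and hence $\mathrm{Ker}(\phi) = 0$, so $\phi$ is injective. The right-module statement is obtained by reading every scalar on the opposite side throughout.

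I expect the main obstacle to be essentially bookkeeping rather than any genuine difficulty: one must consistently keep scalars on the correct side (left throughout the left-module case) and confirm that the exchange and extension lemmas, as well as invariance of rank, do not covertly rely on commutativity. Once those structural facts over a division ring are secured, the remainder of the proof is word-for-word the familiar rank–nullity argument.
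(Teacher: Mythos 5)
Your argument is correct: all the structural facts you invoke (invariant basis number, freeness of submodules, basis extension) do hold over a division ring, and the basis-extension computation of $\mathrm{dim}(\mathrm{Im}(\phi))$ together with the full-rank-submodule observation is sound. The paper itself gives no proof of this theorem — it simply cites \cite[Chapter IV, Corollary 2.14]{hungerford1980algebra} — and your write-up is precisely the standard argument that reference supplies, so there is nothing further to reconcile.
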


\begin{theorem} \label{thm:f(t) irreducible iff S_f right division}
(\cite[(6)]{Petit1966-1967}).
Let $f(t) \in R$ have degree $m$ and $0 \neq a \in S_f$. Then $R_a$ is bijective is equivalent to $1$ being a right greatest common divisor of $f(t)$ and $a$. In particular, $f(t)$ is irreducible if and only if $S_f$ is a right division algebra.
\end{theorem}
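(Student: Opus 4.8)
The plan is to reduce the claim about bijectivity of $R_a$ to a statement about injectivity, and then to identify $\mathrm{Ker}(R_a)$ explicitly in terms of the right greatest common divisor and the least common left multiple of $f(t)$ and $a$. Since $S_f$ is a free left $D$-module of rank $m$ and $R_a$ is left $D$-linear, Theorem \ref{thm:Rank-Nullity} shows that $R_a$ is bijective if and only if it is injective, so it suffices to control $\mathrm{Ker}(R_a)$.

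Next I would describe this kernel. Because $R$ is a left principal ideal domain, I can write $Rf + Ra = Rd$ and $Rf \cap Ra = Rg$, where $d$ is a right gcd and $g$ is the least common left multiple of $f(t)$ and $a$; since $g \in Ra$ I may write $g = qa$ for some $q \in R$. Now $x \circ a = 0$ means exactly $f \vert_r xa$, i.e. $xa \in Rf$; as $xa \in Ra$ always holds, this is equivalent to $xa \in Rf \cap Ra = Rg = Rqa$, and right-cancelling $a$ (valid in the domain $R$) gives $x \in Rq$. Hence $\mathrm{Ker}(R_a) = Rq \cap R_m$, whose nonzero elements are precisely the multiples of $q$ of degree $< m$.

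The key quantitative input is the degree formula $\deg(g) = \deg(f) + \deg(a) - \deg(d)$. I would prove this from the short exact sequence of finite-dimensional left $D$-modules
\[
0 \longrightarrow R/Rg \longrightarrow R/Rf \oplus R/Ra \longrightarrow R/Rd \longrightarrow 0,
\]
where the first map is $x \mapsto (x,x)$ and the second is $(x,y) \mapsto x - y$, together with the fact that $\dim_D(R/Rh) = \deg(h)$ for every nonzero $h \in R$ (a transversal being $1, t, \ldots, t^{\deg(h)-1}$ via right division). Counting $D$-dimensions yields the formula, so $\deg(q) = \deg(g) - \deg(a) = m - \deg(d)$. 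Consequently $\mathrm{Ker}(R_a) = Rq \cap R_m \neq 0$ if and only if $\deg(q) < m$, i.e. if and only if $\deg(d) \geq 1$. Thus $R_a$ is injective, equivalently bijective, precisely when $\deg(d) = 0$, which says exactly that $1$ is a right gcd of $f(t)$ and $a$.

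Finally, for the \emph{in particular} statement: if $f(t)$ is irreducible, then for every $0 \neq a \in S_f$ the right gcd $d$ of $f(t)$ and $a$ is a right divisor of $f(t)$ with $\deg(d) \leq \deg(a) < m$, so irreducibility forces $\deg(d) = 0$ and hence $R_a$ is bijective; thus $S_f$ is a right division algebra. The converse is immediate from Lemma \ref{lem:f(t) reducible implies S_f not division}. I expect the main obstacle to be the degree formula, specifically verifying exactness of the displayed sequence at its middle term, while the remainder is degree bookkeeping together with the cited Rank--Nullity theorem.
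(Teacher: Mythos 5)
Your proof is correct and follows essentially the same route as the paper's: reduce bijectivity of $R_a$ to injectivity via the Rank--Nullity theorem, identify $\mathrm{Ker}(R_a)$ as the multiples of the cofactor $q$ (the paper's $h$) in the least common left multiple $g = qa$ lying in $R_m$, and conclude via the degree relation $\deg(f) + \deg(a) = \deg(g) + \deg(d)$. The only difference is that you prove that degree relation yourself by dimension-counting along the short exact sequence $0 \to R/Rg \to R/Rf \oplus R/Ra \to R/Rd \to 0$, whereas the paper simply cites it from Jacobson; your argument for it is sound.
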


\begin{proof}
Let $0 \neq a \in S_f$. Since $S_f$ is a free left $D$-module of finite rank $m$ and $R_a$ is left $D$-linear, the Rank-Nullity Theorem \ref{thm:Rank-Nullity} implies $R_a$ is bijective if and only if it is injective which is equivalent to $\mathrm{Ker}(R_a) = \{ 0 \}$. Now $R_a(z) = z \circ a = 0$ is equivalent to $za \in Rf$, which means we can write
$$\mathrm{Ker}(R_a) = \{ z \in R_m \ \vert \ za \in Rf \}.$$
Furthermore, $R$ is a left principal ideal domain, which implies $za \in Rf$ if and only if $za \in Ra \cap Rf = Rg = Rha,$ where $g = ha$ is the least common left multiple of $a$ and $f$. Therefore $za \in Rf$ is equivalent to $z \in Rh$, and hence $\mathrm{Ker}(R_a) \neq \{ 0 \}$, if and only if there exists a polynomial of degree strictly less than $m$ in $Rh$, which is equivalent to $\mathrm{deg}(h) \leq m-1$.

Let $b \in R$ be a right greatest common divisor of $a$ and $f$. Then
$$\mathrm{deg}(f) + \mathrm{deg}(a) = \mathrm{deg}(g) + \mathrm{deg}(b) = \mathrm{deg}(ha) + \mathrm{deg}(b),$$
by \cite[Proposition 1.3.1]{jacobson1996finite}, and so $\mathrm{deg}(b) = \mathrm{deg}(f) - \mathrm{deg}(h).$ Thus $\mathrm{deg}(h) \leq m-1$ if and only if $\mathrm{deg}(b) \geq 1$, so we conclude $\mathrm{Ker}(R_a) = \{ 0 \}$ if and only if $\mathrm{deg}(b) = 0$, if and only if $1$ is a right greatest common divisor of $f(t)$ and $a$. In particular, this implies $S_f$ is a right division algebra if and only if $R_a$ is bijective for all $0 \neq a \in S_f$, if and only if $1$ is a right greatest common divisor of $f(t)$ and $a$ for all $0 \neq a \in S_f$, if and only if $f(t)$ is irreducible.
\end{proof}

We wish to determine when $S_f$ is also a left division algebra, hence when it is a division algebra.

\begin{proposition} \label{prop:S_f associative division iff irreducible}
If $f(t) \in R$ is right invariant, then $f(t)$ is irreducible if and only if $S_f$ is a division algebra.
\end{proposition}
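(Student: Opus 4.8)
The plan is to reduce the statement to the already-established case of right division. Since we assume $f(t)$ is right invariant, Theorem \ref{thm:Properties of S_f petit}(iii) tells us $S_f$ is associative. This is the crucial observation: for an associative algebra, being a right division algebra and being a left division algebra are no longer independent notions in the way they are for a general nonassociative $S_f$.

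First I would recall that $S_f$ is always a finite-dimensional algebra over $F$ of dimension $m \cdot [D:F]$, and that Theorem \ref{thm:f(t) irreducible iff S_f right division} already gives the equivalence
$$f(t) \text{ irreducible} \iff S_f \text{ is a right division algebra}.$$
So it suffices to show that, under the right-invariance hypothesis, $S_f$ is a right division algebra if and only if $S_f$ is a (two-sided) division algebra. One direction is immediate, since every division algebra is in particular a right division algebra. For the converse, I would use that $S_f$ is finite-dimensional over $F$ together with the criterion from \cite[p.~12]{schafer1966introduction}: a finite-dimensional algebra is a division algebra if and only if it has no non-trivial zero divisors. Now if $f(t)$ is reducible, Lemma \ref{lem:f(t) reducible implies S_f not division} produces zero divisors, so $S_f$ is not a division algebra; thus if $S_f$ \emph{is} a division algebra then $f(t)$ must be irreducible. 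Conversely, suppose $f(t)$ is irreducible. Because $S_f$ is associative and finite-dimensional, it is an associative algebra with no zero divisors: indeed a zero divisor $a \circ b = 0$ with $a \neq 0$ would force $R_a$ to be non-injective, contradicting Theorem \ref{thm:f(t) irreducible iff S_f right division}. By the finite-dimensionality criterion an associative finite-dimensional algebra with no zero divisors is a division algebra, giving bijectivity of both $L_a$ and $R_a$ for all $0 \neq a$.

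The cleanest route, therefore, is: assume $f(t)$ irreducible, deduce via Theorem \ref{thm:f(t) irreducible iff S_f right division} that $R_a$ is injective for every $0\neq a$, hence $S_f$ has no non-trivial zero divisors; then invoke that $S_f$ is a finite-dimensional \emph{associative} $F$-algebra (associativity coming from right-invariance via Theorem \ref{thm:Properties of S_f petit}(iii)) and apply the no-zero-divisors division criterion to conclude $S_f$ is a division algebra. For the reverse implication, a division algebra has bijective $R_a$, so it is a right division algebra, and Theorem \ref{thm:f(t) irreducible iff S_f right division} forces $f(t)$ irreducible; alternatively one simply contraposes Lemma \ref{lem:f(t) reducible implies S_f not division}.

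The main subtlety to watch is the step from ``no non-trivial zero divisors'' to ``division algebra,'' which in general requires finite-dimensionality \emph{and} does not by itself guarantee both-sided bijectivity for nonassociative algebras; here associativity is what makes the zero-divisor criterion applicable in the clean two-sided form. I expect the only real work is to confirm that right-invariance genuinely delivers associativity (which is exactly Theorem \ref{thm:Properties of S_f petit}(iii)) and that the zero-divisor criterion of \cite[p.~12]{schafer1966introduction} applies, since everything else is a direct appeal to Theorem \ref{thm:f(t) irreducible iff S_f right division} and Lemma \ref{lem:f(t) reducible implies S_f not division}.
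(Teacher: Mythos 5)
Your overall structure matches the paper's: right-invariance gives associativity via Theorem~\ref{thm:Properties of S_f petit}(iii), reducibility gives zero divisors via Lemma~\ref{lem:f(t) reducible implies S_f not division}, and irreducibility gives bijectivity of every $R_b$ via Theorem~\ref{thm:f(t) irreducible iff S_f right division}. The problem is your final step. You assert that $S_f$ is ``always a finite-dimensional algebra over $F$ of dimension $m\cdot[D:F]$'' and then invoke the criterion of \cite[p.~12]{schafer1966introduction} that a finite-dimensional algebra with no non-trivial zero divisors is a division algebra. But nothing in the hypotheses forces $[D:F]$ to be finite: $F = C\cap\mathrm{Fix}(\sigma)\cap\mathrm{Const}(\delta)$ and $D$ is an arbitrary division ring, so $D$ (hence $S_f$) can be infinite-dimensional over $F$. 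This is exactly why Theorem~\ref{thm:S_f_division_iff_irreducible} carries the explicit extra hypothesis that $S_f$ be finite-dimensional over $F$ or free of finite rank over $\mathrm{Nuc}_r(S_f)$, whereas the present proposition does not. Without finite-dimensionality, ``associative with no zero divisors'' does not yield ``division algebra'' (think of a polynomial ring), so your argument as written does not close.

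The gap is easy to repair, and the repair is essentially what the paper does. Since $S_f$ is a free left $D$-module of finite rank $m$ and $R_a$ is left $D$-linear, Theorem~\ref{thm:f(t) irreducible iff S_f right division} gives that $R_a$ is \emph{bijective} (not merely injective) for every $0\neq a$; in particular every nonzero $a$ has a left inverse $b$ with $b\circ a = 1$. In a unital \emph{associative} ring, every nonzero element having a left inverse already forces every nonzero element to be two-sided invertible, so all $L_a$ are bijective as well — this is the content of \cite[Lemma 1B]{bruck1946contributions}, which the paper cites at precisely this point. No dimension count over $F$ is needed; associativity, which you correctly extracted from right-invariance, is what carries the argument, but through Bruck's lemma rather than through Schafer's zero-divisor criterion.
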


\begin{proof}
Suppose $f(t)$ is right invariant so that $S_f$ is associative by Theorem \ref{thm:Properties of S_f petit}. If $f(t)$ is reducible then $S_f$ is not a division algebra by Lemma \ref{lem:f(t) reducible implies S_f not division}. Conversely, if $f(t)$ is irreducible the maps $R_b$ are bijective for all $0 \neq b \in S_f$ by Theorem \ref{thm:f(t) irreducible iff S_f right division}. This implies the maps $L_b$ are also bijective for all $0 \neq b \in S_f$ by \cite[Lemma 1B]{bruck1946contributions}, and so $S_f$ is a division algebra.
\end{proof}

\begin{lemma} \label{lem:L_a injective but not nec surjective}
If $f(t)$ is irreducible then $L_a$ is injective for all $0 \neq a \in S_f$.
\end{lemma}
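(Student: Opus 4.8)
The plan is to show directly that $\ker(L_a) = \{0\}$ for each fixed $0 \neq a \in S_f$. Note first that, in contrast to $R_a$, the left multiplication $L_a$ is \emph{not} left $D$-linear, so the Rank--Nullity argument used in the proof of Theorem~\ref{thm:f(t) irreducible iff S_f right division} is unavailable; indeed we should not expect injectivity to force surjectivity here, since $S_f$ may fail to be a left division algebra when $\sigma$ is not surjective. Instead I would argue via right divisibility in the left principal ideal domain $R$. Since $L_a(x) = a \circ x = ax \ \mathrm{mod}_r f$, a nonzero $x \in R_m$ lies in $\ker(L_a)$ precisely when $ax \in Rf$, so it suffices to derive a contradiction from the existence of such an $x \neq 0$.

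First I would examine a right greatest common divisor $d$ of $x$ and $f$, i.e. $Rx + Rf = Rd$. Since $d \vert_r f$ and $f$ is irreducible, $d$ is either a unit or an associate of $f$; the latter would force $m = \mathrm{deg}(d) \leq \mathrm{deg}(x) < m$, which is absurd. Hence $d$ is a unit and we may take $d = 1$, so that $x$ and $f$ are right coprime.

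Next I would pass to the least common left multiple $\ell$ of $x$ and $f$, so that $Rx \cap Rf = R\ell$. The degree identity $\mathrm{deg}(x) + \mathrm{deg}(f) = \mathrm{deg}(\ell) + \mathrm{deg}(d)$ from \cite[Proposition 1.3.1]{jacobson1996finite} then gives $\mathrm{deg}(\ell) = \mathrm{deg}(x) + m$, as $\mathrm{deg}(d) = 0$. The key observation is that $ax \in Rx$ (trivially, since $ax = a \cdot x$) while $ax \in Rf$ by hypothesis, so $ax \in Rx \cap Rf = R\ell$ and thus $ax = s\ell$ for some $s \in R$. Comparing degrees yields $\mathrm{deg}(a) + \mathrm{deg}(x) = \mathrm{deg}(s) + \mathrm{deg}(x) + m$, hence $\mathrm{deg}(s) = \mathrm{deg}(a) - m < 0$ because $\mathrm{deg}(a) < m$. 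Therefore $s = 0$ and $ax = 0$; since $R$ is a domain and $a \neq 0$, this forces $x = 0$, contradicting $x \neq 0$.

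The only delicate point is the degree bookkeeping and checking that every tool used — right greatest common divisors, least common left multiples, and the degree formula relating them — is available from $R$ being a left principal ideal domain alone. This holds even when $\sigma$ is merely an injective endomorphism, so no left division algorithm and no surjectivity of $\sigma$ is required, which is exactly consistent with $L_a$ being injective while possibly not surjective.
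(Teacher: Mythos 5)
Your proof is correct, but it takes a more self-contained route than the paper. The paper's own proof is a two-line reduction: it observes that $L_a(z) = a \circ z = R_z(a)$, and since Theorem~\ref{thm:f(t) irreducible iff S_f right division} already gives that $R_z$ is injective for every $0 \neq z \in S_f$ when $f(t)$ is irreducible, $R_z(a) = 0$ with $a \neq 0$ is impossible; hence $\ker(L_a) = \{0\}$. Your argument instead unrolls the divisibility machinery behind that theorem at the specific point needed: you note that $0 \neq x \in \ker(L_a)$ means $ax \in Rf$, show the right greatest common divisor of $x$ and $f$ is $1$ (irreducibility plus $\mathrm{deg}(x) < m$), invoke the degree identity $\mathrm{deg}(x) + \mathrm{deg}(f) = \mathrm{deg}(\ell) + \mathrm{deg}(d)$ for the least common left multiple $\ell$, and conclude $ax = 0$ by degree comparison, whence $x = 0$ since $R$ is a domain. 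Every step checks out, and the tools you use (rgcd, lclm, the degree formula from \cite[Proposition 1.3.1]{jacobson1996finite}) are exactly those the paper already deploys in proving Theorem~\ref{thm:f(t) irreducible iff S_f right division}, valid for $\sigma$ merely an injective endomorphism. What the paper's approach buys is brevity and the conceptually clean point that injectivity of $L_a$ is literally the statement that $a$ lies in no kernel $\ker(R_z)$; what yours buys is independence from the earlier theorem, at the cost of repeating its degree bookkeeping. Your closing remark that none of this forces surjectivity of $L_a$ is consistent with Theorem~\ref{thm:L_t surjective iff sigma surjective} and Corollary~\ref{cor:S_f right but not left division algebra}.
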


\begin{proof}
If $f(t)$ is irreducible then $L_a(z) = a \circ z = R_z(a) = 0$ is impossible for $0 \neq z \in S_f$, as $R_z$ is injective by Theorem \ref{thm:f(t) irreducible iff S_f right division}. Thus $L_a$ is also injective.
\end{proof}

In general $L_a$ is neither left nor right $D$-linear. Therefore, when $f(t)$ is irreducible we cannot apply the Rank-Nullity Theorem to conclude $L_a$ is surjective, as we did for $R_a$ in the proof of Theorem \ref{thm:f(t) irreducible iff S_f right division}. In fact, the following Theorem shows that $L_a$ may not be surjective even if $f(t)$ is irreducible:

\begin{theorem} \label{thm:L_t surjective iff sigma surjective}
Let $f(t) = t^m - \sum_{i=0}^{m-1} a_i t^i \in D[t;\sigma]$ where $a_0 \neq 0$. Then for every $j \in \{ 1, \ldots, m-1 \}$, $L_{t^j}$ is surjective if and only if $\sigma$ is surjective. In particular, if $\sigma$ is not surjective then $S_f$ is not a left division algebra.
\end{theorem}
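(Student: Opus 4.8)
The plan is to reduce the statement for arbitrary $j$ to the single case $j=1$, and then to settle $j=1$ by writing $L_t$ out explicitly. The key observation is that, even though $S_f$ is not associative and $t \notin \mathrm{Nuc}_l(S_f)$, one still has $L_{t^j} = (L_t)^j$ as maps $S_f \to S_f$. Indeed, $Rf$ is a \emph{left} ideal of $R$, so $t\cdot Rf \subseteq Rf$; hence if $h = t^{j-1}\circ x = t^{j-1}x - qf$ for the unique $q \in R$, then $th = t^j x - (tq)f \equiv t^j x \pmod{Rf}$, and taking remainders after right division by $f$ gives $t \circ (t^{j-1}\circ x) = t^j \circ x$. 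An easy induction on $j$ then yields $L_{t^j} = (L_t)^j$. Since for a self-map $g$ of a set one has $g^j$ surjective if and only if $g$ is surjective, we deduce that $L_{t^j}$ is surjective if and only if $L_t$ is surjective, for every $j \in \{1,\dots,m-1\}$. This composition trick bypasses the more laborious direct route, which would require proving that a certain $j\times j$ matrix of reduction coefficients of $t^m,\dots,t^{m+j-1}$ is invertible over $D$.

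It therefore suffices to prove that $L_t$ is surjective if and only if $\sigma$ is. First I would compute, for $x = \sum_{i=0}^{m-1} x_i t^i$, that $t\circ x = \sum_{i=0}^{m-1} \sigma(x_i) t^{i+1} \ \mathrm{mod}_r f$; using $t^m \equiv \sum_{i=0}^{m-1} a_i t^i \pmod{f}$ this gives
\[
L_t(x) = \sigma(x_{m-1}) a_0 + \sum_{l=1}^{m-1}\big(\sigma(x_{l-1}) + \sigma(x_{m-1}) a_l\big) t^l.
\]
For the direction ``$\sigma$ surjective $\Rightarrow L_t$ surjective'', given a target $y = \sum_l y_l t^l$ I would solve the resulting triangular system: surjectivity of $\sigma$ together with $a_0 \in D^\times$ lets me first pick $x_{m-1}$ with $\sigma(x_{m-1}) = y_0 a_0^{-1}$, and then successively pick $x_{l-1}$ with $\sigma(x_{l-1}) = y_l - \sigma(x_{m-1})a_l$ for $l = 1,\dots,m-1$, which produces a preimage of $y$. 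For the converse I simply read off that the constant coefficient of $L_t(x)$ is $\sigma(x_{m-1})a_0$; if $L_t$ is surjective this coefficient attains every value of $D$, and since right multiplication by the unit $a_0$ is a bijection of $D$, it follows that $\sigma$ is surjective.

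The ``in particular'' clause is then immediate: taking $j=1$ (legitimate since $m = \deg f \ge 2$ is assumed throughout), if $\sigma$ is not surjective then $L_t$ is not surjective, so the nonzero element $t$ has non-bijective left multiplication and $S_f$ fails to be a left division algebra. The one genuinely non-routine point is the identity $L_{t^j}=(L_t)^j$: the obstacle is to recognise that it is the left-ideal property of $Rf$, rather than any associativity of $\circ$, that makes iterated left multiplication by $t$ compatible with reduction modulo $f$. Once this is established, the rest is a direct calculation with the multiplication of $S_f$.
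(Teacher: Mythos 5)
Your proof is correct and follows essentially the same route as the paper's: the explicit computation of $L_t$ (solving the resulting triangular system for surjectivity of $L_t$ given surjectivity of $\sigma$, and reading off the constant coefficient $\sigma(z_{m-1})a_0$ for the converse) is identical, and your induction establishing $L_{t^j}=L_t^j$ via the left-ideal property $t\cdot Rf\subseteq Rf$ is exactly the argument the paper uses. No gaps.
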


\begin{proof}
We first prove the result for $j = 1$: Given $z = \sum_{i=0}^{m-1} z_i t^i \in S_f$, we have
\begin{equation} \label{eqn:L_t surjective iff sigma surjective 1}
\begin{split}
L_t(z) &= t \circ z = \sum_{i=0}^{m-2} \sigma(z_{i})t^{i+1} + \sigma(z_{m-1})t \circ t^{m-1} \\ 
&= \sum_{i=1}^{m-1} \sigma(z_{i-1})t^i + \sigma(z_{m-1}) \sum_{i=0}^{m-1} a_i t^i.
\end{split}
\end{equation}
\begin{itemize}
\item[($\Rightarrow$)] Suppose $L_t$ is surjective, then given any $b \in D$ there exists $z \in S_f$ such that $t \circ z = b$. The $t^0$-coefficient of $L_t(z)$ is $\sigma(z_{m-1}) a_0$ by \eqref{eqn:L_t surjective iff sigma surjective 1}, and thus for all $b \in D$ there exists $z_{m-1} \in D$ such that $\sigma(z_{m-1}) a_0 = b$. Therefore $\sigma$ is surjective.
\item[($\Leftarrow$)] Suppose $\sigma$ is surjective and let $g = \sum_{i=0}^{m-1} g_i t^i \in S_f$. Define $$z_{m-1} = \sigma^{-1}(g_0 a_0^{-1} ), \ z_{i-1} = \sigma^{-1}(g_i) - z_{m-1} \sigma^{-1}(a_i)$$ for all $i \in \{ 1, \ldots , m-1 \}$. Then
\begin{equation*}
\begin{split}
L_t(z) &= \sigma(z_{m-1}) a_0 + \sum_{i=1}^{m-1} \big( \sigma(z_{i-1}) + \sigma(z_{m-1} a_i \big) t^i = \sum_{i=0}^{m-1} g_i t^i = g,
\end{split}
\end{equation*}
by \eqref{eqn:L_t surjective iff sigma surjective 1}, which implies $L_t$ is surjective.
\end{itemize}
Hence $L_t$ surjective is equivalent to $\sigma$ surjective. To prove the result for all $j \in \{ 1, \ldots, m-1 \}$ we show that
\begin{equation} \label{eqn:L_t surjective iff sigma surjective 2}
L_{t^j} = L_t^j,
\end{equation}
for all $j \in \{ 1, \ldots, m-1 \}$, then it follows $\sigma$ is surjective if and only if $L_t$ is surjective if and only if $L_t^j = L_{t^j}$ is surjective. In the special case when $D = \mathbb{F}_q$ is a finite field, $\sigma$ is an automorphism and $f(t)$ is monic and irreducible, the equality \eqref{eqn:L_t surjective iff sigma surjective 2} is proven in \cite[p.~12]{lavrauw2013semifields}. A similar proof also works more generally in our context: suppose inductively that $L_{t^j} = L_t^j$ for some $j \in \{ 1, \ldots, m-2 \}$. Then $L_t^j(b) = t^j b \ \mathrm{mod}_r f$ for all $b \in R_m$. Let $L_t^j(b) = b'$ so that $t^j b = qf + b'$ for some $q \in R$. We have
\begin{align*}
L_t^{j+1}(b) &= L_t(L_t^j(b)) = L_t(b') = L_t(t^j b - q f) = t \circ (t^j b - q f) \\ &= (t^{j+1} b - tqf) \ \mathrm{mod}_r f = t^{j+1}b \ \mathrm{mod}_r f = L_{t^{j+1}}(b),
\end{align*}
hence \eqref{eqn:L_t surjective iff sigma surjective 2} follows by induction.
\end{proof}

We can use Theorems \ref{thm:f(t) irreducible iff S_f right division} and \ref{thm:L_t surjective iff sigma surjective} to find examples of Petit algebras which are right but not left division algebras:

\begin{corollary} \label{cor:S_f right but not left division algebra}
Suppose $\sigma$ is not surjective and $f(t) \in D[t;\sigma]$ is irreducible. Then $S_f$ is a right division algebra but not a left division algebra.
\end{corollary}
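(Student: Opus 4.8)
The plan is to combine the two theorems immediately preceding this corollary, since the hypotheses have been chosen precisely so that each applies. First I would invoke Theorem \ref{thm:f(t) irreducible iff S_f right division}: because $f(t) \in D[t;\sigma]$ is irreducible, that theorem gives directly that $S_f$ is a right division algebra. This handles the positive half of the conclusion with no extra work.

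For the negative half, I would appeal to Theorem \ref{thm:L_t surjective iff sigma surjective}. The only subtlety is that this theorem is stated for polynomials of the form $f(t) = t^m - \sum_{i=0}^{m-1} a_i t^i$ with $a_0 \neq 0$, so before applying it I must check those hypotheses hold. Since we may assume $f(t)$ is monic (the algebras $S_f$ and $S_{df}$ coincide), $f(t)$ automatically has the required shape. Moreover $a_0 \neq 0$: if $a_0 = 0$ then $t$ would right-divide $f(t)$, contradicting irreducibility (as $\deg(f) = m \geq 2$). With these hypotheses verified, Theorem \ref{thm:L_t surjective iff sigma surjective} tells us that since $\sigma$ is not surjective, the map $L_t$ is not surjective, and hence $S_f$ is not a left division algebra.

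Combining the two gives exactly the stated conclusion: $S_f$ is a right but not a left division algebra. I do not anticipate a genuine obstacle here, as the corollary is essentially a packaging of the preceding results. The one place requiring a small argument, rather than pure citation, is the verification that $a_0 \neq 0$ follows from irreducibility, which is the only nontrivial link needed to make Theorem \ref{thm:L_t surjective iff sigma surjective} applicable.
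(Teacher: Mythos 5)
Your proof is correct and is exactly the argument the paper intends (the corollary is stated right after the sentence announcing that it follows from Theorems \ref{thm:f(t) irreducible iff S_f right division} and \ref{thm:L_t surjective iff sigma surjective}, with no further proof given). Your explicit check that irreducibility forces $a_0 \neq 0$ — since otherwise $t$ would be a proper right factor — is a worthwhile detail the paper leaves implicit.
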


\begin{example} \label{example:S_f right but not left division algebra}
Let $K$ be a field, $y$ be an indeterminate and define $\sigma: K(y) \rightarrow K(y)$ by $\sigma \vert_K = \mathrm{id}$ and $\sigma(y) = y^2$. Then $\sigma$ is an injective but not surjective endomorphism of $K(y)$ \cite[p.~123]{berrick2000introduction}. For $a(y) \in K[y]$ denote by $\mathrm{deg}_y(a(y))$ the degree of $a(y)$ as a polynomial in $y$.

Let $f(t) = t^2 - a(y) \in K(y)[t;\sigma]$ where $0 \neq a(y) \in K[y]$ is such that $3 \nmid \mathrm{deg}_y(a(y))$. We will show later in Corollary \ref{cor:t^2-a(y) in K(y)[t;sigma] irreducibility} that $f(t)$ is irreducible in $K(y)[t;\sigma]$, hence $S_f$ is a right, but not a left division algebra by Corollary \ref{cor:S_f right but not left division algebra}.
\end{example}

The following result was stated but not proved by Petit \cite[(7)]{Petit1966-1967}:
\begin{theorem} \label{thm:S_f_division_iff_irreducible}
(\cite[(7)]{Petit1966-1967}).
Let $f(t) \in D[t; \sigma, \delta]$ be such that $S_f$ is a finite-dimensional $F$-vector space or a right $\mathrm{Nuc}_r(S_f)$-module, which is free of finite rank. Then $S_f$ is a division algebra if and only if $f(t)$ is irreducible.
\end{theorem}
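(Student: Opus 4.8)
The plan is to treat the two implications separately, leaning on the right-division characterisation already in hand. For the forward direction, suppose $S_f$ is a division algebra; then it has no zero divisors, so the contrapositive of Lemma~\ref{lem:f(t) reducible implies S_f not division} immediately forces $f(t)$ to be irreducible. This direction requires no further work and does not use either finiteness hypothesis.

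For the converse, assume $f(t)$ is irreducible. By Theorem~\ref{thm:f(t) irreducible iff S_f right division}, $S_f$ is already a right division algebra, so it remains only to show it is a left division algebra, i.e.\ that $L_a$ is bijective for every $0 \neq a \in S_f$. Lemma~\ref{lem:L_a injective but not nec surjective} supplies injectivity of $L_a$, so the whole difficulty reduces to proving surjectivity. As the discussion preceding the theorem notes, the obstruction is that $L_a$ is in general neither left nor right $D$-linear, so one cannot apply the Rank--Nullity Theorem over $D$ as was done for $R_a$. The key idea is therefore to locate a \emph{different} coefficient ring over which $L_a$ is linear and over which the relevant finiteness hypothesis is available.

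I would dispose of the two hypotheses as follows. When $S_f$ is finite-dimensional over $F$, the multiplication $\circ$ is $F$-bilinear by definition of an $F$-algebra, so $L_a$ is automatically an $F$-linear endomorphism of the finite-dimensional $F$-vector space $S_f$; an injective linear endomorphism of such a space is surjective, whence $L_a$ is bijective. When instead $S_f$ is free of finite rank over $N = \mathrm{Nuc}_r(S_f)$, I set out to check that $L_a$ is right $N$-linear: for $n \in N$ the associator $[a,z,n]$ vanishes, giving $L_a(z \circ n) = (a \circ z)\circ n = L_a(z)\circ n$. Since $f(t)$ is irreducible, Proposition~\ref{prop:f irreducible implies E(f) division} guarantees $N = E(f)$ is a division ring, and by hypothesis $S_f$ is free of finite rank over $N$; Theorem~\ref{thm:Rank-Nullity} then upgrades injectivity of $L_a$ to bijectivity. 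In either case $L_a$ is bijective for all $0 \neq a$, so $S_f$ is also a left division algebra and hence a division algebra.

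The main obstacle I anticipate is conceptual rather than computational: recognising that although $L_a$ is not $D$-linear, it \emph{is} linear over the smaller structures $F$ and $N = \mathrm{Nuc}_r(S_f)$, and that in the free-module case the crucial input is Proposition~\ref{prop:f irreducible implies E(f) division}, which secures that $N$ is a division ring so that Theorem~\ref{thm:Rank-Nullity} applies over $N$. Verifying the right $N$-linearity of $L_a$ through the associator identity is the only genuine, though short, calculation needed.
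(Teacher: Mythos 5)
Your proposal is correct and follows essentially the same route as the paper's own proof: reducibility is excluded via Lemma \ref{lem:f(t) reducible implies S_f not division}, irreducibility gives a right division algebra via Theorem \ref{thm:f(t) irreducible iff S_f right division} and injectivity of $L_a$ via Lemma \ref{lem:L_a injective but not nec surjective}, and surjectivity is obtained by observing that $L_a$ is $F$-linear (resp.\ right $\mathrm{Nuc}_r(S_f)$-linear, with Proposition \ref{prop:f irreducible implies E(f) division} supplying that $E(f)$ is a division ring) and applying the Rank--Nullity Theorem. The only cosmetic difference is that the paper first disposes of the associative case separately via Proposition \ref{prop:S_f associative division iff irreducible}, which your argument renders unnecessary.
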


\begin{proof}
When $S_f$ is associative the assertion follows by Proposition \ref{prop:S_f associative division iff irreducible} so suppose $S_f$ is not associative. If $f(t)$ is reducible, $S_f$ is not a division algebra by Lemma \ref{lem:f(t) reducible implies S_f not division}. Conversely, suppose $f(t)$ is irreducible so that $S_f$ is a right division algebra by Theorem \ref{thm:f(t) irreducible iff S_f right division}. Let $0 \neq a \in S_f$ be arbitrary, then $L_a$ is injective for all $0 \neq a \in S_f$ by Lemma \ref{lem:L_a injective but not nec surjective}. We prove $L_a$ is surjective, hence $S_f$ is also a left division algebra:
\begin{itemize}
\item[(i)] Suppose $S_f$ is a finite-dimensional $F$-vector space. Then since $F \subseteq \mathrm{Nuc}(S_f)$, we have
$$L_a(k \circ z) = a \circ (k \circ z) = (a \circ k) \circ z = (k \circ a) \circ z = k \circ (a \circ z) = k \circ L_a(z)$$
and
$$L_a(z \circ k) = a \circ (z \circ k) = (a \circ z) \circ k = L_a(z) \circ k,$$
for all $k \in F$, $z \in S_f$. Therefore $L_a$ is $F$-linear, and thus $L_a$ is surjective by the Rank-Nullity Theorem \ref{thm:Rank-Nullity}.
\item[(ii)] Suppose $S_f$ is a free right $\mathrm{Nuc}_r(S_f)$-module of finite rank, then $E(f)$ is a division ring by Proposition \ref{prop:f irreducible implies E(f) division}. Furthermore, we have
$$L_a(z \circ k) = a \circ (z \circ k) = (a \circ z) \circ k = L_a(z) \circ k$$
for all $k \in \mathrm{Nuc}_r(S_f)$, $z \in S_f$ and so $L_a$ is right $\mathrm{Nuc}_r(S_f)$-linear. Therefore $L_a$ is surjective by the Rank-Nullity Theorem \ref{thm:Rank-Nullity}.
\end{itemize}
\end{proof}

%

\begin{theorem} \label{thm:L weak irreducible iff division}
Let $\sigma$ be an automorphism of $D$, $L$ be a division subring of $D$ such that $D$ is a free right $L$-module of finite rank, and $f(t) \in D[t;\sigma,\delta]$ be $L$-weak semi-invariant. Then $S_f$ is a division algebra if and only if $f(t)$ is irreducible.
In particular if $\sigma$ is an automorphism of $D$ and $f(t)$ is right semi-invariant then $S_f$ is a division algebra if and only if $f(t)$ is irreducible.
\end{theorem}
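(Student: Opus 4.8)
The plan is to prove the two implications separately, with essentially all the work going into the direction ``$f(t)$ irreducible $\Rightarrow S_f$ is a division algebra''. The reverse implication is immediate: if $f(t)$ were reducible, then $S_f$ would contain zero divisors by Lemma \ref{lem:f(t) reducible implies S_f not division}, contradicting $S_f$ being a division algebra. Note this direction uses neither the automorphism hypothesis on $\sigma$ nor the $L$-weak semi-invariance of $f(t)$.

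For the forward direction, suppose $f(t)$ is irreducible. Then $S_f$ is already a right division algebra by Theorem \ref{thm:f(t) irreducible iff S_f right division}, and each left multiplication $L_a$ (for $0 \neq a \in S_f$) is injective by Lemma \ref{lem:L_a injective but not nec surjective}. It therefore suffices to show each such $L_a$ is also surjective. Since $f(t)$ is $L$-weak semi-invariant, Proposition \ref{prop:L-weak semi-invariant iff L subset Nuc_r(S_f)} gives $L \subseteq \mathrm{Nuc}_r(S_f)$, so that $L_a(z \circ k) = a \circ (z \circ k) = (a \circ z) \circ k = L_a(z) \circ k$ for all $k \in L$ and $z \in S_f$; that is, $L_a$ is right $L$-linear with respect to the action $z, k \mapsto z \circ k$. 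The idea is then to invoke the Rank-Nullity Theorem \ref{thm:Rank-Nullity} over the division ring $L$: once $S_f$ is known to be a free right $L$-module of finite rank, injectivity of $L_a$ forces surjectivity, and combining this with the right division property yields that $S_f$ is a division algebra. I expect this argument to cover the associative and non-associative cases uniformly, so that no separate treatment of right invariant $f(t)$ is needed.

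The main obstacle is verifying that $S_f$ is a free right $L$-module of finite rank. First I would check that $z, k \mapsto z \circ k$ makes $S_f$ into a right $L$-module; associativity of the action $(z \circ k_1) \circ k_2 = z \circ (k_1 k_2)$ is again a consequence of $L \subseteq \mathrm{Nuc}_r(S_f)$, and for $k \in L \subseteq D$ the product $z \circ k$ is computed in $R$ without reduction modulo $f(t)$ since it does not raise the degree. As $L$ is a division ring, $S_f$ is automatically a right $L$-vector space, so only finiteness of the dimension is in question. Here I would filter $S_f$ by the degree submodules $V_i = \{g \in S_f \mid \mathrm{deg}(g) \le i\}$, each of which is a right $L$-submodule because right multiplication by an element of $D$ does not raise degree. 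The leading-coefficient map identifies $V_i / V_{i-1}$ with $D$ carrying the twisted right action $d \cdot k = d\, \sigma^i(k)$, using $S_{i,i} = \sigma^i$. This is exactly where $\sigma$ being an automorphism is essential: $\sigma^i$ is then a ring automorphism of $D$ carrying the standard right $L$-module $D$ isomorphically (semilinearly) onto the right $\sigma^i(L)$-module $D$, so each quotient $V_i / V_{i-1}$ has $L$-dimension equal to $\dim_L D < \infty$. Summing over $i = 0, \ldots, m-1$ gives $\dim_L S_f = m \dim_L D < \infty$, as required, and the Rank-Nullity argument then concludes the proof.

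Finally, the ``in particular'' clause follows by specialising to $L = D$: a right semi-invariant polynomial satisfies $f(t)D \subseteq Df(t)$, which is precisely $D$-weak semi-invariance, and $D$ is trivially a free right $D$-module of rank one, so the general statement applies directly.
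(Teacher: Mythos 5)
Your proposal is correct and follows essentially the same route as the paper: reducibility gives zero divisors via Lemma \ref{lem:f(t) reducible implies S_f not division}, while for irreducible $f(t)$ one combines Theorem \ref{thm:f(t) irreducible iff S_f right division}, injectivity of $L_a$ from Lemma \ref{lem:L_a injective but not nec surjective}, right $L$-linearity of $L_a$ from $L \subseteq \mathrm{Nuc}_r(S_f)$, and the Rank-Nullity Theorem over $L$. The only difference is that you spell out, via the degree filtration and the twisted leading-coefficient action, why $S_f$ is a free right $L$-module of finite rank, a point the paper simply asserts by noting that $S_f$ is a free right $D$-module of rank $m$ (since $\sigma$ is an automorphism) and $D$ is free of finite rank over $L$; your verification is sound.
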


\begin{proof}
If $f(t)$ is reducible then $S_f$ is not a division algebra by Lemma \ref{lem:f(t) reducible implies S_f not division}. Conversely, suppose $f(t)$ is irreducible. Then $S_f$ is a right division algebra by Theorem \ref{thm:f(t) irreducible iff S_f right division} so we are left to show $S_f$ is also a left division algebra. Let $0 \neq a \in S_f$ be arbitrary and recall $L_a$ is injective by Lemma \ref{lem:L_a injective but not nec surjective}. Since $f(t)$ is $L$-weak semi-invariant, $L \subseteq \mathrm{Nuc}_r(S_f)$ which implies
$$L_a(z \circ \lambda) = a \circ (z \circ \lambda) = (a \circ z) \circ \lambda = L_a(z) \circ \lambda,$$ 
for all $z \in S_f$, $\lambda \in L$. Hence $L_a$ is right $L$-linear.

$S_f$ is a free right $D$-module of rank $m = \mathrm{deg}(f)$ because $\sigma$ is an automorphism. Since $D$ is a free right $L$-module of finite rank then also $S_f$ is a free right $L$-module of finite rank. Thus the Rank-Nullity Theorem \ref{thm:Rank-Nullity} implies $L_a$ is bijective as required.
\end{proof}


\section{Semi-Multiplicative Maps} \label{section:Semi-Multiplicative Maps}

\begin{definition}
A \textbf{map of degree $m$} over a field $F$, is a map $M: V \rightarrow W$ between two finite-dimensional vector spaces $V$ and $W$ over $F$, such that $M(\alpha v) = \alpha^m M(v)$ for all $\alpha \in F$, $v \in V$, and such that the map $M: V \times \cdots \times V \rightarrow W$ defined by
$$M(v_1, \ldots , v_m) = \sum_{1 \leq i_1 < \cdots < i_l \leq m} (-1)^{m-l} M(v_{i_1} + \ldots + v_{i_l}),$$
$(1 \leq l \leq m)$ is $m$-linear over $F$. A map $M: V \rightarrow F$ of degree $m$ is called a \textbf{form of degree $m$} over $F$.
\end{definition}

\begin{definition}
Consider a finite-dimensional nonassociative algebra $A$ over a field $F$ containing a subalgebra $D$. A map $M : A \rightarrow D$ of degree $m$ is called \textbf{left semi-multiplicative} if $M(d g) = M(d) M(g)$, for all $d \in D$, $g \in A$. \textbf{Right semi-multiplicative} maps are defined similarly.
\end{definition}

As before let $D$ be a division ring with center $C$, $\sigma$ be an endomorphism of $D$, $\delta$ be a left $\sigma$-derivation of $D$, and $f(t) \in D[t;\sigma,\delta]$ be of degree $m$. In his Ph.D. thesis \cite[\S 4.2]{AndrewPhD}, Steele defined and studied a left semi-multiplicative map on nonassociative cyclic algebras. In this Section, we show that when $D$ is commutative and $S_f$ is finite-dimensional over $F = C \cap \mathrm{Fix}(\sigma) \cap \mathrm{Const}(\delta)$, then we can similarly define a left semi-multiplicative map $M_f$ for $S_f$.

In the classical theory of associative central simple algebras of degree $n$, the reduced norm is a multiplicative form of degree $n$. The maps $M_f$ can be seen as a generalisation of the reduced norm.

\vspace*{4mm}
Consider $S_f$ as a free left $D$-module of rank $m = \mathrm{deg}(f(t))$ with basis $\{ 1,t,\ldots, t^{m-1} \}$, and recall the right multiplication $R_g: S_f \rightarrow S_f, \ h \mapsto h \circ g$ is left $D$-linear for all $0 \neq g \in S_f$ by the argument on page \pageref{eqn:R_a left D-linear}. Define
\begin{equation*}
\lambda: S_f \rightarrow \mathrm{End}_D(S_f), \ g \mapsto R_g,
\end{equation*}
which induces a map 
\begin{equation*}
\lambda: S_f \rightarrow \mathrm{Mat}_m(D), \ g \mapsto W_g,
\end{equation*}
where $W_g \in \mathrm{Mat}_m(D)$ is the matrix representing $R_g$ with respect to the basis $\{ 1,t,\ldots, t^{m-1} \}$. If we represent $h = h_0 + h_1t + \ldots + h_{m-1} t^{m-1} \in S_f$ as the row vector $(h_0, h_1, \ldots, h_{m-1})$ with entries in $D$, then we can write the product of two elements in $S_f$ as $h \circ g = h W_g$.

When $D$ is commutative, define $M_f: S_f \rightarrow D$ by $M_f(g) = \mathrm{det}(W_g)$. Notice this definition does not make sense unless $D$ is commutative, otherwise $W_g$ is a matrix with entries in the noncommutative ring $D$, and as such we cannot take its determinant.

\begin{proposition} \label{prop:f(t) two-sided then M_f is multiplicative}
Suppose $f(t)$ is right invariant, i.e. $S_f$ is associative, then $W_g W_h = W_{g \circ h}$ for all $g,h \in S_f$. In particular, if $D$ is commutative then $M_f$ is multiplicative.
\end{proposition}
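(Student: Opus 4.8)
The plan is to establish the matrix identity $W_{g \circ h} = W_g W_h$ directly from associativity, and then to read off multiplicativity of $M_f$ from the multiplicativity of the determinant over a commutative ring. The whole argument is essentially the observation that right multiplication is represented by matrices, combined with associativity of matrix multiplication.

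First I would recall the fact set up just above the statement: representing $h \in S_f$ as a row vector with entries in $D$, right multiplication is given by $h \circ g = h W_g$, and each map $R_g$ is left $D$-linear. The key move is to compute $x \circ (g \circ h)$ in two ways for an arbitrary $x \in S_f$. On one hand, by the definition of $W_{g \circ h}$ we have $x \circ (g \circ h) = x W_{g \circ h}$. On the other hand, since $f(t)$ is right invariant, $S_f$ is associative by Theorem \ref{thm:Properties of S_f petit}(iii), so $x \circ (g \circ h) = (x \circ g) \circ h = (x W_g) \circ h = (x W_g) W_h$. Because matrix multiplication over the ring $D$ is associative (this requires nothing of $D$ beyond associativity, so it is valid even when $D$ is noncommutative), $(x W_g) W_h = x (W_g W_h)$.

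Combining the two computations gives $x W_{g \circ h} = x (W_g W_h)$ for every $x \in S_f$. Specializing $x$ to the basis elements $1, t, \ldots, t^{m-1}$, i.e. letting $x$ range over the rows of the identity matrix, forces equality of the two matrices row by row, so $W_{g \circ h} = W_g W_h$, as claimed. For the final assertion, when $D$ is commutative each $W_g$ lies in $\mathrm{Mat}_m(D)$ over a commutative ring, where the determinant is multiplicative; hence $M_f(g \circ h) = \det(W_{g \circ h}) = \det(W_g W_h) = \det(W_g)\det(W_h) = M_f(g) M_f(h)$.

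The only point requiring attention — rather than a genuine obstacle — is bookkeeping the order of factors: since $R_g$ acts by \emph{right} multiplication, composition of right multiplications reverses order at the operator level ($R_{g \circ h} = R_h \circ R_g$), yet associativity of matrix multiplication makes this consistent with the stated product $W_g W_h$ rather than $W_h W_g$. Keeping the left/right $D$-linearity conventions straight throughout is the whole content; no further structural input on $D$ is needed beyond associativity (and, for the determinant step, commutativity).
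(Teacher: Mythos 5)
Your argument is correct and coincides with the paper's own proof: both compute $y \circ (g \circ h)$ two ways using associativity of $S_f$ and of matrix multiplication to get $yW_{g\circ h} = y(W_gW_h)$ for all $y$, and then apply multiplicativity of the determinant over the commutative ring $D$. Your extra step of specializing $y$ to the basis elements to force the matrix equality is just a slightly more explicit version of the paper's concluding sentence.
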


\begin{proof}
We have 
$$y W_{g \circ h} = y \circ (g \circ h) = (y \circ g) \circ h = (y W_g) W_h = y (W_g W_h)$$
for all $y, g, h \in S_f$, where we have used the associativity in $S_f$ and the associativity of matrix multiplication. This means $W_g W_h = W_{g \circ h}$ for all $g, h \in S_f$. If $D$ is commutative, then
\begin{align*}
M_f(g \circ h) &= \mathrm{det}(W_{g \circ h}) = \mathrm{det}(W_g W_h) = \mathrm{det}(W_g) \mathrm{det}(W_h) = M_f(g) M_f(h)
\end{align*}
for all $g, h \in S_f$, therefore $M_f$ is multiplicative.
\end{proof}

In general, $W_{g \circ h} \neq W_g W_h$ for $g, h \in S_f$ unless $S_f$ is associative since the map $g \mapsto W_g$ is not an $F$-algebra homomorphism. Nevertheless we obtain:

\begin{proposition} \label{prop:W_d W_g = W_dg}
$W_d W_g = W_{d \circ g}$ for all $d \in D$, $g \in S_f$. In particular, if $D$ is commutative and $S_f$ is finite-dimensional over $F$, then $M_f$ is left semi-multiplicative.
\end{proposition}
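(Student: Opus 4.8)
The plan is to reduce the first assertion to the single structural fact that $D$ lies in the middle nucleus of $S_f$, and then to deduce the semi-multiplicativity of $M_f$ from the multiplicativity of the determinant. Unlike the situation of Proposition \ref{prop:f(t) two-sided then M_f is multiplicative}, where associativity gave $W_g W_h = W_{g \circ h}$ for \emph{all} $g, h$, here one cannot expect $W_g W_h = W_{g \circ h}$ in general; the point is that restricting the left-hand factor to lie in $D$ is exactly what is needed, precisely because $D \subseteq \mathrm{Nuc}_m(S_f)$.

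First I would recall that the right multiplications $R_d$ and $R_g$ are left $D$-linear (page \pageref{eqn:R_a left D-linear}), so that with respect to the basis $\{1, t, \ldots, t^{m-1}\}$ one may write $h \circ d = h W_d$ and $h \circ g = h W_g$ for every row vector $h$, the products being row-vector-times-matrix over the (possibly noncommutative) ring $D$. Then for an arbitrary $y \in S_f$ I would evaluate both candidate matrices at $y$:
\begin{equation*}
y W_{d \circ g} = y \circ (d \circ g), \qquad y (W_d W_g) = (y W_d) W_g = (y \circ d) \circ g,
\end{equation*}
where the middle equality in the second chain is just associativity of matrix multiplication, valid over any associative ring. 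The key step is that $d \in D \subseteq \mathrm{Nuc}_m(S_f)$ by Theorem \ref{thm:Properties of S_f petit}(i) (the associative case being trivial), so the associator $[y, d, g]$ vanishes and $(y \circ d) \circ g = y \circ (d \circ g)$. Hence $y W_{d \circ g} = y (W_d W_g)$ for all $y$; letting $y$ run through the basis vectors $t^i$, $i = 0, \ldots, m-1$, shows the two matrices have equal rows, so $W_d W_g = W_{d \circ g}$.

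For the second assertion I would assume $D$ commutative and $S_f$ finite-dimensional over $F$, so that $M_f(g) = \det(W_g)$ is defined and is a map of degree $m$ over $F$ (the entries of $W_g$ depend $F$-linearly on the coordinates of $g$, making $\det W_g$ homogeneous of degree $m$; this is the only place finite-dimensionality enters, to make both $S_f$ and $D$ finite-dimensional $F$-vector spaces). Then for $d \in D$, $g \in S_f$ the first part together with multiplicativity of the determinant over the commutative ring $D$ yields
\begin{equation*}
M_f(d \circ g) = \det(W_{d \circ g}) = \det(W_d W_g) = \det(W_d) \det(W_g) = M_f(d) M_f(g),
\end{equation*}
which is exactly left semi-multiplicativity.

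I expect the main obstacle to be one of bookkeeping rather than depth: one must check that the identification $h \circ g = h W_g$ and the manipulation $y (W_d W_g) = (y W_d) W_g$ remain valid when $D$ is noncommutative, and one must resist using $W_g W_h = W_{g \circ h}$ for general factors, which fails precisely because $g \mapsto W_g$ is not an algebra homomorphism. Isolating the middle-nucleus condition as the single ingredient that rescues the argument is the conceptual crux.
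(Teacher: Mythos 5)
Your proof is correct and follows essentially the same route as the paper's: both reduce the matrix identity to $D \subseteq \mathrm{Nuc}_m(S_f)$ (Theorem \ref{thm:Properties of S_f petit}(i), with the associative case handled separately), verify $y W_{d \circ g} = y(W_d W_g)$ for all $y$, and then apply multiplicativity of the determinant over commutative $D$. No gaps.
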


\begin{proof}
Consider $d \in D$ as an element of $S_f$ so that $d = d + 0 t + \ldots + 0 t^{m-1}$. When $S_f$ is associative the assertion follows by Proposition \ref{prop:f(t) two-sided then M_f is multiplicative}, otherwise $D = \mathrm{Nuc}_m(S_f)$ by Theorem \ref{thm:Properties of S_f petit} and so
\begin{align*}
y W_{d \circ g} = y \circ (d \circ g) = (y \circ d) \circ g = (y W_d) W_g = y (W_d W_g)
\end{align*}
for all $d \in D$, $y, g \in S_f$. Thus $W_d W_g = W_{d \circ g}$.

If $D$ is commutative and $S_f$ is finite-dimensional over $F$, then
\begin{align*}
M_f(d \circ g) &= \mathrm{det}(W_{d \circ g}) = \mathrm{det}(W_d W_g) = \mathrm{det}(W_d) \mathrm{det}(W_g) = M_f(d) M_f(g)
\end{align*}
for all $d \in D$, $g \in S_f$ and so $M_f$ is left semi-multiplicative.
\end{proof}

\begin{examples}
\begin{itemize}
\item[(i)] Let $f(t) = t^2 - a_1 t - a_0 \in D[t;\sigma,\delta]$. Given $g = g_0 + g_1t \in S_f$ with $g_0, g_1 \in D$, the matrix $W_g$ has the form
$$\begin{pmatrix}
g_0 & g_1 \\
\sigma(g_1)a_0 + \delta(g_0) & \sigma(g_0) + \sigma(g_1)a_1 + \delta(g_1)
\end{pmatrix}.$$
\item[(ii)] Let $f(t) = t^m - a \in D[t; \sigma]$, then given $g = g_0 + g_1 t + \ldots + g_{m-1}t^{m-1} \in S_f$, $g_i \in D$, the matrix $W_g$ has the form
\begin{equation*}
W_g =
\begin{pmatrix}
g_0 & g_1 & g_2 & \cdots & g_{m-1} \\
\sigma(g_{m-1})a & \sigma(g_0) & \sigma(g_1) & \cdots & \sigma(g_{m-2}) \\
\sigma^2(g_{m-2})a & \sigma^2(g_{m-1}) \sigma(a) & \sigma^2(g_0) & \cdots & \sigma^2(g_{m-3}) \\
\sigma^3(g_{m-3})a & \sigma^3(g_{m-2}) \sigma(a) & \sigma^3(g_{m-1}) \sigma^2(a) & \cdots & \sigma^3(g_{m-4}) \\
\vdots & \vdots & \vdots &  & \vdots \\
\sigma^{m-1}(g_1) a & \sigma^{m-1}(g_2) \sigma(a) & \sigma^{m-1}(g_3) \sigma^2(a) & \cdots & \sigma^{m-1}(g_0)
\end{pmatrix}.
\end{equation*}
In other words, $W_g = (W_{ij})_{i,j = 0, \ldots, m-1}$, where
\begin{equation*}
W_{ij} = \begin{cases}
\sigma^i(g_{j-i}) & \text{ if } i \leq j, \\
\sigma^i(g_{m-i+j}) \sigma^j(a) & \text{ if } i > j. 
\end{cases}
\end{equation*}
If $D$ is a finite field, the matrix $W_g$ is the $(\sigma,a)$-circulant matrix $M_a^{\sigma}$ in \cite{fogarty2015circulant}. In this case, Proposition \ref{prop:W_d W_g = W_dg} is \cite[Remark 3.2(b)]{fogarty2015circulant}.
\end{itemize}
\end{examples}

We now look at the connection between $M_f$ and zero divisors in $S_f$:

\begin{theorem} \label{thm:semi-mult division}
Suppose $D$ is commutative.
\begin{itemize}
\item[(i)] Let $0 \neq g \in S_f$. If $g$ is not a right zero divisor in $S_f$ then $M_f(g) \neq 0$.
\item[(ii)] $S_f$ has no non-trivial zero divisors if and only if $M_f(g) \neq 0$ for all $0 \neq g \in S_f$, if and only if $S_f$ is a right division algebra.
\item[(iii)] If $S_f$ is a finite-dimensional left $F$-vector space or a free of finite rank right $\mathrm{Nuc}_r(S_f)$-module, then $S_f$ is a division algebra is equivalent to $M_f(g) \neq 0$ for all $0 \neq g \in S_f$.
\end{itemize}
\end{theorem}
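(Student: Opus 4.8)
The plan is to reduce everything to a single determinant criterion: since $D$ is commutative and a division ring it is a field, so $M_f(g) = \det(W_g)$ makes sense, and over a field $M_f(g) \neq 0$ if and only if $W_g$ is invertible. The right multiplication $R_g : S_f \to S_f,\ h \mapsto h \circ g$ is left $D$-linear (by the argument on page \pageref{eqn:R_a left D-linear}) and is represented by $W_g$ in the basis $\{ 1, t, \ldots, t^{m-1} \}$, so I can translate statements about $M_f$ into statements about the maps $R_g$ and then invoke the Rank-Nullity Theorem \ref{thm:Rank-Nullity}.

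For (i) I would first record the sharper biconditional that drives the whole theorem. For $0 \neq g \in S_f$, the chain $M_f(g) = \det(W_g) \neq 0 \iff W_g$ invertible $\iff R_g$ bijective holds over the field $D$; and since $S_f$ is a free left $D$-module of rank $m$, Theorem \ref{thm:Rank-Nullity} shows $R_g$ is bijective exactly when it is injective, i.e. exactly when $\mathrm{Ker}(R_g) = \{ h \in S_f : h \circ g = 0 \} = \{ 0 \}$. The vanishing of this kernel is precisely the statement that $g$ is not a right zero divisor. Hence $M_f(g) \neq 0$ if and only if $g$ is not a right zero divisor; in particular the stated implication follows, and I also obtain the converse for free, which I reuse below.

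For (ii) I would prove the three assertions equivalent by a short cycle. The equivalence between ``$M_f(g) \neq 0$ for all $0 \neq g$'' and ``$S_f$ is a right division algebra'' is immediate from (i), since $S_f$ is a right division algebra exactly when every $R_g$ with $g \neq 0$ is bijective. For ``right division algebra $\Rightarrow$ no zero divisors'': if $a \circ b = 0$ with $b \neq 0$, then $R_b$ is injective and $R_b(a) = 0$ forces $a = 0$. For the reverse ``no zero divisors $\Rightarrow$ right division algebra'': if $a \neq 0$ then $\mathrm{Ker}(R_a) = \{ 0 \}$, and Theorem \ref{thm:Rank-Nullity} upgrades injectivity of the left $D$-linear map $R_a$ to bijectivity. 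This closes the loop and yields all the equivalences.

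For (iii) the key observation is that nonvanishing of $M_f$ controls only the right multiplications, so it delivers a right division algebra, and the extra module-finiteness hypotheses are exactly what promote this to a two-sided division algebra. One direction is trivial: a division algebra is a right division algebra, so $M_f(g) \neq 0$ for all $0 \neq g$ by (ii). Conversely, if $M_f(g) \neq 0$ for all $0 \neq g$, then $S_f$ is a right division algebra by (ii), hence $f(t)$ is irreducible by Theorem \ref{thm:f(t) irreducible iff S_f right division}; under either of the two stated hypotheses, Theorem \ref{thm:S_f_division_iff_irreducible} then gives that $S_f$ is a (two-sided) division algebra. The main subtlety, and the only genuinely delicate point, is this left-right asymmetry: the determinant sees the $D$-linear maps $R_g$ directly, but left division cannot be read off $M_f$ and must be routed through irreducibility, which is precisely where the finiteness hypotheses of Theorem \ref{thm:S_f_division_iff_irreducible} are needed.
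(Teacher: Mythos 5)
Your proof is correct and follows essentially the same route as the paper: part (i) via the singularity of $W_g$ versus the existence of a nontrivial solution of $hW_g=0$, part (ii) via the injectivity of $R_g$ and the Rank--Nullity Theorem, and part (iii) by combining (ii) with Theorems \ref{thm:f(t) irreducible iff S_f right division} and \ref{thm:S_f_division_iff_irreducible}. The only difference is cosmetic: you record the full biconditional in (i) and spell out the passage through irreducibility in (iii), which the paper leaves implicit.
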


\begin{proof}
\begin{itemize}
\item[(i)] If $W_g$ is a singular matrix, then the equation
$$h \circ g = (h_0, \ldots, h_{m-1}) W_g = 0,$$
has a non-trivial solution $(h_0, \ldots, h_{m-1}) \in D^{m}$, contradicting the assumption that $g$ is not a right zero divisor in $S_f$.
\item[(ii)] Suppose $M_f(g) \neq 0$ for all $0 \neq g \in S_f$. If $g, h \in S_f$ are non-zero and $h \circ g = h W_g = 0$ then $h = 0 W_g^{-1} = 0$, a contradiction. Hence $S_f$ has no non-trivial zero divisors. Conversely, if $S_f$ has no non-trivial zero divisors then $M_f(g) \neq 0$ for all $0 \neq g \in S_f$ by (i).

Additionally, $S_f$ contains no non-trivial zero divisors if and only if the right multiplication map $R_g : S_f \rightarrow S_f, \ x \mapsto x \circ g$ is injective for all non-zero $g \in S_f$, if and only if $S_f$ is a right division algebra by the proof of Theorem \ref{thm:f(t) irreducible iff S_f right division}.
\item[(iii)] Follows from (ii) and Theorem \ref{thm:S_f_division_iff_irreducible}.
\end{itemize}
\end{proof}


\chapter{Irreducibility Criteria in Skew Polynomial Rings} \label{chapter:Irreducibility Criteria for Polynomials in a Skew Polynomial Ring}

Let $D$ be a division ring with center $C$, $\sigma$ be an endomorphism of $D$ and $\delta$ be a left $\sigma$-derivation. Throughout this Chapter we assume without loss of generality $f(t) \in R = D[t;\sigma,\delta]$ is monic.

In Section \ref{section:When is S_f a Division Algebra?}, we saw that whether $S_f$ is a division algebra or not is closely linked to whether the polynomial $f(t)$ used in its construction is irreducible. For instance, $S_f$ is a right division algebra if and only if $f(t)$ is irreducible by Theorem \ref{thm:f(t) irreducible iff S_f right division}. This motivates the study of factorisation and irreducibility of skew polynomials which we do in the present Chapter. The results we obtain, in conjunction with Theorems \ref{thm:f(t) irreducible iff S_f right division} and \ref{thm:S_f_division_iff_irreducible}, yield criteria for some Petit algebras to be (right) division algebras.

It is well-known that a skew polynomial can always be factored as a product of irreducible skew polynomials. This factorisation is in general not unique, however, the degrees of the factors are unique up to permutation:

\begin{theorem} \label{thm:Ore factorisation is similar in pairs}
(\cite[Theorem 1]{ore1933theory}).
Every non-zero polynomial $f(t) \in R$ factorises as $f(t)  = f_1(t) \cdots f_n(t)$ where $f_i(t) \in R$ is irreducible for all $i \in \{ 1, \ldots, n \}$. Furthermore, if $f(t) = g_1(t) \cdots g_s(t)$ is any other factorisation of $f(t)$ as a product of irreducible $g_i \in R$, then $s = n$ and there exists a permutation $\pi: \{ 1, \ldots, n \} \rightarrow \{ 1, \ldots, n \}$ such that $f_i$ is similar to $g_{\pi(i)}$. In particular, $f_i$ and $g_{\pi(i)}$ have the same degree for all $i \in \{ 1, \ldots, n \}$.
\end{theorem}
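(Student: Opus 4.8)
The plan is to separate existence from uniqueness. The existence of a factorisation into irreducibles is immediate by induction on $\deg(f)$: if $f$ is irreducible there is nothing to do, and otherwise $f = gh$ with $\deg(g), \deg(h) < \deg(f)$, so the inductive hypothesis applies to both factors. The substance of the theorem is the uniqueness statement, and for this I would translate factorisations of $f$ into composition series of the left $R$-module $R/Rf$ and then invoke the Jordan--H\"older theorem.

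First I would record that $R/Rf$ has finite length: it is a $D$-vector space of dimension $\deg(f)$, hence certainly of finite length as a left $R$-module. Next, given a factorisation $f = f_1 \cdots f_n$ into irreducibles, set $p_{n+1} = 1$ and $p_i = f_i f_{i+1} \cdots f_n$ for $1 \le i \le n$, so that $p_1 = f$ and $p_i = f_i p_{i+1}$. Since $R$ is a domain, right multiplication by $p_{i+1}$ gives an isomorphism of left $R$-modules $R \cong R p_{i+1}$ carrying $R f_i$ onto $R f_i p_{i+1} = R p_i$, whence $R p_{i+1}/R p_i \cong R/R f_i$. Because $f_i$ is irreducible, $R f_i$ is a maximal left ideal (a proper left ideal strictly containing $R f_i$ would, by the left PID property, be generated by a proper right factor of $f_i$), so $R/R f_i$ is simple. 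Thus the chain
$$0 = R p_1/Rf \subseteq R p_2/Rf \subseteq \cdots \subseteq R p_{n+1}/Rf = R/Rf$$
is a composition series of $R/Rf$ of length $n$ whose successive quotients are the simple modules $R/R f_1, \ldots, R/R f_n$.

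Any second factorisation $f = g_1 \cdots g_s$ into irreducibles yields, in the same way, a composition series of $R/Rf$ of length $s$ with quotients $R/R g_1, \ldots, R/R g_s$. The Jordan--H\"older theorem then forces $s = n$ and produces a permutation $\pi$ with $R/R f_i \cong R/R g_{\pi(i)}$ as left $R$-modules for every $i$. To finish I would invoke the standard characterisation that $f_i$ and $g_{\pi(i)}$ are similar precisely when $R/R f_i \cong R/R g_{\pi(i)}$ as left $R$-modules (cf.\ Jacobson \cite{jacobson1996finite}); the equality of degrees $\deg(f_i) = \deg(g_{\pi(i)})$ then follows, since similar polynomials have equal degree as already noted in the excerpt.

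The main obstacle, and the only genuinely non-formal point, is the passage between the ring-theoretic notion of similarity and the module-theoretic isomorphism $R/R f_i \cong R/R g_{\pi(i)}$. Once that dictionary is in place the result is a direct application of Jordan--H\"older, with everything else reducing to bookkeeping about the chain of principal left ideals and the cancellation property of the domain $R$.
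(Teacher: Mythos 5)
The paper does not prove this statement: it is quoted directly from Ore with a citation to \cite[Theorem 1]{ore1933theory}, so there is no in-paper argument to compare against. Your proof is correct and is the standard modern route: translate a factorisation $f = f_1 \cdots f_n$ into the chain of principal left ideals $Rf = Rp_1 \subseteq \cdots \subseteq Rp_{n+1} = R$, identify the successive quotients with the simple modules $R/Rf_i$ (using that $R$ is a domain for the cancellation and that $R$ is a left PID to see $Rf_i$ is maximal), and apply Jordan--H\"older. This differs from Ore's original argument, which proceeds computationally via least common left multiples and an exchange/transformation process on the factors; the module-theoretic proof is shorter and makes the invariance statement structural rather than combinatorial, at the cost of outsourcing the equivalence ``$f$ similar to $g$ iff $R/Rf \cong R/Rg$ as left $R$-modules'' to the literature. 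That equivalence is genuinely the load-bearing citation in your write-up (the paper's own definition of similarity is phrased via the relations $1 = hf + qg$, $u'f = gu$, so strictly one must match that definition to the module isomorphism, which is \cite[\S 1.2]{jacobson1996finite}); you flag this correctly, and with that reference in place the proof is complete. One cosmetic remark: the existence induction silently excludes units (degree-$0$ polynomials), for which the factorisation statement is vacuous, but since $\deg g + \deg h = \deg f$ forces both proper factors to be non-units the induction does go through for all non-unit $f$.
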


We first restrict our attention to the case where $\delta = 0$.

\section{Irreducibility Criteria in \texorpdfstring{$R = D[t;\sigma]$}{R = D[t;sigma]}} \label{section:Irreducibility Criteria in D[t;sigma]}

Let $f(t) = t^m - \sum_{i=0}^{m-1} a_i t^i \in R = D[t;\sigma]$. In order to study when $f(t)$ is irreducible, we first determine the remainder after dividing $f(t)$ on the right by $(t-b), \ b \in D$. By \cite[p.~15]{jacobson1996finite} we have the identity 
\begin{equation} \label{eqn:Right division identity}
\begin{split}
t^i &- \sigma^{i-1}(b) \sigma^{i-2}(b) \cdots b \\ &= \Big(t^{i-1} + \sigma^{i-1}(b)t^{i-2} + \ldots + \sigma^{i-1}(b) \cdots \sigma(b) \Big)(t-b),
\end{split}
\end{equation}
for all $i \in \mathbb{N}$. Multiplying \eqref{eqn:Right division identity} on the left by $a_i$ and summing over $i$ yields 
$$f(t) = q(t)(t-b) + N_m (b) - \sum_{i=0}^{m-1} a_i N_i(b),$$
for some $q(t) \in R$, where $N_i (b) = \sigma^{i-1} (b) \cdots \sigma(b) b$ for $i > 0$ and $N_0 (b) = 1$. Therefore the remainder after dividing $f(t)$ on the right by $(t-b)$ is $N_m (b) - \sum_{i=0}^{m-1} a_i N_i(b)$,
and we conclude:

\begin{proposition} \label{prop:rightdivdegree1}
(\cite[p.~16]{jacobson1996finite}). $(t-b) \vert_r f(t)$ is equivalent to $$a_m N_m(b) - \sum_{i=0}^{m-1} a_i N_i (b) = 0.$$
\end{proposition}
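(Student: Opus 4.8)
The plan is to read the claim off directly from the computation carried out in the paragraph preceding the statement, the only extra ingredient being the uniqueness of the right division algorithm in $R = D[t;\sigma]$.

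First I would recall that the identity \eqref{eqn:Right division identity}, valid for every $i \in \mathbb{N}$, expresses $t^i$ as $N_i(b)$ plus a left multiple of $(t-b)$, where $N_i(b) = \sigma^{i-1}(b) \cdots \sigma(b) b$ for $i > 0$ and $N_0(b) = 1$. Applying this identity to the leading term $t^m$ and to each lower term $t^i$ of $f(t) = a_m t^m - \sum_{i=0}^{m-1} a_i t^i$ (with $a_m = 1$, since $f$ is monic), then multiplying on the left by the respective coefficient and summing over $i$, collects all the $(t-b)$-multiples into a single polynomial $q(t) \in R$ and leaves the constant
$$r = a_m N_m(b) - \sum_{i=0}^{m-1} a_i N_i(b) \in D.$$
Thus $f(t) = q(t)(t-b) + r$.

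The key observation is then that $r \in D$ has degree $0 < 1 = \mathrm{deg}(t-b)$, so the equation $f(t) = q(t)(t-b) + r$ is precisely a right division of $f(t)$ by $(t-b)$ with remainder $r$. Since the right division algorithm in $R$ produces a \emph{unique} remainder of degree strictly less than $\mathrm{deg}(t-b)$ (Section \ref{section:Skew Polynomial Rings}), the element $r$ is exactly $f(t) \ \mathrm{mod}_r (t-b)$. Therefore $(t-b) \vert_r f(t)$, i.e. the vanishing of this remainder, is equivalent to $r = 0$, which is the asserted identity.

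There is essentially no obstacle here beyond bookkeeping: the whole content sits in identity \eqref{eqn:Right division identity}, quoted from Jacobson, and the passage from ``some factorisation with constant remainder'' to ``the remainder'' relies only on the uniqueness clause of the right division algorithm. The one point worth stating carefully is that the summation genuinely telescopes all the positive-degree contributions into the single left factor $q(t)$, so that no term of positive degree survives in $r$; granting this, the degree comparison and uniqueness finish the proof immediately.
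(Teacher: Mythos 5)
Your proof is correct and follows the same route as the paper: apply identity \eqref{eqn:Right division identity} termwise, multiply by the coefficients, sum to get $f(t) = q(t)(t-b) + r$ with $r = N_m(b) - \sum_{i=0}^{m-1} a_i N_i(b)$, and identify $r$ as the remainder. Your explicit appeal to the uniqueness clause of the right division algorithm makes precise a step the paper leaves implicit, but the argument is the same.
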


When $\sigma$ is an automorphism of $D$, we can also determine the remainder after dividing $f(t)$ on the left by $(t-b), \ b \in D$: Similarly to \eqref{eqn:Right division identity} we have the identity
\begin{equation} \label{eqn:Left division identity}
\begin{split}
t^i - b \sigma^{-1}(b)& \cdots \sigma^{1-i}(b) = (t-b) \Big(t^{i-1 } + \sigma^{-1}(b)t^{i-2} \\ &+ \sigma^{-1}(b) \sigma^{-2}(b)t^{i-3} + \ldots + \sigma^{-1}(b) \sigma^{-2}(b) \cdots \sigma^{1-i}(b)\Big)
\end{split}
\end{equation}
for all $i \in \mathbb{N}$. Multiplying \eqref{eqn:Left division identity} on the right by $\sigma^{-i}(a_i)$, and using $a_i t^i = t^i \sigma^{-i} (a_i)$ gives
\begin{equation*}
\begin{split}
&a_i t^i - b \sigma^{-1}(b) \cdots \sigma^{1-i}(b) \sigma^{-i}(a_i) \\ 
&= (t-b) \Big(t^{i-1} + \sigma^{-1}(b)t^{i-2} + \ldots +  \sigma^{-1}(b) \sigma^{-2}(b) \cdots \sigma^{1-i}(b) \Big) \sigma^{-i}(a_i).
\end{split}
\end{equation*}
Summing over $i$, we obtain
$$f(t) = (t-b)q(t) + M_m (b) - \sum_{i=0}^{m-1} M_i(b) \sigma^{-i}(a_i),$$
for some $q(t) \in R$ where $M_i(b)$ are defined by $M_0 (b) = 1$, $M_1(b) = b$ and $M_i (b) = b \sigma^{-1}(b) \cdots \sigma^{1-i}(b)$ for $i \geq 2$. We immediately conclude:

\begin{proposition} \label{prop:leftdivdegree1}
Suppose $\sigma$ is an automorphism of $D$. Then $(t-b) \vert_l f(t) \ $ if and only if $ \ M_m (b) - \sum_{i=0}^{m-1} M_i(b) \sigma^{-i}(a_i) = 0.$
\end{proposition}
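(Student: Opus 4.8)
The plan is to mirror exactly the derivation already carried out for right division in Proposition \ref{prop:rightdivdegree1}, but working from the left and using the fact that $\sigma$ is now an automorphism. The key observation is that the left-division identity \eqref{eqn:Left division identity} has already been established for all $i \in \mathbb{N}$, and the surrounding text has already performed the multiplication on the right by $\sigma^{-i}(a_i)$ and summed over $i$ to obtain
$$f(t) = (t-b)q(t) + M_m(b) - \sum_{i=0}^{m-1} M_i(b)\sigma^{-i}(a_i),$$
for some $q(t) \in R$. So the structural work is effectively done before the statement is even reached.

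Given this, my proof is essentially a one-line reading of the displayed equation. First I would note that, because $\sigma$ is an automorphism of $D$, the ring $R = D[t;\sigma]$ admits a left division algorithm (as recorded on page \pageref{sigma automorphism right polynomial ring}), so left division of $f(t)$ by the monic degree-one polynomial $(t-b)$ produces a unique quotient $q(t)$ and a unique remainder of degree strictly less than $1$, i.e.\ a constant. The displayed identity exhibits precisely such a decomposition, with quotient $q(t)$ and constant remainder $M_m(b) - \sum_{i=0}^{m-1} M_i(b)\sigma^{-i}(a_i)$. By uniqueness of the remainder, this constant \emph{is} the remainder after left division. Therefore $(t-b) \mid_l f(t)$, which by definition means the remainder is zero, holds if and only if
$$M_m(b) - \sum_{i=0}^{m-1} M_i(b)\sigma^{-i}(a_i) = 0,$$
which is the claimed criterion.

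There is no genuine obstacle here; the proposition is an immediate corollary of the computation preceding it, recorded separately only for emphasis and later reference. The one point deserving a word of care is the rewriting step $a_i t^i = t^i \sigma^{-i}(a_i)$, which is valid precisely because $\sigma$ is invertible and which is what allows the coefficients $a_i$ to be pushed to the right past the powers of $t$; this is exactly where the automorphism hypothesis is used, and it is the analogue of the fact that in the right-division case one pushes coefficients to the left without needing $\sigma^{-1}$. I would also verify that the base cases of the $M_i(b)$ recursion ($M_0(b)=1$, $M_1(b)=b$) are consistent with the boundary terms of the sum, but this is routine and matches the role played by the $N_i(b)$ in Proposition \ref{prop:rightdivdegree1}.
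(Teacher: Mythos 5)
Your proof is correct and follows exactly the paper's route: the paper derives the identity $f(t) = (t-b)q(t) + M_m(b) - \sum_{i=0}^{m-1} M_i(b)\sigma^{-i}(a_i)$ in the text immediately preceding the proposition and then reads off the conclusion from the uniqueness of the remainder under left division, which is precisely your argument. Your added remark about where the automorphism hypothesis enters (the rewriting $a_i t^i = t^i \sigma^{-i}(a_i)$) is accurate and matches the paper's use of it.
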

A careful reading of Propositions \ref{prop:rightdivdegree1} and \ref{prop:leftdivdegree1} yields the following:

\begin{corollary} \label{cor:left iff right divisor}
Suppose $\sigma$ is an automorphism and $f(t) = t^m - a \in D[t;\sigma]$. Then $f(t)$ has a left linear divisor if and only if it has a right linear divisor.
\end{corollary}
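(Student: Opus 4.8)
The plan is to reduce both divisibility conditions to explicit equations via the two preceding propositions, and then exhibit a direct bijection between their solution sets.

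First I would specialise Proposition \ref{prop:rightdivdegree1} to $f(t) = t^m - a$, where in the notation $f(t) = t^m - \sum_{i=0}^{m-1} a_i t^i$ we have $a_0 = a$ and $a_i = 0$ for $1 \le i \le m-1$. The condition $(t-c) \vert_r f(t)$ then collapses to the single equation $N_m(c) = a$, where $N_m(c) = \sigma^{m-1}(c) \cdots \sigma(c) c$. Similarly, specialising Proposition \ref{prop:leftdivdegree1} (which is available precisely because $\sigma$ is an automorphism) shows that $(t-b) \vert_l f(t)$ is equivalent to $M_m(b) = a$, where $M_m(b) = b \sigma^{-1}(b) \cdots \sigma^{1-m}(b)$.

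The heart of the argument is the observation that these two products differ only by a uniform shift in the powers of $\sigma$: $N_m$ uses the exponents $m-1, m-2, \ldots, 1, 0$ while $M_m$ uses $0, -1, \ldots, 1-m$. Concretely, I would set $b = \sigma^{m-1}(c)$ and compute, using $\sigma^{-k}(\sigma^{m-1}(c)) = \sigma^{m-1-k}(c)$, that $M_m(\sigma^{m-1}(c)) = \sigma^{m-1}(c) \sigma^{m-2}(c) \cdots \sigma(c) c = N_m(c)$. Hence $N_m(c) = a$ if and only if $M_m(\sigma^{m-1}(c)) = a$, so $(t-c)$ is a right divisor of $f(t)$ exactly when $(t - \sigma^{m-1}(c))$ is a left divisor. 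Since $\sigma$ is an automorphism, $c \mapsto \sigma^{m-1}(c)$ is a bijection of $D$, which yields the equivalence in both directions (and in fact a bijection between right and left linear divisors).

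There is no genuine obstacle here beyond careful bookkeeping of the exponents of $\sigma$ in the two products $N_m$ and $M_m$; the only place where invertibility of $\sigma$ is essential is in guaranteeing both that Proposition \ref{prop:leftdivdegree1} applies at all and that the substitution $b = \sigma^{m-1}(c)$ defines a genuine bijection of $D$.
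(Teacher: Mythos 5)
Your proposal is correct and is essentially identical to the paper's own proof: both reduce the two divisibility conditions to $N_m(c)=a$ and $M_m(b)=a$ via Propositions \ref{prop:rightdivdegree1} and \ref{prop:leftdivdegree1}, and both use the substitution $b=\sigma^{m-1}(c)$ (equivalently $c=\sigma^{m-1}(b)$) to identify the two products. The only difference is cosmetic, namely which letter names the right divisor and which the left.
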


\begin{proof}
Let $b \in D$, then $(t-b) \vert_r f(t)$ is equivalent to $\sigma^{m-1}(b) \cdots \sigma(b) b = a$ by Proposition \ref{prop:rightdivdegree1}, if and only if $c \sigma^{-1}(c) \cdots \sigma^{1-m}(c) = a$ where $c = \sigma^{m-1}(b)$, if and only if $(t-c) \vert_l f(t)$ by Proposition \ref{prop:leftdivdegree1}.
\end{proof}

Using Propositions \ref{prop:rightdivdegree1} and \ref{prop:leftdivdegree1} we obtain criteria for some skew polynomials of degree two or three to be irreducible. The following was stated but not proven by Petit in \cite[(17), (18)]{Petit1966-1967}:

\begin{theorem} \label{thm:Petit_factor}
\begin{itemize}
\item[(i)] Suppose $\sigma$ is an endomorphism of $D$. Then $f(t) = t^2 - a_1 t - a_0 \in D[t;\sigma]$ is irreducible if and only if 
\begin{equation*} \label{eqn:Petit (17)}
\sigma(b)b - a_1 b - a_0 \neq 0,
\end{equation*}
for all $b \in D.$ 
\item[(ii)] Suppose $\sigma$ is an automorphism. Then $f(t) = t^3 - a_2 t^2 - a_1 t - a_0 \in D[t;\sigma]$ is irreducible if and only if 
\begin{equation*} \label{eqn:Petit (18) 1}
\sigma^2 (b) \sigma(b) b - \sigma^2 (b)\sigma(b) a_2 - \sigma^2 (b) \sigma(a_1) - \sigma^2 (a_0) \neq 0,
\end{equation*}
and 
\begin{equation*} \label{eqn:Petit (18) 2}
\sigma^2 (b) \sigma(b) b - a_2 \sigma(b) b - a_1 b - a_0 \neq 0,
\end{equation*}
for all $b \in D.$
\end{itemize}
\end{theorem}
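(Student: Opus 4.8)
The plan is to reduce irreducibility of $f(t)$ to the non-existence of monic linear factors, and then feed these into Propositions \ref{prop:rightdivdegree1} and \ref{prop:leftdivdegree1} to obtain the stated coefficient conditions. The key preliminary observation is that since $D$ is a division ring, $\mathrm{deg}(gh) = \mathrm{deg}(g) + \mathrm{deg}(h)$ and the leading coefficient of $gh$ equals $\mathrm{lc}(g)\sigma^{\mathrm{deg}(g)}(\mathrm{lc}(h))$. Hence a monic $f(t)$ is reducible if and only if it admits a \emph{monic} right factor of degree $d$ with $1 \leq d \leq \mathrm{deg}(f)-1$: given any proper factorisation $f = gh$, I would write $h = \mathrm{lc}(h)\tilde{h}$ with $\tilde{h}$ monic, so that $f = \big(g\,\mathrm{lc}(h)\big)\tilde{h}$ exhibits the monic right factor $\tilde{h}$.

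For (i), $\mathrm{deg}(f) = 2$ forces $d = 1$, so $f(t)$ is reducible exactly when $(t-b) \vert_r f(t)$ for some $b \in D$. Applying Proposition \ref{prop:rightdivdegree1} with $N_0(b) = 1$, $N_1(b) = b$ and $N_2(b) = \sigma(b)b$, this occurs if and only if $\sigma(b)b - a_1 b - a_0 = 0$; negating over all $b$ yields the claimed criterion. Note this argument only uses that $\sigma$ is an endomorphism, in line with the hypothesis.

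For (ii), $\mathrm{deg}(f) = 3$ gives $d \in \{1,2\}$. A monic right factor of degree $1$ is simply a right linear factor $(t-b) \vert_r f(t)$. A monic right factor $\tilde{h}$ of degree $2$ produces $f = \tilde{g}\tilde{h}$ with $\mathrm{deg}(\tilde{g}) = 1$; comparing leading coefficients and using $\mathrm{lc}(\tilde{h}) = 1$ forces $\mathrm{lc}(\tilde{g}) = 1$, so $\tilde{g} = t-b$ is a monic \emph{left} linear factor $(t-b) \vert_l f(t)$. Thus $f(t)$ is reducible if and only if it has a right or a left linear factor. Proposition \ref{prop:rightdivdegree1}, with $N_3(b) = \sigma^2(b)\sigma(b)b$, translates the right linear factors into the second displayed expression, while Proposition \ref{prop:leftdivdegree1}, with $M_2(b) = b\sigma^{-1}(b)$ and $M_3(b) = b\sigma^{-1}(b)\sigma^{-2}(b)$, governs the left linear factors. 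The resulting left condition does not literally coincide with the first display, so I would apply the automorphism $\sigma^2$ to the entire left-divisibility equation: since $\sigma^2$ is a bijection of $D$, the set of solutions $b$ is unchanged, and the equation is carried precisely onto the first displayed expression equated to zero. Irreducibility of $f(t)$ is then the simultaneous non-vanishing of both expressions for all $b \in D$.

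The routine part is computing the $N_i(b)$ and $M_i(b)$ explicitly. The genuine content, and where I expect the main obstacle, is twofold: first, the structural claim that a reducible cubic must carry a \emph{one-sided} linear factor, which hinges on the leading-coefficient normalisation forcing $\tilde{g}$ to be monic in the degree-$2$ case (so that Proposition \ref{prop:leftdivdegree1} applies to a genuine monic $t-b$); and second, recognising that the left-divisibility condition matches the stated first condition only after the bijective substitution by $\sigma^2$. Keeping the left/right bookkeeping and the direction of the substitution consistent is the one place requiring real care.
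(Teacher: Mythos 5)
Your proposal is correct and follows essentially the same route as the paper: reduce irreducibility to the absence of right (and, for the cubic, left) monic linear divisors, apply Propositions \ref{prop:rightdivdegree1} and \ref{prop:leftdivdegree1}, and apply $\sigma^2$ to the left-divisibility condition to obtain the first display in (ii). The only difference is that you spell out the leading-coefficient normalisation showing a reducible monic cubic must admit a one-sided \emph{monic} linear factor, a step the paper asserts without comment; your justification of it is correct.
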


\begin{proof}
\begin{itemize}
\item[(i)] Since $\mathrm{deg}(f(t)) = 2$, we have $f(t)$ is irreducible if and only if $(t-b) \nmid_r f(t)$ for all $b \in D$, if and only if
$$N_2 (b) - a_1 N_1 (b) - a_0 N_0(b) = \sigma(b) b - a_1 b - a_0 \neq 0,$$ 
for all $b \in D$ by Proposition \ref{prop:rightdivdegree1}.
\item[(ii)] Here $\mathrm{deg}(f(t)) = 3$ and so $f(t)$ is irreducible if and only if $(t-b) \nmid_r f(t)$ and $(t-b) \nmid_l f(t)$ for all $b \in D$, if and only if
\begin{equation*} \label{eqn:Petit_factor 1}
\sigma^2 (b) \sigma(b) b - a_2 \sigma(b)b - a_1 b - a_0 \neq 0,
\end{equation*}
and
\begin{equation} \label{eqn:Petit_factor 2}
b \sigma^{-1}(b) \sigma^{-2}(b) - b \sigma^{-1}(b) \sigma^{-2}(a_2) - b \sigma^{-1}(a_1) - a_0 \neq 0,
\end{equation}
for all $b \in D$ by Propositions \ref{prop:rightdivdegree1} and \ref{prop:leftdivdegree1}. Applying $\sigma^2$ to \eqref{eqn:Petit_factor 2} we obtain the assertion.
\end{itemize}
\end{proof}

When $f(t)$ has the form $f(t) = t^3 - a \in D[t;\sigma]$, we obtain the following simplification of Theorem \ref{thm:Petit_factor}(ii):

\begin{corollary} \label{cor:Irreducibility t^3-a}
Suppose $\sigma$ is an automorphism of $D$, then $f(t) = t^3 - a \in D[t;\sigma]$ is irreducible is equivalent to $\sigma^2 (b) \sigma(b) b \neq a$ for all $b \in D$.
\end{corollary}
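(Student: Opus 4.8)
The plan is to invoke Theorem~\ref{thm:Petit_factor}(ii) with the special form $f(t) = t^3 - a$, which corresponds to setting $a_2 = a_1 = 0$ and $a_0 = a$. First I would substitute these coefficient values into the two irreducibility conditions of Theorem~\ref{thm:Petit_factor}(ii). The first condition $\sigma^2(b)\sigma(b)b - \sigma^2(b)\sigma(b)a_2 - \sigma^2(b)\sigma(a_1) - \sigma^2(a_0) \neq 0$ immediately collapses, since the terms with $a_2$ and $a_1$ vanish, leaving
\begin{equation*}
\sigma^2(b)\sigma(b)b - \sigma^2(a) \neq 0 \quad \text{for all } b \in D.
\end{equation*}
Similarly the second condition $\sigma^2(b)\sigma(b)b - a_2\sigma(b)b - a_1 b - a_0 \neq 0$ reduces to
\begin{equation*}
\sigma^2(b)\sigma(b)b - a \neq 0 \quad \text{for all } b \in D.
\end{equation*}

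The key observation is that these two conditions are in fact equivalent, so that $f(t) = t^3 - a$ is irreducible if and only if $\sigma^2(b)\sigma(b)b \neq a$ for all $b \in D$. The second condition is already in the desired form. For the first condition, I would argue that $\sigma^2(b)\sigma(b)b = \sigma^2(a)$ for some $b \in D$ if and only if $\sigma^2(c)\sigma(c)c = a$ for some $c \in D$. Indeed, since $\sigma$ is an automorphism, applying $\sigma^{-2}$ to the identity $\sigma^2(b)\sigma(b)b = \sigma^2(a)$ and using that $\sigma^{-2}$ is a ring homomorphism yields $b'\sigma^{-1}(b')\sigma^{-2}(b') = a$ where $b' = \sigma^{-2}(b)$; this is the left-divisibility expression, and a change of variable relates it back to the right-divisibility form exactly as in the proof of Corollary~\ref{cor:left iff right divisor}.

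The cleanest way to organise this is to observe that for $f(t)=t^3-a$ the right and left linear-divisor conditions coincide by Corollary~\ref{cor:left iff right divisor}: $f(t)$ has a right linear factor if and only if it has a left linear factor. Hence checking $(t-b)\nmid_r f(t)$ for all $b$ already suffices to guarantee irreducibility of the cubic, and by Proposition~\ref{prop:rightdivdegree1} this single condition is precisely $\sigma^2(b)\sigma(b)b \neq a$ for all $b \in D$. The main (and only) obstacle is verifying that the two seemingly different inequalities from Theorem~\ref{thm:Petit_factor}(ii) genuinely describe the same set of excluded values of $a$; this is a routine application of the automorphism property of $\sigma$ together with the change-of-variable trick already established in Corollary~\ref{cor:left iff right divisor}, so no real difficulty is expected.
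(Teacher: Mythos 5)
Your proposal is correct and follows essentially the same route as the paper: the paper's proof likewise invokes Corollary \ref{cor:left iff right divisor} to reduce irreducibility of the cubic $t^3-a$ to the absence of right linear divisors, and then applies Proposition \ref{prop:rightdivdegree1} to obtain the condition $\sigma^2(b)\sigma(b)b \neq a$ for all $b \in D$. The preliminary detour through the two conditions of Theorem \ref{thm:Petit_factor}(ii) is harmless but unnecessary, as your own closing paragraph recognises.
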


\begin{proof}
Recall $f(t)$ has a right linear divisor if and only if it has a left linear divisor by Corollary \ref{cor:left iff right divisor}. Therefore $f(t)$ is irreducible if and only if $(t-b) \nmid_r f(t)$ for all $b \in D$, if and only if
$\sigma^2 (b) \sigma(b) b \neq a$ for all $b \in D$  by Proposition \ref{prop:rightdivdegree1}.
\end{proof}

\begin{corollary} \label{cor:t^2-a(y) in K(y)[t;sigma] irreducibility}
Let $K$ be a field, $y$ be an indeterminate and define $\sigma: K(y) \rightarrow K(y)$ by $\sigma \vert_K = \mathrm{id}$ and $\sigma(y) = y^2$. For $a(y) \in K[y]$ denote by $\mathrm{deg}_y(a(y))$ the degree of $a(y)$ as a polynomial in $y$. Let $f(t) = t^2 - a(y) \in K(y)[t;\sigma]$ where $0 \neq a(y) \in K[y]$ is such that $3 \nmid \mathrm{deg}_y(a(y))$. Then $f(t)$ is irreducible in $K(y)[t;\sigma]$.
\end{corollary}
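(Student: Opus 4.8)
The plan is to invoke the degree-two irreducibility criterion already established and then rule out linear right factors by a valuation (degree-at-infinity) argument. Since $f(t) = t^2 - a(y)$ is the case $a_1 = 0$, $a_0 = a(y)$ of Theorem \ref{thm:Petit_factor}(i), that result tells us $f(t)$ is irreducible in $K(y)[t;\sigma]$ if and only if $\sigma(b)b \neq a(y)$ for every $b \in K(y)$. So I would argue by contradiction: assume there exists $b \in K(y)$ with $\sigma(b)b = a(y)$, and derive a divisibility constraint on $\deg_y(a(y))$ that violates the hypothesis $3 \nmid \deg_y(a(y))$. Note that such a $b$ must be nonzero, as $a(y) \neq 0$.

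The key device is the \emph{degree} homomorphism $d : K(y)^{\times} \to \mathbb{Z}$ defined by $d(p/q) = \mathrm{deg}_y(p) - \mathrm{deg}_y(q)$ for $p, q \in K[y]$ (equivalently, the negative of the valuation at infinity). This is well-defined independently of the chosen representation of the fraction, and it is a group homomorphism, so $d(rs) = d(r) + d(s)$ for all $r, s \in K(y)^{\times}$. The crucial point is how $\sigma$ interacts with $d$: since $\sigma$ fixes $K$ and sends $y \mapsto y^2$, it sends a rational function $r(y)$ to $r(y^2)$, which doubles the $y$-degree of both numerator and denominator, giving $d(\sigma(r)) = 2\, d(r)$ for all $r \in K(y)^{\times}$.

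Applying $d$ to the hypothetical identity $\sigma(b)b = a(y)$ then yields $d(\sigma(b)) + d(b) = 2\,d(b) + d(b) = 3\,d(b)$ on the left, while on the right $d(a(y)) = \mathrm{deg}_y(a(y))$ because $a(y)$ is a nonzero polynomial. Hence $\mathrm{deg}_y(a(y)) = 3\, d(b)$ with $d(b) \in \mathbb{Z}$, forcing $3 \mid \mathrm{deg}_y(a(y))$ and contradicting the assumption $3 \nmid \mathrm{deg}_y(a(y))$. Therefore no such $b$ exists and $f(t)$ is irreducible. The only steps requiring care are the routine verifications that $d$ is well-defined and multiplicative and that $\sigma$ scales it by a factor of $2$; there is no genuine obstacle here, as the real content is the choice of the degree-at-infinity invariant, whose doubling under $\sigma$ converts $\sigma(b)b$ into a quantity whose degree is always a multiple of $3$.
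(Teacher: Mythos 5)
Your proof is correct and follows essentially the same route as the paper: reduce to the condition $\sigma(b)b \neq a(y)$ via Theorem \ref{thm:Petit_factor}(i), then show $\sigma(b)b$ always has $y$-degree divisible by $3$. Packaging the degree count as the homomorphism $d$ (the negative of the valuation at infinity) with $d(\sigma(r)) = 2d(r)$ is a slightly cleaner formulation that avoids the paper's case split on whether $\sigma(b)b$ lies in $K[y]$, but the underlying computation is identical.
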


\begin{proof}
Note that $\sigma$ is an injective but not surjective endomorphism of $K(y)$ by \cite[p.~123]{berrick2000introduction}. We have $f(t)$ is irreducible is equivalent to $\sigma(b(y))b(y) \neq a(y)$ for all $b(y) \in K(y)$ by Theorem \ref{thm:Petit_factor}. Given $0 \neq b(y) \in K(y)$, write $b(y) = c(y) / d(y)$ for some non zero $c(y), d(y) \in K[y]$.

If $\sigma(b(y))b(y) \notin K[y]$ then $\sigma(b(y))b(y) \neq a(y)$ because $a(y) \in K[y]$. Conversely suppose $\sigma(b(y))b(y) \in K[y]$ and let $c(y) = \sum_{i=0}^{l} \lambda_i y^i$, \ $d(y) = \sum_{j=0}^{n} \mu_j y^j$ for some $\lambda_i, \mu_j \in K$ with $\lambda_l, \mu_n \neq 0$. Then $\sigma(c(y)) = \sum_{i=0}^{l} \lambda_i y^{2i}$, \quad $\sigma(d(y)) = \sum_{j=0}^{n} \mu_j y^{2j}$, and so
\begin{align*}
\sigma(b(y))b(y) &= \frac{\sigma(c(y))c(y)}{\sigma(d(y))d(y)} = \frac{\sum_{i=0}^{l} \lambda_i y^{2i} \sum_{k=0}^{l} \lambda_k y^{k}}{\sum_{j=0}^{n} \mu_j y^{2j} \sum_{s=0}^{n} \mu_s y^{s}} = \frac{\sum_{i=0}^{l} \sum_{k=0}^{l} \lambda_i \lambda_k y^{2i+k}}{\sum_{j=0}^{n} \sum_{s=0}^{n} \mu_j \mu_s y^{2j+s}}.
\end{align*}
This means $\mathrm{deg}_y \big( \sigma(b(y))b(y) \big) = 3l - 3n$ is a multiple of $3$, thus if $3 \nmid \mathrm{deg}_y(a(y))$ then $\sigma(b(y))b(y) \neq a(y)$ and $f(t)$ is irreducible.
\end{proof}

Consider the field extension $\mathbb{C} / \mathbb{R}$ where $\mathrm{Gal}(\mathbb{C} / \mathbb{R}) = \{ \mathrm{id} , \sigma \}$ and $\sigma$ denotes complex conjugation. By \cite[Corollary 6]{pumplun2015factoring}, any non-constant $g(t) \in \mathbb{C}[t;\sigma]$ decomposes into a product of linear and irreducible quadratic skew polynomials, in particular, every polynomial of degree $\geq 3$ is reducible. As a Corollary of Theorem \ref{thm:Petit_factor}(i) we now give some irreducibility criteria for $f(t) \in \mathbb{C}[t;\sigma]$ of degree $2$:

\begin{corollary} \label{cor:Irreducibility_degree_four_complex}
\begin{itemize}
\item[(i)] Let $f(t) = t^2 - a \in \mathbb{C}[t;\sigma]$, then $f(t)$ is irreducible if and only if $a \in \mathbb{C} \setminus \mathbb{R}$ or $a \in \mathbb{R}^{-} = \{ r \in \mathbb{R} \ \vert \ r < 0 \}$.
\item[(ii)] \cite[Corollary 2.6]{bergen2015factorizations} Let $f(t) = t^2 - a_1 t - a_0 \in \mathbb{C}[t;\sigma]$ with $a_0, a_1 \in \mathbb{R}$, then $f(t)$ is irreducible in $\mathbb{C}[t;\sigma]$ if and only if $a_1^2 + 4a_0 < 0$ if and only if $f(t)$ is irreducible in $\mathbb{R}[t]$. Moreover, if $f(t)$ is reducible, then the factorisation of $f(t)$ into monic linear polynomials is unique when $a_1 \neq 0$, whereas $f(t)$ factors an infinite number of ways into monic linear factors when $a_1 = 0$.
\item[(iii)] Let $f(t) = t^2 - a_1 t - a_0 \in \mathbb{C}[t;\sigma]$ where $a_0 \in \mathbb{R}$ and $a_1 = \lambda + \mu i$ for some $\lambda , \mu \in \mathbb{R}^{\times}$. Then $f(t)$ is irreducible if and only if $a_0 < -(\lambda^2 + \mu^2)/4$. In particular, if $a_0 \geq 0$ then $f(t)$ is reducible.
\end{itemize}
\end{corollary}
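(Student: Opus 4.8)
The unifying tool is Theorem \ref{thm:Petit_factor}(i): since every $f(t)$ here has degree two, $f(t)$ is irreducible in $\mathbb{C}[t;\sigma]$ if and only if $\sigma(b)b - a_1 b - a_0 \neq 0$ for all $b \in \mathbb{C}$. As $\sigma$ is complex conjugation, $\sigma(b)b = \bar{b}b = |b|^2$ is a non-negative real number, so in each part I would substitute $b = x + yi$ (with $x,y \in \mathbb{R}$) and split the expression $\bar b b - a_1 b - a_0$ into its real and imaginary parts. A right linear factor $(t-b)$ then corresponds exactly to a simultaneous real solution of the two resulting equations, and the strategy throughout is to eliminate $y$ using the imaginary-part equation and reduce to a single real quadratic in $x$, whose discriminant governs the existence of a root.

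For (i) we have $a_1 = 0$, so the condition reads $|b|^2 \neq a$ for all $b$. Since $|b|^2$ sweeps out exactly $[0,\infty)$, a root exists precisely when $a \in [0,\infty)$; hence $f(t)$ is irreducible iff $a$ lies outside $[0,\infty)$, i.e. $a \in \mathbb{C}\setminus\mathbb{R}$ or $a \in \mathbb{R}^-$. For (ii), with $a_0, a_1$ real, the imaginary part of $\bar b b - a_1 b - a_0$ is $-a_1 y$ and the real part is $x^2 + y^2 - a_1 x - a_0$. When $a_1 \neq 0$ a root forces $y = 0$ and $x^2 - a_1 x - a_0 = 0$, which is solvable iff $a_1^2 + 4a_0 \geq 0$; when $a_1 = 0$ a root exists iff $x^2 + y^2 = a_0$ is solvable, i.e. iff $a_0 \geq 0$. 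In both cases the reducibility threshold is $a_1^2 + 4a_0 \geq 0$, which is precisely the discriminant condition for $t^2 - a_1 t - a_0 \in \mathbb{R}[t]$, giving the claimed equivalence. The factorisation count comes from the shape of the root set: for $a_1 \neq 0$ the roots are the (at most two) real solutions of $x^2 - a_1 x - a_0 = 0$, which are fixed by $\sigma$ and hence yield a single multiset of monic linear factors, whereas for $a_1 = 0$ and $a_0 > 0$ the roots form the whole circle $|b| = \sqrt{a_0}$, and each $b$ gives a distinct factorisation $f = (t + \bar b)(t - b)$.

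Part (iii) requires the most care. Writing $a_1 = \lambda + \mu i$ and $b = x + yi$, I would compute that the imaginary part of $\bar b b - a_1 b - a_0$ is $-(\mu x + \lambda y)$ and the real part is $x^2 + y^2 - \lambda x + \mu y - a_0$. Using $\lambda \neq 0$, the vanishing of the imaginary part gives $y = -\mu x/\lambda$; substituting into the real part and clearing the positive factor $(\lambda^2 + \mu^2)/\lambda^2$ collapses everything to the real quadratic $x^2 - \lambda x - a_0 \lambda^2/(\lambda^2 + \mu^2) = 0$. This has a real solution iff its discriminant $\lambda^2\bigl(1 + 4a_0/(\lambda^2+\mu^2)\bigr)$ is non-negative, i.e. (since $\lambda \neq 0$) iff $a_0 \geq -(\lambda^2 + \mu^2)/4$. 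Hence $f(t)$ is irreducible iff $a_0 < -(\lambda^2+\mu^2)/4$, and in particular $a_0 \geq 0$ forces reducibility.

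The main obstacle is purely computational: keeping the elimination of $y$ and the discriminant bookkeeping in (iii) clean enough that the threshold $-(\lambda^2+\mu^2)/4$ emerges transparently. There is no conceptual difficulty once Theorem \ref{thm:Petit_factor}(i) is in hand; the real and imaginary decomposition of $\bar b b - a_1 b - a_0$ reduces all three parts to elementary real-variable questions about when a quadratic in $x$ acquires a real root.
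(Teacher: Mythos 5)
Your proposal is correct and follows essentially the same route as the paper: invoke Theorem \ref{thm:Petit_factor}(i), write $b = x + yi$, separate $\sigma(b)b - a_1 b - a_0$ into real and imaginary parts, and reduce the existence of a right linear divisor to the solvability of a real quadratic via its discriminant, arriving at the same threshold $a_0 \geq -(\lambda^2+\mu^2)/4$ in (iii). The only difference is that you also supply an argument for part (ii), which the paper simply attributes to the cited reference; your argument there, including the count of factorisations, is sound.
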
 

\begin{proof}
\begin{itemize}
\item[(i)] We have $f(t)$ is reducible is equivalent to $\sigma(b)b = a$ for some $b = c + d i \in \mathbb{C}$ by Theorem \ref{thm:Petit_factor}, which is equivalent to $c^2 + d^2 = a$ for some $c, d \in \mathbb{R}$. Therefore if $0 > a \in \mathbb{R}$ or $a \in \mathbb{C} \setminus \mathbb{R}$, then $f(t)$ must be irreducible. On the other hand, if $0 < a \in \mathbb{R}$ then setting $b = \sqrt{a}$ gives $\sigma(b)b = b^2=a$ as required.
\item[(iii)] We have $f(t)$ is reducible if and only if $\sigma(b)b - a_1 b - a_0 = 0$ for some $b = c + d i \in \mathbb{C}$ by Theorem \ref{thm:Petit_factor}, if and only if
\begin{align*}
c^2 + d^2 - \lambda c + \mu d - a_0 - (\lambda d + \mu c)i = 0,
\end{align*}
if and only if $c^2 + d^2 - \lambda c + \mu d - a_0 = 0$ and $\lambda d + \mu c = 0$ for some $c, d \in \mathbb{R}$. Therefore $f(t)$ is reducible if and only if
$$\Big( 1 + \frac{\mu^2}{\lambda^2} \Big) c^2 + \Big( - \lambda - \frac{\mu^2}{\lambda} \Big) c - a_0 = 0,$$
for some $c \in \mathbb{R}$, if and only if
$$\Big( - \lambda - \frac{\mu^2}{\lambda} \Big) ^2 + 4 a_0 \Big( 1 + \frac{\mu^2}{\lambda^2} \Big) \geq 0,$$
if and only if $a_0 \geq - (\lambda^2 + \mu^2)/4$.
\end{itemize}
\end{proof}

\begin{lemma} \label{lem:f(bt)=q(bt)g(bt)}
Let $f(t) \in R = D[t;\sigma]$ and suppose $f(t) = q(t) g(t)$ for some $q(t), g(t) \in R$. Then $f(bt) = q(bt) g(bt)$ for all $b \in F = C \cap \mathrm{Fix}(\sigma)$.
\end{lemma}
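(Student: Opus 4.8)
The plan is to realise the substitution $t \mapsto bt$ as a ring endomorphism of $R = D[t;\sigma]$ and then apply multiplicativity to the given factorisation $f = qg$. Concretely, for $b \in F$ I would define $\phi_b : R \to R$ by $\phi_b\big(\sum_i a_i t^i\big) = \sum_i a_i (bt)^i$, so that $\phi_b$ fixes $D$ pointwise and sends $t$ to $bt$; by definition $\phi_b(f) = f(bt)$. Everything then reduces to showing $\phi_b$ is a ring homomorphism, since in that case $f(bt) = \phi_b(qg) = \phi_b(q)\phi_b(g) = q(bt)\,g(bt)$.

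First I would record the two consequences of $b \in F = C \cap \mathrm{Fix}(\sigma)$ that drive the argument: $b$ is central in $D$, and $\sigma(b) = b$, whence $\sigma^k(b) = b$ for all $k$ and $tb = \sigma(b)t = bt$. In particular $(bt)^i = b\,\sigma(b)\cdots\sigma^{i-1}(b)\,t^i = b^i t^i$, so $\phi_b$ simply rescales the degree-$i$ part of a polynomial by $b^i$ and lands inside $R$.

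The crux is multiplicativity of $\phi_b$. Additivity is immediate, so by bilinearity it suffices to check $\phi_b(a t^i \cdot c t^j) = \phi_b(a t^i)\,\phi_b(c t^j)$ for monomials. Using $t^i c = \sigma^i(c)t^i$ on the left gives $\phi_b(a\sigma^i(c)t^{i+j}) = a\sigma^i(c)\,b^{i+j}\,t^{i+j}$, while the right-hand side expands to $(a b^i t^i)(c b^j t^j) = a b^i \sigma^i(c b^j) t^{i+j} = a b^i \sigma^i(c)\,b^j\,t^{i+j}$, where $\sigma^i(b^j) = b^j$. These agree precisely because $b^i$ is central and so commutes past $\sigma^i(c)$. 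This is the step to watch: a naive substitution into a twisted polynomial ring is not multiplicative in general, and it is exactly the two defining properties of $F$ — centrality, to move powers of $b$ through the coefficients, and $\sigma$-invariance, to kill the twist on powers of $b$ — that are both needed here. Equivalently, one may invoke the universal property of $D[t;\sigma]$: a homomorphism fixing $D$ and sending $t \mapsto bt$ exists if and only if $(bt)a = \sigma(a)(bt)$ for all $a \in D$, and this identity follows at once from $b \in C$ since $(bt)a = b\sigma(a)t = \sigma(a)bt = \sigma(a)(bt)$.

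With $\phi_b$ established as a ring endomorphism, the conclusion is immediate: applying $\phi_b$ to $f(t) = q(t)g(t)$ yields $f(bt) = q(bt)\,g(bt)$ for every $b \in F$. The degenerate case $b = 0$ causes no trouble, since then $\phi_0$ is the evaluation sending $t \mapsto 0$ and the identity reduces to the obvious fact that the constant term of $qg$ is the product of the constant terms of $q$ and $g$.
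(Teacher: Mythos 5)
Your proof is correct and is essentially the same argument as the paper's: the multiplicativity check on monomials, $q_i t^i g_j t^j \mapsto q_i\sigma^i(g_j)b^{i+j}t^{i+j}$ using centrality and $\sigma$-invariance of $b$, is exactly the computation the paper performs directly on the expanded product $q(bt)g(bt)$. Packaging it as the statement that $t \mapsto bt$ induces a ring endomorphism of $R$ fixing $D$ is a clean (and slightly more conceptual) way to present the identical calculation.
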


\begin{proof}
Write $q(t) = \sum_{i=0}^{l} q_i t^i$, \ $g(t) = \sum_{j=0}^{n} g_j t^j$, then 
$$f(t) = q(t) g(t) = \sum_{i=0}^{l} \sum_{j=0}^{n} q_i t^i g_j t^j = \sum_{i=0}^{l} \sum_{j=0}^{n} q_i \sigma^i(g_j) t^{i+j},$$
and so
\begin{align*}
q(bt) g(bt) &= \sum_{i=0}^{l} q_i (bt)^i \sum_{j=0}^{n} g_j (bt)^j = \sum_{i=0}^{l} \sum_{j=0}^{n} q_i \sigma^i(g_j) b^{i+j} t^{i+j} \\
&= \sum_{i=0}^{l} \sum_{j=0}^{n} q_i \sigma^i(g_j) (bt)^{i+j} = f(bt), \\
\end{align*}
for all $b \in F$.
\end{proof}

The following result was stated as Exercise by Bourbaki in \cite[p.~344]{bourbaki1973elements} and proven in the special case where $\sigma$ is an automorphism of order $m$ in \cite[Proposition 3.7.5]{cohn1995skew}:

\begin{theorem} \label{thm:bourbaki}
Let $\sigma$ be an endomorphism of $D$, $f(t) = t^m - a \in R = D[t;\sigma]$ and suppose $F = C \cap \mathrm{Fix}(\sigma)$ contains a primitive $m^{\text{th}}$ root of unity. If $g(t) \in R$ is a monic irreducible polynomial dividing $f(t)$ on the right, then the degree $d$ of $g(t)$ divides $m$ and $f(t)$ is the product of $m/d$ polynomials of degree $d$.
\end{theorem}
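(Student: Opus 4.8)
The plan is to exploit the identity $f(\omega t) = f(t)$, where $\omega \in F$ is a fixed primitive $m^{\text{th}}$ root of unity. First I would record that the substitution $\alpha \colon R \to R$, $h(t) \mapsto h(\omega t)$, is a ring homomorphism fixing $D$ pointwise: this is exactly the multiplicativity proved in Lemma \ref{lem:f(bt)=q(bt)g(bt)}, applied with $b = \omega \in F^{\times}$. Since $\omega^{-1} \in F^{\times}$ furnishes a two-sided inverse $h(t) \mapsto h(\omega^{-1}t)$, the map $\alpha$ is an automorphism of $R$, and it has order exactly $m$ because $\omega$ is primitive. As $\omega$ is central in $R$ and fixed by $\sigma$, we have $(\omega t)^m = \omega^m t^m = t^m$, so $\alpha(f) = (\omega t)^m - a = t^m - a = f$. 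Consequently $\alpha$ carries factorisations of $f$ to factorisations of $f$: from $f = q\,g$ we get $f = \alpha(q)\,\alpha(g)$, so each conjugate $\alpha^k(g)$ is again a right divisor of $f$, irreducible (being the image of an irreducible under an automorphism) and of the same degree $d$.

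Next I would assemble these conjugates. Let $h$ be the monic least common left multiple of $\alpha^0(g), \ldots, \alpha^{m-1}(g)$, so that $Rh = \bigcap_{k=0}^{m-1} R\alpha^k(g)$; since every $\alpha^k(g)$ right-divides $f$, so does $h$. Because $\alpha$ sends $R\alpha^k(g)$ to $R\alpha^{k+1}(g)$ and $\alpha^m(g) = g$, the automorphism $\alpha$ permutes this finite family of left ideals cyclically, whence $Rh$ is $\alpha$-invariant. Comparing leading coefficients of the monic polynomial $h$ then gives $h(\omega t) = \omega^{\deg h}\,h(t)$, which forces $h_i = 0$ for every exponent $i \not\equiv \deg h \pmod m$. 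Since $h \,\vert_r\, f$ yields $\deg h \le m$, the only surviving possibilities are $h = t^{\deg h}$ with $\deg h < m$, or $\deg h = m$. The former cannot right-divide $t^m - a$ when $a \neq 0$ (the constant terms cannot match), so $\deg h = m$ and hence $h = f$. (If $a = 0$ the theorem is immediate, as then $f = t^m$.) Thus $\bigcap_{k} R\alpha^k(g) = Rf$.

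Finally I would pass to the finite-length left $R$-module $S_f = R/Rf$. Each $M_k := R\alpha^k(g)/Rf$ is a submodule whose quotient $S_f/M_k \cong R/R\alpha^k(g)$ is simple of $D$-dimension $d$, because $\alpha^k(g)$ is irreducible. The equality $\bigcap_k R\alpha^k(g) = Rf$ says precisely that $\bigcap_k M_k = 0$, so the diagonal map $S_f \hookrightarrow \bigoplus_{k} S_f/M_k$ is injective; the target is semisimple with all composition factors of dimension $d$, and therefore so is the submodule $S_f$. Hence $\dim_D S_f = m$ is a multiple of $d$, giving $d \mid m$, and $S_f$ admits a composition series of length $m/d$ in which every factor has dimension $d$. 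Reading this series back through the standard correspondence between composition series of $R/Rf$ and factorisations $f = p_1 \cdots p_{m/d}$ into irreducibles (with $\deg p_i$ equal to the dimension of the $i^{\text{th}}$ composition factor) shows that $f$ is a product of $m/d$ polynomials, each of degree $d$. I expect the crux of the argument to be this last passage: deducing the degree statement by bare least-common-multiple degree bookkeeping is awkward when the conjugates $\alpha^k(g)$ may be mutually similar, and the clean device is the observation that the intersection of the maximal submodules $M_k$ is zero, which makes $S_f$ semisimple with equidimensional composition factors.
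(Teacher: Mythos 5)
Your proposal is correct, and its core is the same as the paper's: you form the conjugates $g(\omega^k t)$, observe via Lemma \ref{lem:f(bt)=q(bt)g(bt)} that each one right-divides $f$, take the least common left multiple $h$ with $Rh=\bigcap_k Rg(\omega^k t)$, and show $h=f$ by exploiting $h(\omega t)\in Rh$ and comparing coefficients. (Your treatment of this last comparison is in fact slightly cleaner than the paper's: you dispose of the degenerate possibility $h=t^{\deg h}$ explicitly, whereas the paper's deduction $\omega^l=1$ from $\omega^l a_0=a_0$ tacitly uses $a_0\neq 0$.) Where you genuinely diverge is the endgame. The paper, having identified $f$ as the least common left multiple of the $g(\omega^k t)$, cites Ore's theorem that an lclm of the $g_i$ factors as a product of polynomials each similar to some $g_i$, and then uses that similar polynomials have equal degree. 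You instead pass to the left module $S_f=R/Rf$, note that $\bigcap_k M_k=0$ forces $S_f$ to embed in $\bigoplus_k R/R\alpha^k(g)$, conclude that $S_f$ is semisimple with all composition factors of $D$-dimension $d$, and read the factorisation of $f$ off a composition series. The two arguments are close cousins — Ore's lclm theorem is itself proved by this kind of module bookkeeping — but yours is self-contained modulo standard semisimplicity facts and makes the divisibility $d\mid m$ transparent as a dimension count, whereas the paper's is shorter at the cost of an external citation to \cite[p.~496]{ore1933theory}. Both are valid; no gap.
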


\begin{proof}
Let $g(t) \in R$ be a monic irreducible polynomial of degree $d$ dividing $f(t)$ on the right, and $\omega \in F$ be a primitive $m^{\text{th}}$ root of unity.

Define $g_{i} (t) = g(\omega^i t)$ for all $i \in \{ 0, \ldots, m-1 \}$. Then $\bigcap_{i=0}^{m-1} R g_{i}(t)$ is an ideal of $R$, and since $R$ is a left principle ideal domain, we have
\begin{equation} \label{eqn:bourbaki 1}
Rh(t) = \bigcap_{i=0}^{m-1} R g_{i}(t),
\end{equation}
for a suitably chosen $h(t) \in R$. Furthermore, we may assume $h(t)$ is monic, otherwise if $h(t)$ has leading coefficient $d \in D^{\times}$, then $Rh(t) = R(d^{-1}h(t))$. 

We show $f(t) \in Rh(t)$: As $g(t)$ right divides $f(t)$, we can write $f(t) = q(t)g(t)$ for some $q(t) \in R$. In addition, we have $(\omega t)^i = \omega^i t^i$ for all $i \in \{ 0, \ldots, m-1 \}$ because $\omega \in F$, therefore
$$f(\omega^i t) = \omega^{mi} t^m -a = t^m - a = f(t) = q(\omega^i t) g(\omega^i t),$$ 
by Lemma \ref{lem:f(bt)=q(bt)g(bt)} and so $g_{i}(t)$ right divides $f(t)$ for all $i \in \{ 0, \ldots, m-1 \}$. This means
$$f(t) \in \bigcap_{i=0}^{m-1} R g_{i}(t) = Rh(t),$$ 
in particular, $Rh(t)$ is not the zero ideal. 

We next show $h(\omega^i t) = h(t)$ for all $i \in \{ 0, \ldots , m-1 \}$: For simplicity we only do this for $i = 1$, the other cases are similar. Notice $h(t) \in \bigcap_{j=0}^{m-1} R g_{j}(t)$ by \eqref{eqn:bourbaki 1} and thus there exists $q_0(t), \ldots, q_{m-1}(t) \in R$ such that $h(t) = q_j(t) g_{j}(t)$, for all  $j \in \{ 0, \ldots , m-1 \}$. Therefore
$$h(\omega t) = q_{m-1}(\omega t) g_{m-1}(\omega t) = q_{m-1}(\omega t) g_0(t),$$
and
$$h(\omega t) = q_j(\omega t) g_{j}(\omega t) = q_j(\omega t) g_{j+1}(t) \in Rg_{j+1}(t),$$
for all $j \in \{ 0, \ldots , m-2 \}$ by Lemma \ref{lem:f(bt)=q(bt)g(bt)}, which implies
$$h(\omega t) \in \bigcap_{j=0}^{m-1} R g_{j}(t) = Rh(t).$$
As a result $h(\omega t) = k (t) h(t)$ for some $k(t) \in R$, and by comparing degrees, we conclude $0 \neq k(t) = k \in D$.
Suppose $h(t)$ has degree $l$ and write
$$h(t) = a_0 + \ldots + a_{l-1}t^{l-1} + t^l, \ a_j \in D,$$
here $Rg(t) \supseteq Rh(t)$ and $f(t) \in Rh(t)$ which yields $\mathrm{deg}(g(t)) = d \leq l \leq m$. Since $h(\omega t) = k h(t)$, we have $k t^l = (\omega t)^l = \big( \prod_{j=0}^{l-1} \sigma^j(\omega) \big) t^l = \omega^l t^l$ which implies $k = \omega^l$. Clearly, the coefficients $a_j$ must be zero for all $j \in \{ 1, \ldots, l-1 \}$, otherwise $a_j (\omega t)^j = k a_j t^j = \omega^l a_j t^j$ giving $\omega^j = \omega^l$, a contradiction as $\omega$ is a primitive $m^{\text{th}}$ root of unity. This means $h(t) = t^l + a_0$, and with
$$\omega^l t^l + a_0 = h(\omega t) = k h(t) = \omega^l (t^l + a_0) = \omega^l t^l + \omega^l a_0,$$
we obtain $\omega^l = 1$. This implies $l = m$ and $k = \omega^m = 1$, hence $h(\omega t) = h(t)$.

We next prove $h(t) = f(t)$: Now $f(t) \in Rh(t)$ implies $f(t) = t^m - a = p(t)(t^m + a_0)$ for some $p \in R$. Comparing degrees we see $p \in D^{\times}$, thus $t^m - a = p(t^m + a_0) = pt^m + p a_0$ which yields $p=1$ , $a_0 = -a$ and $f(t) = h(t)$.

Finally, $\bigcap_{i=0}^{m-1} R g_{i}(t) = Rf(t)$ is equivalent to $f(t)$ being the least common left multiple  of the $g_{i}(t)$, $i \in \{ 0, \ldots, m-1 \}$ \cite[p.~10]{jacobson1996finite}. As a result, we can write
$$f(t) = q_{i_r}(t) q_{i_{r-1}}(t) \cdots q_{i_1}(t),$$
by \cite[p.~496]{ore1933theory}, where $i_1 = 0 < i_2 < \ldots < i_r \leq m-1$ and each $q_{i_s}(t) \in R$ is similar to $g_{i_s}(t)$. Similar polynomials have the same degree \cite[p.~14]{jacobson1996finite} so $r = m/d$, and $f(t)$ factorises into $m/d$ irreducible polynomials of degree $d$.
\end{proof}

Theorem \ref{thm:bourbaki} implies the following result, which improves \cite[(19)]{Petit1966-1967} by making $\sigma^{m-1}(a) \neq \sigma^{m-1} (b) \cdots \sigma(b) b$ for all $b \in D$ a superfluous condition:

\begin{theorem} \label{thm:Petit(19)}
Suppose $m$ is prime, $\sigma$ is an endomorphism of $D$ and $F$ contains a primitive $m^{\text{th}}$ root of unity. Then $f(t) = t^m - a \in D[t;\sigma]$ is irreducible if and only if it has no right linear divisors, if and only if
$$a \neq \sigma^{m-1} (b) \cdots \sigma(b) b$$
 for all $b \in D$.
\end{theorem}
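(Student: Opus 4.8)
The plan is to prove the three stated conditions are equivalent by establishing the cycle: $f(t)$ irreducible $\Rightarrow$ $f(t)$ has no right linear divisors $\Rightarrow$ $a \neq \sigma^{m-1}(b) \cdots \sigma(b) b$ for all $b \in D$ $\Rightarrow$ $f(t)$ irreducible. The first implication is immediate, since $\mathrm{deg}(f) = m \geq 2$ means an irreducible $f(t)$ can have no proper right factors, in particular no linear ones.

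For the equivalence between the absence of right linear divisors and the condition on $a$, I would specialise Proposition \ref{prop:rightdivdegree1} to $f(t) = t^m - a$. Here all the coefficients $a_i$ vanish except the constant one, so the criterion $(t-b) \vert_r f(t)$ collapses to $N_m(b) = \sigma^{m-1}(b) \cdots \sigma(b) b = a$. Hence $f(t)$ admits a right linear divisor if and only if $a = \sigma^{m-1}(b) \cdots \sigma(b) b$ for some $b \in D$, and negating this statement yields the desired equivalence. This step is a routine substitution.

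The substantive step is showing that if $f(t)$ has no right linear divisor, then $f(t)$ is irreducible, which I would argue by contradiction. Suppose $f(t)$ is reducible. Using Theorem \ref{thm:Ore factorisation is similar in pairs}, factor $f(t)$ into irreducibles; since $f(t)$ is monic I may normalise each factor to be monic, and since $f(t)$ is reducible there are at least two factors, so the rightmost one is a monic irreducible right divisor $g(t)$ of degree $d$ with $1 \leq d < m$. Now I invoke Theorem \ref{thm:bourbaki}, whose hypotheses are satisfied because $F$ contains a primitive $m$-th root of unity: it forces $d \mid m$. As $m$ is prime and $d < m$, this leaves only $d = 1$, so $g(t)$ is a right linear divisor of $f(t)$, contradicting the hypothesis. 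Therefore $f(t)$ is irreducible.

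The main obstacle is this last implication, and its entire force comes from Theorem \ref{thm:bourbaki}: the root-of-unity hypothesis is precisely what guarantees that the degree of any irreducible right factor of $t^m - a$ divides $m$, after which the primality of $m$ finishes the argument. Without that divisibility constraint one could not rule out irreducible right factors of intermediate degree, and it is exactly this observation that yields the improvement over \cite[(19)]{Petit1966-1967}, allowing the separate left-divisor condition $\sigma^{m-1}(a) \neq \sigma^{m-1}(b) \cdots \sigma(b) b$ to be dropped as superfluous.
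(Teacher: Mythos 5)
Your proposal is correct and follows essentially the same route as the paper: both reduce the problem to showing that any monic irreducible right divisor of $t^m - a$ has degree dividing $m$ via Theorem \ref{thm:bourbaki}, use primality of $m$ to force that degree to be $1$ or $m$, and then conclude with Proposition \ref{prop:rightdivdegree1}. The only cosmetic difference is that you explicitly invoke Theorem \ref{thm:Ore factorisation is similar in pairs} to produce the irreducible right factor, which the paper leaves implicit.
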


\begin{proof}
Let $g(t) \in D[t;\sigma]$ be an irreducible polynomial of degree $d$ dividing $f(t)$ on the right. Without loss of generality $g(t)$ is monic, otherwise if $g(t)$ has leading coefficient $c \in D^{\times}$, then $c^{-1}g(t)$ is monic and also right divides $f(t)$. Thus $d$ divides $m$ by Theorem \ref{thm:bourbaki} and since $m$ is prime, either $d = m$, in which case $g(t) = f(t)$, or $d = 1$, which means $f(t)$ can be written as a product of $m$ linear factors. Therefore $f(t)$ is irreducible if and only if $(t-b) \nmid_r f(t)$ for all $b \in D$, if and only if $a \neq \sigma^{m-1} (b) \cdots \sigma(b) b$, for all $b \in D$ by Proposition \ref{prop:rightdivdegree1}.
\end{proof}

%
%

\begin{lemma} \label{quantum plane sigma(b(y))=b(qy)}
Let $K$ be a field, $y$ be an indeterminate, and $\sigma$ be the automorphism of $K(y)$ such that $\sigma \vert_K = \mathrm{id}$ and $\sigma(y) = qy$ for some $1 \neq q \in K^{\times}$. Let $b(y) \in K(y)$ and write $b(y) = c(y)/d(y)$ for some $c(y), d(y) \in K[y]$ with $d(y) \neq 0$. Then $\sigma^j(b(y)) = c(q^j y)/ d(q^jy)$ for all $j \in \mathbb{N}$.
\end{lemma}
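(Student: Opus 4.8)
The plan is to reduce the claim to the action of $\sigma$ on polynomials, where it amounts to a substitution, and then to bootstrap from polynomials to rational functions using that $\sigma^j$ is a field automorphism. The essential observation is that $\sigma$ fixes $K$ pointwise, so the only nontrivial data is the image of $y$.

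First I would establish that $\sigma^j(y) = q^j y$ for all $j \in \mathbb{N}$ by induction on $j$. The case $j = 1$ is the defining property of $\sigma$, and for the inductive step, since $q \in K^{\times}$ is fixed by $\sigma$ (because $\sigma \vert_K = \mathrm{id}$), I get $\sigma^{j+1}(y) = \sigma(\sigma^j(y)) = \sigma(q^j y) = q^j \sigma(y) = q^{j+1} y$. Next, for any polynomial $p(y) = \sum_i \alpha_i y^i \in K[y]$ with $\alpha_i \in K$, I use that $\sigma^j$ is a ring homomorphism fixing $K$ pointwise (as $\sigma \vert_K = \mathrm{id}$ immediately yields $\sigma^j \vert_K = \mathrm{id}$) to compute $\sigma^j(p(y)) = \sum_i \alpha_i (\sigma^j(y))^i = \sum_i \alpha_i (q^j y)^i = p(q^j y)$. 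Thus applying $\sigma^j$ to a polynomial is exactly the substitution $y \mapsto q^j y$.

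Finally, since $\sigma^j$ is a field automorphism, it is multiplicative and commutes with taking inverses, so applying it to $b(y) = c(y)/d(y)$ gives $\sigma^j(b(y)) = \sigma^j(c(y)) \, \sigma^j(d(y))^{-1} = c(q^j y)/d(q^j y)$, which is the desired identity. There is no serious obstacle here: the statement is a routine verification. The only point that warrants a line of justification is confirming that $\sigma^j$ still fixes $K$ and acts as the substitution $y \mapsto q^j y$; both are handled by the induction in the previous step, after which the passage from polynomials to quotients is immediate from multiplicativity.
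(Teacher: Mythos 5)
Your proposal is correct and follows essentially the same route as the paper: the paper likewise expands $c(y)$ and $d(y)$ coefficient-wise, uses that $\sigma^j$ fixes $K$ and sends $y$ to $q^j y$, and passes to the quotient by multiplicativity of the field automorphism. Your version merely makes explicit the induction giving $\sigma^j(y)=q^jy$, which the paper leaves implicit.
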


\begin{proof}
Write $c(y) = \lambda_0 + \lambda_1 y + \ldots + \lambda_l y^l$ and $d(y) = \mu_0 + \mu_1 y + \ldots + \mu_n y^n$ for some $\lambda_i, \mu_j \in K$, then
\begin{align*}
\sigma&^j(b(y)) = \frac{\sigma^j(c(y))}{\sigma^j(d(y))} 
= \frac{\sigma^j(\lambda_0) + \sigma^j(\lambda_1 y) + \ldots + \sigma^j(\lambda_l y^l)}{\sigma^j(\mu_0) + \sigma^j(\mu_1 y) + \ldots + \sigma^j(\mu_n y^n)} \\
&= \frac{\lambda_0 + \lambda_1 \sigma^j(y) + \ldots + \lambda_l \sigma^j(y^l)}{\mu_0 + \mu_1 \sigma^j(y) + \ldots + \mu_n \sigma^j(y^n)} = \frac{\lambda_0 + \lambda_1 q^j y + \ldots + \lambda_l (q^j y)^l}{\mu_0 + \mu_1 q^j y + \ldots + \mu_n (q^j y)^n} = b(q^j y).
\end{align*}
\end{proof}

\noindent For $a(y) \in K[y]$ denote $\mathrm{deg}_y (a(y))$ the degree of $a(y)$ as a polynomial in $y$.

\begin{corollary} \label{cor:t^m-a(y) in K(y)[t;sigma] irreducible}
Let $K(y)$ and $\sigma$ be as in Lemma \ref{quantum plane sigma(b(y))=b(qy)}. Suppose $m$ is prime, $K$ contains a primitive $m^{\text{th}}$ root of unity, and $f(t) = t^m - a(y) \in K(y)[t;\sigma]$ where $0 \neq a(y) \in K[y]$ is such that $m \nmid \mathrm{deg}_y (a(y))$. Then $f(t)$ is irreducible in $K(y)[t;\sigma]$.
\end{corollary}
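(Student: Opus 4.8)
The plan is to apply Theorem~\ref{thm:Petit(19)}. Since $D = K(y)$ is commutative its center is $C = K(y)$, and as $\delta = 0$ we have $F = C \cap \mathrm{Fix}(\sigma) = \mathrm{Fix}(\sigma)$. Because $\sigma\vert_K = \mathrm{id}$, the field $K$ is contained in $F$, so $F$ contains the primitive $m^{\text{th}}$ root of unity that $K$ does by hypothesis. As $m$ is prime and $\sigma$ is an endomorphism of $K(y)$, all hypotheses of Theorem~\ref{thm:Petit(19)} are met. It therefore suffices to show that
$$a(y) \neq \sigma^{m-1}(b(y)) \cdots \sigma(b(y)) b(y) = \prod_{j=0}^{m-1} \sigma^j(b(y))$$
for every $b(y) \in K(y)$, where I have used commutativity of $K(y)$ to rewrite the product. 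Write $N_m(b(y)) = \prod_{j=0}^{m-1} \sigma^j(b(y))$.

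The key idea, mirroring the degree computation in the proof of Corollary~\ref{cor:t^2-a(y) in K(y)[t;sigma] irreducibility}, is to track the degree in $y$. Recall that for any nonzero rational function $r = p(y)/q(y) \in K(y)^{\times}$ the integer $\deg_y(r) := \deg_y(p(y)) - \deg_y(q(y))$ is independent of the chosen representation and defines a group homomorphism $\deg_y : K(y)^{\times} \to \mathbb{Z}$; in particular it is additive on products. I would first record that $\sigma$ preserves this degree: writing $b(y) = c(y)/d(y)$, Lemma~\ref{quantum plane sigma(b(y))=b(qy)} gives $\sigma^j(b(y)) = c(q^j y)/d(q^j y)$, and since the substitution $y \mapsto q^j y$ with $q \in K^{\times}$ leaves the $y$-degree of any polynomial unchanged, we obtain $\deg_y(\sigma^j(b(y))) = \deg_y(b(y))$ for all $j$.

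Combining these two observations yields
$$\deg_y\big(N_m(b(y))\big) = \sum_{j=0}^{m-1} \deg_y\big(\sigma^j(b(y))\big) = m\,\deg_y(b(y)),$$
which is a multiple of $m$. Now suppose, for contradiction, that $N_m(b(y)) = a(y)$ for some $b(y) \in K(y)$. Since $a(y) \neq 0$ we must have $b(y) \neq 0$, and then $\deg_y(a(y)) = \deg_y(N_m(b(y))) = m\,\deg_y(b(y))$ is divisible by $m$, contradicting the hypothesis $m \nmid \deg_y(a(y))$. Hence no such $b(y)$ exists and $f(t)$ is irreducible.

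The only step requiring genuine care is the claim that $\deg_y$ is a well-defined homomorphism on $K(y)^{\times}$ taking the value $\deg_y(p)-\deg_y(q)$ on any fraction $p/q$: this is precisely where the potential cancellation between the numerator $\prod_j c(q^j y)$ and the denominator $\prod_j d(q^j y)$ of $N_m(b(y))$ is absorbed, so that one need not first verify by hand that $N_m(b(y))$ lies in $K[y]$, as was done in Corollary~\ref{cor:t^2-a(y) in K(y)[t;sigma] irreducibility}. Everything else is a direct degree count.
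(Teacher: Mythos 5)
Your proof is correct and follows essentially the same route as the paper: both reduce the claim to Theorem \ref{thm:Petit(19)} and then show $\deg_y(N_m(b(y)))$ is a multiple of $m$ via Lemma \ref{quantum plane sigma(b(y))=b(qy)}. Your only departure is packaging the degree count as the standard valuation homomorphism $\deg_y : K(y)^{\times} \to \mathbb{Z}$, which neatly absorbs the paper's explicit case split on whether $N_m(b(y))$ lies in $K[y]$; this is a legitimate minor streamlining, not a different argument.
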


\begin{proof}
We have $f(t)$ is irreducible if and only if
$$N_m(b(y)) = \sigma^{m-1}(b(y)) \cdots \sigma(b(y)) b(y) \neq a(y)$$
for all $b(y) \in K(y)$ by Theorem \ref{thm:Petit(19)}. Given $b(y) \in K(y)$, write $b(y) = c(y)/d(y)$ for some $c(y), d(y) \in K[y]$ with $d(y) \neq 0$, then
\begin{align*}
N_m(b(y)) &= \frac{c(q^{m-1}y) \cdots c(qy) c(y)}{d(q^{m-1}y) \cdots d(qy) d(y)}
\end{align*}
by Lemma \ref{quantum plane sigma(b(y))=b(qy)}. If $N_m(y) \notin K[y]$, we immediately conclude $N_m(b(y)) \neq a(y)$ because $a(y) \in K[y]$. Conversely if $N_m(b(y)) \in K[y]$, then
$$\mathrm{deg}_y(N_m(b(y))) = m \mathrm{deg}_y(c(y)) - m \mathrm{deg}_y(d(y))$$
for all $0 \neq b(y) \in K(y)$. Therefore $\mathrm{deg}_y(N_m(b(y)))$ is a multiple of $m$, thus $N_m(b(y)) \neq a(y)$ for all $b(y) \in K(y)$, and so $f(t)$ is irreducible.
\end{proof}

Recall $N_{K/F}(K^{\times}) \subseteq F^{\times}$ for any finite field extension $K/F$, therefore we obtain the following Corollary of Theorem's \ref{thm:Petit_factor} and \ref{thm:Petit(19)}:

\begin{corollary} \label{cor:nonassociative cyclic algebra is division}
Let $K/F$ be a cyclic Galois field extension of degree $m$ with $\mathrm{Gal}(K/F) = \langle \sigma \rangle$.
\begin{itemize}
\item[(i)] If $m=2$ then $f(t) = t^2 - a \in K[t;\sigma]$ is irreducible for all $a \in K \setminus F$.
\item[(ii)] If $m=3$ then $f(t) = t^3 - a \in K[t;\sigma]$ is irreducible for all $a \in K \setminus F$.
\item[(iii)] If $m$ is prime and $F$ contains a primitive $m^{\text{th}}$ root of unity then $f(t) = t^m - a \in K[t;\sigma]$ is irreducible for all $a \in K \setminus F$.
\end{itemize}
\end{corollary}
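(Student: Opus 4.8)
The unifying idea behind all three parts is that, because $K$ is a \emph{commutative} field and $K/F$ is cyclic Galois with $\mathrm{Gal}(K/F) = \langle \sigma \rangle$, the product of the $\sigma$-conjugates of any $b \in K$ is exactly the field norm:
\begin{equation*}
\sigma^{m-1}(b) \cdots \sigma(b)\, b = b\, \sigma(b) \cdots \sigma^{m-1}(b) = N_{K/F}(b),
\end{equation*}
using the cyclic formula for $N_{K/F}$ recorded in Section \ref{section:Some Basic Definitions}. The crucial structural fact is then the one recalled just before the statement, namely $N_{K/F}(K^{\times}) \subseteq F^{\times}$, so that $N_{K/F}(b) \in F$ for every $b \in K$. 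Hence once we know that $f(t) = t^m - a$ is irreducible precisely when $a$ avoids all such norms, the hypothesis $a \in K \setminus F$ forces irreducibility immediately, since $a \notin F$ while every candidate value lies in $F$.

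The plan is therefore to reduce each part to the appropriate irreducibility criterion already established, and in each case to observe that the relevant expression in $b$ is a field norm. For part (i) I would apply Theorem \ref{thm:Petit_factor}(i) with $a_1 = 0$ and $a_0 = a$: it gives that $t^2 - a$ is irreducible if and only if $\sigma(b)\,b \neq a$ for all $b \in K$; but $\sigma(b)\,b = N_{K/F}(b) \in F$, so no $b$ can realise $a \in K \setminus F$. For part (ii) I would invoke Corollary \ref{cor:Irreducibility t^3-a} (valid since here $\sigma$ is an automorphism of the field $K$), which says $t^3 - a$ is irreducible if and only if $\sigma^2(b)\sigma(b)\,b \neq a$ for all $b$; again the left-hand side is $N_{K/F}(b) \in F$, and the conclusion follows.

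For part (iii) I would use Theorem \ref{thm:Petit(19)}, whose hypotheses ($m$ prime, $\sigma$ an endomorphism of $D = K$, and $F$ containing a primitive $m^{\text{th}}$ root of unity) match those assumed here. That theorem states $t^m - a$ is irreducible if and only if $a \neq \sigma^{m-1}(b) \cdots \sigma(b)\,b$ for all $b \in K$, and the right-hand side is once more $N_{K/F}(b) \in F$; since $a \notin F$, irreducibility holds. I expect no genuine obstacle in this argument: the entire content is the identification of each product of conjugates with the $F$-valued field norm, together with the fact that cases $m = 2, 3$ need no root-of-unity hypothesis because the degree-two and degree-three criteria already characterise irreducibility via the absence of linear factors, whereas the general prime case relies on Theorem \ref{thm:bourbaki} (through Theorem \ref{thm:Petit(19)}) to rule out intermediate-degree factors, which is why the primitive root of unity is required there.
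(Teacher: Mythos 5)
Your proposal is correct and follows exactly the paper's own argument: identify $\sigma^{m-1}(b)\cdots\sigma(b)b$ with $N_{K/F}(b)\in F$ and then apply Theorem \ref{thm:Petit_factor}(i), Corollary \ref{cor:Irreducibility t^3-a}, and Theorem \ref{thm:Petit(19)} for the three cases respectively, noting that $a\in K\setminus F$ can never equal an $F$-valued norm. No issues.
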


\begin{proof}
We have $\sigma^{m-1}(b) \cdots \sigma(b)b = N_{K/F}(b) \in F$, for all $b \in K$, hence the result follows by Corollary \ref{cor:Irreducibility t^3-a} and Theorems \ref{thm:Petit_factor} and \ref{thm:Petit(19)}.
\end{proof}

Recently in \cite[Theorem 3.1]{bergen2015factorizations}, it was shown that in the special case where $K$ is an algebraically closed field and $\sigma$ is an automorphism of $K$ of order $n \geq 2$, that every non-constant reducible skew polynomial in $K[t;\sigma]$ can be written as a product of irreducible skew polynomials of degree less than or equal to $n$. 
Notice that in this case, the Artin-Schreier Theorem implies $\mathrm{char}(K) = 0$ and $n=2$ \cite[p.~242]{lam2013first}. Therefore we can immediately improve \cite[Theorem 3.1]{bergen2015factorizations} to the following:

\begin{theorem} \label{thm:algebraically closed factorisation}
Let $K$ be an algebraically closed field and $\sigma$ be an automorphism of $K$ of order $n \geq 2$.  Then $\mathrm{char}(K) = 0$, $n = 2$ and every non-constant reducible skew polynomial in $K[t;\sigma]$ can be written as a product of linear and irreducible quadratic skew polynomials.
\end{theorem}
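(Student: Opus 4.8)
The plan is to reduce the entire statement to the two results already quoted in the surrounding text: the factorisation theorem of Bergen \cite[Theorem 3.1]{bergen2015factorizations} and the Artin--Schreier theorem \cite[p.~242]{lam2013first}. The only genuine work is to extract, from the order-$n$ automorphism $\sigma$, a finite field extension to which Artin--Schreier can be applied; everything else is a direct substitution.

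First I would set $F = \mathrm{Fix}(\sigma)$ and invoke Artin's theorem on the fixed field of a finite group of automorphisms: since $\langle \sigma \rangle$ is a finite group of automorphisms of $K$ of order $n$, the extension $K/F$ is Galois with $\mathrm{Gal}(K/F) = \langle \sigma \rangle$, and in particular $[K:F] = n$. This is the step where the hypothesis that $\sigma$ has \emph{finite} order is essential, as it is what produces an honest finite extension $K/F$ out of the automorphism.

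Next, since $K$ is algebraically closed and $1 < [K:F] = n < \infty$, the Artin--Schreier theorem applies and forces $[K:F] = 2$ together with $\mathrm{char}(K) = 0$. In particular $n = 2$, which already establishes the first two assertions of the theorem.

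Finally, I would feed $n = 2$ into Bergen's factorisation result: every non-constant reducible skew polynomial in $K[t;\sigma]$ is a product of irreducible skew polynomials of degree at most $n = 2$. Each such factor is therefore either linear or an irreducible quadratic, which is exactly the claimed decomposition. I do not anticipate any real obstacle here; the whole point of the theorem is the observation (already flagged immediately before the statement) that Artin--Schreier pins $n$ down to $2$, after which Bergen's theorem delivers the conclusion verbatim. The one place to be careful is not to conflate the order of $\sigma$ as an automorphism with the degree $[K:F]$ without explicitly citing Artin's fixed-field theorem to justify their equality.
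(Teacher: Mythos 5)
Your proposal is correct and follows exactly the route the paper takes: the remark preceding the theorem applies the Artin--Schreier theorem to the finite extension $K/\mathrm{Fix}(\sigma)$ to force $\mathrm{char}(K)=0$ and $n=2$, and then substitutes $n=2$ into Bergen's factorisation result. Your explicit appeal to Artin's fixed-field theorem to identify $[K:\mathrm{Fix}(\sigma)]$ with the order of $\sigma$ is a detail the paper leaves implicit, but it is the same argument.
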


\vspace*{4mm}
We now extend some of our previous arguments to find criteria for skew polynomials of degree $4$ to be irreducible. We will see that the conditions for $f(t)$ to be irreducible become complicated when $f(t)$ has degree $4$.

Suppose $\sigma$ is an automorphism of $D$ and $f(t) = t^4 - a_3 t^3 - a_2 t^2 - a_1 t - a_0 \in R = D[t;\sigma]$. Then either $f(t)$ is irreducible, $f(t)$ is divisible by a linear factor from the right, from the left, or $f(t) = g (t) h(t)$ for some $g(t), h(t) \in R$ of degree $2$. In Propositions \ref{prop:rightdivdegree1} and \ref{prop:leftdivdegree1} we computed the remainders after dividing $f(t)$ by a linear polynomial on the right and the left. In order to obtain irreducibility criteria for $f(t)$, we wish to find the remainder after dividing $f(t)$ by $t^2 - c t - d \ , \ (c, d \in D)$ on the right. To do this we use the identities
\begin{equation} \label{eqn:degree 4 t^2 identity 1}
t^2 = (t^2 - c t - d) + (c t + d),
\end{equation}
\begin{equation} \label{eqn:degree 4 t^2 identity 2}
t^3 = (t + \sigma(c)) \big( t^2 - c t - d \big) + \big( \sigma(d) + \sigma(c)c \big) t + \sigma(c) d,
\end{equation}
and
\begin{equation} \label{eqn:degree 4 t^2 identity 3}
\begin{split}
t^4 &= \big( t^2 + \sigma^2 (c) t + \sigma^2 (d) + \sigma^2 (c) \sigma(c) \big) \big( t^2 - c t - d \big) \\ &+ \big( \sigma^2 (c) \sigma(c) c + \sigma^2 (d)c + \sigma^2 (c) \sigma(d) \big) t + \sigma^2 (d)d + \sigma^2 (c) \sigma(c) d.
\end{split}
\end{equation}
If we define
$$M_0 (c , d)(t) = 1, \ M_1 (c , d)(t) = t, \ M_2 (c , d)(t) = c t + d$$
$$M_3 (c , d)(t) = \big( \sigma(d) + \sigma(c)c \big)t + \sigma(c) d,$$
$$M_4 (c ,d)(t) = \big( \sigma^2 (c) \sigma(c) c + \sigma^2 (d)c + \sigma^2 (c) \sigma(d) \big) t + \sigma^2 (d)d + \sigma^2 (c) \sigma(c) d,$$
then multiplying \eqref{eqn:degree 4 t^2 identity 1}, \eqref{eqn:degree 4 t^2 identity 2} and \eqref{eqn:degree 4 t^2 identity 3} on the left by $a_i$ and summing over $i$ yields
$$f(t) = q(t) \big( t^2 - c t - d \big) + M_4 (c , d)(t) - \sum_{i=0}^{3} a_i M_i (c , d )(t)$$
for some $q(t) \in R$. This means the remainder after dividing $f(t)$ on the right by $(t^2 - c t - d)$ is
$$M_4 (c ,d )(t) - \sum_{i=0}^3 a_i M_i (c , d )(t),$$
which evidently implies:

\begin{proposition} \label{prop:degree 4 right divide by quadratic}
$(t^2 - c t - d) \vert_r f(t)$ is equivalent to
$$\sigma^2 (c) \sigma(c)c + \sigma^2 (d)c + \sigma^2 (c) \sigma(d) - a_3 \big( \sigma(d) + \sigma(c)c \big) - a_2 c - a_1 = 0,$$
and
$$\sigma^2 (d)d + \sigma^2 (c) \sigma(c) d - a_3 \sigma(c)d - a_2 d - a_0 = 0.$$
\end{proposition}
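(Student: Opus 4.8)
The plan is to obtain the two conditions directly from the remainder after right division by $t^2 - ct - d$, which has in effect already been assembled in the discussion preceding the statement. Multiplying the identities \eqref{eqn:degree 4 t^2 identity 1}, \eqref{eqn:degree 4 t^2 identity 2} and \eqref{eqn:degree 4 t^2 identity 3} on the left by the coefficients of $f(t)$ and summing, exactly as carried out there, yields
\[
f(t) = q(t)\big(t^2 - ct - d\big) + M_4(c,d)(t) - \sum_{i=0}^{3} a_i M_i(c,d)(t)
\]
for some $q(t) \in R$, where the trailing term has degree at most $1$. Since $D$ is a division ring, right division in $R = D[t;\sigma]$ produces a unique remainder of degree strictly less than $2$, so this trailing term is precisely that remainder. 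Hence $(t^2 - ct - d)\vert_r f(t)$ is equivalent to this remainder being the zero polynomial.

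The key observation I would then use is that a skew polynomial vanishes if and only if all of its coefficients vanish. As the remainder $M_4(c,d)(t) - \sum_{i=0}^{3} a_i M_i(c,d)(t)$ has degree at most $1$, it is zero precisely when both its coefficient of $t$ and its constant term are zero. So the proof reduces to reading off these two coefficients from the explicit expressions for $M_0(c,d)(t), \ldots, M_4(c,d)(t)$ recorded just above the statement, and setting each to zero.

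Carrying out the extraction, the coefficient of $t$ is collected from the linear parts of $M_4(c,d)(t)$, $a_3 M_3(c,d)(t)$, $a_2 M_2(c,d)(t)$ and $a_1 M_1(c,d)(t)$ (with $M_0$ contributing nothing), which gives the first displayed equation; the constant term is collected from the constant parts of $M_4(c,d)(t)$, $a_3 M_3(c,d)(t)$, $a_2 M_2(c,d)(t)$ and $a_0 M_0(c,d)(t)$ (with $M_1$ contributing nothing), which gives the second. I expect no genuine obstacle here: the only point demanding care is keeping each twist $\sigma$ and $\sigma^2$ attached to its correct coefficient while separating the degree-$1$ and degree-$0$ parts, a transcription issue rather than a mathematical one, which I would guard against by checking each $M_i(c,d)(t)$ term by term against its defining formula.
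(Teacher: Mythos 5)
Your proposal is correct and follows exactly the route the paper takes: the remainder after right division by $t^2 - ct - d$ is $M_4(c,d)(t) - \sum_{i=0}^{3} a_i M_i(c,d)(t)$, and since this has degree at most $1$, equating its $t$-coefficient and constant term to zero yields the two displayed conditions (the paper simply states this follows ``evidently'' from the preceding computation). Your coefficient bookkeeping, including which $M_i$ contribute to each part, matches the paper's formulas.
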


Together Propositions \ref{prop:rightdivdegree1}, \ref{prop:leftdivdegree1} and \ref{prop:degree 4 right divide by quadratic} yield:

\begin{theorem} \label{thm:degree 4 irreducibility criteria}
$f(t)$ is irreducible if and only if 
\begin{equation} \label{eqn:degree 4 irreducible 1}
\sigma^3 (b) \sigma^2 (b) \sigma(b) b + a_3 \sigma^2 (b) \sigma(b) b + a_2 \sigma(b) b + a_1 b + a_0 \neq 0,
\end{equation}
and
\begin{equation} \label{eqn:degree 4 irreducible 2}
\begin{split}
&\sigma^3 (b) \sigma^2 (b) \sigma (b) b + \sigma^3 (b) \sigma^2 (b) \sigma (b) a_3 \\ &+ \sigma^3 (b) \sigma^2 (b) \sigma (a_2) + \sigma^3 (b) \sigma^2 (a_1) + \sigma^3 (a_0) \neq 0,
\end{split}
\end{equation}
for all $b \in D$, and for every $ c , d \in D$, we have
\begin{equation} \label{eqn:degree 4 irreducible 3}
\sigma^2 (c) \sigma(c)c + \sigma^2 (d)c + \sigma^2 (c) \sigma(d) + a_3 (\sigma(d) + \sigma(c)c) + a_2 c + a_1 \neq 0,
\end{equation}
 or
\begin{equation} \label{eqn:degree 4 irreducible 4}
\sigma^2 (d)d + \sigma^2 (c) \sigma(c) d + a_3 \sigma(c)d + a_2 d + a_0 \neq 0.
\end{equation}
i.e., $f(t)$ is irreducible if and only if \eqref{eqn:degree 4 irreducible 1} and \eqref{eqn:degree 4 irreducible 2} and (\eqref{eqn:degree 4 irreducible 3} or \eqref{eqn:degree 4 irreducible 4}) holds.
\end{theorem}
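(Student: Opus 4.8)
The plan is to exploit the fact that, since $\deg(f(t)) = 4$, any nontrivial factorisation $f(t) = p(t)q(t)$ into factors of positive degree must have $(\deg p, \deg q) \in \{(1,3),(2,2),(3,1)\}$, and any factorisation into more than two irreducibles can be grouped into such a product. Hence $f(t)$ is reducible if and only if at least one of the following occurs: $f(t)$ has a monic right linear divisor $t-b$ (the case $(3,1)$), $f(t)$ has a monic left linear divisor $t-b$ (the case $(1,3)$), or $f(t)$ has a monic degree-two right divisor $t^2 - ct - d$ (the case $(2,2)$, in which the right-hand factor of degree $2$ is exactly such a divisor). Only right divisibility by a quadratic is needed here, since in every $(2,2)$-factorisation the right-hand factor is a degree-two right divisor. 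Thus $f(t)$ is irreducible precisely when none of these three possibilities holds, and I would note that both linear conditions are genuinely required: unlike the situation of Corollary \ref{cor:left iff right divisor}, for a general $f(t)$ with non-zero lower coefficients left and right linear divisibility are independent.

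Each possibility is then controlled by a division proposition already established. First I would invoke Proposition \ref{prop:rightdivdegree1} with $m=4$: writing out $N_4(b) - a_3 N_3(b) - a_2 N_2(b) - a_1 N_1(b) - a_0$, the condition $(t-b)\nmid_r f(t)$ for all $b\in D$ is exactly \eqref{eqn:degree 4 irreducible 1}. Next, Proposition \ref{prop:leftdivdegree1} gives the remainder after left division by $t-b$ in terms of the $M_i(b)$; to put this into the stated form I would apply $\sigma^3$ to the vanishing condition, exactly as $\sigma^2$ was applied in the proof of Theorem \ref{thm:Petit_factor}(ii), which converts $(t-b)\nmid_l f(t)$ for all $b$ into \eqref{eqn:degree 4 irreducible 2}. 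Finally, Proposition \ref{prop:degree 4 right divide by quadratic} states that $(t^2-ct-d)\vert_r f(t)$ is equivalent to the simultaneous vanishing of the two displayed expressions (the $t^1$- and $t^0$-coefficients of the remainder); therefore $f(t)$ has no quadratic right divisor if and only if, for every $c,d\in D$, at least one of these two expressions is non-zero, that is, \eqref{eqn:degree 4 irreducible 3} or \eqref{eqn:degree 4 irreducible 4} holds.

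Combining the three equivalences, $f(t)$ is irreducible if and only if \eqref{eqn:degree 4 irreducible 1} and \eqref{eqn:degree 4 irreducible 2} and (\eqref{eqn:degree 4 irreducible 3} or \eqref{eqn:degree 4 irreducible 4}) hold for all relevant $b$, $c$, $d$, as claimed. I expect the genuinely routine content to dominate: the only real care is the sign bookkeeping when negating the remainder-vanishing conditions of the three propositions and when applying $\sigma^3$ to the left-division remainder to match the displayed inequalities. The one conceptual point worth stating explicitly is the exhaustiveness and non-redundancy of the case split — in particular that a quadratic \emph{right} divisor alone detects every $(2,2)$-factorisation, so that no separate quadratic left-divisor condition is needed.
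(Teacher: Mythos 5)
Your proposal is correct and follows essentially the same route as the paper's proof: irreducibility of the quartic is reduced to the absence of right linear, left linear, and right quadratic divisors, each of which is then converted via Propositions \ref{prop:rightdivdegree1}, \ref{prop:leftdivdegree1} (with $\sigma^3$ applied, as in Theorem \ref{thm:Petit_factor}(ii)) and \ref{prop:degree 4 right divide by quadratic} respectively. Your remark that the sign bookkeeping is the only delicate point is well placed: negating those propositions literally produces the conditions with minus signs on the $a_i$-terms, whereas the displayed inequalities \eqref{eqn:degree 4 irreducible 1}--\eqref{eqn:degree 4 irreducible 4} carry plus signs, a discrepancy internal to the paper rather than a flaw in your argument.
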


\begin{proof}
$f(t)$ is irreducible if and only if $(t-b) \nmid_r f(t)$ for all $b \in D$, $(t-b) \nmid_l f(t)$ for all $b \in D$ and $(t^2 - c t - d) \nmid_r f(t)$ for all $c, d \in D$. Therefore the result follows from Propositions \ref{prop:rightdivdegree1}, \ref{prop:leftdivdegree1} and \ref{prop:degree 4 right divide by quadratic}.
\end{proof}

We briefly consider the special case where $f(t)$ has the form $f(t) = t^4 - a \in R$:

\begin{lemma} \label{lem:t^4-afactorisation1}
Let $f(t) = t^4 - a \in R$. Suppose $(t-b) \vert_r f(t)$, then
$$f(t) = (t + \sigma^3(b))(t^2 + \sigma^2(b) \sigma(b))(t - b),$$
and
$$f(t) = (t^2 + \sigma^3(b) \sigma^2(b))(t + \sigma(b))(t-b),$$
are factorisations of $f(t)$. In particular $(t + \sigma(b))(t-b) = t^2 -\sigma(b)b$ also right divides $f(t)$.
\end{lemma}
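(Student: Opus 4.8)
The plan is to convert the divisibility hypothesis into a single explicit equation and then verify both displayed factorisations by direct expansion, using only the commutation rule $tc = \sigma(c)t$ (equivalently $t^k c = \sigma^k(c)t^k$) valid in $R = D[t;\sigma]$. The hypothesis will be invoked only at the very end, to rewrite the surviving constant term as $a$.

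First I would apply Proposition \ref{prop:rightdivdegree1} to $f(t) = t^4 - a$. Here the coefficients are $a_0 = a$ and $a_1 = a_2 = a_3 = 0$, so the criterion $(t-b) \vert_r f(t)$ collapses to $N_4(b) = a$, that is $\sigma^3(b)\sigma^2(b)\sigma(b)b = a$. This identity is the only fact about $b$ that the proof needs.

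Next I would verify the second, shorter factorisation, since doing so also yields the final \emph{in particular} claim. Expanding $(t + \sigma(b))(t-b)$ and using $tb = \sigma(b)t$, the two middle terms cancel, giving $(t + \sigma(b))(t-b) = t^2 - \sigma(b)b$; this already proves the last sentence of the lemma. Multiplying on the left by $t^2 + \sigma^3(b)\sigma^2(b)$ and using $t^2\big(\sigma(b)b\big) = \sigma^3(b)\sigma^2(b)t^2$, the two $t^2$-terms cancel and the constant term becomes $-\sigma^3(b)\sigma^2(b)\sigma(b)b = -a$ by the hypothesis, so the product equals $t^4 - a = f(t)$. For the first factorisation I would expand from the inside out: compute $(t^2 + \sigma^2(b)\sigma(b))(t-b)$, then multiply on the left by $t + \sigma^3(b)$, again moving each power of $t$ past the coefficients via $t^k c = \sigma^k(c)t^k$. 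The mechanism is a telescoping cancellation: the $t^3$-, $t^2$- and $t$-coefficients produced by the $t$-part of the left factor are exactly cancelled by those produced by the $\sigma^3(b)$-part, leaving only $t^4$ and the constant $-\sigma^3(b)\sigma^2(b)\sigma(b)b = -a$.

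Since the computations are elementary, there is no genuine obstacle beyond bookkeeping. The only points demanding care are applying $\sigma$ the correct number of times when commuting $t$ past each coefficient, and tracking the sign and cancellation pattern of the intermediate terms so that all middle powers of $t$ vanish.
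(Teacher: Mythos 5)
Your proposal is correct and follows essentially the same route as the paper: the paper's proof simply says that multiplying out the factorisations gives $t^4 - \sigma^3(b)\sigma^2(b)\sigma(b)b$, which equals $f(t)$ by Proposition \ref{prop:rightdivdegree1}. You supply the same expansion in more detail, with the same use of the right-division criterion to identify the constant term with $-a$.
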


\begin{proof}
Multiplying out these factorisations gives $t^4 - \sigma^3(b) \sigma^2(b) \sigma(b) b$ which is equal to $f(t)$ by Proposition \ref{prop:rightdivdegree1}.
\end{proof}

Lemma \ref{lem:t^4-afactorisation1} implies that if $f(t) = t^4 - a$ has a right linear divisor then it also has a right quadratic divisor. Therefore in this case Theorem \ref{thm:degree 4 irreducibility criteria} simplifies to:

\begin{theorem} \label{thm:t^4-a irreducibility criteria delta=0}
$f(t) = t^4 - a \in R$ is reducible if and only if 
$$\sigma^2 (c) \sigma(c)c + \sigma^2 (d)c + \sigma^2 (c) \sigma(d) = 0 \quad \text{and} \quad \sigma^2 (d)d + \sigma^2 (c) \sigma(c) d =  a,$$
for some $c, d \in D.$
\end{theorem}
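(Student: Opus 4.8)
The plan is to reduce the reducibility of $f(t) = t^4 - a$ entirely to the existence of a monic right quadratic divisor, using the special structure of $t^4 - a$ to discard the linear-divisor cases of Theorem \ref{thm:degree 4 irreducibility criteria}.

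First I would record the translation of the two displayed equations. Specialising Proposition \ref{prop:degree 4 right divide by quadratic} to the case $a_3 = a_2 = a_1 = 0$ and $a_0 = a$, the condition $(t^2 - c t - d)\vert_r f(t)$ becomes exactly
\[
\sigma^2(c)\sigma(c)c + \sigma^2(d)c + \sigma^2(c)\sigma(d) = 0
\quad\text{and}\quad
\sigma^2(d)d + \sigma^2(c)\sigma(c)d = a.
\]
Thus the two equations hold for some $c, d \in D$ if and only if $f(t)$ admits a monic right quadratic divisor, and it remains to show $f(t)$ is reducible if and only if it has such a divisor.

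The reverse implication is immediate: if the equations hold then $(t^2 - c t - d)\vert_r f(t)$, so $f(t) = q(t)(t^2 - c t - d)$, and comparing degrees forces $\deg q = 2$, a proper factorisation. For the forward implication, suppose $f(t)$ is reducible. As observed before Proposition \ref{prop:degree 4 right divide by quadratic}, a reducible quartic must have a right linear divisor, a left linear divisor, or a right quadratic divisor. The decisive step is to collapse the first two cases into the third: since $\sigma$ is an automorphism and $f(t) = t^4 - a$, Corollary \ref{cor:left iff right divisor} shows a left linear divisor exists if and only if a right linear divisor exists, while Lemma \ref{lem:t^4-afactorisation1} shows that a right linear divisor $(t-b)$ already forces the right quadratic divisor $t^2 - \sigma(b)b$. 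Hence in every case $f(t)$ has a monic right quadratic divisor, which completes the equivalence.

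The work here is conceptual rather than computational: the point is that conditions \eqref{eqn:degree 4 irreducible 1} and \eqref{eqn:degree 4 irreducible 2} governing linear factors are redundant for polynomials of the form $t^4 - a$. Once Lemma \ref{lem:t^4-afactorisation1} and Corollary \ref{cor:left iff right divisor} are invoked to guarantee that any linear factor produces a quadratic factor, only the right quadratic divisibility criterion of Theorem \ref{thm:degree 4 irreducibility criteria} survives, and Proposition \ref{prop:degree 4 right divide by quadratic} turns it into the stated pair of equations.
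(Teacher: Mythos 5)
Your proof is correct and follows essentially the same route as the paper: it invokes Corollary \ref{cor:left iff right divisor} to identify left and right linear divisibility, Lemma \ref{lem:t^4-afactorisation1} to upgrade a right linear divisor to a right quadratic one, and then specialises Proposition \ref{prop:degree 4 right divide by quadratic} to $a_3=a_2=a_1=0$, $a_0=a$. The only difference is that you spell out the degree comparison for the converse direction, which the paper leaves implicit.
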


\begin{proof}
Recall $f(t)$ has a right linear divisor if and only if it has a left linear divisor by  Corollary \ref{cor:left iff right divisor}. Moreover if $f(t)$ has a right linear divisor then it also has a quadratic right divisor by Lemma \ref{lem:t^4-afactorisation1}, therefore $f(t)$ is reducible if and only if $(t^2 - c t - d) \vert_r f(t)$ for some $c, d \in D$. The result now follows from Proposition \ref{prop:degree 4 right divide by quadratic}.
\end{proof}

%



\section{Irreducibility Criteria in Skew Polynomial Rings over Finite Fields}

Let $K = \mathbb{F}_{p^h}$ be a finite field of order $p^h$ for some prime $p$ and $\sigma$ be a non-trivial $\mathbb{F}_p$-automorphism of $K$. This means 
$\sigma: K \rightarrow K, \ k \mapsto k^{p^r},$
for some $r \in \{ 1, \ldots, h-1 \}$ is a power of the Frobenius automorphism. Here $\sigma$ has order $n = h/ \mathrm{gcd}(r,h)$. Algorithms for efficiently factorising polynomials in $K[t;\sigma]$ exist, see \cite{giesbrecht1998factoring} or more recently \cite{caruso2017new}, however our methods are purely algebraic and employ the previously developed theory. All of our previous results from Section \ref{section:Irreducibility Criteria in D[t;sigma]} hold in $K[t;\sigma]$. In this Section we focus on polynomials of the form $f(t) = t^m - a \in K[t;\sigma]$.

We will require the following well-known result:
\begin{lemma} \label{lem:gcd number theory result}
$\mathrm{gcd}(p^h-1,p^r-1) = p^{\mathrm{gcd}(h,r)}-1.$
\end{lemma}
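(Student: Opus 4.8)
The plan is to reduce the computation of $\mathrm{gcd}(p^h - 1, p^r - 1)$ to the ordinary Euclidean algorithm for $\mathrm{gcd}(h, r)$. The cornerstone is the observation that exponentiating $p$ and subtracting $1$ transports the division algorithm on exponents into a congruence on the values $p^n - 1$.

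First I would establish the key congruence: if $h = qr + s$ with $0 \leq s < r$, then $p^h - 1 \equiv p^s - 1 \pmod{p^r - 1}$. This follows by writing $p^h - 1 = p^s(p^{qr} - 1) + (p^s - 1)$ and noting that $p^r - 1$ divides $p^{qr} - 1 = (p^r)^q - 1$, since $x - 1$ divides $x^q - 1$ in $\mathbb{Z}[x]$. Consequently $\mathrm{gcd}(p^h - 1, p^r - 1) = \mathrm{gcd}(p^s - 1, p^r - 1)$, which exactly mirrors the Euclidean step $\mathrm{gcd}(h, r) = \mathrm{gcd}(r, s)$.

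Next I would iterate this reduction. Each application replaces the exponent pair $(h, r)$ by $(r, s)$ precisely as in the Euclidean algorithm applied to $h$ and $r$; since that algorithm terminates with the smaller exponent equal to $d = \mathrm{gcd}(h, r)$ dividing the larger, an induction on the number of steps gives $\mathrm{gcd}(p^h - 1, p^r - 1) = \mathrm{gcd}(p^d - 1, p^{d'} - 1)$ with $d \mid d'$. At that terminal stage $p^d - 1 \mid p^{d'} - 1$, so the greatest common divisor equals $p^d - 1 = p^{\mathrm{gcd}(h,r)} - 1$, as claimed.

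There is no serious obstacle here, as the result is elementary; the only point requiring mild care is the bookkeeping of the induction, namely verifying that the exponent pairs produced by successive applications of the congruence step coincide with the successive remainders in the Euclidean algorithm for $\mathrm{gcd}(h, r)$, together with the base case in which one exponent already divides the other.
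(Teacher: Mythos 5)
Your argument is correct, but it takes a different route from the one in the thesis. You run the Euclidean algorithm directly on the exponents, using the congruence $p^h - 1 \equiv p^s - 1 \pmod{p^r - 1}$ for $h = qr + s$ to show that each division step on $(h,r)$ induces the corresponding reduction on $(p^h-1, p^r-1)$, and then conclude by induction on the number of steps. The thesis instead argues in one shot: it first checks that $p^d - 1$ (with $d = \mathrm{gcd}(h,r)$) is a common divisor of $p^h-1$ and $p^r-1$ via the factorisation $p^{dn}-1 = (p^d-1)(p^{d(n-1)} + \cdots + p^d + 1)$, and then shows every common divisor $c$ divides $p^d-1$ by writing $d = hx + ry$ (B\'ezout) and computing $p^d = (p^h)^x(p^r)^y \equiv 1 \pmod{c}$. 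The trade-offs are minor: your version stays entirely within nonnegative integers and ordinary divisibility, at the cost of the induction bookkeeping you flag; the B\'ezout version is shorter but implicitly works with negative exponents, i.e.\ in the unit group modulo $c$, which is legitimate only because $c \mid p^h - 1$ forces $\gcd(p,c)=1$. Both proofs are complete and elementary.
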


\begin{proof}
Let $d = \mathrm{gcd}(r,h)$ so that $h = dn$. We have
$$p^{h}-1 = (p^d-1)(p^{d(n-1)} + \ldots + p^d+1),$$
therefore $p^h-1$ is divisible by $p^d-1$. A similar argument shows $(p^d-1) \vert (p^r-1)$. Suppose that $c$ is a common divisor of $p^h-1$ and $p^r-1$, this means $p^h \equiv p^r \equiv 1 \ \mathrm{mod} \ (c)$. Write $d = hx + ry$ for some integers $x,y$, then we have
$$p^d = p^{hx + ry} = (p^h)^x (p^r)^y \equiv 1 \ \mathrm{mod} \ (c)$$
which implies $c \vert (p^d-1)$ and hence $p^d-1 = \mathrm{gcd}(p^h-1,p^r-1)$.
\end{proof}

Given $k \in K^{\times}$, we have $k \in \mathrm{Fix}(\sigma)$ if and only if $k^{p^r-1} = 1$, if and only if $k$ is a $(p^r-1)^{\mathrm{th}}$ root of unity. It is well-known there are $\mathrm{gcd}(p^r-1,p^h-1)$ such roots of unity in $K$, see for example \cite[Proposition II.2.1]{koblitz1994course}, thus $$\vert \mathrm{Fix}(\sigma) \vert = \mathrm{gcd}(p^r-1,p^h-1) + 1 = p^{\mathrm{gcd}(r,h)}$$ by Lemma \ref{lem:gcd number theory result} and so $\mathrm{Fix}(\sigma) \cong \mathbb{F}_{q}$ where $q = p^{\mathrm{gcd}(r,h)}$. \label{page:Fix(sigma) isomorphic to}

\begin{proposition}
\begin{itemize}
\item[(i)] Suppose $n \in \{ 2,3 \}$, then $f(t) = t^n - a \in K[t;\sigma]$ is irreducible if and only if $a \in K \setminus \mathrm{Fix}(\sigma)$. In particular there are precisely $p^{h} - q$ irreducible polynomials in $K[t;\sigma]$ of the form $t^n-a$ for some $a \in K$.
\item[(ii)] Suppose $n$ is a prime and $n \vert (q-1)$. Then $f(t) = t^n - a \in K[t;\sigma]$ is irreducible if and only if $a \in K \setminus \mathrm{Fix}(\sigma)$. In particular there are precisely $p^{h} - q$ irreducible polynomials in $K[t;\sigma]$ of the form $t^n-a$ for some $a \in K$.
\end{itemize}
\end{proposition}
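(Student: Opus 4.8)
The plan is to recognise that $\mathrm{Fix}(\sigma)$ plays exactly the role of the base field $F$ in Corollary \ref{cor:nonassociative cyclic algebra is division}, and then argue the two directions of the biconditional separately, finishing with a trivial count.

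First I would set up the field-theoretic picture. Since $\mathrm{Fix}(\sigma) \cong \mathbb{F}_q$ with $q = p^{\gcd(r,h)}$, we have $|K| = p^h = q^n$, so $[K:\mathrm{Fix}(\sigma)] = n$ and $K/\mathrm{Fix}(\sigma)$ is a cyclic Galois extension of degree $n$. Because $\sigma$ fixes $\mathrm{Fix}(\sigma)$ pointwise and has order exactly $n$, it must generate the Galois group, i.e.\ $\mathrm{Gal}(K/\mathrm{Fix}(\sigma)) = \langle \sigma \rangle$. This places us precisely in the hypotheses of Corollary \ref{cor:nonassociative cyclic algebra is division} with $F = \mathrm{Fix}(\sigma)$.

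For the forward implication, that $a \in K \setminus \mathrm{Fix}(\sigma)$ forces $f(t)$ irreducible, part (i) with $n \in \{2,3\}$ is handled directly by parts (i) and (ii) of Corollary \ref{cor:nonassociative cyclic algebra is division}. For part (ii) I would first deduce that $\mathrm{Fix}(\sigma)$ contains a primitive $n$th root of unity: the group $\mathrm{Fix}(\sigma)^{\times}$ is cyclic of order $q-1$, and $n \mid (q-1)$ yields an element of order $n$; moreover $p \nmid (q-1)$ gives $\gcd(n,p) = 1$, so these $n$th roots are distinct and a primitive one exists. With $n$ prime and this root of unity available, part (iii) of Corollary \ref{cor:nonassociative cyclic algebra is division} gives irreducibility of $t^n - a$ for every $a \in K \setminus \mathrm{Fix}(\sigma)$.

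The converse, that $a \in \mathrm{Fix}(\sigma)$ forces reducibility, is where the main work lies. If $a = 0$ then $f(t) = t^n$ is visibly reducible, so assume $a \in \mathrm{Fix}(\sigma)^{\times}$. By Proposition \ref{prop:rightdivdegree1}, $(t-b) \vert_r f(t)$ is equivalent to $N_n(b) = \sigma^{n-1}(b)\cdots\sigma(b)b = a$, and since $\sigma$ generates $\mathrm{Gal}(K/\mathrm{Fix}(\sigma))$ this left-hand side is exactly the field norm $N_{K/\mathrm{Fix}(\sigma)}(b)$. The norm map of a finite field extension is surjective onto $\mathrm{Fix}(\sigma)^{\times}$, so some $b \in K^{\times}$ satisfies $N_{K/\mathrm{Fix}(\sigma)}(b) = a$, whence $(t-b)$ is a proper right factor and $f(t)$ is reducible. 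I expect this surjectivity step to be the crux, since it is the only ingredient not already packaged in Corollary \ref{cor:nonassociative cyclic algebra is division}; it can be cited as a standard fact or proved in one line from $N_{K/\mathrm{Fix}(\sigma)}(b) = b^{(q^n-1)/(q-1)}$ together with the cyclicity of $K^{\times}$. The same converse argument serves both (i) and (ii) simultaneously. Finally, the irreducible polynomials $t^n - a$ are exactly those with $a \in K \setminus \mathrm{Fix}(\sigma)$, of which there are $|K| - |\mathrm{Fix}(\sigma)| = p^h - q$, giving the stated count.
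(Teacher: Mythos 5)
Your proposal is correct and follows essentially the same route as the paper: both reduce irreducibility of $t^n-a$ to the condition $N_{K/\mathrm{Fix}(\sigma)}(b)\neq a$ for all $b\in K$ (via Theorem \ref{thm:Petit_factor}, Corollary \ref{cor:Irreducibility t^3-a}, or Theorem \ref{thm:Petit(19)} after producing the primitive $n$th root of unity from $n\mid(q-1)$), and then invoke surjectivity of the finite-field norm onto $\mathrm{Fix}(\sigma)^{\times}$ to convert this into $a\notin\mathrm{Fix}(\sigma)$. The only cosmetic difference is that you split the biconditional into two directions and route one of them through Corollary \ref{cor:nonassociative cyclic algebra is division}, which is itself a repackaging of the same theorems; your explicit treatment of $a=0$ is a minor point the paper leaves implicit.
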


\begin{proof}
Here $\sigma$ has order $n$ and $K/\mathrm{Fix}(\sigma)$ is a cyclic Galois field extension of degree $n$ with Galois group generated by $\sigma$.
\begin{itemize}
\item[(i)] $f(t)$ is irreducible if and only if
$\prod_{l=0}^{n-1} \sigma^l(b) = N_{K/\mathrm{Fix}(\sigma)}(b) \neq a$ for all $b \in K$ by Theorem \ref{thm:Petit_factor} or Corollary \ref{cor:Irreducibility t^3-a}, where $N_{K/\mathrm{Fix}(\sigma)}$ is the field norm. It is well-known that as $K$ is a finite field, $N_{K/\mathrm{Fix}(\sigma)}: K^{\times} \rightarrow \mathrm{Fix}(\sigma)^{\times}$ is surjective and so $f(t)$ is irreducible if and only if $a \notin \mathrm{Fix}(\sigma)$. There are $p^{h} - q$ elements in $K \setminus \mathrm{Fix}(\sigma)$, hence there are precisely $p^{h} - q$ irreducible polynomials of the form $t^n-a$ for some $a \in K$.
\item[(ii)] Notice $\mathrm{Fix}(\sigma) \cong \mathbb{F}_{q}$ contains a primitive $n^{\mathrm{th}}$ root of unity because $n \vert (q-1)$ \cite[Proposition II.2.1]{koblitz1994course}. The rest of the proof is similar to (i) but using Theorem \ref{thm:Petit(19)}.
\end{itemize}
\end{proof}

Let $a, b \in K$ and recall $(t-b) \vert_r (t^m-a)$ is equivalent to
$$a = \sigma^{m-1}(b) \cdots \sigma(b)b = b^s$$
by Proposition \ref{prop:rightdivdegree1} where $s = \sum_{j=0}^{m-1}p^{rj} = (p^{mr}-1)/(p^r-1)$. Suppose $z$ is a \textbf{primitive element} of $K$, that is $z$ generates the multiplicative group $K^{\times}$. Writing $b = z^l$ for some $l \in \mathbb{Z}$ yields $(t-b) \vert_r (t^m-a)$ if and only if $a = z^{ls}$. This implies the following:

\begin{proposition} \label{prop:finite fields irreducibility primitive element}
Let $f(t) = t^m-a \in K[t;\sigma]$ and write $a \in K$ as $a = z^u$ for some $u \in \{ 0, \ldots, p^h-2 \}$. 
\begin{itemize}
\item[(i)] $(t-b) \nmid_r f(t)$ for all $b \in K$ if and only if 
 $u \notin \mathbb{Z} s \ \mathrm{mod} \ (p^h-1).$
\item[(ii)] If $m \in \{2, 3\}$ then $f(t)$ is irreducible if and only if
 $u \notin \mathbb{Z} s \ \mathrm{mod} \ (p^h-1).$
\item[(iii)] Suppose $m$ is a prime divisor of $(q-1)$, then $f(t)$ is irreducible if and only if  $u \notin \mathbb{Z} s \ \mathrm{mod} \ (p^h-1).$
\end{itemize}
\end{proposition}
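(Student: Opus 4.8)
The plan is to prove part (i) directly by translating the right-linear-divisibility condition into a congruence on exponents via the primitive element $z$, and then to deduce parts (ii) and (iii) by reducing irreducibility to the absence of a right linear divisor and invoking the criteria already established. For part (i), recall from the discussion preceding the proposition that $(t-b) \vert_r f(t)$ is equivalent to $b^s = a$, where $s = (p^{mr}-1)/(p^r-1)$, by Proposition \ref{prop:rightdivdegree1}. Since $a = z^u \neq 0$, the case $b = 0$ yields $b^s = 0 \neq a$ and may be discarded, so it suffices to range over $b \in K^{\times}$. Writing $b = z^l$ for $l \in \mathbb{Z}$, which covers all of $K^{\times}$ as $z$ generates $K^{\times}$, we have $b^s = z^{ls}$, and hence $b^s = a$ if and only if $ls \equiv u \ \mathrm{mod} \ (p^h-1)$. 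Consequently $f(t)$ has a right linear divisor if and only if $u \equiv ls \ \mathrm{mod} \ (p^h-1)$ for some integer $l$, i.e.\ $u \in \mathbb{Z}s \ \mathrm{mod} \ (p^h-1)$; negating this statement gives (i).

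For parts (ii) and (iii), I would reduce irreducibility of $f(t) = t^m - a$ to the condition that it has no right linear divisor, and then apply (i). When $m = 2$, a degree-$2$ skew polynomial can only factor nontrivially as a product of two linear polynomials, so $f(t)$ is irreducible precisely when it has no monic right linear divisor; this is also exactly Theorem \ref{thm:Petit_factor}(i) specialised to $a_1 = 0$. When $m = 3$, Corollary \ref{cor:Irreducibility t^3-a} states directly that $t^3 - a$ is irreducible if and only if $\sigma^2(b)\sigma(b)b \neq a$ for all $b \in K$, which is again the absence of a right linear divisor. In both cases (i) then identifies this with $u \notin \mathbb{Z}s \ \mathrm{mod} \ (p^h-1)$, and the count $p^h - q$ of irreducible such polynomials follows since there are $p^h - q$ elements of $K \setminus \mathrm{Fix}(\sigma)$.

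For (iii) the extra input is the presence of a primitive $m^{\mathrm{th}}$ root of unity: since $\mathrm{Fix}(\sigma) \cong \mathbb{F}_{q}$ with $\mathbb{F}_q^{\times}$ cyclic of order $q-1$, the hypothesis $m \mid (q-1)$ guarantees $\mathrm{Fix}(\sigma)$ contains an element of order $m$, i.e.\ a primitive $m^{\mathrm{th}}$ root of unity. As $m$ is prime and $F = C \cap \mathrm{Fix}(\sigma) = \mathrm{Fix}(\sigma)$ here, Theorem \ref{thm:Petit(19)} applies and shows $f(t)$ is irreducible if and only if it has no right linear divisor; part (i) completes the argument.

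The computation carries no serious obstacle; the points requiring care are purely bookkeeping. One must note that $a \neq 0$ so that $b = 0$ is harmless, that $b = z^l$ genuinely exhausts $K^{\times}$, and that the hypotheses of the cited criteria are met—most notably that $\mathrm{Fix}(\sigma)$ contains a primitive $m^{\mathrm{th}}$ root of unity in (iii), which is where the assumption $m \mid (q-1)$ is used.
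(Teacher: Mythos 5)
Your argument is correct and follows essentially the same route as the paper: part (i) is the translation $(t-b)\vert_r(t^m-a) \Leftrightarrow a = b^s = z^{ls}$ via the primitive element, and parts (ii) and (iii) reduce irreducibility to the absence of a right linear divisor using Corollary \ref{cor:left iff right divisor} (equivalently, Theorem \ref{thm:Petit_factor}(i) and Corollary \ref{cor:Irreducibility t^3-a}) for $m \in \{2,3\}$ and Theorem \ref{thm:Petit(19)} for $m$ a prime divisor of $q-1$. The only blemish is your aside about the count $p^h - q$ of irreducible polynomials: that claim is not part of this proposition and is not valid in this generality, since here $m$ need not equal the order $n$ of $\sigma$, so $\{\,\sigma^{m-1}(b)\cdots\sigma(b)b : b \in K^{\times}\,\}$ need not be $\mathrm{Fix}(\sigma)^{\times}$; it should simply be omitted.
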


\begin{proof}
\begin{itemize}
\item[(i)] $(t-b) \nmid_r f(t)$ for all $b \in K$ if and only if $a = z^u \neq z^{l s}$ for all $l \in \mathbb{Z}$, if and only if $u \notin \mathbb{Z} s \ \mathrm{mod} \ (p^h-1)$.
\item[(ii)] $f(t)$ has a left linear divisor if and only if it has a right linear divisor by Corollary \ref{cor:left iff right divisor}. Therefore if $m \in \{2, 3 \}$ then $f(t)$ is irreducible if and only if $(t-b) \nmid_r f(t)$ for all $b \in K$ and so the assertion follows by (i).
\item[(iii)] If $m$ is a prime divisor of $(q-1)$ then $\mathrm{Fix}(\sigma) \cong \mathbb{F}_{q}$ contains a primitive $m^{\mathrm{th}}$ root of unity. Therefore the result follows by (i) and Theorem \ref{thm:Petit(19)}.
\end{itemize}
\end{proof}

\begin{corollary} \label{cor:Finite field t^m-a irreducibility criteria}
\begin{itemize}
\item[(i)] There exists $a \in K$ such that $(t-b) \nmid_r (t^m-a)$ for all $b \in K$ if and only if $\mathrm{gcd}(s,p^h-1) > 1.$
\item[(ii)] \cite[(22)]{Petit1966-1967} Suppose $m \in \{ 2,3 \}$ or $m$ is a prime divisor of $(q-1)$. Then there exists $a \in K^{\times}$ such that $t^m-a \in K[t;\sigma]$ is irreducible if and only if $\mathrm{gcd}(s,p^h-1) > 1.$
\end{itemize}
\end{corollary}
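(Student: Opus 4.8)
The plan is to derive both parts as essentially immediate consequences of Proposition \ref{prop:finite fields irreducibility primitive element}, reducing everything to one elementary fact about cyclic groups. First I would fix a primitive element $z$ of $K$ and write each nonzero $a$ as $a = z^u$ with $u \in \{0,\ldots,p^h-2\}$, so that membership questions about right linear divisors translate into membership questions about residues modulo $p^h-1$.

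For part (i), Proposition \ref{prop:finite fields irreducibility primitive element}(i) already says that for a fixed $a = z^u$ the condition ``$(t-b)\nmid_r(t^m-a)$ for all $b \in K$'' is equivalent to $u \notin \mathbb{Z}s \ \mathrm{mod}\ (p^h-1)$. The key step is then to observe that the set $\mathbb{Z}s \ \mathrm{mod}\ (p^h-1)$ is exactly the cyclic subgroup of $\mathbb{Z}/(p^h-1)\mathbb{Z}$ generated by $s$, which is the set of multiples of $d := \gcd(s,p^h-1)$ and has precisely $(p^h-1)/d$ elements. Hence a residue $u$ lying outside this subgroup exists if and only if the subgroup is proper, i.e. if and only if $d > 1$; this gives the claimed equivalence. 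I would also note that $a = 0$ always has the right divisor $(t-0)$ of $t^m$, so passing from $a \in K$ to $a \in K^{\times}$ costs nothing. (Equivalently, one may phrase the step through the power map $b \mapsto b^s = \sigma^{m-1}(b)\cdots\sigma(b)b$ on $K^{\times}$, whose image is the subgroup $\langle z^s\rangle$ of index $d$: together with $0$ this is exactly the set of $a$ admitting a right linear factor, and it fails to exhaust $K$ precisely when $d > 1$.)

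For part (ii), under the hypothesis $m \in \{2,3\}$ or $m$ a prime divisor of $(q-1)$, Proposition \ref{prop:finite fields irreducibility primitive element}(ii),(iii) identifies irreducibility of $t^m - a$ with the condition that $(t-b)\nmid_r(t^m-a)$ for all $b \in K$. Therefore the existence of an irreducible $t^m - a$ is equivalent to the existence of an $a$ with no right linear divisor, and part (i) converts this into the single numerical condition $\gcd(s,p^h-1) > 1$. Any witness $a$ produced this way is automatically nonzero, since $0 = 0^s$ always yields the factor $t$, so it indeed lies in $K^{\times}$ as required.

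The proof is thus mostly bookkeeping once Proposition \ref{prop:finite fields irreducibility primitive element} is available, and I do not expect a serious obstacle. The only point that needs care is the group-theoretic identification of $\mathbb{Z}s \ \mathrm{mod}\ (p^h-1)$ with the multiples of $\gcd(s,p^h-1)$, together with the small edge-case check that the excluded value $a$ is never $0$, which is what makes the statements for $a \in K$ in (i) and for $a \in K^{\times}$ in (ii) align.
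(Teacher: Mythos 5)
Your proposal is correct and follows essentially the same route as the paper: both reduce the statement to Proposition \ref{prop:finite fields irreducibility primitive element} and then observe that a residue $u$ outside $\mathbb{Z}s \ \mathrm{mod}\ (p^h-1)$ exists precisely when $s$ fails to generate $\mathbb{Z}_{p^h-1}$, i.e. when $\gcd(s,p^h-1)>1$. Your extra remark about the edge case $a=0$ is a harmless addition the paper leaves implicit.
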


\begin{proof}
There exists $u \in \{ 0, \ldots, p^h-2 \}$ such that $u \notin \mathbb{Z}s \ \mathrm{mod} \ (p^h-1)$, if and only if $s$ does not generate $\mathbb{Z}_{p^h-1}$, if and only if $\mathrm{gcd}(s,p^h-1) > 1.$
Hence the result follows by Proposition \ref{prop:finite fields irreducibility primitive element}.
\end{proof}

When $p \equiv 1 \ \mathrm{mod} \ m$, it becomes simpler to apply Corollary \ref{cor:Finite field t^m-a irreducibility criteria}:

\begin{corollary} \label{cor:p=1mod m finite field irreducibility criteria}
Suppose $p \equiv 1 \ \mathrm{mod} \ m$.
\begin{itemize}
\item[(i)] There exists $a \in K$ such that $(t-b) \nmid_r (t^m-a)$ for all $b \in K$.
\item[(ii)] If $p$ is an odd prime, then there exists $a \in K^{\times}$ such that $t^2-a \in K[t;\sigma]$ is irreducible.
\item[(iii)] If $m = 3$, then there exists $a \in K^{\times}$ such that $t^3-a \in K[t;\sigma]$ is irreducible.
\item[(iv)] Suppose $m$ is a prime divisor of $(q-1)$, then there exists $a \in K^{\times}$ such that $t^m-a \in K[t;\sigma]$ is irreducible.
\end{itemize}
\end{corollary}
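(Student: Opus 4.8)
The plan is to deduce all four parts from Corollary~\ref{cor:Finite field t^m-a irreducibility criteria}. That result reduces each of the existence statements to a single arithmetic condition, namely $\gcd(s,p^h-1)>1$, where $s=\sum_{j=0}^{m-1}p^{rj}=(p^{mr}-1)/(p^r-1)$. Hence it suffices to exhibit one common divisor of $s$ and $p^h-1$ that is larger than $1$, and the natural candidate under the hypothesis $p\equiv 1 \pmod{m}$ is $m$ itself.

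First I would verify $m\mid s$. Since $p\equiv 1\pmod m$, each summand satisfies $p^{rj}\equiv 1\pmod m$, so $s\equiv\sum_{j=0}^{m-1}1=m\equiv 0\pmod m$. Next, $p\equiv 1\pmod m$ gives $p^h\equiv 1\pmod m$, whence $m\mid p^h-1$. Because we assume throughout that $\deg(f(t))\geq 2$, we have $m\geq 2$, so $m$ is a common divisor of $s$ and $p^h-1$ strictly exceeding $1$; thus $\gcd(s,p^h-1)\geq m>1$.

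It then remains to match each part to the appropriate clause of Corollary~\ref{cor:Finite field t^m-a irreducibility criteria}. Part~(i) follows at once from clause~(i) of that corollary. For part~(ii), the ambient hypothesis $p\equiv 1\pmod m$ with $m=2$ is exactly the requirement that $p$ be odd, and $m=2\in\{2,3\}$; for part~(iii) we have $m=3\in\{2,3\}$; and for part~(iv) the integer $m$ is by assumption a prime divisor of $q-1$. In each of these three cases the hypotheses of clause~(ii) of Corollary~\ref{cor:Finite field t^m-a irreducibility criteria} are met, so that clause applies and produces an $a\in K^{\times}$ with $t^m-a$ irreducible.

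I expect no real obstacle here: the entire content is the elementary observation that $p\equiv 1\pmod m$ forces $m$ to divide both $s$ and $p^h-1$ simultaneously. The only point meriting care is recording $m\geq 2$, so that the common divisor $m$ genuinely witnesses $\gcd(s,p^h-1)>1$ and does not collapse to the trivial divisor.
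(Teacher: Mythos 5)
Your proof is correct and follows essentially the same route as the paper: both establish $m \mid s$ from $p^{rj}\equiv 1 \pmod m$ and $m \mid (p^h-1)$, conclude $\gcd(s,p^h-1)\geq m$, and then invoke Corollary \ref{cor:Finite field t^m-a irreducibility criteria}. Your explicit remark that $m\geq 2$ is needed for the common divisor to be non-trivial is a small point the paper leaves implicit.
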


\begin{proof}
We have
$$s \ \mathrm{mod} \ m = \sum_{i=0}^{m-1} (p^{ri} \ \mathrm{mod} \ m) \ \mathrm{mod} \ m = (\sum_{i=0}^{m-1} 1) \ \mathrm{mod} \ m = 0,$$
and $p^h \equiv 1 \ \mathrm{mod} \ m$. This means $m \vert s$ and $m \vert (p^h-1)$, therefore $\mathrm{gcd}( s,p^h-1) \geq m$ and so the assertion follows by Corollary \ref{cor:Finite field t^m-a irreducibility criteria}.
\end{proof}

\section{Irreducibility Criteria in \texorpdfstring{$R = D[t;\sigma,\delta]$}{R = D[t;sigma,delta]}}

Let $D$ be a division ring with center $C$, $\sigma$ be an endomorphism of $D$ and $\delta$ be a left $\sigma$-derivation of $D$. In this Section we investigate irreducibility criteria in $R = D[t;\sigma,\delta]$ generalising some of our results from Section \ref{section:Irreducibility Criteria in D[t;sigma]}.

Let $f(t) = t^m - \sum_{i=0}^{m-1} a_i t^i \in R$ and define a sequence of maps $N_i : D \rightarrow D, \ i \geq 0$, recursively by
$$N_{i+1}(b) = \sigma(N_i(b))b + \delta(N_i(b)), \ N_0 (b) = 1,$$
e.g. $N_0 (b) = 1, \ N_1(b) = b, \ N_2(b) = \sigma(b)b + \delta(b), \ldots$

Let $r \in D$ be the unique remainder after right division of $f(t)$ by $(t-b)$, then
$$r = N_m(b) - \sum_{i=0}^{m-1} a_i N_i (b),$$
by \cite[Lemma 2.4]{lam1988vandermonde}. This evidently implies:

\begin{proposition} \label{prop:rightlinearfactordelta}
$(t-b) \vert_r f(t)$ is equivalent to $N_m(b) - \sum_{i=0}^{m-1} a_i N_i (b) = 0$.
\end{proposition}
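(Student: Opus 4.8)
The plan is to use the right division algorithm in $R$ together with the uniqueness of the remainder. Since $D$ is a division ring, for $f(t)$ and the degree-one polynomial $t-b$ there exist unique $q(t) \in R$ and $r \in D$ (the remainder has degree $< 1$, hence lies in $D$) with $f(t) = q(t)(t-b) + r$. By definition $(t-b) \vert_r f(t)$ if and only if $r = 0$, so everything reduces to identifying $r$ with $N_m(b) - \sum_{i=0}^{m-1} a_i N_i(b)$. This identification is exactly the remainder formula of Lam and Leroy cited in the text, so once that is in hand the proposition is immediate.

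To make the argument self-contained I would re-derive the remainder formula by induction on the monomials. First I would prove that for every $i \geq 0$ there exists $q_i(t) \in R$ with $t^i = q_i(t)(t-b) + N_i(b)$. The base case $i = 0$ holds with $q_0 = 0$ since $N_0(b) = 1$. For the inductive step, assuming $t^i = q_i(t)(t-b) + N_i(b)$, I would multiply on the left by $t$ and use the commutation rule $tc = \sigma(c)t + \delta(c)$ with $c = N_i(b) \in D$ to get $t N_i(b) = \sigma(N_i(b))t + \delta(N_i(b))$. Rewriting $\sigma(N_i(b)) t = \sigma(N_i(b))(t-b) + \sigma(N_i(b)) b$ and recalling the recursion $N_{i+1}(b) = \sigma(N_i(b))b + \delta(N_i(b))$ then yields $t^{i+1} = \big(t q_i(t) + \sigma(N_i(b))\big)(t-b) + N_{i+1}(b)$, completing the induction.

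Finally I would assemble the pieces: writing $f(t) = t^m - \sum_{i=0}^{m-1} a_i t^i$ and substituting the identity for each $t^i$ gives $f(t) = Q(t)(t-b) + \big(N_m(b) - \sum_{i=0}^{m-1} a_i N_i(b)\big)$ with $Q(t) = q_m(t) - \sum_{i=0}^{m-1} a_i q_i(t)$. Since the bracketed term lies in $D$ it has degree $< 1$ and is therefore the unique remainder, so $(t-b) \vert_r f(t)$ holds iff it vanishes. The only delicate point will be the bookkeeping in the inductive step, namely correctly splitting off the factor $(t-b)$ from $\sigma(N_i(b))t$ and matching the leftover term against the defining recursion for $N_{i+1}$; but this is routine and the derivation parallels the $\delta = 0$ computation carried out earlier for \eqref{eqn:Right division identity} and Proposition \ref{prop:rightdivdegree1}.
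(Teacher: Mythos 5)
Your proof is correct and takes essentially the same route as the paper: both reduce the statement to identifying the unique remainder of $f(t)$ upon right division by $(t-b)$ with $N_m(b) - \sum_{i=0}^{m-1} a_i N_i(b)$ and then invoking uniqueness of remainders in $R$. The paper simply cites Lam and Leroy's remainder formula at this point, whereas you re-derive it via the induction $t^{i+1} = \big(t q_i(t) + \sigma(N_i(b))\big)(t-b) + N_{i+1}(b)$, which is carried out correctly.
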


Now suppose $\sigma$ is an automorphism of $D$. We wish to find the remainder after left division of $f(t)$ by $(t-b)$. Define a sequence of maps $M_i:D \rightarrow D$, $i \geq 0$, recursively by
$$M_{i+1}(b) = b \sigma^{-1}(M_i(b)) - \delta(\sigma^{-1}(M_i(b))), \ M_0(b) = 1,$$
for example $M_0(b) = 1$, \ $M_1(b) = b$, \ $M_2(b) = b \sigma^{-1}(b) - \delta(\sigma^{-1}(b))$, \ldots

Recall from page \pageref{sigma automorphism right polynomial ring} that since $\sigma$ is an automorphism, we can also view $R$ as a right polynomial ring. In particular this means we can write $f(t) = t^m - \sum_{i=0}^{m-1} a_i t^i \in R$ in the form $f(t) = t^m - \sum_{i=0}^{m-1} t^i a_i'$ for some uniquely determined $a_i' \in D$.

\begin{proposition} \label{prop:leftlinearfactordelta}
$(t-b) \vert_l f(t)$ is equivalent to $M_m(b) - \sum_{i=0}^{m-1} M_i(b) a_i' = 0$. In particular, $(t-b) \vert_l (t^m - a)$ if and only if $M_m(b) \neq a$.
\end{proposition}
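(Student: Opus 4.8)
The plan is to mirror the proof of Proposition~\ref{prop:rightlinearfactordelta}, but using the left division algorithm, which is available precisely because $\sigma$ is assumed to be an automorphism. The essential tool is the right-polynomial representation of $R$: since $\sigma \in \mathrm{Aut}(D)$ we may view every element of $R$ as a right polynomial, and the commutation rule then reads $at = t\sigma^{-1}(a) - \delta(\sigma^{-1}(a))$ for all $a \in D$ (see page~\pageref{sigma automorphism right polynomial ring}). This is exactly the operation appearing in the recursion defining $M_{i+1}(b)$, which is the hint that the $M_i(b)$ are the left-division remainders of the powers $t^i$.

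First I would establish, by induction on $i$, the reduction identity
$$ t^i = (t-b)\, q_i(t) + M_i(b), \qquad q_i(t) \in R,\ M_i(b) \in D. $$
The cases $i = 0, 1$ are immediate from $M_0(b) = 1$ and $M_1(b) = b$. For the inductive step I would write $t^{i+1} = \big[(t-b)q_i(t) + M_i(b)\big] t$, then push the constant $M_i(b)$ past $t$ using the right-polynomial rule, giving $M_i(b)\, t = t\sigma^{-1}(M_i(b)) - \delta(\sigma^{-1}(M_i(b)))$. Splitting $t\sigma^{-1}(M_i(b)) = (t-b)\sigma^{-1}(M_i(b)) + b\sigma^{-1}(M_i(b))$ lets me absorb everything left-divisible by $(t-b)$ into a new quotient, and the leftover constant is precisely $b\sigma^{-1}(M_i(b)) - \delta(\sigma^{-1}(M_i(b))) = M_{i+1}(b)$. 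This is the crux of the argument, and the step where the chosen recursion is forced.

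With the reduction identity in hand, I would express $f$ in right-polynomial form $f(t) = t^m - \sum_{i=0}^{m-1} t^i a_i'$, apply the identity to each power $t^i$, right-multiply by $a_i'$, and collect the quotient terms to obtain
$$ f(t) = (t-b)\, \tilde{q}(t) + \Big( M_m(b) - \sum_{i=0}^{m-1} M_i(b)\, a_i' \Big), $$
where the bracketed remainder lies in $D$, hence has degree less than $\mathrm{deg}(t-b)$. By the uniqueness of the left division algorithm, $(t-b) \vert_l f(t)$ holds if and only if this remainder vanishes, which is the claimed criterion. For the final assertion I would specialise to $f(t) = t^m - a$, whose right-polynomial coefficients are $a_0' = a$ and $a_i' = 0$ for $i \geq 1$, so that the remainder collapses to $M_m(b) - a$; thus $(t-b) \vert_l (t^m - a)$ precisely when $M_m(b) = a$.

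I expect the only genuine obstacle to be the bookkeeping in the inductive step: keeping track of which side each scalar sits on, and confirming that the remainder really stays in $D$ at every stage, so that uniqueness of the remainder may be invoked. Should the direct computation become unwieldy, an alternative route is to apply the canonical anti-isomorphism $\psi$ of Proposition~\ref{prop:_fS opposite algebra of some S_g}, under which left divisibility in $R$ corresponds to right divisibility in $D^{\mathrm{op}}[t;\sigma^{-1},-\delta\sigma^{-1}]$, and then to invoke Proposition~\ref{prop:rightlinearfactordelta} there; this, however, requires translating the $N_i$ of the opposite ring back into the $M_i$, so I would treat it only as a fallback.
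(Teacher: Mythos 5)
Your proposal is correct and follows essentially the same route as the paper: an induction establishing $t^i = (t-b)q_i(t) + M_i(b)$ via the right-polynomial commutation rule $at = t\sigma^{-1}(a) - \delta(\sigma^{-1}(a))$, followed by right-multiplying by the $a_i'$ and summing to read off the remainder. As a minor point, your concluding criterion $(t-b) \vert_l (t^m-a)$ iff $M_m(b) = a$ is the correct one; the "$\neq$" in the statement as printed is a typo.
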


\begin{proof}
We first show $t^n - M_n(b) \in (t-b)R$ for all $b \in D$ and $n \geq 0$: If $n = 0$ then $t^0 - M_0(b) = 1 - 1 = 0 \in (t-b)R$ as required. Suppose inductively $t^n - M_n(b) \in (t-b)R$ for some $n \geq 0$, then
\begin{align*}
&t^{n+1} - M_{n+1}(b) = t^{n+1} - b \sigma^{-1}(M_n(b)) + \delta(\sigma^{-1}(M_n(b))) \\
&= t^{n+1} + (t-b) \sigma^{-1}(M_n(b)) - t \sigma^{-1}(M_n(b)) + \delta(\sigma^{-1}(M_n(b))) \\
&= t^{n+1} + (t-b) \sigma^{-1}(M_n(b)) - M_n(b) t - \delta(\sigma^{-1}(M_n(b))) + \delta(\sigma^{-1}(M_n(b))) \\
&= (t-b) \sigma^{-1}(M_n(b)) + (t^n - M_n(b))t \in (t-b)R,
\end{align*}
as $t^n-M_n(b) \in (t-b)R$. Therefore $t^n - M_n(b) \in (t-b)R$ for all $b \in D$, $n \geq 0$ by induction.

As a result, there exists $q_i(t) \in R$ such that $t^i = (t-b) q_i(t) + M_i(b)$, for all $i \in \{ 0, \ldots, m \}$. Multiplying on the right by $a_i'$ and summing over $i$ yields
$$f(t) = (t-b)q(t) + M_m(b) - \sum_{i=0}^{m-1} M_i(b) a_i',$$
for some $q(t) \in R$.
\end{proof}

Using Propositions \ref{prop:rightlinearfactordelta} and \ref{prop:leftlinearfactordelta} we obtain criteria for skew polynomials of degree $2$ and $3$ to be irreducible:

\begin{theorem} \label{thm:irredcriteriadelta}
\begin{itemize}
\item[(i)] Suppose $\sigma$ is an endomorphism of $D$, then $f(t) = t^2 - a_1 t - a_0 \in R$ is irreducible if and only if  $\sigma(b)b + \delta(b) - a_1 b - a_0 \neq 0$  for all $b \in D$.
\item[(ii)] Suppose $\sigma$ is an automorphism of $D$ and $f(t) = t^3 - a_2 t^2 - a_1 t - a_0 \in R$. Write $f(t) = t^3 - t^2 a_2' - t a_1' - a_0'$ for some unique $a_0', a_1', a_2' \in D$, then $f(t)$ is irreducible if and only if
\begin{equation} \label{eqn:irredcriteriadelta 1}
N_m(b) - \sum_{i=0}^{2}a_i N_i(b) \neq 0,
\end{equation}
and
\begin{equation} \label{eqn:irredcriteriadelta 2}
M_m(b) - \sum_{i=0}^{2} M_i(b) a_i' \neq 0,
\end{equation}
for all $b \in D$.
\end{itemize}
\end{theorem}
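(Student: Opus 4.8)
The plan is to reduce irreducibility in each case to the non-existence of monic linear divisors, and then to read off the conditions directly from Propositions \ref{prop:rightlinearfactordelta} and \ref{prop:leftlinearfactordelta}. The key preliminary fact is that $D$ is a division ring, hence a domain, so degrees are additive: $\deg(gh) = \deg(g) + \deg(h)$ for all $g,h \in R$. Consequently a proper factorisation of a degree-$2$ polynomial forces both factors to have degree $1$, while a proper factorisation of a degree-$3$ polynomial forces the two factors to have degrees $1$ and $2$ in some order.

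For (i) I would argue that a monic $f$ of degree $2$ is reducible if and only if it admits a degree-$1$ right factor $h$. Normalising $h = ct + d$ to the monic polynomial $c^{-1}h = t - b$ (which generates the same left ideal $Rh$ and so still right divides $f$), reducibility is equivalent to $(t-b)\mid_r f$ for some $b \in D$. Proposition \ref{prop:rightlinearfactordelta} with $m=2$ then translates this into $N_2(b) - a_1 N_1(b) - a_0 N_0(b) = 0$; substituting $N_0(b)=1$, $N_1(b)=b$ and $N_2(b)=\sigma(b)b+\delta(b)$ and negating gives the stated criterion. Note this part needs only that $\sigma$ is an endomorphism, since it uses right division alone.

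For (ii) I would first use that $\sigma$ is an automorphism, so that $R$ is also a left principal ideal domain and the left-division statement of Proposition \ref{prop:leftlinearfactordelta} is available. A monic $f$ of degree $3$ is reducible if and only if $f = gh$ with $\{\deg g, \deg h\} = \{1,2\}$. If the right factor $h$ has degree $1$ then, after normalising as above, $f$ has a monic right linear divisor $(t-b)$; if instead the left factor $g = ct+d$ has degree $1$, then multiplying on the right by the unit $u = \sigma^{-1}(c^{-1})$ (where surjectivity of $\sigma$ is used) yields a monic linear $gu = t-b$, and $gR = (gu)R$ shows it still left divides $f$. Conversely, any linear divisor gives a proper factorisation. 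Hence $f$ is irreducible exactly when it has neither a right nor a left linear divisor, and Propositions \ref{prop:rightlinearfactordelta} and \ref{prop:leftlinearfactordelta} turn these two requirements into conditions \eqref{eqn:irredcriteriadelta 1} and \eqref{eqn:irredcriteriadelta 2} respectively.

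The essential content is carried entirely by the two preceding propositions; the only genuine care required is the degree bookkeeping and the normalisation of a linear factor to the monic form $(t-b)$, where for left factors one invokes surjectivity of $\sigma$. I do not anticipate a serious obstacle here, beyond being careful that reducibility of the cubic is detected by a linear factor on \emph{one} of the two sides rather than requiring both, which is exactly why the two separate conditions \eqref{eqn:irredcriteriadelta 1} and \eqref{eqn:irredcriteriadelta 2} appear.
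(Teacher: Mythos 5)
Your proposal is correct and follows essentially the same route as the paper: reduce irreducibility of the quadratic (resp.\ cubic) to the non-existence of right (resp.\ right or left) monic linear divisors and then invoke Propositions \ref{prop:rightlinearfactordelta} and \ref{prop:leftlinearfactordelta}. The paper's own proof is just a terser version of yours, omitting the normalisation-to-monic and degree-bookkeeping details that you spell out.
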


\begin{proof}
\begin{itemize}
\item[(i)] We have $f(t)$ is irreducible if and only if it has no right linear factors, if and only if
$$N_2 (b) - a_1 N_1(b) - a_0 N_0 (b) = \sigma(b)b + \delta(b) - a_1 b - a_0 \neq 0,$$
for all $b \in D$ by Proposition \ref{prop:rightlinearfactordelta}.
\item[(ii)] We have $f(t)$ is irreducible if and only if it has no left or right linear factors, if and only if \eqref{eqn:irredcriteriadelta 1} and \eqref{eqn:irredcriteriadelta 2} hold for all $b \in D$ by Propositions \ref{prop:rightlinearfactordelta} and \ref{prop:leftlinearfactordelta}.
\end{itemize}
\end{proof}

We now prove an analogous result to Theorem \ref{thm:Petit(19)}: 

\begin{theorem} \label{thm:Petit(19), delta not 0}
Suppose $\sigma$ is an endomorphism of $D$, $m$ is prime, $\mathrm{Char}(D) \neq m$ and $C \cap \mathrm{Fix}(\sigma)$ contains a primitive $m^{\text{th}}$ root of unity $\omega$. Then $f(t) = t^m - a \in R$ is irreducible if and only if $N_m(b) \neq a$ for all $b \in D$.
\end{theorem}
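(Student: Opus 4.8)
The plan is to prove the two non-trivial equivalences by reducing everything to the single statement that a reducible $t^m - a$ must possess a right linear factor. One direction is immediate: if $N_m(b) = a$ for some $b \in D$, then $(t-b) \vert_r f$ by Proposition~\ref{prop:rightlinearfactordelta}, and since $m \geq 2$ this exhibits a proper factorisation, so $f$ is reducible. It therefore remains to prove the contrapositive, that if $f = t^m - a$ is reducible then $N_m(b) = a$ for some $b \in D$. This is the exact analogue of Theorem~\ref{thm:Petit(19)}, and as there the entire difficulty is to force the existence of a \emph{right} linear factor.

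First I would record the one place where $\mathrm{Char}(D) \neq m$ enters: it guarantees $\delta(\omega) = 0$, so that $\omega$ in fact lies in $F = C \cap \mathrm{Fix}(\sigma) \cap \mathrm{Const}(\delta)$ and not merely in $C \cap \mathrm{Fix}(\sigma)$. Applying \eqref{eqn:goodearl} with $a = \omega$, $n = m$, and using $\sigma(\omega) = \omega$ together with the centrality of $\omega$, gives
\begin{equation*}
0 = \delta(\omega^m) = \sum_{i=0}^{m-1} \sigma(\omega)^i \delta(\omega)\, \omega^{m-1-i} = m\, \omega^{m-1} \delta(\omega),
\end{equation*}
and since $m \cdot 1 \neq 0$ in $D$ while $\omega^{m-1} \in D^{\times}$, we conclude $\delta(\omega) = 0$.

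The crux is then a Bourbaki-type statement for $R = D[t;\sigma,\delta]$: every monic irreducible right factor $g$ of $f = t^m - a$ has degree $d$ dividing $m$. Granting this, if $f$ is reducible then by Theorem~\ref{thm:Ore factorisation is similar in pairs} it has a monic irreducible right factor of degree $d < m$; as $m$ is prime, $d \mid m$ forces $d = 1$, so $f$ has a right linear factor, and we are done. The natural approach to the divisibility claim is to mirror the proof of Theorem~\ref{thm:bourbaki}: from a right factor $g$ of degree $d$ one manufactures $m$ right factors conjugate to $g$, shows their least common left multiple $h$ satisfies $f \in Rh$ and is itself $\omega$-invariant, deduces $h = f$, and finally reads off from $\bigcap_i Rg_i = Rf$ that $f$ is a product of $m/d$ factors of degree $d$.

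The hard part is exactly the step that breaks down when $\delta \neq 0$. In the $\delta = 0$ case the conjugate factors are taken to be $g_i(t) = g(\omega^i t)$, and their dividing $f$ rests on Lemma~\ref{lem:f(bt)=q(bt)g(bt)}, that is, on $t \mapsto \omega t$ being a ring homomorphism of $R$. For $\delta \neq 0$ this substitution is \emph{not} a homomorphism: comparing $(\omega t)c$ with $\sigma(c)\omega t + \delta(c)$ forces $\omega\delta(c) = \delta(c)$ for all $c$, hence $\delta = 0$, and indeed the ``rotated roots'' $\omega^{-1}b$ are in general no longer roots of $f$. I therefore expect the genuine work to lie in replacing this substitution: either by embedding $D$ into a larger division ring in which $\delta$ becomes inner, so that a bona fide $\omega$-rotating automorphism of the enlarged skew polynomial ring exists and the $\delta = 0$ argument applies and can be descended to $R$; or by producing the required family of conjugate right factors intrinsically, via the $(\sigma,\delta)$-conjugation $b \mapsto \sigma(c)bc^{-1} + \delta(c)c^{-1}$ and the identity $(t - b^c)c = \sigma(c)(t-b)$. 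Showing that one of these routes yields $m$ right factors whose least common left multiple is $f$ is, I expect, the main obstacle.
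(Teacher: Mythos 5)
Your reduction is the same as the paper's: the computation showing $\delta(\omega)=0$ (hence $\omega\in F$ and $(\omega t)^i=\omega^i t^i$) is verbatim the first half of the paper's proof, the easy direction via Proposition~\ref{prop:rightlinearfactordelta} is as in the paper, and the paper likewise finishes by invoking the degree-divisibility statement of Theorem~\ref{thm:bourbaki} together with primality of $m$ to force a right linear factor. The difference is that the paper simply asserts that ``the proof now follows exactly as in Theorems~\ref{thm:bourbaki} and~\ref{thm:Petit_factor}'', whereas you stop and declare the transfer of Theorem~\ref{thm:bourbaki} to the case $\delta\neq 0$ to be the main obstacle, sketching two possible repairs without carrying either out. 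As it stands your proposal is therefore not a proof: the claim that every monic irreducible right factor of $t^m-a$ has degree dividing $m$ (equivalently, that a reducible $t^m-a$ acquires a \emph{right} linear factor) is essentially the whole content of the theorem once $m\geq 3$, and neither the inner-derivation embedding nor the $(\sigma,\delta)$-conjugation route is executed. Nothing in the proposal rules out the a priori possibility that $t^m-a=(t-b)q(t)$ with $q$ irreducible of degree $m-1$ and no factorisation placing a linear factor on the right.

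That said, your diagnosis of where the difficulty sits is sharper than what the paper offers at this point. You are right that $t\mapsto \omega t$ is not an endomorphism of $D[t;\sigma,\delta]$ unless $\delta=0$: condition \eqref{eqn:isomorphisms between skew polynomial rings 1} with $\tau=\mathrm{id}$, $c=0$, $d=\omega$ forces $(\omega-1)\delta(b)=0$ for all $b\in D$. Hence Lemma~\ref{lem:f(bt)=q(bt)g(bt)}, which is the engine of Theorem~\ref{thm:bourbaki}, does not transfer merely because $(\omega t)^i=\omega^i t^i$. A concrete illustration: in $\mathbb{Q}(x)[t;d/dx]$ one has $t^2-(x^2+1)=(t+x)(t-x)$, and the image of the right factor $t-x$ under $t\mapsto -t$ is, up to a unit, $t+x$, which is \emph{not} a right divisor since $N_2(-x)=x^2-1\neq x^2+1$; so the family of conjugate right factors $g_i(t)=g(\omega^i t)$ on which the lclm argument rests simply fails to consist of right factors when $\delta\neq 0$. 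A complete write-up therefore needs a genuine substitute for that family (your $(\sigma,\delta)$-conjugation suggestion is the natural candidate) before the conclusion can be drawn; until one of your two proposed routes is actually carried through, the proof is incomplete.
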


\begin{proof}
Recall
$$\delta(b^n) = \sum_{i=0}^{n-1} \sigma(b)^i \delta(b)b^{n-1-i},$$
for all $b \in D$, $n \geq 1$ by \eqref{eqn:goodearl} and so
\begin{align*}
0 &= \delta(1) = \delta(\omega^m) = \sum_{i=0}^{m-1} \sigma(\omega)^i \delta(\omega) \omega^{m-1-i} = \sum_{i=0}^{m-1} \omega^i \delta(\omega) \omega^{m-1-i} \\ &= \sum_{i=0}^{m-1} \delta(\omega) \omega^{m-1} = \delta(\omega) \omega^{m-1} m,
\end{align*}
where we have used $\omega \in C \cap \mathrm{Fix}(\sigma)$. Therefore $\omega \in \mathrm{Const}(\delta)$ because $\mathrm{Char}(D) \neq m$, hence also $\omega^i \in \mathrm{Const}(\delta)$ and so $(\omega t)^i = \omega^i t^i$ for all $i \in \{ 1, \ldots , m \}$. Furthermore if $b \in D$, then $(t-b) \nmid_r f(t)$ is equivalent to $N_m(b) \neq a$ by Proposition \ref{prop:rightlinearfactordelta}. The proof now follows exactly as in Theorem's \ref{thm:bourbaki} and \ref{thm:Petit_factor}.
\end{proof}

Setting $m = 3$ and $\sigma = \mathrm{id}$ in Theorem \ref{thm:Petit(19), delta not 0} yields:

\begin{corollary} \label{cor:Petit(19), delta not 0, m=3}
Suppose $\mathrm{Char}(D) \neq 3$, $\sigma = \mathrm{id}$ and $C$ contains a primitive $3^{\text{rd}}$ root of unity. Then $f(t) = t^3 - a \in D[t; \delta]$ is irreducible if and only if
$$N_3(b) = b^3 + 2 \delta(b)b + b\delta(b) + \delta^2(b) \neq a,$$
for all $b \in D$. 
\end{corollary}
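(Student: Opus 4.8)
The plan is to derive this Corollary as a direct specialization of Theorem~\ref{thm:Petit(19), delta not 0}, followed by an explicit evaluation of the norm map $N_3$. First I would verify that the hypotheses of Theorem~\ref{thm:Petit(19), delta not 0} are met for $m = 3$ and $\sigma = \mathrm{id}$. Since $\sigma = \mathrm{id}$ we have $\mathrm{Fix}(\sigma) = D$, so $C \cap \mathrm{Fix}(\sigma) = C$, and the assumption that $C$ contains a primitive $3^{\text{rd}}$ root of unity is precisely the root-of-unity hypothesis required there; the remaining conditions, namely that $m = 3$ is prime and that $\mathrm{Char}(D) \neq 3$, are given. Hence Theorem~\ref{thm:Petit(19), delta not 0} applies and yields that $f(t) = t^3 - a$ is irreducible if and only if $N_3(b) \neq a$ for all $b \in D$.

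It then remains only to compute $N_3(b)$ explicitly when $\sigma = \mathrm{id}$. The defining recursion $N_{i+1}(b) = \sigma(N_i(b))b + \delta(N_i(b))$ collapses to $N_{i+1}(b) = N_i(b)b + \delta(N_i(b))$. Starting from $N_0(b) = 1$ and using $\delta(1) = 0$, I obtain $N_1(b) = b$ and then $N_2(b) = b^2 + \delta(b)$. For the final step I apply the $\sigma$-derivation rule with $\sigma = \mathrm{id}$, namely $\delta(b^2) = \sigma(b)\delta(b) + \delta(b)b = b\delta(b) + \delta(b)b$, together with $\delta(\delta(b)) = \delta^2(b)$, to get
$$N_3(b) = N_2(b)b + \delta\big(N_2(b)\big) = (b^2 + \delta(b))b + b\delta(b) + \delta(b)b + \delta^2(b),$$
which collects to $b^3 + 2\delta(b)b + b\delta(b) + \delta^2(b)$, as claimed.

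I expect no genuine obstacle here, as the result is a straightforward corollary: the substantive work has already been carried out in Theorem~\ref{thm:Petit(19), delta not 0}, and what remains is a short iterated calculation of the maps $N_i$. The only point requiring mild care is the last step, where the Leibniz-type rule for $\delta$ must be applied to $b^2$ and the two resulting $\delta(b)b$ terms collected to produce the coefficient $2$; everything else is an immediate unwinding of the recursion.
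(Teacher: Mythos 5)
Your proposal is correct and follows exactly the paper's route: the paper obtains this corollary by setting $m=3$ and $\sigma=\mathrm{id}$ in Theorem \ref{thm:Petit(19), delta not 0}, and your unwinding of the recursion for $N_3(b)$ (including the use of $\delta(b^2)=b\delta(b)+\delta(b)b$ to produce the coefficient $2$) matches the stated formula.
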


\section{Irreducibility Criteria in \texorpdfstring{$D[t;\delta]$}{D[t;delta]} where \texorpdfstring{$\mathrm{Char}(D) = p$}{Char(D) = p}} \label{section:Irreducibility Criteria in D[t;delta] where Char(D) = p}

Suppose $\mathrm{Char}(D) = p \neq 0$, $\sigma = \mathrm{id}$ and
$$f(t) = t^{p^e} - a_{1} t^{p^{e-1}} - \ldots - a_e t - d \in D[t;\delta].$$
In $D[t;\delta]$ we have the equalities
\begin{equation} \label{p-power formula char p}
(t-b)^p = t^p - V_p(b), \qquad V_p(b) = b^p + \delta^{p-1}(b) + * \in D,
\end{equation}
for all  $b \in D$, with $*$ a sum of commutators of $b, \delta(b), \ldots, \delta^{p-2}(b)$ \cite[p.~17-18]{jacobson1996finite}. E.g. $V_2(b) = b^2 + \delta(b)$ and $V_3(b) = b^3 + \delta^2(b) + \delta(b) b - b \delta(b)$. In particular, if $D$ is commutative or $b$ commutes with all of its derivatives, then $* = 0$ and the formula simplifies to
\begin{equation} \label{eqn:V_p(b) form when D commutative}
V_p(b) = b^p + \delta^{p-1}(b).
\end{equation}
We can iterate \eqref{p-power formula char p} to obtain
\begin{equation} \label{p-power formula char p iterated}
(t-b)^{p^i} = t^{p^i} - V_{p^i}(b),
\end{equation}
for all $i \in \mathbb{N}$ where $V_{p^i}(b) = V_p^i(b) = V_p(V_p( \cdots (V_p(b)) \cdots )$. We thus have $a_i t^{p^i} = a_i (t-b)^{p^i} + a_i V_{p^i}(b)$ and by summing over $i$ we conclude:

\begin{proposition} \label{prop:(t-b) right divides f(t) in Char p}
(\cite[Proposition 1.3.25]{jacobson1996finite}).
$(t-b) \vert_r f(t)$ is equivalent to 
\begin{equation} \label{eqn:(t-b) right divides f(t) in Char p}
V_{p^e}(b) - a_{1} V_{p^{e-1}}(b) - \ldots - a_e b - d = 0.
\end{equation}
\end{proposition}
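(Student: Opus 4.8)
The plan is to substitute the iterated $p$-power identity \eqref{p-power formula char p iterated} directly into $f(t)$ and read off the remainder after right division by $(t-b)$. First I would use \eqref{p-power formula char p iterated} to write $t^{p^i} = (t-b)^{p^i} + V_{p^i}(b)$ for each relevant $i$, noting that $V_{p^0}(b) = V_1(b) = b$ since $V_{p^0} = V_p^0 = \mathrm{id}$. Substituting these expressions for $t^{p^e}, t^{p^{e-1}}, \ldots, t$ into $f(t) = t^{p^e} - a_1 t^{p^{e-1}} - \ldots - a_e t - d$ and separating the $(t-b)^{p^i}$ terms from the constant $V_{p^i}(b)$ terms gives
$$f(t) = \Big( (t-b)^{p^e} - \sum_{i=1}^{e} a_i (t-b)^{p^{e-i}} \Big) + \Big( V_{p^e}(b) - \sum_{i=1}^{e} a_i V_{p^{e-i}}(b) - d \Big).$$

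The second step is to observe that the first bracket is a right multiple of $(t-b)$: each exponent $p^{e-i}$ is at least $1$, so $(t-b)^{p^{e-i}} = (t-b)^{p^{e-i}-1}(t-b) \in R(t-b)$, whence the whole bracket equals $q(t)(t-b)$ for some $q(t) \in R$. The second bracket lies in $D$, so its degree is $< 1 = \mathrm{deg}(t-b)$. By the uniqueness of the remainder in the right division algorithm in $R = D[t;\delta]$, this exhibits $V_{p^e}(b) - a_1 V_{p^{e-1}}(b) - \ldots - a_e b - d$ as precisely the remainder of $f(t)$ upon right division by $(t-b)$. Consequently $(t-b) \vert_r f(t)$ if and only if this element vanishes, which is the asserted equivalence \eqref{eqn:(t-b) right divides f(t) in Char p}.

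I expect no serious obstacle, since the substantial content, namely the $p$-power formula \eqref{p-power formula char p} and its iteration \eqref{p-power formula char p iterated}, is already established. The only points requiring care are bookkeeping ones: matching the index shift so that the $i = e$ term $-a_e t$ produces $-a_e V_{p^0}(b) = -a_e b$, and confirming that the collected $V$-terms genuinely assemble into a single degree-zero element of $D$ so that the uniqueness clause of right division may be invoked.
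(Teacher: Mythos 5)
Your proposal is correct and follows essentially the same route as the paper: the text preceding the proposition derives the result by substituting the iterated identity $t^{p^i} = (t-b)^{p^i} + V_{p^i}(b)$ from \eqref{p-power formula char p iterated} into $f(t)$, multiplying by the coefficients and summing, exactly as you do. Your additional remarks on why the first bracket lies in $R(t-b)$ and on invoking uniqueness of the remainder are just slightly more explicit bookkeeping of the same argument.
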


Looking instead at left division of $f(t)$ by a linear polynomial gives:

\begin{proposition} \label{prop:(t-b) left divides f(t) in Char p}
$(t-b) \vert_l f(t)$ is equivalent to
\begin{equation*}
\begin{split}
V_{p^e}(b) - \big( V_{p^{e-1}}(b)& a_{1} - \delta^{p^{e-1}}(a_{1}) \big) - \ldots - \big( V_p(b)a_{e-1} - \delta^p(a_{e-1}) \big) \\& - (b a_e - \delta(a_e)) - d = 0.
\end{split}
\end{equation*}
\end{proposition}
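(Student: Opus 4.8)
The plan is to mirror the proof of Proposition \ref{prop:(t-b) right divides f(t) in Char p}, but taking care that for \emph{left} divisibility the factor $(t-b)$ must sit on the left. The one difference from the right-division case is that left multiplication by a coefficient does not preserve left-divisibility by $(t-b)$; the remedy is to first push each left coefficient $a_i$ to the right past the corresponding power of $t$, turning it into a right coefficient, after which left-divisibility becomes manifest.

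First I would record the two ingredients. From the commutation rule $t^n c = \sum_{j=0}^n \binom{n}{j}\delta^{n-j}(c)t^j$ together with the congruence $\binom{p^j}{k}\equiv 0 \pmod p$ for $0 < k < p^j$, only the $j=0$ and $j=p^j$ terms survive, which shows that in $D[t;\delta]$
$$a t^{p^j} = t^{p^j} a - \delta^{p^j}(a)$$
for all $a \in D$ and $j \geq 0$. Second, by \eqref{p-power formula char p iterated} we have $t^{p^j} = (t-b)^{p^j} + V_{p^j}(b)$, and since $(t-b)$ is a left factor of $(t-b)^{p^j}$, right multiplication by any $c \in D$ keeps $(t-b)^{p^j}c$ a left multiple of $(t-b)$.

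Combining these, I would compute the left remainder of each monomial of $f(t)$ separately. For the leading term, $t^{p^e} = (t-b)^{p^e} + V_{p^e}(b)$ gives left remainder $V_{p^e}(b)$. For each term $a_i t^{p^{e-i}}$ I first write $a_i t^{p^{e-i}} = t^{p^{e-i}} a_i - \delta^{p^{e-i}}(a_i)$ and then substitute $t^{p^{e-i}} = (t-b)^{p^{e-i}} + V_{p^{e-i}}(b)$, obtaining
$$a_i t^{p^{e-i}} = (t-b)^{p^{e-i}} a_i + V_{p^{e-i}}(b) a_i - \delta^{p^{e-i}}(a_i);$$
as $(t-b)^{p^{e-i}} a_i \in (t-b)R$, the left remainder is $V_{p^{e-i}}(b) a_i - \delta^{p^{e-i}}(a_i)$. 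The constant term $-d$ contributes $-d$.

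Summing these contributions with the signs coming from $f(t) = t^{p^e} - \sum_{i=1}^e a_i t^{p^{e-i}} - d$ yields the left remainder
$$V_{p^e}(b) - \sum_{i=1}^e\big(V_{p^{e-i}}(b)a_i - \delta^{p^{e-i}}(a_i)\big) - d.$$
Since $\sigma = \mathrm{id}$ is an automorphism, $R$ admits a left division algorithm with unique remainder of degree $< 1$, so $(t-b)\vert_l f(t)$ holds precisely when this remainder vanishes, giving the claimed equivalence. I expect the only genuinely delicate point to be the congruence $\binom{p^j}{k}\equiv 0 \pmod p$ for $0<k<p^j$ underlying the identity $a t^{p^j} = t^{p^j} a - \delta^{p^j}(a)$; everything else is bookkeeping.
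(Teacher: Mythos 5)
Your proposal is correct and follows essentially the same route as the paper: both rest on the identity $at^{p^k} = t^{p^k}a - \delta^{p^k}(a)$ (from the binomial commutation formula and $\binom{p^k}{i}\equiv 0 \bmod p$ for $0<i<p^k$) combined with $t^{p^k} = (t-b)^{p^k} + V_{p^k}(b)$, giving $at^{p^k} = (t-b)^{p^k}a + V_{p^k}(b)a - \delta^{p^k}(a)$ and hence the stated left remainder term by term. No gaps.
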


\begin{proof}
We have $t^{p^k} a = (t-b)^{p^k}a + V_{p^k}(b)a,$ for all $a, b \in D$ and $k \geq 1$ by \eqref{p-power formula char p iterated}. Moreover, iterating the relation $ta = at + \delta(a)$ yields
\begin{equation*}
t^{p^k} a = \sum_{i=0}^{p^k} \binom{p^k}{i} \delta^{p^k - i}(a) t^i,
\end{equation*}
for all $a \in D$, $k \geq 1$ \cite[(1.1.26)]{jacobson1996finite}. This implies $t^{p^k} a = \delta^{p^k}(a) + at^{p^k}$ for all $a \in D$, $k \geq 1$ because $\binom{p^k}{i} = 0$ for all $i \in \{ 1, \ldots, p^k -1 \}$. Therefore 
$$at^{p^k} = (t-b)^{p^k}a + V_{p^k}(b)a - \delta^{p^k}(a),$$
and hence
\begin{equation*}
\begin{split}
f(t) &= t^{p^e} - a_{1} t^{p^{e-1}} - \ldots - a_e t - d \\
&= (t-b)q(t) + V_{p^e}(b) - \big( V_{p^{e-1}}(b) a_{1} - \delta^{p^{e-1}}(a_{1}) \big) \\
& \qquad \qquad - \ldots - \big( V_p(b)a_{e-1} - \delta^p(a_{e-1}) \big) - ba_e + \delta(a_e) - d.
\end{split}
\end{equation*}
\end{proof}

\begin{corollary} \label{cor:right iff left division in Char p}
Suppose $\mathrm{Char}(D) = p \neq 0$ and $f(t) = t^p - a_1 t - a_0 \in D[t;\delta]$, where $a_1 \in C \cap \mathrm{Const}(\delta)$. Then $(t-b) \vert_r f(t)$ if and only if  $(t-b) \vert_l f(t)$.
\end{corollary}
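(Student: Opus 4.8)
The plan is to specialise Propositions \ref{prop:(t-b) right divides f(t) in Char p} and \ref{prop:(t-b) left divides f(t) in Char p} to the case $e = 1$ and observe that the hypothesis $a_1 \in C \cap \mathrm{Const}(\delta)$ is exactly what is needed to make the two divisibility conditions coincide. First I would apply Proposition \ref{prop:(t-b) right divides f(t) in Char p} with $e = 1$, where $f(t) = t^p - a_1 t - a_0$; this gives that $(t-b) \vert_r f(t)$ is equivalent to
$$V_p(b) - a_1 b - a_0 = 0.$$

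Next I would apply Proposition \ref{prop:(t-b) left divides f(t) in Char p}, again with $e = 1$. Here the sum collapses to a single correction term, namely $\big( b a_1 - \delta(a_1) \big)$, so $(t-b) \vert_l f(t)$ is equivalent to
$$V_p(b) - \big( b a_1 - \delta(a_1) \big) - a_0 = 0.$$
Now I would invoke the hypothesis $a_1 \in C \cap \mathrm{Const}(\delta)$: since $a_1 \in C$ it is central, so $b a_1 = a_1 b$, and since $a_1 \in \mathrm{Const}(\delta)$ we have $\delta(a_1) = 0$. Substituting these into the left-division condition turns it into $V_p(b) - a_1 b - a_0 = 0$, which is precisely the right-division condition. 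Hence the two divisibility statements are literally the same equation, and $(t-b) \vert_r f(t)$ if and only if $(t-b) \vert_l f(t)$.

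There is essentially no hard step here; the whole content lies in recognising that the condition $a_1 \in C \cap \mathrm{Const}(\delta)$ simultaneously kills the derivation discrepancy $\delta(a_1)$ and the commutation defect $b a_1 - a_1 b$ that otherwise separate the left and right formulas. The only point demanding mild care is matching the indexing between the two Propositions: in their generic polynomial $t^{p^e} - a_1 t^{p^{e-1}} - \cdots - a_e t - d$ the constant term is $d$ and the coefficient of $t$ is $a_e$, so when $e = 1$ one must read $a_e = a_1$ and $d = a_0$ before comparing the expressions.
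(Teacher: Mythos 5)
Your proposal is correct and matches the paper's own proof: both reduce the claim to Propositions \ref{prop:(t-b) right divides f(t) in Char p} and \ref{prop:(t-b) left divides f(t) in Char p} with $e=1$ and observe that $a_1 \in C \cap \mathrm{Const}(\delta)$ makes the two resulting conditions identical. Your remark on matching the indexing ($a_e = a_1$, $d = a_0$) is the only subtlety, and you handle it correctly.
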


\begin{proof}
Recall $(t-b) \vert_r f(t)$ if and only if $V_p(b) - a_1 b - a_0 = 0$ by Proposition \ref{prop:(t-b) right divides f(t) in Char p}, and $(t-b) \vert_l f(t)$ if and only if $V_p(b) - b a_1 + \delta(a_1) - a_0 = 0$ by Proposition \ref{prop:(t-b) left divides f(t) in Char p}. When $a_1 \in C \cap \mathrm{Const}(\delta)$, these two conditions are equivalent.
\end{proof}

When $p=3$, Propositions \ref{prop:(t-b) right divides f(t) in Char p} and \ref{prop:(t-b) left divides f(t) in Char p} yield the following:

\begin{corollary} \label{cor:t^3-a_1t-a_0 Char 3 irreducibility}
Let $\mathrm{Char}(D) = 3$ and $f(t) = t^3 - a_1 t - a_0 \in D[t;\delta]$. 
\begin{itemize}
\item[(i)] $f(t)$ is irreducible if and only if
$$V_3(b) - a_1 b - a_0 \neq 0 \ \text{ and } \ V_3(b) - b a_1 + \delta(a_1) - a_0 \neq 0,$$
for all $b \in D$. 
\item[(ii)] Let $a_1 \in C \cap \mathrm{Const}(\delta)$, then $f(t)$ is irreducible if and only if $V_3(b) - a_1 b - a_0 \neq 0$  for all $b \in D$.
\item[(iii)] Suppose $D$ is commutative, then $f(t)$ is irreducible if and only if
$$b^3 + \delta^2(b) - a_1 b - a_0 \neq 0 \ \text{ and } \ b^3 + \delta^2(b) - b a_1 + \delta(a_1) - a_0 \neq 0,$$
for all $b \in D$.
\end{itemize}
\end{corollary}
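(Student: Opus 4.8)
The plan is to exploit that $\deg f(t) = 3$ is prime, so that $f(t)$ is irreducible precisely when it admits no monic linear divisor on either side. Indeed, if $f(t) = g(t)h(t)$ is a proper factorisation then $\{\deg g, \deg h\} = \{1,2\}$, and after absorbing the (invertible) leading coefficient of the degree-one factor we may take it monic: a degree-one factor appearing on the right yields a monic right divisor $t-b$, while one appearing on the left yields a monic left divisor $t-b$. Hence $f(t)$ is irreducible if and only if $(t-b) \nmid_r f(t)$ and $(t-b) \nmid_l f(t)$ for every $b \in D$, and the whole argument reduces to reading off the two one-sided linear-divisibility criteria already in hand.

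First I would specialise those criteria to the present setting. Writing $f(t) = t^3 - a_1 t - a_0$ in the shape $t^{p^e} - a_1 t^{p^{e-1}} - \ldots - a_e t - d$ with $p = 3$, $e = 1$, $a_e = a_1$ and $d = a_0$, Proposition \ref{prop:(t-b) right divides f(t) in Char p} gives that $(t-b) \vert_r f(t)$ is equivalent to $V_3(b) - a_1 b - a_0 = 0$, while Proposition \ref{prop:(t-b) left divides f(t) in Char p} gives that $(t-b) \vert_l f(t)$ is equivalent to $V_3(b) - (b a_1 - \delta(a_1)) - a_0 = 0$, i.e. $V_3(b) - b a_1 + \delta(a_1) - a_0 = 0$. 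Negating both and combining with the reduction above yields part (i) directly.

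For (ii), I would invoke Corollary \ref{cor:right iff left division in Char p}: when $a_1 \in C \cap \mathrm{Const}(\delta)$ the two one-sided divisibility conditions coincide, so the single non-vanishing requirement $V_3(b) - a_1 b - a_0 \neq 0$ already guarantees no linear factor on either side. For (iii), commutativity of $D$ lets me replace $V_3(b)$ throughout by its explicit form $b^3 + \delta^2(b)$ from \eqref{eqn:V_p(b) form when D commutative}, substituting into the two conditions of (i).

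There is no serious obstacle here; the argument is essentially bookkeeping layered on top of the linear-divisor criteria already established. The only point meriting a little care is the opening reduction — verifying that a proper factorisation of a cubic always produces a monic linear factor on one side or the other, including the leading-coefficient normalisation (which for a left factor interacts mildly with $\delta$) — but this is routine once one uses that $\deg f(t) = 3$ is prime.
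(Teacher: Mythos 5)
Your proof is correct and follows essentially the same route as the paper: since $\deg f = 3$, irreducibility reduces to the absence of monic linear right and left divisors, and parts (i)--(iii) then follow from Propositions \ref{prop:(t-b) right divides f(t) in Char p} and \ref{prop:(t-b) left divides f(t) in Char p}, Corollary \ref{cor:right iff left division in Char p}, and \eqref{eqn:V_p(b) form when D commutative} respectively. Your extra remark on normalising the leading coefficient of the linear factor is a minor detail the paper leaves implicit.
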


\begin{proof}
Notice $f(t)$ is irreducible if and only if $(t-b) \nmid_r f(t)$ and $(t-b) \nmid_l f(t)$ for all $b \in D$ and thus (i) follows by Propositions \ref{prop:(t-b) right divides f(t) in Char p} and \ref{prop:(t-b) left divides f(t) in Char p}. (ii) follows from (i) and Corollary \ref{cor:right iff left division in Char p} and (iii) follows from (i) and \eqref{eqn:V_p(b) form when D commutative}.
\end{proof}


\begin{proposition}
Suppose $\mathrm{Char}(D) = p \neq 0$, $D$ is commutative and $\delta$ is a non-trivial derivation of $D$ such that $\delta^p = 0$. Let $f(t) = t^p - a \in D[t;\delta]$ where $a \notin \mathrm{Const}(\delta)$, then
\begin{itemize}
\item[(i)] $(t-b) \nmid_r f(t)$ and $(t-b) \nmid_l f(t)$ for all $b \in D$.
\item[(ii)] If $p = 3$ then $f(t)$ is irreducible.
\end{itemize}
\end{proposition}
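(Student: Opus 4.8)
The plan is to show that for every $b \in D$ the element $V_p(b)$ lies in $\mathrm{Const}(\delta)$, from which it follows that $V_p(b)$ can never equal $a \notin \mathrm{Const}(\delta)$. Since both the right- and left-linear-divisibility conditions for $f(t) = t^p - a$ collapse to the single equation $V_p(b) = a$, neither can ever hold, giving part (i); part (ii) is then a short degree count.

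First I would record that, because $D$ is commutative and $\sigma = \mathrm{id}$, formula \eqref{eqn:V_p(b) form when D commutative} gives $V_p(b) = b^p + \delta^{p-1}(b)$ for all $b \in D$. The key computation is then $\delta(V_p(b)) = \delta(b^p) + \delta^p(b)$. Applying \eqref{eqn:goodearl} with $\sigma = \mathrm{id}$ and using commutativity yields $\delta(b^p) = \sum_{i=0}^{p-1} b^i \delta(b) b^{p-1-i} = p\,b^{p-1}\delta(b) = 0$, since $\mathrm{Char}(D) = p$, while $\delta^p(b) = 0$ by hypothesis. Hence $\delta(V_p(b)) = 0$, i.e. $V_p(b) \in \mathrm{Const}(\delta)$ for every $b \in D$.

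For part (i), Proposition \ref{prop:(t-b) right divides f(t) in Char p} tells us that $(t-b) \vert_r f(t)$ is equivalent to $V_p(b) = a$, and Proposition \ref{prop:(t-b) left divides f(t) in Char p}, with all intermediate coefficients zero, gives the same equivalence for $(t-b) \vert_l f(t)$ (one may also appeal directly to Corollary \ref{cor:right iff left division in Char p} with $a_1 = 0 \in C \cap \mathrm{Const}(\delta)$). Since $V_p(b) \in \mathrm{Const}(\delta)$ but $a \notin \mathrm{Const}(\delta)$, we get $V_p(b) \neq a$ for all $b$, so $f(t)$ has neither a right nor a left linear divisor. For part (ii), take $p = 3$, so $\deg f = 3$. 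As $D[t;\delta]$ is a domain, degrees add, so any proper factorisation $f = gh$ has $\{\deg g, \deg h\} = \{1,2\}$ and therefore produces either a left linear factor (when $\deg g = 1$) or a right linear factor (when $\deg h = 1$); part (i) excludes both, so $f$ is irreducible. Equivalently this is the case $a_1 = 0$, $a_0 = a$ of Corollary \ref{cor:t^3-a_1t-a_0 Char 3 irreducibility}(iii), whose two conditions both read $b^3 + \delta^2(b) = V_3(b) \neq a$.

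The only genuine content is the identity $\delta(V_p(b)) = 0$, and even that is a two-line calculation, so I do not anticipate a real obstacle; everything else is bookkeeping with the divisibility criteria already proved. The one point to handle carefully is the degenerate indexing in Propositions \ref{prop:(t-b) right divides f(t) in Char p} and \ref{prop:(t-b) left divides f(t) in Char p} when $e = 1$ and all intermediate $a_i$ vanish, to confirm that both conditions indeed reduce precisely to $V_p(b) = a$.
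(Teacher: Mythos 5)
Your proof is correct and follows essentially the same route as the paper: both arguments reduce to showing $V_p(b) = b^p + \delta^{p-1}(b) \in \mathrm{Const}(\delta)$ for all $b$, then invoke the right/left linear-divisibility criteria (which coincide here by Corollary \ref{cor:right iff left division in Char p}) and, for $p=3$, a degree count. The only difference is that you verify $\delta(b^p)=0$ directly from \eqref{eqn:goodearl} in characteristic $p$, whereas the paper cites the literature for $b^p \in \mathrm{Const}(\delta)$; this is a harmless, self-contained substitution.
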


\begin{proof}
\begin{itemize}
\item[(i)] Recall $(t-b) \vert_r f(t)$ is equivalent to $(t-b) \vert_l f(t)$ by Corollary \ref{cor:right iff left division in Char p}, and that $(t-b) \vert_r f(t)$ if and only if $V_p(b) = a$ if and only if $b^p + \delta^{p-1}(b) = a$ by Proposition \ref{prop:(t-b) right divides f(t) in Char p}. Now $b^p \in \mathrm{Const}(\delta)$ for all $b \in D$ as $D$ is commutative \cite[p.~60]{kolchin1973differential}, also $\delta^{p-1}(b) \in \mathrm{Const}(\delta)$ as $\delta^p = 0$. Therefore if $a \notin \mathrm{Const}(\delta)$ then $b^p + \delta^{p-1}(b) \neq a$ for all $b \in D$.
\item[(ii)] If $p = 3$, then $f(t)$ is irreducible if and only if $(t-b) \nmid_r f(t)$ and $(t-b) \nmid_l f(t)$ for all $b \in D$, hence the assertion follows by (i).
\end{itemize}
\end{proof}

When $f(t)$ has the form $f(t) = t^p - t - a \in D[t;\delta]$, we have:
\begin{theorem}
(\cite[Lemmas 4 and 6]{amitsur1954non}).
For any $a \in D$, the polynomial $f(t) = t^p - t - a \in D[t;\delta]$ is either a product of commuting linear factors or irreducible. Furthermore, $f(t)$ is irreducible if and only if $V_p(b) - b - a \neq 0,$
for all $b \in D$. In particular, if $D$ is commutative then $f(t)$ is irreducible if and only if
$$b^p + \delta^{p-1}(b) - b - a \neq 0,$$
for all $b \in D$.
\end{theorem}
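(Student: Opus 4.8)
The plan is to reduce the whole statement to one structural dichotomy and then prove that dichotomy. By Proposition \ref{prop:(t-b) right divides f(t) in Char p}, a monic linear polynomial $t-b$ right divides $f(t)=t^p-t-a$ precisely when $V_p(b)-b-a=0$; hence once we know that $f(t)$ is \emph{either} irreducible \emph{or} a product of commuting linear factors, the equivalence ``$f(t)$ irreducible $\iff V_p(b)-b-a\neq 0$ for all $b\in D$'' is immediate, since reducibility of $f(t)$ would then force a linear right factor and hence a solution of $V_p(b)=b+a$. The final commutative statement then follows by substituting the simplified formula $V_p(b)=b^p+\delta^{p-1}(b)$ from \eqref{eqn:V_p(b) form when D commutative}. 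So the entire content is the structural dichotomy.

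First I would show that if $f(t)$ has one monic linear right factor $t-b$, then it splits completely into commuting linear factors. The key point is that the prime field $\mathbb{F}_p$ is central in $D$ and contained in $\mathrm{Const}(\delta)$, so each $i\in\mathbb{F}_p$ is central in $R=D[t;\delta]$. Writing $s=t-b$, the binomial theorem with central $i$ gives $(s-i)^p=s^p-i$, which combined with \eqref{p-power formula char p} yields $V_p(b+i)=V_p(b)+i$; thus $V_p(b)=b+a$ forces $V_p(b+i)=(b+i)+a$, so every $t-b-i$ with $i\in\mathbb{F}_p$ is a right factor. Since $s$ commutes with itself and with the central elements $i$, all these factors lie in the commutative subring generated by $s$ and $\mathbb{F}_p$, where the identity $\prod_{i\in\mathbb{F}_p}(X-i)=X^p-X$ specialises to $\prod_{i\in\mathbb{F}_p}(t-b-i)=s^p-s=(t-b)^p-(t-b)=t^p-t-a=f(t)$, using $V_p(b)=b+a$. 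This exhibits $f(t)$ as a product of $p$ pairwise commuting linear factors.

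The harder half, and the main obstacle, is to prove that a \emph{reducible} $f(t)$ necessarily has a linear right factor; here I would mimic the argument of Theorem \ref{thm:bourbaki}, with the scaling $t\mapsto\omega t$ replaced by a translation automorphism. Let $\tau\colon R\to R$ be the ring automorphism fixing $D$ pointwise with $\tau(t)=t+1$; one checks $\tau$ is well defined, has order $p$, and fixes $f(t)$ since $(t+1)^p-(t+1)-a=t^p-t-a$. If $f(t)$ is reducible, take a monic irreducible right factor $g(t)$ of degree $d$ (Theorem \ref{thm:Ore factorisation is similar in pairs}); then each $g_i=\tau^i(g)$ is again a monic irreducible degree-$d$ right factor of $f(t)=\tau^i(f)$, and $\tau$ cyclically permutes the $g_i$. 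Setting $Rh=\bigcap_{i=0}^{p-1}Rg_i$ with $h$ monic, this intersection is $\tau$-stable, so $\tau(h)=h$; a short leading-coefficient computation shows a $\tau$-invariant polynomial cannot have degree in $\{1,\dots,p-1\}$ (the coefficient of $t^{n-1}$ in $\tau(h)-h$ is $n$ times the leading coefficient of $h$), so from $d\le\mathrm{deg}(h)\le p$ (since $g$ right divides $h$, which right divides $f$) we obtain $\mathrm{deg}(h)=p$ and hence $h=f(t)$. Thus $f(t)$ is the least common left multiple of the $g_i$, and exactly as in Theorem \ref{thm:bourbaki} it factors into $p/d$ polynomials of degree $d$; as $p$ is prime this forces $d=1$ or $d=p$. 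The case $d=p$ means $f(t)$ is irreducible, while $d=1$ produces a linear right factor and the previous paragraph shows $f(t)$ is a product of commuting linear factors. I expect the delicate points to be the verification that $Rh$ is $\tau$-stable and the degree argument pinning down $h=f(t)$, everything else being routine or a direct appeal to the machinery already set up for Theorem \ref{thm:bourbaki}.
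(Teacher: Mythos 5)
Your proof is correct, but note that the paper does not actually prove this statement: it is quoted directly from Amitsur \cite[Lemmas 4 and 6]{amitsur1954non}, so there is no in-paper argument to compare against. What you have written is a self-contained proof assembled from the paper's own toolkit, and the delicate points all check out. The reduction of the two ``furthermore'' clauses to the structural dichotomy via Proposition \ref{prop:(t-b) right divides f(t) in Char p} and \eqref{eqn:V_p(b) form when D commutative} is immediate. In the splitting half, the facts that $\mathbb{F}_p$ is central in $D[t;\delta]$ and annihilated by $\delta$, that $(s-i)^p=s^p-i$ for $s=t-b$, and hence that $V_p(b+i)=V_p(b)+i$, are all sound, and evaluating $X^p-X=\prod_{i\in\mathbb{F}_p}(X-i)$ at $X=s$ is legitimate because $s$ commutes with the central coefficients, so the evaluation map $\mathbb{F}_p[X]\rightarrow R$ is a ring homomorphism. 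In the reducibility half, your translation map $\tau$ is exactly the order-$p$ automorphism $\sum b_it^i\mapsto\sum b_i(t+1)^i$ that the paper itself uses later (Section \ref{section:Automorphisms of S_f, f(t) in D[t;delta]}); it visibly fixes $f(t)$, the $\tau$-stability of $\bigcap_{i}Rg_i$ gives $\tau(h)=uh$ for a unit $u\in D^{\times}$ and monicity forces $u=1$ (the one step you compress slightly), your coefficient-of-$t^{n-1}$ computation correctly forces $p\mid\mathrm{deg}(h)$, and $1\leq\mathrm{deg}(g)\leq\mathrm{deg}(h)\leq p$ then pins down $h=f$. The appeal to the Ore least-common-left-multiple factorisation into similar factors is valid here because the machinery of Theorem \ref{thm:bourbaki} (left principal ideal domain, similarity preserving degree) holds in any $D[t;\sigma,\delta]$ over a division ring, so $d\mid p$ and the dichotomy follows. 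This is precisely the additive analogue of the Bourbaki argument for $t^m-a$, with translation by $\mathbb{F}_p$ replacing scaling by roots of unity, and it is in the spirit of Amitsur's original proof.
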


\chapter{Isomorphisms Between Petit Algebras} \label{chapter:Isomorphisms Between some Petit Algebras}

We now investigate isomorphisms between some Petit Algebras. The results we obtain in this Chapter will then be applied in Chapter \ref{chapter:Automorphisms of S_f} to study the automorphism groups of Petit algebras. 

Let $D$ be an associative division ring with center $C$, $\sigma$ be an automorphism of $D$ and $\delta$ be a left $\sigma$-derivation of $D$. Suppose $D'$, $C'$, $\sigma'$ and $\delta'$ are defined similarly. Let $f(t) \in R = D[t;\sigma,\delta]$, \ $g(t) \in R' = D'[t;\sigma',\delta']$, and recall $S_f = R/Rf$ is an algebra over $F = C \cap \mathrm{Fix}(\sigma) \cap \mathrm{Const}(\delta)$ and $S_g = R'/R'g$ is an algebra over $F' = C' \cap \mathrm{Fix}(\sigma') \cap \mathrm{Const}(\delta')$. Denote by $\circ_f$ the multiplication in $S_f$ and by $\circ_g$ the multiplication in $S_g$. If $f(t)$ and $g(t)$ are not right invariant and $F = F'$, we have the following necessary conditions for $S_f$ to be $F$-isomorphic to $S_g$:


\begin{proposition} \label{prop:isomorphism_necessity}
Suppose $f(t)$, $g(t)$ are not right invariant, $F = F'$ and $S_f$ is $F$-isomorphic to $S_g$. Then 
\begin{itemize}
\item[(i)] $D \cong D'$.
\item[(ii)] $\mathrm{Nuc}_r(S_f) \cong \mathrm{Nuc}_r(S_g)$.
\item[(iii)] $S_f$ is a division algebra if and only if $S_g$ is a division algebra.
\item[(iv)] If $S_f$ is a finite-dimensional left $F$-vector space, then $\mathrm{deg}(f(t)) = \mathrm{deg}(g(t))$.
\end{itemize}
\end{proposition}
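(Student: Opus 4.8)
The plan is to exploit the fact that an algebra isomorphism preserves the associator, and hence carries each nucleus isomorphically onto the corresponding nucleus, together with the explicit description of the nuclei furnished by Theorem \ref{thm:Properties of S_f petit}. Fix an $F$-algebra isomorphism $H \colon S_f \to S_g$. Since $f(t)$ and $g(t)$ are not right invariant, neither $S_f$ nor $S_g$ is associative by Corollary \ref{cor:E(f)=S_f iff associative, S_f central}(i), so Theorem \ref{thm:Properties of S_f petit}(i) gives $\mathrm{Nuc}_l(S_f) = D$, $\mathrm{Nuc}_l(S_g) = D'$, $\mathrm{Nuc}_r(S_f) = E(f)$ and $\mathrm{Nuc}_r(S_g) = E(g)$. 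Because $H([x,y,z]) = [H(x),H(y),H(z)]$ for all $x,y,z \in S_f$, we have $x \in \mathrm{Nuc}_l(S_f)$ if and only if $H(x) \in \mathrm{Nuc}_l(S_g)$, and likewise for the right nucleus; hence $H$ restricts to $F$-algebra isomorphisms $D = \mathrm{Nuc}_l(S_f) \cong \mathrm{Nuc}_l(S_g) = D'$ and $\mathrm{Nuc}_r(S_f) \cong \mathrm{Nuc}_r(S_g)$, proving (i) and (ii).

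For (iii), I would transfer bijectivity of the multiplication operators through $H$. For any $0 \neq a \in S_f$ and the corresponding $b = H(a) \neq 0$ in $S_g$, a direct computation using $H(x \circ_f a) = H(x) \circ_g H(a)$ shows $R_b^{S_g} = H \circ R_a^{S_f} \circ H^{-1}$ and $L_b^{S_g} = H \circ L_a^{S_f} \circ H^{-1}$. Thus $R_b^{S_g}$ (resp. $L_b^{S_g}$) is bijective exactly when $R_a^{S_f}$ (resp. $L_a^{S_f}$) is, and since $a \mapsto H(a)$ is a bijection of the non-zero elements, $S_f$ is a division algebra if and only if $S_g$ is.

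Finally, for (iv), I would carry out a dimension count. Recall $S_f$ is a free left $D$-module with basis $\{1,t,\dots,t^{m-1}\}$, $m = \deg f(t)$, so if $\{d_1,\dots,d_n\}$ is an $F$-basis of $D$ then $\{d_i t^j : 1 \le i \le n,\ 0 \le j \le m-1\}$ is an $F$-basis of $S_f$, giving $\dim_F S_f = \deg(f(t)) \cdot \dim_F D$, and similarly $\dim_F S_g = \deg(g(t)) \cdot \dim_F D'$. Since $S_f$ is finite-dimensional over $F$ and $D = \mathrm{Nuc}_l(S_f)$ is an $F$-subspace of $S_f$, the integer $\dim_F D$ is finite and at least $1$. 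As $H$ is in particular an $F$-linear isomorphism, $\dim_F S_f = \dim_F S_g$; combining this with $\dim_F D = \dim_{F'} D' = \dim_F D'$ (from (i) and $F = F'$) and cancelling the nonzero finite factor $\dim_F D$ yields $\deg(f(t)) = \deg(g(t))$. The point requiring the most care throughout is the repeated use of the hypothesis that $f(t)$ and $g(t)$ are not right invariant: it is exactly this that forces $S_f$ and $S_g$ to be non-associative and thereby identifies their left nuclei with $D$ and $D'$, which is the linchpin of parts (i), (ii) and (iv).
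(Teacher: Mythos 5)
Your proposal is correct and follows essentially the same route as the paper's own proof: non-right-invariance forces non-associativity so that Theorem \ref{thm:Properties of S_f petit}(i) identifies the left nuclei with $D$ and $D'$, the nuclei are transported by the isomorphism for (i) and (ii), the multiplication operators are conjugated through the isomorphism for (iii), and a dimension count over $F$ using $D \cong D'$ gives (iv). No gaps; your write-up is if anything slightly more explicit than the paper's.
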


\begin{proof}
\begin{itemize}
\item[(i)] We have $\mathrm{Nuc}_l(S_f) = D$ and $\mathrm{Nuc}_l(S_g) = D'$ by Theorem \ref{thm:Properties of S_f petit}(i) because $S_f$ and $S_g$ are not associative. Any isomorphism preserves the left nucleus and so $D \cong D'$.
\item[(ii)] Any isomorphism also preserves the right nucleus, thus $\mathrm{Nuc}_r(S_f) \cong \mathrm{Nuc}_r(S_g)$. 
\item[(iii)] Let $L_{a,f}: S_f \rightarrow S_f, \ x \mapsto a \circ_f x$ denote the left multiplication in $S_f$ and $L_{b,g}: S_g \rightarrow S_g, \ x \mapsto b \circ_g x$ the left multiplication in $S_g$. Similarly denote the right multiplication maps $R_{a,f}$ and $R_{b,g}$. Suppose $\phi:S_f \rightarrow S_g$ is an $F$-isomorphism and $S_f$ is a division algebra, so that $L_{a,f}$ and $R_{a,f}$ are bijective for all non-zero $a \in S_f$. Furthermore we have
$$L_{b,g}(x) = \phi(L_{\phi^{-1}(b),f}(\phi^{-1}(x))) \text{ and } R_{b,g}(x) = \phi(R_{\phi^{-1}(b),f}(\phi^{-1}(x))),$$
for all $x \in S_g$, $0 \neq b \in S_g$. These imply $L_{b,g}$ and $R_{b,g}$ are bijective for all non-zero $b \in S_g$ because $\phi, \phi^{-1}, L_{\phi^{-1}(b),f}$ and $R_{\phi^{-1}(b),f}$ are all bijective, hence $S_g$ is a division algebra. The reverse implication is proven analogously.
\item[(iv)] $S_f$ and $S_g$ must have the same dimension as left $F$-vector spaces, and since $D \cong D'$, this implies $\mathrm{deg}(f(t)) = \mathrm{deg}(g(t))$.
\end{itemize}
\end{proof}

\section{Automorphisms of \texorpdfstring{$R = D[t;\sigma,\delta]$}{R = D[t;sigma,delta]}}

Henceforth we assume $D = D'$, $\sigma = \sigma'$, $\delta = \delta'$ and $f(t), g(t) \in R = D[t;\sigma,\delta]$ have degree $m$. Automorphisms of $R$ can be used to define isomorphisms between Petit algebras:

\begin{theorem} \label{thm:lavrauw isomorphic petit algebras generalisation} 
Let $\Theta$ be an $F$-automorphism of $R$. Given $f(t) \in R$, define $g(t) = l \Theta(f(t)) \in R$ for some $l \in D^{\times}$. Then $\Theta$ induces an $F$-isomorphism $S_f \cong S_g$.
\end{theorem}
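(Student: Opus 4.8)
The plan is to show that the isomorphism induced by $\Theta$ is nothing more than the restriction $\Theta|_{R_m}$, and then to verify directly that this restriction intertwines $\circ_f$ and $\circ_g$. The first thing I would prove is that every $F$-automorphism $\Theta$ of $R$ preserves degree. Since $R$ is a domain with $\mathrm{deg}(xy) = \mathrm{deg}(x) + \mathrm{deg}(y)$, its units are exactly $D^{\times}$, so $\Theta$ carries $D^{\times}$ into $D^{\times}$ and therefore restricts to an automorphism of $D$. Setting $k = \mathrm{deg}(\Theta(t)) \geq 1$ (it cannot be $0$, else $\Theta(R)$ would lie in $D$, contradicting surjectivity), I would observe that for $h = \sum_i a_i t^i$ the summands $\Theta(a_i)\Theta(t)^i$ have pairwise distinct degrees $ik$, so no cancellation occurs and $\mathrm{deg}(\Theta(h)) = k\,\mathrm{deg}(h)$. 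Surjectivity of $\Theta$ then forces $k = 1$, whence $\mathrm{deg}(\Theta(h)) = \mathrm{deg}(h)$ for all $h \in R$. In particular $\Theta(R_m) = R_m$, so $\Theta$ restricts to an $F$-linear bijection $\Phi := \Theta|_{R_m} : S_f \to S_g$ with $\Phi(1) = 1$.

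It then remains to check $\Phi(a \circ_f b) = \Phi(a) \circ_g \Phi(b)$ for all $a,b \in R_m$. I would write the right division $ab = qf + (a \circ_f b)$ in $R$, with $\mathrm{deg}(a \circ_f b) < m$, and apply $\Theta$ to obtain $\Theta(a)\Theta(b) = \Theta(q)\Theta(f) + \Theta(a \circ_f b)$. Since $g = l\,\Theta(f)$ with $l \in D^{\times}$, we have $\Theta(f) = l^{-1}g$, so $\Theta(q)\Theta(f) = \Theta(q)l^{-1}g \in Rg$; and by the degree-preservation step $\mathrm{deg}(\Theta(a \circ_f b)) = \mathrm{deg}(a \circ_f b) < m$. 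Thus this equation exhibits $\Theta(a \circ_f b)$ as a remainder of degree $< m$ after right division of $\Theta(a)\Theta(b)$ by $g$. By uniqueness of the right-division remainder, $\Theta(a \circ_f b) = \Theta(a)\Theta(b)\ \mathrm{mod}_r\, g = \Theta(a) \circ_g \Theta(b)$, which is exactly $\Phi(a \circ_f b) = \Phi(a) \circ_g \Phi(b)$.

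The step I expect to be the main obstacle is the degree-preservation claim, since it is what makes the whole argument go through cleanly: it guarantees simultaneously that the induced map is the honest restriction $\Theta|_{R_m}$ rather than $\Theta$ followed by a nontrivial reduction $\mathrm{mod}_r\, g$, and that $\Theta(a \circ_f b)$ lands back in $R_m$ so that it can be recognised as the $g$-remainder. Once degree preservation is established, the remaining verification is a formal consequence of the uniqueness of right division together with the relation $g = l\,\Theta(f)$; $F$-linearity of $\Theta$ supplies the $F$-linearity of $\Phi$, completing the proof that $\Phi$ is an $F$-algebra isomorphism.
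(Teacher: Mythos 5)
Your proposal is correct and follows essentially the same route as the paper: establish that $\Theta$ preserves degree (so that $\Theta\vert_{R_m}$ is a well-defined $F$-linear bijection of $R_m$), then apply $\Theta$ to the right division $ab = qf + (a\circ_f b)$ and invoke uniqueness of the remainder modulo $g = l\Theta(f)$. The only difference is that you prove the degree-preservation claim directly from the unit structure of the domain $R$, whereas the paper cites it from Lam--Leroy; this is a harmless (indeed self-contained) elaboration rather than a different argument.
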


\begin{proof}
$\Theta(t)$ has degree $1$ by the argument in \cite[p.~4]{lam1992homomorphisms}, therefore $\Theta$ preserves the degree of elements of $R$ and thus $\Theta \vert_{R_m}: R_m \rightarrow R_m$ is well defined, hence bijective and $F$-linear. Let $a, b \in R_m$, then there exist unique $q(t), r(t) \in R$ with $\mathrm{deg}(r(t)) < m$ such that $ab = q(t)f(t) + r(t)$. We have
\begin{align*}
\Theta (a \circ_f b) &= \Theta(ab - qf) = \Theta(a)\Theta(b) - \Theta(q)\Theta(f) \\ &= \Theta(a)\Theta(b) - \Theta(q) l^{-1} l \Theta(f) = \Theta(a) \circ_{g} \Theta(b),
\end{align*}
and so $\Theta \vert_{R_m}: S_f \rightarrow S_{g}$ is an $F$-isomorphism between algebras.
\end{proof}

When $D = K$ is a finite field and $f(t) \in K[t;\sigma]$ is irreducible, then \cite[Theorem 7]{lavrauw2013semifields} states that an automorphism $\Theta$ of $K[t;\sigma]$ restricts to an isomorphism between $S_f$ and $S_{\Theta(f)}$. Therefore Theorem \ref{thm:lavrauw isomorphic petit algebras generalisation} is a generalisation of \cite[Theorem 7]{lavrauw2013semifields}.


In order to employ Theorem \ref{thm:lavrauw isomorphic petit algebras generalisation}, we first investigate what the automorphisms of $R$ look like: Given $\tau \in \mathrm{Aut}_{F}(D)$ and $p(t) \in R$, the map
\begin{equation*}
\Theta: R \rightarrow R, \ \sum_{i=0}^{n} b_i t^i \mapsto \sum_{i=0}^{n} \tau(b_i) p(t)^i,
\end{equation*}
is an $F$-homomorphism if and only if
\begin{equation} \label{eqn:isomorphism between skew polynomial rings}
p(t) \tau(b) = \tau(\sigma(b))p(t) + \tau(\delta(b)),
\end{equation}
for all $b \in D$ by \cite[p.~4]{lam1992homomorphisms}. Furthermore, by a simple degree argument we see $\Theta$ is injective if and only if $p(t)$ has degree $\geq 1$, and $\Theta$ is bijective if and only if $p(t)$ has degree $=1$ \cite[p.~4]{lam1992homomorphisms}. Therefore, by \eqref{eqn:isomorphism between skew polynomial rings} we conclude:

\begin{proposition} \label{prop:isomorphisms between skew polynomial rings}
Let $c \in D$, $d \in D^{\times}$, then
\begin{equation} \label{eqn:isomorphisms between skew polynomial rings 0}
\Theta_{\tau,c,d}: R \rightarrow R, \ \sum_{i=0}^n b_i t^i \mapsto \sum_{i=0}^n \tau(b_i) (c + d t)^i,
\end{equation}
is an $F$-automorphism of $R$ if and only if 
\begin{equation} \label{eqn:isomorphisms between skew polynomial rings 1}
c \tau(b) + d \delta(\tau(b)) = \tau(\sigma(b))c + \tau(\delta(b)),
\end{equation}
and
\begin{equation} \label{eqn:isomorphisms between skew polynomial rings 2}
d \sigma(\tau(b)) = \tau(\sigma(b))d,
\end{equation}
for all $b \in D$.
\end{proposition}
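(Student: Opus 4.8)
The plan is to reduce everything to the criterion recalled immediately before the statement. Since we take $p(t) = c + dt$ with $d \in D^{\times}$, the polynomial $p(t)$ has degree exactly $1$, so by the degree argument of \cite[p.~4]{lam1992homomorphisms} the map $\Theta_{\tau,c,d}$ is automatically bijective. Consequently $\Theta_{\tau,c,d}$ is an $F$-automorphism of $R$ if and only if it is an $F$-homomorphism, which by \eqref{eqn:isomorphism between skew polynomial rings} holds if and only if
$$p(t)\tau(b) = \tau(\sigma(b))p(t) + \tau(\delta(b))$$
for all $b \in D$. So the entire content of the proposition is to show that, for $p(t) = c + dt$, this single identity is equivalent to the conjunction of \eqref{eqn:isomorphisms between skew polynomial rings 1} and \eqref{eqn:isomorphisms between skew polynomial rings 2}.

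Next I would substitute $p(t) = c + dt$ into both sides and expand. On the left, using the commutation rule $t\tau(b) = \sigma(\tau(b))t + \delta(\tau(b))$ in $R$, one gets
$$(c+dt)\tau(b) = c\tau(b) + d\delta(\tau(b)) + d\sigma(\tau(b))\,t,$$
while the right side is
$$\tau(\sigma(b))(c+dt) + \tau(\delta(b)) = \tau(\sigma(b))c + \tau(\delta(b)) + \tau(\sigma(b))d\,t.$$
Comparing the coefficients of $t^1$ yields $d\sigma(\tau(b)) = \tau(\sigma(b))d$, which is \eqref{eqn:isomorphisms between skew polynomial rings 2}, and comparing the coefficients of $t^0$ yields $c\tau(b) + d\delta(\tau(b)) = \tau(\sigma(b))c + \tau(\delta(b))$, which is \eqref{eqn:isomorphisms between skew polynomial rings 1}. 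Since two elements of $R$ written in the left-polynomial normal form are equal precisely when their coefficients agree, the homomorphism identity holds for all $b$ if and only if both \eqref{eqn:isomorphisms between skew polynomial rings 1} and \eqref{eqn:isomorphisms between skew polynomial rings 2} hold for all $b$, which completes the argument.

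There is essentially no obstacle here beyond careful bookkeeping: the only things to get right are invoking the degree-$1$ criterion to pass from "homomorphism" to "automorphism" for free, and correctly normalising $(c+dt)\tau(b)$ into the standard left-polynomial form before equating coefficients. I would make sure to state explicitly that $d \neq 0$ is exactly what forces $\deg p(t) = 1$, so that bijectivity is not an extra hypothesis but an automatic consequence.
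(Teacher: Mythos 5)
Your proposal is correct and follows essentially the same route as the paper's proof: both invoke the degree-$1$ criterion from \cite[p.~4]{lam1992homomorphisms} to get bijectivity for free, expand $(c+dt)\tau(b)$ via the commutation rule, and compare the coefficients of $t^0$ and $t^1$ to obtain \eqref{eqn:isomorphisms between skew polynomial rings 1} and \eqref{eqn:isomorphisms between skew polynomial rings 2}. No gaps.
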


\begin{proof}
Let $p(t) = c + dt$, then $\Theta_{\tau,c,d}: R \rightarrow R$ is bijective because $p(t)$ has degree $1$, furthermore $\Theta_{\tau,c,d}$ is an $F$-automorphism if and only if
$$(c + dt) \tau(b) = c \tau(b) + d \sigma(\tau(b))t + d \delta(\tau(b)) = \tau(\sigma(b))(c + dt) + \tau(\delta(b)),$$
by \eqref{eqn:isomorphism between skew polynomial rings}. Comparing the coefficients of $t^0$ and $t$ yields \eqref{eqn:isomorphisms between skew polynomial rings 1} and \eqref{eqn:isomorphisms between skew polynomial rings 2} as required.
\end{proof}

Looking closely at the conditions \eqref{eqn:isomorphisms between skew polynomial rings 1} and \eqref{eqn:isomorphisms between skew polynomial rings 2} yields the following:

\begin{corollary} \label{cor:Automorphisms of D[t;sigma,delta]}
\begin{itemize}
\item[(i)] Suppose $c \in D$, $d \in D^{\times}$. Then $\Theta_{\mathrm{id},c,d}$ is an $F$-automorphism of $R$ if and only if $d \in C^{\times}$ and
$$cb + d \delta(b) = \sigma(b) c + \delta(b)$$
for all $b \in D$.
\item[(ii)] $\Theta_{\tau,0,1}$ is an $F$-automorphism of $R$ if and only if $\sigma \circ \tau = \tau \circ \sigma$ and $\delta \circ \tau = \tau \circ \delta$.
\item[(iii)] Let $c \in D$, then $\Theta_{\tau,c,1}$ is an $F$-automorphism of $R$ if and only if $\sigma \circ \tau = \tau \circ \sigma$ and
\begin{equation} \label{eqn:Automorphisms of D[t;sigma,delta] 1}
c \tau(b) + \delta(\tau(b)) = \tau(\sigma(b)) c + \tau(\delta(b)),
\end{equation}
for all $b \in D$.
\item[(iv)] Suppose $c \in D^{\times}, 1 \neq d \in C^{\times}$ and $\delta$ is the inner $\sigma$-derivation
\begin{equation} \label{eqn:Automorphisms of D[t;sigma,delta] 2}
\delta: D \rightarrow D, \ b \mapsto c(1-d)^{-1} b - \sigma(b) c(1-d)^{-1}.
\end{equation}
Then $\Theta_{\mathrm{id},c,d}$ is an $F$-automorphism of $R$.
\end{itemize}
\end{corollary}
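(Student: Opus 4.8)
The plan is to derive all four parts directly from the biconditional of Proposition \ref{prop:isomorphisms between skew polynomial rings}, which says that $\Theta_{\tau,c,d}$ is an $F$-automorphism of $R$ precisely when conditions \eqref{eqn:isomorphisms between skew polynomial rings 1} and \eqref{eqn:isomorphisms between skew polynomial rings 2} both hold for all $b \in D$. Parts (i)--(iii) are each obtained by substituting a specific choice of $\tau$, $c$, $d$ into these two equations and simplifying; part (iv) then follows from part (i) together with a short computation. In every case both directions of the stated biconditionals come for free, since the proposition is itself a biconditional.

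For part (i) I would set $\tau = \mathrm{id}$. Then \eqref{eqn:isomorphisms between skew polynomial rings 1} reads exactly $cb + d\delta(b) = \sigma(b)c + \delta(b)$, while \eqref{eqn:isomorphisms between skew polynomial rings 2} becomes $d\sigma(b) = \sigma(b)d$ for all $b \in D$. Because $\sigma$ is an automorphism, $\sigma(b)$ ranges over all of $D$, so this last equation is equivalent to $d$ commuting with every element of $D$, i.e. $d \in C$; since $d \in D^\times$ this is precisely $d \in C^\times$. This yields the biconditional of (i). For part (ii) I would take $c = 0$ and $d = 1$: equation \eqref{eqn:isomorphisms between skew polynomial rings 1} collapses to $\delta(\tau(b)) = \tau(\delta(b))$, i.e. $\delta \circ \tau = \tau \circ \delta$, and \eqref{eqn:isomorphisms between skew polynomial rings 2} collapses to $\sigma(\tau(b)) = \tau(\sigma(b))$, i.e. $\sigma \circ \tau = \tau \circ \sigma$. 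For part (iii) I would keep $d = 1$ but leave $c$ arbitrary: \eqref{eqn:isomorphisms between skew polynomial rings 1} becomes condition \eqref{eqn:Automorphisms of D[t;sigma,delta] 1} verbatim, and \eqref{eqn:isomorphisms between skew polynomial rings 2} again reduces to $\sigma \circ \tau = \tau \circ \sigma$.

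For part (iv), since $d \in C^\times$ is assumed, part (i) reduces the claim to verifying $cb + d\delta(b) = \sigma(b)c + \delta(b)$ for all $b$, which I would rewrite as $cb - (1-d)\delta(b) = \sigma(b)c$. The hypothesis $d \neq 1$ gives $1 - d \in C^\times$, so $(1-d)^{-1}$ is central and commutes past both $c$ and $\sigma(b)$. Substituting the inner $\sigma$-derivation \eqref{eqn:Automorphisms of D[t;sigma,delta] 2} and factoring out the central element gives $\delta(b) = (1-d)^{-1}\big(cb - \sigma(b)c\big)$, whence $(1-d)\delta(b) = cb - \sigma(b)c$ and therefore $cb - (1-d)\delta(b) = \sigma(b)c$, exactly the required identity. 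I would also note for completeness that the map in \eqref{eqn:Automorphisms of D[t;sigma,delta] 2} is genuinely a left $\sigma$-derivation, which is the standard verification for inner $\sigma$-derivations $b \mapsto ub - \sigma(b)u$.

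There is no serious obstacle here: the entire content of the corollary lies in correctly reading off the specialisations of \eqref{eqn:isomorphisms between skew polynomial rings 1} and \eqref{eqn:isomorphisms between skew polynomial rings 2}, and in part (iv) in exploiting the centrality of $(1-d)^{-1}$ to cancel factors. The mildest point of care is that the equivalence of \eqref{eqn:isomorphisms between skew polynomial rings 2} with $d \in C^\times$ in part (i) genuinely uses the surjectivity of $\sigma$; everything else is mechanical substitution.
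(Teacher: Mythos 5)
Your proof is correct and follows essentially the same route as the paper: parts (i)--(iii) are read off by specialising the two conditions of Proposition \ref{prop:isomorphisms between skew polynomial rings}, and part (iv) is the same computation with the central factor $(1-d)^{-1}$ (the paper verifies the identity directly rather than routing through part (i), but the algebra is identical). Your explicit remark that the equivalence $d\sigma(b)=\sigma(b)d$ for all $b$ with $d\in C^{\times}$ uses surjectivity of $\sigma$ is a small but welcome clarification that the paper leaves implicit.
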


\begin{proof}
(ii) and (iii) follow immediately from \eqref{eqn:isomorphisms between skew polynomial rings 1} and \eqref{eqn:isomorphisms between skew polynomial rings 2}. 
\begin{itemize}
\item[(i)] Setting $\tau = \mathrm{id}$ in \eqref{eqn:isomorphisms between skew polynomial rings 2} yields $d \sigma(b) = \sigma(b) d$ for all $b \in D$, which is equivalent to $d \in C^{\times}$. The result follows by setting $\tau = \mathrm{id}$ in \eqref{eqn:isomorphisms between skew polynomial rings 1}.
\item[(iv)] If $\delta$ has the form \eqref{eqn:Automorphisms of D[t;sigma,delta] 2} and $\tau = \mathrm{id}$, then
\begin{align*}
c b &+ d \delta(b) = c b + d \big( c (1-d)^{-1} b - \sigma(b) c (1-d)^{-1} \big) \\
&= (1-d)^{-1} \big( c b(1-d) + d c b - d \sigma(b) c \big) = (1-d)^{-1}(c b - d \sigma(b) c) \\
&= (1-d)^{-1} \big( \sigma(b) c (1-d) + c b - \sigma(b) c \big) = \sigma(b) c + \delta(b),
\end{align*}
and $d \sigma(b) = \sigma(b) d$, for all $b \in D$ because $d \in C^{\times}$. Therefore $\Theta_{\mathrm{id},c,d}$ is an $F$-automorphism of $R$ by Proposition \ref{prop:isomorphisms between skew polynomial rings}.
\end{itemize}
\end{proof}

We can use Theorem \ref{thm:lavrauw isomorphic petit algebras generalisation} together with Proposition \ref{prop:isomorphisms between skew polynomial rings} and Corollary \ref{cor:Automorphisms of D[t;sigma,delta]}, to find isomorphisms between some Petit algebras:

\begin{corollary}
Let $f(t) = t^m - \sum_{i=0}^{m-1} a_i t^i \in R$.
\begin{itemize}
\item[(i)] Suppose $\tau \in \mathrm{Aut}_{F}(D)$ and $c \in D$, $d \in D^{\times}$ are such that \eqref{eqn:isomorphisms between skew polynomial rings 1} and \eqref{eqn:isomorphisms between skew polynomial rings 2} hold for all $b \in D$. Let $g(t) = (c+dt)^m - \sum_{i=0}^{m-1} \tau(a_i) (c+dt)^i \in R,$ then $S_f \cong S_g$.
\item[(ii)] Suppose $\tau \in \mathrm{Aut}_{F}(D)$ is such that $\sigma \circ \tau = \tau \circ \sigma$ and $\delta \circ \tau = \tau \circ \delta$ and define $g(t) = t^m - \sum_{i=0}^{m-1} \tau(a_i)t^i \in R$, then $S_f \cong S_g$.
\item[(iii)] Suppose $\tau \in \mathrm{Aut}_{F}(D)$ and $c \in D$ are such that $\sigma \circ \tau = \tau \circ \sigma$ and \eqref{eqn:Automorphisms of D[t;sigma,delta] 1} holds for all $b \in D$. If $g(t) = (c+t)^m - \sum_{i=0}^{m-1} \tau(a_i)(c+t)^i \in R$, then $S_f \cong S_g$.
\item[(iv)] Suppose $\delta$ is the inner $\sigma$-derivation given by \eqref{eqn:Automorphisms of D[t;sigma,delta] 2} for some $c \in D^{\times}, 1 \neq d \in C^{\times}$. If $g(t) = (c + dt)^m - \sum_{i=0}^{m-1} a_i (c + dt)^i \in R$, then $S_f \cong S_g$.
\end{itemize}
\end{corollary}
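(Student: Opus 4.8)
The plan is to realise each of the four isomorphisms as the restriction to $R_m$ of a single $F$-automorphism $\Theta_{\tau,c,d}$ of $R$, and then invoke Theorem \ref{thm:lavrauw isomorphic petit algebras generalisation} with $l = 1$. First I would check that in each case the relevant map $\Theta_{\tau,c,d}$ really is an $F$-automorphism of $R$. In part (i) this is exactly Proposition \ref{prop:isomorphisms between skew polynomial rings}, since conditions \eqref{eqn:isomorphisms between skew polynomial rings 1} and \eqref{eqn:isomorphisms between skew polynomial rings 2} are assumed. Parts (ii), (iii) and (iv) are then the corresponding specialisations recorded in Corollary \ref{cor:Automorphisms of D[t;sigma,delta]}: namely $c = 0$, $d = 1$ and the commutation hypotheses in (ii); $d = 1$ together with \eqref{eqn:Automorphisms of D[t;sigma,delta] 1} in (iii); and $\tau = \mathrm{id}$ with $\delta$ the inner $\sigma$-derivation \eqref{eqn:Automorphisms of D[t;sigma,delta] 2} in (iv). In every case the cited result already guarantees $\Theta_{\tau,c,d} \in \mathrm{Aut}_F(R)$, so no fresh verification is needed.

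The one genuine computation, which I would carry out for part (i) and inherit in the remaining parts, is to apply $\Theta_{\tau,c,d}$ to $f(t) = t^m - \sum_{i=0}^{m-1} a_i t^i$. Writing $f(t) = \sum_{i=0}^{m} b_i t^i$ with $b_m = 1$ and $b_i = -a_i$ for $i < m$, the defining formula \eqref{eqn:isomorphisms between skew polynomial rings 0} gives
\begin{align*}
\Theta_{\tau,c,d}(f(t)) &= \tau(1)(c+dt)^m - \sum_{i=0}^{m-1} \tau(a_i)(c+dt)^i \\
&= (c+dt)^m - \sum_{i=0}^{m-1} \tau(a_i)(c+dt)^i,
\end{align*}
using that $\tau$ is an automorphism of $D$ and hence $\tau(1) = 1$. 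This is precisely the $g(t)$ of part (i), and the specialisations $c = 0, d = 1$; $d = 1$; and $\tau = \mathrm{id}$ respectively reproduce the $g(t)$ of parts (ii), (iii) and (iv). Since $\Theta_{\tau,c,d}$ preserves degrees (as established in the proof of Theorem \ref{thm:lavrauw isomorphic petit algebras generalisation}), the polynomial $g(t)$ has degree exactly $m$, so $S_g$ is a well-defined Petit algebra of the same dimension.

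Finally I would apply Theorem \ref{thm:lavrauw isomorphic petit algebras generalisation} with the automorphism $\Theta_{\tau,c,d}$ and $l = 1$: because $g(t) = \Theta_{\tau,c,d}(f(t))$, the theorem immediately produces an $F$-isomorphism $S_f \cong S_g$, settling all four parts simultaneously.

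I do not expect any real obstacle here, as the statement is an assembly of Theorem \ref{thm:lavrauw isomorphic petit algebras generalisation}, Proposition \ref{prop:isomorphisms between skew polynomial rings} and Corollary \ref{cor:Automorphisms of D[t;sigma,delta]}. The only points demanding a little care are bookkeeping the constant coefficient --- confirming that $\tau(1) = 1$ so the leading $t^m$ term maps cleanly to $(c+dt)^m$ with no stray scalar --- and confirming that the image has degree exactly $m$ rather than collapsing, which is precisely the degree-preservation used in the proof of Theorem \ref{thm:lavrauw isomorphic petit algebras generalisation}.
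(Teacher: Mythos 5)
Your proposal is correct and follows essentially the same route as the paper: the paper likewise observes that $\Theta_{\tau,c,d}$, $\Theta_{\tau,0,1}$, $\Theta_{\tau,c,1}$ and $\Theta_{\mathrm{id},c,d}$ are $F$-automorphisms of $R$ by Proposition \ref{prop:isomorphisms between skew polynomial rings} and Corollary \ref{cor:Automorphisms of D[t;sigma,delta]}, that applying them to $f(t)$ yields $g(t)$ in each case, and then invokes Theorem \ref{thm:lavrauw isomorphic petit algebras generalisation}. Your additional remarks on $\tau(1)=1$ and degree preservation are correct but already implicit in the cited theorem.
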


\begin{proof}
In (i), (ii), (iii) and (iv) the maps $\Theta_{\tau,c,d}$, $\Theta_{\tau,0,1}$, $\Theta_{\tau,c,1}$ and $\Theta_{\mathrm{id},c,d}$ resp. are $F$-automorphisms of $R$ by Proposition \ref{prop:isomorphisms between skew polynomial rings} and Corollary \ref{cor:Automorphisms of D[t;sigma,delta]}. Applying these automorphisms to $f(t)$ gives $g(t)$ in each case, thus the assertion follows by Theorem \ref{thm:lavrauw isomorphic petit algebras generalisation}.
\end{proof}

\section{Isomorphisms Between \texorpdfstring{$S_f$}{S\_f} and \texorpdfstring{$S_g$}{S\_g} when \texorpdfstring{$f(t), g(t) \in D[t;\delta]$}{f(t), g(t) in D[t;delta]}}

Let $D$ be an associative division ring with center $C$, $\delta$ be a derivation of $D$ and $F = C \cap \mathrm{Const}(\delta)$. We briefly look into the isomorphisms between some Petit algebras $S_f$ and $S_g$ in the special case where $f(t), g(t) \in R = D[t;\delta]$. Here Proposition \ref{prop:isomorphisms between skew polynomial rings} becomes:

\begin{corollary} \label{cor:Automorphisms of D[t;delta]}
Let $c, d \in D$ and $\tau \in \mathrm{Aut}_{F}(D)$, then the map $\Theta_{\tau,c,d}$ defined by \eqref{eqn:isomorphisms between skew polynomial rings 0} is an $F$-automorphism of $R$, if and only if $d \in C^{\times}$ and
\begin{equation} \label{eqn:isomorphisms between skew polynomial rings sigma=id}
c \tau(b) + d \delta(\tau(b)) = \tau(b)c + \tau(\delta(b)).
\end{equation}
In particular, if $c \in C$ then $\Theta_{\tau,c,1}$ is an $F$-automorphism if and only if $\delta \circ \tau = \tau \circ \delta$. 
\end{corollary}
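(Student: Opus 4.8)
The plan is to obtain this as the specialisation of Proposition \ref{prop:isomorphisms between skew polynomial rings} to the case $\sigma = \mathrm{id}$, since $R = D[t;\delta] = D[t;\mathrm{id},\delta]$. First I would observe that for $\Theta_{\tau,c,d}$ to be bijective the polynomial $c + dt$ must have degree exactly $1$, which forces $d \neq 0$, hence $d \in D^{\times}$ because $D$ is a division ring. This is what licenses the use of Proposition \ref{prop:isomorphisms between skew polynomial rings} in both directions, despite the Corollary only assuming $c, d \in D$.

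Setting $\sigma = \mathrm{id}$ in the two defining conditions \eqref{eqn:isomorphisms between skew polynomial rings 1} and \eqref{eqn:isomorphisms between skew polynomial rings 2}, the first becomes verbatim \eqref{eqn:isomorphisms between skew polynomial rings sigma=id}, while the second collapses to $d\tau(b) = \tau(b)d$ for all $b \in D$. The key step is then to exploit the surjectivity of $\tau$: as $\tau$ is an automorphism of $D$, we have $\{ \tau(b) \mid b \in D \} = D$, so the relation $d\tau(b) = \tau(b)d$ for all $b$ is equivalent to $d$ commuting with every element of $D$, i.e. $d \in C$. Combined with $d \in D^{\times}$ this yields $d \in C^{\times}$. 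Hence $\Theta_{\tau,c,d}$ is an $F$-automorphism of $R$ if and only if $d \in C^{\times}$ and \eqref{eqn:isomorphisms between skew polynomial rings sigma=id} holds, which is the main claim.

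For the final assertion I would specialise to $c \in C$ and $d = 1$. Then $d = 1 \in C^{\times}$, so the centrality condition is automatic, and in \eqref{eqn:isomorphisms between skew polynomial rings sigma=id} the terms $c\tau(b)$ and $\tau(b)c$ coincide because $c$ is central; cancelling them leaves $\delta(\tau(b)) = \tau(\delta(b))$ for all $b \in D$, that is $\delta \circ \tau = \tau \circ \delta$. There is no genuine obstacle here; the only point requiring care is the handling of the universal quantifier in \eqref{eqn:isomorphisms between skew polynomial rings 2}, where it is precisely the surjectivity of $\tau$ that upgrades $d\tau(b) = \tau(b)d$ into a bona fide centrality statement about $d$ rather than a weaker $\tau$-dependent condition.
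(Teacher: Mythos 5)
Your proposal is correct and follows essentially the same route as the paper: specialise Proposition \ref{prop:isomorphisms between skew polynomial rings} to $\sigma = \mathrm{id}$, note that \eqref{eqn:isomorphisms between skew polynomial rings 2} collapses to $d\tau(b)=\tau(b)d$ for all $b$, hence $d\in C^{\times}$ by surjectivity of $\tau$, and that \eqref{eqn:isomorphisms between skew polynomial rings 1} becomes \eqref{eqn:isomorphisms between skew polynomial rings sigma=id}. Your extra remarks on why $d$ must be invertible and on the role of surjectivity of $\tau$ are correct points the paper leaves implicit.
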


\begin{proof}
With $\sigma = \mathrm{id}$, \eqref{eqn:isomorphisms between skew polynomial rings 2} becomes $d \tau(b) = \tau(b) d$ for all $b \in D$, i.e. $d \in C$. Finally, setting $\sigma = \mathrm{id}$ in \eqref{eqn:isomorphisms between skew polynomial rings 1} yields \eqref{eqn:isomorphisms between skew polynomial rings sigma=id}, hence the result follows by Proposition \ref{prop:isomorphisms between skew polynomial rings}.
\end{proof}

Recall from \eqref{p-power formula char p} that when $D$ has characteristic $p \neq 0$, we have
$$(t-b)^p = t^p - V_p(b), \ V_p(b) = b^p + \delta^{p-1}(b) + *$$
for all $b \in D$, where $*$ is a sum of commutators of $b, \delta(b), \ldots, \delta^{p-2}(b)$. An iteration yields $(t-b)^{p^e} = t^{p^e} - V_{p^e}(b)$, for all $b \in D$ with $V_{p^e}(b) = V_p^e(b) = V_p(\ldots(V_p(b)\ldots)$. We use Corollary \ref{cor:Automorphisms of D[t;delta]}, together with Theorem \ref{thm:lavrauw isomorphic petit algebras generalisation} to obtain sufficient conditions for some Petit algebras to be isomorphic:

\begin{corollary} \label{cor:Isomorphisms of Petit algebras D[t;delta]}
\begin{itemize}
\item[(a)] Suppose $D$ has arbitrary characteristic and $f(t) = t^m - \sum_{i=0}^{m-1} a_i t^i \in R = D[t;\delta]$.
\begin{itemize}
\item[(i)] Let $\tau \in \mathrm{Aut}_{F}(D)$ and $g(t) = (t-c)^m - \sum_{i=0}^{m-1} \tau(a_i) (t-c)^i \in R$ for some $c \in C$. If $\tau \circ \delta = \delta \circ \tau$, then $S_f \cong S_{g}$.
\item[(ii)] Let
$g(t) = (t-c)^m - \sum_{i=0}^{m-1} a_i (t-c)^i \in R$
for some $c \in C$, then $S_f \cong S_{g}$.
\end{itemize}
\item[(b)] Suppose $\mathrm{Char}(D) = p \neq 0$ and $f(t) = t^{p^e} + a_1 t^{p^{e-1}} + \ldots + a_e t + d \in R$.
\begin{itemize}
\item[(i)] Let $\tau \in \mathrm{Aut}_{F}(D)$. If $\tau \circ \delta = \delta \circ \tau$ and
\begin{align*}
g(t) &= t^{p^e} + \tau(a_1) t^{p^{e-1}} + \ldots + \tau(a_e) t + \tau(d) - V_{p^e}(c) \\ & \qquad - \tau(a_1) V_{p^{e-1}}(c) - \ldots - \tau(a_e)c \in R
\end{align*}
for some $c \in C$, then $S_f \cong S_{g}$.
\item[(ii)] Let
$$g(t) = f(t) - V_{p^e}(c) - a_1 V_{p^{e-1}}(c) - \ldots - a_e c \in R$$
for some $c \in C$, then $S_f \cong S_{g}$.
\end{itemize}
\end{itemize}
\end{corollary}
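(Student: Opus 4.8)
The plan is to apply Theorem \ref{thm:lavrauw isomorphic petit algebras generalisation} in each of the four cases, by exhibiting an explicit $F$-automorphism $\Theta$ of $R = D[t;\delta]$ that sends $f(t)$ to $g(t)$ (or to a left $D^\times$-multiple of $g(t)$). Since $\delta$ is a derivation, i.e. $\sigma = \mathrm{id}$, the relevant automorphisms come from Corollary \ref{cor:Automorphisms of D[t;delta]}: for $c \in C$ and $\tau \in \mathrm{Aut}_F(D)$ with $\tau \circ \delta = \delta \circ \tau$, the map $\Theta_{\tau,c,1}$ sending $\sum b_i t^i \mapsto \sum \tau(b_i)(c+t)^i$ is an $F$-automorphism of $R$. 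Replacing $c$ by $-c$ gives an automorphism sending $t \mapsto t-c$, which is exactly the substitution appearing in the statements.

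\medskip
\noindent First, for part (a)(i), I would take the automorphism $\Theta_{\tau,-c,1}$, which is legitimate by Corollary \ref{cor:Automorphisms of D[t;delta]} since $c \in C$ and $\tau \circ \delta = \delta \circ \tau$. Applying it to $f(t) = t^m - \sum_{i=0}^{m-1} a_i t^i$ gives
\begin{equation*}
\Theta_{\tau,-c,1}(f(t)) = (t-c)^m - \sum_{i=0}^{m-1} \tau(a_i)(t-c)^i = g(t),
\end{equation*}
so Theorem \ref{thm:lavrauw isomorphic petit algebras generalisation} yields $S_f \cong S_g$. Part (a)(ii) is the special case $\tau = \mathrm{id}$, for which the condition $\tau \circ \delta = \delta \circ \tau$ is automatic.

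\medskip
\noindent For part (b), the key extra ingredient is the $p$-power identity $(t-c)^{p^k} = t^{p^k} - V_{p^k}(c)$ recalled from \eqref{p-power formula char p} and its iterate \eqref{p-power formula char p iterated}. Applying $\Theta_{\tau,-c,1}$ to $f(t) = t^{p^e} + a_1 t^{p^{e-1}} + \ldots + a_e t + d$ sends each monomial $a_j t^{p^{e-j}}$ to $\tau(a_j)(t-c)^{p^{e-j}} = \tau(a_j)\bigl(t^{p^{e-j}} - V_{p^{e-j}}(c)\bigr)$, and the constant term $d$ to $\tau(d)$. Collecting the terms involving $t^{p^{e-j}}$ and separating out the constants $-\tau(a_j)V_{p^{e-j}}(c)$ reproduces precisely the stated $g(t)$; here I should double-check the $j=e$ case gives $V_{p^0}(c) = c$. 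Again (b)(ii) is the specialization $\tau = \mathrm{id}$.

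\medskip
\noindent \textbf{The main obstacle} I anticipate is purely bookkeeping rather than conceptual: verifying in part (b) that the expansion of $\sum_j \tau(a_j)(t-c)^{p^{e-j}}$ via the iterated $p$-power formula matches the indexing in the definition of $g(t)$, in particular that the correct powers $V_{p^{e-j}}(c)$ appear with the correct coefficients $\tau(a_j)$ and that the leading term $t^{p^e}$ survives with coefficient $1$. Since $c \in C$, I must also confirm $c \in C$ guarantees the hypotheses of Corollary \ref{cor:Automorphisms of D[t;delta]} (so that $\Theta_{\tau,-c,1}$ is genuinely an automorphism, which requires $d = 1 \in C^\times$ trivially and the commutation condition $\tau\circ\delta = \delta\circ\tau$ when $c \in C$), and that the $V_{p^k}$ are well-defined as elements of $D$, which is exactly the content of \eqref{p-power formula char p}. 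No step requires the leading multiplier $l \in D^\times$ of Theorem \ref{thm:lavrauw isomorphic petit algebras generalisation}, since $\Theta_{\tau,-c,1}$ already sends the monic $f(t)$ to the monic $g(t)$.
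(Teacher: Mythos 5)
Your proposal is correct and follows exactly the paper's own argument: both use the automorphism $\Theta_{\tau,-c,1}$ of $D[t;\delta]$ from Corollary \ref{cor:Automorphisms of D[t;delta]}, apply Theorem \ref{thm:lavrauw isomorphic petit algebras generalisation}, expand via $(t-c)^{p^k} = t^{p^k} - V_{p^k}(c)$ in part (b), and obtain the (ii) cases by setting $\tau = \mathrm{id}$. The bookkeeping you flag (including the $j=e$ term contributing $-\tau(a_e)c$) works out as you expect.
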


\begin{proof}
\begin{itemize}
\item[(i)] The maps $\Theta_{\tau,-c,1}$ are automorphisms of $R$ for all $c \in C$ by Corollary \ref{cor:Automorphisms of D[t;delta]}. In (a) we have $g(t) = \Theta_{\tau,-c,1}(f(t))$ and so $S_f \cong S_{g}$ by Theorem \ref{thm:lavrauw isomorphic petit algebras generalisation}.

In (b), since $\mathrm{Char}(D) = p \neq 0$, we have
\begin{align*}
\Theta_{\tau,-c,1}&(f(t)) = (t-c)^{p^e} + \tau(a_1)(t-c)^{p^{e-1}} + \ldots + \tau(a_e)(t-c) + \tau(d) \\
&= t^{p^e} - V_{p^e}(c) + \tau(a_1) \big( t^{p^{e-1}} - V_{p^{e-1}}(c) \big) + \ldots + \tau(a_e)(t-c) + \tau(d)\\
&= g(t),
\end{align*}
and hence $S_f \cong S_{g}$ by Theorem \ref{thm:lavrauw isomorphic petit algebras generalisation}.
\item[(ii)] follows from (i) by setting $\tau = \mathrm{id}$.
\end{itemize}
\end{proof}

\section{Isomorphisms Between \texorpdfstring{$S_f$}{S\_f} and \texorpdfstring{$S_g$}{S\_g} when \texorpdfstring{$f(t), g(t) \in D[t;\sigma]$}{f(t), g(t) in D[t;sigma]}}

Let $D$ be an associative division ring with center $C$ and $\sigma$ be an automorphism of $D$. Suppose $\delta = 0$ so that
$$f(t) = t^m - \sum_{i=0}^{m-1} a_i t^i, \ g(t) = t^m - \sum_{i=0}^{m-1} b_i t^i \in R = D[t;\sigma],$$
then $S_f$ and $S_g$ are nonassociative algebras over $F = C \cap \mathrm{Fix}(\sigma)$. Throughout this Section, if we assume $\sigma$ has order $\geq m-1$, we include infinite order. We begin by looking at the automorphisms of $R$. Here Proposition \ref{prop:isomorphisms between skew polynomial rings} becomes:

\begin{corollary} \label{cor:Automorphisms of D[t;sigma]}
Let $c \in D$, $d \in D^{\times}$ and $\tau \in \mathrm{Aut}_{F}(D)$. Then $\Theta_{\tau,c,d}$ is an $F$-automorphism of $R$ if and only if $c \tau(b) = \tau(\sigma(b))c$ and $d \sigma(\tau(b)) = \tau(\sigma(b)) d$ for all $b \in D$. In particular, if $\sigma \circ \tau = \tau \circ \sigma$ then $\Theta_{\tau,0,d}$ is an $F$-automorphism if and only if $d \in C^{\times}$.
\end{corollary}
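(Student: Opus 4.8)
The plan is to obtain this corollary as a direct specialization of Proposition \ref{prop:isomorphisms between skew polynomial rings} to the case $\delta = 0$. That proposition tells us $\Theta_{\tau,c,d}$ is an $F$-automorphism of $R = D[t;\sigma,\delta]$ precisely when \eqref{eqn:isomorphisms between skew polynomial rings 1} and \eqref{eqn:isomorphisms between skew polynomial rings 2} hold for all $b \in D$. Since here $\delta = 0$, I would first substitute $\delta = 0$ into \eqref{eqn:isomorphisms between skew polynomial rings 1}, which kills the two derivation terms $d\delta(\tau(b))$ and $\tau(\delta(b))$, leaving exactly $c\tau(b) = \tau(\sigma(b))c$. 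Condition \eqref{eqn:isomorphisms between skew polynomial rings 2} contains no $\delta$ and is unchanged, giving $d\sigma(\tau(b)) = \tau(\sigma(b))d$. This establishes the first (iff) assertion immediately.

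For the ``in particular'' statement, I would set $c = 0$, so that the first condition $c\tau(b) = \tau(\sigma(b))c$ becomes $0 = 0$ and is automatically satisfied; thus $\Theta_{\tau,0,d}$ is an $F$-automorphism if and only if the single condition $d\sigma(\tau(b)) = \tau(\sigma(b))d$ holds for all $b \in D$. Under the hypothesis $\sigma \circ \tau = \tau \circ \sigma$ we have $\sigma(\tau(b)) = \tau(\sigma(b))$, so this condition reads $d\,\tau(\sigma(b)) = \tau(\sigma(b))\,d$, i.e. $d$ commutes with $\tau(\sigma(b))$.

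The only genuine (and minor) point to get right is the passage from ``$d$ commutes with $\tau(\sigma(b))$ for all $b$'' to ``$d \in C^{\times}$''. Here I would use that $\sigma$ and $\tau$ are both automorphisms of $D$, hence bijective, so the composite $\tau \circ \sigma$ is a bijection of $D$ onto itself; therefore $\tau(\sigma(b))$ ranges over all of $D$ as $b$ does. Consequently the commuting condition is equivalent to $d$ commuting with every element of $D$, that is $d \in C$, and since $d \in D^{\times}$ we conclude $d \in C^{\times}$. The converse is clear: if $d \in C^{\times}$ it commutes with everything, so the condition holds. I expect no serious obstacle, as the argument is a routine specialization together with this short bijectivity observation.
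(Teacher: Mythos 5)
Your proposal is correct and follows exactly the route the paper intends: the corollary is stated without proof as an immediate specialization of Proposition \ref{prop:isomorphisms between skew polynomial rings} to $\delta = 0$, precisely mirroring how the paper proves the analogous Corollary \ref{cor:Automorphisms of D[t;delta]} for the case $\sigma = \mathrm{id}$. Your additional observation that $\tau \circ \sigma$ is surjective (since $\sigma$ is assumed to be an automorphism in this section), so that the commuting condition forces $d \in C^{\times}$, is the right way to close the ``in particular'' part.
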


We employ Corollary \ref{cor:Automorphisms of D[t;sigma]}, together with Theorem \ref{thm:lavrauw isomorphic petit algebras generalisation}, to find sufficient conditions for $S_f$ and $S_g$ to be $F$-isomorphic. When $f(t)$ and $g(t)$ are not right invariant, $\sigma$ commutes with all $F$-automorphisms of $D$, and $\sigma \vert_C$ has order at least $m-1$, these conditions are also necessary:

\begin{theorem} \label{thm:general_isomorphism}
\begin{itemize}
\item[(i)] Suppose there exists $k \in C^{\times}$ such that
\begin{equation} \label{eqn:isomorphism necessity division case Q_id,k}
a_i = \Big( \prod_{l=i}^{m-1} \sigma^l(k) \Big) b_i,
\end{equation}
for all $i \in \{ 0, \ldots, m-1 \}$. Then $S_f \cong S_g$. Furthermore, for every such $k \in C^{\times}$ the maps $Q_{\mathrm{id},k}: S_f \rightarrow S_g$,
\begin{equation} \label{eqn:form of Q_id,k isomorphism}
Q_{\mathrm{id},k}: \sum_{i=0}^{m-1} x_i t^i \mapsto x_0 + \sum_{i=1}^{m-1} x_i \big( \prod_{l=0}^{i-1} \sigma^l(k) \big) t^i,
\end{equation}
are $F$-isomorphisms between $S_f$ and $S_g$.
\item[(ii)] Suppose there exists $\tau \in \mathrm{Aut}_{F}(D)$ and $k \in C^{\times}$ such that $\sigma$ commutes with $\tau$ and
\begin{equation} \label{eqn:isomorphism necessity division case}
\tau(a_i) = \Big( \prod_{l=i}^{m-1} \sigma^l(k) \Big) b_i,
\end{equation}
for all $i \in \{ 0, \ldots, m-1 \}$. Then $S_f \cong S_g$. Furthermore, for every such $\tau$ and $k$ the maps $Q_{\tau,k}: S_f \rightarrow S_g$,
\begin{equation} \label{eqn:isomorphism form of Q_tau,k}
Q_{\tau,k}: \sum_{i=0}^{m-1} x_i t^i \mapsto \tau(x_0) + \sum_{i=1}^{m-1} \tau(x_i) \big( \prod_{l=0}^{i-1} \sigma^l(k) \big) t^i,
\end{equation}
are $F$-isomorphisms between $S_f$ and $S_g$.
\item[(iii)] Suppose $f(t)$, $g(t)$ are not right invariant, $\sigma \vert_C$ has order at least $m-1$, and $\sigma$ commutes with all $F$-automorphisms of $D$. Then $S_f \cong S_g$ if and only if there exists $\tau \in \mathrm{Aut}_{F}(D)$ and $k \in C^{\times}$ such that \eqref{eqn:isomorphism necessity division case} holds for all $i \in \{ 0, \ldots, m-1 \}$. Every such $\tau$ and $k$ gives rise to an $F$-isomorphism $Q_{\tau,k}: S_f \rightarrow S_g$ and these are the only $F$-isomorphisms between $S_f$ and $S_g$.
\end{itemize}
\end{theorem}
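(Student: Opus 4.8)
The plan is to treat parts (i) and (ii) as immediate applications of Theorem \ref{thm:lavrauw isomorphic petit algebras generalisation}, and to prove the converse direction of (iii) by analysing an arbitrary $F$-isomorphism through the left nucleus and a degree count.

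For (ii) I would take $\tau$ and $k$ as given and consider the map $\Theta_{\tau,0,k}$ of \eqref{eqn:isomorphisms between skew polynomial rings 0}. Since $\sigma$ commutes with $\tau$ and $k \in C^{\times}$, Corollary \ref{cor:Automorphisms of D[t;sigma]} guarantees $\Theta_{\tau,0,k}$ is an $F$-automorphism of $R$. Writing $P_i = \prod_{l=0}^{i-1}\sigma^l(k)$ (all central, as $C$ is $\sigma$-stable), one checks $(kt)^i = P_i t^i$, so that $\Theta_{\tau,0,k}(f(t)) = P_m t^m - \sum_{i=0}^{m-1}\tau(a_i)P_i t^i$; multiplying on the left by $P_m^{-1}$ makes this monic, and the hypothesis $\tau(a_i) = \big(\prod_{l=i}^{m-1}\sigma^l(k)\big)b_i = P_m P_i^{-1} b_i$ is precisely what forces the result to equal $g(t)$. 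Theorem \ref{thm:lavrauw isomorphic petit algebras generalisation} then yields $S_f \cong S_g$ with induced isomorphism $\Theta_{\tau,0,k}\vert_{R_m}$, which unravels to the stated formula \eqref{eqn:isomorphism form of Q_tau,k} for $Q_{\tau,k}$. Part (i) is the special case $\tau = \mathrm{id}$.

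For the converse in (iii), let $\phi : S_f \rightarrow S_g$ be any $F$-isomorphism. As $f(t), g(t)$ are not right invariant, neither $S_f$ nor $S_g$ is associative (Corollary \ref{cor:E(f)=S_f iff associative, S_f central}), so $\mathrm{Nuc}_l(S_f) = D = \mathrm{Nuc}_l(S_g)$ by Theorem \ref{thm:Properties of S_f petit}(i); since isomorphisms preserve the left nucleus, $\phi\vert_D =: \tau \in \mathrm{Aut}_F(D)$. The crux is to pin down $\phi(t)$. Writing $\phi(t) = \sum_j w_j t^j$ and applying $\phi$ to the identity $t \circ x = \sigma(x) \circ t$ gives $\tau(\sigma(x)) \circ_g \phi(t) = \phi(t) \circ_g \tau(x)$; expanding both sides, using that $\sigma$ commutes with $\tau$, and letting $y = \tau(x)$ range over $D$, I obtain coefficientwise $\sigma(y) w_j = w_j \sigma^j(y)$ for all $y \in D$ and all $j$. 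Restricting this to $y \in C$ and invoking that $\sigma\vert_C$ has order at least $m-1$ forces $w_j = 0$ for every $j \neq 1$ (otherwise $\sigma^{j-1}\vert_C = \mathrm{id}$, impossible for $1 \leq |j-1| \leq m-2$), while for $j = 1$ one gets $w_1 \in C^{\times}$; hence $\phi(t) = kt$ with $k := w_1$.

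Once $\phi\vert_D = \tau$ and $\phi(t) = kt$ are known, a short induction using $t^i = t^{i-1}\circ t$ shows $\phi(t^i) = P_i t^i$ for $i \leq m-1$, whence $\phi = Q_{\tau,k}$, establishing that these are the only $F$-isomorphisms. Finally, applying $\phi$ to the relation $t^m = \sum_{i=0}^{m-1} a_i t^i$ valid in $S_f$ and comparing coefficients in $S_g$ (where $t^m = \sum_{i=0}^{m-1} b_i t^i$) yields $\tau(a_i) P_i = P_m b_i$, which is exactly condition \eqref{eqn:isomorphism necessity division case}. I expect the main obstacle to be the step showing $\phi(t) = kt$: extracting the vanishing of all $w_j$ with $j \neq 1$ from the order hypothesis on $\sigma\vert_C$, together with the commutativity of $\sigma$ with $\tau$, is where the hypotheses of (iii) are genuinely used.
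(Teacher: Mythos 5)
Your proposal is correct and follows essentially the same route as the paper: parts (i) and (ii) via the ring automorphism $\Theta_{\tau,0,k}$ together with Theorem \ref{thm:lavrauw isomorphic petit algebras generalisation}, and the converse of (iii) by restricting an arbitrary isomorphism to the left nucleus, comparing coefficients in $Q(t\circ z)$ to force $Q(t)=kt$ with $k\in C^{\times}$ using the order hypothesis on $\sigma\vert_C$, and then evaluating on $t^m$ to extract condition \eqref{eqn:isomorphism necessity division case}. No substantive differences.
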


\begin{proof}
\begin{itemize}
\item[(i)] Recall $\Theta_{\mathrm{id},0,k}$ is an $F$-automorphism of $R$ by Corollary \ref{cor:Automorphisms of D[t;sigma]}, moreover
\begin{align*}
\Theta_{\mathrm{id},0,k}(f(t)) &= (kt)^m - \sum_{i=0}^{m-1} a_i (kt)^i \\
&= \big( \prod_{l=0}^{m-1} \sigma^l(k) \big) t^m - a_0 - \sum_{i=1}^{m-1} a_i \big( \prod_{l=0}^{i-1} \sigma^l(k) \big) t^i \\
&=  \big( \prod_{l=0}^{m-1} \sigma^l(k) \big) \Big( t^m - \sum_{i=0}^{m-1} b_i t^i \big) = \big( \prod_{l=0}^{m-1} \sigma^l(k) \big) g(t),
\end{align*}
by \eqref{eqn:isomorphism necessity division case Q_id,k} and thus $\Theta_{\mathrm{id},0,k}$ restricts to an isomorphism between $S_f$ and $S_g$ by Theorem \ref{thm:lavrauw isomorphic petit algebras generalisation}. This restriction is $Q_{\mathrm{id},k}$ by a straightforward calculation.
\item[(ii)] The proof is similar to (i) using that $\Theta_{\tau,0,k} \in \mathrm{Aut}_{F}(R)$ by Corollary \ref{cor:Automorphisms of D[t;sigma]}.
\item[(iii)] We are left to prove that $S_f \cong S_g$ implies there exists $\tau \in \mathrm{Aut}_{F}(D)$ and $k \in C^{\times}$ such that \eqref{eqn:isomorphism necessity division case} holds for all $i \in \{ 0, \ldots, m-1 \}$, and that all $F$-isomorphisms between $S_f$ and $S_g$ have the form $Q_{\tau,k}$ where $\tau$ and $k$ satisfy \eqref{eqn:isomorphism necessity division case}.

Suppose $S_f \cong S_g$ and let $Q: S_f \rightarrow S_g$ be an $F$-isomorphism. $f(t)$ and $g(t)$ are not right invariant which is equivalent to $S_f$ and $S_g$ not being associative, therefore $\mathrm{Nuc}_l(S_f) = \mathrm{Nuc}_l(S_g) = D$ by Theorem \ref{thm:Properties of S_f petit}(i). Since any isomorphism preserves the left nucleus, $Q(D) = D$ and so $Q \vert_D = \tau$ for some $\tau \in \mathrm{Aut}_{F}(D)$. Suppose $Q(t) = \sum_{i=0}^{m-1} k_i t^i$ for some $k_i \in D$, then we have
\begin{equation} \label{eqn:general_isomorphism_theoremI}
Q(t \circ_f z) = Q(t) \circ_g Q(z) = \Big( \sum_{i=0}^{m-1} k_i t^i \Big)  \circ_g \tau(z) = \sum_{i=0}^{m-1} k_i \sigma^{i}(\tau(z)) t^i,
\end{equation}
and
\begin{equation} \label{eqn:general_isomorphism_theoremII}
Q(t \circ_f z) = Q(\sigma(z)t) = \sum_{i=0}^{m-1} \tau(\sigma(z)) k_i t^i
\end{equation}
for all $z \in D$. Comparing the coefficients of $t^i$ in \eqref{eqn:general_isomorphism_theoremI} and \eqref{eqn:general_isomorphism_theoremII} we obtain
\begin{equation} \label{eqn:general_isomorphism_theoremIII}
k_i \sigma^i(\tau(z)) = k_i \tau(\sigma^i(z)) = \tau(\sigma(z)) k_i,
\end{equation}
for all $i \in \{ 0, \ldots m-1 \}$ and all $z \in D$ as $\sigma$ and $\tau$ commute. In particular
\begin{equation*}
k_i \tau(\sigma^i(z)) = k_i \tau(\sigma(z)),
\end{equation*}
for all $i \in \{ 0, \ldots m-1 \}$ and all $z \in C$. This means
$$k_i \tau \big( \sigma^i(z) - \sigma(z) \big) = 0,$$
for all $i \in \{ 0, \ldots m-1 \}$ and all $z \in C$, i.e. $k_i = 0$ or $\sigma \vert_C = \sigma^i \vert_C$ for all $i \in \{0, \ldots, m-1 \}$. Now, $\sigma \vert_C$ has order at least $m-1$ or infinite order which means $\sigma^i \vert_C \neq \sigma \vert_C$ for all $1 \neq i \in \{ 0, \ldots, m-1 \}$ and thus $k_i = 0$ for all $1 \neq i \in \{ 0, \ldots, m-1 \}$. Therefore $Q(t) = kt$ for some $k \in D^{\times}$ such that $k \tau(\sigma(z)) = \tau(\sigma(z))k$ for all $z \in D$ by \eqref{eqn:general_isomorphism_theoremIII}. Hence $k \in C^{\times}$. 
 
Furthermore, we have
$$Q(z t^i) = Q(z) \circ_g Q(t)^i = \tau(z) (kt)^i = \tau(z) \Big( \prod_{l=0}^{i-1} \sigma^l(k) \Big) t^i,$$
for all $i \in \{ 1, \ldots, m-1 \}$ and all $z \in D$. Thus $Q$ has the form
$$Q_{\tau,k}: \sum_{i=0}^{m-1} x_i t^i \mapsto \tau(x_0) + \sum_{i=1}^{m-1} \tau(x_i) \big( \prod_{l=0}^{i-1} \sigma^l(k) \big) t^i,$$
for some $k \in C^{\times}$. Moreover, with $t^m = t \circ_f t^{m-1}$, also
\begin{equation} \label{eqn:general_isomorphism_theoremV}
\begin{split}
Q(t^m) &= Q \big( \sum_{i=0}^{m-1} a_i t^i \big) = \sum_{i=0}^{m-1} Q(a_i) \circ_g Q(t)^i \\
&= \tau(a_0) + \sum_{i=1}^{m-1} \tau(a_i) \big( \prod_{l=0}^{i-1} \sigma^l(k) \big) t^i,
\end{split}
\end{equation}
and $Q(t \circ_f t^{m-1}) = Q(t) \circ_g Q(t)^{m-1}$, i.e.
\begin{equation} \label{eqn:general_isomorphism_theoremVI}
Q(t^m) = Q(t) \circ_g Q(t)^{m-1} = \Big( \prod_{l=0}^{m-1} \sigma^l(k) \Big) t^m = \Big( \prod_{l=0}^{m-1} \sigma^l(k) \Big) \sum_{i=0}^{m-1} b_i t^i.
\end{equation}
Comparing \eqref{eqn:general_isomorphism_theoremV} and \eqref{eqn:general_isomorphism_theoremVI} gives $\tau(a_i) = \Big( \prod_{l=i}^{m-1} \sigma^l(k) \Big) b_i$, for all $i \in \{ 0, \ldots, m-1 \}$, and hence $Q$ has the form $Q_{\tau,k}$ where $\tau \in \mathrm{Aut}_{F}(D)$ and $k \in C^{\times}$ satisfy \eqref{eqn:isomorphism necessity division case}.
\end{itemize}
\end{proof}

\begin{remark}
Suppose $D = \mathbb{F}_{p^n}$ is a finite field of order $p^n$, $\sigma: \mathbb{F}_{p^n} \rightarrow \mathbb{F}_{p^n}, \ k \mapsto k^p$ is the Frobenius automorphism and $f(t), g(t) \in \mathbb{F}_{p^n}[t;\sigma]$ are irreducible of degree $m \in \{ 2, 3 \}$. Then $S_f$ and $S_g$ are isomorphic semifields if and only if there exists $\tau \in \mathrm{Aut}(\mathbb{F}_{p^n})$ and $k \in \mathbb{F}_{p^n}^{\times}$ are such that \eqref{eqn:isomorphism necessity division case} holds for all $i \in \{ 0, \ldots, m-1 \}$ by \cite[Theorems 4.2 and 5.4]{wene2000finite}. Therefore Theorem \ref{thm:general_isomorphism}(iii) can be seen as a generalisation of \cite[Theorems 4.2 and 5.4]{wene2000finite}.
\end{remark}

We obtain the following Corollaries of Theorem \ref{thm:general_isomorphism}:

\begin{corollary}
Let $f(t) = t^m -a \in R$ and define $g(t) = t^m - k^m a \in R$ for some $k \in F^{\times}$. Then $S_f \cong S_g$.
\end{corollary}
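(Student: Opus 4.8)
The plan is to apply Theorem~\ref{thm:general_isomorphism}(i) directly, taking the central scalar in that theorem to be $k^{-1}$. First I would record the coefficients of the two polynomials in the notation of that theorem: writing $f(t) = t^m - \sum_{i=0}^{m-1} a_i t^i$ and $g(t) = t^m - \sum_{i=0}^{m-1} b_i t^i$, we have $a_0 = a$ and $b_0 = k^m a$, while $a_i = b_i = 0$ for all $i \in \{1, \ldots, m-1\}$. Thus to invoke the theorem it suffices to exhibit a central element whose associated products reproduce the relation $a_i = \big(\prod_{l=i}^{m-1} \sigma^l(k^{-1})\big) b_i$ of \eqref{eqn:isomorphism necessity division case Q_id,k}.

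The key observation is that $k \in F^{\times} = C^{\times} \cap \mathrm{Fix}(\sigma)$, so $k^{-1} \in C^{\times}$ and $\sigma^l(k^{-1}) = k^{-1}$ for every $l$. Consequently $\prod_{l=0}^{m-1} \sigma^l(k^{-1}) = k^{-m}$. For the index $i = 0$ the required identity then reads $a_0 = k^{-m} b_0$, i.e. $a = k^{-m}(k^m a) = a$, which holds. For each $i \in \{1, \ldots, m-1\}$ both $a_i$ and $b_i$ vanish, so the identity is trivially satisfied. Hence the hypothesis of Theorem~\ref{thm:general_isomorphism}(i) holds with $k^{-1}$ in place of $k$, and the theorem yields $S_f \cong S_g$; indeed the map $Q_{\mathrm{id},k^{-1}}$ of \eqref{eqn:form of Q_id,k isomorphism} is an explicit $F$-isomorphism.

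There is essentially no obstacle here: the only point requiring care is that $k$ must be both central \emph{and} $\sigma$-fixed for the product $\prod_{l=0}^{m-1}\sigma^l(k^{-1})$ to collapse to $k^{-m}$, and this is exactly guaranteed by the hypothesis $k \in F^{\times}$. The statement is therefore an immediate specialisation of Theorem~\ref{thm:general_isomorphism}(i) to the case $f(t) = t^m - a$.
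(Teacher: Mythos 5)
Your proof is correct and follows essentially the same route as the paper: a direct application of Theorem~\ref{thm:general_isomorphism}(i), using that $k \in F^{\times} = C^{\times} \cap \mathrm{Fix}(\sigma)$ makes the product $\prod_{l}\sigma^l(\cdot)$ collapse to an $m$th power. The only cosmetic difference is that you verify the hypothesis with the scalar $k^{-1}$ (giving $Q_{\mathrm{id},k^{-1}}\colon S_f \to S_g$), whereas the paper records the equivalent relation $\big(\prod_{l=0}^{m-1}\sigma^l(k)\big)a = k^m a$ with $k$ itself; these are the same isomorphism read in opposite directions.
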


\begin{proof}
We have $\Big( \prod_{l=0}^{m-1} \sigma^l(k) \Big) a = k^m a$ and so $S_f \cong S_g$ by Theorem \ref{thm:general_isomorphism}(i).
\end{proof}

We next take a closer look at the equation \eqref{eqn:isomorphism necessity division case} to obtain necessary conditions for some $S_f$ and $S_g$ to be isomorphic.

\begin{corollary}
Suppose $f(t)$ and $g(t)$ are not right invariant, $\sigma$ commutes with all $F$-automorphisms of $D$ and $\sigma \vert_C$ has order at least $m-1$. If $S_f \cong S_g$ then $a_i = 0$ is equivalent to $b_i = 0$ for all $i \in \{ 0 , \ldots, m-1 \}$.
\end{corollary}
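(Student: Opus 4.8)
The plan is to invoke Theorem~\ref{thm:general_isomorphism}(iii) directly, since its hypotheses are exactly the ones assumed here: $f(t)$ and $g(t)$ are not right invariant, $\sigma$ commutes with all $F$-automorphisms of $D$, and $\sigma\vert_C$ has order at least $m-1$. From $S_f \cong S_g$ that theorem produces some $\tau \in \mathrm{Aut}_F(D)$ and $k \in C^\times$ with
$$\tau(a_i) = \Big( \prod_{l=i}^{m-1} \sigma^l(k) \Big) b_i$$
for all $i \in \{0, \ldots, m-1\}$. The whole corollary then reduces to reading off this single family of identities, coefficient by coefficient.

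First I would record that the factor $p_i = \prod_{l=i}^{m-1} \sigma^l(k)$ is nonzero in $D$: since $k \in C^\times \subseteq D^\times$ and $\sigma$ is an automorphism of $D$, each $\sigma^l(k)$ is a nonzero element of $D$, so $p_i$ is a product of nonzero elements of the division ring $D$ and is therefore itself a unit. This is the only place where the division-ring structure and the injectivity of $\sigma$ are needed.

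Next I would treat the two implications separately. For the forward direction, if $a_i = 0$ then $\tau(a_i) = 0$, hence $p_i b_i = 0$; multiplying on the left by $p_i^{-1}$ gives $b_i = 0$. For the reverse direction, if $b_i = 0$ then $p_i b_i = 0$, so $\tau(a_i) = 0$, and since $\tau$ is an automorphism of $D$ it is injective, whence $a_i = 0$. Running this over all $i$ gives $a_i = 0 \iff b_i = 0$ as claimed.

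There is no real obstacle to overcome: the substantive work is entirely contained in Theorem~\ref{thm:general_isomorphism}(iii), and the only points requiring (minor) care are that the twisted product $p_i$ is a unit of $D$ and that $\tau$ is injective, both of which are immediate from the hypotheses.
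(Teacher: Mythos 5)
Your proposal is correct and follows essentially the same route as the paper: both rest entirely on the necessary condition $\tau(a_i) = \bigl(\prod_{l=i}^{m-1}\sigma^l(k)\bigr)b_i$ supplied by Theorem \ref{thm:general_isomorphism}(iii). The paper phrases the deduction as a one-line contradiction (and rather loosely at that), whereas you spell out the two implications using that the twisted product is a unit and $\tau$ is injective — a cleaner write-up of the identical argument.
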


\begin{proof}
If $a_i \neq b_i$ then $\tau(a_i) \neq \Big( \prod_{l=i}^{m-1} \sigma^l(k) \Big) b_i$ for all $\tau \in \mathrm{Aut}_{F}(D)$ and all $k \in C^{\times}$ and so $S_f \ncong S_g$ by Theorem \ref{thm:general_isomorphism}(iii), a contradiction.
\end{proof}

Now suppose $C/F$ is a proper field extension of finite degree, $D$ is finite-dimensional as an algebra over $C$, and $D$ is also finite-dimensional when considered as an algebra over $F$. Let $N_{D/C}$ denote the norm of $D$ considered as an algebra over $C$, $N_{D/F}$ the norm of $D$ considered as an algebra over $F$ and $N_{C/F}$ be the norm of the field extension $C/F$. We have $N_{D/F}(z) = N_{C/F}(N_{D/C}(z))$ and $N_{D/F}(\tau(z)) = N_{D/F}(z)$, for all $\tau \in \mathrm{Aut}_{F}(D)$ and all $z \in D$ \cite[p.~547, 548]{magurn2002algebraic}.

\begin{corollary} \label{cor:norm isomorphism}
Suppose $f(t)$ and $g(t)$ are not right invariant, $\sigma$ commutes with all $F$-automorphisms of $D$ and $\sigma \vert_C$ has order at least $m-1$.
\begin{itemize}
\item[(i)] If $b_0 \neq 0$ and $N_{D/F}(a_0 b_0^{-1}) \notin F^{\times m}$ then $S_f \ncong S_g$.
\item[(ii)] If there exists $i \in \{ 0, \ldots, m-1 \}$ such that $b_i \neq 0$ and $N_{D/F}(a_i b_i^{-1}) \notin F^{\times (m-i)}$ then $S_f \ncong S_g$.
\end{itemize}
\end{corollary}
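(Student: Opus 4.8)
The plan is to prove the contrapositive in both parts: assuming $S_f \cong S_g$, I would show that for every index $i$ with $b_i \neq 0$ the norm $N_{D/F}(a_i b_i^{-1})$ is forced to be an $(m-i)$-th power in $F^\times$. Since the hypotheses of the corollary are exactly those of Theorem \ref{thm:general_isomorphism}(iii), the first step is to invoke that theorem: an $F$-isomorphism $S_f \cong S_g$ supplies $\tau \in \mathrm{Aut}_F(D)$ and $k \in C^\times$ with
$$\tau(a_i) = \Big( \prod_{l=i}^{m-1} \sigma^l(k) \Big) b_i \qquad (0 \le i \le m-1).$$

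The heart of the argument is then to apply $N_{D/F}$ to this relation for a fixed $i$ with $b_i \neq 0$. On the left I would use that $N_{D/F}$ is invariant under $F$-automorphisms, so $N_{D/F}(\tau(a_i)) = N_{D/F}(a_i)$. On the right I would use multiplicativity of $N_{D/F}$, together with the observation that each $\sigma^l$ lies in $\mathrm{Aut}_F(D)$ (since $\sigma$ fixes $F$ pointwise, as $F \subseteq \mathrm{Fix}(\sigma)$), whence $N_{D/F}(\sigma^l(k)) = N_{D/F}(k)$. The $m-i$ identical factors collapse, giving
$$N_{D/F}(a_i b_i^{-1}) = N_{D/F}(a_i) N_{D/F}(b_i)^{-1} = N_{D/F}(k)^{m-i}.$$
Because $N_{D/F}(k) \in F^\times$, the right-hand side lies in $F^{\times(m-i)}$, proving the claim. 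Part (ii) is then immediate by contraposition, and part (i) is the special case $i = 0$.

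I do not expect a real obstacle: once Theorem \ref{thm:general_isomorphism}(iii) is in hand, the statement is a formal consequence of the two standard properties of the algebra norm recalled just before the corollary (multiplicativity and $\mathrm{Aut}_F(D)$-invariance). The only points needing care are bookkeeping: checking that $\sigma^l \in \mathrm{Aut}_F(D)$ so that the invariance applies to each factor $\sigma^l(k)$, and confirming that the product of $m-i$ copies of $N_{D/F}(k)$ is exactly $N_{D/F}(k)^{m-i}$, an $(m-i)$-th power of an element of $F^\times$.
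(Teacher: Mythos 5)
Your proposal is correct and follows essentially the same route as the paper: both invoke Theorem \ref{thm:general_isomorphism}(iii) to get $\tau(a_i) = \big(\prod_{l=i}^{m-1}\sigma^l(k)\big)b_i$, then apply $N_{D/F}$ using its multiplicativity and invariance under $F$-automorphisms to conclude $N_{D/F}(a_ib_i^{-1}) = N_{D/F}(k)^{m-i} \in F^{\times(m-i)}$. The only cosmetic difference is that the paper phrases it as a proof by contradiction rather than a contrapositive, and your explicit check that each $\sigma^l$ lies in $\mathrm{Aut}_F(D)$ is a welcome (if routine) addition the paper leaves implicit.
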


\begin{proof}
We prove (ii) since setting $i = 0$ in (ii) yields (i). Suppose, for a contradiction, that $S_f \cong S_g$. Then there exists $\tau \in \mathrm{Aut}_{F}(D)$ and $k \in C^{\times}$ such that $\tau(a_i) = \Big( \prod_{l=i}^{m-1} \sigma^l(k) \Big) b_i$
by Theorem \ref{thm:general_isomorphism}(iii). Applying $N_{D/F}$ we obtain
\begin{equation*}
\begin{split}
N_{D/F}(\tau(a_i)) &= N_{D/F}(a_i) = N_{D/F} \big( \big( \prod_{l=i}^{m-1} \sigma^l(k) \big) b_i \big) = N_{D/F}(k)^{m-i} N_{D/F}(b_i).
\end{split}
\end{equation*}
This implies $N_{D/F}(a_i b_i^{-1}) = N_{D/F}(k)^{m-i}$ by the multiplicity of the norm, but here $N_{D/F}(k)^{m-i} \in F^{\times (m-i)}$, a contradiction.
\end{proof}

In \cite{AndrewPhD}, isomorphisms between two nonassociative cyclic algebras were briefly investigated. We show Theorem \ref{thm:general_isomorphism}(iii) specialises to \cite[Proposition 3.2.8]{AndrewPhD} and \cite[Corollary 6.2.5]{AndrewPhD}.

Let $K/F$ be a cyclic Galois field extension of degree $m$ with $\mathrm{Gal}(K/F) = \langle \sigma \rangle$ and $a, b \in K \setminus F$. Then Theorem \ref{thm:general_isomorphism}(iii) shows when the nonassociative cyclic algebras $(K/F,\sigma,a)$ and $(K/F,\sigma,b)$ are isomorphic:

\begin{corollary} \label{cor:isomorphisms of nonassociative cyclic algebras}
(\cite[Proposition 3.2.8]{AndrewPhD}).
$(K/F,\sigma,a) \cong (K/F,\sigma,b)$ if and only if there exists $k \in K^{\times}$ and $j \in \{ 0, \ldots, m-1 \}$ such that $\sigma^j(a) = \big( \prod_{l=0}^{m-1} \sigma^l(k) \big) b = N_{K/F}(k) b$.
\end{corollary}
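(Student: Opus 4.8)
The plan is to derive this as a direct specialisation of Theorem \ref{thm:general_isomorphism}(iii), with $D = K$, $\sigma$ the chosen generator of $\mathrm{Gal}(K/F)$, $f(t) = t^m - a$ and $g(t) = t^m - b$, so that $S_f = (K/F,\sigma,a)$ and $S_g = (K/F,\sigma,b)$. First I would check the three hypotheses of that theorem. Since $D = K$ is a field, its center is $C = K$ and its group of $F$-automorphisms is $\mathrm{Aut}_F(K) = \mathrm{Gal}(K/F) = \langle \sigma \rangle$, which is abelian; hence $\sigma$ commutes with every $F$-automorphism of $K$, and $\sigma\vert_C = \sigma$ has order exactly $m \geq m-1$. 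Moreover, since $a, b \in K \setminus F$, the multiplications in $(K/F,\sigma,a)$ and $(K/F,\sigma,b)$ are not associative (recall the multiplication is associative if and only if the scalar lies in $F$), so neither $f(t)$ nor $g(t)$ is right invariant by Theorem \ref{thm:Properties of S_f petit}(iii). All hypotheses of Theorem \ref{thm:general_isomorphism}(iii) are therefore met.

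Next I would write $f(t) = t^m - \sum_{i=0}^{m-1} a_i t^i$ and $g(t) = t^m - \sum_{i=0}^{m-1} b_i t^i$ and simply read off the coefficients: $a_0 = a$, $b_0 = b$, and $a_i = b_i = 0$ for all $i \in \{1, \ldots, m-1\}$. Theorem \ref{thm:general_isomorphism}(iii) then states that $(K/F,\sigma,a) \cong (K/F,\sigma,b)$ if and only if there exist $\tau \in \mathrm{Aut}_F(K)$ and $k \in C^{\times} = K^{\times}$ with $\tau(a_i) = \big( \prod_{l=i}^{m-1} \sigma^l(k) \big) b_i$ for every $i$. For $i \geq 1$ these equations collapse to $0 = 0$ and hold automatically, so the single remaining constraint is the $i = 0$ equation $\tau(a) = \big( \prod_{l=0}^{m-1} \sigma^l(k) \big) b$.

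Finally, since $\tau$ is an $F$-automorphism of $K$ it lies in $\mathrm{Gal}(K/F) = \langle \sigma \rangle$, so I can write $\tau = \sigma^j$ for a unique $j \in \{0, \ldots, m-1\}$, and I would recognise $\prod_{l=0}^{m-1} \sigma^l(k) = N_{K/F}(k)$ from the definition of the field norm for a cyclic extension. Substituting these two observations turns the surviving constraint into $\sigma^j(a) = \big( \prod_{l=0}^{m-1} \sigma^l(k) \big) b = N_{K/F}(k) b$, which is exactly the asserted condition, giving both directions of the equivalence at once.

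I do not expect any genuine obstacle here: the entire substance is already contained in Theorem \ref{thm:general_isomorphism}(iii), and the corollary is purely a matter of instantiation. The only points needing a moment's care are the verification of the hypotheses — namely that $f$ and $g$ fail to be right invariant (which is where $a, b \in K \setminus F$ is used) and that the order and commutativity conditions on $\sigma$ hold — both of which are immediate consequences of $\mathrm{Gal}(K/F)$ being cyclic of order $m$.
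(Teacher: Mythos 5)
Your proposal is correct and follows exactly the route the paper takes: the paper presents this corollary as an immediate specialisation of Theorem \ref{thm:general_isomorphism}(iii) to $D = K$, $f(t) = t^m - a$, $g(t) = t^m - b$, with the hypotheses (non-right-invariance from $a,b \in K\setminus F$, commutativity and order of $\sigma$ from $\mathrm{Gal}(K/F) = \langle\sigma\rangle$ being cyclic of order $m$) verified just as you do. Your write-up merely makes explicit the instantiation that the paper leaves implicit.
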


Now suppose additionally $K$ and $F$ are finite fields. It is well-known that the norm $N_{K/F}:K^{\times} \rightarrow F^{\times}$ is surjective for finite extensions of finite fields and so by Corollary \ref{cor:isomorphisms of nonassociative cyclic algebras}, we conclude:

\begin{corollary} \label{cor:isomorphisms between nonassociative cyclic algebras over finite fields}
(\cite[Corollary 6.2.5]{AndrewPhD}).
$(K/F,\sigma,a) \cong (K/F,\sigma,b)$ if and only if $\sigma^j(a) = k b$ for some $j \in \{ 0, \ldots, m-1 \}$ and some $k \in F^{\times}$.
\end{corollary}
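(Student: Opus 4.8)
The plan is to deduce this statement directly from Corollary~\ref{cor:isomorphisms of nonassociative cyclic algebras}, whose hypotheses already apply verbatim here since $K/F$ is a cyclic Galois field extension of degree $m$ with $\mathrm{Gal}(K/F) = \langle \sigma \rangle$ and $a, b \in K \setminus F$. The only extra ingredient I need is the well-known fact, noted in the text, that for finite fields the norm map $N_{K/F}\colon K^{\times} \to F^{\times}$ is surjective. With these two facts in hand the proof reduces to comparing the two characterising conditions, namely $\sigma^j(a) = N_{K/F}(k)\,b$ for some $k \in K^{\times}$ (from the earlier corollary) versus $\sigma^j(a) = k'\,b$ for some $k' \in F^{\times}$ (the claimed condition).

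For the forward implication I would start from an isomorphism $(K/F,\sigma,a) \cong (K/F,\sigma,b)$. By Corollary~\ref{cor:isomorphisms of nonassociative cyclic algebras} there exist $k \in K^{\times}$ and $j \in \{0,\ldots,m-1\}$ with $\sigma^j(a) = N_{K/F}(k)\,b$. Setting $k' = N_{K/F}(k)$ and recalling $N_{K/F}(K^{\times}) \subseteq F^{\times}$, I immediately obtain $k' \in F^{\times}$ with $\sigma^j(a) = k'\,b$, as required. For the converse I would assume $\sigma^j(a) = k'\,b$ for some $k' \in F^{\times}$ and $j \in \{0,\ldots,m-1\}$; here surjectivity of the norm supplies an element $k \in K^{\times}$ with $N_{K/F}(k) = k'$, whence $\sigma^j(a) = N_{K/F}(k)\,b$ and Corollary~\ref{cor:isomorphisms of nonassociative cyclic algebras} yields the isomorphism $(K/F,\sigma,a) \cong (K/F,\sigma,b)$.

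The argument is essentially bookkeeping, so the only genuinely substantive point is the surjectivity of $N_{K/F}$, which is what lets one replace the \emph{norm} $N_{K/F}(k)$ appearing in the general finite-field case by an \emph{arbitrary} element $k'$ of $F^{\times}$. I would therefore make sure to cite this surjectivity explicitly (it follows, for instance, from Hilbert~90 for cyclic extensions, or from a direct counting argument for finite fields), since without it the reverse direction fails: for a general cyclic extension $K/F$ only those $b$ differing from $\sigma^j(a)$ by a norm give isomorphic algebras. In the finite-field setting every element of $F^{\times}$ is a norm, which collapses the two conditions and gives the simpler criterion stated.
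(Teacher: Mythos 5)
Your proof is correct and is essentially the paper's own argument: the statement is obtained from Corollary \ref{cor:isomorphisms of nonassociative cyclic algebras} by invoking surjectivity of $N_{K/F}\colon K^{\times}\to F^{\times}$ for finite extensions of finite fields, which is exactly what the paper does. One small caveat: your parenthetical suggestion that this surjectivity ``follows from Hilbert 90'' is off the mark --- Hilbert 90 describes the \emph{kernel} of the norm, not its image; the surjectivity here is the standard counting fact for finite fields.
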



\chapter{Automorphisms of Petit Algebras} \label{chapter:Automorphisms of S_f}

Let $D$ be an associative division ring with center $C$, $\sigma$ be a ring automorphism of $D$, $\delta$ be a left $\sigma$-derivation of $D$ and $f(t) \in R = D[t;\sigma,\delta]$. Here $S_f$ is a nonassociative algebra over $F = C \cap \mathrm{Fix}(\sigma) \cap \mathrm{Const}(\delta)$. Since $S_f = S_{df}$ for all $d \in D^{\times}$ we assume w.l.o.g. that $f(t)$ is monic, otherwise, if $f(t)$ has leading coefficient $d \in D^{\times}$, then consider $d^{-1}f(t)$. In this Chapter we study the automorphism groups of Petit algebras building upon our results in Chapter \ref{chapter:Isomorphisms Between some Petit Algebras}. We obtain partial results in the most general case where $\sigma$ is not necessarily the identity and $\delta$ is not necessarily $0$, however, most of our attention is given to the cases where $f(t)$ is either a differential or twisted polynomial (see Sections
\ref{section:Automorphisms of S_f, f(t) in D[t;delta]} and \ref{section:Automorphisms of S_f when f in D[t;sigma]} respectively). Later in Chapter \ref{chapter:Automorphisms of Nonassociative Cyclic Algebras} we go on to study the automorphism groups of nonassociative cyclic algebras.

\vspace*{4mm}
If $f(t) \in R$ is not right invariant, then $S_f$ is not associative and any automorphism of $S_f$ extends automorphisms of $\mathrm{Nuc}_l(S_f)$, $\mathrm{Nuc}_m(S_f)$ and $\mathrm{Nuc}_r(S_f)$ because the nuclei are invariant under automorphisms. Since $\mathrm{Nuc}_l(S_f) = \mathrm{Nuc}_m(S_f) = D$ and $\mathrm{Nuc}_r(S_f) = E(f)$ by Theorem \ref{thm:Properties of S_f petit}, we conclude:

\begin{lemma} \label{lem:automorphism restricts to right nucleus}
If $f(t) \in R$ is not right invariant, any $F$-automorphism of $S_f$ extends an $F$-automorphism of $D$ and an $F$-automorphism of $E(f)$.
\end{lemma}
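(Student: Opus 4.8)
The plan is to use the fact that each nucleus of a nonassociative algebra is a characteristic subalgebra, i.e.\ is mapped onto itself by every automorphism, and then to identify the left and right nuclei of $S_f$ explicitly via Theorem \ref{thm:Properties of S_f petit}.

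First I would note that since $f(t)$ is not right invariant, $S_f$ is not associative by Theorem \ref{thm:Properties of S_f petit}(iii), so Theorem \ref{thm:Properties of S_f petit}(i) applies and yields $\mathrm{Nuc}_l(S_f) = D$ and $\mathrm{Nuc}_r(S_f) = E(f)$. The heart of the argument is then to check that an arbitrary $F$-automorphism $\phi$ of $S_f$ preserves these nuclei. For the left nucleus, take $x \in \mathrm{Nuc}_l(S_f)$ and arbitrary $y,z \in S_f$; by surjectivity of $\phi$ write $y = \phi(y')$ and $z = \phi(z')$. Using that $\phi$ respects the multiplication of $S_f$, one computes
$$[\phi(x),y,z] = (\phi(x)\phi(y'))\phi(z') - \phi(x)(\phi(y')\phi(z')) = \phi\big([x,y',z']\big) = \phi(0) = 0,$$
so $\phi(x) \in \mathrm{Nuc}_l(S_f)$. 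This gives $\phi(\mathrm{Nuc}_l(S_f)) \subseteq \mathrm{Nuc}_l(S_f)$, and applying the same computation to $\phi^{-1}$ forces equality. The identical calculation, but with the associator evaluated in its third slot, shows that $\phi$ likewise preserves $\mathrm{Nuc}_r(S_f)$.

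Finally I would assemble the conclusion: since $\phi$ maps $\mathrm{Nuc}_l(S_f) = D$ bijectively onto itself and fixes $F$ pointwise, the restriction $\phi\vert_D$ is an $F$-automorphism of $D$; similarly $\phi\vert_{E(f)}$ is an $F$-automorphism of $E(f) = \mathrm{Nuc}_r(S_f)$. Thus $\phi$ extends both an $F$-automorphism of $D$ and an $F$-automorphism of $E(f)$, which is exactly the claim.

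I do not expect any genuine obstacle here; the lemma is essentially a consequence of the structure results already recorded in Theorem \ref{thm:Properties of S_f petit}. The only point requiring a little care is the invariance of the nuclei under $\phi$, and this reduces to the short associator identity above, where surjectivity of $\phi$ is used to realise arbitrary elements of $S_f$ as images under $\phi$.
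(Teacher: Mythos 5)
Your proposal is correct and follows exactly the paper's own argument: the paper likewise observes that the nuclei are invariant under automorphisms and then invokes Theorem \ref{thm:Properties of S_f petit} to identify $\mathrm{Nuc}_l(S_f)=D$ and $\mathrm{Nuc}_r(S_f)=E(f)$. The only difference is that you spell out the standard associator computation (with the correct use of surjectivity of $\phi$) that the paper leaves implicit.
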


By Theorem \ref{thm:lavrauw isomorphic petit algebras generalisation}, automorphisms $\Theta: R \rightarrow R$ induce isomorphisms between $S_f$ and $S_{\Theta(f(t))}$. This means if $\Theta(f(t)) = l f(t)$ for some $l \in D^{\times}$, then $\Theta$ induces an isomorphism $S_f \cong S_{l \Theta(f(t))} = S_f$, i.e. $\Theta$ induces an automorphism of $S_f$:

\begin{theorem} \label{thm:automorphism of skew polynomial induces automorphism of S_f}
If $\Theta$ is an $F$-automorphism of $R$ and $\Theta(f(t)) = l f(t)$ for some $l \in D^{\times}$, then $\Theta(b \circ c) = \Theta(b) \circ \Theta(c)$ for all $b, c \in S_f$, i.e. $\Theta$ induces an $F$-automorphism of $S_f$.
\end{theorem}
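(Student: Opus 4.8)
The plan is to deduce this directly from Theorem \ref{thm:lavrauw isomorphic petit algebras generalisation}, by a judicious choice of the free scalar appearing in that statement. Recall that theorem takes an $F$-automorphism $\Theta$ of $R$ and \emph{any} $l' \in D^{\times}$, forms $g(t) = l'\Theta(f(t))$, and concludes that $\Theta$ restricts to an $F$-isomorphism $S_f \cong S_g$. The idea is simply to pick $l'$ so that $g$ becomes $f$ itself.

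Concretely, I would apply Theorem \ref{thm:lavrauw isomorphic petit algebras generalisation} with $l' = l^{-1}$, where $l$ is the coefficient in the hypothesis $\Theta(f(t)) = l f(t)$. Then $g(t) = l^{-1}\Theta(f(t)) = l^{-1}(l f(t)) = f(t)$, so $S_g = S_f$, and the conclusion of Theorem \ref{thm:lavrauw isomorphic petit algebras generalisation} is exactly that $\Theta$ induces an $F$-isomorphism $S_f \cong S_f$, i.e. an $F$-automorphism. This is precisely the displayed identity $\Theta(b \circ c) = \Theta(b) \circ \Theta(c)$, since that is what it means for $\Theta\vert_{R_m}$ to be an algebra homomorphism.

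Alternatively, and more self-containedly, I would give the one-line direct verification (which is also what makes the reduction above legitimate). Since $\Theta(t)$ has degree $1$ by the argument in \cite[p.~4]{lam1992homomorphisms}, $\Theta$ preserves degrees and hence restricts to a bijective $F$-linear map $\Theta\vert_{R_m}$ on $R_m = S_f$. Given $b, c \in R_m$, write $bc = q(t)f(t) + r(t)$ in $R$ with $\mathrm{deg}(r) < m$, so that $b \circ c = r$. Applying $\Theta$ gives $\Theta(b)\Theta(c) = \Theta(q)\Theta(f) + \Theta(r) = \big(\Theta(q)\, l\big) f + \Theta(r)$, where $\mathrm{deg}(\Theta(r)) = \mathrm{deg}(r) < m$. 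By uniqueness of right division in $R$ this is exactly the right division of $\Theta(b)\Theta(c)$ by $f$, whence $\Theta(b) \circ \Theta(c) = \Theta(r) = \Theta(b \circ c)$.

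I expect essentially no obstacle here: the substance is entirely contained in Theorem \ref{thm:lavrauw isomorphic petit algebras generalisation} together with the remark (already recorded in the excerpt) that $S_f = S_{df}$ for $d \in D^{\times}$, which is what allows the scalar $l$ to be absorbed. The only point deserving a word of care is that $\Theta$ must preserve $R_m$, i.e. that $\Theta$ preserves degree; this follows from $\mathrm{deg}(\Theta(t)) = 1$ and is in any case already used in the proof of Theorem \ref{thm:lavrauw isomorphic petit algebras generalisation}.
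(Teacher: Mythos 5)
Your proposal is correct and follows exactly the route the paper takes: the theorem is deduced from Theorem \ref{thm:lavrauw isomorphic petit algebras generalisation} by absorbing the scalar $l$ via the identity $S_{df}=S_f$ for $d\in D^{\times}$, so that $\Theta$ restricts to an isomorphism $S_f\cong S_f$. Your supplementary direct verification simply replays the computation inside the proof of that theorem and is likewise fine.
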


Theorem \ref{thm:automorphism of skew polynomial induces automorphism of S_f} allows us to find automorphisms of Petit algebras which are induced by automorphisms of $R$, later focusing on the special cases where $R$ is a differential or twisted polynomial ring. More generally, when $\sigma$ is not necessarily the identity, and $\delta$ is not necessarily $0$, we can still use Theorem \ref{thm:automorphism of skew polynomial induces automorphism of S_f} to obtain non-trivial automorphisms of $S_f$ for some quadratic $f(t)$:

\begin{proposition} \label{prop:aut of S_f from automorphism of D[t;sigma,delta]}
Suppose $1 \neq d \in C^{\times}$ is such that $d \sigma(d) = 1$ and $\delta$ is the inner $\sigma$-derivation given by
$$\delta: D \rightarrow D, \ b \mapsto c(1-d)^{-1} b - \sigma(b) c(1-d)^{-1},$$
for some $c \in D^{\times}$. If
$f(t) = t^2 - (c - d \sigma(c))(1 - d)^{-1} t - a \in R,$
then the map
$$H_{\mathrm{id},c,d}: S_f \rightarrow S_f, \ x_0 + x_1t \mapsto x_0 + x_1(c + d t),$$
is a non-trivial $F$-automorphism of $S_f$. Moreover, if additionally $d$ is a primitive $n^{\text{th}}$ root of unity for some $n > 1$, then $\langle H_{\mathrm{id},c,d} \rangle$ is a cyclic subgroup of $\mathrm{Aut}_{F}(S_f)$ of order $n$.
\end{proposition}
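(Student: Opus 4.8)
The plan is to realize $H_{\mathrm{id},c,d}$ as the restriction to $S_f = R_2$ of the ring automorphism $\Theta_{\mathrm{id},c,d}$ of $R$, and then to invoke Theorem \ref{thm:automorphism of skew polynomial induces automorphism of S_f}. Under the stated hypotheses, Corollary \ref{cor:Automorphisms of D[t;sigma,delta]}(iv) already tells us that $\Theta_{\mathrm{id},c,d}\colon \sum b_i t^i \mapsto \sum b_i (c+dt)^i$ is an $F$-automorphism of $R$ (this part does not require $d\sigma(d)=1$). Since $\Theta_{\mathrm{id},c,d}$ fixes $D$ pointwise and sends $t \mapsto c+dt$, it preserves $R_2$ and restricts there to exactly $x_0 + x_1 t \mapsto x_0 + x_1(c+dt) = H_{\mathrm{id},c,d}$. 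Thus the whole statement reduces to verifying the hypothesis $\Theta_{\mathrm{id},c,d}(f(t)) = l\, f(t)$ of Theorem \ref{thm:automorphism of skew polynomial induces automorphism of S_f}; I expect to find $l = 1$, i.e. that $f$ is in fact fixed.

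The main work, and the main obstacle, is this computation. First I would record the simplification $\delta(b) = (1-d)^{-1}(cb - \sigma(b)c)$, valid because $d$, hence $1-d$ and $(1-d)^{-1}$, lie in $C$; I would also note $\sigma(d) = d^{-1}$, which follows from $d\sigma(d)=1$. Writing $a_1 = (c-d\sigma(c))(1-d)^{-1}$ for the coefficient of $t$ in $f$, I would expand $\Theta_{\mathrm{id},c,d}(f(t)) = (c+dt)^2 - a_1(c+dt) - a$ using $tc = \sigma(c)t + \delta(c)$ and $td = \sigma(d)t + \delta(d) = d^{-1}t + \delta(d)$. The crucial point is that $dt\cdot dt = d\sigma(d)t^2 + d\delta(d)t = t^2 + d\delta(d)t$, so the leading coefficient is $1$ and $l=1$ is forced. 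Collecting terms, I would then separately check that the constant coefficient of $\Theta_{\mathrm{id},c,d}(f(t))$ equals $-a$ and that the coefficient of $t$ equals $-a_1$; after substituting the formulas for $\delta(c)$ and $\delta(d)$ and using the centrality of $d$ and $(1-d)^{-1}$, both collapse via the telescoping identities $(1-d)^{-1}(d-1) = -1$ and $(1-d)^{-1}(d^2-1) = -(d+1)$. This confirms $\Theta_{\mathrm{id},c,d}(f(t)) = f(t)$, whence Theorem \ref{thm:automorphism of skew polynomial induces automorphism of S_f} yields $H_{\mathrm{id},c,d} \in \mathrm{Aut}_F(S_f)$.

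Non-triviality is immediate, since $H_{\mathrm{id},c,d}(t) = c + dt \neq t$ because $c \in D^\times$ is nonzero. For the final claim, as $H_{\mathrm{id},c,d}$ is the restriction of $\Theta_{\mathrm{id},c,d}$ and the latter fixes $D$, it suffices to iterate on $t$. An easy induction gives $\Theta_{\mathrm{id},c,d}^{\,k}(t) = (1 + d + \cdots + d^{k-1})c + d^k t$. When $d$ is a primitive $n$-th root of unity, the condition $d^k \neq 1$ for $0 < k < n$ forces $\Theta_{\mathrm{id},c,d}^{\,k}(t) \neq t$, while $d^n = 1$ together with $1 + d + \cdots + d^{n-1} = 0$ (which holds because $d-1$ is invertible in $C$ and $(d-1)(1+\cdots+d^{n-1}) = d^n-1 = 0$) gives $\Theta_{\mathrm{id},c,d}^{\,n}(t) = t$ and hence $\Theta_{\mathrm{id},c,d}^{\,n} = \mathrm{id}$. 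Therefore $H_{\mathrm{id},c,d}$ has order exactly $n$ and $\langle H_{\mathrm{id},c,d}\rangle$ is a cyclic subgroup of $\mathrm{Aut}_F(S_f)$ of order $n$, completing the plan.
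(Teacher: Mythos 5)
Your proposal is correct and follows essentially the same route as the paper: invoke Corollary \ref{cor:Automorphisms of D[t;sigma,delta]}(iv) to get the automorphism $\Theta_{\mathrm{id},c,d}$ of $R$, verify by direct computation (using $d\sigma(d)=1$ and the centrality of $d$ and $(1-d)^{-1}$) that $\Theta_{\mathrm{id},c,d}(f(t))=f(t)$, apply Theorem \ref{thm:automorphism of skew polynomial induces automorphism of S_f}, and determine the order by the induction $H^k(t)=(1+d+\cdots+d^{k-1})c+d^kt$, which is exactly the paper's formula $H^k(x_0+x_1t)=x_0+x_1c(1-d^k)(1-d)^{-1}+x_1d^kt$ restricted to $t$. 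The coefficient checks you outline do close as claimed, so no gap remains.
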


\begin{proof}
Recall
$$\Theta :R \rightarrow R, \ \sum_{i=0}^{n} b_i t^i \mapsto \sum_{i=0}^n b_i (c + d t)^i,$$
is an $F$-automorphism by Corollary \ref{cor:Automorphisms of D[t;sigma,delta]}. We have
\begin{align*}
\Theta(f(t)) &= (c + dt)^2 - (c - d \sigma(c))(1-d)^{-1} (c + d t) - a \\
&= d \sigma(d) t^2 + \Big( c d + d \sigma(c) + d \delta(d) - (c - d \sigma(c)) (1-d)^{-1} d \Big) t \\ & \qquad + \Big( c^2 + d \delta(c) - (c - d \sigma(c)) (1-d)^{-1} c - a \Big) \\
&= t^2 - (c - d \sigma(c))(1-d)^{-1} t - a = f(t),
\end{align*}
where we have used that $d \sigma(d) = 1$, hence the first assertion follows by Theorem \ref{thm:automorphism of skew polynomial induces automorphism of S_f}.

To prove the final assertion we first show $H_{\mathrm{id},c,d}^n$ has the form
\begin{equation} \label{eqn:form of H^n contain primitive nth root}
H_{\mathrm{id},c,d}^n(x_0 + x_1t) = x_0 + x_1 c (1-d^n)(1-d)^{-1} + x_1 d^n t,
\end{equation}
for all $n \in \mathbb{N}$ by induction: For $n = 1$ we have
$$x_0 + x_1 c (1-d^n)(1-d)^{-1} + x_1 d^n t = x_0 + x_1 c + x_1 d t = H_{\mathrm{id},c,d}(x_0 + x_1t)$$
as required. Assume as induction hypothesis that \eqref{eqn:form of H^n contain primitive nth root} holds for some $n \geq 1$, then
\begin{align*}
H_{\mathrm{id},c,d}&^{n+1}(x_0 + x_1t) = H_{\mathrm{id},c,d} \big( H_{\mathrm{id},c,d}^n(x_0 + x_1 t) \big) \\
&= H_{\mathrm{id},c,d} \big( x_0 + x_1 c (1-d^n)(1-d)^{-1} + x_1 d^n t \big) \\
&= x_0 + x_1 c (1-d^n)(1-d)^{-1} + x_1 d^n (c + d t) \\
&= x_0 + x_1 c \sum_{j=0}^{n-1} d^j + x_1 c d^n + x_1 d^{n+1}t = x_0 + x_1 c \sum_{j=0}^{n} d^j + x_1 d^{n+1}t \\
&= x_0 + x_1 c (1-d^{n+1})(1-d)^{-1} + x_1 d^{n+1} t,
\end{align*}
and thus \eqref{eqn:form of H^n contain primitive nth root} holds by induction. In particular \eqref{eqn:form of H^n contain primitive nth root} implies $H_{\mathrm{id},c,d}^n = \mathrm{id}$ if and only if $d^n = 1$, therefore if $d$ is a primitive $n^{\text{th}}$ root of unity, $\langle H_{\mathrm{id},c,d} \rangle$ is a cyclic subgroup of $\mathrm{Aut}_{F}(S_f)$ of order $n$.
\end{proof}

Setting $d = -1$ in Proposition \ref{prop:aut of S_f from automorphism of D[t;sigma,delta]} gives:

\begin{corollary}
Suppose $\mathrm{Char}(D) \neq 2$ and $\delta$ is the inner $\sigma$-derivation given by
$$\delta: D \rightarrow D, \ b \mapsto \frac{c}{2}b - \sigma(b) \frac{c}{2},$$
for some $c \in D^{\times}$. Then for 
$$f(t) = t^2 - \frac{1}{2}(c + \sigma(c)) t - a \in R,$$
the map $H_{\mathrm{id},c,-1}$ is an $F$-automorphism of $S_f$. Moreover $\{ \mathrm{id}, H_{\mathrm{id},c,-1} \}$ is a subgroup of $\mathrm{Aut}_{F}(S_f)$.
\end{corollary}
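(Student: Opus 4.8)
The plan is to obtain the Corollary as the special case $d = -1$ of Proposition \ref{prop:aut of S_f from automorphism of D[t;sigma,delta]}, so the entire task reduces to verifying that $d = -1$ is an admissible value for the parameter $d$ in that Proposition and that the resulting data $(\delta, f)$ coincide with those stated here. First I would confirm the constraints on $d$ itself. We need $1 \neq d \in C^{\times}$, which holds exactly because $\mathrm{Char}(D) \neq 2$: then $-1 \neq 1$, and $-1$ is always a central unit. Next I would check the constraint $d\sigma(d) = 1$; since $\sigma$ is a ring automorphism it fixes $-1$, so $d\sigma(d) = (-1)(-1) = 1$, as required.

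Second I would substitute $d = -1$ into the formulas of the Proposition and match them against the Corollary. Because $(1-d)^{-1} = 2^{-1}$, the inner $\sigma$-derivation $b \mapsto c(1-d)^{-1}b - \sigma(b)c(1-d)^{-1}$ becomes $b \mapsto \tfrac{c}{2}b - \sigma(b)\tfrac{c}{2}$, which is precisely the $\delta$ stated here, and the middle coefficient $(c - d\sigma(c))(1-d)^{-1} = (c+\sigma(c))\cdot\tfrac12$, giving $f(t) = t^2 - \tfrac12(c+\sigma(c))t - a$. Thus the hypotheses of Proposition \ref{prop:aut of S_f from automorphism of D[t;sigma,delta]} hold verbatim with $d = -1$, and its first assertion yields that $H_{\mathrm{id},c,-1}$ is a non-trivial $F$-automorphism of $S_f$.

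For the group statement I would note that $-1$ is a primitive second root of unity in $C$, again because $\mathrm{Char}(D) \neq 2$: $(-1)^2 = 1$ while $-1 \neq 1$. The second assertion of Proposition \ref{prop:aut of S_f from automorphism of D[t;sigma,delta]} then gives that $\langle H_{\mathrm{id},c,-1}\rangle$ is a cyclic subgroup of $\mathrm{Aut}_F(S_f)$ of order $2$, and a cyclic group of order two is exactly $\{\mathrm{id}, H_{\mathrm{id},c,-1}\}$. There is no genuine obstacle, since the result is a direct specialisation; the only point demanding a little care is recognising that both the relation $d\sigma(d) = 1$ and the primitivity of $d$ as a second root of unity rest solely on $\mathrm{Char}(D) \neq 2$, the single standing hypothesis of the Corollary.
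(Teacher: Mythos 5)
Your proof is correct and follows exactly the paper's route: the Corollary is obtained by specialising Proposition \ref{prop:aut of S_f from automorphism of D[t;sigma,delta]} to $d=-1$, and your verification that $-1$ is an admissible central unit with $d\sigma(d)=1$ and is a primitive second root of unity (both resting on $\mathrm{Char}(D)\neq 2$) is precisely what is needed.
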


\begin{example}
Suppose $K = F(i)$ is a quadratic separable extension of $F$ with non-trivial automorphism $\sigma$ and let $\delta$ be the inner $\sigma$-derivation
$$\delta:K \rightarrow K, \ b \mapsto \frac{c}{1-i}(b-\sigma(b)),$$
for some $c \in K$. Then $i \sigma(i) = -i^2 = 1$ and $i$ is a primitive $4^{\text{th}}$ root of unity, so for
$$f(t) = t^2 - \frac{c - i \sigma(c)}{1-i}t - a \in K[t;\sigma,\delta],$$
the map $H_{\mathrm{id},c,i}$ is an automorphism of $S_f$ of order $4$ by Proposition \ref{prop:aut of S_f from automorphism of D[t;sigma,delta]}.
\end{example}

\begin{proposition}
Let $f(t) = t^m - \sum_{i=0}^{m-1} a_i t^i \in F[t] = F[t;\sigma,\delta] \subset R$, then for all $\tau \in \mathrm{Aut}_{F}(D)$ such that $\sigma \circ \tau = \tau \circ \sigma$ and $\delta \circ \tau = \tau \circ \delta$, the maps
$$H_{\tau,0,1}:S_f \rightarrow S_f, \ \sum_{i=0}^{m-1} b_i t^i \mapsto \sum_{i=0}^{m-1} \tau(b_i) t^i,$$
are $F$-automorphisms of $S_f$.
\end{proposition}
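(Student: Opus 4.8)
The plan is to realise $H_{\tau,0,1}$ as the restriction to $R_m = S_f$ of the full ring automorphism $\Theta_{\tau,0,1}$ of $R$, and then invoke Theorem \ref{thm:automorphism of skew polynomial induces automorphism of S_f}. First I would note that the hypotheses $\sigma \circ \tau = \tau \circ \sigma$ and $\delta \circ \tau = \tau \circ \delta$ are precisely the conditions appearing in Corollary \ref{cor:Automorphisms of D[t;sigma,delta]}(ii), which guarantee that the map $\Theta_{\tau,0,1}$ (i.e.\ $\Theta_{\tau,c,d}$ with $c = 0$, $d = 1$) is an $F$-automorphism of $R$. Thus the hypothesis on $\tau$ is exactly what is needed to get an automorphism of the ambient skew polynomial ring to work with.

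Next I would compute the action of $\Theta_{\tau,0,1}$ on $f(t)$. Since $c = 0$ and $d = 1$, the defining formula \eqref{eqn:isomorphisms between skew polynomial rings 0} reduces to $\sum_i b_i t^i \mapsto \sum_i \tau(b_i)(0 + 1 \cdot t)^i = \sum_i \tau(b_i) t^i$, so $t$ (and hence each $t^i$) is fixed. Because $f(t) \in F[t]$, every coefficient $a_i$ lies in $F$ and is therefore fixed by the $F$-automorphism $\tau$. Consequently
$$\Theta_{\tau,0,1}(f(t)) = t^m - \sum_{i=0}^{m-1} \tau(a_i) t^i = t^m - \sum_{i=0}^{m-1} a_i t^i = f(t).$$

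With this in hand, Theorem \ref{thm:automorphism of skew polynomial induces automorphism of S_f} applies with $l = 1$: since $\Theta_{\tau,0,1}(f(t)) = 1 \cdot f(t)$, the automorphism $\Theta_{\tau,0,1}$ induces an $F$-automorphism of $S_f$. Finally I would identify the induced map on $R_m$ with $H_{\tau,0,1}$ by reading off the formula above on elements $\sum_{i=0}^{m-1} b_i t^i$ of $R_m$, which yields exactly $\sum_{i=0}^{m-1} \tau(b_i) t^i$. There is no genuine obstacle here; the argument is entirely routine once Corollary \ref{cor:Automorphisms of D[t;sigma,delta]}(ii) and Theorem \ref{thm:automorphism of skew polynomial induces automorphism of S_f} are in place. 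The only points that warrant explicit mention are that $\tau$ fixes $F$ pointwise (so the coefficients $a_i \in F$ are unchanged) and that the leading term $t^m$ is preserved, ensuring the scalar $l$ equals $1$ rather than a nontrivial element of $D^\times$.
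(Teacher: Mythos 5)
Your proof is correct and follows exactly the paper's own argument: invoke Corollary \ref{cor:Automorphisms of D[t;sigma,delta]}(ii) to get the ring automorphism $\Theta_{\tau,0,1}$ of $R$, observe that $\Theta_{\tau,0,1}(f(t)) = f(t)$ because the coefficients of $f(t)$ lie in $F$ and are fixed by $\tau$, and conclude via Theorem \ref{thm:automorphism of skew polynomial induces automorphism of S_f}. The only difference is that you spell out the computation and the identification of the restriction with $H_{\tau,0,1}$ slightly more explicitly than the paper does.
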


\begin{proof}
Let $\tau \in \mathrm{Aut}_{F}(D)$, then
$$\Theta: R \rightarrow R, \ \sum_{i=0}^n b_i t^i \mapsto \sum_{i=0}^n \tau(b_i) t^i,$$
is an $F$-automorphism if and only if $\sigma \circ \tau = \tau \circ \sigma$ and $\delta \circ \tau = \tau \circ \delta$ by Corollary \ref{cor:Automorphisms of D[t;sigma,delta]}. If all the coefficients of $f(t)$ are contained in $F$, a straightforward computation yields $\Theta(f(t)) = f(t)$, hence $H_{\tau,0,1} \in \mathrm{Aut}_{F}(S_f)$ by Theorem \ref{thm:automorphism of skew polynomial induces automorphism of S_f}.
\end{proof}

Given a nonassociative $F$-algebra $A$, an element $0 \neq c \in A$ has a \textbf{left inverse} $c_l \in A$ if $R_c(c_l) = c_l c = 1$, and a \textbf{right inverse} $c_r \in A$ if $L_c(c_r) = c c_r = 1$. We say $c$ is \textbf{invertible} if it has both a left and a right inverse. In \cite[p.~233]{wene2010inner} the definition of inner automorphisms of associative algebras is generalised to finite nonassociative division algebras, also called finite semifields. We can generalise this definition further to arbitrary nonassociative algebras:

\begin{definition}
An automorphism $G \in \mathrm{Aut}_F(A)$ is an \textbf{inner automorphism} if there is an element $0 \neq c \in A$ with left inverse $c_l$, such that $G(x) = (c_l x)c$ for all $x \in A$.
\end{definition}

Equivalently, an automorphism $G$ is inner if there is an element $0 \neq c \in A$ with left inverse $c_l$, such that $G(x) = R_c(L_{c_l}(x))$ for all $x \in A$.

When $A$ is a finite semifield, every non-zero element of $A$ is left invertible and our definition reduces to the definition of inner automorphisms given in \cite{wene2010inner}. Given an inner automorphism $G \in \mathrm{Aut}_F(A)$ and some $H \in \mathrm{Aut}_F(A)$ then a straightforward calculation shows $H^{-1} \circ G \circ H$ is the inner automorphism $A \rightarrow A, \ x \mapsto \big( H^{-1}(c_l)x \big) H^{-1}(c)$.

If $c \in \mathrm{Nuc}(A)$ is invertible, then $c_l = c_r$ as $\mathrm{Nuc}(A)$ is an associative algebra. We denote this element $c^{-1}$. Let $G_c$ denote the map $G_c: A \rightarrow A, \ x \mapsto (c^{-1} x)c$ for all invertible $c \in \mathrm{Nuc}(A)$.

We remark that if $c \in D$ and $f(t) \in D[t;\sigma,\delta]$ is not right invariant, then multiplying $1 = c_l c$ on the right by $c_r$ and using $c \in \mathrm{Nuc}_m(S_f)$ yields $c_r = (c_l c)c_r = c_l (c c_r) = c_l = c^{-1}$.

\begin{proposition} \label{prop:Inner automorphisms nucleus}
$Q = \{ G_c \ \vert \ c \in \mathrm{Nuc}(A) \text{ is invertible} \}$ is a subgroup of $\mathrm{Aut}_F(A)$ consisting of inner automorphisms.
\end{proposition}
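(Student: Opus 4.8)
The plan is to verify in turn that each map $G_c$ is a well-defined $F$-algebra automorphism of $A$, that these maps are closed under composition and inversion, and that a composition law identifies $Q$ with a homomorphic image of the group of invertible elements of $\mathrm{Nuc}(A)$.

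First I would record the auxiliary fact that the inverse of an invertible nuclear element is again nuclear, i.e. $c \in \mathrm{Nuc}(A)$ invertible implies $c^{-1} \in \mathrm{Nuc}(A)$; this is needed throughout. To see $c^{-1} \in \mathrm{Nuc}_l(A)$ one applies left multiplication by $c$ to the associator $[c^{-1},a,b]$: using that $c \in \mathrm{Nuc}_l(A)$ together with $cc^{-1} = c^{-1}c = 1$ gives $c\big((c^{-1}a)b\big) = ab = c\big(c^{-1}(ab)\big)$, so $c[c^{-1},a,b] = 0$; since $L_c$ is injective (it is inverted by $L_{c^{-1}}$, again because $c \in \mathrm{Nuc}_l(A)$), we get $[c^{-1},a,b] = 0$. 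The analogous computations for the middle and right nuclei, applying the appropriate one-sided multiplication by $c$, show $c^{-1} \in \mathrm{Nuc}(A)$.

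Next I would check $G_c \in \mathrm{Aut}_F(A)$. The $F$-linearity is immediate from the $F$-bilinearity of the multiplication. For multiplicativity, the key point is that both $c$ and $c^{-1}$ lie in all three nuclei, which lets one rearrange brackets freely around them. Concretely, writing $G_c(x)G_c(y) = \big((c^{-1}x)c\big)\big((c^{-1}y)c\big)$ and using the left-nuclear property of $c^{-1}$ to pull $c^{-1}$ to the outside, the computation reduces to the cancellation $(xc)\big(c^{-1}(yc)\big) = x\big(c(c^{-1}(yc))\big) = x(yc) = (xy)c$, obtained from the middle-, left- and right-nuclear identities for $c$ in succession; this yields $G_c(x)G_c(y) = \big(c^{-1}(xy)\big)c = G_c(xy)$.

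Finally I would establish the group structure. A short bracket manipulation, again valid because all elements involved are nuclear and $\mathrm{Nuc}(A)$ is associative with $(dc)^{-1} = c^{-1}d^{-1}$, gives the composition law $G_c \circ G_d = G_{dc}$. Taking $c = 1$ shows $G_1 = \mathrm{id}_A \in Q$; the composition law then gives $G_c \circ G_{c^{-1}} = G_{c^{-1}} \circ G_c = G_1 = \mathrm{id}_A$, so each $G_c$ is bijective (hence genuinely an automorphism) with inverse $G_{c^{-1}} \in Q$, and $Q$ is closed under composition. Thus $Q$ is a subgroup of $\mathrm{Aut}_F(A)$, and each $G_c$ is inner by definition, taking $c_l = c^{-1}$. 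I expect the main obstacle to be the multiplicativity computation together with the preliminary lemma that $c^{-1}$ is nuclear, since both rest on carefully chaining the three separate nuclear identities in the correct order.
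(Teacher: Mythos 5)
Your proof is correct and follows essentially the same route as the paper: the same associator-shuffling computation for multiplicativity, the same composition law ($G_d \circ G_c = G_{cd}$, equivalently your $G_c \circ G_d = G_{dc}$), and the same identification of identity and inverses. The only addition is your explicit preliminary lemma that $c^{-1} \in \mathrm{Nuc}(A)$, which the paper uses without proof; supplying it is a sensible (and correct) refinement rather than a different approach.
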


\begin{proof}
We have
\begin{align*}
G_c(x) G_c(y) &= \big( (c^{-1} x)c \big) \big( (c^{-1} y)c \big) = (c^{-1} x) [ c ( (c^{-1} y)c ) ] = (c^{-1} x) [ [ c c^{-1} y ] c ] \\ &= (c^{-1} x) (yc) = \big( (c^{-1} x)y \big) c = (c^{-1} (xy))c = G_c(xy),
\end{align*}
for all $x, y \in A$, where we have used $c^{-1} \in \mathrm{Nuc}(A)$. Hence $G_c$ is multiplicative. Now $G_c$ is clearly bijective and $F$-linear, and so the maps $G_c$ are $F$-automorphisms of $A$ for all invertible $c \in \mathrm{Nuc}(A)$. 

We now prove $Q$ is a subgroup of $\mathrm{Aut}_F(A)$: Clearly $\mathrm{id}_A = G_1 \in Q$. Let $G_c, G_d \in Q$ for some invertible $c, d \in \mathrm{Nuc}(A)$, then
$$G_d(G_c(x)) = \big( d^{-1}((c^{-1}x)c) \big) d = (d^{-1} c^{-1} x) cd = (cd)^{-1}xcd = G_{cd}(x)$$
for all $x \in A$ and so $Q$ is closed under composition. Finally $G_c \circ G_{c^{-1}} = G_1 = \mathrm{id}$, therefore $Q$ is also closed under inverses since $c^{-1} \in \mathrm{Nuc}(A)$. Thus $Q$ is a subgroup of $\mathrm{Aut}_F(A)$.
\end{proof}

In the case where $A$ is a finite semifield, Proposition \ref{prop:Inner automorphisms nucleus} was proven in \cite[2 Lemma]{wene2010inner}. By Proposition \ref{prop:Inner automorphisms nucleus} we conclude:

\begin{corollary} \label{cor:inner automorphisms of S_f, D[t;sigma,delta]}
Given $f(t) \in R = D[t;\sigma,\delta]$, the maps $G_c(x) = (c^{-1} \circ x) \circ c$ are inner automorphisms of $S_f$ for all invertible $c \in \mathrm{Nuc}(S_f)$. In particular, if $f(t)$ is right semi-invariant the maps $G_c$ are inner automorphisms of $S_f$ for all $c \in D^{\times}$.
\end{corollary}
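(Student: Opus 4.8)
The plan is to obtain both assertions from the general Proposition \ref{prop:Inner automorphisms nucleus} by specialising it to $A = S_f$. Since $S_f$ is a nonassociative $F$-algebra, the proposition applies verbatim: for every invertible $c \in \mathrm{Nuc}(S_f)$ the map $G_c(x) = (c^{-1} \circ x) \circ c$ is an inner automorphism of $S_f$, and these maps even form a subgroup of $\mathrm{Aut}_F(S_f)$. Thus the first claim requires nothing beyond invoking Proposition \ref{prop:Inner automorphisms nucleus}, with the $\circ$-notation making explicit that all products are taken in $S_f$.

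For the second claim I would first show that right semi-invariance of $f(t)$ forces $D \subseteq \mathrm{Nuc}(S_f)$. By Theorem \ref{thm:semi-invariant iff D contained in E(f)}, if $f(t)$ is right semi-invariant then $D \subseteq \mathrm{Nuc}_r(S_f)$, and moreover either $S_f$ is associative or $\mathrm{Nuc}(S_f) = D$. I would handle these two cases separately: if $S_f$ is associative then $\mathrm{Nuc}(S_f) = S_f \supseteq D$, while if $S_f$ is not associative then $\mathrm{Nuc}(S_f) = D$. In either case $D \subseteq \mathrm{Nuc}(S_f)$.

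Next I would verify that each $c \in D^{\times}$ is an invertible element of $\mathrm{Nuc}(S_f)$. As $D$ is a division ring, $c$ admits an inverse $c^{-1} \in D^{\times}$; because $c$ and $c^{-1}$ both have degree $0$, their products in $S_f$ agree with the products in $R$ and hence in $D$, so $c \circ c^{-1} = c^{-1} \circ c = 1$ (this is exactly the content of the remark preceding Proposition \ref{prop:Inner automorphisms nucleus}). Therefore $c$ is invertible in $S_f$ with inverse $c^{-1}$, and applying the first part of the corollary to these elements shows that $G_c$ is an inner automorphism of $S_f$ for every $c \in D^{\times}$.

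I do not expect any genuine obstacle here, since the corollary is a direct specialisation of an already-established proposition. The only points demanding a little care are the case distinction in Theorem \ref{thm:semi-invariant iff D contained in E(f)} needed to conclude $D \subseteq \mathrm{Nuc}(S_f)$ in both the associative and non-associative situations, and the brief remark that the ring inverse $c^{-1}$ taken in $D$ genuinely serves as the two-sided inverse of $c$ under the multiplication $\circ$ of $S_f$.
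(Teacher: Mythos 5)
Your proposal is correct and follows essentially the same route as the paper: the first claim is an immediate specialisation of Proposition \ref{prop:Inner automorphisms nucleus}, and the second is obtained by using Theorem \ref{thm:semi-invariant iff D contained in E(f)} to place $D$ inside $\mathrm{Nuc}(S_f)$ via the same associative/non-associative case distinction. Your added remark that each $c \in D^{\times}$ is genuinely invertible under $\circ$ (since degree-zero products in $S_f$ agree with products in $D$) is a detail the paper leaves implicit but is worth making explicit.
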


\begin{proof}
The first assertion follows immediately from Proposition \ref{prop:Inner automorphisms nucleus}. We have $f(t) \in R$ is right semi-invariant is equivalent to $D \subseteq \mathrm{Nuc}_r(S_f)$ by Theorem \ref{thm:semi-invariant iff D contained in E(f)}, thus either $S_f$ is associative or $\mathrm{Nuc}(S_f) = D \cap \mathrm{Nuc}_r(S_f) = D$. Therefore the maps $G_c$ are inner automorphisms of $S_f$ for all $c \in D^{\times}$.
\end{proof}

\section{Automorphisms of \texorpdfstring{$S_f$}{S\_f}, \texorpdfstring{$f(t) \in R = D[t;\delta]$}{f(t) in R = D[t;delta]}} \label{section:Automorphisms of S_f, f(t) in D[t;delta]}

Now suppose $D$ is an associative division ring of characteristic $p \neq 0$ and center $C$, $\sigma = \mathrm{id}$ and $0 \neq \delta$ is a derivation of $D$. In this Section we investigate the automorphisms of $S_f$ in the special case where
$$f(t) = t^{p^e} + a_1 t^{p^{e-1}} + \ldots + a_e t + d \in R = D[t;\delta].$$
Here $S_f$ is a nonassociative algebra over $F = C \cap \mathrm{Const}(\delta)$.

Recall from \eqref{p-power formula char p} that as $D$ has characteristic $p$, we can write $(t-b)^p = t^p - V_p(b)$, where $V_p(b) = b^p + \delta^{p-1}(b) + *$ for all $b \in D$, with $*$ a sum of commutators of $b, \delta(b), \ldots, \delta^{p-2}(b)$. In particular, if $b \in C$ then $* = 0$ and $V_p(b) = b^p + \delta^{p-1}(b)$. An iteration yields $(t-b)^{p^e} = t^{p^e} - V_{p^e}(b)$, for all $b \in D$ with $V_{p^e}(b) = V_p^e(b) = V_p(\ldots(V_p(b)\ldots)$.

The automorphisms of $R$ are described in Corollary \ref{cor:Automorphisms of D[t;delta]}. Together with Theorem \ref{thm:automorphism of skew polynomial induces automorphism of S_f}, this leads us to the following result, which generalises \cite[Proposition 7]{pumpluen2016nonassociative} in which $\tau = \mathrm{id}$:

\begin{corollary} \label{cor:auto of S_f D[t;delta] from auto of D[t;delta]}
Suppose $\tau \in \mathrm{Aut}_{F}(D)$ commutes with $\delta$ and
$$f(t) = t^{p^e} + a_1 t^{p^{e-1}} + \ldots + a_e t + d \in R,$$
where $a_1, \ldots, a_e \in \mathrm{Fix}(\tau)$. Then for any $b \in C$ such that 
\begin{equation} \label{eqn:auto of S_f D[t;delta] from auto of D[t;delta]}
V_{p^e}(b) + a_1 V_{p^{e-1}}(b) + \ldots + a_e b + d = \tau(d),
\end{equation}
the map
\begin{equation} \label{eqn:form of H_tau,-b,1 sigma=id}
H_{\tau,-b,1}: S_f \rightarrow S_f, \ \sum_{i=0}^{p^e-1} x_i t^i \mapsto \sum_{i=0}^{p^e-1} \tau(x_i)(t-b)^i,
\end{equation}
is an $F$-automorphism of $S_f$.
\end{corollary}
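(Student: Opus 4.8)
The plan is to realise $H_{\tau,-b,1}$ as the restriction to $S_f$ of the ring automorphism $\Theta_{\tau,-b,1}$ of $R$ given by \eqref{eqn:isomorphisms between skew polynomial rings 0}, and then to invoke Theorem \ref{thm:automorphism of skew polynomial induces automorphism of S_f}. First I would verify that $\Theta_{\tau,-b,1}$ really is an $F$-automorphism of $R$: here $\sigma = \mathrm{id}$, $c = -b$ and $d = 1$, so that $c + dt = t - b$, and since $b \in C$ we have $-b \in C$. Corollary \ref{cor:Automorphisms of D[t;delta]} then applies directly, giving that $\Theta_{\tau,-b,1}$ is an $F$-automorphism of $R$ precisely because $\tau$ commutes with $\delta$.

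The core of the argument is the computation $\Theta_{\tau,-b,1}(f(t)) = f(t)$. Applying the definition of $\Theta_{\tau,-b,1}$ term by term and using $\tau(a_i) = a_i$ (which holds since $a_i \in \mathrm{Fix}(\tau)$), I obtain
$$\Theta_{\tau,-b,1}(f(t)) = (t-b)^{p^e} + a_1 (t-b)^{p^{e-1}} + \ldots + a_e(t-b) + \tau(d).$$
Next I would substitute the iterated $p$-power identity $(t-b)^{p^i} = t^{p^i} - V_{p^i}(b)$ from \eqref{p-power formula char p iterated}, collect the pure-$t$ contributions into $t^{p^e} + a_1 t^{p^{e-1}} + \ldots + a_e t$, and gather the constant contributions into $\tau(d) - \big( V_{p^e}(b) + a_1 V_{p^{e-1}}(b) + \ldots + a_e b \big)$. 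Comparing with $f(t)$, the pure-$t$ parts coincide automatically, while the constant terms agree exactly when $\tau(d) - \big( V_{p^e}(b) + \ldots + a_e b \big) = d$, which is precisely the hypothesis \eqref{eqn:auto of S_f D[t;delta] from auto of D[t;delta]}.

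Having established $\Theta_{\tau,-b,1}(f(t)) = f(t) = 1 \cdot f(t)$, Theorem \ref{thm:automorphism of skew polynomial induces automorphism of S_f} yields that $\Theta_{\tau,-b,1}$ induces an $F$-automorphism of $S_f$. This induced map is simply the restriction of $\Theta_{\tau,-b,1}$ to $R_{p^e}$, and since $\Theta_{\tau,-b,1}(t^i) = (t-b)^i$ has degree $i < p^e$ for every $i \in \{0,\ldots,p^e-1\}$, the restriction lands in $R_{p^e}$ and coincides with $H_{\tau,-b,1}$ as written in \eqref{eqn:form of H_tau,-b,1 sigma=id}. The only delicate point is the constant-term bookkeeping in the $p$-power expansion, together with the observation that the hypothesis $a_i \in \mathrm{Fix}(\tau)$ is exactly what forces the middle coefficients $\tau(a_i)$ to collapse back to $a_i$; once this is handled, everything else is a direct application of the cited results, so I expect no genuine obstacle.
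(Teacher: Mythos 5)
Your proof is correct and follows essentially the same route as the paper: you verify that $\Theta_{\tau,-b,1}$ is an $F$-automorphism of $R$ via Corollary \ref{cor:Automorphisms of D[t;delta]} (using $-b \in C$ and $\tau \circ \delta = \delta \circ \tau$), show $\Theta_{\tau,-b,1}(f(t)) = f(t)$ exactly when \eqref{eqn:auto of S_f D[t;delta] from auto of D[t;delta]} holds, and conclude by Theorem \ref{thm:automorphism of skew polynomial induces automorphism of S_f}. The only cosmetic difference is that you carry out the $p$-power expansion $(t-b)^{p^i} = t^{p^i} - V_{p^i}(b)$ and the constant-term bookkeeping inline, whereas the paper delegates that computation to the proof of Corollary \ref{cor:Isomorphisms of Petit algebras D[t;delta]}.
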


\begin{proof}
The map
$$\Theta: R \rightarrow R, \ \sum_{i=0}^n b_i t^i \mapsto \sum_{i=0}^n \tau(b_i) (t-b)^i,$$
is an $F$-automorphism for all $b \in C$ by Corollary \ref{cor:Automorphisms of D[t;delta]}. Furthermore, a close inspection of the proof of Corollary \ref{cor:Isomorphisms of Petit algebras D[t;delta]} shows $\Theta(f(t)) = f(t)$ if and only if \eqref{eqn:auto of S_f D[t;delta] from auto of D[t;delta]} holds, thus the assertion follows by Theorem \ref{thm:automorphism of skew polynomial induces automorphism of S_f}.
\end{proof}


Let $f(t) = t^p - t - d \in R$, then $H_{\tau,-b,1} \in \mathrm{Aut}_{F}(S_f)$ for all $b \in C$, $\tau \in \mathrm{Aut}_{F}(D)$ such that $\tau \circ \delta = \delta \circ \tau$ and $$\tau(d) = b - V_p(b) + d = b + d - b^p - \delta^{p-1}(b)$$ by Corollary \ref{cor:auto of S_f D[t;delta] from auto of D[t;delta]}.
In addition, if $f(t)$ is not right invariant, $\delta$ commutes with all $F$-automorphisms of $D$ and $F \subsetneq C$, these are the only $F$-automorphisms of $S_f$:

\begin{theorem} \label{thm:automorphisms of S_f t^p-t-a derivation}
Let $f(t) = t^p - t - d \in R$. Then for all $\tau \in \mathrm{Aut}_{F}(D)$ and $b \in C$ such that $\tau$ commutes with $\delta$ and 
\begin{equation} \label{eqn:automorphisms of S_f t^p-t-a derivation}
\tau(d) = b + d - b^p - \delta^{p-1}(b),
\end{equation}
the maps $H_{\tau,-b,1}$ given by \eqref{eqn:form of H_tau,-b,1 sigma=id} are $F$-automorphisms of $S_f$. Moreover, if $f(t)$ is not right invariant, $\delta$ commutes with all $F$-automorphisms of $D$ and $F \subsetneq C$, these are all the automorphisms of $S_f$.
\end{theorem}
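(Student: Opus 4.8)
The sufficiency half of the statement is essentially already recorded in the paragraph preceding the theorem: specialising Corollary \ref{cor:auto of S_f D[t;delta] from auto of D[t;delta]} to $e=1$, leading lower coefficient $a_1=-1$ and constant term $-d$, the condition \eqref{eqn:auto of S_f D[t;delta] from auto of D[t;delta]} becomes exactly \eqref{eqn:automorphisms of S_f t^p-t-a derivation}, so each $H_{\tau,-b,1}$ with $\tau\delta=\delta\tau$ and \eqref{eqn:automorphisms of S_f t^p-t-a derivation} is an $F$-automorphism of $S_f$. The real content is the converse, and the plan is to fix an arbitrary $H\in\mathrm{Aut}_F(S_f)$ and show it must be one of these maps, following the template of the twisted case Theorem \ref{thm:general_isomorphism}(iii) but with the characteristic-$p$ structure of $\delta$ replacing the order hypothesis on $\sigma|_C$.

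First I would pin down $H$ on $D$. Since $f(t)$ is not right invariant, $S_f$ is not associative, so $\mathrm{Nuc}_l(S_f)=D$ by Theorem \ref{thm:Properties of S_f petit}(i); as automorphisms preserve the left nucleus, Lemma \ref{lem:automorphism restricts to right nucleus} gives $H|_D=\tau$ for some $\tau\in\mathrm{Aut}_F(D)$, and by hypothesis $\tau$ commutes with $\delta$.

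The crux is to determine $H(t)$. Applying $H$ to the identity $t\circ z=zt+\delta(z)$, which holds in $S_f$ for $z\in D$ with no reduction occurring, and using multiplicativity, one obtains $H(t)\circ\tau(z)=\tau(z)\circ H(t)+\tau(\delta(z))$; substituting $w=\tau(z)$ this reads $H(t)\circ w-w\circ H(t)=(\tau\delta\tau^{-1})(w)$, and since $\tau\delta=\delta\tau$ the right-hand side is $\delta(w)=[t,w]$. All products here have degree $<p$, so these commutators may be computed in $R$, and hence $Q:=H(t)-t$ centralises $D$ inside $R$. Writing $Q=\sum_{i=0}^{n}q_it^i$ with $q_n\neq0$ and $n\le p-1$, I would invoke the $\sigma=\mathrm{id}$ commutation rule $[t^i,w]=\sum_{j=0}^{i-1}\binom{i}{j}\delta^{i-j}(w)t^j$ and read off the coefficient of $t^{n-1}$ in $[Q,w]$, which is $q_n\,n\,\delta(w)$. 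Because $\delta\neq0$ and $D$ is a division ring, vanishing of $[Q,w]$ for all $w$ forces $n\equiv0\pmod p$, hence $n=0$; thus $Q\in D$ centralises $D$, i.e. $Q\in C$, so $H(t)=t-b$ with $b\in C$. This centralizer computation in characteristic $p$ is where I expect the main difficulty, and it is the step in which $\delta\neq0$ is essential (the remaining standing hypotheses, $\delta$ commuting with all $F$-automorphisms and $F\subsetneq C$, play the same auxiliary roles as the corresponding conditions in Theorem \ref{thm:general_isomorphism}(iii)).

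Finally I would assemble the map. Since $t^i=t^{\circ i}$ and $(t-b)^i=(t-b)^{\circ i}$ for $i\le p-1$, all intermediate degrees staying below $p$, multiplicativity forces $H\bigl(\sum_i x_it^i\bigr)=\sum_i\tau(x_i)(t-b)^i$, that is $H=H_{\tau,-b,1}$ in the notation \eqref{eqn:form of H_tau,-b,1 sigma=id}. Consistency of $H$ with the defining relation then yields the constraint: evaluating $H$ on $t\circ t^{p-1}=t^p\ \mathrm{mod}_r f=t+d$ and using $(t-b)^p=t^p-V_p(b)$ from \eqref{p-power formula char p} (with $V_p(b)=b^p+\delta^{p-1}(b)$ as $b\in C$) gives $t+d-V_p(b)=(t-b)+\tau(d)$, i.e. $\tau(d)=b+d-b^p-\delta^{p-1}(b)$, which is precisely \eqref{eqn:automorphisms of S_f t^p-t-a derivation}. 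Hence every $F$-automorphism of $S_f$ is an $H_{\tau,-b,1}$ satisfying the required relation, completing the characterisation.
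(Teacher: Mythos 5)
Your proof follows essentially the same route as the paper's: restrict $H$ to $D$ via the left nucleus, determine $H(t)$ by comparing the two expansions of $H(t\circ z)$, and extract \eqref{eqn:automorphisms of S_f t^p-t-a derivation} from $H(t\circ t^{p-1})=H(t)\circ H(t)^{p-1}$ using $(t-b)^p=t^p-V_p(b)$. Your reformulation of the middle step --- that $Q=H(t)-t$ centralises $D$ inside $R$ --- is a tidy repackaging of the paper's coefficient-by-coefficient elimination of $b_{p-1},\dots,b_2$, and it has the small advantage of yielding the coefficient $b_1=1$ automatically rather than by a separate comparison at $t^0$.

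One justification needs repair. The coefficient of $t^{n-1}$ in $[Q,w]$ is $n\,q_n\,\delta(w)+[q_{n-1},w]$, not $n\,q_n\,\delta(w)$, so you must restrict to $w\in C$ (where the commutator term vanishes); and then the existence of some $w\in C$ with $\delta(w)\neq 0$ is exactly the hypothesis $F\subsetneq C$ (since $F=C\cap\mathrm{Const}(\delta)$ when $\sigma=\mathrm{id}$), not merely ``$\delta\neq 0$ and $D$ is a division ring''. The distinction matters: if $\delta\vert_C=0$ the conclusion can fail --- for an inner derivation $\delta(w)=cw-wc$ the degree-one element $t-c$ centralises $D$, yet $p\nmid 1$. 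With the argument run over $w\in C$ and the appeal to $F\subsetneq C$ made explicit (which is precisely how the paper uses this hypothesis), your proof is complete and matches the paper's.
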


\begin{proof}
Suppose $f(t)$ is not right invariant, $F \subsetneq C$ and $H \in \mathrm{Aut}_F(S_f)$. By Corollary \ref{cor:auto of S_f D[t;delta] from auto of D[t;delta]} we are left to show $H$ has the form $H_{\tau,-b,1}$ for some $\tau \in \mathrm{Aut}_{F}(D)$ and $b \in C$ satisfying \eqref{eqn:automorphisms of S_f t^p-t-a derivation}.

Since $f(t)$ is not right invariant, $S_f$ is not associative and $\mathrm{Nuc}_l(S_f) = D$ by Theorem \ref{thm:Properties of S_f petit}. Any automorphism of $S_f$ must preserve the left nucleus, thus $H(D) = D$ and so $H \vert_D = \tau$ for some $\tau \in \mathrm{Aut}_{F}(D)$. Write $H(t) = \sum_{i=0}^{p-1} b_i t^i$ for some $b_i \in D$, then we have
\begin{equation} \label{eqn:automorphisms of S_f t^p-t-a derivation 1}
H(t \circ z) = H(t) \circ H(z) = \sum_{i=0}^{p-1} b_i t^i \circ \tau(z) = \sum_{i=0}^{p-1} \sum_{j=0}^i \binom{i}{j} b_i \delta^{i-j}(\tau(z))t^j,
\end{equation}  
and
\begin{equation} \label{eqn:automorphisms of S_f t^p-t-a derivation 2}
H(t \circ z) = H(zt + \delta(z)) = \tau(z) \sum_{i=0}^{p-1} b_i t^i + \tau(\delta(z)),
\end{equation} 
for all $z \in D$. Comparing the coefficients of $t^{p-2}$ in \eqref{eqn:automorphisms of S_f t^p-t-a derivation 1} and \eqref{eqn:automorphisms of S_f t^p-t-a derivation 2} we obtain
$$\tau(z) b_{p-2} = b_{p-2} \tau(z) + \binom{p-1}{p-2} b_{p-1} \delta(\tau(z)),$$
for all $z \in D$. In particular,
$$\binom{p-1}{p-2} b_{p-1} \delta(\tau(z)) = 0,$$
for all $z \in C$, therefore $b_{p-1} = 0$ since $\binom{p-1}{p-2} \neq 0$ and $\delta(\tau(z)) = \tau(\delta(z)) \neq 0$ for all $z \in C$ with $z \notin \mathrm{Const}(\delta)$. Such a $z$ exists because $F \subsetneq C$.

Similarly, looking in turn at the coefficients of $t^{p-3}, \ldots, t^2, t$ in \eqref{eqn:automorphisms of S_f t^p-t-a derivation 1} and \eqref{eqn:automorphisms of S_f t^p-t-a derivation 2} yields $b_{p-1} = b_{p-2} = \ldots = b_2 = 0$, and comparing the coefficients of $t^0$ in \eqref{eqn:automorphisms of S_f t^p-t-a derivation 1} and \eqref{eqn:automorphisms of S_f t^p-t-a derivation 2} we get
\begin{equation} \label{eqn:automorphisms of S_f t^p-t-a derivation 3}
\tau(z) b_0 + \tau(\delta(z)) = b_0 \tau(z) + b_1 \delta(\tau(z)),
\end{equation}
for all $z \in D$. In particular, this means $\tau(\delta(z)) = b_1 \tau(\delta(z))$ for all $z \in C$ since $\tau$ and $\delta$ commute. Hence $b_1 = 1$ as $F \subsetneq C$. We conclude $\tau(z) b_0 = b_0 \tau(z)$ for all $z \in D$ by \eqref{eqn:automorphisms of S_f t^p-t-a derivation 3}, thus $b_0 \in C$ and so $H(t) = t - b$ for some $b \in C$. 

We now show $H$ has the form \eqref{eqn:form of H_tau,-b,1 sigma=id}: We have
$$H(z \circ t^i) = H(z) \circ H(t)^i = \tau(z)(t-b)^i,$$
for all $i \in \{ 1, \ldots, m-1 \}$, $z \in D$, and so $H$ has the form
$H_{\tau,-b,1}$ for some $b \in C$. Moreover, with $t^p = t \circ t^{p-1}$, also
\begin{equation} \label{eqn:automorphisms of S_f t^p-t-a derivation 4}
H(t^p) = H(t + d) = t - b + \tau(d),
\end{equation}
and $H(t \circ t^{p-1}) = H(t) \circ H(t)^{p-1}$, i.e.
\begin{align} \label{eqn:automorphisms of S_f t^p-t-a derivation 5}
\begin{split}
H(t^p) &= H(t) \circ H(t)^{p-1} = (t-b)^p \ \mathrm{mod}_r \ f \\ &= t^p - V_p(b) \ \mathrm{mod}_r \ f = t + d - V_p(b).
\end{split}
\end{align}
Comparing \eqref{eqn:automorphisms of S_f t^p-t-a derivation 4} and \eqref{eqn:automorphisms of S_f t^p-t-a derivation 5} gives
\begin{equation} \label{eqn:tau(d) = d - V_p(b) + b}
\tau(d) = d - V_p(b) + b = b + d - b^p - \delta^{p-1}(b),
\end{equation}
and thus $H$ has the form $H_{\tau,-b,1}$, where $\tau \in \mathrm{Aut}_{F}(D)$ and $b \in C$ satisfy \eqref{eqn:tau(d) = d - V_p(b) + b}.
\end{proof}

When $f(t) = t^p - t - d \in R$, we see $H_{\mathrm{id}, -1,1} \in \mathrm{Aut}_{F}(S_f)$ by Theorem \ref{thm:automorphisms of S_f t^p-t-a derivation}. Additionally, a straightforward calculation shows $H_{\mathrm{id}, -1,1}^i(t) = t-i$ for all $i \in \mathbb{N}$, therefore since $\mathrm{Char}(D) = p$, we have $H_{\mathrm{id}, -1,1}^i \neq \mathrm{id}$ for all $i \in \{ 1, \ldots, p-1 \}$. Furthermore
$$\Theta : R \rightarrow R: \sum_{i=0}^{n} b_i t^i \mapsto \sum_{i=0}^{n} b_i (t-1)^i$$
is an automorphism of order $p$ \cite[p.~90]{amitsur1954non}, thus $\langle H_{\mathrm{id}, -1,1} \rangle$ is a cyclic subgroup of $\mathrm{Aut}_{F}(S_f)$ of order $p$ \cite[Lemma 9]{pumpluen2016nonassociative}.

\vspace*{4mm}
Notice the equation \eqref{eqn:automorphisms of S_f t^p-t-a derivation} in Theorem \ref{thm:automorphisms of S_f t^p-t-a derivation} is remarkably similar to \eqref{eqn:(t-b) right divides f(t) in Char p} in Proposition \ref{prop:(t-b) right divides f(t) in Char p}. Comparing them yields a connection between the automorphisms of $S_f$ and factors of certain differential polynomials:

\begin{proposition} \label{prop:automorphisms right linear division f(t)=t^p-t-a}
Let $\tau \in \mathrm{Aut}_{F}(D)$, $f(t) = t^p - t - d \in R$ and $g(t) = t^p - t - (d - \tau(d)) \in R$.
\begin{itemize}
\item[(i)] Suppose $\tau$ commutes with $\delta$. If $b \in C^{\times}$ is such that $(t-b) \vert_r g(t)$, then $H_{\tau,-b,1} \in \mathrm{Aut}_{F}(S_f)$.
\item[(ii)] Given $b \in C^{\times}$, if $(t-b) \vert_r (t^p-t)$ then $H_{\mathrm{id},-b,1} \in \mathrm{Aut}_{F}(S_f)$.
\item[(iii)] Suppose $f(t)$ is not right invariant, $\delta$ commutes with all $F$-automorphisms of $D$ and $F \subsetneq C$. Then $H_{\tau,-b,1} \in \mathrm{Aut}_{F}(S_f)$ if and only if $b \in C^{\times}$ and $(t-b) \vert_r g(t).$
In particular, if $b \in C^{\times}$ then $H_{\mathrm{id},-b,1} \in \mathrm{Aut}_{F}(S_f)$ if and only if $(t-b) \vert_r (t^p-t)$.
\item[(iv)] Suppose $f(t)$ is not right invariant, $\delta$ commutes with all $F$-automorphisms of $D$ and $F \subsetneq C$. If $g(t)$ is irreducible then $H_{\tau,-b,1} \notin \mathrm{Aut}_F(S_f)$ for all $b \in C^{\times}$.
\end{itemize}
\end{proposition}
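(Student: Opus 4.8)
The plan is to exploit the fact that the divisibility condition of Proposition~\ref{prop:(t-b) right divides f(t) in Char p} and the automorphism condition \eqref{eqn:automorphisms of S_f t^p-t-a derivation} of Theorem~\ref{thm:automorphisms of S_f t^p-t-a derivation} coincide once $b$ is central. First I would specialise Proposition~\ref{prop:(t-b) right divides f(t) in Char p} to $g(t)=t^p-t-(d-\tau(d))$ (the case $e=1$, $a_1=1$), obtaining that $(t-b)\vert_r g(t)$ is equivalent to $V_p(b)=b+d-\tau(d)$. Next I would use the remark following \eqref{p-power formula char p} that for $b\in C$ the commutator term in $V_p$ vanishes, so that $V_p(b)=b^p+\delta^{p-1}(b)$ as in \eqref{eqn:V_p(b) form when D commutative}. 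Substituting, for central $b$ the relation $(t-b)\vert_r g(t)$ reads $\tau(d)=b+d-b^p-\delta^{p-1}(b)$, which is precisely \eqref{eqn:automorphisms of S_f t^p-t-a derivation}. This single observation drives the whole proposition.

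For (i), assuming $\tau$ commutes with $\delta$ and $b\in C^\times$ satisfies $(t-b)\vert_r g(t)$, the reduction above rewrites the divisibility as \eqref{eqn:automorphisms of S_f t^p-t-a derivation}, and Theorem~\ref{thm:automorphisms of S_f t^p-t-a derivation} then yields $H_{\tau,-b,1}\in\mathrm{Aut}_F(S_f)$. Part (ii) I would obtain simply by taking $\tau=\mathrm{id}$, so that $g(t)=t^p-t$ and the compatibility $\tau\circ\delta=\delta\circ\tau$ is automatic.

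For (iii), the extra hypotheses ($f$ not right invariant, $\delta$ commuting with every $F$-automorphism of $D$, and $F\subsetneq C$) are exactly those under which the last part of Theorem~\ref{thm:automorphisms of S_f t^p-t-a derivation} asserts that every element of $\mathrm{Aut}_F(S_f)$ is of the form $H_{\tau,-b,1}$ with $b\in C$ satisfying \eqref{eqn:automorphisms of S_f t^p-t-a derivation}. Hence if $H_{\tau,-b,1}$ is an automorphism then $b\in C$ and \eqref{eqn:automorphisms of S_f t^p-t-a derivation} holds, and reversing the reduction gives $(t-b)\vert_r g(t)$; conversely $(t-b)\vert_r g(t)$ with $b\in C^\times$ gives the automorphism by (i). The final assertion of (iii) is again the case $\tau=\mathrm{id}$.

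Part (iv) then drops out of (iii): since $\deg g=p\ge 2$, an irreducible $g(t)$ can have no monic right linear factor (such a factor would give a proper factorisation), so $(t-b)\nmid_r g(t)$ for all $b\in D$, in particular for all $b\in C^\times$; by (iii) no $H_{\tau,-b,1}$ with $b\in C^\times$ lies in $\mathrm{Aut}_F(S_f)$. I expect the only delicate point to be the bookkeeping between $b\in C$ and $b\in C^\times$ together with the justified vanishing of the commutator term $*$ in $V_p(b)$: one must check that centrality of $b$ makes $V_p(b)=b^p+\delta^{p-1}(b)$ exact, and note that in the forward direction of (iii) Theorem~\ref{thm:automorphisms of S_f t^p-t-a derivation} already supplies $b\in C$, so no separate centrality argument is needed.
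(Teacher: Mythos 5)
Your proposal is correct and follows essentially the same route as the paper: both proofs identify the divisibility condition of Proposition \ref{prop:(t-b) right divides f(t) in Char p} applied to $g(t)$ (namely $V_p(b)-b-d+\tau(d)=0$) with condition \eqref{eqn:automorphisms of S_f t^p-t-a derivation} of Theorem \ref{thm:automorphisms of S_f t^p-t-a derivation}, using that $V_p(b)=b^p+\delta^{p-1}(b)$ for central $b$, and then deduce (ii)--(iv) by specialising $\tau=\mathrm{id}$, invoking the converse direction of that theorem, and noting that an irreducible $g(t)$ of degree $p\geq 2$ has no right linear divisors.
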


\begin{proof}
\begin{itemize}
\item[(i)] We have $(t-b) \vert_r g(t)$ is equivalent to $V_p(b) - b - d + \tau(d) = 0$ by Proposition \ref{prop:(t-b) right divides f(t) in Char p}. As $\tau$ commutes with $\delta$ and $b \in C^{\times}$, this yields $H_{\tau,-b,1} \in \mathrm{Aut}_{F}(S_f)$ by Theorem \ref{thm:automorphisms of S_f t^p-t-a derivation}.
\item[(ii)] follows by setting $\tau = \mathrm{id}$ in (i).
\item[(iii)] If $f(t)$ is not right invariant and $F \subsetneq C$, then $H_{\tau,-b,1} \in \mathrm{Aut}_{F}(S_f)$ if and only if $\tau$ commutes with $\delta$, $b \in C^{\times}$ and $\tau(d) = b - V_p(b) + d$ by Theorem \ref{thm:automorphisms of S_f t^p-t-a derivation}. Now $(t-b) \vert_r g(t)$ is equivalent to $V_p(b) - b - d + \tau(d) = 0$ by Proposition \ref{prop:(t-b) right divides f(t) in Char p} and the assertion follows.
\item[(iv)] If $g(t)$ is irreducible then in particular, $(t-b) \nmid_r g(t)$ for all $b \in C^{\times}$ and the assertion follows by (iii).
\end{itemize}
\end{proof}

\section{Automorphisms of \texorpdfstring{$S_f$}{S\_f}, \texorpdfstring{$f(t) \in R = D[t;\sigma]$}{f(t) in R = D[t;sigma]}} \label{section:Automorphisms of S_f when f in D[t;sigma]}

Now suppose $D$ is an associative division ring with center $C$, $\mathrm{Char}(D)$ is arbitrary, $\sigma$ is a non-trivial automorphism of $D$ and $f(t) \in R = D[t;\sigma]$. Thus $S_f$ is a nonassociative algebra over $F = C \cap \mathrm{Fix}(\sigma)$. Throughout this Section, if we assume $\sigma$ has order $\geq m-1$ we include infinite order. From Theorem \ref{thm:general_isomorphism} we obtain:

\begin{theorem} \label{thm:automorphism_of_S_f_division_case}
Let $f(t) = t^m - \sum_{i=0}^{m-1} a_i t^i \in R$.
\begin{itemize}
\item[(i)] For every $k \in C^{\times}$ such that
\begin{equation} \label{eqn:automorphism necessary id}
a_i = \Big( \prod_{l=i}^{m-1} \sigma^l(k) \Big) a_i,
\end{equation}
for all $i \in \{ 0, \ldots, m-1 \}$, the maps
\begin{equation}
H_{\mathrm{id},k}: \sum_{i=0}^{m-1} x_i t^i \mapsto x_0 + \sum_{i=1}^{m-1} x_i \big( \prod_{l=0}^{i-1} \sigma^l(k) \big) t^i,
\end{equation}
are $F$-automorphisms of $S_f$. Furthermore
$$\{ H_{\mathrm{id},k} \ \vert \ k \in K^{\times} \text{ satisfies } \eqref{eqn:automorphism necessary id} \text{ for all } i \in \{ 0, \ldots, m-1 \} \}$$
is a subgroup of $\mathrm{Aut}_{F}(S_f)$.
\item[(ii)] For every $\tau \in \mathrm{Aut}_{F}(D)$ and $k \in C^{\times}$ such that $\tau$ commutes with $\sigma$ and 
\begin{equation} \label{eqn:automorphism necessary}
\tau(a_i) = \Big( \prod_{l=i}^{m-1}\sigma^l(k) \Big) a_i,
\end{equation}
for all $i \in \{ 0, \ldots, m-1 \}$, the maps 
\begin{equation} \label{automorphism_of_Sf form of H}
H_{\tau , k} : \sum_{i=0}^{m-1} x_i t^i \mapsto \tau(x_0) + \sum_{i=1}^{m-1} \tau(x_i) \big( \prod_{l=0}^{i-1} \sigma^l(k) \big) t^i,
\end{equation}
are $F$-automorphisms of $S_f$. Moreover
\begin{align*}
\{ H_{\tau,k} \ \vert \ \tau \in \mathrm{Aut}_{F}(D), \ \tau \circ \sigma = \sigma \circ \tau, \ k \in K^{\times} \text{ satisfies } \eqref{eqn:automorphism necessary} \text{ for all } i \}
\end{align*}
is a subgroup of $\mathrm{Aut}_{F}(S_f)$.
\item[(iii)] Suppose $f(t)$ is not right invariant, $\sigma$ commutes with all $F$-automorphisms of $D$ and $\sigma \vert_C$ has order at least $m-1$. A map $H: S_f \rightarrow S_f$ is an $F$-automorphism of $S_f$ if and only if $H$ has the form $H_{\tau,k}$, where $\tau \in \mathrm{Aut}_{F}(D)$ and $k \in C^{\times}$ are such that \eqref{eqn:automorphism necessary} holds for all $i \in \{ 0, \ldots, m-1 \}$.
\end{itemize}
\end{theorem}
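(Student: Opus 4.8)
The plan is to derive the entire theorem as the special case $g(t) = f(t)$ (equivalently $b_i = a_i$) of Theorem~\ref{thm:general_isomorphism}, since an $F$-automorphism of $S_f$ is nothing but an $F$-isomorphism $S_f \to S_f$. For parts~(i) and~(ii) I would simply observe that, with $b_i = a_i$, the conditions \eqref{eqn:automorphism necessary id} and \eqref{eqn:automorphism necessary} coincide with \eqref{eqn:isomorphism necessity division case Q_id,k} and \eqref{eqn:isomorphism necessity division case}. Theorem~\ref{thm:general_isomorphism}(i),(ii) then shows at once that $Q_{\mathrm{id},k} = H_{\mathrm{id},k}$ and $Q_{\tau,k} = H_{\tau,k}$ are $F$-isomorphisms from $S_f$ to itself, i.e.\ $F$-automorphisms, whenever the respective conditions hold.

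To settle the subgroup claims in (i) and (ii), I would realise each $H_{\tau,k}$ as the automorphism of $S_f = R/Rf$ induced by the ring automorphism $\Theta_{\tau,0,k}$ of $R$ from Corollary~\ref{cor:Automorphisms of D[t;sigma]}. The computation in the proof of Theorem~\ref{thm:general_isomorphism} gives $\Theta_{\tau,0,k}(f(t)) = \big(\prod_{l=0}^{m-1}\sigma^l(k)\big)f(t)$ exactly when \eqref{eqn:automorphism necessary} holds, so the relevant $\Theta_{\tau,0,k}$ are precisely those in the stabiliser $\{\Theta \in \mathrm{Aut}_F(R) : \Theta(f) \in D^\times f\}$. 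A short check on generators yields the composition law $\Theta_{\tau,0,k}\circ \Theta_{\tau',0,k'} = \Theta_{\tau\tau',\,0,\,\tau(k')k}$; since every automorphism of $D$ fixes $C$ setwise we have $\tau(k')k \in C^\times$, and $\tau\tau'$ commutes with $\sigma$ whenever $\tau$ and $\tau'$ do, so these maps form a subgroup of $\mathrm{Aut}_F(R)$. Intersecting with the stabiliser keeps it a subgroup, and passing to the quotient $S_f$ (which is well defined and multiplicative by Theorem~\ref{thm:automorphism of skew polynomial induces automorphism of S_f}) is a group homomorphism onto the sets in (i) and (ii); the image of a subgroup is a subgroup, so the claims follow, with inverses coming for free.

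For (iii) I would invoke Theorem~\ref{thm:general_isomorphism}(iii) with $g(t)=f(t)$: the standing hypotheses there---that $f$ is not right invariant, that $\sigma$ commutes with all $F$-automorphisms of $D$, and that $\sigma\vert_C$ has order at least $m-1$---are exactly the ones assumed here. That theorem then asserts that every $F$-isomorphism $S_f \to S_f$ has the form $Q_{\tau,k}$ with $\tau \in \mathrm{Aut}_F(D)$ and $k \in C^\times$ satisfying \eqref{eqn:isomorphism necessity division case}; specialising $b_i = a_i$ turns this into \eqref{eqn:automorphism necessary} and $Q_{\tau,k}$ into $H_{\tau,k}$, which is exactly the claimed description of $\mathrm{Aut}_F(S_f)$.

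The genuine difficulty of the characterisation already resides in Theorem~\ref{thm:general_isomorphism}(iii), where one must show any isomorphism restricts to some $\tau \in \mathrm{Aut}_F(D)$ on the left nucleus $D$, force $Q(t)=kt$ with $k$ central by exploiting the order condition on $\sigma\vert_C$, and pin down the relation between the $a_i$. Granting that theorem, the only residual care needed is bookkeeping: verifying the composition law on the $t$-coefficient and confirming that centrality of $k$ and commutation with $\sigma$ persist under composition and inversion. I therefore expect no conceptual obstacle here beyond this routine verification, with the real work having been done in the already-established Theorem~\ref{thm:general_isomorphism}.
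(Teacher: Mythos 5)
Your proposal is correct and follows essentially the same route as the paper: the paper also obtains (i)--(iii) by specialising Theorem~\ref{thm:general_isomorphism} to $g(t)=f(t)$ and settles the subgroup claims via the composition law $H_{\tau,k}\circ H_{\rho,b}=H_{\tau\rho,\tau(b)k}$ and the inverse $H_{\tau^{-1},\tau^{-1}(k^{-1})}$, which is the same bookkeeping you package as the image of a stabiliser subgroup of $\mathrm{Aut}_F(R)$ under restriction to $S_f$.
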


\begin{proof}
Note that the inverse of $H_{\tau,k}$ is $H_{\tau^{-1},\tau^{-1}(k^{-1})}$ and $H_{\tau,k} \circ H_{\rho,b} = H_{\tau \rho, \tau(b)k}$. The rest of the proof is trivial using Theorem \ref{thm:general_isomorphism}.

\end{proof}

The automorphisms $H_{\tau,k}$ in Theorem \ref{thm:automorphism_of_S_f_division_case} are restrictions of automorphisms 
$$\Theta: R \rightarrow R, \ \sum_{i=0}^{n} b_i t^i \mapsto \sum_{i=0}^{n} \tau(b_i) (kt)^i,$$
by the proof of Theorem \ref{thm:general_isomorphism}. If $f(t)$ is not right invariant, $\sigma$ commutes with all $F$-automorphisms of $D$ and $\sigma \vert_C$ has order at least $m-1$, then Theorem \ref{thm:automorphism_of_S_f_division_case}(iii) shows that all $F$-automorphisms of $S_f$ are restrictions of automorphisms of $R$. Conversely, when $\sigma \vert_C$ has order $< m-1$ and $\sigma$ commutes with all $F$-automorphisms of $D$, the automorphisms $H_{\tau,k}$ are restrictions of automorphisms of $R$ and form a subgroup of $\mathrm{Aut}_F(S_f)$. Moreover we have:

\begin{proposition} \label{prop:automorphism_of_Sf_division_caseII}
Suppose $f(t) = t^m - \sum_{i=0}^{m-1} a_i t^i \in R$ is not right invariant, $\sigma$ commutes with all $F$-automorphisms of $D$ and $\sigma \vert_C$ has order $n < m-1$. Let $H \in \mathrm{Aut}_{F}(S_f)$ and $N = \mathrm{Nuc}_r(S_f)$. Then $H \vert_D = \tau$ for some $\tau \in \mathrm{Aut}_{F}(D)$, $H_N \in \mathrm{Aut}_F(N)$ and $H(t) = g(t)$ with
\begin{equation} \label{eqn:H(t) form when ord(sigma) < m-1}
g(t) = k_1t + k_{1+n}t^{1+n} +  k_{1+2n}t^{1+2n} + \ldots + k_{1+sn} t^{1+sn},
\end{equation}
for some $k_{1+ln}\in D$.
\end{proposition}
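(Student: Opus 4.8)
The plan is to treat the three assertions in turn. The first two are essentially the statements that an automorphism preserves the nuclei, and the third is the only substantive computation; it mirrors the coefficient comparison carried out in the proof of Theorem \ref{thm:general_isomorphism}(iii), adapted to the situation where $\sigma\vert_C$ has order strictly less than $m-1$.

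First I would dispose of the nucleus statements. Since $f(t)$ is not right invariant, $S_f$ is not associative by Theorem \ref{thm:Properties of S_f petit}(iii), and hence $\mathrm{Nuc}_l(S_f) = D$ and $\mathrm{Nuc}_r(S_f) = E(f) = N$ by Theorem \ref{thm:Properties of S_f petit}(i). Every $F$-automorphism preserves each of the left, middle and right nuclei, so $H(D) = D$ and $H(N) = N$; this yields at once $H\vert_D = \tau$ for some $\tau \in \mathrm{Aut}_F(D)$ and $H_N = H\vert_N \in \mathrm{Aut}_F(N)$, which is just Lemma \ref{lem:automorphism restricts to right nucleus} applied to the present setting.

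For the shape of $H(t)$, I would write $H(t) = \sum_{i=0}^{m-1} k_i t^i$ (recall $H(t) \in S_f = R_m$) and evaluate $H(t \circ_f z)$ in two ways for $z \in D$. On one hand, $H(t \circ_f z) = H(t) \circ_f \tau(z) = \sum_{i=0}^{m-1} k_i \sigma^i(\tau(z)) t^i$, where no reduction modulo $f$ occurs because each $t^i \circ_f \tau(z)$ with $i \le m-1$ has degree at most $m-1$; on the other hand, $H(t \circ_f z) = H(\sigma(z) t) = \tau(\sigma(z)) \circ_f H(t) = \sum_{i=0}^{m-1} \tau(\sigma(z)) k_i t^i$. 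Comparing the coefficient of $t^i$ and using $\sigma \circ \tau = \tau \circ \sigma$ gives
$$k_i\,\tau(\sigma^i(z)) = \tau(\sigma(z))\,k_i$$
for all $i \in \{0, \ldots, m-1\}$ and all $z \in D$.

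The decisive step is to restrict this identity to $z \in C$. Because $\sigma$ and $\tau$ both preserve the center $C$, the elements $\tau(\sigma^i(z))$ and $\tau(\sigma(z))$ are central, so the displayed equation collapses to $\tau(\sigma^i(z) - \sigma(z))\,k_i = 0$. As $D$ is a division ring and $\tau$ is injective, this forces $k_i = 0$ unless $\sigma^i\vert_C = \sigma\vert_C$, i.e. unless $\sigma^{i-1}\vert_C = \mathrm{id}$, which by $\mathrm{ord}(\sigma\vert_C) = n$ means exactly $n \mid (i-1)$. The surviving coefficients are therefore those with $i = 1 + ln$ subject to $1 + ln \le m-1$, and $H(t)$ takes precisely the form \eqref{eqn:H(t) form when ord(sigma) < m-1}. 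The computation is routine; the only point needing care is the centrality bookkeeping in the last paragraph (ensuring the two central factors genuinely commute past $k_i$) together with the degree count that keeps the products in $S_f$ equal to products in $R$, so I would not expect any real obstacle here.
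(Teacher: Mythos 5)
Your proposal is correct and follows essentially the same route as the paper: the nucleus statements are exactly Lemma \ref{lem:automorphism restricts to right nucleus}, and the shape of $H(t)$ is obtained, as in the paper, by writing $H(t)=\sum_i k_i t^i$, comparing coefficients in $H(t\circ z)=H(t)\circ H(z)=H(\sigma(z)t)$, and restricting to central $z$ to force $k_i=0$ unless $\sigma^i\vert_C=\sigma\vert_C$, i.e.\ $n\mid(i-1)$. Your write-up is in fact slightly more careful than the paper's about the centrality bookkeeping and the absence of reduction modulo $f$, but no new idea is involved.
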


\begin{proof}
Let  $H: S_f \rightarrow S_f$ be an automorphism. Then $H \vert_N \in \mathrm{Aut}_F(N)$ and $H \vert_D = \tau$ for some $\tau \in \mathrm{Aut}_{F}(D)$ by Lemma \ref{lem:automorphism restricts to right nucleus}.

Suppose $H(t) = \sum_{i=0}^{m-1} k_i t^i$ for some $k_i \in D$. Comparing the coefficients of $t$ in $H(t \circ z) = H(t) \circ H(z)= H(\sigma(z)t)$ we obtain $k_i = 0$ or $\sigma^{i}(z) = \sigma(z)$, for all $i \in\{ 0, \ldots, m-1\}$ and all $z \in C$. 
Now, since $\sigma \vert_C$ has order $n < m-1$, $\sigma^i(z) = \sigma(z)$ for all $z \in C$ if and only if $i = 1 + nl$ for some $l \in \mathbb{Z}$. Therefore $k_i = 0$ for every $i \neq 1 + nl$, $l \in \mathbb{N} \cup \{ 0 \}$, $i \in  \{ 0, \ldots, m-1 \}$ and hence $H(t)$ has the form \eqref{eqn:H(t) form when ord(sigma) < m-1} for some $s$ with $sn < m-1$.
\end{proof}

Suppose $\sigma$ commutes with all $F$-automorphisms of $D$ and $\sigma \vert_C$ has order at least $m-1$, then the automorphism groups of $S_f$ for $f(t) = t^m-a \in R$ not right invariant, are essential to understanding the automorphism groups of all the algebras $S_g$, as for all nonassociative $S_g$ with $g(t) = t^m - \sum_{i=0}^{m-1} b_i t^i \in R$ and $b_0 = a$, $\mathrm{Aut}_F(S_g)$ is a subgroup of $\mathrm{Aut}_{F}(S_f)$:

\begin{theorem} \label{thm:Aut(S_f) subgroup}
Suppose $\sigma$ commutes with all $F$-automorphisms of $D$ and $\sigma \vert_C$ has order at least $m-1$. Let $g(t) = t^m - \sum_{i=0}^{m-1} b_i t^i \in R$ not be right invariant.
\begin{itemize}
\item[(i)] If $f(t) = t^m - b_0 \in R$ is not right invariant, then $\mathrm{Aut}_{F}(S_g)$ is a subgroup of $\mathrm{Aut}_{F}(S_f)$.
\item[(ii)] If $f(t) = t^m - \sum_{i=0}^{m-1} a_i t^i \in R$ is not right invariant and $a_j \in \{ 0 , b_j \}$ for all $j \in \{ 0, \ldots , m-1 \}$, then $\mathrm{Aut}_{F}(S_g)$ is a subgroup of $\mathrm{Aut}_{F}(S_f)$.   
\end{itemize}
If additionally all automorphisms of $S_g$ are extensions of the identity, i.e. have the form $H_{\mathrm{id},k}$ for some $k \in C^{\times}$, then $\mathrm{Aut}_{F}(S_g)$ is a normal subgroup of $\mathrm{Aut}_{F}(S_f)$.
\end{theorem}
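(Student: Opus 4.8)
The plan is to use the explicit description of all automorphisms of $S_f$ and $S_g$ provided by Theorem \ref{thm:automorphism_of_S_f_division_case}(iii), together with the composition and inversion formulas recorded in its proof, and then to verify directly that conjugation preserves the defining condition for membership in $\mathrm{Aut}_{F}(S_g)$.

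First I would recall the setup. Since $f(t)$ and $g(t)$ are not right invariant, $\sigma$ commutes with all $F$-automorphisms of $D$, and $\sigma\vert_C$ has order at least $m-1$, Theorem \ref{thm:automorphism_of_S_f_division_case}(iii) shows every element of $\mathrm{Aut}_{F}(S_f)$ has the form $H_{\tau,k}$ with $\tau \in \mathrm{Aut}_{F}(D)$ commuting with $\sigma$ and $k \in C^{\times}$ satisfying $\tau(a_i) = \big(\prod_{l=i}^{m-1}\sigma^l(k)\big)a_i$ for all $i$, and similarly for $\mathrm{Aut}_{F}(S_g)$ with the $a_i$ replaced by $b_i$. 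The subgroup inclusion (parts (i) and (ii)) is then immediate: if $H_{\tau,k} \in \mathrm{Aut}_{F}(S_g)$ and $a_j \in \{0,b_j\}$, then for each $j$ either $a_j = 0$, in which case the $S_f$-condition reads $0 = 0$, or $a_j = b_j$, in which case it coincides with the $S_g$-condition; hence $H_{\tau,k} \in \mathrm{Aut}_{F}(S_f)$, and the group operations agree by the formulas $H_{\tau,k}\circ H_{\rho,b} = H_{\tau\rho,\tau(b)k}$ and $H_{\tau,k}^{-1} = H_{\tau^{-1},\tau^{-1}(k^{-1})}$ noted in the proof of Theorem \ref{thm:automorphism_of_S_f_division_case}.

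For the normality claim I would assume $\mathrm{Aut}_{F}(S_g) = \{H_{\mathrm{id},k}\}$, fix an arbitrary $H_{\mathrm{id},k} \in \mathrm{Aut}_{F}(S_g)$ and an arbitrary $H_{\tau,b} \in \mathrm{Aut}_{F}(S_f)$, and compute the conjugate using the composition formulas. This gives
\[
H_{\tau,b}\circ H_{\mathrm{id},k}\circ H_{\tau,b}^{-1} = H_{\tau,\tau(k)b}\circ H_{\tau^{-1},\tau^{-1}(b^{-1})} = H_{\mathrm{id},\,b^{-1}\tau(k)b}.
\]
Since $\tau$ fixes $F$ and maps the center onto itself, $\tau(k) \in C^{\times}$, so $b^{-1}\tau(k)b = \tau(k)$ and the conjugate equals $H_{\mathrm{id},\tau(k)}$.

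The crux, and what I expect to be the main obstacle, is showing $H_{\mathrm{id},\tau(k)} \in \mathrm{Aut}_{F}(S_g)$, i.e.\ that $b_i = \big(\prod_{l=i}^{m-1}\sigma^l(\tau(k))\big)b_i$ for all $i$. Here I would use that $\sigma$ commutes with $\tau$ to write $\prod_{l=i}^{m-1}\sigma^l(\tau(k)) = \tau\big(\prod_{l=i}^{m-1}\sigma^l(k)\big)$; setting $P_i = \prod_{l=i}^{m-1}\sigma^l(k) \in C$, membership of $H_{\mathrm{id},k}$ in $\mathrm{Aut}_{F}(S_g)$ gives $(1-P_i)b_i = 0$ for every $i$, so $P_i = 1$ whenever $b_i \neq 0$, whence $\tau(P_i) = 1$ and $(1-\tau(P_i))b_i = 0$ as well. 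Thus $H_{\mathrm{id},\tau(k)} \in \mathrm{Aut}_{F}(S_g)$, which establishes normality. The only points requiring care are that $\tau$ preserves both the center and the relation $P_i = 1$, both of which follow from $\tau$ being a center-preserving $F$-automorphism commuting with $\sigma$.
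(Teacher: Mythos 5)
Your proposal is correct and follows essentially the same route as the paper: both deduce the inclusion from the membership criterion of Theorem \ref{thm:automorphism_of_S_f_division_case}(iii) (where $a_j=0$ makes the condition vacuous and $a_j=b_j$ makes it coincide with the $S_g$-condition), and both establish normality by computing the conjugate to be $H_{\mathrm{id},\tau(k)}$ and then applying $\tau$ to the identity $\prod_{l=i}^{m-1}\sigma^l(k)=1$, valid whenever $b_i\neq 0$, using that $\sigma$ and $\tau$ commute. The only cosmetic difference is that you carry out the conjugation via the recorded composition and inversion formulas rather than by the paper's explicit element-by-element calculation.
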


\begin{proof}
\begin{itemize}
\item[(i)] Let $H \in \mathrm{Aut}_{F}(S_g)$, then $H$ has the form $H_{\tau,k}$ where $\tau \in \mathrm{Aut}_{F}(D)$ and $k \in C^{\times}$ satisfy $\tau(b_i) = \big( \prod_{l=i}^{m-1} \sigma^l(k) \big) b_i$ for all $i \in \{ 0, \ldots, m-1 \}$ by Theorem \ref{thm:automorphism_of_S_f_division_case}(iii). In particular, $\tau(b_0) = \Big( \prod_{l=0}^{m-1} \sigma^l(k) \Big) b_0,$ thus $H_{\tau,k}$ is also an automorphism of $S_f$, again by Theorem \ref{thm:automorphism_of_S_f_division_case}(iii). Therefore $\mathrm{Aut}_{F}(S_g) \leq \mathrm{Aut}_{F}(S_f)$.

Suppose additionally that all automorphisms of $S_g$ are extensions of the identity. Let $H_{\rho,k} \in \mathrm{Aut}_{F}(S_f)$, $H_{\mathrm{id},z} \in \mathrm{Aut}_{F}(S_g)$ for some $\rho \in \mathrm{Aut}_{F}(D)$ and $k, z \in C^{\times}$. The inverse of $H_{\rho,k}$ is $H_{\rho^{-1},\rho^{-1}(k^{-1})}$, furthermore
\begin{align*}
H_{\rho,k} \Big( &H_{\mathrm{id},z} \Big( H_{\rho^{-1},\rho^{-1}(k^{-1})} \Big( \sum_{i=0}^{m-1} x_i t^i \Big) \Big) \Big) \\
&= H_{\rho,k} \Big( H_{\mathrm{id},z} \Big( \rho^{-1}(x_0) + \sum_{i=1}^{m-1} \rho^{-1}(x_i) \big( \prod_{l=0}^{i-1} \sigma^l(\rho^{-1}(k^{-1})) \big) t^i \Big) \Big) \\
&= H_{\rho,k} \Big( \rho^{-1}(x_0) + \sum_{i=1}^{m-1} \rho^{-1}(x_i) \big( \prod_{l=0}^{i-1} \sigma^l(\rho^{-1}(k^{-1})) \big) \big( \prod_{l=0}^{i-1} \sigma^l(z) \big) t^i \Big) \\
&= x_0 + \sum_{i=1}^{m-1} x_i \big( \prod_{l=0}^{i-1} \rho(\sigma^l(z)) \big) t^i = x_0 + \sum_{i=1}^{m-1} x_i \big( \prod_{l=0}^{i-1} \sigma^l(\rho(z)) \big) t^i \\
&= H_{\mathrm{id}, \rho(z)} \Big( \sum_{i=0}^{m-1} x_i t^i \Big),
\end{align*}
because $\sigma$ and $\rho$ commute. Therefore if we show $H_{\mathrm{id}, \rho(z)} \in \mathrm{Aut}_{F}(S_g)$, then indeed $\mathrm{Aut}_{F}(S_g)$ is a normal subgroup of $\mathrm{Aut}_{F}(S_f)$.

As $H_{\mathrm{id},z} \in \mathrm{Aut}_{F}(S_g)$, Theorem \ref{thm:automorphism_of_S_f_division_case}(iii) implies $\prod_{l=i}^{m-1} \sigma^l(z) = 1$, for all $i \in \{0, \ldots, m-1 \}$ such that $b_i \neq 0$. Applying $\rho$ and using that $\sigma$ and $\rho$ commute, we obtain
$$\rho(1) = 1 = \rho \Big( \prod_{l=i}^{m-1} \sigma^l(z) \Big) = \prod_{l=i}^{m-1} \sigma^l(\rho(z)),$$
for all $i \in \{0, \ldots, m-1 \}$ such that $b_i \neq 0$. Thus
$$b_i = \Big( \prod_{l=i}^{m-1} \sigma^l(\rho(z)) \Big) b_i,$$
for all $i \in \{ 0,\ldots, m-1 \}$ and hence $H_{\mathrm{id}, \rho(z)} \in \mathrm{Aut}_{F}(S_g)$ by Theorem \ref{thm:automorphism_of_S_f_division_case}.
\item[(ii)] The proof is analogous to (i).
\end{itemize}
\end{proof}

\subsection{Cyclic Subgroups of \texorpdfstring{$\mathrm{Aut}_F(S_f)$}{Aut(S\_f)}}

We now give some conditions for $\mathrm{Aut}_F(S_f)$ to have cyclic subgroups of certain order. In the special case when the coefficients of $f(t)$ are contained in $F$ we obtain the following:

\begin{proposition} \label{prop:Aut(S_f) Coefficients in F 2}
Let $f(t) = t^m - \sum_{i=0}^{m-1} a_i t^i \in F[t;\sigma] \subseteq R$.
\begin{itemize}
\item[(i)] If $\sigma$ has finite order $n$, then $\langle H_{\sigma,1} \rangle \cong \mathbb{Z}/n \mathbb{Z}$ is a subgroup of $\mathrm{Aut}_{F}(S_f)$.
\item[(ii)] Suppose $D = K$ is a cyclic Galois field extension of $F$ of prime degree $m$, $\mathrm{Gal}(K/F) = \langle \sigma \rangle$, $a_0 \neq 0$ and not all $a_1, \ldots, a_{m-1}$ are zero. Then $\mathrm{Aut}_{F}(S_f) = \langle H_{\sigma,1} \rangle \cong \mathbb{Z}/m \mathbb{Z}$.
\end{itemize}
\end{proposition}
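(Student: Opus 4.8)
The plan is to treat the two parts separately, both building on Theorem \ref{thm:automorphism_of_S_f_division_case}. For part (i) I would produce $H_{\sigma,1}$ directly from Theorem \ref{thm:automorphism_of_S_f_division_case}(ii) applied with $\tau = \sigma$ and $k = 1$: since $\sigma$ commutes with itself, $\sigma \in \mathrm{Aut}_F(D)$, and every coefficient satisfies $a_i \in F \subseteq \mathrm{Fix}(\sigma)$, the defining condition $\tau(a_i) = \big(\prod_{l=i}^{m-1}\sigma^l(k)\big)a_i$ collapses to $a_i = a_i$. Hence $H_{\sigma,1} \in \mathrm{Aut}_F(S_f)$ and $\langle H_{\sigma,1}\rangle$ is a subgroup. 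To identify it with $\mathbb{Z}/n\mathbb{Z}$ I would use the composition rule $H_{\tau,k}\circ H_{\rho,b} = H_{\tau\rho,\tau(b)k}$ recorded in the proof of Theorem \ref{thm:automorphism_of_S_f_division_case}, which gives $H_{\sigma,1}^{\,j} = H_{\sigma^j,1}$ by induction. Since $H_{\sigma^j,1}$ restricts to $\sigma^j$ on $D = \mathrm{Nuc}_l(S_f)$, it is the identity exactly when $\sigma^j = \mathrm{id}$, i.e. when $n \mid j$; thus $H_{\sigma,1}$ has order exactly $n$.

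For part (ii) the first task is to verify the hypotheses of Theorem \ref{thm:automorphism_of_S_f_division_case}(iii). Here $C = K$ and $\mathrm{Fix}(\sigma) = F$, and $\mathrm{Aut}_F(K) = \mathrm{Gal}(K/F) = \langle\sigma\rangle$ is abelian, so $\sigma$ commutes with all $F$-automorphisms of $K$; moreover $\sigma|_C$ has order $m \ge m-1$. To see $f(t)$ is not right invariant I would invoke Theorem \ref{thm:Properties of S_f petit}(v): since $a_i \in F$ we have $\sigma(a_i) = a_i$ and $\sigma^m = \mathrm{id}$, so right invariance would force $a_i(z - \sigma^i(z)) = 0$ for all $z \in K$ and all $i$; choosing the index $j \in \{1,\dots,m-1\}$ with $a_j \neq 0$ (which exists by hypothesis) this fails because $\sigma^j \neq \mathrm{id}$. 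Theorem \ref{thm:automorphism_of_S_f_division_case}(iii) then says every $F$-automorphism of $S_f$ is some $H_{\tau,k}$ with $\tau \in \langle\sigma\rangle$ and $k \in K^\times$ satisfying $\tau(a_i) = \big(\prod_{l=i}^{m-1}\sigma^l(k)\big)a_i$ for all $i$.

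The crux is to show that the only admissible $k$ is $k = 1$, so that $\mathrm{Aut}_F(S_f) = \{H_{\sigma^r,1} : 0 \le r \le m-1\} = \langle H_{\sigma,1}\rangle$. Because $a_i \in F$, each condition reads $\prod_{l=i}^{m-1}\sigma^l(k) = 1$ whenever $a_i \neq 0$, independently of $\tau$. From $a_0 \neq 0$ I get $N_{K/F}(k) = \prod_{l=0}^{m-1}\sigma^l(k) = 1$, and from $a_j \neq 0$ I get $\prod_{l=j}^{m-1}\sigma^l(k) = 1$; dividing yields $\prod_{l=0}^{j-1}\sigma^l(k) = 1$. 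Writing $c_l = \sigma^l(k)$ and applying $\sigma$ to this last relation gives $c_1\cdots c_j = 1 = c_0 c_1 \cdots c_{j-1}$, and cancelling the common factors in the field $K$ forces $c_j = c_0$, i.e. $\sigma^j(k) = k$. Since $m$ is prime and $1 \le j \le m-1$ we have $\gcd(j,m) = 1$, so $\langle\sigma^j\rangle = \langle\sigma\rangle$ and $\mathrm{Fix}(\sigma^j) = F$; hence $k \in F$. Then $\prod_{l=0}^{j-1}\sigma^l(k) = k^j = 1$ together with $k^m = N_{K/F}(k) = 1$ and a Bézout relation $jx + my = 1$ gives $k = k^{jx+my} = 1$.

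Finally I would note that the pair $(\tau,k)$ is uniquely recoverable from $H_{\tau,k}$ — evaluating on constants gives $\tau$ and on $t$ gives $H_{\tau,k}(t) = kt$ — so $k = 1$ forces each automorphism to be exactly $H_{\sigma^r,1} = H_{\sigma,1}^{\,r}$, and the order computation of part (i) (with $n = m$) yields $\mathrm{Aut}_F(S_f) \cong \mathbb{Z}/m\mathbb{Z}$. The main obstacle is the Galois-theoretic step isolating $k = 1$; everything else is a direct application of the cited structure theorems. The reductions $\sigma^j(k) = k \Rightarrow k \in F$ and $k^j = k^m = 1 \Rightarrow k = 1$ are exactly where primality of $m$ is used, and I would take care that the cancellation argument is valid because $K$ is commutative.
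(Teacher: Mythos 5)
Your proposal is correct, and part (i) matches the paper's proof exactly (same appeal to Theorem \ref{thm:automorphism_of_S_f_division_case}(ii) and the composition rule $H_{\sigma^j,1}\circ H_{\sigma^l,1}=H_{\sigma^{j+l},1}$). For part (ii) you arrive at the same conclusion by a different mechanism at the decisive step. The paper also starts from $N_{K/F}(k)=1$ (forced by $a_0\neq 0$) and $\prod_{l=q}^{m-1}\sigma^l(k)=1$ (forced by some $a_q\neq 0$ with $q\geq 1$), but then invokes Hilbert's Theorem 90 to write $k=\sigma(\alpha)/\alpha$, telescopes the second product to $\alpha/\sigma^q(\alpha)=1$, and uses primality of $m$ to get $\alpha\in\mathrm{Fix}(\sigma^q)=F$, whence $k=1$. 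You instead divide the two relations to get $\prod_{l=0}^{j-1}\sigma^l(k)=1$, apply $\sigma$ and cancel in the field $K$ to deduce $\sigma^j(k)=k$, use primality to get $k\in\mathrm{Fix}(\sigma^j)=F$, and finish with $k^j=k^m=1$ and B\'ezout. Both routes use primality at the same point ($\mathrm{Fix}(\sigma^j)=F$); yours avoids Hilbert 90 entirely and is correspondingly more elementary, at the cost of the short extra computation with $k^j$ and $k^m$. You also explicitly verify via Theorem \ref{thm:Properties of S_f petit}(v) that $f(t)$ is not right invariant before invoking Theorem \ref{thm:automorphism_of_S_f_division_case}(iii) — a hypothesis the paper's proof uses silently — and this is exactly where the assumption that not all of $a_1,\ldots,a_{m-1}$ vanish enters, so that check is a worthwhile addition rather than padding.
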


\begin{proof}
\begin{itemize}
\item[(i)] Since $a_i \in F$, we have $\sigma(a_i) = a_i = \Big( \prod_{l=i}^{m-1} \sigma^l(1) \Big) a_i,$ for all $i \in \{ 0, \ldots, m-1 \}$ and so $H_{\sigma,1} \in \mathrm{Aut}_{F}(S_f)$ by Theorem \ref{thm:automorphism_of_S_f_division_case}(ii). Furthermore, $H_{\sigma^j,1} \circ H_{\sigma^l,1} = H_{\sigma^{l+j},1}$ and $H_{\sigma^n,1} = H_{\mathrm{id},1}$, hence $\langle H_{\sigma,1} \rangle = \{ H_{\mathrm{id},1}, H_{\sigma,1}, \ldots, H_{\sigma^{n-1}, 1} \} \cong \mathbb{Z}/n \mathbb{Z}$ is a cyclic subgroup of order $n$.
\item[(ii)] The automorphisms of $S_f$ are exactly the maps $H_{\sigma^j,k}$ for some $j \in \{ 0, \ldots, m-1 \}$ and $k \in K^{\times}$ such that 
\begin{equation} \label{eqn:Aut(S_f) Coefficients in F, field caseIII}
\sigma^j(a_i) = a_i = \Big( \prod_{l=i}^{m-1} \sigma^l(k) \Big) a_i,
\end{equation}
for all $i \in \{ 0, \ldots, m-1 \}$ by Theorem \ref{thm:automorphism_of_S_f_division_case}(iii). The maps $H_{\sigma^j,1}$ are therefore automorphisms of $S_f$ for all $j \in \{ 0, \ldots, m-1 \}$. We show that these are all the automorphisms of $S_f$: We have $N_{K/F}(k) = 1$ because $a_0 \neq 0$, hence by Hilbert's Theorem 90, there exists $\alpha \in K$ such that $k = \sigma(\alpha)/\alpha$. Let $q \in \{ 1, \ldots, m-1 \}$ be such that $a_q \neq 0$, then
$$1 = \prod_{l=q}^{m-1} \sigma^l(k) = \prod_{l=q}^{m-1} \sigma^l \big( \frac{\sigma(\alpha)}{\alpha} \big) = \frac{\prod_{l=q+1}^{m} \sigma^l(\alpha)}{\prod_{l=q}^{m-1} \sigma^l(\alpha)} = \frac{\alpha}{\sigma^q(\alpha)},$$
by \eqref{eqn:Aut(S_f) Coefficients in F, field caseIII}.
This means $\alpha \in \mathrm{Fix}(\sigma^q) = F$ as $m$ is prime. Therefore
$k = \sigma(\alpha)/\alpha = \alpha/\alpha = 1$
as required.
\end{itemize}
\end{proof}

If $F$ contains a primitive $n^{\text{th}}$ root of unity for some $n \geq 2$, then there exist algebras $S_f$ whose automorphism groups contain a cyclic subgroup of order $n$:

\begin{theorem} \label{thm:Primitive root then subgroup of order m}
Let $f(t) = t^m - a \in R$. If $n \vert m$ and $F$ contains a primitive $n^{\text{th}}$ root of unity $\omega$, then $\mathrm{Aut}_{F}(S_f)$ contains a cyclic subgroup of order $n$ generated by $H_{\mathrm{id},\omega}$.
\end{theorem}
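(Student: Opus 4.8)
The plan is to apply Theorem \ref{thm:automorphism_of_S_f_division_case}(i) directly: it suffices to check that $k = \omega$ satisfies the defining condition \eqref{eqn:automorphism necessary id}, which certifies that $H_{\mathrm{id},\omega}$ is an automorphism, and then to compute the order of this automorphism using the composition law for the maps $H_{\mathrm{id},k}$.

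First I would verify that $H_{\mathrm{id},\omega}$ is an $F$-automorphism. Since $f(t) = t^m - a$ has coefficients $a_0 = a$ and $a_i = 0$ for $i \geq 1$, condition \eqref{eqn:automorphism necessary id} is vacuous for $i \geq 1$ and reduces, for $i = 0$, to $a = \big( \prod_{l=0}^{m-1} \sigma^l(\omega) \big) a$. As $\omega \in F = C \cap \mathrm{Fix}(\sigma)$, every $\sigma^l(\omega) = \omega$, so $\prod_{l=0}^{m-1} \sigma^l(\omega) = \omega^m$; and because $\omega$ is a primitive $n^{\text{th}}$ root of unity with $n \mid m$, we get $\omega^m = 1$. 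Hence the condition holds and $H_{\mathrm{id},\omega} \in \mathrm{Aut}_F(S_f)$ by Theorem \ref{thm:automorphism_of_S_f_division_case}(i).

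Next I would identify the powers of $H_{\mathrm{id},\omega}$. Using the composition rule $H_{\tau,k} \circ H_{\rho,b} = H_{\tau \rho, \tau(b)k}$ recorded in the proof of Theorem \ref{thm:automorphism_of_S_f_division_case}, specialised to $\tau = \rho = \mathrm{id}$, one obtains $H_{\mathrm{id},k} \circ H_{\mathrm{id},b} = H_{\mathrm{id},kb}$, and hence by induction $H_{\mathrm{id},\omega}^j = H_{\mathrm{id},\omega^j}$ for all $j$. These are genuine automorphisms automatically, being powers of one, which sidesteps re-checking the hypothesis for each $\omega^j$.

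Finally I would determine the order. Inspecting the defining formula for $H_{\mathrm{id},k}$ at $i = 1$ (recall $m = \deg f \geq 2$ throughout), the map equals $\mathrm{id}$ precisely when the $i=1$ factor $\prod_{l=0}^{0} \sigma^l(k) = k$ equals $1$; conversely $k = 1$ clearly yields the identity. Thus $H_{\mathrm{id},\omega}^j = H_{\mathrm{id},\omega^j} = \mathrm{id}$ if and only if $\omega^j = 1$, i.e. if and only if $n \mid j$, since $\omega$ is a primitive $n^{\text{th}}$ root of unity. Therefore $H_{\mathrm{id},\omega}$ has order exactly $n$ and $\langle H_{\mathrm{id},\omega} \rangle \cong \mathbb{Z}/n\mathbb{Z}$ is a cyclic subgroup of $\mathrm{Aut}_F(S_f)$ of order $n$. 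I expect no real obstacle: the argument is a verification of the hypotheses of Theorem \ref{thm:automorphism_of_S_f_division_case}(i) together with a short order computation, the only mild point being the observation that the $i=1$ coefficient already forces $k = 1$ for the identity, so that the order of $H_{\mathrm{id},\omega}$ coincides with the multiplicative order of $\omega$.
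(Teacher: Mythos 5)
Your proof is correct and follows essentially the same route as the paper: verify condition \eqref{eqn:automorphism necessary id} for $k=\omega$ using $\omega^m=1$, apply Theorem \ref{thm:automorphism_of_S_f_division_case}(i), and compute $H_{\mathrm{id},\omega}^j = H_{\mathrm{id},\omega^j}$ to conclude the subgroup is cyclic of order $n$. Your observation that the $t$-coefficient forces $H_{\mathrm{id},k}=\mathrm{id}$ only when $k=1$ is a slightly more explicit justification of the order than the paper gives, but the argument is the same.
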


\begin{proof}
We have
$\omega \sigma(\omega) \cdots \sigma^{m-1}(\omega) = \omega^{m} = 1$
and thus $H_{\mathrm{id},\omega} \in \mathrm{Aut}_{F}(S_f)$ by Theorem \ref{thm:automorphism_of_S_f_division_case}(i). Notice $H_{\mathrm{id},\omega^j}$ is not the identity automorphism for all $j \in \{ 1, \ldots, n-1 \}$ and that $H_{\mathrm{id}, \omega^n} = H_{\mathrm{id},1}$ is the identity. Furthermore
\begin{align*}
H_{\mathrm{id},\omega^j} \Big( H_{\mathrm{id},\omega^l} \Big( \sum_{i=0}^{m-1} x_i t^i \Big) \Big) &= H_{\mathrm{id},\omega^j} \Big( \sum_{i=0}^{m-1} x_i \omega^{li} t^i \Big) = \sum_{i=0}^{m-1} x_i \omega^{li} \omega^{ji} t^i \\ &= H_{\mathrm{id},\omega^{j+l}} \Big( \sum_{i=0}^{m-1} x_i t^i \Big),
\end{align*}
thus $H_{\mathrm{id},\omega^j} \circ H_{\mathrm{id},\omega^l} = H_{\mathrm{id},\omega^{j+l}}$ for all $j, l \in \{ 0, \ldots, n-1 \}$. This implies $\langle H_{\mathrm{id},\omega} \rangle = \{ H_{\mathrm{id},1}, H_{\mathrm{id},\omega}, \ldots, H_{\mathrm{id},\omega^{n-1}} \}$ is a cyclic subgroup.
\end{proof}

\begin{proposition} \label{prop:primitive root of unity t^ml - sum a_im t^mi}
Suppose $F$ contains a primitive $n^{\text{th}}$ root of unity $\omega$ and let $f(t) = t^{nm} - \sum_{i=0}^{m-1} a_{in} t^{in} \in R$. Then $\mathrm{Aut}_{F}(S_f)$ contains a cyclic subgroup of order $n$ generated by $H_{\mathrm{id},\omega}$. 
\end{proposition}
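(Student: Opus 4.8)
The plan is to realise $H_{\mathrm{id},\omega}$ as the restriction of an automorphism of $R$ via Theorem~\ref{thm:automorphism_of_S_f_division_case}(i), exactly as in the proof of Theorem~\ref{thm:Primitive root then subgroup of order m}; the only new feature here is that $f(t)$ carries several possibly nonzero lower coefficients, all sitting in degrees divisible by $n$. First I would rewrite $f(t) = t^{nm} - \sum_{j=0}^{nm-1} c_j t^j$, where $c_j = a_{in}$ if $j = in$ for some $i \in \{ 0, \ldots, m-1 \}$ and $c_j = 0$ otherwise. This puts $f(t)$ into the standard shape required by Theorem~\ref{thm:automorphism_of_S_f_division_case}(i) in the degree-$nm$ setting: the map $H_{\mathrm{id},k}$ is an $F$-automorphism of $S_f$ as soon as $c_j = \big( \prod_{l=j}^{nm-1} \sigma^l(k) \big) c_j$ holds for every $j$.

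Next I would verify this condition for $k = \omega$. For the indices $j$ with $c_j = 0$ there is nothing to check, so the only genuine constraints arise at $j = in$, $i \in \{ 0, \ldots, m-1 \}$. The crucial point is that $\omega \in F = C \cap \mathrm{Fix}(\sigma)$, so $\sigma^l(\omega) = \omega$ for all $l$, and the product $\prod_{l=in}^{nm-1} \sigma^l(\omega)$ has exactly $nm - in = n(m-i)$ factors, each equal to $\omega$. Hence
$$\prod_{l=in}^{nm-1} \sigma^l(\omega) = \omega^{n(m-i)} = (\omega^n)^{m-i} = 1,$$
using $\omega^n = 1$. This yields $c_{in} = 1 \cdot c_{in}$ for all $i$, so $H_{\mathrm{id},\omega} \in \mathrm{Aut}_{F}(S_f)$. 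Repeating the computation with $\omega$ replaced by $\omega^j$, and again using $\omega^j \in F$ together with $(\omega^j)^n = 1$, shows $H_{\mathrm{id},\omega^j} \in \mathrm{Aut}_{F}(S_f)$ for every $j$.

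Finally I would check that $\langle H_{\mathrm{id},\omega} \rangle$ is cyclic of order exactly $n$, following Theorem~\ref{thm:Primitive root then subgroup of order m} essentially verbatim. Since each $\omega^j$ lies in $F$, the map $H_{\mathrm{id},\omega^j}$ sends $\sum_i x_i t^i$ to $\sum_i x_i \omega^{ji} t^i$, from which the composition law $H_{\mathrm{id},\omega^j} \circ H_{\mathrm{id},\omega^l} = H_{\mathrm{id},\omega^{j+l}}$ follows by a one-line calculation. As $\deg f = nm \geq 2$, the element $t$ belongs to $S_f$ and $H_{\mathrm{id},\omega^j}(t) = \omega^j t$, so $H_{\mathrm{id},\omega^j} = \mathrm{id}$ if and only if $\omega^j = 1$, i.e.\ if and only if $n \mid j$. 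Therefore $\langle H_{\mathrm{id},\omega} \rangle = \{ H_{\mathrm{id},1}, H_{\mathrm{id},\omega}, \ldots, H_{\mathrm{id},\omega^{n-1}} \}$ is a cyclic subgroup of order $n$.

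I do not expect a real obstacle, since the argument is a direct application of Theorem~\ref{thm:automorphism_of_S_f_division_case}(i). The one point demanding care is the bookkeeping of the gaps in $f(t)$: one must confirm that the automorphism condition \eqref{eqn:automorphism necessary id} is forced only at the degrees $j = in$, and that at each such degree the number of factors $nm - in$ is a multiple of $n$ — which is exactly what makes $\omega^{n(m-i)} = 1$ and drives the whole proof.
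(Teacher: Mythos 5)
Your proposal is correct and follows essentially the same route as the paper: it verifies condition \eqref{eqn:automorphism necessary id} for $k=\omega$ by counting the $n(m-i)$ factors $\sigma^l(\omega)=\omega$ at each index $in$, so that $\prod_{l=in}^{nm-1}\sigma^l(\omega)=\omega^{n(m-i)}=1$, and then transfers the cyclic-subgroup argument from Theorem~\ref{thm:Primitive root then subgroup of order m}. The extra detail you supply (that $H_{\mathrm{id},\omega^j}(t)=\omega^j t$ forces $H_{\mathrm{id},\omega^j}=\mathrm{id}$ exactly when $n\mid j$) is precisely what the paper leaves implicit in its appeal to that earlier theorem.
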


\begin{proof}
We have
$$\Big( \prod_{l=in}^{nm-1} \sigma^l(\omega) \Big) a_{in} = \omega^{mn-in} a_{in} = a_{in},$$
for all $i \in \{ 0, \ldots, m-1 \}$ which implies $H_{\mathrm{id},\omega} \in \mathrm{Aut}_{F}(S_f)$ by Theorem \ref{thm:automorphism_of_S_f_division_case}(i). The rest of the proof is similar to Theorem \ref{thm:Primitive root then subgroup of order m}.
\end{proof}

$F$ contains a primitive $2^{\text{nd}}$ root of unity whenever $\mathrm{Char}(F) \neq 2$, namely $-1$. Therefore setting $n = 2$ in Proposition \ref{prop:primitive root of unity t^ml - sum a_im t^mi} yields:

\begin{corollary}
If $\mathrm{Char}(F) \neq 2$ and $f(t) = t^{2m} - \sum_{i=0}^{m-1} a_{2i} t^{2i} \in R$, then $\{ H_{\mathrm{id},1}, H_{\mathrm{id},-1} \}$ is a subgroup of $\mathrm{Aut}_{F}(S_f)$ of order $2$.
\end{corollary}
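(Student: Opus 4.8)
The plan is to deduce this directly from Proposition \ref{prop:primitive root of unity t^ml - sum a_im t^mi} by specialising to $n = 2$. First I would verify that $-1$ is a primitive $2^{\text{nd}}$ root of unity in $F$: since $\mathrm{Char}(F) \neq 2$ we have $-1 \neq 1$ in $F$, while $(-1)^2 = 1$, so the multiplicative order of $-1$ is exactly $2$, and hence $-1 \in F$ qualifies as the primitive root of unity $\omega$ required by the Proposition. This is the only place where the characteristic hypothesis is genuinely needed.

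Next I would match the shape of the polynomial. Setting $n = 2$, the general polynomial $t^{nm} - \sum_{i=0}^{m-1} a_{in} t^{in}$ appearing in Proposition \ref{prop:primitive root of unity t^ml - sum a_im t^mi} becomes precisely $t^{2m} - \sum_{i=0}^{m-1} a_{2i} t^{2i}$, which is exactly the $f(t)$ in the statement. Thus the hypotheses of the Proposition are met with $\omega = -1$, and applying it yields that $\mathrm{Aut}_{F}(S_f)$ contains a cyclic subgroup of order $2$ generated by $H_{\mathrm{id},-1}$.

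Finally I would unwind the explicit description of this subgroup as recorded in the proofs of Proposition \ref{prop:primitive root of unity t^ml - sum a_im t^mi} and Theorem \ref{thm:Primitive root then subgroup of order m}: the cyclic group $\langle H_{\mathrm{id},-1} \rangle$ consists of the two distinct elements $H_{\mathrm{id},(-1)^0} = H_{\mathrm{id},1}$ and $H_{\mathrm{id},(-1)^1} = H_{\mathrm{id},-1}$, where $H_{\mathrm{id},1}$ is the identity automorphism. Hence $\{ H_{\mathrm{id},1}, H_{\mathrm{id},-1} \}$ is a subgroup of $\mathrm{Aut}_{F}(S_f)$ of order $2$, as claimed. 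Since the result is an immediate corollary of an already-established Proposition, there is no substantive obstacle; the only point deserving care is confirming that $-1$ has order exactly $2$ rather than collapsing to the identity, which is precisely the role of the assumption $\mathrm{Char}(F) \neq 2$.
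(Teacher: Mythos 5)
Your argument is correct and is exactly the paper's: the corollary is obtained by setting $n=2$ in Proposition \ref{prop:primitive root of unity t^ml - sum a_im t^mi}, using that $-1$ is a primitive $2^{\text{nd}}$ root of unity in $F$ precisely because $\mathrm{Char}(F) \neq 2$. Your additional remark identifying the two elements of $\langle H_{\mathrm{id},-1}\rangle$ just makes explicit what the Proposition already gives.
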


\subsection{Inner Automorphisms}

In this subsection we consider the case where $\sigma \vert_C$ has finite order $m$, and look at the inner automorphisms of $S_f$.

\begin{proposition} \label{prop:automorphisms sigma vertC order m}
Suppose $\sigma \vert_C$ has finite order $m$ and $f(t) = t^m - a \in R$. Then the maps
$$G_c: S_f \rightarrow S_f, \ \sum_{i=0}^{m-1} x_i t^i \mapsto \sum_{i=0}^{m-1} x_i c^{-1} \sigma^i(c) t^i,$$
are inner automorphisms for all $c \in C^{\times}$. Furthermore, $\{ G_c \ \vert \ c \in C^{\times} \}$ is a non-trivial subgroup of $\mathrm{Aut}_F(S_f)$.
\end{proposition}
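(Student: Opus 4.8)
The plan is to realise each $G_c$ as one of the inner automorphisms already produced by Corollary~\ref{cor:inner automorphisms of S_f, D[t;sigma,delta]}, so the heart of the matter is to show that every $c \in C^{\times}$ is an invertible element of $\mathrm{Nuc}(S_f)$ and then to match the map $x \mapsto (c^{-1} \circ x) \circ c$ with the explicit formula in the statement. Since $c^{-1} \in C^{\times}$ as well, invertibility inside $\mathrm{Nuc}(S_f)$ comes for free once nucleus membership is established.

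First I would verify that $c \in \mathrm{Nuc}(S_f)$. As $c \in C \subseteq D$ and $D$ lies in the left and middle nuclei (by Theorem~\ref{thm:Properties of S_f petit}(i) when $S_f$ is not associative, and trivially otherwise), it remains to check $c \in \mathrm{Nuc}_r(S_f) = E(f)$. This is exactly where the hypothesis that $\sigma\vert_C$ has order $m$ is used: because $\sigma^m(c) = c$ and $c$ is central, I compute $f(t)c = t^m c - ac = \sigma^m(c)t^m - ca = c t^m - ca = c\,f(t) \in Rf$, so indeed $c \in E(f)$. Hence $c$ is an invertible element of $\mathrm{Nuc}(S_f)$, and Corollary~\ref{cor:inner automorphisms of S_f, D[t;sigma,delta]} immediately gives that $G_c(x) = (c^{-1} \circ x) \circ c$ is an inner $F$-automorphism of $S_f$. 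Evaluating on a basis monomial, $c^{-1} \circ (x_i t^i) = c^{-1} x_i t^i$ and then $(c^{-1} x_i t^i) \circ c = c^{-1} x_i t^i c = c^{-1} x_i \sigma^i(c) t^i = x_i c^{-1}\sigma^i(c) t^i$, using $\delta = 0$, the relation $t^i c = \sigma^i(c) t^i$, and the centrality of $c^{-1}$ and $\sigma^i(c)$; summing over $i$ recovers the stated formula.

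For the group structure I would invoke Proposition~\ref{prop:Inner automorphisms nucleus}, which already says the maps $G_c$ for invertible $c \in \mathrm{Nuc}(S_f)$ form a subgroup of $\mathrm{Aut}_F(S_f)$ under composition with $G_d \circ G_c = G_{cd}$. The subset indexed by $C^{\times}$ is therefore closed under composition and inverses (since $cd, c^{-1} \in C^{\times}$) and contains $G_1 = \mathrm{id}$, so $\{ G_c \ \vert \ c \in C^{\times} \}$ is a subgroup. Non-triviality follows because $\sigma\vert_C$ has order $m \geq 2$ (recall $\deg f = m \geq 2$), hence $\sigma\vert_C \neq \mathrm{id}$, so there is some $c \in C$ with $\sigma(c) \neq c$; for this $c$ the formula gives $G_c(t) = c^{-1}\sigma(c)\,t \neq t$. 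The computations are routine; the only point requiring care is the right-nucleus membership, and in particular presenting it so that it covers the associative and non-associative cases uniformly, which I expect to be the main thing to state cleanly.
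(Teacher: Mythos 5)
Your proof is correct, but it takes a genuinely different route from the paper. The paper first identifies $G_c$ with the map $H_{\mathrm{id},k}$ for $k = c^{-1}\sigma(c)$, checks the telescoping identity $\prod_{l=0}^{m-1}\sigma^l(c^{-1}\sigma(c)) = c^{-1}\sigma^m(c) = 1$, and then invokes Theorem \ref{thm:automorphism_of_S_f_division_case}(i) to conclude $G_c \in \mathrm{Aut}_F(S_f)$; only afterwards does it observe that $G_c(x) = (c^{-1}x)c$, so that $G_c$ is inner, and it proves the subgroup property by the direct computation $G_c \circ G_d = G_{cd}$. You instead go straight for nucleus membership: the computation $f(t)c = \sigma^m(c)t^m - ac = c\,f(t) \in Rf$ puts $c$ into $E(f) = \mathrm{Nuc}_r(S_f)$, and combined with $D \subseteq \mathrm{Nuc}_l(S_f) \cap \mathrm{Nuc}_m(S_f)$ this lets you cite Corollary \ref{cor:inner automorphisms of S_f, D[t;sigma,delta]} and Proposition \ref{prop:Inner automorphisms nucleus} wholesale, getting both ``inner'' and the subgroup structure for free. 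Your route is more self-contained — it bypasses the isomorphism machinery behind Theorem \ref{thm:automorphism_of_S_f_division_case} entirely and makes transparent that the order-$m$ hypothesis is used precisely to place $C$ in the right nucleus — while the paper's route has the advantage of explicitly exhibiting $G_c$ as $H_{\mathrm{id},c^{-1}\sigma(c)}$, an identification that is reused repeatedly later (e.g.\ in Theorem \ref{thm:t^m-a_automorphism_field} and Proposition \ref{prop:inner automorphisms of S_f over finite fields}, where it feeds into Hilbert 90 arguments). Your treatment of non-triviality ($\sigma\vert_C$ has order $m \geq 2$, so some $c \in C$ has $\sigma(c) \neq c$ and $G_c(t) \neq t$) matches the paper's. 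Everything checks out.
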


\begin{proof}
Let $c \in C^{\times}$, then
$\prod_{l=0}^{i-1} \sigma^l \big( c^{-1}\sigma(c) \big) = c^{-1} \sigma^i(c)$
for all $i \geq 1$, thus $G_c = H_{\mathrm{id},k}$ where $k = c^{-1} \sigma(c)$. Moreover, we have
$$\prod_{l=0}^{m-1} \sigma^l(c^{-1}\sigma(c)) = c^{-1} \sigma^m(c) = c^{-1}c = 1,$$
and hence $G_c = H_{\mathrm{id}, k} \in \mathrm{Aut}_F(S_f)$ by Theorem \ref{thm:automorphism_of_S_f_division_case}(i). A simple calculation shows
$G_c \Big( \sum_{i=0}^{m-1} x_i t^i \Big) = \Big( c^{-1} \sum_{i=0}^{m-1} x_i t^i \Big) c,$
and so $G_c$ are inner automorphisms for all $c \in C^{\times}$.

A straightforward calculation shows $G_c \circ G_d = G_{cd}$ for all $c,d \in C^{\times}$, therefore $\{ G_c \ \vert \ c \in C^{\times} \}$ is closed under composition. Additionally, we have $G_1 = \mathrm{id}_{S_f}$ and $G_c \circ G_{c^{-1}} = G_1$, hence $\{ G_c \ \vert \ c \in C^{\times} \}$ forms a subgroup of $\mathrm{Aut}_F(S_f)$.
 Finally, $G_c$ is not the identity for all $c \in C \setminus F$ which yields the assertion.
\end{proof}

\begin{example}
We use the same set-up as in Hanke \cite[p.~200]{hanke2005twisted}: Let $K = \mathbb{Q}(\alpha)$ where $\alpha$ is a root of $x^3 + x^2 - 2x - 1 \in \mathbb{Q}[x]$. Then $K / \mathbb{Q}$ is a cyclic Galois field extension of degree $3$, its Galois group is generated by $\tilde{\sigma}: \alpha \mapsto \alpha^2 - \alpha + 1$. Let also $L = K(\beta)$ where $\beta$ is a root of $x^3 + (\alpha-2)x^2 - (\alpha+1)x + 1 \in K[x]$. Then $L/K$ is a cyclic Galois field extension of degree $3$ and there is $\tau \in \mathrm{Gal}(L/K)$ with $\tau(\beta) = \beta^2 + (\alpha-2)\beta - \alpha$. Define $\pi = \alpha^2 + 2\alpha - 1 \in K$, then $D = (L/K,\tau,2\pi)$ is an associative cyclic division algebra of degree $3$ and $\tilde{\sigma}$ extends to an automorphism $\sigma$ of $D$.

Suppose $f(t) = t^3 - a \in D[t;\sigma]$ and note that $\mathrm{Cent}(D) = K$ and $\sigma \vert_K = \tilde{\sigma}$ has order $3$. Therefore $\{ G_c \ \vert \ c \in K^{\times} \}$ is a non-trivial subgroup of $\mathrm{Aut}_F(S_f)$ consisting of inner automorphisms by Proposition \ref{prop:automorphisms sigma vertC order m}.
\end{example}

\begin{corollary} \label{cor:G_c subgroup order j cyclic}
Suppose $\sigma \vert_C$ has finite order $m$ and $f(t) = t^m - a \in R$. Let $c \in C^{\times}$ and suppose there exists $j \in \mathbb{N}$ such that $c^j \in F$. Let $j$ be minimal. Then $\langle G_c \rangle \cong \mathbb{Z}/j \mathbb{Z}$ is a cyclic subgroup of $\mathrm{Aut}_{F}(S_f)$ consisting of inner automorphisms. 
\end{corollary}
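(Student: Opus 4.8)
The plan is to combine the preceding Proposition \ref{prop:automorphisms sigma vertC order m} with an analysis of the order of the map $G_c$. Since that proposition already establishes that $\{ G_c \ \vert \ c \in C^{\times} \}$ is a subgroup of $\mathrm{Aut}_F(S_f)$ and that $G_c \circ G_d = G_{cd}$, the subset $\langle G_c \rangle$ generated by a single $G_c$ is automatically a cyclic subgroup. The entire content of the statement therefore reduces to computing the order of $G_c$, i.e.\ determining the smallest positive integer $j$ with $G_c^j = \mathrm{id}$, and identifying it with the minimal $j$ such that $c^j \in F$.

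First I would record that $G_c^j = G_{c^j}$ for all $j \in \mathbb{N}$, which is immediate from the homomorphism property $G_c \circ G_d = G_{cd}$ proved in Proposition \ref{prop:automorphisms sigma vertC order m} by a routine induction. Next I would determine exactly when $G_{c^j}$ is the identity map. From the explicit formula $G_{c^j}(\sum_i x_i t^i) = \sum_i x_i (c^j)^{-1} \sigma^i(c^j) t^i$, the map $G_{c^j}$ equals the identity precisely when $(c^j)^{-1}\sigma^i(c^j) = 1$ for all $i \in \{0,\ldots,m-1\}$, equivalently $\sigma^i(c^j) = c^j$ for all such $i$, equivalently $c^j \in \mathrm{Fix}(\sigma) \cap C = F$ (using that $c^j \in C$ already, so fixing $c^j$ under $\sigma$ places it in $F$). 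Thus $G_c^j = \mathrm{id}$ if and only if $c^j \in F$.

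It then follows that the order of $G_c$ is the minimal $j$ with $c^j \in F$, which is exactly the hypothesised $j$; consequently $\langle G_c \rangle = \{ \mathrm{id}, G_c, \ldots, G_c^{j-1} \}$ is a cyclic subgroup of order $j$, isomorphic to $\mathbb{Z}/j\mathbb{Z}$. Each element is an inner automorphism because every $G_{c^l}$ is inner by Proposition \ref{prop:automorphisms sigma vertC order m} (all powers $c^l$ lie in $C^\times$).

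The one point requiring a little care — the only mild obstacle — is the equivalence $\sigma^i(c^j)=c^j$ for all $i\in\{0,\ldots,m-1\}$ $\iff$ $c^j\in F$. The forward direction needs the order-$m$ hypothesis on $\sigma\vert_C$: since $c^j \in C$ and the powers $\sigma^0,\ldots,\sigma^{m-1}$ restricted to $C$ are exactly the distinct automorphisms in $\langle \sigma\vert_C\rangle$, being fixed by all of them (in particular by $\sigma$) forces $c^j \in \mathrm{Fix}(\sigma) \cap C = F$. The reverse direction is trivial since $F \subseteq \mathrm{Fix}(\sigma)$. I expect this to be entirely straightforward given the groundwork already laid, so the proof should be short.
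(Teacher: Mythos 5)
Your proposal is correct and follows essentially the same route as the paper: both reduce the statement to the identity $G_c^j = G_{c^j}$ and the observation that $G_{c^j} = \mathrm{id}$ if and only if $c^j \in F$, with minimality of $j$ giving the order of the cyclic subgroup. Your write-up merely fills in the detail (via the explicit formula for $G_{c^j}$) that the paper asserts without proof, so there is nothing to add.
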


\begin{proof}
The maps $G_c, G_{c^2}, \ldots$ are all automorphisms of $S_f$ by Proposition \ref{prop:automorphisms sigma vertC order m}, furthermore a straightforward calculation shows $G_{c^i} \circ G_{c^l} = G_{c^{i+l}}$ for all $i, l \in \mathbb{N}$. Notice $G_c^j = G_{c^j}$ is the identity automorphism if and only if $c^j \in F$, then by the minimality of $j$ we conclude
$\langle G_c \rangle = \{ G_c, G_{c^2}, \ldots, G_{c^{j-1}}, \mathrm{id} \}$ is a cyclic subgroup.
\end{proof}

If $D$ is finite-dimensional over $C$, then since $\sigma \vert_C$ has finite order $m$, $\sigma$ has inner order $m$ by the Skolem-Noether Theorem. That is $\sigma^m$ is an inner automorphism $I_u: x \mapsto u^{-1}xu$ for some $u \in D^{\times}$, where we can choose $u \in D^{\times}$ such that $\sigma(u) = u$ \cite[Theorem 1.1.22]{jacobson1996finite}. Given $f(t) = \sum_{j=0}^n a_j u^{n-j} t^{jm} \in D[t;\sigma]$ such that $a_n =1$ and $a_j \in C$, then $f(t)$ is right semi-invariant by Theorem \ref{thm:right semi invariant conditions}. Therefore as a direct consequence of Corollary \ref{cor:inner automorphisms of S_f, D[t;sigma,delta]}, we obtain:

\begin{corollary}
Suppose $\sigma$ and $m$ are as above, and let $f(t) = \sum_{j=0}^n a_j u^{n-j} t^{jm} \in R$ where $a_n =1$ and $a_j \in C$. Then the maps 
$$G_c : S_f \rightarrow S_f, \ \sum_{i=0}^{mn-1} x_i t^i \mapsto \sum_{i=0}^{mn-1} c^{-1} x_i \sigma^i(c) t^i,$$
are inner automorphisms for all $c \in D^{\times}$.
\end{corollary}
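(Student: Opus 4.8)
The plan is to recognise that this corollary is essentially a direct application of Corollary \ref{cor:inner automorphisms of S_f, D[t;sigma,delta]}, the main work being to verify that the abstract inner automorphism $G_c(x) = (c^{-1} \circ x) \circ c$ takes the explicit coordinate form asserted.

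First I would record that, since $f(t) = \sum_{j=0}^n a_j u^{n-j} t^{jm}$ has $a_n = 1$, $a_j \in C$, and $\sigma^m = I_u$ with $\sigma(u) = u$, Theorem \ref{thm:right semi invariant conditions}(iv) applies and shows $D \subseteq \mathrm{Nuc}_r(S_f)$; equivalently, $f(t)$ is right semi-invariant by Theorem \ref{thm:semi-invariant iff D contained in E(f)}. Consequently either $S_f$ is associative or $\mathrm{Nuc}(S_f) = D$, and in both cases $D \subseteq \mathrm{Nuc}(S_f)$. As $D$ is a division ring, every $c \in D^{\times}$ is an invertible element of $\mathrm{Nuc}(S_f)$ with inverse $c^{-1} \in D$, so Corollary \ref{cor:inner automorphisms of S_f, D[t;sigma,delta]} immediately yields that each map $G_c \colon x \mapsto (c^{-1} \circ x) \circ c$ is an inner $F$-automorphism of $S_f$.

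It then remains to compute $G_c$ explicitly. Here I would use two elementary facts about the multiplication $\circ$ on low-degree elements: for $d \in D$ we have $d \circ (x_i t^i) = d x_i t^i$, and $(x_i t^i) \circ c = x_i t^i c = x_i \sigma^i(c) t^i$ since $\delta = 0$. Because each monomial $t^i$ with $0 \le i \le mn-1$ has degree strictly below $\deg f = mn$, neither product triggers a reduction modulo $f$, so these are genuine equalities in $S_f$. Applying them in turn to $\sum_{i=0}^{mn-1} x_i t^i$ gives $c^{-1} \circ \big( \sum_i x_i t^i \big) = \sum_i c^{-1} x_i t^i$ and then $\big( \sum_i c^{-1} x_i t^i \big) \circ c = \sum_i c^{-1} x_i \sigma^i(c) t^i$, which is exactly the claimed formula for $G_c$.

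There is no serious obstacle: the only points requiring care are confirming that $D$ lies in the nucleus, so that $c^{-1}$ acts as a genuine two-sided inverse inside $\mathrm{Nuc}(S_f)$ and the earlier corollary applies, and checking that the degree bound $i \le mn-1$ prevents any reduction modulo $f$ when evaluating the two one-sided products. Once these are in place the result is immediate.
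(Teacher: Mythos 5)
Your proof is correct and follows the same route as the paper: the result is stated there as a direct consequence of Theorem \ref{thm:right semi invariant conditions}(iv) (giving that $f(t)$ is right semi-invariant, hence $D \subseteq \mathrm{Nuc}_r(S_f)$) combined with Corollary \ref{cor:inner automorphisms of S_f, D[t;sigma,delta]}. Your additional coordinate computation of $(c^{-1}\circ x)\circ c$, using that no reduction modulo $f$ occurs for degrees below $mn$, correctly fills in the detail the paper leaves implicit.
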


\subsection{Necessary Conditions for \texorpdfstring{$H_{\tau,k} \in \mathrm{Aut}_F(S_f)$}{H\_\{tau,k\} to be an Automorphism of S\_f}}

We next take a closer look at the equality \eqref{eqn:automorphism necessary}, to obtain necessary conditions for $\tau \in \mathrm{Aut}_F(D)$ to extend to $H_{\tau,k} \in \mathrm{Aut}_F(S_f)$. The more non-zero coefficients $f(t)$ has the more restrictive these conditions become.

\begin{proposition} \label{prop:Conditions for automorphism group trivial}
Let $\sigma, \tau \in \mathrm{Aut}_F(D)$ and $k \in C^{\times}$ be such that \eqref{eqn:automorphism necessary} holds for all $i \in \{ 0, \ldots, m-1 \}$, i.e. $\tau(a_i) = \big( \prod_{l=i}^{m-1} \sigma^l(k) \big) a_i$ for all $i \in \{ 0, \ldots, m-1 \}$.
\begin{itemize}
\item[(i)] If $a_{m-1} \neq 0$ then
$$\tau(a_i) = \big( \prod_{l=i}^{m-1} \sigma^{l-m+1} \big( \tau(a_{m-1})a_{m-1}^{-1} \big) \big) a_i$$
for all $i \in \{ 0, \ldots, m-1 \}$.
\item[(ii)] If two consecutive $a_s, a_{s+1} \in \mathrm{Fix}(\tau)^{\times}$ then $k = 1$.
\item[(iii)] If $a_{m-1} \in \mathrm{Fix}(\tau)^{\times}$ then $k=1$.
\item[(iv)] If there is $i \in \{ 0, \ldots, m-1 \}$ such that $a_i \in \mathrm{Fix}(\tau)^{\times}$ then $1 = \prod_{l=i}^{m-1} \sigma^l(k).$
\end{itemize}
\end{proposition}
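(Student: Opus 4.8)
The plan is to start from the defining relation \eqref{eqn:automorphism necessary}, namely $\tau(a_i) = \big( \prod_{l=i}^{m-1} \sigma^l(k) \big) a_i$ for all $i \in \{0, \ldots, m-1\}$, and extract the four consequences by purely algebraic manipulation. The key observation throughout is that $k \in C^{\times}$ and that applying $\sigma$ to such a product simply shifts the index range, so telescoping arguments will do most of the work.

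For part (i), I would take the equation at $i = m-1$, which reads $\tau(a_{m-1}) = \sigma^{m-1}(k)\, a_{m-1}$. Solving for $\sigma^{m-1}(k)$ gives $\sigma^{m-1}(k) = \tau(a_{m-1}) a_{m-1}^{-1}$, and hence $k = \sigma^{1-m}\big(\tau(a_{m-1}) a_{m-1}^{-1}\big)$. Substituting $\sigma^l(k) = \sigma^{l-m+1}\big(\tau(a_{m-1}) a_{m-1}^{-1}\big)$ back into the product in \eqref{eqn:automorphism necessary} yields the stated formula. For part (iv), if $a_i \in \mathrm{Fix}(\tau)^{\times}$ then $\tau(a_i) = a_i$, so the defining equation becomes $a_i = \big( \prod_{l=i}^{m-1} \sigma^l(k) \big) a_i$; cancelling the invertible $a_i$ on the right gives $1 = \prod_{l=i}^{m-1} \sigma^l(k)$ immediately.

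Parts (ii) and (iii) are the heart of the argument and where I expect the only real subtlety. For part (iii), take $i = m-1$ with $a_{m-1} \in \mathrm{Fix}(\tau)^{\times}$: part (iv) gives $\prod_{l=m-1}^{m-1} \sigma^l(k) = \sigma^{m-1}(k) = 1$, and since $\sigma$ is an automorphism this forces $k = 1$. For part (ii), I would apply part (iv) to both indices $s$ and $s+1$, obtaining $\prod_{l=s}^{m-1} \sigma^l(k) = 1$ and $\prod_{l=s+1}^{m-1} \sigma^l(k) = 1$. Dividing one relation by the other (both factors lie in the commutative field $C$, so this is legitimate) leaves the single factor $\sigma^s(k) = 1$, and again injectivity of $\sigma$ gives $k = 1$. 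The main obstacle to watch is ensuring all cancellations are valid: the $a_i$ appearing must be invertible, which is exactly why the hypotheses specify $a_s, a_{s+1} \in \mathrm{Fix}(\tau)^{\times}$ and $a_{m-1} \in \mathrm{Fix}(\tau)^{\times}$ (the $\times$ guaranteeing nonzero elements of the division ring $D$), and the products of $\sigma$-powers of $k$ must be manipulated inside the commutative center $C$ so that right-cancellation and the quotient step cause no noncommutativity issues.

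In summary, I would present (i) and (iv) first as direct substitutions, then derive (iii) as the special case $i = m-1$ of (iv), and finally obtain (ii) by taking the quotient of the two relations furnished by (iv) at consecutive indices. No step requires more than elementary cancellation once the telescoping structure of $\prod_{l=i}^{m-1}\sigma^l(k)$ is exploited, so I do not anticipate any genuinely hard computation — only the care needed to keep track of where invertibility and centrality are being used.
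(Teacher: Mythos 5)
Your proposal is correct and follows essentially the same route as the paper: (i) and (iv) by solving/cancelling in \eqref{eqn:automorphism necessary}, (iii) as the $i=m-1$ instance, and (ii) by comparing the two products at consecutive indices to isolate $\sigma^s(k)=1$. The only cosmetic difference is that you derive (ii) and (iii) as corollaries of (iv) while the paper writes each out directly; the underlying computations are identical.
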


\begin{proof}
\begin{itemize}
\item[(i)] Since $a_{m-1} \neq 0$, \eqref{eqn:automorphism necessary} implies $\tau(a_{m-1}) = \sigma^{m-1}(k) a_{m-1}$, hence $k = \sigma^{-m+1} \big( \tau(a_{m-1}) a_{m-1}^{-1} \big)$. Subbing this back into \eqref{eqn:automorphism necessary} yields the assertion.
\item[(ii)] If there are two consecutive $a_s, a_{s+1} \in \mathrm{Fix}(\tau)^{\times}$, then by \eqref{eqn:automorphism necessary} we conclude $\prod_{l=s}^{m-1} \sigma^l(k) = 1 = \prod_{l=s+1}^{m-1} \sigma^l(k)$, thus cancelling gives $\sigma^s(k) = 1$, i.e. $k = 1$. 

The proof of (iii) and (iv) is similar to \cite[Proposition 9]{brownautomorphism2017}, but we need not assume $D$ is a field: 
\item[(iii)] Since $a_{m-1} \in \mathrm{Fix}(\tau)^{\times}$, \eqref{eqn:automorphism necessary} yields $\tau(a_{m-1}) = a_{m-1} = \sigma^{m-1}(k) a_{m-1}$, thus $\sigma^{m-1}(k) = 1$ and so $k = 1$.
\item[(iv)] We have $\tau(a_i) = a_i = \prod_{l=i}^{m-1} \sigma^l(k) a_i$ by \eqref{eqn:automorphism necessary}, hence $1 = \prod_{l=i}^{m-1} \sigma^l(k)$.
\end{itemize}
\end{proof}

The condition \eqref{eqn:automorphism necessary} heavily restricts the choice of available $k$ to $k=1$ in many cases. Therefore in many instances we conclude $\mathrm{Aut}_F(S_f)$ is isomorphic to a subgroup of $\mathrm{Aut}_F(D)$ or is trivial:

\begin{corollary}
Suppose $f(t) = t^m - \sum_{i=0}^{m-1} a_i t^i \in R$ is not right invariant, $\sigma$ commutes with all $F$-automorphisms of $D$ and $\sigma$ has order at least $m-1$. Suppose also that one of the following holds:
\begin{itemize}
\item[(i)] $a_{m-1} \neq 0$ and there exists $j \in \{ 0, \ldots, m-2 \}$ such that
$$\tau(a_j) \neq \big( \prod_{l=j}^{m-1} \sigma^{l-m+1} \big( \tau(a_{m-1})a_{m-1}^{-1} \big) \big) a_j,$$
for all $\mathrm{id} \neq \tau \in \mathrm{Aut}_F(D)$.
\item[(ii)] $a_{m-1} \in F^{\times}$ and for all $\mathrm{id} \neq \tau \in \mathrm{Aut}_F(D)$ there exists $j \in \{ 0, \ldots, m-2 \}$ such that $a_j \notin \mathrm{Fix}(\tau)$.
\item[(iii)] There are two consecutive $a_s, a_{s+1} \in F^{\times}$ and for all $\mathrm{id} \neq \tau \in \mathrm{Aut}_F(D)$ there exists $j \in \{ 0, \ldots, m-1 \}$ such that $a_j \notin \mathrm{Fix}(\tau)$.
\end{itemize}
Then $\mathrm{Aut}_F(S_f)$ is trivial.
\end{corollary}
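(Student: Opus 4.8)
The plan is to reduce every case to Theorem \ref{thm:automorphism_of_S_f_division_case}(iii) followed by the appropriate clause of Proposition \ref{prop:Conditions for automorphism group trivial}. Since $f(t)$ is not right invariant, $\sigma$ commutes with every $F$-automorphism of $D$, and $\sigma\vert_C$ has order at least $m-1$, Theorem \ref{thm:automorphism_of_S_f_division_case}(iii) tells us that every element of $\mathrm{Aut}_F(S_f)$ has the form $H_{\tau,k}$ for some $\tau\in\mathrm{Aut}_F(D)$ and $k\in C^\times$ satisfying \eqref{eqn:automorphism necessary}, i.e. $\tau(a_i)=\big(\prod_{l=i}^{m-1}\sigma^l(k)\big)a_i$ for all $i$. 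The first thing I would record is that $H_{\tau,k}$ is the identity map precisely when $\tau=\mathrm{id}$ and $k=1$: evaluating \eqref{automorphism_of_Sf form of H} on elements of $D$ forces $\tau=\mathrm{id}$, and reading off the coefficient of $t$ then forces $k=1$. Hence it suffices to show that, under any one of (i), (ii), (iii), the only pair $(\tau,k)$ obeying \eqref{eqn:automorphism necessary} is $(\mathrm{id},1)$.

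So fix an arbitrary $H_{\tau,k}\in\mathrm{Aut}_F(S_f)$. Under hypothesis (i) we have $a_{m-1}\neq 0$, so Proposition \ref{prop:Conditions for automorphism group trivial}(i) gives $\tau(a_i)=\big(\prod_{l=i}^{m-1}\sigma^{l-m+1}(\tau(a_{m-1})a_{m-1}^{-1})\big)a_i$ for all $i$. If $\tau\neq\mathrm{id}$, the index $j$ supplied by (i) would contradict this identity, so $\tau=\mathrm{id}$. Then $a_{m-1}\in\mathrm{Fix}(\mathrm{id})^\times$, and Proposition \ref{prop:Conditions for automorphism group trivial}(iii) yields $k=1$; thus $H_{\tau,k}=\mathrm{id}$.

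Under hypotheses (ii) and (iii) I would instead force $k=1$ first, using the fact that $F$ is pointwise fixed by every $F$-automorphism $\tau$, so $F^\times\subseteq\mathrm{Fix}(\tau)^\times$. If (ii) holds then $a_{m-1}\in F^\times\subseteq\mathrm{Fix}(\tau)^\times$, and Proposition \ref{prop:Conditions for automorphism group trivial}(iii) gives $k=1$; if (iii) holds then the consecutive coefficients $a_s,a_{s+1}\in F^\times\subseteq\mathrm{Fix}(\tau)^\times$, and Proposition \ref{prop:Conditions for automorphism group trivial}(ii) gives $k=1$. In either case \eqref{eqn:automorphism necessary} collapses to $\tau(a_i)=a_i$ for all $i$, i.e. every $a_i\in\mathrm{Fix}(\tau)$. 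The standing assumption of (ii) and (iii)—that for each $\tau\neq\mathrm{id}$ some coefficient escapes $\mathrm{Fix}(\tau)$—then forces $\tau=\mathrm{id}$, whence $H_{\tau,k}=\mathrm{id}$ once more.

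Since $\mathrm{id}\in\mathrm{Aut}_F(S_f)$ always, this shows $\mathrm{Aut}_F(S_f)=\{\mathrm{id}\}$ in every case. The argument is essentially bookkeeping once Theorem \ref{thm:automorphism_of_S_f_division_case}(iii) and Proposition \ref{prop:Conditions for automorphism group trivial} are available; the only point requiring genuine care is the order of the two deductions ($\tau=\mathrm{id}$ before $k=1$ in case (i), and the reverse in cases (ii)--(iii)), together with keeping track of which clause of Proposition \ref{prop:Conditions for automorphism group trivial} applies to each hypothesis.
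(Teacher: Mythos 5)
Your proposal is correct and follows essentially the same route as the paper: invoke Theorem \ref{thm:automorphism_of_S_f_division_case}(iii) to write any automorphism as $H_{\tau,k}$, then apply Proposition \ref{prop:Conditions for automorphism group trivial}(i) to force $\tau=\mathrm{id}$ in case (i), and clauses (iii) and (ii) respectively to force $k=1$ in cases (ii) and (iii), after which \eqref{eqn:automorphism necessary} pins down the remaining parameter. The only cosmetic difference is that in case (i) the paper extracts $k=1$ by reading off $a_{m-1}=\sigma^{m-1}(k)a_{m-1}$ directly rather than citing Proposition \ref{prop:Conditions for automorphism group trivial}(iii) with $\tau=\mathrm{id}$, which is the same computation.
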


\begin{proof}
Suppose $H \in \mathrm{Aut}_F(S_f)$, then $H = H_{\tau,k}$ for some $\tau \in \mathrm{Aut}_F(D)$ and $k \in C^{\times}$ satisfying \eqref{eqn:automorphism necessary} by Theorem \ref{thm:automorphism_of_S_f_division_case}.
\begin{itemize}
\item[(i)] We have $\tau = \mathrm{id}$ by Proposition \ref{prop:Conditions for automorphism group trivial}(i), therefore \eqref{eqn:automorphism necessary} implies $a_{m-1} = \sigma^{m-1}(k) a_{m-1}$. This means $k = 1$ and $H = H_{\mathrm{id},1}$ is trivial.
\item[(ii)] We have $k = 1$ by Proposition \ref{prop:Conditions for automorphism group trivial}(iii), therefore \eqref{eqn:automorphism necessary} implies $\tau(a_i) = a_i$ for all $i \in \{ 0,\ldots, m-1 \}$ and thus $\tau = \mathrm{id}$. Therefore $H = H_{\mathrm{id},1}$ and $\mathrm{Aut}_F(S_f)$ is trivial.
\item[(iii)] Proposition \ref{prop:Conditions for automorphism group trivial}(ii) yields $k=1$, therefore by \eqref{eqn:automorphism necessary} we have $\tau(a_i) = a_i$ for all $i \in \{ 0,\ldots, m-1 \}$, hence $\tau = \mathrm{id}$ and $H = H_{\mathrm{id},1}$.
\end{itemize}
\end{proof}

Denote by $\mathrm{Cent}_{\mathrm{Aut}(D)}(\sigma)$ the centralizer of $\sigma$ in $\mathrm{Aut}_{F}(D)$. If the coefficients of $f(t)$ are all in $F$, we have:

\begin{proposition} \label{prop:Aut(S_f) Coefficients in F}
Let $f(t) = t^m - \sum_{i=0}^{m-1} a_i t^i \in F[t] = F[t;\sigma] \subset R$.
\begin{itemize}
\item[(i)] $\{ H_{\tau,1} \ \vert \ \tau \in \mathrm{Cent}_{\mathrm{Aut}(D)}(\sigma) \} \cong \mathrm{Cent}_{\mathrm{Aut}(D)}(\sigma)$ is a subgroup of $\mathrm{Aut}_{F}(S_f)$.
\item[(ii)] Suppose $f(t)$ is not right invariant, $\sigma$ commutes with all $F$-automorphisms of $D$, $a_{m-1} \neq 0$ and $\sigma \vert_C$ has order at least $m-1$. Then
$$\mathrm{Aut}_{F}(S_f) = \{ H_{\tau,1} \ \vert \ \tau \in \mathrm{Aut}_{F}(D) \} \cong \mathrm{Aut}_{F}(D).$$
\end{itemize}
\end{proposition}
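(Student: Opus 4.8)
The plan is to derive both parts directly from Theorem \ref{thm:automorphism_of_S_f_division_case} and Proposition \ref{prop:Conditions for automorphism group trivial}, since the substantive work has already been carried out there. For part (i), I would first observe that because every coefficient $a_i$ lies in $F \subseteq \mathrm{Fix}(\tau)$, each $\tau \in \mathrm{Aut}_{F}(D)$ fixes $a_i$, and trivially $\prod_{l=i}^{m-1}\sigma^l(1) = 1$; thus the defining condition \eqref{eqn:automorphism necessary} reduces to $\tau(a_i) = a_i$, which holds automatically. Hence for every $\tau$ commuting with $\sigma$, i.e. every $\tau \in \mathrm{Cent}_{\mathrm{Aut}(D)}(\sigma)$, Theorem \ref{thm:automorphism_of_S_f_division_case}(ii) gives $H_{\tau,1} \in \mathrm{Aut}_{F}(S_f)$.

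Next I would verify that $\tau \mapsto H_{\tau,1}$ is an injective group homomorphism. Using the composition rule $H_{\tau,k} \circ H_{\rho,b} = H_{\tau\rho,\tau(b)k}$ recorded in the proof of Theorem \ref{thm:automorphism_of_S_f_division_case}, specialising $k=b=1$ and using $\tau(1)=1$ yields $H_{\tau,1} \circ H_{\rho,1} = H_{\tau\rho,1}$, so the map is a homomorphism; its image is closed under inverses since $H_{\tau,1}^{-1} = H_{\tau^{-1},1}$. Injectivity is immediate because $H_{\tau,1}\vert_D = \tau$. This identifies $\{H_{\tau,1} \mid \tau \in \mathrm{Cent}_{\mathrm{Aut}(D)}(\sigma)\}$ as a subgroup of $\mathrm{Aut}_{F}(S_f)$ isomorphic to $\mathrm{Cent}_{\mathrm{Aut}(D)}(\sigma)$, proving (i).

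For part (ii), the additional hypotheses let me pin down every automorphism. Since $\sigma$ commutes with all $F$-automorphisms of $D$, the centralizer $\mathrm{Cent}_{\mathrm{Aut}(D)}(\sigma)$ equals $\mathrm{Aut}_{F}(D)$, so part (i) already gives the inclusion $\{H_{\tau,1} \mid \tau \in \mathrm{Aut}_{F}(D)\} \subseteq \mathrm{Aut}_{F}(S_f)$. For the reverse inclusion I would invoke Theorem \ref{thm:automorphism_of_S_f_division_case}(iii): as $f(t)$ is not right invariant, $\sigma$ commutes with all $F$-automorphisms of $D$, and $\sigma\vert_C$ has order at least $m-1$, any $H \in \mathrm{Aut}_{F}(S_f)$ must equal $H_{\tau,k}$ for some $\tau \in \mathrm{Aut}_{F}(D)$ and $k \in C^{\times}$ satisfying \eqref{eqn:automorphism necessary}. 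The decisive step is then to eliminate the scalar $k$: because $a_{m-1} \in F^{\times} \subseteq \mathrm{Fix}(\tau)^{\times}$, Proposition \ref{prop:Conditions for automorphism group trivial}(iii) forces $k=1$. Thus $H = H_{\tau,1}$, giving equality of the two sets, and the isomorphism $\mathrm{Aut}_{F}(S_f) \cong \mathrm{Aut}_{F}(D)$ follows from the homomorphism of part (i).

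I do not anticipate a genuine obstacle here, as the content is entirely contained in the cited results. The only points requiring care are the bookkeeping in part (i), namely confirming that $\tau \mapsto H_{\tau,1}$ respects composition and inverses and that its image is exactly the asserted subgroup, and in part (ii) ensuring that the hypothesis $a_{m-1} \neq 0$ together with $a_{m-1} \in F$ is precisely what triggers Proposition \ref{prop:Conditions for automorphism group trivial}(iii) to collapse $k$ to $1$.
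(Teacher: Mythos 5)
Your proof is correct and follows essentially the same route as the paper: part (i) is the identical application of Theorem \ref{thm:automorphism_of_S_f_division_case}(ii) plus the composition rule $H_{\tau,1}\circ H_{\rho,1}=H_{\tau\rho,1}$, and part (ii) uses Theorem \ref{thm:automorphism_of_S_f_division_case}(iii) and then forces $k=1$ from $a_{m-1}\neq 0$. The only cosmetic difference is that you cite Proposition \ref{prop:Conditions for automorphism group trivial}(iii) for the step $a_{m-1}=\sigma^{m-1}(k)a_{m-1}\Rightarrow k=1$, whereas the paper writes out that one-line computation directly.
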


\begin{proof}
\begin{itemize}
\item[(i)] We have $\tau(a_i) = a_i = \Big( \prod_{l=i}^{m-1} \sigma^l(1) \Big) a_i$, for all $i \in \{ 0, \ldots, m-1 \}$, $\tau \in \mathrm{Cent}_{\mathrm{Aut}(D)}(\sigma)$, therefore $\{ H_{\tau,1} \ \vert \ \tau \in \mathrm{Cent}_{\mathrm{Aut}(D)}(\sigma) \}$ is a subset of $\mathrm{Aut}_{F}(S_f)$ by Theorem \ref{thm:automorphism_of_S_f_division_case}(ii). Furthermore, $H_{\tau,1} \circ H_{\rho,1} = H_{\tau \rho,1}$ for all $\tau, \rho \in \mathrm{Cent}_{\mathrm{Aut}(D)}(\sigma)$, hence $\{ H_{\tau,1} \ \vert \ \tau \in \mathrm{Cent}_{\mathrm{Aut}(D)}(\sigma) \}$ is a subgroup of $\mathrm{Aut}_{F}(S_f)$ because $\mathrm{Cent}_{\mathrm{Aut}(D)}(\sigma)$ is a group.
\item[(ii)] In this case we prove the subgroup in (i) is all of $\mathrm{Aut}_F(S_f)$: Let $H \in \mathrm{Aut}_{F}(S_f)$, then $H$ has the form $H_{\tau,k}$ for some $\tau \in \mathrm{Aut}_{F}(D)$, $k \in C^{\times}$ such that $\tau(a_i) = a_i = \Big( \prod_{l=i}^{m-1} \sigma^l(k) \Big) a_i$, for all $i \in \{ 0, \ldots, m-1 \}$ by Theorem \ref{thm:automorphism_of_S_f_division_case}(iii). In particular, $a_{m-1} = \sigma^{m-1}(k) a_{m-1}$ which implies $k = 1$ since $a_{m-1}\neq 0$. Thus $H = H_{\tau,1}$ as required.
\end{itemize}
\end{proof}

Now suppose $C/F$ is a proper field extension of finite degree, and $D$ is finite-dimensional as an algebra over $C$ and over $F$. Let $N_{D/C}$ denote the norm of $D$ considered as an algebra over $C$, $N_{D/F}$ denote the norm of $D$ considered as an algebra over $F$, and $N_{C/F}$ denote the norm of the field extension $C/F$. Recall $N_{D/F}(z) = N_{C/F}(N_{D/C}(z))$ for all $z \in D$ \cite[\S 7.4]{jacobson1985basic} and $N_{D/F}(\tau(z)) = N_{D/F}(z)$ for all $\tau \in \mathrm{Aut}_{F}(D)$ and all $z \in D$ \cite[p.~547]{magurn2002algebraic}. Applying $N_{D/C}$ to \eqref{eqn:automorphism necessary} yields a necessary condition for $H_{\tau,k}$ to be an automorphism of $S_f$:

\begin{proposition} \label{prop:automorphism division norm argument}
Suppose $\sigma$ commutes with all $F$-automorphisms of $D$, $\sigma \vert_C$ has order at least $m - 1$ and $f(t) = t^m - \sum_{i=0}^{m-1} a_i t^i \in D[t;\sigma]$ is not right invariant. If $H_{\tau,k} \in \mathrm{Aut}_{F}(S_f)$, then $N_{C/F}(k)$ is a $[D:C](m-i)$th root of unity for all $i \in \{ 0, \ldots, m-1 \}$ such that $a_i \neq 0$. In particular, if $D$ is commutative and $a_0 \neq 0$ then $N_{C/F}(k)$ is an $m$th root of unity.
\end{proposition}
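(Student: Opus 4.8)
The plan is to exploit the defining relation \eqref{eqn:automorphism necessary} of an automorphism $H_{\tau,k}$ together with the multiplicativity, transitivity and automorphism-invariance of the norm maps. First I would invoke Theorem \ref{thm:automorphism_of_S_f_division_case}(iii): since $f(t)$ is not right invariant, $\sigma$ commutes with all $F$-automorphisms of $D$ and $\sigma\vert_C$ has order at least $m-1$, every $H_{\tau,k}\in\mathrm{Aut}_F(S_f)$ satisfies $\tau(a_i)=\big(\prod_{l=i}^{m-1}\sigma^l(k)\big)a_i$ for all $i\in\{0,\ldots,m-1\}$. I then fix an index $i$ with $a_i\neq 0$ and set $P=\prod_{l=i}^{m-1}\sigma^l(k)$. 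Because $\sigma$ preserves the center $C$, each $\sigma^l(k)\in C$, so $P\in C$.

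Next I would apply $N_{D/C}$ to both sides of this relation. Using that $N_{D/C}$ is multiplicative and that $N_{D/C}(cz)=c^{[D:C]}N_{D/C}(z)$ for $c\in C$ (multiplication by a central element acts as a scalar on the $C$-vector space $D$, so its determinant is $c^{[D:C]}$), I obtain $N_{D/C}(\tau(a_i))=P^{[D:C]}N_{D/C}(a_i)$. Applying $N_{C/F}$ and invoking the transitivity $N_{D/F}=N_{C/F}\circ N_{D/C}$ turns the left-hand side into $N_{D/F}(\tau(a_i))=N_{D/F}(a_i)$, where the second equality uses the invariance of $N_{D/F}$ under $F$-automorphisms of $D$; the right-hand side becomes $N_{C/F}(P)^{[D:C]}N_{D/F}(a_i)$.

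The point needing care is the evaluation of $N_{C/F}(P)$. Here I would use that $\sigma\vert_C$ is an $F$-automorphism of $C$ (it fixes $F=C\cap\mathrm{Fix}(\sigma)$ pointwise) and that the field norm is invariant under $F$-automorphisms, so $N_{C/F}(\sigma^l(k))=N_{C/F}(k)$ for every $l$. Since the product defining $P$ has $m-i$ factors, multiplicativity of $N_{C/F}$ gives $N_{C/F}(P)=N_{C/F}(k)^{m-i}$, whence $N_{C/F}(P)^{[D:C]}=N_{C/F}(k)^{[D:C](m-i)}$. Comparing the two sides yields $N_{D/F}(a_i)=N_{C/F}(k)^{[D:C](m-i)}N_{D/F}(a_i)$. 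As $D$ is a division algebra and $a_i\neq 0$, we have $N_{D/F}(a_i)\neq 0$, so cancelling gives $N_{C/F}(k)^{[D:C](m-i)}=1$, i.e. $N_{C/F}(k)$ is a $[D:C](m-i)$th root of unity. Finally, when $D$ is commutative we have $D=C$ and $[D:C]=1$, and the hypothesis $a_0\neq 0$ lets me take $i=0$, so $N_{C/F}(k)$ is an $m$th root of unity.

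I expect the main obstacle to be marshalling the three distinct norm identities correctly: the behaviour $N_{D/C}(cz)=c^{[D:C]}N_{D/C}(z)$ on central elements, the transitivity $N_{D/F}=N_{C/F}\circ N_{D/C}$, and the automorphism-invariance of both $N_{D/F}$ and $N_{C/F}$. In particular I would stress that the invariance $N_{C/F}(\sigma^l(k))=N_{C/F}(k)$ requires only that $\sigma\vert_C$ be an $F$-automorphism of $C$, and not that $C/F$ be a Galois extension.
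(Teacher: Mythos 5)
Your proposal is correct and follows essentially the same route as the paper: both start from the relation $\tau(a_i)=\big(\prod_{l=i}^{m-1}\sigma^l(k)\big)a_i$ supplied by Theorem \ref{thm:automorphism_of_S_f_division_case}(iii), apply the norm, and use multiplicativity, the transitivity $N_{D/F}=N_{C/F}\circ N_{D/C}$ and invariance under $F$-automorphisms to extract $N_{C/F}(k)^{[D:C](m-i)}=1$. The only cosmetic difference is that you pass through $N_{D/C}$ before $N_{C/F}$ whereas the paper applies $N_{D/F}$ in one step; your explicit remark that $N_{D/F}(a_i)\neq 0$ justifies the cancellation that the paper leaves implicit.
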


\begin{proof}
Applying $N_{D/F}$ to \eqref{eqn:automorphism necessary} we obtain
\begin{equation*}
\begin{split}
N_{D/F}&(\tau(a_i)) = N_{D/F}(a_i) 
= N_{D/F} \Big( \Big( \prod_{l=i}^{m-1} \sigma^l(k) \Big) a_i \Big) \\ 
&= N_{D/F}(k)^{m-i} N_{D/F}(a_i) = N_{C/F}(k)^{[D:C](m-i)} N_{D/F}(a_i),
\end{split}
\end{equation*}
for all $i \in \{ 0, \ldots, m-1 \}$ since $N_{D/F}(k) = N_{C/F}(N_{D/C}(k)) = N_{C/F}(k)^{[D:C]}$ for all $k \in C^{\times}$. This yields $N_{C/F}(k)^{[D:C](m-i)} = 1$ or $a_i = 0$  for every $i \in \{ 0, \ldots, m-1 \}$. 
\end{proof}

\subsection{Connections Between Automorphisms of \texorpdfstring{$S_f$}{S\_f}, \texorpdfstring{$f(t) = t^m-a \in D[t;\sigma]$}{f(t) = t\^{}m - a in D[t;sigma]}, and Factors of Skew Polynomials}

Suppose $D$ is an associative division ring with center $C$, $\sigma$ is a non-trivial ring automorphism of $D$ and $f(t) = t^m-a \in R = D[t;\sigma]$. We compare Theorem \ref{thm:automorphism_of_S_f_division_case} and Proposition \ref{prop:rightdivdegree1} to obtain connections between the automorphisms of $S_f$ and factors of certain skew polynomials:

\begin{proposition} \label{prop:automorphisms right divisors of t^m-tau(a)a^-1}
\begin{itemize}
\item[(i)] Suppose $\tau \in \mathrm{Aut}_{F}(D)$ commutes with $\sigma$ and $k \in C^{\times}$, then $H_{\tau,k} \in \mathrm{Aut}_{F}(S_f)$ if $(t-k) \vert_r (t^m - \tau(a) a^{-1})$.
\item[(ii)] Suppose $(t-k) \vert_r (t^m-1)$ for some $k \in C^{\times}$, then $H_{\mathrm{id},k} \in \mathrm{Aut}_{F}(S_f)$.
\item[(iii)] Suppose $f(t)$ is not right invariant, $\sigma$ commutes with all $F$-automorphisms of $D$ and $\sigma \vert_C$ has order at least $m-1$. If $k \in C^{\times}$ and $\tau \in \mathrm{Aut}_{F}(D)$, then $H_{\tau,k} \in \mathrm{Aut}_{F}(S_f)$ if and only if $(t-k) \vert_r (t^m - \tau(a)a^{-1})$. In particular, $H_{\mathrm{id},k} \in \mathrm{Aut}_{F}(S_f)$ if and only if $(t-k) \vert_r (t^m - 1)$.
\end{itemize}
\end{proposition}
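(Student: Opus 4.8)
The plan is to observe that both sides of each biconditional are governed by a single algebraic identity, and that the two characterisations meet at that identity. The key simplification is that for $f(t) = t^m - a$ the coefficients are $a_0 = a$ and $a_i = 0$ for $1 \le i \le m-1$, so the defining system \eqref{eqn:automorphism necessary} for $H_{\tau,k}$ to be an automorphism collapses: the equations indexed by $i \ge 1$ read $0 = 0$ and are vacuous, while the $i=0$ equation is the single condition $\tau(a) = \big(\prod_{l=0}^{m-1}\sigma^l(k)\big)a$, equivalently $\prod_{l=0}^{m-1}\sigma^l(k) = \tau(a)a^{-1}$. First I would record this reduction explicitly, noting that since $k \in C^{\times}$ every $\sigma^l(k)$ is central, so the product is unambiguous.

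Next I would translate the right-divisibility condition. Applying Proposition \ref{prop:rightdivdegree1} to the polynomial $t^m - \tau(a)a^{-1} \in D[t;\sigma]$ (leading coefficient $1$, constant term $\tau(a)a^{-1}$, all intermediate coefficients zero) shows that $(t-k) \vert_r \big(t^m - \tau(a)a^{-1}\big)$ is equivalent to $N_m(k) = \tau(a)a^{-1}$. Since $k$ is central, $N_m(k) = \sigma^{m-1}(k)\cdots\sigma(k)k = \prod_{l=0}^{m-1}\sigma^l(k)$, so this divisibility condition is literally the same equation as the one extracted from \eqref{eqn:automorphism necessary}. This identification is the heart of the proof; everything else is bookkeeping about which hypotheses each part needs.

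With the two conditions matched, I would assemble the three statements. For (i), the hypotheses give $\tau \circ \sigma = \sigma \circ \tau$ and (via the divisibility) the $i=0$ equation, so Theorem \ref{thm:automorphism_of_S_f_division_case}(ii) yields $H_{\tau,k} \in \mathrm{Aut}_{F}(S_f)$; no extra hypotheses on $f$ are needed for this direction. Part (ii) is then just the specialisation $\tau = \mathrm{id}$, for which $\tau(a)a^{-1} = 1$ and commutativity with $\sigma$ is automatic. For (iii), the stronger hypotheses ($f$ not right invariant, $\sigma$ commuting with all $F$-automorphisms of $D$, and $\sigma\vert_C$ of order at least $m-1$) let me invoke Theorem \ref{thm:automorphism_of_S_f_division_case}(iii), which says every $F$-automorphism is exactly of the form $H_{\tau,k}$ satisfying \eqref{eqn:automorphism necessary}; here $\tau$ commutes with $\sigma$ automatically, so the biconditional follows from the already-established equivalence of \eqref{eqn:automorphism necessary} with right-divisibility, and the ``in particular'' clause is again $\tau = \mathrm{id}$. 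The only point demanding care is the clean collapse of the system \eqref{eqn:automorphism necessary} and the identity $N_m(k) = \prod_{l=0}^{m-1}\sigma^l(k)$ for central $k$; there is no genuine analytic obstacle, only the need to keep the hypotheses of each cited result aligned with those assumed in (i)--(iii).
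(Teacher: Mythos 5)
Your proposal is correct and follows essentially the same route as the paper: translate $(t-k) \vert_r (t^m - \tau(a)a^{-1})$ via Proposition \ref{prop:rightdivdegree1} into $N_m(k) = \prod_{l=0}^{m-1}\sigma^l(k) = \tau(a)a^{-1}$ (using centrality of $k$), observe this is exactly the single surviving equation of \eqref{eqn:automorphism necessary} for $f(t)=t^m-a$, and then apply Theorem \ref{thm:automorphism_of_S_f_division_case}(ii) for the sufficiency in (i)--(ii) and Theorem \ref{thm:automorphism_of_S_f_division_case}(iii) for the converse in (iii). Your explicit remark that the system collapses to the $i=0$ equation, and your citation of Theorem \ref{thm:automorphism_of_S_f_division_case}(ii) rather than (i) for part (i), are if anything slightly cleaner than the paper's own write-up.
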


\begin{proof}
\begin{itemize}
\item[(i)] If $(t-k) \vert_r (t^m - \tau(a) a^{-1})$ then $\prod_{l=0}^{m-1} \sigma^l(k) = \tau(a)a^{-1}$ by Proposition \ref{prop:rightdivdegree1}, thus $H_{\tau,k} \in \mathrm{Aut}_{F}(S_f)$ by Theorem \ref{thm:automorphism_of_S_f_division_case}(i).
\item[(ii)] follows by setting $\tau = \mathrm{id}$ in (i).
\item[(iii)] If $(t-k) \vert_r (t^m - \tau(a) a^{-1})$ then $H_{\tau,k} \in \mathrm{Aut}_{F}(S_f)$ by (i). Conversely if $H_{\tau,k} \in \mathrm{Aut}_{F}(S_f)$, then $\tau(a) = \big( \prod_{l=0}^{m-1} \sigma^l(k) \big) a$ by Theorem \ref{thm:automorphism_of_S_f_division_case}(iii), and thus $(t-k) \vert_r (t^m - \tau(a) a^{-1})$ by Proposition \ref{prop:rightdivdegree1}.
\end{itemize}
\end{proof}

When $D$ is commutative, Proposition \ref{prop:automorphisms right divisors of t^m-tau(a)a^-1} shows there is an injective map between the monic linear right divisors of $t^m-1$ in $R$ and the automorphisms of $S_f$ of the form $H_{\mathrm{id},k}$. If additionally $f(t)$ is not right invariant, $\sigma$ commutes with all $F$-automorphisms of $D$ and $\sigma$ has order at least $m-1$, then there is a bijection between the monic linear right divisors of $t^m-1$ in $R$ and the automorphisms of $S_f$ of the form $H_{\mathrm{id},k}$.

\begin{corollary} \label{cor:H_id,k automorphism link to right divisors}
Suppose $\sigma$ commutes with all $F$-automorphisms of $D$, $f(t)$ is not right invariant, and $\sigma \vert_C$ has order at least $m-1$. 
\begin{itemize}
\item[(i)] If all $F$-automorphisms of $S_f$ have the form $H_{\mathrm{id},k}$ for some $k \in C^{\times}$, then $(t-b) \nmid_r (t^m-\tau(a)a^{-1}),$ for all $\mathrm{id} \neq \tau \in \mathrm{Aut}_{F}(D)$, $b \in C^{\times}$. In addition, if $D$ is commutative, $m$ is prime and $\mathrm{Fix}(\sigma)$ contains a primitive $m^{\text{th}}$ root of unity, then $t^m - \tau(a)a^{-1} \in R$ is irreducible for all $\mathrm{id} \neq \tau \in \mathrm{Aut}_{F}(D)$.
\item[(ii)] If $t^m - \tau(a)a^{-1} \in R$ is irreducible for all $\mathrm{id} \neq \tau \in \mathrm{Aut}_{F}(D)$, then
$$\mathrm{Aut}_{F}(S_f) = \Big\{ H_{\mathrm{id},k} \ \vert \ k \in C^{\times} \text{ such that } \prod_{l=0}^{m-1} \sigma^l(k) = 1 \Big\} .$$
\end{itemize}
\end{corollary}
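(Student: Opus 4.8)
The statement to prove is Corollary \ref{cor:H_id,k automorphism link to right divisors}, which has two parts, and both follow by combining Proposition \ref{prop:automorphisms right divisors of t^m-tau(a)a^-1}(iii) with the standing hypotheses. The plan is to treat each part as a routine translation between the automorphism condition and the right-divisibility condition, using the equivalence already established in Proposition \ref{prop:automorphisms right divisors of t^m-tau(a)a^-1}(iii): under the hypotheses that $f(t)$ is not right invariant, $\sigma$ commutes with all $F$-automorphisms of $D$, and $\sigma\vert_C$ has order at least $m-1$, one has $H_{\tau,k}\in\mathrm{Aut}_F(S_f)$ if and only if $(t-k)\vert_r(t^m-\tau(a)a^{-1})$.

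For part (i), I would argue contrapositively. Suppose some $\mathrm{id}\neq\tau\in\mathrm{Aut}_F(D)$ and some $b\in C^\times$ satisfied $(t-b)\vert_r(t^m-\tau(a)a^{-1})$. Then Proposition \ref{prop:automorphisms right divisors of t^m-tau(a)a^-1}(iii) would produce an automorphism $H_{\tau,b}\in\mathrm{Aut}_F(S_f)$ with $\tau\neq\mathrm{id}$, contradicting the hypothesis that every $F$-automorphism of $S_f$ has the form $H_{\mathrm{id},k}$. Hence no such $b$ exists, i.e.\ $(t-b)\nmid_r(t^m-\tau(a)a^{-1})$ for all such $\tau$ and all $b\in C^\times$. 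For the second assertion of (i), I would additionally invoke Theorem \ref{thm:Petit(19)}: when $D$ is commutative, $m$ is prime, and $\mathrm{Fix}(\sigma)$ contains a primitive $m^{\text{th}}$ root of unity, a polynomial $t^m-c$ is irreducible if and only if it has no right linear divisor. Since we have just shown $t^m-\tau(a)a^{-1}$ has no right linear factor $(t-b)$ with $b\in C^\times$, and here $D=C$ is commutative so every candidate $b$ lies in $C^\times=D^\times$, the hypotheses of Theorem \ref{thm:Petit(19)} are met and irreducibility follows. (One subtlety to check is that in the commutative case the relevant $b$ ranges over all of $D^\times$, which coincides with $C^\times$, so there is no gap between "no right linear divisor over $C$" and "no right linear divisor over $D$".)

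For part (ii), I would again use Proposition \ref{prop:automorphisms right divisors of t^m-tau(a)a^-1}(iii) but now in the forward direction together with Theorem \ref{thm:automorphism_of_S_f_division_case}(iii), which says that under the standing hypotheses every $F$-automorphism of $S_f$ is of the form $H_{\tau,k}$ with $\tau\in\mathrm{Aut}_F(D)$, $k\in C^\times$ satisfying $\tau(a)=\bigl(\prod_{l=0}^{m-1}\sigma^l(k)\bigr)a$. Given an arbitrary $H_{\tau,k}\in\mathrm{Aut}_F(S_f)$, Proposition \ref{prop:automorphisms right divisors of t^m-tau(a)a^-1}(iii) gives $(t-k)\vert_r(t^m-\tau(a)a^{-1})$. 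If $\tau\neq\mathrm{id}$, this contradicts the irreducibility of $t^m-\tau(a)a^{-1}$, since an irreducible polynomial of degree $m\geq 2$ has no linear right divisor. Therefore $\tau=\mathrm{id}$, the divisibility condition becomes $(t-k)\vert_r(t^m-1)$, equivalently $\prod_{l=0}^{m-1}\sigma^l(k)=1$ by Proposition \ref{prop:rightdivdegree1}, and conversely every such $k$ yields an automorphism $H_{\mathrm{id},k}$ by Proposition \ref{prop:automorphisms right divisors of t^m-tau(a)a^-1}(ii). This gives the claimed description of $\mathrm{Aut}_F(S_f)$.

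The argument is essentially bookkeeping once the right prior results are lined up, so I do not expect a genuine obstacle; the only place demanding care is the commutative reduction in part (i), namely confirming that the root-of-unity hypothesis on $\mathrm{Fix}(\sigma)$ and the equality $C=D$ let Theorem \ref{thm:Petit(19)} convert "no monic linear right divisor" into full irreducibility, rather than into the weaker statement that $f$ merely has no linear factor. I would state this reduction explicitly and then let the three cited results (Theorem \ref{thm:automorphism_of_S_f_division_case}(iii), Proposition \ref{prop:automorphisms right divisors of t^m-tau(a)a^-1}, and Theorem \ref{thm:Petit(19)}) do the work.
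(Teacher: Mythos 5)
Your proposal is correct and follows essentially the same route as the paper: part (i) is the contrapositive of Proposition \ref{prop:automorphisms right divisors of t^m-tau(a)a^-1}(iii) followed by Theorem \ref{thm:Petit(19)} in the commutative case (where $C^{\times}=D^{\times}$), and part (ii) combines the same proposition with Theorem \ref{thm:automorphism_of_S_f_division_case}(iii) to rule out $\tau\neq\mathrm{id}$ and reduce to the norm-one condition on $k$. The subtlety you flag about $b$ ranging over $D^{\times}$ versus $C^{\times}$ is exactly the observation the paper makes as well.
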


\begin{proof}
\begin{itemize}
\item[(i)] The first assertion follows immediately from Proposition \ref{prop:automorphisms right divisors of t^m-tau(a)a^-1}. If $D$ is commutative this means $(t-b) \nmid_r (t^m-\tau(a)a^{-1})$ for all $b \in D^{\times}$ and all $\mathrm{id} \neq \tau \in \mathrm{Aut}_{F}(D)$. If also $\mathrm{Fix}(\sigma)$ contains a primitive $m^{\text{th}}$ root of unity, then $t^m-\tau(a)a^{-1} \in R$ is irreducible for all $\mathrm{id} \neq \tau \in \mathrm{Aut}_{F}(D)$ by Theorem \ref{thm:Petit(19)}.
\item[(ii)] Suppose $t^m - \tau(a)a^{-1} \in R$ is irreducible for all $\mathrm{id} \neq \tau \in \mathrm{Aut}_{F}(D)$, then in particular, $(t-b) \nmid_r (t^m-\tau(a)a^{-1})$ for all $b \in C^{\times}$, $\mathrm{id} \neq \tau \in \mathrm{Aut}_{F}(D)$. Therefore $H_{\tau,b} \notin \mathrm{Aut}_{F}(S_f)$ for all $b \in C^{\times}$, and all $\mathrm{id} \neq \tau \in \mathrm{Aut}_{F}(D)$ by Proposition \ref{prop:automorphisms right divisors of t^m-tau(a)a^-1} and so
$$\mathrm{Aut}_{F}(S_f) = \Big\{ H_{\mathrm{id},k} \ \vert \ k \in C^{\times} \text{ such that } \prod_{l=0}^{m-1} \sigma^l(k) = 1 \Big\}$$
by Theorem \ref{thm:automorphism_of_S_f_division_case}.
\end{itemize}
\end{proof}

\section[Automorphisms of some Jha-Johnson Semifields]{Automorphisms of Jha-Johnson Semifields Obtained from Skew Polynomial Rings} \label{section:Automorphisms of Jha-Johnson Semifields Obtained from Skew Polynomial Rings}

In this Section we study the automorphism groups of the Jha-Johnson semifields which arise from Petit's algebra construction. Let $K = \mathbb{F}_{p^h}$ be a finite field of order $p^h$ for some prime $p$ and $\sigma$ be a non-trivial $\mathbb{F}_p$-automorphism of $K$, i.e. $\sigma: K \rightarrow K, \ k \mapsto k^{p^r}$ for some $r \in \{ 1, \ldots, h-1 \}$ is a power of the Frobenius automorphism. Notice that $F = \mathrm{Fix}(\sigma) \cong \mathbb{F}_{q}$ where $q = p^{\mathrm{gcd}(r,h)}$, $\sigma$ has order $n = h/ \mathrm{gcd}(r,h)$, and $\sigma$ commutes with all $\mathbb{F}_q$-automorphisms of $K$.

Suppose $f(t) \in R = K[t;\sigma]$ is monic and irreducible, so that $S_f$ is a Jha-Johnson semifield \cite[Theorem 15]{lavrauw2013semifields} (see Theorem \ref{thm:Jha-Johnson_is_S_f}). Recall that when $f(t) = t^m - a \in K[t;\sigma]$ is irreducible, $a \in K \setminus F$ and $n \geq m$, then $S_f$ is called a Sandler semifield \cite{sandler1962autotopism}. The automorphism groups of Sandler semifields are particularly relevant as for all Jha-Johnson semifields $S_g$ with $g(t) = t^m - \sum_{i=0}^{m-1} b_i t^i \in K[t;\sigma]$ irreducible and $b_0 = a$, $\mathrm{Aut}_{F}(S_g)$ is a subgroup of $\mathrm{Aut}_{F}(S_f)$ by Theorem \ref{thm:Aut(S_f) subgroup}. We study the automorphisms of Sandler semifields in Section \ref{section:Automorphisms of Sandler Semifields}. When $m = n$, Sandler semifields are precisely the nonassociative cyclic algebras over $F$, and their automorphism groups are studied in detail in Chapter \ref{chapter:Automorphisms of Nonassociative Cyclic Algebras}.

We remark that the results in this Section hold more generally when $f(t) \in R$ is not necessarily irreducible, here $S_f$ is a nonassociative algebra over $F$, but $S_f$ is not a semifield unless $f(t)$ is irreducible (Theorem \ref{thm:S_f_division_iff_irreducible}).

Theorem \ref{thm:automorphism_of_S_f_division_case} becomes:

\begin{theorem} \label{thm:automorphism_of_S_f_finite field case}
Let $f(t) = t^m - \sum_{i=0}^{m-1} a_i t^i \in K[t;\sigma]$ and define $s_i = (p^{rm}-p^{ri})/(p^r-1).$
\begin{itemize}
\item[(i)] Suppose $k \in K^{\times}$ is an $s_i$th root of unity for all $i \in \{ 0, \ldots, m - 1 \}$ with $a_i \neq 0$. Then $H_{\mathrm{id},k} \in \mathrm{Aut}_{F}(S_f)$.
\item[(ii)] Suppose $k \in K^{\times}$ and $j \in \{ 0, \ldots, n-1 \}$ are such that $\sigma^j(a_i) = k^{s_i} a_i$ for all $i \in \{ 0 , \ldots, m-1 \}$, then $H_{\sigma^j,k} \in \mathrm{Aut}_{F}(S_f)$.
\item[(iii)] Suppose $f(t)$ is not right invariant and $n \geq m-1$. Then $H \in \mathrm{Aut}_{F}(S_f)$ if and only if $H = H_{\sigma^j,k}$ for some $k \in K^{\times}$, $j \in \{ 0, \ldots, n-1 \}$ such that $\sigma^j(a_i) = k^{s_i} a_i$ for all $i \in \{ 0 , \ldots, m-1 \}$.
\end{itemize}
\end{theorem}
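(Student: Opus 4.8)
The theorem in question is Theorem \ref{thm:automorphism_of_S_f_finite field case}, the finite-field specialisation of Theorem \ref{thm:automorphism_of_S_f_division_case}. The plan is to deduce everything from Theorem \ref{thm:automorphism_of_S_f_division_case} by translating its conditions into the concrete numerical statements of the present setting. The two facts I would record first are the structural ones about $K=\mathbb{F}_{p^h}$ and $\sigma\colon k\mapsto k^{p^r}$: namely that $F=\mathrm{Fix}(\sigma)\cong\mathbb{F}_q$ with $q=p^{\gcd(r,h)}$, that $\sigma$ has order $n=h/\gcd(r,h)$, and crucially that $\sigma$ \emph{commutes with all} $\mathbb{F}_q$-automorphisms of $K$ (this is automatic since $\mathrm{Aut}_F(K)=\langle\sigma\rangle$ is cyclic, hence abelian). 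This last point is exactly the standing hypothesis ``$\sigma$ commutes with all $F$-automorphisms of $D$'' needed to invoke the harder part (iii) of Theorem \ref{thm:automorphism_of_S_f_division_case}.

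\textbf{Key computational step.} The heart of the translation is to evaluate the product $\prod_{l=i}^{m-1}\sigma^l(k)$ appearing in condition \eqref{eqn:automorphism necessary}. Since $\sigma^l(k)=k^{p^{rl}}$ for $k\in K^\times$, I would compute
\begin{equation*}
\prod_{l=i}^{m-1}\sigma^l(k)=k^{\sum_{l=i}^{m-1}p^{rl}}=k^{s_i},\qquad s_i=\sum_{l=i}^{m-1}p^{rl}=\frac{p^{rm}-p^{ri}}{p^r-1},
\end{equation*}
which matches the definition of $s_i$ in the statement. Every $\mathbb{F}_q$-automorphism of $K$ is a power of $\sigma$, so the generic $\tau\in\mathrm{Aut}_F(D)$ of Theorem \ref{thm:automorphism_of_S_f_division_case} becomes $\tau=\sigma^j$ for some $j\in\{0,\dots,n-1\}$, and the general condition $\tau(a_i)=\big(\prod_{l=i}^{m-1}\sigma^l(k)\big)a_i$ becomes precisely $\sigma^j(a_i)=k^{s_i}a_i$. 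With this identification each of the three parts follows: part (i) is the case $\tau=\mathrm{id}$ (i.e. $j=0$), where the condition $a_i=k^{s_i}a_i$ for each $i$ with $a_i\neq 0$ is equivalent to $k$ being an $s_i$th root of unity for those $i$; part (ii) is the general sufficiency statement from Theorem \ref{thm:automorphism_of_S_f_division_case}(ii), noting $\sigma$ automatically commutes with $\sigma^j$; and part (iii) is the necessity-and-sufficiency statement from Theorem \ref{thm:automorphism_of_S_f_division_case}(iii), whose hypotheses ($f$ not right invariant, $\sigma$ commutes with all $F$-automorphisms, $\sigma|_C$ of order $\geq m-1$) are met here because $C=K$ forces $\sigma|_C=\sigma$ of order $n\geq m-1$.

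\textbf{Main obstacle.} There is no real analytic obstacle, since all the deep content already resides in Theorem \ref{thm:automorphism_of_S_f_division_case}; the proof is genuinely a specialisation. The one point requiring care is the bookkeeping around the order hypothesis. Theorem \ref{thm:automorphism_of_S_f_division_case}(iii) demands $\sigma|_C$ have order at least $m-1$, and here $\sigma|_C=\sigma$ has order exactly $n$; so I must invoke (iii) only under the stated hypothesis $n\geq m-1$ in part (iii), and verify that parts (i) and (ii), which are only sufficiency statements, require no order hypothesis at all (they follow from Theorem \ref{thm:automorphism_of_S_f_division_case}(i),(ii), which impose none). I would also double-check the edge behaviour of $s_i$: the closed form $(p^{rm}-p^{ri})/(p^r-1)$ is valid for all $i\in\{0,\dots,m-1\}$, and the condition ``$k$ is an $s_i$th root of unity'' in $K^\times$ means $k^{s_i}=1$, i.e. the multiplicative order of $k$ divides $s_i$. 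Assembling these observations, the proof is a short paragraph citing Theorem \ref{thm:automorphism_of_S_f_division_case} together with the evaluation $\prod_{l=i}^{m-1}\sigma^l(k)=k^{s_i}$.
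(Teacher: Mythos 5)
Your proposal is correct and follows essentially the same route as the paper: both reduce the statement to Theorem \ref{thm:automorphism_of_S_f_division_case} via the computation $\prod_{l=i}^{m-1}\sigma^l(k)=k^{\sum_{l=i}^{m-1}p^{rl}}=k^{s_i}$ and the observation that every $F$-automorphism of $K$ is a power of $\sigma$. Your extra care in checking the hypotheses of part (iii) (commutativity, $\sigma\vert_C=\sigma$ of order $n\geq m-1$) is sound; the paper leaves this implicit in the section's standing setup.
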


\begin{proof}
\begin{itemize}
\item[(i)] We have
$$\Big( \prod_{l=i}^{m-1} \sigma^l(k) \Big) a_i = \Big( \prod_{l=i}^{m-1} k^{p^{rl}} \Big) a_i = k^{s_i} a_i = a_i,$$
for all $i \in \{ 0, \ldots, m-1 \}$ and thus $H_{\mathrm{id},k} \in \mathrm{Aut}_{F}(S_f)$ by Theorem \ref{thm:automorphism_of_S_f_division_case}(i).
\item[(ii)] Similarly to (i) we have
$$\Big( \prod_{l=i}^{m-1} \sigma^l(k) \Big) a_i = \Big( \prod_{l=i}^{m-1} k^{p^{rl}} \Big) a_i = k^{s_i} a_i = \sigma^j(a_i)$$
for all $i \in \{ 0, \ldots, m-1 \}$, and thus $H_{\sigma^j,k} \in \mathrm{Aut}_{F}(S_f)$ by Theorem \ref{thm:automorphism_of_S_f_division_case}(ii).
\item[(iii)] follows by (ii) and Theorem \ref{thm:automorphism_of_S_f_division_case}(iii).
\end{itemize}
\end{proof}

As a direct consequence of Theorem \ref{thm:Aut(S_f) subgroup}, we have:
\begin{corollary} 
Let $n \geq m-1$ and $g(t) = t^m - \sum_{i=0}^{m-1} b_i t^i \in R$ not be right invariant.
\begin{itemize}
\item[(i)] If $f(t) = t^m - b_0 \in R$ is not right invariant then $\mathrm{Aut}_{F}(S_g)$ is a subgroup of $\mathrm{Aut}_{F}(S_f)$.
\item[(ii)] If $f(t) = t^m - \sum_{i=0}^{m-1} a_i t^i \in R$ is not right invariant and $a_j \in \{ 0 , b_j \}$ for all $j \in \{ 0, \ldots , m-1 \}$, then $\mathrm{Aut}_{F}(S_g)$ is a subgroup of $\mathrm{Aut}_{F}(S_f)$.   
\end{itemize}
\end{corollary}

Proposition \ref{prop:Aut(S_f) Coefficients in F 2} becomes:

\begin{corollary}
Let $f(t) = t^m - \sum_{i=0}^{m-1} a_i t^i \in F[t;\sigma] \subseteq K[t;\sigma]$. Then $\langle H_{\sigma,1} \rangle \cong \mathbb{Z}/n \mathbb{Z}$ is a cyclic subgroup of $\mathrm{Aut}_{F}(S_f)$.
\end{corollary}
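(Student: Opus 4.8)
The plan is to obtain this statement as the finite-field specialisation of Proposition~\ref{prop:Aut(S_f) Coefficients in F 2}(i). Recall that in the present setting $K = \mathbb{F}_{p^h}$, $\sigma: k \mapsto k^{p^r}$, and $F = \mathrm{Fix}(\sigma) \cong \mathbb{F}_q$ with $q = p^{\gcd(r,h)}$, so that $\sigma$ has finite order $n = h/\gcd(r,h)$. Since the coefficients $a_0, \ldots, a_{m-1}$ of $f(t)$ lie in $F = \mathrm{Fix}(\sigma)$, the hypotheses of Proposition~\ref{prop:Aut(S_f) Coefficients in F 2}(i) are met and the conclusion follows at once; nonetheless I would spell out the two ingredients for completeness.

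First I would verify that $H_{\sigma,1} \in \mathrm{Aut}_F(S_f)$ by invoking Theorem~\ref{thm:automorphism_of_S_f_division_case}(ii) with $\tau = \sigma$ and $k = 1$. The automorphism $\sigma$ trivially commutes with itself, and since each $a_i \in \mathrm{Fix}(\sigma)$ we have $\sigma(a_i) = a_i = \big( \prod_{l=i}^{m-1} \sigma^l(1) \big) a_i$ for all $i \in \{0, \ldots, m-1\}$, which is exactly the condition~\eqref{eqn:automorphism necessary}. Hence $H_{\sigma,1}$ is an $F$-automorphism of $S_f$.

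Next I would identify the cyclic group it generates. Using the composition rule $H_{\sigma^j,1} \circ H_{\sigma^l,1} = H_{\sigma^{j+l},1}$ recorded in the proof of Theorem~\ref{thm:automorphism_of_S_f_division_case}, together with $\sigma^n = \mathrm{id}$, gives $H_{\sigma,1}^n = H_{\sigma^n,1} = H_{\mathrm{id},1} = \mathrm{id}_{S_f}$, so the order of $H_{\sigma,1}$ divides $n$. To see the order is exactly $n$, I would restrict each $H_{\sigma^j,1}$ to the degree-zero part: directly from the definition of $H_{\sigma^j,1}$ one has $H_{\sigma^j,1}(x_0) = \sigma^j(x_0)$ for $x_0 \in K$, and the powers $\sigma^0, \sigma^1, \ldots, \sigma^{n-1}$ are pairwise distinct precisely because $\sigma$ has order exactly $n$. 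Thus the $n$ maps $H_{\mathrm{id},1}, H_{\sigma,1}, \ldots, H_{\sigma^{n-1},1}$ are distinct and $\langle H_{\sigma,1} \rangle \cong \mathbb{Z}/n\mathbb{Z}$.

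Since the argument is a direct specialisation, there is no genuine obstacle; the only point requiring a moment's care is the distinctness of the generators, which is exactly where the hypothesis that $\sigma$ has order $n$ (rather than merely that its order divides $n$) is used. I would also note that this distinctness check, being carried out on constants alone, requires neither that $f(t)$ be irreducible nor that it be non-right-invariant, so the claim indeed holds for every $f(t) \in F[t;\sigma]$ as stated.
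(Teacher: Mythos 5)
Your proof is correct and follows essentially the same route as the paper: the corollary is presented there precisely as the finite-field specialisation of Proposition \ref{prop:Aut(S_f) Coefficients in F 2}(i), whose proof verifies condition \eqref{eqn:automorphism necessary} with $\tau=\sigma$, $k=1$ via Theorem \ref{thm:automorphism_of_S_f_division_case}(ii) and then uses the composition rule $H_{\sigma^j,1}\circ H_{\sigma^l,1}=H_{\sigma^{j+l},1}$ together with $H_{\sigma^n,1}=H_{\mathrm{id},1}$, exactly as you do. Your explicit check that the $n$ powers are pairwise distinct (by restricting to the degree-zero part $K$, where $H_{\sigma^j,1}$ acts as $\sigma^j$) is a detail the paper leaves implicit but is entirely consistent with its argument.
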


The Hughes-Kleinfeld semifields can be written in the form $S_f$ for irreducible $f(t) \in K[t;\sigma]$ of degree $2$ by Theorem \ref{thm:S_f as Hughes Kleinfeld and Knuth Semifield}. Setting $m=2$ in Theorem \ref{thm:automorphism_of_S_f_finite field case} gives a description of the automorphisms of Hughes Kleinfeld semifields. In particular we obtain \cite[Proposition 8.3.1]{AndrewPhD} and \cite[Corollary 8.3.6]{AndrewPhD} as straightforward corollaries of Theorem \ref{thm:automorphism_of_S_f_division_case}(iii).

\subsection{Automorphisms of Sandler Semifields} \label{section:Automorphisms of Sandler Semifields}

In this Subsection we study the automorphism group of $S_f$ when $f(t) = t^m-a \in K[t;\sigma]$. In particular, when $a \in K \setminus F$ and $\sigma$ has order $n \geq m$ this yields results on the automorphisms of Sandler semifields. Let $$s = \frac{p^{rm}-1}{p^r-1}.$$
Theorem \ref{thm:automorphism_of_S_f_finite field case} implies:

\begin{corollary} \label{cor:automorphisms of Sandler semifields}
Let $f(t) = t^m - a \in K[t;\sigma]$.
\begin{itemize}
\item[(i)] The set $\{ H_{\mathrm{id},k} \ \vert \ k \in K, \ k^s=1 \}$ is a cyclic subgroup of  $\mathrm{Aut}_F(S_f)$ of order $\mathrm{gcd}(s, p^h-1)$.
\item[(ii)] If $p \equiv 1 \ \mathrm{mod} \ m$, then there are at least $m$ automorphisms of $S_f$ of the form $H_{\mathrm{id},k}$. In particular, $\mathrm{Aut}_{F}(S_f)$ is not trivial.
\item[(iii)] Suppose $h$ is even and at least one of $r, m$ are even. If $p \equiv -1 \ \mathrm{mod} \ m$, then there are at least $m$ automorphisms of $S_f$ of the form $H_{\mathrm{id},k}$. In particular, $\mathrm{Aut}_{F}(S_f)$ is not trivial.
\item[(iv)] Suppose $p$ is an odd prime and $m = 2$. Then $\mathrm{Aut}_{F}(S_f)$ is not trivial.
\end{itemize}
\end{corollary}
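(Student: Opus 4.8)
The plan is to read off everything from Theorem \ref{thm:automorphism_of_S_f_finite field case}, specialised to $f(t) = t^m - a$. Here the only nonzero coefficient among $a_0, \dots, a_{m-1}$ is $a_0 = a$, and $s_0 = (p^{rm}-p^{0})/(p^r-1) = s$, so Theorem \ref{thm:automorphism_of_S_f_finite field case}(i) tells us $H_{\mathrm{id},k} \in \mathrm{Aut}_F(S_f)$ whenever $k \in K^{\times}$ satisfies $k^s = 1$. Thus all four parts reduce to counting the $s$-th roots of unity in $K^{\times}$.

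For (i), I would first record that $k \mapsto H_{\mathrm{id},k}$ is a group homomorphism from $K^{\times}$ into $\mathrm{Aut}_F(S_f)$, using the composition law $H_{\mathrm{id},k} \circ H_{\mathrm{id},k'} = H_{\mathrm{id},kk'}$ established in the proof of Theorem \ref{thm:automorphism_of_S_f_division_case}, and that it is injective because the $t^1$-coefficient of $H_{\mathrm{id},k}(t)$ is $k$, so $H_{\mathrm{id},k} = \mathrm{id}$ forces $k = 1$. The subset $\{ k \in K^{\times} : k^s = 1 \}$ is a subgroup of the cyclic group $K^{\times}$ of order $p^h - 1$, hence is itself cyclic of order exactly $\gcd(s, p^h-1)$, this being the number of solutions of $x^s = 1$ in a cyclic group of order $p^h - 1$. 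Restricting the injective homomorphism to this subgroup identifies $\{ H_{\mathrm{id},k} : k^s = 1 \}$ with it, giving a cyclic subgroup of $\mathrm{Aut}_F(S_f)$ of order $\gcd(s, p^h-1)$, as claimed.

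For (ii) and (iii) it then suffices, by part (i), to show $m \mid \gcd(s, p^h-1)$, that is $m \mid s$ and $m \mid (p^h-1)$, which I would verify by reducing modulo $m$ exactly as in Corollary \ref{cor:p=1mod m finite field irreducibility criteria}. When $p \equiv 1 \pmod m$ one gets $p^{ri} \equiv 1$, so $s \equiv m \equiv 0 \pmod m$, while $p^h \equiv 1 \pmod m$, giving (ii). For (iii), with $h$ even and $p \equiv -1 \pmod m$ one has $p^h \equiv (-1)^h = 1 \pmod m$, so $m \mid (p^h-1)$; and $s \equiv \sum_{i=0}^{m-1} (-1)^{ri} \pmod m$, which is $\equiv m \equiv 0$ when $r$ is even and is a cancelling alternating sum $\equiv 0$ when $r$ is odd and $m$ is even, so the hypothesis that at least one of $r, m$ is even covers both cases. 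Finally, (iv) is the special case $m = 2$ of (ii), since $p$ odd means $p \equiv 1 \pmod 2$; alternatively $s = p^r + 1$ and $p^h - 1$ are both even, so $\gcd(s, p^h-1) \geq 2$ and $\mathrm{Aut}_F(S_f)$ is nontrivial.

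The only genuinely delicate point is the bookkeeping in (i): making sure $k \mapsto H_{\mathrm{id},k}$ really is an injective homomorphism, so that the count of roots of unity transfers exactly to a count of automorphisms. The remaining parts are short congruence computations, with the mild case split in (iii) on the parities of $r$ and $m$ being the one place to take care.
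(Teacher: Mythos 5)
Your proposal is correct and follows essentially the same route as the paper: part (i) via Theorem \ref{thm:automorphism_of_S_f_finite field case}(i) together with the composition law $H_{\mathrm{id},k}\circ H_{\mathrm{id},l}=H_{\mathrm{id},kl}$ and the count of $s$-th roots of unity in $K^\times$, and parts (ii)--(iv) via the same congruence computations showing $m\mid s$ and $m\mid(p^h-1)$, including the identical parity case-split in (iii). Your explicit injectivity check for $k\mapsto H_{\mathrm{id},k}$ is a small but welcome tightening of the paper's "is isomorphic to" step.
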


\begin{proof}
\begin{itemize}
\item[(i)] If $k \in K^{\times}$ is an $s^{\text{th}}$ root of unity then $H_{\mathrm{id},k} \in \mathrm{Aut}_{F}(S_f)$ by Theorem \ref{thm:automorphism_of_S_f_finite field case}(i). Moreover a straightforward calculation shows $H_{\mathrm{id},k} \circ H_{\mathrm{id},l} = H_{\mathrm{id},kl} \in \{ H_{\mathrm{id},k} \ \vert \ k^s=1 \}$ for all $s^{\text{th}}$ roots of unity $k, l \in K^{\times}$. This means $\{ H_{\mathrm{id},k} \ \vert \ k^s=1 \}$ is isomorphic to the cyclic subgroup of $K^{\times}$ consisting of all $s^{\text{th}}$ roots of unity. There are precisely $\mathrm{gcd}(s, p^h-1)$ $s^{\text{th}}$ roots of unity in $K$ by \cite[Proposition II.2.1]{koblitz1994course} which yields the assertion.
\item[(ii)] We have $\mathrm{gcd}(s, p^h-1) \geq m$ by the proof of Corollary \ref{cor:p=1mod m finite field irreducibility criteria}, therefore there are at least $m$ automorphisms of $S_f$ of the form $H_{\mathrm{id},k}$ by (i).
\item[(iii)] We have $p^h \equiv (-1)^h \ \mathrm{mod} \ m \equiv 1 \ \mathrm{mod} \ m$ because $h$ is even. If $r$ is even then $p^r \equiv 1 \ \mathrm{mod} \ m$ and
\begin{align*}
s \ \mathrm{mod} \ m & \equiv \Big( \sum_{i=0}^{m-1} (p^{ri} \ \mathrm{mod} \ m) \Big) \ \mathrm{mod} \ m  \equiv \big( \sum_{i=0}^{m-1} 1 \big) \ \mathrm{mod} \ m \equiv 0 \ \mathrm{mod} \ m.
\end{align*}
On the other hand, if $r$ is odd then $m$ must be even, therefore $p^r \equiv -1 \ \mathrm{mod} \ m$ and
\begin{align*}
s \ \mathrm{mod} \ m & \equiv \Big( \sum_{i=0}^{m-1} (p^{ri} \ \mathrm{mod} \ m) \Big) \ \mathrm{mod} \ m \\ & \equiv \big( \sum_{i=0}^{m-1} (-1)^i \big) \ \mathrm{mod} \ m \equiv 0 \ \mathrm{mod} \ m.
\end{align*}
In either case, $m \vert (p^h-1)$ and $m \vert s$. Hence $\mathrm{gcd}(s, p^h-1) \geq m$, so there are at least $m$ automorphisms of $S_f$ of the form $H_{\mathrm{id},k}$ by (i).
\item[(iv)] $p$ is odd so $p \equiv 1 \ \mathrm{mod} \ 2$ and the result follows by (ii).
\end{itemize}
\end{proof}

Corollary \ref{cor:automorphisms of Sandler semifields}(i) implies that if $\mathrm{Aut}_F(S_f)$ is trivial for a single $f(t) = t^m-a \in K[t;\sigma]$, then all $t^m-c \in K[t;\sigma]$, $c \in K$, are reducible: Indeed if $\mathrm{Aut}_{F}(S_f)$ is trivial then $\mathrm{gcd}(s, p^h-1) = 1$ by Corollary \ref{cor:automorphisms of Sandler semifields}(i), and so $t^m - c \in K[t;\sigma]$ is reducible for all $c \in K$ by Corollary \ref{cor:Finite field t^m-a irreducibility criteria}.

\begin{proposition} \label{prop:inner automorphisms of S_f over finite fields}
Let $f(t) = t^m-a \in K[t;\sigma]$.
\begin{itemize}
\item[(i)] If $c \in K^{\times}$ is a $(p^{rm}-1)$ root of unity, then the map
$$G_c : S_f \rightarrow S_f, \ \sum_{i=0}^{m-1} x_i t^i \mapsto \Big( c^{-1} \sum_{i=0}^{m-1} x_i t^i \Big) c,$$
is an inner automorphism of $S_f$.
\item[(ii)] If $p^{\mathrm{gcd}(rm,h)} - p^{\mathrm{gcd}(r,h)} > 0$, then there exists a non-trivial inner automorphism of $S_f$ of the form $G_c$ for some $c \in K^{\times}$ which is a $(p^{rm}-1)$ root of unity, but not a $(p^r-1)$ root of unity.
\end{itemize}
\end{proposition}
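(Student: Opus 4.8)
The plan is to treat part (i) by first computing $G_c$ explicitly and recognising it as one of the automorphisms $H_{\mathrm{id},k}$ from Theorem \ref{thm:automorphism_of_S_f_finite field case}. Since $c\in K=D$ and $\deg c=0$, for $x=\sum_{i=0}^{m-1}x_it^i$ we have $c^{-1}\circ x=\sum_i c^{-1}x_i t^i$, and then $t^i\circ c=\sigma^i(c)t^i$ because $\delta=0$ and no reduction occurs for $i<m$; using commutativity of $K$ this gives $G_c(x)=\sum_{i=0}^{m-1}x_i\,c^{-1}\sigma^i(c)\,t^i$. Setting $k=c^{-1}\sigma(c)\in K^{\times}$, the telescoping identity $\prod_{l=0}^{i-1}\sigma^l(k)=c^{-1}\sigma^i(c)$ shows that $G_c$ coincides exactly with the map $H_{\mathrm{id},k}$ of Theorem \ref{thm:automorphism_of_S_f_finite field case}. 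Thus it suffices to verify that $k$ meets the hypothesis of part (i) of that theorem.

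First I would translate the root-of-unity condition on $c$ into a statement about $\sigma$: saying $c$ is a $(p^{rm}-1)$ root of unity means $c^{p^{rm}}=c$, i.e. $\sigma^m(c)=c$. The same telescoping, now run to $i=m$, gives $\prod_{l=0}^{m-1}\sigma^l(k)=c^{-1}\sigma^m(c)$, and since this product equals $k^{\,1+p^r+\cdots+p^{(m-1)r}}=k^{s}$ with $s=s_0=(p^{rm}-1)/(p^r-1)$, the hypothesis $\sigma^m(c)=c$ yields $k^{s}=1$. As the only nonzero lower coefficient of $f(t)=t^m-a$ is $a_0$, Theorem \ref{thm:automorphism_of_S_f_finite field case}(i) immediately gives $G_c=H_{\mathrm{id},k}\in\mathrm{Aut}_F(S_f)$. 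It then remains only to observe that $c\in K^{\times}$ is left invertible in $S_f$ with left inverse $c^{-1}$ (indeed $c^{-1}\circ c=c^{-1}c=1$), so that $G_c(x)=(c^{-1}\circ x)\circ c$ is precisely of the form required in the definition of an inner automorphism; hence $G_c$ is inner.

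For part (ii) the approach is a counting argument. The $(p^r-1)$ roots of unity in $K$ are exactly the elements of $F^{\times}=\mathrm{Fix}(\sigma)^{\times}$, and by Lemma \ref{lem:gcd number theory result} their number is $\gcd(p^r-1,p^h-1)=p^{\gcd(r,h)}-1$; likewise the $(p^{rm}-1)$ roots of unity form the group $\mathrm{Fix}(\sigma^m)^{\times}$ of order $\gcd(p^{rm}-1,p^h-1)=p^{\gcd(rm,h)}-1$. Since $(p^r-1)\mid(p^{rm}-1)$, every $(p^r-1)$ root of unity is also a $(p^{rm}-1)$ root of unity, so $F\subseteq\mathrm{Fix}(\sigma^m)$. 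The hypothesis $p^{\gcd(rm,h)}-p^{\gcd(r,h)}>0$ forces this inclusion to be strict, so I can pick $c\in\mathrm{Fix}(\sigma^m)\setminus F$, that is, a $(p^{rm}-1)$ root of unity which is not a $(p^r-1)$ root of unity. By part (i) the map $G_c$ is an inner automorphism, and its $t$-coefficient scales by $c^{-1}\sigma(c)$; because $c\notin F=\mathrm{Fix}(\sigma)$ we have $\sigma(c)\neq c$, so $c^{-1}\sigma(c)\neq1$ and (using $m\geq2$) $G_c$ acts non-trivially on the $t$-term, giving $G_c\neq\mathrm{id}$.

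None of the steps is genuinely difficult; the only point demanding care is the bookkeeping that links the three descriptions of $c$ — as a $(p^{rm}-1)$ root of unity, as a fixed point of $\sigma^m$, and as the parameter making $k=c^{-1}\sigma(c)$ satisfy $k^{s}=1$ — together with the correct use of Lemma \ref{lem:gcd number theory result} to count $\mathrm{Fix}(\sigma^m)$ and $F$. I expect the main subtlety to be confirming that the exponent arising from the telescoped product is exactly $s=s_0$, so that Theorem \ref{thm:automorphism_of_S_f_finite field case}(i) applies verbatim, and ensuring the strict inclusion $F\subsetneq\mathrm{Fix}(\sigma^m)$ that guarantees a suitable non-trivial $c$ exists.
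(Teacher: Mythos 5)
Your proof is correct and follows essentially the same route as the paper: both identify $G_c$ with $H_{\mathrm{id},k}$ for $k=c^{-1}\sigma(c)=c^{p^r-1}$, verify $k^{s}=1$ with $s=(p^{rm}-1)/(p^r-1)$ so that Theorem \ref{thm:automorphism_of_S_f_finite field case}(i) applies, and prove (ii) by counting the $(p^{rm}-1)$- versus $(p^r-1)$-th roots of unity via Lemma \ref{lem:gcd number theory result}. The only cosmetic difference is that you phrase the root-of-unity conditions as fixed-point conditions for $\sigma^m$ and $\sigma$ and telescope the product, whereas the paper computes the exponents directly; the substance is identical.
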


\begin{proof}
Let $k = c^{p^r-1}$.
\begin{itemize}
\item[(i)] We have $k^{(p^{rm}-1)/(p^r-1)} = 1$ and so $H_{\mathrm{id},k} \in \mathrm{Aut}_{F}(S_f)$ by Theorem \ref{thm:automorphism_of_S_f_finite field case}(i). Furthermore
\begin{align*}
H_{\mathrm{id},k} &\big( \sum_{i=0}^{m-1} x_i t^i \big) = x_0 + \sum_{i=1}^{m-1} x_i \Big( \prod_{l=0}^{i-1} \sigma^l(k) \Big) t^i = x_0 + \sum_{i=1}^{m-1} x_i \Big( \prod_{l=0}^{i-1} k^{p^{rl}} \Big) t^i \\
&= x_0 + x_1 k t + \sum_{i=2}^{m-1} x_i k^{(p^{ri}-1)/(p^r-1)} t^i = \sum_{i=0}^{m-1} x_i c^{-1} \sigma^i(c) t^i \\ &= \Big( c^{-1} \sum_{i=0}^{m-1} x_i t^i \Big) c,
\end{align*}
so we conclude $H_{\mathrm{id},k} = G_c$ is an inner automorphism.
\item[(ii)] Notice $H_{\mathrm{id},k}$ is the identity if and only if $k = 1$, i.e. if and only if $c$ is a $(p^r-1)$th root of unity. Moreover every $(p^r-1)$th root of unity $c$ is also a $(p^{rm}-1)$th root of unity because $(p^r-1) \vert (p^{rm}-1)$. Therefore if the number of $(p^{rm}-1)$ roots of unity in $K$ is strictly greater than the number of $(p^{r}-1)$ roots of unity, then there exists a non-trivial automorphism of $S_f$ of the form $G_c$ for some $c \in K^{\times}$ which is a $(p^{rm}-1)$ root of unity, but not a $(p^r-1)$ root of unity.

Finally, the number of $(p^{rm}-1)$ roots of unity in $K$ is strictly greater than the number of $(p^{r}-1)$ roots of unity if and only if
\begin{align*}
\mathrm{gcd}&(p^{rm}-1,p^h-1) - \mathrm{gcd}(p^r-1,p^h-1) > 0,
\end{align*}
if and only if
$$p^{\mathrm{gcd}(rm,h)} - 1 - (p^{\mathrm{gcd}(r,h)}-1) = p^{\mathrm{gcd}(rm,h)} - p^{\mathrm{gcd}(r,h)} > 0$$
by Lemma \ref{lem:gcd number theory result}.
\end{itemize}
\end{proof}

When $K$ contains a primitive $(p^{rm}-1)^{\text{th}}$ primitive root of unity, Proposition \ref{prop:inner automorphisms of S_f over finite fields} leads to:

\begin{corollary} \label{cor:inner automorphism cyclic subgroup over finite fields}
Suppose $f(t) = t^m-a \in K[t;\sigma]$ and $(rm) \vert h$, then $K$ contains a primitive $(p^{rm}-1)$ root of unity $c$ and $\mathrm{Aut}_{F}(S_f)$ contains a cyclic subgroup of inner automorphisms of order $(p^{rm}-1)/(p^r-1)$ generated by $G_c$.
\end{corollary}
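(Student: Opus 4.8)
The plan is to deduce the statement directly from Proposition \ref{prop:inner automorphisms of S_f over finite fields}, together with the composition law for the maps $H_{\mathrm{id},k}$ recorded in Theorem \ref{thm:automorphism_of_S_f_division_case}. First I would establish the existence of the required root of unity. Since $(rm) \mid h$, writing $h = rmq$ gives the factorisation $p^h - 1 = (p^{rm})^q - 1 = (p^{rm}-1)\big((p^{rm})^{q-1} + \cdots + 1\big)$, so $(p^{rm}-1) \mid (p^h-1)$. Because $K^{\times}$ is cyclic of order $p^h-1$, it therefore contains an element $c$ of exact order $p^{rm}-1$, i.e. a primitive $(p^{rm}-1)$-th root of unity. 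By Proposition \ref{prop:inner automorphisms of S_f over finite fields}(i), $G_c$ is then an inner automorphism of $S_f$, and the same applies to every power $c^i$, since each $c^i$ is again a $(p^{rm}-1)$-th root of unity.

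Next I would identify the subgroup generated by $G_c$ and confirm that it is cyclic. Recall from the proof of Proposition \ref{prop:inner automorphisms of S_f over finite fields} that $G_c = H_{\mathrm{id},k}$ with $k = c^{-1}\sigma(c) = c^{p^r-1}$. Using the composition rule $H_{\mathrm{id},k}\circ H_{\mathrm{id},b} = H_{\mathrm{id},kb}$ (a special case of the formula in the proof of Theorem \ref{thm:automorphism_of_S_f_division_case}) and the commutativity of $K$, I obtain $G_c \circ G_d = G_{cd}$ for all such $c,d$. Hence $G_{c^i} = G_c^{\,i}$ and $\langle G_c \rangle = \{ G_{c^i} \}$ is a cyclic subgroup of $\mathrm{Aut}_F(S_f)$ consisting of inner automorphisms.

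The key computation is the order of $G_c$. Here $G_c^{\,i} = G_{c^i} = H_{\mathrm{id},\, c^{\,i(p^r-1)}}$, and since $H_{\mathrm{id},k}$ is the identity exactly when $k = 1$ (comparing the coefficient of $t$), this map is trivial iff $c^{\,i(p^r-1)} = 1$, i.e. iff $(p^{rm}-1) \mid i(p^r-1)$, as $c$ has order $p^{rm}-1$. Writing $N = p^{rm}-1$ and $e = p^r-1$ and using $e \mid N$, set $s = N/e = (p^{rm}-1)/(p^r-1)$; then $N \mid ie$ is equivalent to $s \mid i$, so the least positive such $i$ is $s$. Therefore $\langle G_c \rangle$ has order $(p^{rm}-1)/(p^r-1)$, as claimed.

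I expect the only real (and mild) obstacle to be this last order computation: correctly tracking the divisibility $N \mid ie$ under the factorisation $N = es$ and checking that the least such $i$ is exactly $s$ rather than a proper divisor of it. This is precisely the point at which both hypotheses are used, namely that $e \mid N$ and that $c$ is genuinely primitive of order $N$; everything else reduces to bookkeeping on the already-established behaviour of the maps $H_{\mathrm{id},k}$ and $G_c$.
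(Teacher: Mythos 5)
Your proof is correct and follows essentially the same route as the paper: establish $(p^{rm}-1)\mid(p^h-1)$ to get the primitive root $c$, invoke Proposition \ref{prop:inner automorphisms of S_f over finite fields} to see $G_c=H_{\mathrm{id},k}$ with $k=c^{p^r-1}$ inner, and observe that $k$ has order $(p^{rm}-1)/(p^r-1)$. The only cosmetic difference is that you verify the divisibility $(p^{rm}-1)\mid(p^h-1)$ by direct factorisation, whereas the paper routes it through Lemma \ref{lem:gcd number theory result}; both are fine.
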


\begin{proof}
$K$ contains a primitive $(p^{rm}-1)$ root of unity if and only if $(p^{rm}-1) \vert (p^h-1)$ by \cite[Proposition II.2.1]{koblitz1994course}, if and only if
$$p^{rm}-1 = \mathrm{gcd}(p^{rm}-1, p^h-1) = p^{\mathrm{gcd}(rm,h)}-1$$
by Lemma \ref{lem:gcd number theory result}, if and only if $rm = \mathrm{gcd}(rm,h)$ if and only if $(rm) \vert h$. Let $c \in K^{\times}$ be a primitive $(p^{rm}-1)$ root of unity, then $G_c$ is a non-trivial inner automorphism of $S_f$ by Proposition \ref{prop:inner automorphisms of S_f over finite fields}. Additionally, notice $k = c^{p^r-1}$ is a primitive $(p^{rm}-1)/(p^r-1)$ root of unity and $G_c = H_{\mathrm{id},k}$ by the proof of Proposition \ref{prop:inner automorphisms of S_f over finite fields}. Thus $\langle G_c \rangle = \{ G_c, G_{c^2}, \ldots, \mathrm{id} \}$ is a cyclic subgroup of $\mathrm{Aut}_{F}(S_f)$ of order $(p^{rm}-1)/(p^r-1)$.
\end{proof}

\begin{proposition}
Let $f(t) = t^m-a \in K[t;\sigma]$ and $l \in \mathbb{N}$ be such that $l \vert m$ and $l \vert (p^{\mathrm{gcd}(r,h)}-1)$. Then $F$ contains a primitive $l^{\text{th}}$ root of unity $\omega$ and $\mathrm{Aut}_{F}(S_f)$ contains a cyclic subgroup of order $l$ generated by $H_{\mathrm{id},\omega}$.
\end{proposition}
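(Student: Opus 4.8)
The plan is to reduce the statement to a direct application of Theorem \ref{thm:Primitive root then subgroup of order m}, after first producing the required root of unity inside $F$. The only genuine content is the existence of a primitive $l$th root of unity in $F$; once that is in hand, the automorphism and the cyclic subgroup it generates are supplied almost verbatim by the earlier theorem.

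First I would recall that $F = \mathrm{Fix}(\sigma) \cong \mathbb{F}_q$ with $q = p^{\mathrm{gcd}(r,h)}$, as established on page \pageref{page:Fix(sigma) isomorphic to}. Hence $F^{\times}$ is a cyclic group of order $q - 1 = p^{\mathrm{gcd}(r,h)} - 1$. The hypothesis $l \mid (p^{\mathrm{gcd}(r,h)} - 1)$ says precisely that $l$ divides $\lvert F^{\times} \rvert$, so the cyclic group $F^{\times}$ contains an element $\omega$ of order exactly $l$; equivalently, $F$ contains a primitive $l$th root of unity (see \cite[Proposition II.2.1]{koblitz1994course}).

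With $\omega \in F$ a primitive $l$th root of unity and $l \mid m$, the hypotheses of Theorem \ref{thm:Primitive root then subgroup of order m} are met, taking the generic integer there to be $l$. That theorem then yields directly that $H_{\mathrm{id},\omega}$ is an $F$-automorphism of $S_f$ generating a cyclic subgroup of $\mathrm{Aut}_{F}(S_f)$ of order $l$. Concretely, since $\omega \in \mathrm{Fix}(\sigma)$ one has $\prod_{j=0}^{m-1}\sigma^{j}(\omega) = \omega^{m} = 1$ because $l \mid m$, which is exactly the condition in Theorem \ref{thm:automorphism_of_S_f_finite field case}(i) for $H_{\mathrm{id},\omega}$ to lie in $\mathrm{Aut}_{F}(S_f)$; distinctness of the powers $H_{\mathrm{id},\omega^{j}}$ for $j = 0, \ldots, l-1$ follows from $\omega$ having order exactly $l$.

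Since every step is an invocation of an already-proved result, there is no substantial obstacle. The one point requiring care is matching the generic integer $n$ of Theorem \ref{thm:Primitive root then subgroup of order m} with $l$ (rather than with the order of $\sigma$), and confirming that the root of unity produced has order \emph{exactly} $l$, not merely dividing $l$, so that the resulting subgroup has full order $l$.
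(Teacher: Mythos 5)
Your proof is correct and follows essentially the same route as the paper: the hypothesis $l \mid (p^{\mathrm{gcd}(r,h)}-1)$ is exactly the condition for $F \cong \mathbb{F}_{p^{\mathrm{gcd}(r,h)}}$ to contain a primitive $l$th root of unity, after which everything is delegated to Theorem \ref{thm:Primitive root then subgroup of order m}. Your extra remarks on matching the generic integer and on the order of $\omega$ being exactly $l$ are sensible but add nothing beyond the paper's one-line argument.
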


\begin{proof}
$F$ contains a primitive $l^{\text{th}}$ root of unity is equivalent to $l \vert (p^{\mathrm{gcd}(r,h)}-1)$, so the result follows by Theorem \ref{thm:Primitive root then subgroup of order m}.
\end{proof}

\chapter{Automorphisms of Nonassociative Cyclic Algebras} \label{chapter:Automorphisms of Nonassociative Cyclic Algebras}

Throughout this Chapter, let $K/F$ be a cyclic Galois field extension of degree $m$ with $\mathrm{Gal}(K/F) = \langle \sigma \rangle$, and  $$A = (K/F,\sigma,a) = K[t;\sigma]/K[t;\sigma](t^m-a)$$ be a nonassociative cyclic algebra of degree $m$ for some $a \in K \setminus F$. Here $A$ is not associative by Theorem \ref{thm:Properties of S_f petit}(v).

In this Chapter we investigate the automorphisms of nonassociative cyclic algebras using results in Chapter \ref{chapter:Automorphisms of S_f} and those by Steele in \cite[\S 6.3]{AndrewPhD}. 

In particular, we will prove there always exist non-trivial inner automorphisms of a nonassociative cyclic algebra and find conditions for all automorphisms to be inner. These conditions are closely related to whether or not the field $F$ contains a primitive $m^{\text{th}}$ root of unity. We pay special attention to the case where $K/F$ is an extension of finite fields in Section \ref{section:Automorphisms of Nonassociative Cyclic Algebras over Finite Fields}, and we completely determine the automorphism group of nonassociative cyclic algebras of prime degree different from $\mathrm{Char}(F)$.

The field norm $N_{K/F}:K \rightarrow F$ is given by $N_{K/F}(k) = \prod_{l=0}^{m-1} \sigma^l(k)$. Recall Hilbert's Theorem 90, which states $N_{K/F}(b) = 1$ if and only if $b = \sigma(c)c^{-1}$ for some $c \in K^{\times}$, thus $\mathrm{Ker}(N_{K/F}) = \Delta^\sigma (1)$, where $\Delta^\sigma (l) = \{ \sigma(c) l c^{-1} \ \vert \ c \in K^{\times} \}$ is the \textbf{$\sigma$-conjugacy class} of $l \in K^{\times}$ \cite{lam1988vandermonde}.

In this case Theorem \ref{thm:automorphism_of_S_f_division_case}(iii) immediately becomes:

\begin{corollary} \label{cor:t^m-a automorphism field result}
(\cite[Corollary 3.2.9]{AndrewPhD}).
A map $H: A \rightarrow A$ is an $F$-automorphism of $A$ if and only if $H = H_{\sigma^j, k}$ for some $j \in \{ 0, \ldots, m-1 \}$ and $k \in K^{\times}$ such that $\sigma^j(a) = N_{K/F}(k)a$, where $H_{\sigma^j, k}$ is defined as in \eqref{automorphism_of_Sf form of H}.
\end{corollary}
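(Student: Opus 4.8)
The plan is to obtain this statement as the specialisation of Theorem \ref{thm:automorphism_of_S_f_division_case}(iii) to the case $D = K$, $\delta = 0$, and $f(t) = t^m - a$; the bulk of the work is simply to check that the three hypotheses of that theorem are satisfied and then to simplify the resulting coefficient conditions \eqref{eqn:automorphism necessary}. First I would record the translation of notation: here $A = S_f$ with $R = K[t;\sigma]$, the division ring $D$ is the field $K$, so that $C = \mathrm{Cent}(K) = K$ and $F = C \cap \mathrm{Fix}(\sigma) = \mathrm{Fix}(\sigma)$, and the polynomial $f(t) = t^m - a$ has $a_0 = a$ as its only nonzero lower coefficient.

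Next I would verify the hypotheses of Theorem \ref{thm:automorphism_of_S_f_division_case}(iii). First, $f(t)$ is not right invariant: since $a \in K \setminus F$ we have $\sigma(a) \neq a$, so $f(t)$ fails the criterion of Theorem \ref{thm:Properties of S_f petit}(v); equivalently, $A$ is not associative, as already noted at the start of the Chapter. Second, $\sigma$ commutes with every $F$-automorphism of $K$: because $K/F$ is cyclic Galois with $\mathrm{Gal}(K/F) = \langle \sigma \rangle$, every $F$-automorphism of $K$ is a power of $\sigma$, and powers of $\sigma$ commute with $\sigma$. Third, $\sigma|_C = \sigma|_K = \sigma$ has order exactly $m$, which is $\geq m-1$.

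With these in place, Theorem \ref{thm:automorphism_of_S_f_division_case}(iii) shows that $H \in \mathrm{Aut}_F(A)$ if and only if $H = H_{\tau,k}$ for some $\tau \in \mathrm{Aut}_F(K)$ and $k \in C^{\times} = K^{\times}$ satisfying \eqref{eqn:automorphism necessary} for all $i \in \{0,\ldots,m-1\}$. By the second point above, $\tau = \sigma^j$ for a unique $j \in \{0,\ldots,m-1\}$. Since $a_i = 0$ for $i \geq 1$, the equations \eqref{eqn:automorphism necessary} are vacuous except at $i = 0$, where they read $\sigma^j(a) = \bigl( \prod_{l=0}^{m-1}\sigma^l(k) \bigr) a = N_{K/F}(k)\,a$, using the definition of the field norm $N_{K/F}(k) = \prod_{l=0}^{m-1}\sigma^l(k)$. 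Substituting $\tau = \sigma^j$ into \eqref{automorphism_of_Sf form of H} yields precisely the asserted form of $H_{\sigma^j,k}$, which completes the argument.

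I expect no genuine obstacle here: the heavy lifting — exhibiting the maps $H_{\sigma^j,k}$ explicitly and proving that every $F$-automorphism arises in this way — has already been carried out in Theorem \ref{thm:automorphism_of_S_f_division_case}(iii). The only two points that warrant an explicit sentence are the reduction $\mathrm{Aut}_F(K) = \langle \sigma \rangle$ (this is where commutativity of the Galois group is used) and the collapse of the family of conditions \eqref{eqn:automorphism necessary} to the single norm equation $\sigma^j(a) = N_{K/F}(k)\,a$.
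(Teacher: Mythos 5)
Your proposal is correct and follows exactly the route the paper takes: the paper states that Theorem \ref{thm:automorphism_of_S_f_division_case}(iii) ``immediately becomes'' this corollary, and your verification of its three hypotheses (non-right-invariance from $a \in K \setminus F$, commutativity via $\mathrm{Aut}_F(K) = \langle\sigma\rangle$, and the order of $\sigma$) together with the collapse of \eqref{eqn:automorphism necessary} to the single norm equation at $i=0$ is precisely the intended argument.
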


Corollary \ref{cor:t^m-a automorphism field result} leads us to the following Theorem on the inner automorphisms of nonassociative cyclic algebras:

\begin{theorem} \label{thm:t^m-a_automorphism_field}
\begin{itemize}
\item[(i)] The maps
\begin{equation} \label{eqn:form of G_c t^m-a automorphism}
G_c: A \rightarrow A, \ \sum_{i=0}^{m-1} x_i t^i \mapsto \sum_{i=0}^{m-1} x_i c^{-1} \sigma^i(c) t^i,
\end{equation}
are inner automorphisms for all $c \in K^{\times}$.
\item[(ii)] Let $c \in K^{\times}$, then $G_c = H_{\mathrm{id},k}$ where $k = c^{-1} \sigma(c)$. Furthermore every $H_{\mathrm{id}, l} \in \mathrm{Aut}_F(A)$ is inner and can be written in the form $G_c$ for some $c \in K^{\times}$. 
\item[(iii)] Let $c, d \in K^{\times}$, then $G_c = G_d$ if and only if $c^{-1} \sigma(c) = d^{-1}\sigma(d)$.
\item[(iv)] There exists a non-trivial inner automorphism of $A$. Therefore the automorphism group of $A$ is not trivial.
\item[(v)] $N = \{ G_c \ \vert \ c \in K^{\times} \} \cong \mathrm{Ker}(N_{K/F})$ is an abelian normal subgroup of $\mathrm{Aut}_F(A)$. In particular, $H \circ G_c \circ H^{-1}$ is inner for all $c \in K^{\times}$ and all $H \in \mathrm{Aut}_F(A)$.
\item[(vi)] If $\sigma^j(a) a^{-1} \notin N_{K/F}(K^{\times})$ for all $j \in \{ 1, \ldots, m-1 \}$ then
$$\mathrm{Aut}_F(A) = \{ G_c \ \vert \ c \in K^{\times} \} \cong \mathrm{Ker}(N_{K/F}),$$
and all automorphisms of $A$ are inner. In particular, $\mathrm{Aut}_F(A)$ is abelian.
\item[(vii)] Let $c \in K \setminus F$ and suppose there exists $j \in \mathbb{N}$ such that $c^j \in F^{\times}$. Let $j$ be minimal. Then $\langle G_c \rangle \cong \mathbb{Z}/j \mathbb{Z}$ is a cyclic subgroup of $\mathrm{Aut}_F(A)$.
\end{itemize}
\end{theorem}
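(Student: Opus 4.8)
The plan is to deduce the entire theorem from the classification of $F$-automorphisms in Corollary~\ref{cor:t^m-a automorphism field result}, the inner-automorphism machinery of Proposition~\ref{prop:Inner automorphisms nucleus}, the fact that $\mathrm{Nuc}(A)=K$ (Corollary~\ref{cor:Nucleus of Nonassociative cyclic algebra}), and Hilbert's Theorem~90. For (i), I first note that $t^m-a$ with $a\in K\setminus F$ is right semi-invariant by Theorem~\ref{thm:right semi invariant conditions}(iii); hence Corollary~\ref{cor:inner automorphisms of S_f, D[t;sigma,delta]} guarantees that $x\mapsto(c^{-1}\circ x)\circ c$ is an inner automorphism for each $c\in K^\times$, and expanding this expression using $c\in K=\mathrm{Nuc}(A)$ and $t^ic=\sigma^i(c)t^i$ recovers the explicit formula~\eqref{eqn:form of G_c t^m-a automorphism}.

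For (ii) I would compute the telescoping product $\prod_{l=0}^{i-1}\sigma^l(c^{-1}\sigma(c))=c^{-1}\sigma^i(c)$, which identifies $G_c$ with $H_{\mathrm{id},k}$ for $k=c^{-1}\sigma(c)$; conversely, any $H_{\mathrm{id},l}\in\mathrm{Aut}_F(A)$ satisfies $N_{K/F}(l)=1$ by Corollary~\ref{cor:t^m-a automorphism field result} (taking $j=0$), so Hilbert~90 writes $l=c^{-1}\sigma(c)$ and $H_{\mathrm{id},l}=G_c$ is inner. Part (iii) then drops out by comparing the $t^1$-coefficients of $G_c$ and $G_d$.

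The composition rule $G_c\circ G_d=G_{cd}$, a one-line check using commutativity of $K$, makes $c\mapsto G_c$ a homomorphism $K^\times\to\mathrm{Aut}_F(A)$ with kernel $\{c:\sigma(c)=c\}=F^\times$; choosing $c\in K\setminus F$ yields $G_c\neq\mathrm{id}$ and proves (iv). For (v), post-composing with the map $c\mapsto c^{-1}\sigma(c)$ and invoking Hilbert~90 identifies $N=\{G_c\}$ with the abelian group $\mathrm{Ker}(N_{K/F})$; normality I would obtain from the conjugation formula recorded before Proposition~\ref{prop:Inner automorphisms nucleus}, noting that an automorphism $H$ preserves $\mathrm{Nuc}(A)=K$, so that $H\circ G_c\circ H^{-1}=G_{H(c)}\in N$. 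For (vi), Corollary~\ref{cor:t^m-a automorphism field result} writes any automorphism as $H_{\sigma^j,k}$ with $\sigma^j(a)a^{-1}=N_{K/F}(k)\in N_{K/F}(K^\times)$, and the hypothesis forbids $j\in\{1,\dots,m-1\}$, forcing $j=0$ so that every automorphism lies in $N$.

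Finally, for (vii) I would use $G_c^i=G_{c^i}$ together with the triviality criterion $G_e=\mathrm{id}\iff e\in F$ established in (v): minimality of $j$ gives $G_{c^i}\neq\mathrm{id}$ for $1\le i<j$, while $c^j\in F^\times$ gives $G_c^j=G_{c^j}=\mathrm{id}$, so $G_c$ has order exactly $j$ and $\langle G_c\rangle\cong\mathbb{Z}/j\mathbb{Z}$. I expect the one genuinely delicate point to be the normality in (v): one must check that conjugating the inner automorphism $G_c$ by an \emph{arbitrary} automorphism $H$ stays inside $N$, which works precisely because $H$ restricts to an automorphism of the nucleus $K$, so $H(c)\in K^\times$ and the conjugate is again of the form $G_{H(c)}$. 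Everything else is routine bookkeeping once Corollary~\ref{cor:t^m-a automorphism field result}, the telescoping identity, and Hilbert~90 are in hand.
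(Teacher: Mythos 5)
Your proposal is correct and follows essentially the same route as the paper: both rest on Corollary \ref{cor:t^m-a automorphism field result}, the identification $G_c=H_{\mathrm{id},c^{-1}\sigma(c)}$ via the telescoping product, Hilbert~90, and Corollary \ref{cor:inner automorphisms of S_f, D[t;sigma,delta]} (reached in your case through right semi-invariance of $t^m-a$, in the paper's through $\mathrm{Nuc}(A)=K$, which amounts to the same thing). The only point handled differently is normality in (v), where the paper performs the explicit coefficient computation $H_{\sigma^j,k}\circ G_c\circ H_{\sigma^j,k}^{-1}=G_{\sigma^j(c)}$ while you invoke the general conjugation formula for inner automorphisms together with preservation of the nucleus; both are valid and reach the same identity.
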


\begin{proof}
\begin{itemize}
\item[(i)] 
A straightforward calculation shows
$$G_c \Big( \sum_{i=0}^{m-1} x_i t^i \Big) = \Big( c^{-1} \sum_{i=0}^{m-1} x_i t^i  \Big) c,$$
for all $c \in K^{\times}$. Furthermore, $D = \mathrm{Nuc}(A)$ by Corollary \ref{cor:Nucleus of Nonassociative cyclic algebra} and hence $G_c$ are inner automorphisms by Corollary \ref{cor:inner automorphisms of S_f, D[t;sigma,delta]}.
\item[(ii)] If $k = c^{-1} \sigma(c)$, then
\begin{align*}
H_{\mathrm{id}, k} \big( \sum_{i=0}^{m-1} x_i t^i \big) &= x_0 + \sum_{i=1}^{m-1} x_i \Big( \prod_{l=0}^{i-1} \sigma^l(k) \Big) t^i \\ &= x_0 + \sum_{i=1}^{m-1} x_i c^{-1} \sigma^i(c) t^i = G_c \big( \sum_{i=0}^{m-1} x_i t^i \big),
\end{align*}
and thus $G_c = H_{\mathrm{id},k}$. Suppose $H_{\mathrm{id},l} \in \mathrm{Aut}_F(A)$ for some $l \in K^{\times}$, then $N_{K/F}(l) = 1$ by Corollary \ref{cor:t^m-a automorphism field result} and so there exists $c \in K^{\times}$ such that $l = c^{-1}\sigma(c)$ by Hilbert 90. This means $H_{\mathrm{id},l} = G_c$ by the above calculation.
\item[(iii)] Let $k = c^{-1} \sigma(c)$ and $l = d^{-1} \sigma(d)$, so that $G_c = H_{\mathrm{id},k}$ and $G_d = H_{\mathrm{id},l}$ by (ii). Therefore $G_c = G_d$ if and only if $H_{\mathrm{id},k} = H_{\mathrm{id},l}$ if and only if $k = l$.
\item[(iv)] $G_c$ is a non-trivial inner automorphism of $A$ for all $c \in K \setminus F$.
\item[(v)] Note $N = \{ H_{\mathrm{id},k} \ \vert \ k \in \mathrm{Ker}(N_{K/F}) \}$ by (ii) and Corollary \ref{cor:t^m-a automorphism field result} and $N$ is a subgroup of $\mathrm{Aut}_F(A)$ by Theorem \ref{thm:automorphism_of_S_f_division_case}(i). Furthermore, a straightforward calculation shows $G_c \circ G_d = G_{cd} = G_{dc} = G_d \circ G_c$, for all $c, d \in K^{\times}$, i.e. $N$ is abelian. We are left to prove $N$ is a normal subgroup of $\mathrm{Aut}_F(A)$: Let $c \in K^{\times}$ and $H \in \mathrm{Aut}_F(A)$. Then $H = H_{\sigma^j,k}$ for some $j \in \{ 0, \ldots, m-1 \}$, $k \in K^{\times}$ such that $\sigma^j(a) = N_{K/F}(k)a$ by Corollary \ref{cor:t^m-a automorphism field result}. Additionally the inverse of $H_{\sigma^j,k}$ is $H_{\sigma^{-j},\sigma^{-j}(k^{-1})}$, and we have
\begin{align*}
H_{\sigma^j,k} \Big( &G_c \Big( H_{\sigma^j,k}^{-1} \Big( \sum_{i=0}^{m-1} x_i t^i \Big) \Big) \Big) \\
&= H_{\sigma^j,k} \Big( G_c \Big( \sigma^{-j}(x_0) + \sum_{i=1}^{m-1} \sigma^{-j}(x_i) \big( \prod_{l=0}^{i-1} \sigma^{l-j}(k^{-1}) \big) t^i \Big) \Big) \\
&= H_{\sigma^j,k} \Big( \sigma^{-j}(x_0) + \sum_{i=1}^{m-1} \sigma^{-j}(x_i) \big( \prod_{l=0}^{i-1} \sigma^{l-j}(k^{-1}) \big) c^{-1} \sigma^i(c) t^i \Big) \\
&= x_0 + \sum_{i=1}^{m-1} x_i \big( \prod_{l=0}^{i-1} \sigma^{l}(k^{-1}) \big) \sigma^j(c^{-1}) \sigma^{i+j}(c) \big( \prod_{l=0}^{i-1} \sigma^{l}(k) \big) t^i \\
&= x_0 + \sum_{i=1}^{m-1} x_i \sigma^j(c^{-1}) \sigma^{j+i}(c) t^i = G_{\sigma^j(c)} \Big( \sum_{i=0}^{m-1} x_i t^i \Big),
\end{align*}
hence $H_{\sigma^j,k} \circ G_c \circ H_{\sigma^j,k}^{-1} = G_{\sigma^j(c)} \in N$ which yields the assertion.
\item[(vi)] Here $\mathrm{Aut}_{F}(A) = \{ H_{\mathrm{id},k} \ \vert \ k \in \mathrm{Ker}(N_{K/F}) \}$ by Corollary \ref{cor:t^m-a automorphism field result} and hence the result follows by (ii) and (iii).
\item[(vii)] This is just Corollary \ref{cor:G_c subgroup order j cyclic}.
\end{itemize}
\end{proof}

In some instances, $\mathrm{Aut}_F(A)$ is a non-abelian group:

\begin{proposition}
Suppose $H_{\sigma^j,k} \in \mathrm{Aut}_F(A)$ for some $j \in \{ 1, \ldots, m-1 \}$ and $k \in K^{\times}$. If there exists $c \in K^{\times}$ and $i \in \{ 1, \ldots, m-1 \}$ such that $c^{-1} \sigma^i(c) \notin \mathrm{Fix}(\sigma^j)$, then $\mathrm{Aut}_F(A)$ is a non-abelian group.
\end{proposition}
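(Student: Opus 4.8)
The plan is to prove non-commutativity by exhibiting two automorphisms of $A$ that fail to commute: the given $H_{\sigma^j,k}$ with $j\neq 0$, and the inner automorphism $G_c$ attached to the very element $c\in K^{\times}$ furnished by the hypothesis. Both are genuine members of $\mathrm{Aut}_F(A)$ --- the former by assumption, the latter by Theorem \ref{thm:t^m-a_automorphism_field}(i) --- so it suffices to show $H_{\sigma^j,k}\circ G_c\neq G_c\circ H_{\sigma^j,k}$.

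First I would recall the conjugation identity established inside the proof of Theorem \ref{thm:t^m-a_automorphism_field}(v), namely $H_{\sigma^j,k}\circ G_c\circ H_{\sigma^j,k}^{-1}=G_{\sigma^j(c)}$. Thus $H_{\sigma^j,k}$ and $G_c$ commute precisely when $G_{\sigma^j(c)}=G_c$. By the criterion in Theorem \ref{thm:t^m-a_automorphism_field}(iii), this equality holds if and only if $\sigma^j(c)^{-1}\sigma^{j+1}(c)=c^{-1}\sigma(c)$; since the left side equals $\sigma^j\big(c^{-1}\sigma(c)\big)$, this is exactly the condition $c^{-1}\sigma(c)\in\mathrm{Fix}(\sigma^j)$. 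So the whole statement reduces to showing that the chosen $c$ satisfies $c^{-1}\sigma(c)\notin\mathrm{Fix}(\sigma^j)$.

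The crux is to pass from the hypothesis $c^{-1}\sigma^i(c)\notin\mathrm{Fix}(\sigma^j)$ (for some $i\in\{1,\dots,m-1\}$) to $c^{-1}\sigma(c)\notin\mathrm{Fix}(\sigma^j)$. Here I would use the telescoping identity
$$c^{-1}\sigma^i(c)=\prod_{l=0}^{i-1}\sigma^l\big(c^{-1}\sigma(c)\big),$$
which follows because $\prod_{l=0}^{i-1}\sigma^l(c)^{-1}\sigma^{l+1}(c)$ collapses. Since $\mathrm{Fix}(\sigma^j)$ is a field, hence closed under products, the left-hand side lying outside $\mathrm{Fix}(\sigma^j)$ forces some factor $\sigma^{l_0}\big(c^{-1}\sigma(c)\big)$ to lie outside $\mathrm{Fix}(\sigma^j)$ as well. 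As $\sigma$ commutes with $\sigma^j$ and is bijective, $\sigma^{l_0}$ restricts to a bijection of $\mathrm{Fix}(\sigma^j)$, so $\sigma^{l_0}(x)\in\mathrm{Fix}(\sigma^j)$ if and only if $x\in\mathrm{Fix}(\sigma^j)$; applied to $x=c^{-1}\sigma(c)$ this yields $c^{-1}\sigma(c)\notin\mathrm{Fix}(\sigma^j)$, as needed.

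I expect the only delicate point to be this last reduction --- verifying the telescoping identity and the $\sigma^{l_0}$-invariance of $\mathrm{Fix}(\sigma^j)$ carefully --- after which the conjugation formula from Theorem \ref{thm:t^m-a_automorphism_field} together with part (iii) immediately closes the argument: $G_{\sigma^j(c)}\neq G_c$, so $H_{\sigma^j,k}$ and $G_c$ do not commute, and hence $\mathrm{Aut}_F(A)$ is a non-abelian group.
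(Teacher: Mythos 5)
Your proof is correct, but it follows a genuinely different route from the one in the paper. The paper argues by direct computation: it evaluates $H_{\sigma^j,k}(G_c(t^i))$ and $G_c(H_{\sigma^j,k}(t^i))$ for the specific $i$ supplied by the hypothesis, and observes that the two results differ in their $t^i$-coefficient exactly when $\sigma^j\big(c^{-1}\sigma^i(c)\big)\neq c^{-1}\sigma^i(c)$ --- two lines and done, with no reduction to $i=1$ needed. You instead work structurally: you invoke the conjugation formula $H_{\sigma^j,k}\circ G_c\circ H_{\sigma^j,k}^{-1}=G_{\sigma^j(c)}$ from the proof of Theorem \ref{thm:t^m-a_automorphism_field}(v) and the equality criterion $G_c=G_d\iff c^{-1}\sigma(c)=d^{-1}\sigma(d)$ from part (iii), which reduces everything to the case $i=1$; you then bridge the gap to the stated hypothesis via the telescoping identity $c^{-1}\sigma^i(c)=\prod_{l=0}^{i-1}\sigma^l\big(c^{-1}\sigma(c)\big)$ together with the facts that $\mathrm{Fix}(\sigma^j)$ is multiplicatively closed and that $\sigma^{l_0}$ preserves and reflects membership in $\mathrm{Fix}(\sigma^j)$. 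All of these steps check out (the telescoping is legitimate because $K$ is commutative, and the citations to parts (iii) and (v) are accurate). The paper's computation is shorter and self-contained; your argument is longer but buys a small extra insight, namely that the hypothesis ``there exists $i$ with $c^{-1}\sigma^i(c)\notin\mathrm{Fix}(\sigma^j)$'' is in fact equivalent to its $i=1$ instance, and it makes transparent that the obstruction to commutativity is precisely $G_{\sigma^j(c)}\neq G_c$.
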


\begin{proof}
We have
$$H_{\sigma^j,k}(G_c(t^i)) = H_{\sigma^j,k} \big( c^{-1} \sigma^i(c)t^i \big) = \sigma^j \big( c^{-1} \sigma^i(c) \big) \Big( \prod_{l=0}^{i-1} \sigma^l(k) \Big) t^i,$$
and
$$G_c(H_{\sigma^j,k}(t^i)) = G_c \Big( \Big( \prod_{l=0}^{i-1} \sigma^l(k) \Big) t^i \Big) = \Big( \prod_{l=0}^{i-1} \sigma^l(k) \Big) c^{-1} \sigma^i(c) t^i,$$
for all $i \in \{ 1, \ldots, m-1 \}$. Thus if there exists $i \in \{ 1, \ldots, m-1 \}$ such that $c^{-1} \sigma^i(c) \notin \mathrm{Fix}(\sigma^j)$, then $G_c \circ H_{\sigma^j,k} \neq H_{\sigma^j,k} \circ G_c$ and $\mathrm{Aut}_F(A)$ is not abelian.
\end{proof}

We now take a closer look at the inner automorphisms in the case where $F$ does not contain certain primitive roots of unity:

\begin{theorem} \label{thm:Aut(S_f) F does not contain primitive root of unity}
Suppose $a \in K^{\times}$ does not lie in any proper subfield of $K$ and $F$ does not contain a non-trivial $m^{\text{th}}$ root of unity. Then 
$$\mathrm{Aut}_F(A) = \{ G_c \ \vert \ c \in K^{\times} \} \cong \mathrm{Ker}(N_{K/F}),$$
and all automorphisms of $A$ are inner.
\end{theorem}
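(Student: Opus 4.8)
The plan is to deduce this directly from Theorem \ref{thm:t^m-a_automorphism_field}(vi), whose hypothesis is precisely that $\sigma^j(a)a^{-1} \notin N_{K/F}(K^{\times})$ for all $j \in \{1,\ldots,m-1\}$. Once this hypothesis is verified, part (vi) yields verbatim the claimed identity $\mathrm{Aut}_F(A) = \{ G_c \mid c \in K^{\times} \} \cong \mathrm{Ker}(N_{K/F})$ together with the assertion that every automorphism is inner. So the whole task reduces to checking that hypothesis, which by Corollary \ref{cor:t^m-a automorphism field result} amounts to showing that no automorphism $H_{\sigma^j,k}$ with $1 \le j \le m-1$ can exist.

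First I would argue by contradiction: suppose some $j \in \{1,\ldots,m-1\}$ satisfies $\sigma^j(a)a^{-1} \in N_{K/F}(K^{\times})$. Since $N_{K/F}(K^{\times}) \subseteq F^{\times}$, this forces $\sigma^j(a) = \lambda a$ for some $\lambda \in F^{\times}$. The key step is then a short iteration: because $\lambda \in F = \mathrm{Fix}(\sigma)$, repeated application of $\sigma^j$ gives $\sigma^{lj}(a) = \lambda^l a$ for all $l \ge 0$. Putting $d = \gcd(j,m)$ and $l = m/d$ makes $lj$ a multiple of $m$, so $\sigma^{lj} = \mathrm{id}$ and hence $\lambda^{m/d} = 1$. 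As $m/d$ divides $m$, it follows that $\lambda$ is an $m^{\text{th}}$ root of unity lying in $F$.

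Next I would split into two cases according to whether $\lambda = 1$, and observe that the two hypotheses of the theorem eliminate the two cases respectively. If $\lambda \neq 1$, then $F$ contains a non-trivial $m^{\text{th}}$ root of unity, contradicting the hypothesis. If $\lambda = 1$, then $\sigma^j(a) = a$, so $a \in \mathrm{Fix}(\sigma^j)$; by the Galois correspondence $[\mathrm{Fix}(\sigma^j):F] = d$, and since $1 \le j \le m-1$ gives $d = \gcd(j,m) < m$, the field $\mathrm{Fix}(\sigma^j)$ is a proper subfield of $K$, contradicting the assumption that $a$ lies in no proper subfield of $K$. Either way we obtain a contradiction, so the hypothesis of Theorem \ref{thm:t^m-a_automorphism_field}(vi) holds and the proof is complete.

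The computation is genuinely short, so the only point requiring care — and the place I expect the real content to sit — is the clean case split that uses \emph{both} hypotheses at once: the root-of-unity assumption rules out $\lambda \neq 1$, while the subfield assumption rules out $\lambda = 1$ after identifying the relevant proper subfield as $\mathrm{Fix}(\sigma^j)$ of degree $d$ over $F$. Keeping track of $d = \gcd(j,m)$ and confirming $d < m$ for every admissible $j$ is the main thing to get right.
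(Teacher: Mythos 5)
Your proof is correct and follows essentially the same route as the paper's: both argue by contradiction from the existence of some $H_{\sigma^j,k}$ with $j\geq 1$, iterate to force $N_{K/F}(k)$ to be a root of unity, and then invoke the two hypotheses. The only (cosmetic) difference is that you iterate the scalar relation $\sigma^j(a)=\lambda a$ directly and split on $\lambda=1$ versus $\lambda\neq 1$, whereas the paper iterates the automorphism $H_{\sigma^j,k}$ itself and shows $N_{K/F}(k)$ is a \emph{primitive} $n^{\text{th}}$ root of unity with $n>1$; your version is, if anything, slightly leaner.
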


\begin{proof}
We first prove every automorphism of $A$ has the form $H_{\mathrm{id},k}$, then
$$\mathrm{Aut}_F(A) = \{ G_c \ \vert \ c \in K^{\times} \} \cong \mathrm{Ker}(N_{K/F}),$$
by Theorem \ref{thm:t^m-a_automorphism_field} and all automorphisms of $A$ are inner. Suppose, for a contradiction, that there exists $j \in \{ 1, \ldots, m-1 \}$ and $k \in K^{\times}$ such that $H_{\sigma^j,k} \in \mathrm{Aut}_{F}(A)$. This implies $H_{\sigma^j,k}^2 = H_{\sigma^j,k} \circ H_{\sigma^j,k} \in \mathrm{Aut}_{F}(A)$, and
\begin{equation} \label{eqn:Aut(S_f) F does not contain primitive root of unity 1}
\begin{split}
H_{\sigma^j,k}^2 & \Big( \sum_{i=0}^{m-1} x_i t^i \Big) = \sigma^{2j}(x_0)  + \sum_{i=1}^{m-1} \sigma^{2j}(x_i) \Big( \prod_{q=0}^{i-1} \sigma^{j+q}(k) \sigma^q(k) \Big) t^i.
\end{split}
\end{equation}
Now $H_{\sigma^j,k}^2$ must have the form $H_{\sigma^{2j},l}$ for some $l \in K^{\times}$ by Corollary \ref{cor:t^m-a automorphism field result}, and comparing \eqref{automorphism_of_Sf form of H} and \eqref{eqn:Aut(S_f) F does not contain primitive root of unity 1} yields $l = k \sigma^j(k)$. Similarly, $H_{\sigma^j,k}^3 = H_{\sigma^{3j},s} \in \mathrm{Aut}_{F}(A)$ where $s = k \sigma^j(k) \sigma^{2j}(k)$.
Continuing in this manner we conclude the maps $H_{\sigma^j,k}, H_{\sigma^{2j},l}, H_{\sigma^{3j},s}, \ldots$ are all $F$-automorphisms of $A$, therefore
\begin{align} \label{eqn:Aut(S_f) F does not contain primitive root of unity 2}
\begin{split}
\sigma^j(a) &= N_{K/F}(k) a, \\
\sigma^{2j}(a) &= N_{K/F}(k \sigma^j(k)) a = N_{K/F}(k)^2 a, \\
\vdots & \qquad \qquad \vdots \\
a = \sigma^{n j}(a) &= N_{K/F}(k)^{n} a,
\end{split}
\end{align}
by Corollary \ref{cor:t^m-a automorphism field result}, where $n = m/\mathrm{gcd}(j,m)$ is the order of $\sigma^j$.

Note that $\sigma^{ij}(a) \neq a$ for all $i \in \{ 1, \ldots, n-1 \}$ since $a$ is not contained in any proper subfield of $K$. Therefore $N_{K/F}(k)^{n} = 1$ and $N_{K/F}(k)^i \neq 1$ for all $i \in \{1, \ldots, n - 1 \}$ by \eqref{eqn:Aut(S_f) F does not contain primitive root of unity 2}, i.e. $N_{K/F}(k)$ is a primitive $n^{\text{th}}$ root of unity, thus also an $m^{\text{th}}$ root of unity, a contradiction.
\end{proof}

\begin{example} \label{ex:cubic cyclic extension inner automorphisms}
Let $F = \mathbb{Q}$ and $K = \mathbb{Q}(\theta)$ where $\theta$ is a root of $T(y) = y^3 + y^2 - 2y - 1 \in F[y]$. Then $K/F$ is a cubic cyclic Galois field extension, its Galois group is generated by $\sigma$ where $\sigma(\theta) = - \theta^2 - \theta +1$ \cite[p.~199]{hanke2005twisted}. Suppose $A = (K/F,\sigma,a)$ for some $a \in K \setminus F$. As $F$ does not contain a non-trivial $3^{\text{rd}}$ root of unity, Theorem \ref{thm:Aut(S_f) F does not contain primitive root of unity} implies $\mathrm{Aut}_F(A) = \{ G_c \ \vert \ c \in K^{\times} \} \cong \mathrm{Ker}(N_{K/F}),$ and all automorphisms of $A$ are inner.
\end{example}

We now investigate the automorphisms of $A$ in the case when $F$ contains a primitive $m^{\text{th}}$ root of unity. It is well-known that if $F$ contains a primitive $m^{\text{th}}$ root of unity and $K/F$ is cyclic of degree $m$ (where $m$ and $\mathrm{Char}(F)$ are coprime), then $K = F(d)$ where $d$ is a root of the irreducible polynomial $x^m - e \in F[x]$ for some $e \in F^{\times}$ \cite[\S VI.6]{lang2002algebra}. 

\begin{lemma} \label{lem:eigenvalues and eigenvectors of cyclic extension automorphisms}
Suppose $F$ contains a primitive $m^{\text{th}}$ root of unity and either $F$ has characteristic $0$ or $\mathrm{gcd}(\mathrm{char}(F),m)=1$. Write $K = F(d)$ where $d$ is a root of the irreducible polynomial $x^m - e$ for some $e \in F^{\times}$. Then $\sigma^j(\lambda d^i) = \omega^{ij} \lambda d^i$, for all $\lambda \in F$ and $i, j \in \{ 0, \ldots, m-1 \}$ with $\omega \in F^{\times}$ a primitive $m^{\text{th}}$ root of unity. Furthermore, if $m$ is prime then $\lambda d^i$ are the only possible eigenvectors of $\sigma^j$.
\end{lemma}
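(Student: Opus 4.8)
The plan is to exploit the explicit description of $K$ as a radical extension $F(d)$ with $d^m=e$, and compute directly how $\sigma$ acts on the power basis $\{1,d,\dots,d^{m-1}\}$. First I would observe that since $K=F(d)$ with $x^m-e$ irreducible over $F$, the set $\{1,d,\dots,d^{m-1}\}$ is an $F$-basis of $K$, and the conjugates of $d$ (the roots of $x^m-e$) are precisely $\omega^l d$ for $l\in\{0,\dots,m-1\}$, where $\omega$ is a fixed primitive $m^{\text{th}}$ root of unity in $F$. Because $\sigma$ generates $\mathrm{Gal}(K/F)$ and permutes these conjugates cyclically, after possibly replacing $\omega$ by a suitable power (i.e. relabelling so that $\sigma$ is the generator sending $d\mapsto\omega d$) we get $\sigma(d)=\omega d$. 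This is the crux computation and I would state it carefully.

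Granting $\sigma(d)=\omega d$, the formula $\sigma^j(\lambda d^i)=\omega^{ij}\lambda d^i$ follows by a short calculation: $\sigma$ fixes $F$ pointwise, so $\sigma^j(\lambda d^i)=\lambda\,\sigma^j(d)^i=\lambda(\omega^j d)^i=\omega^{ij}\lambda d^i$, using that $\omega\in F$ is fixed by $\sigma$. This shows each $\lambda d^i$ is an eigenvector of $\sigma^j$ with eigenvalue $\omega^{ij}$. I would present this as a single display, being careful not to break the math environment with a blank line.

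For the final claim, that when $m$ is prime the only eigenvectors of $\sigma^j$ are the $\lambda d^i$, I would fix $j\in\{1,\dots,m-1\}$ (the case $j=0$ being trivial since $\sigma^0=\mathrm{id}$) and note that $\sigma^j$ is a diagonalisable $F$-linear map with respect to the basis $\{1,d,\dots,d^{m-1}\}$, having eigenvalues $\omega^{ij}$ for $i=0,\dots,m-1$. The key point is that these eigenvalues are \emph{distinct}: if $\omega^{ij}=\omega^{i'j}$ then $m\mid (i-i')j$, and since $m$ is prime and $0<j<m$ we have $\gcd(j,m)=1$, forcing $m\mid(i-i')$, hence $i=i'$ as $i,i'\in\{0,\dots,m-1\}$. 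Distinct eigenvalues give one-dimensional eigenspaces, namely $Fd^i$, so every eigenvector lies in some $Fd^i$ and thus has the form $\lambda d^i$.

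The main obstacle will be the bookkeeping in the first paragraph: justifying cleanly that one may take $\sigma(d)=\omega d$ for the \emph{given} generator $\sigma$ and the \emph{stated} primitive root $\omega$, rather than merely for some generator and some primitive root. This amounts to matching the labelling of the Galois group to the labelling of the roots, and I would handle it by allowing $\omega$ to be chosen as whichever primitive $m^{\text{th}}$ root of unity satisfies $\sigma(d)=\omega d$ (such a root exists because $\sigma(d)$ must be a conjugate $\omega^l d$ with $\omega^l$ primitive, as $\sigma$ has order $m$); everything downstream depends only on $\omega$ being \emph{a} primitive $m^{\text{th}}$ root of unity, which is exactly what the statement asserts. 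The hypothesis $\gcd(\mathrm{char}(F),m)=1$ (or characteristic $0$) is used to guarantee that $x^m-e$ is separable and that $F$ genuinely contains a primitive $m^{\text{th}}$ root of unity distinct roots, so I would flag where separability enters.
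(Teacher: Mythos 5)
Your proof is correct, and for the first assertion it is essentially the paper's argument: the paper observes $\sigma(d)^m=\sigma(e)=e=d^m$, so $\sigma(d)=\omega d$ for some $m^{\text{th}}$ root of unity $\omega\in F$, which must be primitive because $\sigma$ has order $m$; your "conjugates of $d$ are $\omega^l d$" phrasing is the same computation, and your resolution of the labelling issue (define $\omega$ to be $\sigma(d)/d$ and check primitivity from the order of $\sigma$) is exactly what the paper does implicitly. The one genuine difference is the final claim for $m$ prime: the paper does not prove it at all but cites Steele's thesis (\cite[Lemma 6.2.7]{AndrewPhD}), whereas you supply a complete argument via distinct eigenvalues --- $\omega^{ij}=\omega^{i'j}$ forces $m\mid(i-i')j$, hence $i=i'$ since $\gcd(j,m)=1$, so each eigenspace of $\sigma^j$ is the line $Fd^i$. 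That is the standard argument and makes the lemma self-contained, which is a mild improvement. One small caveat on both your write-up and the statement itself: for $j=0$ the map $\sigma^0=\mathrm{id}$ has every nonzero element as an eigenvector, so the "only possible eigenvectors" claim should really be read as asserting this for $j\in\{1,\dots,m-1\}$; calling $j=0$ "trivial" papers over this, but it is a defect of the statement rather than of your argument, and it is harmless in the applications.
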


\begin{proof}
When $m$ is prime this is \cite[Lemma 6.2.7]{AndrewPhD}. We are left to prove the first assertion when $m$ is not necessarily prime, this is similar to the first part of the proof of \cite[Lemma 6.2.7]{AndrewPhD}: We have $\sigma(d^m) = \sigma(e) = e = d^m$, therefore the action of $\sigma$ on $d$ is given by $\sigma(d) = \omega d$ where $\omega$ is a primitive $m^{\text{th}}$ root of unity. Thus $\sigma^j(d) = \omega^jd$ and $\sigma^j(d^i) = \omega^{ij}d^i$ for all $i,j \in \{ 0, \ldots, m-1 \}$.
\end{proof}

When $m$ is prime, Corollary \ref{cor:t^m-a automorphism field result} and Lemma \ref{lem:eigenvalues and eigenvectors of cyclic extension automorphisms} yield:

\begin{proposition} \label{prop:contains rot of unity Ker}
Suppose $m$ is prime, $F$ has characteristic not $m$ and contains a primitive $m^{\text{th}}$ root of unity. Write $K = F(d)$ where $d$ is a root of the irreducible polynomial $x^m-e \in F[x]$ for some $e \in F^{\times}$.
\begin{itemize}
\item[(i)] If $a \neq \lambda d^i$ for all $\lambda \in F^{\times}$, $i \in \{ 1, \ldots, m-1 \}$ then $\mathrm{Aut}_F(A) \cong \mathrm{Ker}(N_{K/F})$ and all automorphisms of $A$ are inner.
\item[(ii)] Suppose $a = \lambda d^i$ for some $\lambda \in F^{\times}$, $i \in \{ 1, \ldots, m-1 \}$. If there exists $j \in \{ 1, \ldots, m-1 \}$ and $k \in K^{\times}$ such that $N_{K/F}(k) = \omega^{ij}$ where $\omega \in F^{\times}$ is the primitive $m^{\text{th}}$ root of unity satisfying $\sigma(d) = \omega d$, then $\langle H_{\sigma^j,k} \rangle$ is a cyclic subgroup of $\mathrm{Aut}_F(A)$ of order $m^2$. Otherwise $\mathrm{Aut}_F(A) \cong \mathrm{Ker}(N_{K/F})$ and all automorphisms of $A$ are inner.
\end{itemize}
\end{proposition}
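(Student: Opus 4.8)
The plan is to reduce everything to Corollary \ref{cor:t^m-a automorphism field result}, which says every $F$-automorphism of $A$ has the form $H_{\sigma^j,k}$ with $\sigma^j(a) = N_{K/F}(k)a$, and then to control the quantity $\sigma^j(a)a^{-1}$ using the eigenvalue description of $\sigma^j$ from Lemma \ref{lem:eigenvalues and eigenvectors of cyclic extension automorphisms}. Recall $\sigma(d) = \omega d$ for a primitive $m$th root of unity $\omega \in F^\times$, so $\sigma^j(d^i) = \omega^{ij}d^i$. Since $N_{K/F}(K^\times) \subseteq F^\times$, an automorphism with $j \neq 0$ can only exist when $\sigma^j(a)a^{-1} \in F^\times$, and the first task is to decide exactly when this happens.

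For part (i), I would write $a = \sum_{i=0}^{m-1}\lambda_i d^i$ with $\lambda_i \in F$ in the $F$-basis $\{1,d,\dots,d^{m-1}\}$ of $K$. Because $a \in K \setminus F$ and, by hypothesis, $a \neq \lambda d^i$ for every $\lambda \in F^\times$ and $i \in \{1,\dots,m-1\}$, the element $a$ has at least two nonzero coefficients. Suppose $\sigma^j(a) = \mu a$ for some $\mu \in F^\times$ and some $j \in \{1,\dots,m-1\}$; comparing coefficients gives $\omega^{ij} = \mu$ for every index $i$ with $\lambda_i \neq 0$. Picking two distinct such indices $i \neq i'$ yields $\omega^{(i-i')j}=1$, hence $m \mid (i-i')j$; as $m$ is prime and $\gcd(j,m)=1$ this forces $m \mid (i-i')$, which is impossible for $0 \le i \neq i' \le m-1$. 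Thus $\sigma^j(a)a^{-1} \notin F^\times$, and since $N_{K/F}(K^\times) \subseteq F^\times$ we get $\sigma^j(a)a^{-1} \notin N_{K/F}(K^\times)$ for all $j \in \{1,\dots,m-1\}$; Theorem \ref{thm:t^m-a_automorphism_field}(vi) then gives $\mathrm{Aut}_F(A) = \{G_c \mid c \in K^\times\} \cong \mathrm{Ker}(N_{K/F})$ with every automorphism inner.

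For part (ii), write $a = \lambda d^i$, so $\sigma^j(a) = \omega^{ij}a$ and hence $\sigma^j(a)a^{-1} = \omega^{ij} \in F^\times$ for every $j$. By Corollary \ref{cor:t^m-a automorphism field result}, $H_{\sigma^j,k} \in \mathrm{Aut}_F(A)$ for some $j \in \{1,\dots,m-1\}$ precisely when $N_{K/F}(k) = \omega^{ij}$. If no such pair $j,k$ exists then $\omega^{ij} = \sigma^j(a)a^{-1} \notin N_{K/F}(K^\times)$ for all $j \in \{1,\dots,m-1\}$, and Theorem \ref{thm:t^m-a_automorphism_field}(vi) again yields $\mathrm{Aut}_F(A) \cong \mathrm{Ker}(N_{K/F})$ with all automorphisms inner. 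In the remaining case I would fix such a pair $j,k$ and compute the order of $H_{\sigma^j,k}$ using the composition rule $H_{\tau,k}\circ H_{\rho,b}=H_{\tau\rho,\tau(b)k}$ from the proof of Theorem \ref{thm:automorphism_of_S_f_division_case}; an induction then gives $H_{\sigma^j,k}^{\,r} = H_{\sigma^{rj},\,k_r}$ with $k_r = \prod_{s=0}^{r-1}\sigma^{sj}(k)$.

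Determining this order is the crux. Since $\gcd(j,m)=1$, the automorphism $H_{\sigma^{rj},k_r}$ can equal the identity only if $m \mid r$; and when $r=m$, the reindexing $s \mapsto sj \bmod m$ shows $k_m = \prod_{l=0}^{m-1}\sigma^l(k) = N_{K/F}(k) = \omega^{ij}$, so $H_{\sigma^j,k}^{\,m} = H_{\mathrm{id},\omega^{ij}}$. Because $i,j \in \{1,\dots,m-1\}$ and $m$ is prime, $ij \not\equiv 0 \pmod m$, so $\omega^{ij}$ is itself a primitive $m$th root of unity; then $H_{\mathrm{id},\omega^{ij}}^{\,r'} = H_{\mathrm{id},\omega^{ijr'}}$ is trivial iff $m \mid r'$, so $H_{\mathrm{id},\omega^{ij}}$ has order $m$. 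Combining, $H_{\sigma^j,k}^{\,r}=\mathrm{id}$ forces $m \mid r$, say $r = mr'$, and then $m \mid r'$, whence $m^2 \mid r$; conversely $H_{\sigma^j,k}^{\,m^2}=\mathrm{id}$, so $\langle H_{\sigma^j,k}\rangle \cong \mathbb{Z}/m^2\mathbb{Z}$. The main obstacle is the bookkeeping in this power computation — in particular verifying $k_m = N_{K/F}(k)$ via the coprimality of $j$ and $m$, and confirming $\omega^{ij}\neq 1$ so that the order genuinely jumps from $m$ to $m^2$ rather than collapsing.
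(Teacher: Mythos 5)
Your proposal is correct and follows essentially the same route as the paper: reduce to Corollary \ref{cor:t^m-a automorphism field result}, use the eigenvalue action $\sigma^j(d^i)=\omega^{ij}d^i$ to decide when $\sigma^j(a)a^{-1}\in N_{K/F}(K^{\times})$, and in the nontrivial case of (ii) compute that $H_{\sigma^j,k}^{m}=H_{\mathrm{id},\omega^{ij}}$ has order $m$, giving order $m^2$. The only cosmetic differences are that in (i) you reprove the "only eigenvectors are $\lambda d^i$" statement by comparing coefficients instead of citing Lemma \ref{lem:eigenvalues and eigenvectors of cyclic extension automorphisms}, and your order bookkeeping in (ii) is somewhat more explicit than the paper's.
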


\begin{proof} 
\begin{itemize}
\item[(i)] If $a \neq \lambda d^i$ for all $\lambda \in F^{\times}$, $i \in \{ 1, \ldots, m-1 \}$ then $\sigma^j(a) \neq la$ for all $l \in F^{\times}$, $j \in \{ 1, \ldots, m-1 \}$ by Lemma \ref{lem:eigenvalues and eigenvectors of cyclic extension automorphisms}. In particular, this means $\sigma^j(a) \neq N_{K/F}(k)a$ for all $k \in K^{\times}$ and so $H_{\sigma^j,k}$ is not an automorphism of $A$ for all $j \in \{ 1, \ldots, m-1 \}$, $k \in K^{\times}$ by Corollary \ref{cor:t^m-a automorphism field result}. Therefore $\mathrm{Aut}_F(A) = \{ H_{\mathrm{id},k} \ \vert \ N_{K/F}(k)=1 \}$ again by Corollary \ref{cor:t^m-a automorphism field result} and $\mathrm{Aut}_F(A) = \{ G_c \ \vert \ c \in K^{\times} \} \cong \mathrm{ker}(N_{K/F})$ by Theorem \ref{thm:t^m-a_automorphism_field}, hence all automorphisms of $A$ are inner.
\item[(ii)] Suppose there exists $j \in \{ 1, \ldots, m-1 \}$ and $k \in K$ such that $N_{K/F}(k) = \omega^{ij}$ where $\omega \in F^{\times}$ is a primitive $m^{\text{th}}$ root of unity. Then
$$\sigma^j(a) = \omega^{ij}a = N_{K/F}(k)a,$$
by Lemma \ref{lem:eigenvalues and eigenvectors of cyclic extension automorphisms} which implies $H_{\sigma^j,k} \in \mathrm{Aut}_F(A)$ by Corollary \ref{cor:t^m-a automorphism field result}. Since $m$ is prime, $\sigma^j$ has order $m$, and $H_{\sigma^j,k} \circ \ldots \circ H_{\sigma^j,k}$ ($m$-times) becomes $H_{\mathrm{id},b}$ where $b = \omega^{ij} = N_{K/F}(k)$. As $b$ is a primitive $m^{\text{th}}$ root of unity, $H_{\mathrm{id},b}$ has order $m$ so the subgroup generated by $H_{\sigma^j,k}$ has order $m^2$.

On the other hand, if $N_{K/F}(k) \neq \omega^{ij}$ for all $k \in K^{\times}$, $j \in \{ 1, \ldots, m-1 \}$, then $\sigma^j(a) \neq N_{K/F}(k) a$ for all $k \in K^{\times}$, $j \in \{ 1, \ldots, m-1 \}$ by Lemma \ref{lem:eigenvalues and eigenvectors of cyclic extension automorphisms}, and hence $H_{\sigma^j,k} \notin \mathrm{Aut}_{F}(A)$ for all $j \in \{ 1,\ldots, m-1 \}$, $k \in K^{\times}$ by Corollary \ref{cor:t^m-a automorphism field result}. Therefore $\mathrm{Aut}_F(A) = \{ G_c \ \vert \ c \in K^{\times} \} \cong \mathrm{ker}(N_{K/F})$ by Theorem \ref{thm:t^m-a_automorphism_field}.
\end{itemize}
\end{proof}

\section{Automorphisms of Nonassociative Quaternion Algebras} \label{section:Automorphisms of Nonassociative Quaternion Algebras}

We now study the automorphisms of nonassociative cyclic algebras of degree $2$, i.e. nonassociative quaternion algebras. Suppose $\mathrm{Char}(F) \neq 2$, $K/F$ is a quadratic separable field extension with non-trivial automorphism $\sigma$, and write $K = F(\sqrt{b})$ for some $b \in K^{\times}$. Let $A = (K/F,\sigma,a)$, $a \in K \setminus F$, be a nonassociative quaternion algebra.

\begin{theorem} \label{thm:Automorphisms of nonassociative quaternion algebras}
\begin{itemize}
\item[(i)] A map $H$ is an automorphism of $A$ if and only if $H = H_{\sigma^j,k}$ for some $j \in \{ 0,1 \}$ and $k \in K^{\times}$ such that $\sigma^j(a) = N_{K/F}(k)a$.
\item[(ii)] The map $H_{\mathrm{id},c^{-1}\sigma(c)} = G_c$ defined as in \eqref{eqn:form of G_c t^m-a automorphism} is an inner automorphism of $A$ for all $c \in K^{\times}$. Moreover every automorphism of $A$ of the form $H_{\mathrm{id},k}$ can also be written in the form $G_c$ for some $c \in K^{\times}$.
\item[(iii)] $G_c$ is not trivial if and only if $c \in K \setminus F$. In particular there exists a non-trivial inner automorphism of $A$.
\item[(iv)] $\{ G_c \ \vert \ c \in K^{\times} \}$ are the only inner automorphisms of $A$.
\end{itemize}
\end{theorem}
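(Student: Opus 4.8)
Parts (i)--(iii) I would dispatch quickly as the $m=2$ specializations of the general theory already built for nonassociative cyclic algebras. Since $a \in K \setminus F$ the algebra $A$ is not associative, so (i) is immediate from Corollary \ref{cor:t^m-a automorphism field result} with $m=2$, where $\{0,\dots,m-1\}=\{0,1\}$. For (ii) the maps $G_c$ are inner automorphisms by Theorem \ref{thm:t^m-a_automorphism_field}(i) (equivalently by Corollary \ref{cor:inner automorphisms of S_f, D[t;sigma,delta]}, as $\mathrm{Nuc}(A)=K$ by Corollary \ref{cor:Nucleus of Nonassociative cyclic algebra}); the identity $G_c=H_{\mathrm{id},c^{-1}\sigma(c)}$ follows by comparing \eqref{eqn:form of G_c t^m-a automorphism} with the formula for $H_{\mathrm{id},k}$, and every $H_{\mathrm{id},l}\in\mathrm{Aut}_F(A)$ can be rewritten as some $G_c$ using Hilbert's Theorem 90, exactly as in Theorem \ref{thm:t^m-a_automorphism_field}(ii). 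For (iii) I would read off from \eqref{eqn:form of G_c t^m-a automorphism} that $G_c(x_0+x_1t)=x_0+x_1c^{-1}\sigma(c)t$, so $G_c=\mathrm{id}$ iff $c^{-1}\sigma(c)=1$ iff $c\in\mathrm{Fix}(\sigma)=F$; hence $G_c$ is non-trivial precisely for $c\in K\setminus F$, and such $c$ exist since $[K:F]=2$.

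The real content is (iv): showing no inner automorphism lies outside $\{G_c : c\in K^{\times}\}$. The plan is to take an arbitrary inner automorphism $G(x)=(c_l\circ x)\circ c$ with $0\neq c=c_0+c_1t$ and left inverse $c_l=e_0+e_1t$ satisfying $c_l\circ c=1$, and to exploit that $G$, being an automorphism of a non-associative algebra, must preserve the left nucleus $\mathrm{Nuc}_l(A)=K$ by Theorem \ref{thm:Properties of S_f petit}(i). Using the degree-two multiplication $(x+yt)\circ(u+vt)=(xu+y\sigma(v)a)+(xv+y\sigma(u))t$, I would compute the $t$-component of $G(z)$ for $z\in K$ and set it to zero, obtaining
\[
e_0c_1 z + e_1\sigma(c_0)\sigma(z)=0 \quad\text{for all } z\in K.
\]
Since $\sigma\neq\mathrm{id}$ on $K$, evaluating at $z=1$ and then at a general $z$ forces $e_0c_1=0$ and $e_1\sigma(c_0)=0$.

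A short case analysis then finishes the argument. If $c_1=0$, then $c=c_0\in K^{\times}$, the relation $e_1\sigma(c_0)=0$ gives $e_1=0$, and $c_l\circ c=1$ yields $c_l=c_0^{-1}$, so $G=G_{c_0}$ is of the desired form. If instead $c_1\neq 0$, then $e_0=0$, and since $c_l\neq 0$ rules out $e_1=0$ we must have $c_0=0$, i.e.\ $c=c_1t$, $c_l=e_1t$ with $e_1\sigma(c_1)a=1$. This case is the main obstacle, because a direct check shows $G|_K=\sigma$, so the left-nucleus argument alone does not exclude it. I would rule it out by multiplicativity: a brief computation gives $G(t)=e_1ac_1t$ and $G(t)\circ G(t)=a$, whereas $G(t\circ t)=G(a)=\sigma(a)$ since $a\in K$ and $G|_K=\sigma$. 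As $G$ is an automorphism, $G(t\circ t)=G(t)\circ G(t)$, forcing $\sigma(a)=a$, which contradicts $a\in K\setminus F$. Hence $c_1=0$ necessarily, and every inner automorphism equals $G_{c_0}$ for some $c_0\in K^{\times}$, establishing (iv).
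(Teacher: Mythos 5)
Your proof is correct and follows essentially the same route as the paper: parts (i)--(iii) are read off from Corollary \ref{cor:t^m-a automorphism field result} and Theorem \ref{thm:t^m-a_automorphism_field}, and for (iv) you derive the same two linear relations (from $c_l\circ c=1$ and from preservation of $\mathrm{Nuc}_l(A)=K$) that force the troublesome case down to $c=c_1t$, $c_l=e_1t$. The only cosmetic difference is at the very end: the paper identifies the residual map as $H_{\sigma,\sigma(s)^{-1}s}$ and invokes Corollary \ref{cor:t^m-a automorphism field result} to get $\sigma(a)=a$, whereas you obtain the same contradiction directly from $G(t\circ t)=G(t)\circ G(t)$.
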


\begin{proof}
(i), (ii) and (iii) follow immediately from Corollary \ref{cor:t^m-a automorphism field result} and Theorem \ref{thm:t^m-a_automorphism_field}. The proof of (iv) is similar to \cite[Lemmas 2 and 3]{wene2006auto} with $\alpha = \mathrm{id}$:

Suppose, for a contradiction, that $\{ G_c \ \vert \ c \in K^{\times} \}$ are not the only inner automorphisms of $A$. This means there exists an element $0 \neq r + st \in A$ with left inverse $u+vt$, such that $s \neq 0$ and
$$H: A \rightarrow A, \ x_0 + x_1t \mapsto [(u + vt) \circ (x_0 + x_1t)] \circ (r + s t),$$
is an automorphism. We have
\begin{align} \label{eqn:t^m-a_automorphism_field 0}
\begin{split}
1 &= (u + vt) \circ (r + st) = u r + v \sigma(s)a + \big( u s + v \sigma(r) \big) t,
\end{split}
\end{align}
and comparing the coefficients of $t$ in \eqref{eqn:t^m-a_automorphism_field 0} yields
\begin{equation} \label{eqn:t^m-a_automorphism_field 1}
u s + v \sigma(r) = 0.
\end{equation}
Any automorphism must preserve the left nucleus, so $H(K) = K$ and $H \vert_K = \sigma^j$ for some $j \in \{ 0, 1 \}$. This implies
\begin{align*}
H(k) &= [(u + vt) \circ k] \circ (r + st) = (uk + v \sigma(k)t) \circ (r + s t) \\
&= u k r + v \sigma(k) \sigma(s) a + (u k s + v \sigma(k)\sigma(r))t = \sigma^j(k) 
\end{align*}
for all $k \in K$, in particular
\begin{equation} \label{eqn:t^m-a_automorphism_field 2}
k u s + \sigma(k) v \sigma(r) = 0.
\end{equation}
Therefore $k u s = \sigma(k) u s$ for all $k \in K$ by \eqref{eqn:t^m-a_automorphism_field 1}, \eqref{eqn:t^m-a_automorphism_field 2}, hence $u = 0$ since $s \neq 0$ and $\sigma$ is not trivial. Furthermore $v \sigma(r) = 0$ by \eqref{eqn:t^m-a_automorphism_field 1} which means $r = 0$ because $\sigma$ is injective and $v \neq 0$. Now $A$ is a division algebra by Corollary \ref{cor:nonassociative cyclic algebra is division} and Theorem \ref{thm:S_f_division_iff_irreducible} (see also \cite[p.~369]{waterhouse}), moreover
$$\frac{1}{\sigma(s)a}t \circ st = 1,$$
and so the left inverse of $st$ is unique and equal to $(1/(\sigma(s)a)) t$. We conclude $H$ has the form
\begin{align*}
H(x_0 + x_1t) &= \big[ \frac{1}{\sigma(s)a}t \circ (x_0 + x_1t) \big] \circ s t = \Big( \frac{\sigma(x_0)}{\sigma(s)a}t + \frac{\sigma(x_1)}{\sigma(s)} \Big) \circ st \\
&= \sigma(x_0) + \frac{\sigma(x_1)s}{\sigma(s)}t,
\end{align*}
that is $H = H_{\sigma,b}$ where $b = \sigma(s)^{-1} s$, and
$$\sigma(a) = \sigma(s)^{-1} s \sigma^2(s)^{-1} \sigma(s) a = s \sigma^2(s)^{-1} a = s s^{-1} a = a$$ by Corollary \ref{cor:t^m-a automorphism field result}, a contradiction because $a \in K \setminus F$.
\end{proof}

Theorem \ref{thm:Automorphisms of nonassociative quaternion algebras}(i) is also proven by Waterhouse in \cite[p.~370-371]{waterhouse}. Setting $m = 2$ in Theorem \ref{thm:t^m-a_automorphism_field}(vi) and Proposition \ref{prop:contains rot of unity Ker}(i) yields:

\begin{corollary} \label{cor:inner automorphisms of nonassociative quaternion algebras}
\begin{itemize}
\item[(i)] If $a = \lambda_0 + \lambda_1 \sqrt{b}$ for some $\lambda_0, \lambda_1 \in F^{\times}$, then $\mathrm{Aut}_F(A) = \{ G_c \ \vert \ c \in K^{\times} \} \cong \mathrm{Ker}(N_{K/F})$ and all automorphisms of $A$ are inner.
\item[(ii)] If $a = \lambda \sqrt{b}$ for some $\lambda \in F^{\times}$ and $N_{K/F}(k) \neq -1$ for all $k \in K^{\times}$, then $\mathrm{Aut}_F(A) = \{ G_c \ \vert \ c \in K^{\times} \} \cong \mathrm{Ker}(N_{K/F})$ and all automorphisms of $A$ are inner.
\end{itemize}
\end{corollary}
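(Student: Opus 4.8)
The plan is to derive both statements directly from the two cited results, specialising to the case $m = 2$. Throughout I take $d = \sqrt{b}$ and $e = b$, so that $K = F(\sqrt{b}) = F(d)$ with $d$ a root of the polynomial $x^2 - b \in F[x]$, which is irreducible since $K/F$ is a proper quadratic extension. Because $\mathrm{Char}(F) \neq 2$, the field $F$ contains the primitive $2$nd root of unity $-1$, and the nontrivial automorphism $\sigma$ of $K/F$ acts by $\sigma(\sqrt{b}) = -\sqrt{b}$ (the two roots of $x^2 - b$ being $\pm\sqrt{b}$). Recording this dictionary is all the setup that the argument needs.

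For part (i), I would first observe that writing $a = \lambda_0 + \lambda_1 \sqrt{b}$ with $\lambda_0, \lambda_1 \in F^{\times}$ forces $a \neq \lambda \sqrt{b}$ for every $\lambda \in F^{\times}$, simply because the $F$-component $\lambda_0$ of $a$ is nonzero whereas that of $\lambda\sqrt{b}$ vanishes. With $m = 2$ the index range $i \in \{1, \ldots, m-1\}$ collapses to $i = 1$, so the hypothesis of Proposition \ref{prop:contains rot of unity Ker}(i), namely $a \neq \lambda d^i$ for all $\lambda \in F^{\times}$ and all such $i$, is exactly $a \neq \lambda \sqrt{b}$. Hence that proposition applies and yields $\mathrm{Aut}_F(A) \cong \mathrm{Ker}(N_{K/F})$ with every automorphism inner; combining with Theorem \ref{thm:t^m-a_automorphism_field}(v), which identifies $\{G_c \mid c \in K^{\times}\}$ with $\mathrm{Ker}(N_{K/F})$, upgrades this to the stated equality $\mathrm{Aut}_F(A) = \{G_c \mid c \in K^{\times}\}$.

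For part (ii), the key computation is $\sigma(a) a^{-1}$ for $a = \lambda \sqrt{b}$ with $\lambda \in F^{\times}$. Since $\sigma$ fixes $\lambda$ and sends $\sqrt{b} \mapsto -\sqrt{b}$, we get $\sigma(a) = -a$ and therefore $\sigma(a) a^{-1} = -1$. With $m = 2$ the condition of Theorem \ref{thm:t^m-a_automorphism_field}(vi), that $\sigma^j(a) a^{-1} \notin N_{K/F}(K^{\times})$ for all $j \in \{1, \ldots, m-1\}$, reduces to the single requirement $\sigma(a) a^{-1} = -1 \notin N_{K/F}(K^{\times})$. This is precisely the assumption $N_{K/F}(k) \neq -1$ for all $k \in K^{\times}$, so Theorem \ref{thm:t^m-a_automorphism_field}(vi) gives $\mathrm{Aut}_F(A) = \{G_c \mid c \in K^{\times}\} \cong \mathrm{Ker}(N_{K/F})$ with all automorphisms inner.

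There is essentially no serious obstacle here: both parts are immediate specialisations once the dictionary $d = \sqrt{b}$, $e = b$ and the action $\sigma(\sqrt{b}) = -\sqrt{b}$ have been set down. The only points demanding a moment's care are the trivial-looking verification in part (i) that $a \notin F\sqrt{b}$, and the sign computation $\sigma(a) = -a$ in part (ii); both are one-line checks, and no machinery beyond the two cited results is required.
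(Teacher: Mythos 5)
Your proposal is correct and follows essentially the same route as the paper: part (i) is the specialisation of Proposition \ref{prop:contains rot of unity Ker}(i) to $m=2$ (using that $\lambda_0\neq 0$ rules out $a=\lambda\sqrt{b}$), and part (ii) is Theorem \ref{thm:t^m-a_automorphism_field}(vi) applied via the computation $\sigma(a)a^{-1}=-1$. The extra details you spell out (the dictionary $d=\sqrt{b}$, $e=b$, and the identification of $\{G_c\}$ with $\mathrm{Ker}(N_{K/F})$) are exactly what the paper leaves implicit.
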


\begin{proof}
\begin{itemize}
\item[(i)] $-1 \in F^{\times}$ is a primitive $2^{\text{nd}}$ root of unity so the result follows by Proposition \ref{prop:contains rot of unity Ker}(i).
\item[(ii)] We have $\sigma(a)a^{-1} = -a a^{-1} = -1$ so the assertion follows by Theorem \ref{thm:t^m-a_automorphism_field}(vi).
\end{itemize}
\end{proof}

\begin{example} \label{ex:automorphisms_of_nonassociative_quaternion_algebras}
Consider the nonassociative quaternion algebra $A = (\mathbb{C}/\mathbb{R},\sigma,a)$ where $a \in \mathbb{C} \setminus \mathbb{R}$ and $\sigma$ denotes complex conjugation. Given $k = k_0 + k_1 i \in \mathbb{C}$, $k_0, k_1 \in \mathbb{R}$, we have
$$N_{\mathbb{C}/\mathbb{R}}(k) = k \sigma(k) = (k_0 + k_1 i)(k_0 - k_1 i) = k_0^2 + k_1^2 \neq -1,$$
and so $\mathrm{Aut}_{\mathbb{R}}(A) = \{ G_c \ \vert \ c \in \mathbb{C}^{\times} \} \cong \mathrm{Ker}(N_{\mathbb{C}/\mathbb{R}})$ by Corollary \ref{cor:inner automorphisms of nonassociative quaternion algebras}. Now since
$\mathrm{Ker}(N_{\mathbb{C}/\mathbb{R}}) = \{ k_0 + k_1 i \in \mathbb{C} \ \vert \ k_0^2 + k_1^2 = 1 \},$
we conclude $\mathrm{Aut}_{\mathbb{R}}(A)$ is isomorphic to the unit circle in $\mathbb{C}$.
\end{example}

Recall the \textbf{dicyclic group} of order $4l$ has the presentation
\begin{equation} \label{eqn:dicyclic group presentation}
\mathrm{Dic}_l = \langle x, y \ \vert \ y^{2l}=1, \ x^2 = y^l, \ x^{-1}yx = y^{-1} \rangle.
\end{equation}
The \textbf{semidirect product} $\mathbb{Z} / s \mathbb{Z} \rtimes \mathbb{Z} / n \mathbb{Z}$ between cyclic groups $\mathbb{Z} / s \mathbb{Z}$ and $\mathbb{Z} / n \mathbb{Z}$ corresponds to a choice of integer $l$ such that $l^n \equiv 1 \ \mathrm{ mod} \ s$. It can be described by the presentation
\begin{equation} \label{eqn:semidirect product of cyclic algebras}
\mathbb{Z} / s \mathbb{Z} \rtimes_l \mathbb{Z} / n \mathbb{Z} = \langle x,y \ \vert \ x^s = 1, \ y^n= 1, \ yxy^{-1} = x^l \rangle.
\end{equation}

\begin{theorem} \label{thm:semidirect and dicyclic m=2}
Let $a = \lambda \sqrt{b}$ for some $\lambda \in F^{\times}$ and suppose there exists $k \in K^{\times}$ such that $N_{K/F}(k) = -1$. For every $c \in K \setminus F$ for which there exists a positive integer $j$ such that $c^j \in F^{\times}$, pick the smallest such $j$.
\begin{itemize}
\item[(i)] If $j$ is even then $\mathrm{Aut}_F(A)$ contains a subgroup isomorphic to the dicyclic group of order $2j$.
\item[(ii)] If $j$ is odd then $\mathrm{Aut}_F(A)$ contains a subgroup isomorphic to the semidirect product $\mathbb{Z} / j \mathbb{Z} \rtimes_{j-1} \mathbb{Z} / 4 \mathbb{Z}.$
\end{itemize}
\end{theorem}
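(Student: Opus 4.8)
The goal is to exhibit, inside $\mathrm{Aut}_F(A)$, a subgroup generated by one inner automorphism $G_c$ and one ``twisting'' automorphism $H_{\sigma,k}$, and to identify the abstract structure of the group they generate. The key ingredient is already available: by Theorem \ref{thm:Automorphisms of nonassociative quaternion algebras}(i), since $a = \lambda\sqrt{b}$ with $\lambda \in F^\times$ we have $\sigma(a) = -a = N_{K/F}(k) a$ whenever $N_{K/F}(k) = -1$, so $H_{\sigma,k} \in \mathrm{Aut}_F(A)$ exists by hypothesis. The plan is to fix such a $k$, fix a $c \in K\setminus F$ with $c^j \in F^\times$ and $j$ minimal, and then compute the three defining relations among $x := G_c$ and $y := H_{\sigma,k}$ by comparing them to the presentations \eqref{eqn:dicyclic group presentation} and \eqref{eqn:semidirect product of cyclic algebras}.

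\textbf{Key steps.}
First I would record that $\langle G_c \rangle \cong \mathbb{Z}/j\mathbb{Z}$ is cyclic of order $j$ by Theorem \ref{thm:t^m-a_automorphism_field}(vii) (equivalently Corollary \ref{cor:G_c subgroup order j cyclic}), giving the relation $x^j = \mathrm{id}$. Second, I would compute the order of $H_{\sigma,k}$: using the composition law $H_{\sigma^i,k_1}\circ H_{\sigma^{i'},k_2} = H_{\sigma^{i+i'},\,\sigma^{i}(k_2)k_1}$ from Theorem \ref{thm:automorphism_of_S_f_division_case}, one finds $H_{\sigma,k}^2 = H_{\mathrm{id},\,k\sigma(k)}$, and since $m=2$ the element $k\sigma(k) = N_{K/F}(k) = -1$ gives $H_{\sigma,k}^2 = H_{\mathrm{id},-1} = G_{c_0}$ where $c_0 \in K^\times$ satisfies $c_0^{-1}\sigma(c_0) = -1$ (such $c_0$ exists by Hilbert 90 since $N_{K/F}(-1)=1$). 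So $y^2$ is a specific inner automorphism; one checks $H_{\mathrm{id},-1}$ has order $2$, whence $y$ has order $4$. Third, and this is the crux, I would compute the conjugation relation $y x y^{-1} = H_{\sigma,k}\circ G_c \circ H_{\sigma,k}^{-1}$. By the normality computation in the proof of Theorem \ref{thm:t^m-a_automorphism_field}(v) (specialised to $m=2$, $j=1$), this equals $G_{\sigma(c)}$. Now the whole problem reduces to expressing $G_{\sigma(c)}$ as a power of $G_c$: since $\sigma(c) = -c$ in the relevant $\sigma$-conjugacy sense — more precisely $G_{\sigma(c)}$ and $G_{c^{-1}}$ must be compared via part (iii) of Theorem \ref{thm:Automorphisms of nonassociative quaternion algebras}, $G_{c'}=G_{c''} \iff (c')^{-1}\sigma(c')=(c'')^{-1}\sigma(c'')$ — one shows $G_{\sigma(c)} = G_c^{-1} = G_c^{j-1}$, yielding $yxy^{-1}=x^{-1}$.

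\textbf{Assembling the two cases.}
With $x^j=1$, $\,y^4=1$, and $yxy^{-1}=x^{-1}$ established, I split on the parity of $j$. If $j$ is even, then $y^2 = H_{\mathrm{id},-1}$ lies in $\langle x\rangle$: indeed $H_{\mathrm{id},-1}=G_{c_0}$ with $c_0^{-1}\sigma(c_0)=-1$, and since $c^{j/2}$ has the property that its $\sigma$-conjugacy invariant matches, one identifies $y^2 = x^{j/2}$. The relations $x^j=1$, $x^2 = y^j$ — after renaming to match \eqref{eqn:dicyclic group presentation} with $2l=j$ — and $y^{-1}xy = x^{-1}$ give exactly $\mathrm{Dic}_{j/2}$ of order $2j$, proving (i). If $j$ is odd, then $y^2=H_{\mathrm{id},-1}$ is \emph{not} in $\langle x\rangle$ (the invariant $-1$ cannot be realised as $(c^i)^{-1}\sigma(c^i)$ for any power $c^i$ with $c^j\in F^\times$, $j$ odd), so $\langle x\rangle \cap \langle y\rangle = \{\mathrm{id}\}$ and $\langle x\rangle$ is normal with quotient $\langle y\rangle \cong \mathbb{Z}/4\mathbb{Z}$; the conjugation $yxy^{-1}=x^{-1}=x^{j-1}$ is precisely the relation in \eqref{eqn:semidirect product of cyclic algebras} with $s=j$, $n=4$, $l=j-1$, giving $\mathbb{Z}/j\mathbb{Z}\rtimes_{j-1}\mathbb{Z}/4\mathbb{Z}$, proving (ii). The main obstacle I anticipate is the bookkeeping in step three: correctly tracking the $\sigma$-conjugacy invariant $c^{-1}\sigma(c)$ through conjugation and verifying $G_{\sigma(c)}=G_c^{-1}$, together with the parity-dependent question of whether $y^2$ falls inside $\langle x\rangle$, which is what genuinely distinguishes the dicyclic from the semidirect-product case.
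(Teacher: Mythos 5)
Your proposal is correct and follows essentially the same route as the paper: the paper likewise shows $H_{\sigma,k}$ has order $4$ with $H_{\sigma,k}^2=H_{\mathrm{id},-1}$, that conjugation by $H_{\sigma,k}$ sends $G_c$ to $G_{\sigma(c)}=G_c^{-1}$, and that the dichotomy between the dicyclic and semidirect-product cases is exactly whether $H_{\mathrm{id},-1}$ lands inside the cyclic group $\langle G_c\rangle$ of order $j$. The one step you leave implicit is the verification that $(c^{j/2})^{-1}\sigma(c^{j/2})=-1$ when $j$ is even, which in the paper is done by writing $c^{j/2}=\lambda+\mu\sqrt{b}$ and using $c^j\in F$, $\mathrm{Char}(F)\neq 2$ and the minimality of $j$ to force $\lambda=0$.
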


\begin{proof}
Since $\sigma(a) = -a = N_{K/F}(k)a$, $H_{\sigma,k} \in \mathrm{Aut}_F(A)$ by Corollary \ref{cor:t^m-a automorphism field result}. $\langle G_c \rangle$ is a cyclic subgroup of $\mathrm{Aut}_F(A)$ of order $j$ by Theorem \ref{thm:t^m-a_automorphism_field}, furthermore, a straightforward calculation shows that
\begin{equation} \label{eqn:semidirect and dicyclic m=2 I}
\langle H_{\sigma,k} \rangle = \{ H_{\sigma,k}, H_{\mathrm{id},-1}, H_{\sigma,-k}, H_{\mathrm{id}, 1} \}.
\end{equation}
\begin{itemize}
\item[(i)] Suppose $j$ is even and write $j = 2l$. We prove first that $G_{c^l} = H_{\mathrm{id}, -1}$. Write $c^l = \lambda + \mu \sqrt{b}$ for some $\lambda, \mu \in F$. Then
$$c^j = c^{2l} = \lambda^2 + \mu^2 b + 2 \lambda \mu \sqrt{b} \in F,$$
which implies $2 \lambda \mu = 0$, i.e. $\lambda = 0$ or $\mu = 0$. By the minimality of $j$, $c^l \notin F$ thus $\lambda = 0$ and hence $c^l = \mu \sqrt{b}$. We have
\begin{align*}
G_{c^l}(x_0 + x_1t) &= x_0 + x_1 (\mu \sqrt{b})^{-1} \sigma(\mu \sqrt{b})t 
= x_0 + x_1 \mu^{-1} b^{-1} \sqrt{b}(- \mu \sqrt{b})t \\
&= x_0 - x_1t = H_{\mathrm{id}, -1}(x_0 + x_1t),
\end{align*}
that is $G_{c^l} = H_{\mathrm{id}, -1}$.

Next we prove $(H_{\sigma,k})^{-1} \circ G_c \circ H_{\sigma,k} = G_c^{-1}$: Simple calculations show $(H_{\sigma,k})^{-1} = H_{\sigma,-k}$ and $G_c^{-1} = G_{\sigma(c)}$. Furthermore
\begin{align*}
H_{\sigma,-k} \big( G_c \big( H_{\sigma,k} \big( x_0 + x_1t \big) \big) \big) &= H_{\sigma,-k} \big( G_c \big( \sigma(x_0) + \sigma(x_1)kt \big) \big) \\
&= H_{\sigma,-k} \big( \sigma(x_0) + \sigma(x_1)k c^{-1}\sigma(c) t \big) \\
&= x_0 - x_1 \sigma(k) \sigma(c^{-1})ckt \\
&= x_0 + x_1 \sigma(c^{-1})c t = G_{\sigma(c)}\big( x_0 + x_1t \big),
\end{align*}
that is $(H_{\sigma,k})^{-1} \circ G_c \circ H_{\sigma,k} = G_c^{-1}$.

We conclude $H_{\sigma,k}^2 = H_{\mathrm{id}, -1} = G_{c^l} = G_c^l$, $G_c^{2l} = \mathrm{id}$ and $(H_{\sigma,k})^{-1} \circ G_c \circ H_{\sigma,k} = G_c^{-1}$, hence $\langle H_{\sigma,k}, G_c \rangle$ has the presentation \eqref{eqn:dicyclic group presentation} as required.
\item[(ii)] Suppose $j$ is odd, then $\langle G_c \rangle$ is cyclic of order $j$ so does not contain $H_{\mathrm{id}, -1}$, because $H_{\mathrm{id}, -1}$ has order $2$. This implies $\langle H_{\sigma,k} \rangle \cap \langle G_c \rangle = \{ \mathrm{id} \}$ by \eqref{eqn:semidirect and dicyclic m=2 I}. Furthermore,
$(H_{\sigma,k})^{-1} \circ G_c \circ H_{\sigma,k} = G_c^{-1} = G_c^{j-1} = G_{c^{j-1}},$
similarly to the argument in (i). Notice that
$$(j-1)^4 = j^4 - 4j^3 + 6j^2 - 4j + 1 \equiv 1 \text{ mod }(j),$$
thus $\mathrm{Aut}_F(A)$ contains the subgroup 
$$\langle G_c \rangle \rtimes_{j-1} \langle H_{\sigma,k} \rangle \cong \mathbb{Z} / j \mathbb{Z} \rtimes_{j-1} \mathbb{Z} / 4 \mathbb{Z}$$
as required.
\end{itemize}
\end{proof}

In particular if we choose $c = \sqrt{b}$ in Theorem \ref{thm:semidirect and dicyclic m=2} then clearly $j = 2$ which means $\mathrm{Aut}_F(A)$ contains the dicyclic group of order $4$, which is the cyclic group of order $4$:

\begin{corollary}
If $a = \lambda \sqrt{b}$ for some $\lambda \in F^{\times}$ and there exists $k \in K^{\times}$ such that $k \sigma(k) = -1$ then $\mathrm{Aut}_F(A)$ contains a subgroup isomorphic to $\mathbb{Z}/4\mathbb{Z}$.
\end{corollary}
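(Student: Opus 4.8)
The plan is to specialise Theorem \ref{thm:semidirect and dicyclic m=2} to the element $c = \sqrt{b}$. The whole point of the corollary is that this particular choice of $c$ makes the integer $j$ (the smallest positive power sending $c$ into $F^{\times}$) equal to $2$, which lands us squarely in case (i) of the theorem with the smallest possible even value.

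First I would verify the hypotheses of Theorem \ref{thm:semidirect and dicyclic m=2} are met. The hypothesis on $a$ is identical: $a = \lambda \sqrt{b}$ for some $\lambda \in F^{\times}$. The hypothesis that there exists $k \in K^{\times}$ with $N_{K/F}(k) = -1$ is exactly the stated assumption $k\sigma(k) = -1$, since $N_{K/F}(k) = k\sigma(k)$ for the quadratic extension $K/F$. So the theorem applies verbatim.

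Next I would take $c = \sqrt{b}$. Since $K = F(\sqrt{b})$ with $\sqrt{b} \notin F$, we have $c \in K \setminus F$, and $c^2 = b \in F^{\times}$, so there does exist a positive integer power of $c$ lying in $F^{\times}$. The smallest such $j$ cannot be $1$ (as $c \notin F$), hence the minimal $j$ is exactly $2$. In particular $j$ is even, so part (i) of Theorem \ref{thm:semidirect and dicyclic m=2} yields a subgroup of $\mathrm{Aut}_F(A)$ isomorphic to the dicyclic group of order $2j = 4$. Finally I would identify this group concretely: the dicyclic group $\mathrm{Dic}_l$ has order $4l$, so order $4$ forces $l = 1$, and $\mathrm{Dic}_1 = \langle x, y \mid y^2 = 1,\ x^2 = y,\ x^{-1}yx = y^{-1}\rangle$ is generated by $x$ with $x^4 = x^2 \cdot x^2 = y \cdot y = y^2 = 1$, so it is simply the cyclic group $\mathbb{Z}/4\mathbb{Z}$. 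This gives the claimed subgroup isomorphic to $\mathbb{Z}/4\mathbb{Z}$.

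There is essentially no obstacle here; the corollary is a direct application. The only point requiring a word of care is the final identification $\mathrm{Dic}_1 \cong \mathbb{Z}/4\mathbb{Z}$, which I would justify by noting from the presentation \eqref{eqn:dicyclic group presentation} that when $l = 1$ the relation $x^{-1}yx = y^{-1}$ becomes vacuous (as $y = y^{-1}$), leaving an abelian group of order $4$ generated by the order-$4$ element $x$.
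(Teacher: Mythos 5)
Your proposal is correct and follows exactly the paper's argument: choose $c = \sqrt{b}$, observe that the minimal $j$ with $c^j \in F^{\times}$ is $2$, invoke part (i) of Theorem \ref{thm:semidirect and dicyclic m=2} to obtain the dicyclic group of order $4$, and identify it with $\mathbb{Z}/4\mathbb{Z}$. The only difference is that you spell out the identification $\mathrm{Dic}_1 \cong \mathbb{Z}/4\mathbb{Z}$, which the paper states without comment.
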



\begin{examples}
\begin{itemize}
\item[(i)] Suppose $F = \mathbb{Q}(i)$, $K = F(\sqrt{-3})$ and $\sigma: K \rightarrow K$ is the $F$-automorphism sending $\sqrt{-3}$ to $-\sqrt{-3}$. Let $A = (K/F,\sigma, \lambda \sqrt{-3})$ for some $\lambda \in F^{\times}$, $c = 1 + \sqrt{-3} \in K$, and notice $i \sigma(i) = i^2 = -1$. We have $c^2 = -2+2\sqrt{-3}$ and $c^3 = -8$ so that $c, c^2 \in K \setminus F$ and $c^3 \in F$. This means $\mathrm{Aut}_F(A)$ contains a subgroup isomorphic to the semidirect product $\mathbb{Z} / 3\mathbb{Z} \rtimes_2 \mathbb{Z} / 4 \mathbb{Z}$ by Theorem \ref{thm:semidirect and dicyclic m=2}.
\item[(ii)] Suppose $F = \mathbb{Q}(i)$, $K = F(\sqrt{-1/12})$ and $\sigma: K \rightarrow K$ the $F$-automorphism sending $\sqrt{-1/12}$ to $-\sqrt{-1/12}$. Let $$A = (K/F,\sigma,\lambda \sqrt{-1/12})$$
for some $\lambda \in F^{\times}$, $c = 1 + 2 \sqrt{-1/12} \in K$ and notice $i \sigma(i) = i^2 = -1$. Then
$$c^2 = \frac{2}{3} + \frac{2i}{\sqrt{3}}, \ \ c^3 = \frac{8i}{3\sqrt{3}}, \ \ c^4 = \frac{-8}{9} + \frac{8i}{3 \sqrt{3}},$$
$$c^5 = \frac{-16}{9} + \frac{16i}{9\sqrt{3}} \ \text{ and } \ c^6 = \frac{-64}{27}.$$
Hence $c, c^2, c^3, c^4, c^5 \in K \setminus F$ and $c^6 \in F$. Therefore $\mathrm{Aut}_F(A)$ contains the dicyclic group of order $12$ by Theorem \ref{thm:semidirect and dicyclic m=2}.
\item[(iii)] Suppose $F = \mathbb{Q}(i)(\sqrt{5})$, $K = F \big( \sqrt{2\sqrt{5}-5} \big)$ and $\sigma: K \rightarrow K$ is the $F$-automorphism sending $\sqrt{2\sqrt{5}-5}$ to $-\sqrt{2\sqrt{5}-5}$. Let $$A = (K/F,\sigma,\lambda \sqrt{2\sqrt{5}-5})$$ for some $\lambda \in F^{\times}$, $c = 1 + \sqrt{2\sqrt{5}-5} \in K$ and notice $i \sigma(i) = i^2 = -1$. Then
\begin{align*}
c^2 &= -4 + 2 \sqrt{5} + 2\sqrt{2\sqrt{5}-5} \in K \setminus F, \\
c^3 &= -14 + 6 \sqrt{5} - 2 \sqrt{2\sqrt{5}-5} + 2 \sqrt{5} \sqrt{2\sqrt{5}-5} \in K \setminus F, \\
c^4 &= 16 - 8 \sqrt{5} - 16 \sqrt{2\sqrt{5}-5} + 8 \sqrt{5} \sqrt{2\sqrt{5}-5} \in K \setminus F, \\
c^5 &= 176 - 80 \sqrt{5} \in F,
\end{align*}
so we conclude $\mathrm{Aut}_F(A)$ contains a subgroup isomorphic to the semidirect product $\mathbb{Z} / 5\mathbb{Z} \rtimes_4 \mathbb{Z} / 4 \mathbb{Z}$ by Theorem \ref{thm:semidirect and dicyclic m=2}.
\end{itemize}
\end{examples}


\section{Automorphisms of Nonassociative Cyclic Algebras over Finite Fields} \label{section:Automorphisms of Nonassociative Cyclic Algebras over Finite Fields}

In \cite[p.~88-92]{AndrewPhD}, the automorphisms of nonassociative cyclic algebras over finite fields were briefly investigated. In this Section we 
continue this investigation. In particular, we completely determine the automorphism group of nonassociative cyclic algebras of prime degree $p$ where $\mathrm{Char}(F) \neq p$, this was done only for $p = 2$ in \cite{AndrewPhD}.

Let now $F = \mathbb{F}_q$ be a finite field of order $q$ and $K = \mathbb{F}_{q^m}$ for some $m \geq 2$. Then $K/F$ is a cyclic Galois extension of degree $m$, say $\mathrm{Gal}(K/F) = \langle \sigma \rangle$. Suppose $A = (K/F,\sigma,a)$ is a nonassociative cyclic algebra for some $a \in K \setminus F$. Let $\alpha$ be a primitive element of $K$, i.e. $K^{\times} = \langle \alpha \rangle$. We recall the well-known fact that since $K$ and $F$ are finite fields, the field norm $N_{K / F} : K^{\times} \rightarrow F^{\times}$ is surjective. Therefore by Corollary \ref{cor:t^m-a automorphism field result}, the problem of finding for which $j \in \{ 0, \ldots, m-1 \}$ there exists $H_{\sigma^j,k} \in \mathrm{Aut}_F(A)$ for some $k \in K^{\times}$, reduces to finding which $j \in \{0 , \ldots, m-1 \}$ exist such that $\sigma^j(a) a^{-1} \in F^{\times}$.

Let $s = (q^m-1)/(q-1)$ and notice $s = \sum_{i=0}^{m-1}q^i$.

\begin{lemma} \label{lem:number_of_solutions_to_norm_equation}
(\cite[Proposition 6.3.1]{AndrewPhD}).
For every $b \in F^{\times}$ there are precisely $s$ elements $k \in K^{\times}$ such that $N_{K/F}(k) = b$. Furthermore, $\mathrm{Ker}(N_{K/F}) \cong \mathbb{Z} / s \mathbb{Z}$ is a cyclic subgroup of the multiplicative group $K^{\times}$. 
\end{lemma}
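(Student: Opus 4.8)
The statement to prove is Lemma~\ref{lem:number_of_solutions_to_norm_equation}: for every $b \in F^{\times}$ there are precisely $s = (q^m-1)/(q-1)$ elements $k \in K^{\times}$ with $N_{K/F}(k) = b$, and $\mathrm{Ker}(N_{K/F}) \cong \mathbb{Z}/s\mathbb{Z}$ is a cyclic subgroup of $K^{\times}$.

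The plan is to exploit that $K^{\times}$ is cyclic of order $q^m - 1$, generated by the primitive element $\alpha$, together with the explicit form of the norm on finite fields. First I would recall the well-known fact (already used in the excerpt, e.g. on page~\pageref{page:Fix(sigma) isomorphic to}) that $N_{K/F}(k) = k\sigma(k)\cdots\sigma^{m-1}(k) = k \cdot k^q \cdots k^{q^{m-1}} = k^{s}$, since $\sigma(k) = k^q$ and $s = \sum_{i=0}^{m-1} q^i$. Thus the norm map is simply the power map $k \mapsto k^s$ on the cyclic group $K^{\times}$. The kernel $\mathrm{Ker}(N_{K/F}) = \{k \in K^{\times} \mid k^s = 1\}$ is then exactly the group of $s$-th roots of unity in $K$.

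Next I would apply the standard structure theory of cyclic groups: in a cyclic group of order $n = q^m - 1$, the subgroup of elements killed by the exponent $s$ has order $\gcd(s, n)$, and this subgroup is itself cyclic. Here $s = (q^m-1)/(q-1)$ divides $n = q^m - 1$, so $\gcd(s,n) = s$; hence $\lvert \mathrm{Ker}(N_{K/F})\rvert = s$ and $\mathrm{Ker}(N_{K/F}) \cong \mathbb{Z}/s\mathbb{Z}$. Concretely, writing $K^{\times} = \langle \alpha \rangle$, the kernel is generated by $\alpha^{q-1}$, which has order $s$. This establishes the second assertion. For the first assertion, I would use that the norm $N_{K/F}: K^{\times} \to F^{\times}$ is a group homomorphism which is surjective (stated in the excerpt as a well-known fact for finite fields). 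For any fixed $b \in F^{\times}$, pick one preimage $k_0$ with $N_{K/F}(k_0) = b$; then the full preimage is the coset $k_0 \cdot \mathrm{Ker}(N_{K/F})$, which has exactly $\lvert \mathrm{Ker}(N_{K/F})\rvert = s$ elements. This gives precisely $s$ solutions for each $b$.

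There is no real obstacle here: the whole argument is routine once one observes that $N_{K/F}$ is the power map $k \mapsto k^s$ on a cyclic group. The only points requiring a sentence of care are the identity $N_{K/F}(k) = k^s$ (which follows immediately from $\sigma = (\cdot)^q$ and the geometric-series expression for $s$) and the divisibility $s \mid (q^m-1)$, which is clear from $s = (q^m-1)/(q-1)$. The surjectivity of the norm may be cited as a standard fact, or alternatively deduced by counting: the image of a homomorphism with kernel of size $s$ has size $(q^m-1)/s = q-1 = \lvert F^{\times}\rvert$, forcing surjectivity, which also reproves the fibre count simultaneously. I would likely present this counting version since it makes the two claims fall out together from a single application of the first isomorphism theorem.
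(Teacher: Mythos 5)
Your proof is correct and complete. Note that the paper itself does not prove this lemma: it is quoted directly from \cite[Proposition 6.3.1]{AndrewPhD}, so there is no in-text argument to compare against. Your route --- observing that $N_{K/F}$ is the power map $k \mapsto k^s$ on the cyclic group $K^{\times}$ of order $q^m-1$, that $s \mid (q^m-1)$ forces the kernel to be the cyclic subgroup $\langle \alpha^{q-1} \rangle$ of order $s$, and then deducing surjectivity onto $F^{\times}$ and the fibre count of $s$ simultaneously from the first isomorphism theorem --- is the standard argument and correctly fills the omitted proof.
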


If $\mathrm{gcd}(m,q-1) = 1$ then $F$ does not contain a non-trivial $m^{\text{th}}$ root of unity by \cite[p.~42]{koblitz1994course}. Therefore Theorems \ref{thm:t^m-a_automorphism_field} and \ref{thm:Aut(S_f) F does not contain primitive root of unity} become:

\begin{theorem} \label{thm:t^m-a_automorphism_field finite}
\begin{itemize}
\item[(i)] $\langle G_{\alpha} \rangle$ is a cyclic normal subgroup of $\mathrm{Aut}_F(A)$ of order $s$. Moreover every automorphism of $A$ of the form $H_{\mathrm{id}, k}$ for some $k \in K^{\times}$, is contained in $\langle G_{\alpha} \rangle$.
\item[(ii)] All automorphisms contained in $\langle G_{\alpha} \rangle$ are inner, and if $m = 2$ these are all the inner automorphisms of $A$.
\item[(iii)] If $\mathrm{gcd}(m,q-1) = 1$ and $a$ is not contained in any proper subfield of $K$, then $\mathrm{Aut}_F(A) = \langle G_{\alpha} \rangle$ is a cyclic group of order $s$ and all automorphisms of $A$ are inner.
\end{itemize}
\end{theorem}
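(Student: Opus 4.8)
The plan is to assemble all three parts from results already in hand, the only genuine computation being the identification of $\langle G_\alpha \rangle$ in part (i); everything else is a direct appeal to Theorems \ref{thm:t^m-a_automorphism_field}, \ref{thm:Automorphisms of nonassociative quaternion algebras} and \ref{thm:Aut(S_f) F does not contain primitive root of unity}, together with Lemma \ref{lem:number_of_solutions_to_norm_equation}.

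For part (i), I would first note that $c \mapsto G_c$ is a group homomorphism $K^\times \to \mathrm{Aut}_F(A)$, since $G_c \circ G_d = G_{cd}$ by Theorem \ref{thm:t^m-a_automorphism_field}(v). Its image is exactly $N = \{ G_c \mid c \in K^\times \}$, and by Theorem \ref{thm:t^m-a_automorphism_field}(iii) its kernel is $\{ c \in K^\times \mid c^{-1}\sigma(c) = 1 \} = \mathrm{Fix}(\sigma)^\times = F^\times$. Hence $N \cong K^\times / F^\times$. Because $K^\times = \langle \alpha \rangle$ is cyclic of order $q^m - 1$ and $F^\times$ is its unique subgroup of order $q-1$, the quotient is cyclic of order $(q^m - 1)/(q-1) = s$, generated by the image of $\alpha$; this agrees with $N \cong \mathrm{Ker}(N_{K/F}) \cong \mathbb{Z}/s\mathbb{Z}$ from Theorem \ref{thm:t^m-a_automorphism_field}(v) and Lemma \ref{lem:number_of_solutions_to_norm_equation}. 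Thus $\langle G_\alpha \rangle = N$ is cyclic of order $s$, and it is normal in $\mathrm{Aut}_F(A)$ by Theorem \ref{thm:t^m-a_automorphism_field}(v). The ``moreover'' clause is then immediate: every $H_{\mathrm{id},k} \in \mathrm{Aut}_F(A)$ equals some $G_c$ by Theorem \ref{thm:t^m-a_automorphism_field}(ii), so it lies in $N = \langle G_\alpha \rangle$.

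For part (ii), since $\langle G_\alpha \rangle = N = \{ G_c \mid c \in K^\times \}$ and each $G_c$ is inner by Theorem \ref{thm:t^m-a_automorphism_field}(i), every element of $\langle G_\alpha \rangle$ is inner. When $m = 2$ the algebra $A$ is a nonassociative quaternion algebra, and Theorem \ref{thm:Automorphisms of nonassociative quaternion algebras}(iv) asserts that the maps $G_c$ are the only inner automorphisms of $A$; combined with the first sentence this shows the inner automorphisms of $A$ are precisely those in $\langle G_\alpha \rangle$. For part (iii), the hypothesis $\gcd(m,q-1) = 1$ guarantees, as observed immediately before the statement of the theorem (via \cite[p.~42]{koblitz1994course}), that $F$ contains no non-trivial $m^{\text{th}}$ root of unity. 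Together with the assumption that $a$ lies in no proper subfield of $K$, Theorem \ref{thm:Aut(S_f) F does not contain primitive root of unity} applies and gives $\mathrm{Aut}_F(A) = \{ G_c \mid c \in K^\times \}$ with all automorphisms inner; by part (i) this set is $\langle G_\alpha \rangle$, cyclic of order $s$.

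The main obstacle is conceptually modest but is the single place where genuine structure is used: establishing that $\langle G_\alpha \rangle$ exhausts $N$ and has order exactly $s$. This rests on recognising the homomorphism $c \mapsto G_c$, identifying its kernel as $F^\times$, and exploiting the cyclicity of the finite multiplicative group $K^\times$ to conclude that the image of the primitive element $\alpha$ generates the quotient $K^\times/F^\times$. Once this is in place, the remaining assertions are purely citational.
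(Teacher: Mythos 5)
Your proof is correct and follows essentially the same route as the paper: part (i) rests on the cyclicity of $K^{\times}$, the identification $N=\{G_c \mid c\in K^{\times}\}\cong\mathrm{Ker}(N_{K/F})$ of order $s$, and normality from Theorem \ref{thm:t^m-a_automorphism_field}(v), while (ii) and (iii) are the same citations of Theorems \ref{thm:Automorphisms of nonassociative quaternion algebras}(iv) and \ref{thm:Aut(S_f) F does not contain primitive root of unity}. The only cosmetic difference is in (i): you identify $N\cong K^{\times}/F^{\times}$ via the homomorphism $c\mapsto G_c$ with kernel $F^{\times}$, whereas the paper gets $|\langle G_{\alpha}\rangle|=s$ from the minimality of $s$ with $\alpha^{s}\in F^{\times}$ (Theorem \ref{thm:t^m-a_automorphism_field}(vii)) and then matches cardinalities with $|N|=s$ from Lemma \ref{lem:number_of_solutions_to_norm_equation}.
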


\begin{proof}
\begin{itemize}
\item[(i)] If $K^{\times} = \langle \alpha \rangle$ then $F^{\times} = \langle\alpha^s \rangle$ by \cite[p.~303]{nicholson2012introduction}. This means $\alpha^s \in F^{\times}$ but $\alpha^j \notin F$ for all $j \in \{ 1, \ldots, s-1 \}$ which implies $\langle G_{\alpha} \rangle$ is a cyclic subgroup of $\mathrm{Aut}_F(A)$ of order $s$ by Theorem \ref{thm:t^m-a_automorphism_field}. Since $\mathrm{Ker}(N_{K/F}) \cong \mathbb{Z} / s \mathbb{Z}$ by Lemma \ref{lem:number_of_solutions_to_norm_equation}, every automorphism $H_{\mathrm{id}, k}$ is contained in $\langle G_{\alpha} \rangle$ by Theorem \ref{thm:t^m-a_automorphism_field}.
\item[(ii)] follows by (i) and Theorem \ref{thm:t^m-a_automorphism_field} and (iii) follows by (i) and Theorem \ref{thm:Aut(S_f) F does not contain primitive root of unity}.
\end{itemize}
\end{proof}

\begin{lemma} \label{lem:n divides s} 
Suppose $m \vert (q-1)$, then $m \vert s$.
\end{lemma}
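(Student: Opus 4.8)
The plan is to reduce $s$ modulo $m$ directly, exploiting the explicit expression $s = \sum_{i=0}^{m-1} q^i$ that was recorded just before the statement. The hypothesis $m \mid (q-1)$ is precisely the congruence $q \equiv 1 \pmod{m}$, and this is the only input needed.

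First I would observe that $q \equiv 1 \pmod{m}$ implies $q^i \equiv 1^i = 1 \pmod{m}$ for every $i \in \{0, \ldots, m-1\}$, since congruences are preserved under taking powers. Summing these congruences over the $m$ terms of $s$ then gives
\begin{equation*}
s = \sum_{i=0}^{m-1} q^i \equiv \sum_{i=0}^{m-1} 1 = m \equiv 0 \pmod{m},
\end{equation*}
which is exactly the assertion $m \mid s$. This mirrors the modular computations already carried out in the proofs of Corollary \ref{cor:p=1mod m finite field irreducibility criteria} and Corollary \ref{cor:automorphisms of Sandler semifields}(iii), where $s \bmod m$ is evaluated in the same fashion, so the argument is entirely consistent with the techniques used in this chapter.

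There is no real obstacle here: the statement is a one-line consequence of the congruence, and no appeal to $q$ being a prime power, to $K/F$, or to any structure beyond elementary modular arithmetic is required. The only point worth stating carefully is that each summand $q^i$ is congruent to $1$ modulo $m$, which is where the hypothesis is used; everything else is the summation of $m$ copies of $1$.
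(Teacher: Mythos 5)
Your proof is correct. It does, however, take a genuinely different (and more direct) route than the paper. The paper first establishes by induction on $m$ the stronger divisibility $(q-1) \mid \bigl( \bigl( \sum_{i=0}^{m-1} q^i \bigr) - m \bigr)$, i.e. $s \equiv m \pmod{q-1}$, and only then invokes the hypothesis $m \mid (q-1)$ to pass from divisibility by $q-1$ to divisibility by $m$ and conclude $m \mid (s - m) + m = s$. You instead reduce modulo $m$ from the outset: $q \equiv 1 \pmod{m}$ gives $q^i \equiv 1 \pmod{m}$ for each $i$, so $s \equiv m \equiv 0 \pmod{m}$. Your argument is shorter, avoids the induction entirely, and is the same computation the paper itself performs in the proofs of Corollary \ref{cor:p=1mod m finite field irreducibility criteria} and Corollary \ref{cor:automorphisms of Sandler semifields}; what the paper's route buys is the intermediate congruence $s \equiv m \pmod{q-1}$, which is strictly stronger than the lemma but is not used elsewhere, so nothing is lost by your approach.
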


\begin{proof}
We prove first
\begin{equation} \label{eqn:Lemma:n divides s I} 
(q-1) \vert \Big( \big( \sum_{i=0}^{m-1} q^i \big) - m \Big)
\end{equation}
for all $m \geq 2$ by induction: Clearly \eqref{eqn:Lemma:n divides s I} holds for $m = 2$. Suppose \eqref{eqn:Lemma:n divides s I} holds for some $m \geq 2$, then
\begin{equation} \label{eqn:Lemma:n divides s II}
\big( \sum_{i=0}^{m} q^i \big) - (m+1) =  \big( \sum_{i=0}^{m-1} q^i \big) - m + q^m - 1 =  \big( \sum_{i=0}^{m-1} q^i \big) - m +  \big( \sum_{i=0}^{m-1} q^i \big) (q-1).
\end{equation}
Now, $(q-1) \vert \Big( \big( \sum_{i=0}^{m-1} q^i \big) - m \Big)$ by induction hypothesis, thus $(q-1)$ divides \eqref{eqn:Lemma:n divides s II} which implies \eqref{eqn:Lemma:n divides s I} holds by induction. In particular since $m \vert (q-1)$, \eqref{eqn:Lemma:n divides s I} yields
$$m \vert \Big( \big( \sum_{i=0}^{m-1} q^i \big) - m \Big),$$
therefore $m$ divides $\big( \sum_{i=0}^{m-1} q^i \big) - m + m = s$ as required.
\end{proof}

\begin{lemma} \label{lem:n^2 does not divide s} 
Suppose $m \vert (q-1)$.
\begin{itemize}
\item[(i)] If $m$ is odd then $m^2 \nmid (ls)$ for all $l \in \{ 1, \ldots, m-1 \}$.
\item[(i)] If $(q-1)/m$ is even then $m^2 \nmid (ls)$ for all $l \in \{ 1, \ldots, m-1 \}$.
\end{itemize}
\end{lemma}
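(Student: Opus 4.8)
The claim concerns the quantity $s = \sum_{i=0}^{m-1} q^i = (q^m-1)/(q-1)$ under the hypothesis $m \mid (q-1)$, and asks to show that $m^2 \nmid (ls)$ for all $l \in \{1,\dots,m-1\}$ in two cases: when $m$ is odd, and when $(q-1)/m$ is even. By Lemma~\ref{lem:n divides s} we already know $m \mid s$, so write $s = mw$ where $w = s/m$. Then $m^2 \mid (ls)$ is equivalent to $m^2 \mid (lmw)$, i.e.\ $m \mid (lw)$. So the plan is to reduce the whole statement to a single congruence condition: I must show that $m \nmid (lw)$ for all $l \in \{1,\dots,m-1\}$, which (since $\gcd$ considerations apply) will follow once I determine $w \bmod m$ and show $\gcd(w,m) = 1$, or more precisely show $w$ is invertible modulo $m$ under each hypothesis.

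\textbf{Computing $w \bmod m$.} The key computation is to find $w = s/m \pmod m$. First I would compute $s \bmod m^2$. Since $m \mid (q-1)$, write $q = 1 + mc$ where $c = (q-1)/m$. Then $q^i = (1+mc)^i \equiv 1 + imc \pmod{m^2}$ by the binomial theorem (all higher terms carry a factor $m^2$). Summing,
\begin{equation*}
s = \sum_{i=0}^{m-1} q^i \equiv \sum_{i=0}^{m-1}(1 + imc) = m + mc\cdot\frac{m(m-1)}{2} \pmod{m^2}.
\end{equation*}
Thus $s \equiv m + m^2 c (m-1)/2 \pmod{m^2}$, so $w = s/m \equiv 1 + mc(m-1)/2 \pmod m$. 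Therefore $w \equiv 1 + \tfrac{m(m-1)}{2}c \pmod m$. The problem now is entirely about whether the term $\tfrac{m(m-1)}{2}c$ vanishes mod $m$.

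\textbf{The two cases.} When $m$ is odd, $\tfrac{m-1}{2}$ is an integer, so $\tfrac{m(m-1)}{2}c = m\cdot\tfrac{(m-1)}{2}c \equiv 0 \pmod m$, giving $w \equiv 1 \pmod m$. When $(q-1)/m = c$ is even, write $c = 2c'$; then $\tfrac{m(m-1)}{2}c = m(m-1)c' \equiv 0 \pmod m$, again giving $w \equiv 1 \pmod m$. In both cases $w \equiv 1 \pmod m$, hence $\gcd(w,m)=1$ and $m \mid (lw)$ forces $m \mid l$, which is impossible for $l \in \{1,\dots,m-1\}$. This yields $m \nmid (lw)$, equivalently $m^2 \nmid (ls)$, for all such $l$, completing both parts.

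\textbf{Anticipated obstacle.} The only delicate point is the parity bookkeeping in the factor $\tfrac{m(m-1)}{2}$: exactly one of $m$ and $m-1$ is even, so $\tfrac{m(m-1)}{2}$ is always an integer, but whether it is divisible by $m$ depends on which of the two consecutive integers supplies the factor of $2$. When $m$ is odd the factor of $m$ survives intact (the $2$ comes from $m-1$); when $m$ is even the surviving part is only $m/2$ times $(m-1)$, so one genuinely needs the extra factor of $2$ from $c$ being even. I expect the main care to be in handling the even-$m$ subcase of part~(ii) correctly, ensuring the hypothesis $(q-1)/m$ even is used precisely where $m$ itself fails to absorb the $\tfrac12$. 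I would present the unified computation $w \equiv 1 \pmod m$ and then note that each hypothesis is exactly what makes the error term $\tfrac{m(m-1)}{2}c$ divisible by $m$.
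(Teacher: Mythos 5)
Your proof is correct and follows essentially the same route as the paper: both expand $q^i=(1+mc)^i\equiv 1+imc \pmod{m^2}$, sum to get $s\equiv m+\tfrac{m^2(m-1)}{2}c \pmod{m^2}$, and use the hypothesis (either $m$ odd or $c$ even) to kill the error term. The only cosmetic difference is that you divide through by $m$ and phrase the conclusion as $s/m\equiv 1\pmod m$, whereas the paper works directly with $ls\equiv lm\not\equiv 0\pmod{m^2}$; the content is identical.
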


\begin{proof}
Write $q = 1 + rm$ for some $r \in \mathbb{N}$, then
\begin{align*}
q^j &= (1+rm)^j = \sum_{i=0}^j \binom{j}{i} (rm)^i \equiv \sum_{i=0}^1 \binom{j}{i} (rm)^i \ \mathrm{ mod} \ (m^2) \\ & \equiv (1 + jrm) \ \mathrm{ mod} \ (m^2)
\end{align*}
for all $j \geq 1$. Therefore
\begin{align*}
ls &= l \sum_{j=0}^{m-1} q^j \equiv l \big( 1 + \sum_{j=1}^{m-1}(1+jrm) \big) \ \mathrm{ mod} \ (m^2) \\ & \equiv \Big( lm + lrm \frac{(m-1)m}{2} \Big) \ \mathrm{ mod} \ (m^2),
\end{align*}
for all $l \in \{ 1, \ldots, m-1 \}$.
If $m$ is odd or $r = (q-1)/m$ is even, then
$$\frac{lr(m-1)}{2} \in \mathbb{Z}$$
and so
$$lrm \frac{(m-1)m}{2} \ \mathrm{ mod} \ (m^2) \equiv 0$$
for all $l \in \{ 1, \ldots, m-1 \}$. This means
$$ls \equiv lm \ \mathrm{ mod} \ (m^2) \not\equiv 0 \ \mathrm{ mod} \ (m^2),$$
that is, $m^2 \nmid (ls)$ for all $l \in \{ 1, \ldots, m-1 \}$.
\end{proof}

\begin{theorem} \label{thm:Automorphisms of nonassociative cyclic algebras over finite fields}
Suppose $\mathrm{gcd}(\mathrm{Char}(F),m) = 1$ and $m \vert (q-1)$. Then we can write $K = F(d)$ where $d$ is a root of the irreducible polynomial $x^m - e \in F[x]$ as in Lemma \ref{lem:eigenvalues and eigenvectors of cyclic extension automorphisms}. Let $A = (K/F,\sigma,\lambda d^i)$ for some $i \in \{ 1 , \ldots, m-1 \}$, $\lambda \in F^{\times}$. If $m$ is odd or $(q-1)/m$ is even, then $\mathrm{Aut}_F(A)$ is a group of order $ms$ and contains a subgroup isomorphic to the semidirect product of cyclic groups
\begin{equation} \label{eqn:Automorphisms of nonassociative cyclic algebras over finite fields semidirect not nec prime}
\mathbb{Z} / \Big( \frac{s}{m} \Big) \mathbb{Z} \rtimes_{q} \mathbb{Z} / (m \mu) \mathbb{Z},
\end{equation}
where $\mu = m/\mathrm{gcd}(i,m)$.
Moreover if $i$ and $m$ are coprime, then
\begin{equation} \label{eqn:Automorphisms of nonassociative cyclic algebras over finite fields semidirect not nec prime 2}
\mathrm{Aut}_F(A) \cong \mathbb{Z} / \Big( \frac{s}{m} \Big) \mathbb{Z} \rtimes_{q} \mathbb{Z} / (m^2) \mathbb{Z}.
\end{equation}
\end{theorem}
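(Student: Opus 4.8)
The strategy is to identify $\mathrm{Aut}_F(A)$ explicitly via Corollary \ref{cor:t^m-a automorphism field result}, which tells us every automorphism has the form $H_{\sigma^j,k}$ with $\sigma^j(a) = N_{K/F}(k)a$, and then recognise the resulting group by generators and relations. First I would use $m \mid (q-1)$ together with $\gcd(\mathrm{Char}(F),m)=1$ to invoke Lemma \ref{lem:eigenvalues and eigenvectors of cyclic extension automorphisms}, writing $K = F(d)$ with $d^m = e \in F^{\times}$ and $\sigma(d) = \omega d$ for a primitive $m^{\text{th}}$ root of unity $\omega \in F^{\times}$. Applying $\sigma^j$ to $a = \lambda d^i$ gives $\sigma^j(a) = \omega^{ij}\lambda d^i = \omega^{ij}a$, so the condition $\sigma^j(a) = N_{K/F}(k)a$ becomes $N_{K/F}(k) = \omega^{ij}$. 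Since $N_{K/F}$ is surjective onto $F^{\times}$ and $\omega^{ij} \in F^{\times}$, for \emph{every} $j \in \{0,\ldots,m-1\}$ there exists a valid $k$, each realisable in exactly $s$ ways by Lemma \ref{lem:number_of_solutions_to_norm_equation}. This immediately gives $|\mathrm{Aut}_F(A)| = ms$.

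Next I would set up the generators. Let $\alpha$ be a primitive element of $K$; then $G_\alpha = H_{\mathrm{id},\alpha^{-1}\sigma(\alpha)}$ generates the cyclic normal subgroup $N = \langle G_\alpha\rangle \cong \mathrm{Ker}(N_{K/F}) \cong \mathbb{Z}/s\mathbb{Z}$ by Theorem \ref{thm:t^m-a_automorphism_field finite}(i) and Lemma \ref{lem:number_of_solutions_to_norm_equation}. For the complementary direction, pick $j$ with $\gcd(i,m)$ controlling the order of $\sigma^j$ on $a$: the order of the map $a \mapsto \omega^{ij}a$ is $\mu = m/\gcd(i,m)$, and the key subgroup is generated by a single $H_{\sigma,k_0}$ where $k_0$ satisfies $N_{K/F}(k_0) = \omega^{i}$. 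Using the composition rule $H_{\sigma^j,k}\circ H_{\sigma^l,b} = H_{\sigma^{j+l},\sigma^j(b)k}$ (noted in the proof of Theorem \ref{thm:automorphism_of_S_f_division_case}), I would compute the order of $H_{\sigma,k_0}$: raising it to the $m^{\text{th}}$ power collapses the $\sigma$-part to the identity and leaves an inner automorphism $H_{\mathrm{id},b}$ with $b = N_{K/F}(k_0) = \omega^i$, which has order $\mu$ inside $N$. Hence $H_{\sigma,k_0}$ has order $m\mu$, and when $\gcd(i,m)=1$ this is $m^2$, mirroring the degree-two computation in Theorem \ref{thm:semidirect and dicyclic m=2}.

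The semidirect product structure comes from the conjugation action. I would verify $H_{\sigma,k_0}\circ G_\alpha \circ H_{\sigma,k_0}^{-1} = G_{\sigma(\alpha)}$, exactly as in Theorem \ref{thm:t^m-a_automorphism_field}(v), and then translate the action of $\sigma$ on $\mathrm{Ker}(N_{K/F}) \cong \mathbb{Z}/s\mathbb{Z}$ into multiplication by $q$ on the exponent, since $\sigma$ acts as the Frobenius $x \mapsto x^q$. This identifies the action as $x \mapsto x^q$, giving the twist subscript $q$ in the presentation. To extract the stated subgroup $\mathbb{Z}/(s/m)\mathbb{Z} \rtimes_q \mathbb{Z}/(m\mu)\mathbb{Z}$, I would observe that $H_{\sigma,k_0}^m$ lands in $N$ as an element of order $\mu$, so the cyclic group $\langle H_{\sigma,k_0}\rangle$ already meets $N$ in a subgroup of order $\mu$; quotienting $N = \mathbb{Z}/s\mathbb{Z}$ by this overlap and using Lemma \ref{lem:n^2 does not divide s} to guarantee the intersection has the expected size yields the complementary cyclic factor of order $s/m$.

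\textbf{Main obstacle.} The hard part will be bookkeeping the exact intersection $\langle H_{\sigma,k_0}\rangle \cap N$ and the resulting quotient, so that the two cyclic factors have the precise orders $s/m$ and $m\mu$ (resp.\ $m^2$) rather than merely dividing them. This is where Lemma \ref{lem:n^2 does not divide s} is essential: the hypotheses ``$m$ odd or $(q-1)/m$ even'' are exactly what prevent $m^2 \mid (ls)$, ensuring the cyclic subgroup generated by the $\sigma$-twist does not accidentally swallow more of $N$ than the order-$\mu$ piece. Verifying that the relations $x^{s/m}=1$, $y^{m\mu}=1$, $yxy^{-1}=x^q$ hold with no further collapse — i.e.\ that the constructed subgroup genuinely has order $(s/m)\cdot m\mu = s\mu$ and, in the coprime case $\mu = m$, fills out all of $\mathrm{Aut}_F(A)$ of order $ms$ — is the delicate counting step that ties everything together.
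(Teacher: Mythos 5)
Your plan matches the paper's proof essentially step for step: the eigenvalue computation $\sigma^j(a)=\omega^{ij}a$ via Lemma \ref{lem:eigenvalues and eigenvectors of cyclic extension automorphisms}, the count $|\mathrm{Aut}_F(A)|=ms$ via Corollary \ref{cor:t^m-a automorphism field result} and Lemma \ref{lem:number_of_solutions_to_norm_equation}, the generator $H_{\sigma,k_0}$ with $N_{K/F}(k_0)=\omega^i$ of order $m\mu$, the conjugation formula producing the twist by $q$, and Lemma \ref{lem:n^2 does not divide s} as the crux. The one point to tighten is the complement: rather than ``quotienting $N$ by the overlap'' (which would give order $s/\mu$, not $s/m$), the paper takes the concrete subgroup $\langle G_{\alpha^m}\rangle$ of order $s/m$ (using $m\mid s$ from Lemma \ref{lem:n divides s}) and applies Lemma \ref{lem:n^2 does not divide s} to show it meets $\langle H_{\sigma,k_0}\rangle$ trivially --- exactly the delicate counting step you flagged.
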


\begin{proof}
Let $\tau: K \rightarrow K, \ k \mapsto k^q$, then $\tau^j(\lambda d^i) = \omega^{ij} \lambda d^i$, for all $j \in \{ 0, \ldots, m-1 \}$ where $\omega \in F^{\times}$ is a primitive $m^{\text{th}}$ root of unity by Lemma \ref{lem:eigenvalues and eigenvectors of cyclic extension automorphisms}. As $\tau$ generates $\mathrm{Gal}(K/F)$, the automorphisms of $A$ are precisely the maps $H_{\tau^j,k}$, where $j \in \{ 0, \ldots, m-1 \}$ and $k \in K^{\times}$ are such that $\tau^j(\lambda d^i) = N_{K/F}(k) \lambda d^i$ by Corollary \ref{cor:t^m-a automorphism field result}. Moreover there are exactly $s$ elements $k \in K^{\times}$ with $N_{K/F}(k) = \omega^{ij}$ by Lemma \ref{lem:number_of_solutions_to_norm_equation}, and each of these elements corresponds to a unique automorphism of $A$. Therefore $\mathrm{Aut}_F(A)$ is a group of order $ms$.

Choose $k \in K^{\times}$ such that $N_{K/F}(k) = \omega^i$ so that $H_{\tau,k} \in \mathrm{Aut}_F(A)$. As $\tau$ has order $m$, $H_{\tau,k} \circ \ldots \circ H_{\tau,k}$ ($m$-times) becomes $H_{\mathrm{id},b}$ where $b = \omega^i = N_{K/F}(k)$. Notice $\omega^i$ is a primitive $\mu^{\text{th}}$ root of unity where $\mu = m/\mathrm{gcd}(i,m)$, therefore $H_{\mathrm{id},b}$ has order $\mu$, and thus the subgroup of $\mathrm{Aut}_F(A)$ generated by $H_{\tau,k}$ has order $m \mu$.

$\langle G_{\alpha} \rangle$ is a cyclic subgroup of $\mathrm{Aut}_F(A)$ of order $s$ by Theorem \ref{thm:t^m-a_automorphism_field finite} where $\alpha$ is a primitive element of $K$. Furthermore, $m \vert s$ by Lemma \ref{lem:n divides s} and so $\langle G_{\alpha^m} \rangle$ is a cyclic subgroup of $\mathrm{Aut}_F(A)$ of order $s/m$. We will prove $\mathrm{Aut}_F(A)$ contains the semidirect product $\langle G_{\alpha^m} \rangle \rtimes_q \langle H_{\tau,k} \rangle$, by showing it can be written in the presentation \eqref{eqn:semidirect product of cyclic algebras}:

The inverse of $H_{\tau,k}$ in $\mathrm{Aut}_F(A)$ is $H_{\tau^{-1},\tau^{-1}(k^{-1})}$ and so
\begin{align*}
H_{\tau,k} \Big( & G_{\alpha^m}  \Big( H_{\tau,k}^{-1} \Big( \sum_{j=0}^{m-1} x_j t^j \Big) \Big) \Big) \\
&= H_{\tau,k} \Big( G_{\alpha^m} \Big( \tau^{-1}(x_0) + \sum_{j=1}^{m-1} \tau^{-1}(x_j) \big( \prod_{l=0}^{j-1} \sigma^{l}(\tau^{-1}(k^{-1})) \big) t^j \Big) \Big) \\
&= H_{\tau,k} \Big( \tau^{-1}(x_0) + \sum_{j=1}^{m-1} \tau^{-1}(x_j) \big( \prod_{l=0}^{j-1} \sigma^{l}(\tau^{-1}(k^{-1})) \big) \alpha^{-m} \sigma^j(\alpha^m) t^j \Big) \\
&= x_0 + \sum_{j=1}^{m-1} x_j \tau(\alpha^{-m}) \tau(\sigma^{j}(\alpha^m)) t^j = x_0 + \sum_{j=1}^{m-1} x_j \tau(\alpha^{-m}) \sigma^{j}(\tau(\alpha^m)) t^j \\
&= G_{\tau(\alpha^m)} \Big( \sum_{j=0}^{m-1} x_j t^j \Big) = G_{\alpha^{mq}} \Big( \sum_{j=0}^{m-1} x_j t^j \Big),
\end{align*}
that is $H_{\tau,k} \circ G_{\alpha^m} \circ H_{\tau,k}^{-1} = G_{\alpha^{mq}} = (G_{\alpha^m})^q.$

Notice $q^m = qs -s + 1$, i.e. $q^m \equiv 1 \text{ mod } (s)$, and so $q^{m \mu} \equiv 1 \text{ mod } (s)$. Then $m \vert s$ by Lemma \ref{lem:n divides s}, hence $q^{m \mu} \equiv 1 \text{ mod } (s/m)$. In order to prove $\mathrm{Aut}_F(A)$ contains $\langle G_{\alpha^m} \rangle \rtimes_q \langle H_{\tau,k} \rangle$, we are left to show that $\langle H_{\tau,k} \rangle \cap \langle G_{\alpha^m} \rangle = \{ \mathrm{id} \}$.

Suppose, for a contradiction, that $\langle H_{\tau,k} \rangle \cap \langle G_{\alpha^m} \rangle \neq \{ \mathrm{id} \}$. Then $H_{\mathrm{id},\omega^l} \in \langle G_{\alpha^m} \rangle$ for some $l \in \{ 1, \ldots, m-1 \}$. Therefore $\langle G_{\alpha^m} \rangle$ contains a subgroup of order $m/\mathrm{gcd}(l,m)$ generated by $H_{\mathrm{id}, \omega^l}$ and so $(m/\mathrm{gcd}(l,m)) \vert (s / m)$. This means $m^2 \vert (s \mathrm{gcd}(l,m))$, a contradiction by Lemma \ref{lem:n^2 does not divide s}.

Therefore $\mathrm{Aut}_F(A)$ contains the subgroup
$$\langle G_{\alpha^m} \rangle \rtimes_q \langle H_{\tau,k} \rangle \cong \mathbb{Z} / \Big( \frac{s}{m} \Big) \mathbb{Z} \rtimes_{q} \mathbb{Z} / (m \mu) \mathbb{Z}.$$
If $\mathrm{gcd}(i,m) = 1$ this subgroup has order $ms$ and since $|\mathrm{Aut}_F(A)| = ms$, this is all of $\mathrm{Aut}_F(A)$.
\end{proof}

We can completely determine the automorphism group of nonassociative cyclic algebras of prime degree $m$ different from $\mathrm{Char}(F)$. If $F$ does not contain a primitive $m^{\text{th}}$ root of unity, i.e. if $m \nmid (q-1)$, then $\mathrm{Aut}_F(A) = \langle G_{\alpha} \rangle \cong \mathbb{Z}/ s \mathbb{Z}$ by Theorem \ref{thm:t^m-a_automorphism_field finite}(iii), and all automorphisms of $A$ are inner. Otherwise we have: 


\begin{theorem} \label{thm:Automorphisms of nonassociative cyclic algebras over finite fields prime}
Suppose $m$ is prime and $m \vert (q-1)$. Then we can write $K = F(d)$ where $d$ is a root of the irreducible polynomial $x^m - e \in F[x]$ as in Lemma \ref{lem:eigenvalues and eigenvectors of cyclic extension automorphisms}. Let $A = (K/F,\sigma,a)$ for some $a \in K \setminus F$.
\begin{itemize}
\item[(i)] If $a \neq \lambda d^i$ for any $i \in \{ 0, \ldots, m-1 \}$, $\lambda\in F^\times$, then $\mathrm{Aut}_F(A) = \langle G_{\alpha} \rangle \cong \mathbb{Z}/ s \mathbb{Z}$ and all automorphisms of $A$ are inner.
\item[(ii)] (\cite[Theorem 6.3.5]{AndrewPhD}). If $m = 2$ and $a = \lambda d$ for some $\lambda \in F^{\times}$, then $\mathrm{Aut}_F(A)$ is the dicyclic group of order $2q + 2$.
\item[(iii)] If $m > 2$ and $a = \lambda d^i$ for some $i \in \{ 1 , \ldots, m-1 \}$, $\lambda \in F^{\times}$, then
$$\mathrm{Aut}_F(A) \cong \mathbb{Z} / \Big( \frac{s}{m} \Big) \mathbb{Z} \rtimes_{q} \mathbb{Z} / (m^2) \mathbb{Z}.$$
\end{itemize}
\end{theorem}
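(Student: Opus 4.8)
The plan is to prove the three cases of Theorem \ref{thm:Automorphisms of nonassociative cyclic algebras over finite fields prime} by specialising the general machinery developed earlier, particularly Corollary \ref{cor:t^m-a automorphism field result}, Theorem \ref{thm:t^m-a_automorphism_field finite}, and Theorem \ref{thm:Automorphisms of nonassociative cyclic algebras over finite fields}, and then invoking Lemma \ref{lem:eigenvalues and eigenvectors of cyclic extension automorphisms} to control the action of $\sigma$ on powers of $d$. Since $m$ is prime with $m \vert (q-1)$, the field $F = \mathbb{F}_q$ contains a primitive $m^{\text{th}}$ root of unity, so by the structure theory quoted before Lemma \ref{lem:eigenvalues and eigenvectors of cyclic extension automorphisms} we may indeed write $K = F(d)$ with $d$ a root of an irreducible $x^m - e \in F[x]$, and Lemma \ref{lem:eigenvalues and eigenvectors of cyclic extension automorphisms} gives $\sigma^j(\lambda d^i) = \omega^{ij} \lambda d^i$ for a primitive $m^{\text{th}}$ root of unity $\omega$. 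This eigenvalue description is the key tool: by Corollary \ref{cor:t^m-a automorphism field result}, an automorphism $H_{\sigma^j,k}$ exists precisely when $\sigma^j(a) = N_{K/F}(k)a$, and since the norm $N_{K/F}\colon K^\times \to F^\times$ is surjective, the existence of a nontrivial $\sigma^j$-component reduces entirely to whether $\sigma^j(a)a^{-1} \in F^\times$, which by Lemma \ref{lem:eigenvalues and eigenvectors of cyclic extension automorphisms} depends on whether $a$ is a scalar multiple of a power of $d$.

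For part (i), I would argue that if $a \neq \lambda d^i$ for any $i$, $\lambda$, then since $\{\lambda d^i\}$ are (for $m$ prime) the only eigenvectors of each $\sigma^j$ by Lemma \ref{lem:eigenvalues and eigenvectors of cyclic extension automorphisms}, we have $\sigma^j(a) \neq la$ for all $l \in F^\times$ and all $j \in \{1,\dots,m-1\}$. Hence $\sigma^j(a) \neq N_{K/F}(k)a$ for every $k$, so $H_{\sigma^j,k} \notin \mathrm{Aut}_F(A)$ for $j \neq 0$, and only automorphisms of the form $H_{\mathrm{id},k}$ survive; by Theorem \ref{thm:t^m-a_automorphism_field finite}(i) these constitute exactly $\langle G_\alpha \rangle \cong \mathbb{Z}/s\mathbb{Z}$, all inner by Theorem \ref{thm:t^m-a_automorphism_field}. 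This mirrors Proposition \ref{prop:contains rot of unity Ker}(i). Part (ii) is already established as \cite[Theorem 6.3.5]{AndrewPhD}, so I would simply cite it, perhaps noting it is the $m=2$ instance of the dicyclic phenomenon; alternatively one checks directly that with $m=2$ one has $s = q+1$ and the group $\mathrm{Aut}_F(A)$ of order $ms = 2(q+1)$ carries the dicyclic presentation \eqref{eqn:dicyclic group presentation}.

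For part (iii), the main content, I would observe that $a = \lambda d^i$ with $i \in \{1,\dots,m-1\}$ and $m$ prime forces $\gcd(i,m) = 1$, so $\mu = m/\gcd(i,m) = m$. Since $m > 2$ is prime and $m \vert (q-1)$, the hypothesis of Theorem \ref{thm:Automorphisms of nonassociative cyclic algebras over finite fields} — namely that $m$ is odd or $(q-1)/m$ is even — is automatically satisfied (a prime $m > 2$ is odd). Therefore Theorem \ref{thm:Automorphisms of nonassociative cyclic algebras over finite fields} applies directly, and because $\gcd(i,m) = 1$ we land in the case \eqref{eqn:Automorphisms of nonassociative cyclic algebras over finite fields semidirect not nec prime 2}, giving $\mathrm{Aut}_F(A) \cong \mathbb{Z}/(s/m)\mathbb{Z} \rtimes_q \mathbb{Z}/(m^2)\mathbb{Z}$ as claimed.

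The routine verifications (the eigenvalue action, norm surjectivity, the order counts) are all available from prior results, so the proof is essentially an assembly of earlier theorems under the prime hypothesis. The step requiring the most care is confirming that the oddness-or-even-quotient hypothesis of Theorem \ref{thm:Automorphisms of nonassociative cyclic algebras over finite fields} holds without extra assumptions: I expect the main obstacle to be handling the boundary prime $m = 2$, which is exactly why it is separated out as part (ii) — there the parity condition can fail, the relevant root of unity is $-1$, and the group becomes dicyclic rather than a semidirect product of the form in (iii). Ensuring the three cases partition all possibilities for $a \in K \setminus F$ (scalar multiple of a power of $d$ versus not, and the prime $m=2$ versus $m>2$ split) is the one place where I would check the logic carefully rather than merely citing.
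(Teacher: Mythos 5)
Your proposal is correct and follows essentially the same route as the paper: part (i) is the eigenvector argument of Lemma \ref{lem:eigenvalues and eigenvectors of cyclic extension automorphisms} combined with Theorem \ref{thm:t^m-a_automorphism_field finite}(i) (the paper cites this as \cite[Corollary 6.3.3]{AndrewPhD}), part (ii) is the cited result from \cite{AndrewPhD}, and part (iii) is a direct application of Theorem \ref{thm:Automorphisms of nonassociative cyclic algebras over finite fields} using that $\gcd(i,m)=1$ and that a prime $m>2$ is odd. You in fact supply slightly more justification than the paper does (e.g.\ checking the parity hypothesis and noting that $m\mid(q-1)$ already forces $m\neq\mathrm{Char}(F)$), and the logic partitioning the cases is sound.
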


\begin{proof}
\begin{itemize}
\item[(i)] This is \cite[Corollary 6.3.3]{AndrewPhD} together with Theorem \ref{thm:t^m-a_automorphism_field finite}(i).
\item[(iii)] follows immediately from Theorem \ref{thm:Automorphisms of nonassociative cyclic algebras over finite fields}.
\end{itemize}
\end{proof}



\chapter{Generalisation of the \texorpdfstring{$S_f$}{S\_f} Construction} \label{chapter:Generalisation of the S_f Construction}

Until now, we have studied the construction $S_f = D[t;\sigma,\delta]/D[t;\sigma,\delta]f$ in the case where $D$ is an associative division ring. In this Chapter we generalise this construction using the skew polynomial ring $S[t;\sigma,\delta]$, where $S$ is any associative unital ring, $\sigma$ is an injective endomorphism of $S$ and $\delta$ is a $\sigma$-derivation of $S$. While $S[t;\sigma,\delta]$ is in general neither left nor right Euclidean (unless $S$ is a division ring), we are still able to right divide by polynomials $f(t) \in S[t;\sigma,\delta]$ whose leading coefficient is a unit. Moreover, if $\sigma$ is an automorphism we are also able to left divide by such $f(t)$. Therefore, when $f(t)$ has an invertible leading coefficient, it is possible to generalise the construction of the algebras $S_f$ to this setting.
\vspace{5mm}

In the following, let $S$ be a unital associative ring, $\sigma$ be an injective endomorphism of $S$ and $\delta$ be a left $\sigma$-derivation of $S$.
An element $0 \neq b \in S$ is called \textbf{right-invertible} if there exists $b_r \in S$ such that $b b_r = 1$, and \textbf{left-invertible} if there exists $b_l \in S$ such that $b_l b = 1$. An element $b$ which is both left and right invertible is called \textbf{invertible} (or a \textbf{unit}), and $b_l = b_r$ is called the \textbf{inverse} of $b$ and denoted $b^{-1}$ \cite[p.~4]{lam2013first}. We say a non-zero ring is a \textbf{domain} if it has no non-trivial zero divisors. A commutative domain is called an \textbf{integral domain}.

Recall from \eqref{eqn:mult in S_f 3} that $S_{n,j}$ denotes the sum of all monomials in $\sigma$ and $\delta$ that are of degree $j$ in $\sigma$ and degree $n - j$ in $\delta$. The equality
\begin{equation}
(bt^n)(ct^m) = \sum_{j=0}^n b(S_{n,j}(c))t^{j+m}, \tag{\ref{eqn:mult in S_f 2}}
\end{equation}
for all $b,c \in S$, holds more generally for $S$ any unital associative ring by \cite[p.~4]{leroy2013sigma}.

The degree function satisfies
\begin{equation} \label{eqn:generalised degree function}
\mathrm{deg} \big( g(t)h(t) \big) \leq \mathrm{deg}(g(t)) + \mathrm{deg}(h(t))
\end{equation}
for all $g(t), h(t) \in S[t;\sigma,\delta]$. In general \eqref{eqn:generalised degree function} is not an equality unless $S$ is a domain, or $g(t)$ has an invertible leading coefficient, or $h(t)$ has an invertible leading coefficient: 

Indeed, if $S$ is a domain this is \cite[p.~12]{gomez2014basic}, and the equality in \eqref{eqn:generalised degree function} implies $S[t;\sigma,\delta]$ is also a domain. Suppose now $S$ is not necessarily a domain, $g(t)$ has degree $l$ and leading coefficient $g_l$ and $h(t)$ has degree $n$ and leading coefficient $h_n$. If $h_n$ is invertible then $\sigma^l(h_n)$ is also invertible and
$$g(t)h(t)  = g_{l} \sigma^{l}(h_{n}) t^{l + n} + \text{ lower degree terms}.$$
Here $g_{l} \sigma^{l}(h_{n}) \neq 0$ since invertible elements are not zero divisors. Similarly, if $g_l$ is invertible then $g_{l} \sigma^{l}(h_{n}) \neq 0$. In either case \eqref{eqn:generalised degree function} is an equality.

\vspace{4mm}
$S[t;\sigma,\delta]$ is in general neither left nor right Euclidean, nor a left or right principal ideal domain. Nevertheless, we can still perform right division by a polynomial whose leading coefficient is a unit. Additionally, when $\sigma$ is an automorphism we can also left divide by such a polynomial. When $\delta = 0$ and $\sigma \in \mathrm{Aut}(S)$ this was proven for special cases of $S$, for instance in \cite[p.~4]{ducoat2015skew}, \cite[p.~4]{jitman2010skew} and \cite[p.~391]{mcdonald1974finite}:

\begin{theorem} \label{thm:generalised S_f euclidean division}
Let $f(t) = a_m t^m - \sum_{i=0}^{m-1} a_i t^i \in R = S[t;\sigma,\delta]$ and suppose $a_m$ is invertible.
\begin{itemize}
\item[(i)] For all $g(t) \in R$, there exists uniquely determined $r(t), q(t) \in R$ with $\mathrm{deg}(r(t)) < m$, such that $g(t) = q(t) f(t) + r(t)$.
\item[(ii)] Suppose additionally $\sigma$ is an automorphism of $S$. Then for all $g(t) \in R$, there exists uniquely determined $r(t), q(t) \in R$ with $\mathrm{deg}(r(t)) < m$, such that $g(t) = f(t) q(t) + r(t)$.
\end{itemize}
\end{theorem}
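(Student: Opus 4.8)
The plan is to prove both statements by the standard Euclidean division argument, adapted to the noncommutative setting where the only available hypothesis is that the leading coefficient $a_m$ of $f(t)$ is invertible. The key observation making everything work is the degree formula established just before the theorem: if a polynomial has an invertible leading coefficient, then multiplying it by another polynomial adds degrees exactly, because invertible elements are never zero divisors. This guarantees that each reduction step in the division algorithm genuinely lowers the degree.

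For part (i), I would argue by strong induction on $\mathrm{deg}(g(t))$. If $\mathrm{deg}(g(t)) < m$, then take $q(t) = 0$ and $r(t) = g(t)$. Otherwise, write $g(t) = b_n t^n + \text{lower terms}$ with $n \geq m$. The candidate leading term of the quotient should be $b_n \sigma^{n-m}(a_m)^{-1} t^{n-m}$; here I use that $\sigma$ being an injective endomorphism sends the unit $a_m$ to the unit $\sigma^{n-m}(a_m)$, so its inverse exists. Multiplying this monomial by $f(t)$ produces, via the multiplication rule \eqref{eqn:mult in S_f 2}, a polynomial whose leading term is exactly $b_n t^n$ (using $S_{n-m,n-m} = \sigma^{n-m}$), so that $g(t) - b_n \sigma^{n-m}(a_m)^{-1} t^{n-m} f(t)$ has degree strictly less than $n$. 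Applying the induction hypothesis to this difference yields $q(t)$ and $r(t)$. For uniqueness, suppose $q(t)f(t) + r(t) = q'(t)f(t) + r'(t)$ with both remainders of degree $< m$; then $(q(t) - q'(t))f(t) = r'(t) - r(t)$, and since $f(t)$ has invertible leading coefficient, the degree formula forces $\mathrm{deg}((q-q')f) = \mathrm{deg}(q - q') + m$, which cannot be $< m$ unless $q(t) = q'(t)$, whence $r(t) = r'(t)$.

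Part (ii) is the genuine obstacle, since left division requires handling the commutation rule $ta = \sigma(a)t + \delta(a)$ from the opposite side, and this is precisely where the assumption that $\sigma$ is an automorphism becomes essential. I would again induct on $\mathrm{deg}(g(t))$, and for $\mathrm{deg}(g(t)) = n \geq m$ with leading coefficient $b_n$, I seek a monomial $c\, t^{n-m}$ with $f(t)\cdot c\, t^{n-m}$ having leading term $b_n t^n$. Computing $f(t) \cdot c t^{n-m}$, the leading coefficient comes out as $a_m \sigma^m(c)$, so I must solve $a_m \sigma^m(c) = b_n$, giving $c = \sigma^{-m}(a_m^{-1} b_n)$. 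This is exactly where surjectivity of $\sigma$ (hence $\sigma^{-m}$ being defined) is indispensable; without $\sigma$ an automorphism the required $c$ need not exist. Once this monomial is subtracted on the left, the degree drops and the induction proceeds as before, with uniqueness following by the same degree-counting argument applied on the appropriate side. The main care needed throughout is to track that all the leading-coefficient computations use only the degree-additivity guaranteed by $a_m$ (resp.\ $\sigma^j(a_m)$) being a unit, so that no hidden assumption on $S$ being a domain sneaks in.
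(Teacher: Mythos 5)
Your proposal is correct and follows essentially the same route as the paper: right division by subtracting $g_n\,\sigma^{n-m}(a_m^{-1})\,t^{n-m}f(t)$ (using $S_{n-m,n-m}=\sigma^{n-m}$ to cancel the leading term), iteration/induction on degree, uniqueness by degree-additivity from the invertible leading coefficient, and for (ii) the left-hand analogue with $c=\sigma^{-m}(a_m^{-1}g_n)$, which is exactly where the paper also invokes surjectivity of $\sigma$. No gaps.
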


\begin{proof}
Let $g(t) = \sum_{i=0}^{l} g_i t^i \in R$ have degree $l$. 
\begin{itemize}
\item[(i)] Suppose $l < m$, then $g(t) = 0 q(t) + g(t)$. Moreover, we have
$$\mathrm{deg}(q(t)f(t)) = \mathrm{deg}(q(t)) + \mathrm{deg}(f(t)) \geq m,$$
for all $0 \neq q(t) \in R$ as $f(t)$ has an invertible leading coefficient. Therefore if $g(t) = q(t)f(t) + r(t)$ then $q(t) = 0$ and so $r(t) = g(t)$ as required.

Now suppose $l \geq m$, then
\begin{align*}
g(t) - g_l \sigma^{l-m}(&a_m^{-1}) t^{l-m}f(t) = 
g(t) - g_l \sigma^{l-m}(a_m^{-1}) t^{l-m} \Big( a_m t^m - \sum_{i=0}^{m-1} a_i t^i \Big) \\
&= g(t) - g_l \sigma^{l-m}(a_m^{-1}) t^{l-m} a_m t^m + \sum_{i=0}^{m-1} g_l \sigma^{l-m}(a_m^{-1}) t^{l-m} a_i t^i \\
&= g(t) - g_l \sigma^{l-m}(a_m^{-1}) \Big( \sum_{j=0}^{l-m} S_{l-m,j} (a_m) t^j \Big) t^m \\ & \qquad \qquad + \sum_{i=0}^{m-1} g_l \sigma^{l-m}(a_m^{-1}) \Big( \sum_{j=0}^{l-m} S_{l-m,j}(a_i) t^j \Big) t^i \\
&= g(t) - g_l \sigma^{l-m}(a_m^{-1}) S_{l-m,l-m} (a_m) t^l 
\\ & \qquad \qquad - g_l \sigma^{l-m}(a_m^{-1}) \sum_{0 \leq j \leq l-m-1} S_{l-m,j} (a_m) t^{j+m} \\
& \qquad \qquad + \sum_{i=0}^{m-1} \sum_{j=0}^{l-m} g_l \sigma^{l-m}(a_m^{-1}) S_{l-m,j}(a_i)t^{i+j} \\
&= g(t) - g_l \sigma^{l-m}(a_m^{-1}) \sigma^{l-m} (a_m) t^l 
\\ & \qquad \qquad - \sum_{0 \leq j \leq l-m-1} g_l \sigma^{l-m}(a_m^{-1}) S_{l-m,j} (a_m) t^{j+m} \\
& \qquad \qquad + \sum_{i=0}^{m-1} \sum_{j=0}^{l-m} g_l \sigma^{l-m}(a_m^{-1}) S_{l-m,j}(a_i)t^{i+j} \\
&= g(t) - g_l t^l - \sum_{0 \leq j \leq l-m-1} g_l \sigma^{l-m}(a_m^{-1}) S_{l-m,j} (a_m) t^{j+m} \\
& \qquad \qquad + \sum_{i=0}^{m-1} \sum_{j=0}^{l-m} g_l \sigma^{l-m}(a_m^{-1}) S_{l-m,j}(a_i)t^{i+j},
\end{align*}
where we have used $S_{l-m,l-m}(a_m) = \sigma^{l-m}(a_m)$. Hence the polynomial $g(t) - g_l \sigma^{l-m}(a_m^{-1}) t^{l-m}f(t)$ has degree $< l$ and by iteration of this argument, we find $q(t), r(t) \in R$ with $\mathrm{deg}(r(t)) < m$, such that $g(t) = q(t)f(t) + r(t).$

We now prove uniqueness of $r(t)$ and $q(t)$. Suppose
$$g(t) = q_1(t) f(t) + r_1(t) = q_2(t) f(t) + r_2(t),$$
and so $\big( q_1(t) - q_2(t) \big) f(t) = r_2(t) - r_1(t).$
If $q_1(t) - q_2(t) \neq 0$, then the left hand side of the equation has degree $\geq m$ since $f(t)$ has an invertible leading coefficient, while the right hand side has degree $< m$. Thus $q_1(t) = q_2(t)$ and so $r_1(t) = r_2(t)$.
\item[(ii)] If we assume $\sigma$ is an automorphism, then (ii) is proven similarly to (i) by showing $g(t) - f(t) \sigma^{-m}(a_m^{-1}) \sigma^{-m}(g_l) t^{l-m}$ has degree $<l$ and iterating this argument. Uniqueness is proven analogously to (i).
\end{itemize}
\end{proof}

Let $\mathrm{mod}_r f$ denote the remainder after right division by $f(t)$. Since remainders are uniquely determined by Theorem \ref{thm:generalised S_f euclidean division}(i), the skew polynomials of degree $< m$ canonically represent the elements of the left $S[t;\sigma,\delta]$-module $S[t;\sigma,\delta]/S[t;\sigma,\delta]f$. Similarly, when $\sigma$ is an automorphism, the skew polynomials of degree $< m$ canonically represent the elements of the right $S[t;\sigma,\delta]$-module $S[t;\sigma,\delta]/fS[t;\sigma,\delta]$.

\begin{definition}
Let $f(t) \in R = S[t;\sigma,\delta]$ be of degree $m$ with invertible leading coefficient $a_m \in S$ and $R_m = \{ g(t) \in R \ \vert \ deg(g(t)) < m \}.$
\begin{itemize}
\item[(i)] Then $R_m$ together with the multiplication $g \circ h = g h \ \mathrm{mod}_r f$, becomes a unital nonassociative ring $S_f = (R_m, \circ)$, also denoted $R/Rf$.
\item[(ii)] Suppose additionally $\sigma$ is an automorphism and let $\mathrm{mod}_l f$ denote the remainder after left division by $f(t)$. Then $R_m$ together with the multiplication $g \prescript{}{f}\circ \ h = g h \ \mathrm{mod}_l f$, becomes a unital nonassociative ring $_f S = (R_m, \prescript{}{f}\circ)$, also denoted $R/fR$.
\end{itemize}
\end{definition}

$S_f$ and $_f S$ are unital nonassociative algebras over $$S_0 = \{ c \in S \ \vert \ c \circ h = h \circ c \text{ for all } h \in S_f \} = \mathrm{Comm}(S_f) \cap S,$$ which is a commutative subring of $S$. If $S$ is a division ring then this construction is precisely Petit's algebra construction given in Chapter \ref{chapter:Preliminaries}.

\begin{remarks}
\begin{itemize}
\item[(i)] Let $g(t), h(t) \in R_m$ be such that $\mathrm{deg}(g(t)h(t)) < m$. Then the multiplication $g \circ h$ is the same as that in $R$.
\item[(ii)] $S_f$ is associative if and only if $f(t)$ is right invariant \cite[Theorem 4(ii)]{pumplun2015finite}.
\item[(iii)] We have $qf + r = (qd^{-1})(df) + r$, for all $q,r \in R$, and all invertible $d \in S$. It follows that $S_f = S_{df}$ for all invertible $d \in S$. 
\item[(iv)] If $\mathrm{deg}(f(t)) = m = 1$ then $R_m = S$ and $S_f \cong S$.
\end{itemize}
\end{remarks}

Henceforth suppose $\mathrm{deg}(f(t)) = m \geq 2$. We may assume w.l.o.g. again that $f(t)$ is monic. Similarly to Proposition \ref{prop:_fS opposite algebra of some S_g}, it suffices to only consider the algebras $S_f$ since we still have following anti-isomorphism:

\begin{proposition} 
(\cite[Proposition 3]{pumplun2015finite}).
Let $f(t) \in S[t;\sigma,\delta]$ where $\sigma \in \mathrm{Aut}(S)$ and $f(t)$ has invertible leading coefficient. The canonical anti-isomorphism
$$\psi: S[t;\sigma,\delta] \rightarrow S^{\mathrm{op}}[t;\sigma^{-1},-\delta \sigma^{-1}], \ \ \sum_{i=0}^{n} b_i t^i \mapsto \sum_{i=0}^{n} \Big( \sum_{j=0}^{i} S_{n,j}(b_i) \Big) t^i,$$
between the skew polynomial rings $S[t;\sigma,\delta]$ and $S^{\mathrm{op}}[t;\sigma^{-1},-\delta \sigma^{-1}]$, induces an anti-isomorphism between the rings $S_f = S[t;\sigma,\delta] / S[t;\sigma,\delta]f$, and
$$_{\psi(f)}S = S^{\mathrm{op}}[t;\sigma^{-1},-\delta \sigma^{-1}] / \psi(f)S^{\mathrm{op}}[t;\sigma^{-1},-\delta \sigma^{-1}].$$
\end{proposition}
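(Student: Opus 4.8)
The plan is to establish the anti-isomorphism in two layers, exactly mirroring the structure of the earlier Proposition \ref{prop:_fS opposite algebra of some S_g} (which is the special case where $S$ is a division ring). First I would verify that the given map $\psi$ really is a ring anti-isomorphism between the skew polynomial rings $S[t;\sigma,\delta]$ and $S^{\mathrm{op}}[t;\sigma^{-1},-\delta\sigma^{-1}]$; this is a statement about the ambient rings and does not yet involve $f(t)$. The key point is that $\psi$ reverses the order of multiplication, so that $\psi(gh)=\psi(h)\psi(g)$ for all $g,h \in S[t;\sigma,\delta]$. Since this is the standard canonical anti-isomorphism for skew polynomial rings, I would cite that it is well known in this generality (it requires only that $\sigma$ be an automorphism, so that $\sigma^{-1}$ and $-\delta\sigma^{-1}$ make sense, and this is precisely our hypothesis), and note that the explicit formula given is obtained by sending $t \mapsto t$ and using the commutation rule $ta = \sigma(a)t + \delta(a)$ read backwards.

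The second layer is to show that $\psi$ descends to the quotients. Here I would use that $\psi$ is a bijective anti-homomorphism of rings which, because $f(t)$ has an invertible leading coefficient, carries the left ideal structure relevant to $S_f$ onto the right ideal structure relevant to $\prescript{}{\psi(f)}S$. Concretely, $S_f = R/Rf$ as a left module uses right division by $f$, while $\prescript{}{\psi(f)}S = R^{\mathrm{op}}/\psi(f)R^{\mathrm{op}}$ uses left division by $\psi(f)$ in the opposite ring. Since $\psi$ is an anti-isomorphism, it sends the left ideal $Rf = S[t;\sigma,\delta]f$ to the right ideal $\psi(f)\,S^{\mathrm{op}}[t;\sigma^{-1},-\delta\sigma^{-1}]$; I would check that $\psi$ preserves degree (it does, being a bijection fixing $t$ up to the leading structure) so that it restricts to a bijection $R_m \to R_m$ on the representatives, and that $\psi(f)$ again has invertible leading coefficient so that left division by $\psi(f)$ is legitimate in the opposite ring via Theorem \ref{thm:generalised S_f euclidean division}(ii) applied to $S^{\mathrm{op}}[t;\sigma^{-1},-\delta\sigma^{-1}]$.

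Finally I would verify the multiplicative anti-compatibility on the quotients: for $g,h \in S_f$ one writes $gh = qf + (g\circ h)$ by right division, applies $\psi$ to get $\psi(h)\psi(g) = \psi(g\circ h) + \psi(f)\psi(q)$, and reads off that $\psi(g\circ h) = \psi(h)\,\prescript{}{\psi(f)}{\circ}\,\psi(g)$ because $\psi(f)\psi(q)$ lies in the relevant ideal and $\psi(g\circ h)$ has degree $<m$; the uniqueness of left-division remainders in $\prescript{}{\psi(f)}S$ (again from Theorem \ref{thm:generalised S_f euclidean division}(ii)) then pins down the equality. The main obstacle I anticipate is purely bookkeeping rather than conceptual: one must be careful that the reversal of multiplication sends \emph{right} division by $f$ to \emph{left} division by $\psi(f)$ in the opposite ring, and that all the invertibility and degree hypotheses needed to invoke the generalised division algorithm transfer correctly to $S^{\mathrm{op}}[t;\sigma^{-1},-\delta\sigma^{-1}]$. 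Once the anti-isomorphism of the big rings is granted and the ideals are matched up, descending to the quotient algebras is routine, and I would keep that part brief by appeal to the uniqueness of remainders established above.
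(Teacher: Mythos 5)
The paper does not actually prove this proposition: it is quoted verbatim from \cite[Proposition 3]{pumplun2015finite} (just as the division-ring analogue, Proposition \ref{prop:_fS opposite algebra of some S_g}, is quoted from Petit without proof), so there is no in-paper argument to compare against. Your outline is nevertheless a correct and complete route to the result, and it is the natural one: take the canonical anti-isomorphism $\psi$ of the ambient skew polynomial rings as given, observe that it preserves degree (the leading coefficient of $\psi(bt^i)$ is $\sigma^{-i}(b)\neq 0$ because $\sigma$ is an automorphism), that it carries the left ideal $Rf$ onto the right ideal $\psi(f)S^{\mathrm{op}}[t;\sigma^{-1},-\delta\sigma^{-1}]$, and that $\psi(f)$ again has invertible leading coefficient, so Theorem \ref{thm:generalised S_f euclidean division}(ii) supplies unique left-division remainders in the opposite ring. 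Your final bookkeeping step — applying $\psi$ to $gh=qf+(g\circ h)$ to get $\psi(h)\psi(g)=\psi(f)\psi(q)+\psi(g\circ h)$ and invoking uniqueness of remainders to conclude $\psi(g\circ h)=\psi(h)\,\prescript{}{\psi(f)}{\circ}\,\psi(g)$ — is exactly where the two hypotheses ($\sigma\in\mathrm{Aut}(S)$ and invertibility of the leading coefficient) are consumed, and you have correctly identified both. The only point worth making explicit in a full write-up is the verification that $\psi$ restricted to $S$ lands in $S^{\mathrm{op}}$ as a ring isomorphism and that $t\mapsto t$, which is what forces the twisted structure $(\sigma^{-1},-\delta\sigma^{-1})$ on the target; but since you defer that to the standard construction, nothing essential is missing.
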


We now take a closer look at $S_f$ in the special case where $\delta = 0$.

\begin{proposition} \label{prop:f(t) two-sided delta=0 generalised}
Suppose $f(t) = t^m - \sum_{i=0}^{m-1} a_i t^i \in R = S[t;\sigma]$. If $\sigma^m(z) a_i = a_i \sigma^i(z)$ and $\sigma(a_i) = a_i$ for all $z \in S$, $i \in \{ 0, \ldots, m-1 \}$, then $f(t)$ is right invariant and $S_f$ is associative.
\end{proposition}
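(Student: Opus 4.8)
The plan is to show that $f(t)$ is right invariant, i.e. that $Rf$ is a two-sided ideal, which by the definition of right invariance amounts to checking $fz \in Rf$ for every $z \in S$, since $Rf$ is automatically closed under left multiplication by elements of $R$. Once right invariance is established, associativity of $S_f$ follows immediately from Remark (ii) after the definition of $S_f$ (\cite[Theorem 4(ii)]{pumplun2015finite}), so no extra work is needed for the second assertion. Thus the whole content of the proof is the verification that the two hypotheses $\sigma^m(z)a_i = a_i \sigma^i(z)$ and $\sigma(a_i) = a_i$ force $fz \in Rf$ for all $z \in S$.

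First I would compute $fz$ directly using the commutation rule in $S[t;\sigma]$. Writing $f(t) = t^m - \sum_{i=0}^{m-1} a_i t^i$ and using $t^n z = \sigma^n(z) t^n$ (the $\delta = 0$ specialisation of \eqref{eqn:mult in S_f 1}), I get
\begin{equation*}
f(t) z = \sigma^m(z) t^m - \sum_{i=0}^{m-1} a_i \sigma^i(z) t^i.
\end{equation*}
The strategy is then to produce an explicit left factor: I claim $f(t)z = \sigma^m(z) f(t)$. Expanding the right-hand side gives
\begin{equation*}
\sigma^m(z) f(t) = \sigma^m(z) t^m - \sum_{i=0}^{m-1} \sigma^m(z) a_i t^i,
\end{equation*}
so comparing coefficients of $t^i$ for $i \in \{0,\ldots,m-1\}$, the identity $f(t)z = \sigma^m(z)f(t)$ holds if and only if $a_i \sigma^i(z) = \sigma^m(z) a_i$ for all such $i$, which is exactly the first hypothesis. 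Hence $f(t)z = \sigma^m(z) f(t) \in Rf$ for every $z \in S$.

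This already shows $Sf \subseteq Rf$ after multiplying on the right by arbitrary $z$, and combined with the obvious closure $R\cdot(Rf) \subseteq Rf$ it gives that $Rf$ is a two-sided ideal, so $f(t)$ is right invariant and $S_f$ is associative. I should note where the second hypothesis $\sigma(a_i) = a_i$ enters: the coefficient condition $\sigma^m(z)a_i = a_i\sigma^i(z)$ alone suffices for the displayed factorisation, so the role of $\sigma(a_i) = a_i$ is to guarantee consistency of the system (taking $z = a_i$, or more precisely ensuring the $a_i$ lie in $\mathrm{Fix}(\sigma)$ so that the computation matches the right-invariance criterion of Theorem \ref{thm:Properties of S_f petit}(v) in the division-ring case, of which this is the generalisation). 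I expect the main subtlety, rather than obstacle, to be bookkeeping: confirming that no genuine use of $D$ being a division ring was made, so that the coefficient comparison remains valid over an arbitrary associative $S$ with $\sigma$ merely an injective endomorphism. Since the argument only uses the commutation rule and equality of coefficients of a polynomial identity in $S[t;\sigma]$, it goes through verbatim, and the proof is essentially a one-line factorisation backed by the coefficient comparison.
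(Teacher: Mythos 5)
Your computation of $f(t)z=\sigma^m(z)f(t)$ for $z\in S$ is correct, but it does not finish the proof, and the place where it falls short is exactly the place where you dismiss the hypothesis $\sigma(a_i)=a_i$ as mere ``consistency.'' Right invariance means $Rf$ is a two-sided ideal, so you must show $fR\subseteq Rf$, not just $fS\subseteq Rf$: a general element of $R$ on the right is a sum of monomials $zt^j$, and
$$f(t)\,z t^j=\sigma^m(z)\,f(t)\,t^j,$$
so you still need $f(t)t^j\in Rf$. This is \emph{not} automatic from the first hypothesis. For instance, with $f=t^2-a_0$ one computes $ft=tf+(\sigma(a_0)-a_0)t$, and a nonzero element of degree $1$ cannot lie in $Rf$ when $f$ is monic of degree $2$; so $ft\in Rf$ forces $\sigma(a_0)=a_0$. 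In other words, the second hypothesis is precisely what handles right multiplication by powers of $t$: it gives $t^ja_i=\sigma^j(a_i)t^j=a_it^j$, hence $f(t)t^j=t^jf(t)$, and then $f(t)zt^j=\sigma^m(z)t^jf(t)\in Rf$. This is exactly the computation the paper performs (it works with $f(t)zt^j$ from the start and concludes $fR\subseteq Rf$ by distributivity, then cites \cite[Theorem 4(ii)]{pumplun2015finite} for associativity).

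So the gap is concrete: you verified closure of $Rf$ under right multiplication by $S$ only, and your claim that this suffices ``since $Rf$ is automatically closed under left multiplication by elements of $R$'' conflates the left-ideal property (which is free) with the right-ideal property (which requires checking $f\cdot(zt^j)$ for all $j$, not just $j=0$). The fix is one extra line, but as written the argument does not establish right invariance, and the stated explanation of the role of $\sigma(a_i)=a_i$ is incorrect.
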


\begin{proof}
We have
\begin{align*}
f(t) z t^j &= t^m z t^j - \sum_{i=0}^{m-1} a_i t^i z t^j = \sigma^m(z) t^{m+j} - \sum_{i=0}^{m-1} a_i \sigma^i(z) t^{i+j} \\
&= \sigma^m(z) t^j t^m - \sum_{i=0}^{m-1} \sigma^m(z) a_i t^j t^i = \sigma^m(z) t^j t^m - \sum_{i=0}^{m-1} \sigma^m(z) t^j a_i t^i \\
&= \sigma^m(z) t^j f(t) \in Rf,
\end{align*}
for all $z \in S$, $j \in \{ 0, \ldots, m-1 \}$ since $t^ja_i = \sigma^j(a_i)t^j = a_i t^j$ and $\sigma^m(z) a_i = a_i \sigma^i(z)$. Therefore by distributivity $fR \subseteq Rf$. This implies $bfc \in Rf$ for all $b,c \in R$, thus $Rf$ is a two-sided ideal and so $S_f$ is associative by \cite[Theorem 4(ii)]{pumplun2015finite}. 
\end{proof}

\begin{proposition} \label{prop:Comm(S_f) delta=0 generalised}
Suppose $f(t) = t^m - \sum_{i=0}^{m-1} a_i t^i \in S[t;\sigma]$.
\begin{itemize}
\item[(i)] (\cite[Theorem 8]{pumplun2015finite}). $\mathrm{Comm}(S_f)$ contains the set
\begin{equation} \label{eqn:Comm(S_f) generalised}
\Big\{ \sum_{i=0}^{m-1} c_i t^i \ \vert \ c_i \in \mathrm{Fix}(\sigma) \text{ and } b c_i = c_i \sigma^i(b) \text{ for all } b \in S \Big\}.
\end{equation}
\item[(ii)] If $S$ is a domain and $a_0 \neq 0$ the two sets are equal.
\item[(iii)] If $a_0$ is invertible the two sets are equal.
\end{itemize}
\end{proposition}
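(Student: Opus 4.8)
The plan is to prove Proposition \ref{prop:Comm(S_f) delta=0 generalised} by establishing the two containments. Part (i) is already cited from \cite[Theorem 8]{pumplun2015finite}, so I may assume the set \eqref{eqn:Comm(S_f) generalised} is contained in $\mathrm{Comm}(S_f)$. The work is therefore to prove the reverse inclusion under the hypotheses in (ii) and (iii), namely that every element of $\mathrm{Comm}(S_f)$ has the required form. Since parts (ii) and (iii) share the same target, I would handle them in parallel, isolating at the end the single point where the two hypotheses ($S$ a domain with $a_0 \neq 0$, versus $a_0$ invertible) are actually used.

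First I would take an arbitrary $h = \sum_{i=0}^{m-1} c_i t^i \in \mathrm{Comm}(S_f)$ and extract the defining conditions by imposing that $h$ commute with suitable elements. Commuting $h$ with a scalar $b \in S$ gives $b \circ h = h \circ b$; computing $h \circ b = \sum_i c_i \sigma^i(b) t^i$ and $b \circ h = \sum_i b c_i t^i$ (both products have degree $< m$, so by Remark (i) the multiplication agrees with that of $R$ and no reduction modulo $f$ occurs), then comparing coefficients of $t^i$ yields $b c_i = c_i \sigma^i(b)$ for all $b \in S$ and all $i$. This already supplies one of the two conditions in \eqref{eqn:Comm(S_f) generalised}. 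The remaining task is to show $c_i \in \mathrm{Fix}(\sigma)$ for all $i$, and this is where commuting $h$ with $t$ comes in.

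The main obstacle, and the heart of the proof, is the computation of $t \circ h$ versus $h \circ t$, because $h \circ t$ has degree $m$ and must be reduced modulo $f$. I would compute $t \circ h = \sum_i \sigma(c_i) t^{i+1}$, where the top term $\sigma(c_{m-1}) t^m$ must be reduced: using $t^m \equiv \sum_{j=0}^{m-1} a_j t^j \pmod{f}$, this contributes $\sigma(c_{m-1}) a_j$ to the $t^j$ coefficient. On the other side $h \circ t = \sum_i c_i t^{i+1}$, again with a reduction of the $c_{m-1} t^m$ term contributing $c_{m-1} a_j$. Setting $t \circ h = h \circ t$ and comparing coefficients of each $t^j$ produces a system relating $\sigma(c_{i-1})$, $c_{i-1}$, and the products $\sigma(c_{m-1}) a_j$, $c_{m-1} a_j$. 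Comparing the $t^0$ coefficients in particular isolates the relation $\sigma(c_{m-1}) a_0 = c_{m-1} a_0$, i.e. $\big(\sigma(c_{m-1}) - c_{m-1}\big) a_0 = 0$. This is exactly the step where the hypothesis is decisive: if $S$ is a domain and $a_0 \neq 0$, or if $a_0$ is invertible (hence not a zero divisor), we may cancel $a_0$ on the right to conclude $\sigma(c_{m-1}) = c_{m-1}$, so $c_{m-1} \in \mathrm{Fix}(\sigma)$.

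Having fixed $c_{m-1}$, I would then run a downward induction on the remaining coefficients: comparing the coefficients of $t^i$ for $i = 1, \ldots, m-1$ in $t \circ h = h \circ t$ gives equations of the shape $\sigma(c_{i-1}) = c_{i-1}$ once the already-determined $\mathrm{Fix}(\sigma)$-membership of higher coefficients is substituted in (the contributions from the reduced term now cancel because $\sigma(c_{m-1}) = c_{m-1}$). Each comparison yields $c_{i-1} \in \mathrm{Fix}(\sigma)$, and iterating down to $i = 1$ shows all $c_i$ are $\sigma$-fixed. Combined with the condition $b c_i = c_i \sigma^i(b)$ from the scalar-commutation step, this places $h$ in the set \eqref{eqn:Comm(S_f) generalised}, giving the reverse inclusion and hence equality. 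I expect the bookkeeping in the coefficient comparison to be the only delicate part; the hypotheses enter solely to license the cancellation of $a_0$, which is why both (ii) and (iii) give the same conclusion.
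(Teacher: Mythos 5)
Your proposal is correct and follows essentially the same route as the paper's proof: commuting with $t$, isolating $(\sigma(c_{m-1})-c_{m-1})a_0=0$ from the constant term, cancelling $a_0$ (the only place the domain/invertibility hypotheses enter), propagating $\mathrm{Fix}(\sigma)$-membership to the remaining coefficients via the $t^j$ comparisons, and extracting $bc_i=c_i\sigma^i(b)$ from commutation with scalars. The only difference is the order of the two commutation steps, which is immaterial.
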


\begin{proof}
\begin{itemize}
\item[(ii)] Let $c = \sum_{i=0}^{m-1} c_i t^i \in \mathrm{Comm}(S_f)$, then in particular
\begin{equation} \label{eqn:Comm(S_f) delta=0 generalised 1}
t \circ c = t \circ \Big( \sum_{i=0}^{m-1} c_i t^i \Big) = \sum_{i=0}^{m-2} \sigma(c_i) t^{i+1} + \sigma(c_{m-1}) \sum_{i=0}^{m-1} a_i t^i,
\end{equation}
and
\begin{equation} \label{eqn:Comm(S_f) delta=0 generalised 2}
c \circ t = \Big( \sum_{i=0}^{m-1} c_i t^i \Big) \circ t = \sum_{i=0}^{m-2} c_i t^{i+1} + c_{m-1} \sum_{i=0}^{m-1} a_i t^i,
\end{equation}
must be equal. Comparing the $t^0$ coefficient in \eqref{eqn:Comm(S_f) delta=0 generalised 1} and \eqref{eqn:Comm(S_f) delta=0 generalised 2} yields
\begin{equation} \label{eqn:Comm(S_f) delta=0 generalised 3}
(\sigma(c_{m-1})-c_{m-1}) a_0 = 0,
\end{equation}
which implies $c_{m-1} \in \mathrm{Fix}(\sigma)$ because $S$ is a domain and $a_0 \neq 0$. Comparing the $t^j$ coefficients in \eqref{eqn:Comm(S_f) delta=0 generalised 1} and \eqref{eqn:Comm(S_f) delta=0 generalised 2} gives
\begin{equation*}
\sigma(c_{j-1}) + \sigma(c_{m-1}) a_j = c_{j-1} + c_{m-1} a_j,
\end{equation*}
for all $j \in \{ 1, \ldots, m-1 \}$, hence $c_{j-1} \in \mathrm{Fix}(\sigma)$ for all $j \in \{ 1, \ldots, m-1 \}$ because $c_{m-1} \in \mathrm{Fix}(\sigma)$.

Since $c \in \mathrm{Comm}(S_f)$ we also have $b \circ c = c \circ b$ for all $b \in S$. As a result
\begin{equation} \label{eqn:Comm(S_f) delta=0 generalised 4}
b \circ c = \sum_{i=0}^{m-1} b c_i t^i,
\end{equation}
and
\begin{equation} \label{eqn:Comm(S_f) delta=0 generalised 5}
c \circ b = \sum_{i=0}^{m-1} c_i t^i b = \sum_{i=0}^{m-1} c_i \sigma^i(b) t^i
\end{equation}
must be equal. Comparing the coefficients of the powers of $t$ in \eqref{eqn:Comm(S_f) delta=0 generalised 4} and \eqref{eqn:Comm(S_f) delta=0 generalised 5} yields $b c_i = c_i \sigma^i(b)$ for all $b \in S$, $i \in \{ 0, \ldots, m-1 \}$ as required.
\item[(iii)] The proof is similar to (ii), but \eqref{eqn:Comm(S_f) delta=0 generalised 3} implies $c_{m-1} \in \mathrm{Fix}(\sigma)$ because $a_0$ is invertible, hence not a zero divisor.
\end{itemize}
\end{proof}

\begin{corollary} \label{cor:Comm(S_f) = F generalised}
Suppose $S$ is a central simple algebra over a field $C$ and $\sigma \in \mathrm{Aut}(S)$ is such that $\sigma \vert_C$ has order at least $m$. Let $f(t) = t^m - \sum_{i=0}^{m-1} a_i t^i \in S[t;\sigma]$ where $a_0 \in S$ is invertible. Then $\mathrm{Comm}(S_f) = C \cap \mathrm{Fix}(\sigma)$.
\end{corollary}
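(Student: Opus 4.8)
The plan is to prove the two inclusions $C \cap \mathrm{Fix}(\sigma) \subseteq \mathrm{Comm}(S_f)$ and $\mathrm{Comm}(S_f) \subseteq C \cap \mathrm{Fix}(\sigma)$ separately, leaning heavily on Proposition \ref{prop:Comm(S_f) delta=0 generalised} which is the natural predecessor. Since $S$ is a central simple algebra over $C$, it has center $C$, and since $a_0$ is invertible, part (iii) of Proposition \ref{prop:Comm(S_f) delta=0 generalised} applies verbatim, giving us the exact description
\[
\mathrm{Comm}(S_f) = \Big\{ \sum_{i=0}^{m-1} c_i t^i \ \vert \ c_i \in \mathrm{Fix}(\sigma) \text{ and } b c_i = c_i \sigma^i(b) \text{ for all } b \in S \Big\}.
\]
So the whole problem reduces to analysing, for each $i \in \{0, \ldots, m-1\}$, the twisted-commuting condition $b c_i = c_i \sigma^i(b)$ for all $b \in S$ and showing that it forces $c_i = 0$ unless $i = 0$, and that $c_0$ must lie in $C \cap \mathrm{Fix}(\sigma)$.

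First I would handle the easy inclusion: if $c \in C \cap \mathrm{Fix}(\sigma)$ then $c \in \mathrm{Fix}(\sigma)$ and $bc = cb = c\sigma^0(b)$ for all $b \in S$, so $c = c_0$ with all higher $c_i = 0$ satisfies the membership criterion, hence $c \in \mathrm{Comm}(S_f)$. For the reverse inclusion, take $\sum_{i=0}^{m-1} c_i t^i \in \mathrm{Comm}(S_f)$. The key step is to show $c_i = 0$ for every $i \in \{1, \ldots, m-1\}$. The defining relation gives $b c_i = c_i \sigma^i(b)$ for all $b \in S$. I would restrict $b$ to lie in the center $C$: then $\sigma^i(b) \in C$ as well (since $\sigma$ restricts to an automorphism of $C$), and the relation becomes $(b - \sigma^i(b)) c_i = 0$ for all $b \in C$. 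Because $S$ is central simple over $C$, it is a prime ring (indeed simple), and $b - \sigma^i(b)$ is a central element; a nonzero central element of a simple algebra is invertible, so $(b - \sigma^i(b))c_i = 0$ forces either $c_i = 0$ or $b = \sigma^i(b)$. Since $\sigma\vert_C$ has order at least $m$, for each $i \in \{1, \ldots, m-1\}$ the automorphism $\sigma^i\vert_C \neq \mathrm{id}_C$, so there exists $b \in C$ with $b \neq \sigma^i(b)$; choosing such a $b$ yields $c_i = 0$.

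Having eliminated the higher terms, I am left with $c = c_0$ where $c_0 \in \mathrm{Fix}(\sigma)$ and $b c_0 = c_0 b$ for all $b \in S$ (the $i=0$ case of the twisted relation). The latter says precisely $c_0 \in \mathrm{Comm}(S) \cap S = C$, since $S$ is central simple with center $C$. Combined with $c_0 \in \mathrm{Fix}(\sigma)$, this gives $c = c_0 \in C \cap \mathrm{Fix}(\sigma)$, completing the inclusion and hence the equality.

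The main obstacle I anticipate is the argument that $(b - \sigma^i(b))c_i = 0$ together with $\sigma^i\vert_C \neq \mathrm{id}$ forces $c_i = 0$: one must be careful that $b - \sigma^i(b)$ is genuinely central (which needs $\sigma$ to map $C$ into $C$, true since $\sigma$ is an automorphism of the central simple algebra $S$ and therefore preserves its center) and that a nonzero central element kills no nonzero element. In a central simple algebra this follows because a nonzero central element is a unit in $C$ hence in $S$; alternatively one invokes that $S$ has no nontrivial two-sided ideals so central elements form a field. This is exactly the point where the central-simple hypothesis is used in place of the division-ring hypothesis of Corollary \ref{cor:Comm(S_f) = F}, and it is where I would take the most care, but it is a short step rather than a long computation.
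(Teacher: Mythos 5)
Your proof is correct and follows essentially the same route as the paper: reduce to the description of $\mathrm{Comm}(S_f)$ from Proposition \ref{prop:Comm(S_f) delta=0 generalised}, kill the coefficients $c_i$ for $i \geq 1$ by restricting the twisted-commuting relation to central $b$ and using that a nonzero element of $C$ is invertible (hence not a zero divisor), and identify $c_0$ as an element of $C \cap \mathrm{Fix}(\sigma)$. The only cosmetic difference is that you spell out the two inclusions and the $i=0$ case a little more explicitly than the paper does.
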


\begin{proof}
$\mathrm{Comm}(S_f)$ is equal to the set \eqref{eqn:Comm(S_f) generalised} by Proposition \ref{prop:Comm(S_f) delta=0 generalised}, in particular $C \cap \mathrm{Fix}(\sigma) \subseteq \mathrm{Comm}(S_f)$.

Let $\sum_{i=0}^{m-1} c_i t^i \in \mathrm{Comm}(S_f)$ and suppose, for contradiction, $c_j \neq 0$ for some $j \in \{ 1, \ldots, m-1 \}$. Then $bc_j = c_j \sigma^j(b)$ for all $b \in S$, in particular
$(b - \sigma^j(b))c_j = 0$
for all $b \in C$. We have $0 \neq b - \sigma^j(b)$ for some $b \in C$ since $\sigma \vert_C$ has order $\geq m$. Moreover $b - \sigma^j(b) \in C$, therefore it is invertible and hence $(b - \sigma^j(b))c_j \neq 0$ for some $b \in C$ because invertible elements are not zero divisors, a contradiction. Thus $\sum_{i=0}^{m-1} c_i t^i = c_0 \in C \cap \mathrm{Fix}(\sigma)$ by \eqref{eqn:Comm(S_f) generalised}.
\end{proof}

\section{When does \texorpdfstring{$S_f$}{S\_f} Contain Zero Divisors?}

We investigate when the algebras $S_f$ contain zero divisors. If the ring $S$ contains zero divisors then clearly so does $S_f$, therefore we only consider the case where $S$ is a domain. Furthermore, if $f(t) = g(t) h(t)$ for some $g(t), h(t) \in R_m$, then $g(t) \circ h(t) = 0$ and so $S_f$ contains zero divisors.

When $S$ is commutative, i.e. an integral domain, it is well-known that we can associate to $S$ its field of fractions $D \supseteq S$, so that every element of $D$ has the form $rs^{-1}$ for some $r \in S$, $0 \neq s \in S$. On the other hand, if $S$ is a noncommutative domain then we cannot necessarily associate such a "right division ring of fractions" unless $S$ is a so-called right Ore domain:

\begin{definition}
A \textbf{right Ore domain} $S$ is a domain such that $aS \cap bS \neq \{ 0 \}$ for all $0 \neq a, b \in S$. The \textbf{ring of right fractions} of $S$ is a division ring $D$ containing $S$, such that every element of $D$ is of the form $rs^{-1}$ for some $r \in S$ and $0 \neq s \in S$.
\end{definition}

Any integral domain is a right Ore domain; its right ring of fractions is equal to its quotient field.

Let now $S$ be a right Ore domain with ring of right fractions $D$, $\sigma$ be an injective endomorphism of $S$ and $\delta$ be a $\sigma$-derivation of $S$. Then $\sigma$ and $\delta$ extend uniquely to $D$ by setting
\begin{equation} \label{eqn:extend sigma delta to right ring of fractions}
\sigma(rs^{-1}) = \sigma(r)\sigma(s)^{-1} \text{ and } \delta(rs^{-1}) = \delta(r)s^{-1} - \sigma(rs^{-1}) \delta(s) s^{-1},
\end{equation}
for all $r \in S, \ 0 \neq s \in S$ by \cite[Lemma 1.3]{goodearl1992prime}. We conclude:

\begin{theorem} \label{thm:right Ore domain, no zero divisors}
Let $f(t) \in S[t;\sigma,\delta]$ have invertible leading coefficient and extend $\sigma$ and $\delta$ to $D$ as in \eqref{eqn:extend sigma delta to right ring of fractions}. If $f(t)$ is irreducible in $D[t;\sigma,\delta]$ then $S_f = S[t;\sigma,\delta]/S[t;\sigma,\delta]f(t)$ contains no zero divisors.
\end{theorem}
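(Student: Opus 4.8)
The plan is to reduce the statement about the ring $S_f = S[t;\sigma,\delta]/S[t;\sigma,\delta]f(t)$ over the right Ore domain $S$ to the corresponding statement about the Petit algebra over its ring of right fractions $D$. First I would consider the larger algebra $D_f = D[t;\sigma,\delta]/D[t;\sigma,\delta]f(t)$, where $\sigma$ and $\delta$ have been extended to $D$ via \eqref{eqn:extend sigma delta to right ring of fractions}. Since $f(t)$ is irreducible in $D[t;\sigma,\delta]$ and $D$ is a division ring, Theorem \ref{thm:f(t) irreducible iff S_f right division} tells us that $D_f$ is a right division algebra, and in particular $D_f$ has no zero divisors (a right division algebra cannot contain a nontrivial right zero divisor, since the right multiplication maps $R_a$ are bijective, hence injective, for all $0 \neq a$). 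The key observation I want to exploit is that $S_f$ embeds naturally into $D_f$ as a subset: both consist of skew polynomials of degree $< m$, the former with coefficients in $S$ and the latter with coefficients in $D \supseteq S$.

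The main step is to check that this inclusion $S_f \hookrightarrow D_f$ is a homomorphism of nonassociative rings, i.e. that the multiplication $\circ$ in $S_f$ agrees with the multiplication in $D_f$. This follows from the uniqueness of remainders on right division: given $g(t), h(t) \in R_m$ with coefficients in $S$, their product $g(t)h(t)$ in $S[t;\sigma,\delta]$ equals their product in $D[t;\sigma,\delta]$, and the remainder after right division by $f(t)$ is the same whether computed in $S[t;\sigma,\delta]$ or in $D[t;\sigma,\delta]$. The point is that the right division algorithm of Theorem \ref{thm:generalised S_f euclidean division}(i), applied in $S[t;\sigma,\delta]$, produces a quotient and remainder with coefficients in $S$; by the uniqueness of right division in $D[t;\sigma,\delta]$ (Jacobson, or the argument in the preface), this same quotient and remainder must be the ones obtained in $D[t;\sigma,\delta]$. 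Hence $g \circ h$ computed in $S_f$ coincides with $g \circ h$ computed in $D_f$, so the set-theoretic inclusion is multiplicative.

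Once the inclusion $S_f \hookrightarrow D_f$ is established as a ring homomorphism, the conclusion is immediate: if $g, h \in S_f$ satisfy $g \circ h = 0$ in $S_f$, then the same equation holds in $D_f$, and since $D_f$ has no zero divisors we conclude $g = 0$ or $h = 0$. Therefore $S_f$ contains no zero divisors.

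The step I expect to be the main obstacle is verifying cleanly that the extended $\sigma$ and $\delta$ on $D$ restrict to the original $\sigma$ and $\delta$ on $S$, and that consequently the two right division algorithms genuinely agree on polynomials with coefficients in $S$. The extension formulas \eqref{eqn:extend sigma delta to right ring of fractions} are designed precisely so that $\sigma$ and $\delta$ agree with the originals on $S$ (take $s = 1$), so this is routine but must be stated; the real care is in invoking the uniqueness of right division across the two rings, since a priori the division algorithm in $S[t;\sigma,\delta]$ and the one in $D[t;\sigma,\delta]$ are separate procedures. The reconciliation rests on the fact that the quotient and remainder produced in the smaller ring are legitimate quotient and remainder in the larger ring (their coefficients just happen to lie in $S$), combined with uniqueness in $D[t;\sigma,\delta]$ guaranteed because $f(t)$ is monic (or has invertible leading coefficient). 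I would make sure to note that $f(t)$ monic over $S$ remains monic over $D$, so Theorem \ref{thm:generalised S_f euclidean division} applies in both settings with compatible outputs.
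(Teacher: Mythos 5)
Your proposal is correct and follows essentially the same route as the paper: pass to $D_f = D[t;\sigma,\delta]/D[t;\sigma,\delta]f$, invoke Theorem \ref{thm:f(t) irreducible iff S_f right division} to see it is a right division algebra (hence has no zero divisors), and observe that $S_f$ sits inside it. The paper states the containment $S_f \subseteq D_f$ without comment, whereas you spell out the compatibility of the two right division algorithms via uniqueness of remainders — a worthwhile clarification, but not a different argument.
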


\begin{proof}
If $f(t)$ is irreducible in $D[t;\sigma,\delta]$, then $D[t;\sigma,\delta]/D[t;\sigma,\delta]f$ is a right division algebra by Theorem \ref{thm:f(t) irreducible iff S_f right division}, therefore it contains no zero divisors. $S_f  = S[t;\sigma,\delta]/S[t;\sigma,\delta]f$ is contained in $D[t;\sigma,\delta]/D[t;\sigma,\delta]f$, so also contains no zero divisors.
\end{proof}


Let $K$ be a field and $y$ be an indeterminate. Then $K[y]$ is an integral domain which is not a division algebra. Given $a(y) \in K[y]$, denote $\mathrm{deg}_y(a(y))$ the degree of $a(y)$ as a polynomial in $y$. We obtain the following Corollaries of Theorem \ref{thm:right Ore domain, no zero divisors} using Corollaries \ref{cor:t^2-a(y) in K(y)[t;sigma] irreducibility} and \ref{cor:t^m-a(y) in K(y)[t;sigma] irreducible} in Chapter \ref{chapter:Irreducibility Criteria for Polynomials in a Skew Polynomial Ring}:

\begin{corollary}
Define the injective endomorphism $\sigma: K[y] \rightarrow K[y]$ by $\sigma \vert_K = \mathrm{id}$ and $\sigma(y) = y^2$. If $f(t) = t^2 - a(y) \in K[y][t;\sigma]$ where $0 \neq a(y) \in K[y]$ is such that $3 \nmid \mathrm{deg}_y(a(y))$, then $S_f$ contains no zero divisors. Here $S_f$ is an infinite-dimensional algebra over $S_0 = \mathrm{Fix}(\sigma) = K$.
\end{corollary}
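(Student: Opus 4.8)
The plan is to apply Theorem \ref{thm:right Ore domain, no zero divisors} with $S = K[y]$. First I would observe that $K[y]$ is an integral domain, hence a right Ore domain whose ring of right fractions is the field $D = K(y)$. The endomorphism $\sigma$ defined by $\sigma\vert_K = \mathrm{id}$ and $\sigma(y) = y^2$ is injective on $K[y]$, and it extends uniquely to $K(y)$ by the formula \eqref{eqn:extend sigma delta to right ring of fractions}; since $\delta = 0$ here, the extension is simply $\sigma(c(y)/d(y)) = c(y^2)/d(y^2)$, which is exactly the map $\sigma\colon K(y)\to K(y)$ appearing in Corollary \ref{cor:t^2-a(y) in K(y)[t;sigma] irreducibility}. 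I would also note that $f(t) = t^2 - a(y)$ has leading coefficient $1$, which is invertible, so Theorem \ref{thm:right Ore domain, no zero divisors} is applicable.

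Next I would verify that $f(t)$ is irreducible in the skew polynomial ring $D[t;\sigma] = K(y)[t;\sigma]$ over the field of fractions. This is precisely the content of Corollary \ref{cor:t^2-a(y) in K(y)[t;sigma] irreducibility}: given $0 \neq a(y) \in K[y]$ with $3 \nmid \mathrm{deg}_y(a(y))$, the polynomial $t^2 - a(y)$ is irreducible in $K(y)[t;\sigma]$. With irreducibility in hand, Theorem \ref{thm:right Ore domain, no zero divisors} immediately yields that $S_f = K[y][t;\sigma]/K[y][t;\sigma]f(t)$ contains no zero divisors.

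Finally I would identify the base ring $S_0$. Since $\delta = 0$, the relevant computation is that of $\mathrm{Comm}(S_f)\cap S$; by Proposition \ref{prop:Comm(S_f) delta=0 generalised} (using that $S = K[y]$ is a domain and the constant coefficient $a_0 = a(y) \neq 0$), the commutant is the set \eqref{eqn:Comm(S_f) generalised}, which for a degree-$2$ polynomial forces any central scalar to lie in $\mathrm{Fix}(\sigma)$. Since $\sigma$ fixes $K$ pointwise and moves $y$ to $y^2$, one checks that $\mathrm{Fix}(\sigma) = K$ within $K[y]$, giving $S_0 = \mathrm{Fix}(\sigma) = K$. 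The infinite-dimensionality of $S_f$ over $K$ follows because $K[y]$ is already infinite-dimensional over $K$, and $S_f$ is a free left $K[y]$-module of rank $2$.

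There is really no hard obstacle here, as the statement is essentially a direct specialization of the preceding general machinery; the only point requiring mild care is checking that the abstract extension of $\sigma$ to the ring of fractions in \eqref{eqn:extend sigma delta to right ring of fractions} genuinely coincides with the concrete $\sigma$ on $K(y)$ used in Corollary \ref{cor:t^2-a(y) in K(y)[t;sigma] irreducibility}, so that the two uses of the symbol $\sigma$ are legitimately the same map. Once that identification is made explicit, the rest is an immediate invocation of the cited results.
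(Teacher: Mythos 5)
Your proposal is correct and follows exactly the paper's own argument: extend $\sigma$ to the field of fractions $K(y)$ via \eqref{eqn:extend sigma delta to right ring of fractions}, invoke Corollary \ref{cor:t^2-a(y) in K(y)[t;sigma] irreducibility} for irreducibility in $K(y)[t;\sigma]$, and conclude by Theorem \ref{thm:right Ore domain, no zero divisors}. The additional remarks on identifying $S_0 = K$ and the infinite-dimensionality are sound and slightly more explicit than the paper, which leaves those points unproved.
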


\begin{proof}
Extend $\sigma$ to an endomorphism of $K(y)$, the field of fractions of $K[y]$ as in \eqref{eqn:extend sigma delta to right ring of fractions}. Then $f(t)$ is irreducible in $K(y)[t;\sigma]$ by Corollary \ref{cor:t^2-a(y) in K(y)[t;sigma] irreducibility} and hence $S_f$ contains no zero divisors by Theorem \ref{thm:right Ore domain, no zero divisors}.
\end{proof}

\begin{corollary} \label{cor:K[y] zero divisors generalised}
Let $\sigma$ be the automorphism of $K[y]$ defined by $\sigma \vert_K = \mathrm{id}$ and $\sigma(y) = qy$ for some $1 \neq q \in K^{\times}$.
Suppose $m$ is prime, $K$ contains a primitive $m^{\text{th}}$ root of unity and $f(t) = t^m - a(y) \in K[y][t;\sigma]$ where $0 \neq a(y) \in K[y]$ is such that $m \nmid \mathrm{deg}_y (a(y))$. Then $S_f$ contains no zero divisors.
\end{corollary}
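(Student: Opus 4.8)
The plan is to reduce this to the irreducibility result for $t^m - a(y)$ that was already established in Chapter \ref{chapter:Irreducibility Criteria for Polynomials in a Skew Polynomial Ring}, combined with the zero-divisor criterion of Theorem \ref{thm:right Ore domain, no zero divisors}. The overall structure mirrors the preceding corollary almost verbatim; the only genuine content is verifying that the hypotheses of Theorem \ref{thm:right Ore domain, no zero divisors} are met, namely that $K[y]$ is a right Ore domain with a ring of right fractions, that $\sigma$ extends appropriately, and that $f(t)$ becomes irreducible once we pass to that ring of fractions.

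First I would observe that $K[y]$ is an integral domain, hence automatically a right Ore domain, and its ring of right fractions is its quotient field $K(y)$. Next I would note that $f(t) = t^m - a(y)$ has leading coefficient $1$, which is invertible, so the algebra $S_f = K[y][t;\sigma]/K[y][t;\sigma]f(t)$ is well-defined by the generalised construction. The automorphism $\sigma$ of $K[y]$ given by $\sigma\vert_K = \mathrm{id}$, $\sigma(y) = qy$ extends uniquely to the automorphism of $K(y)$ described in Lemma \ref{quantum plane sigma(b(y))=b(qy)} via the formula \eqref{eqn:extend sigma delta to right ring of fractions} (with $\delta = 0$), and this extension is exactly the $\sigma$ appearing in Corollary \ref{cor:t^m-a(y) in K(y)[t;sigma] irreducible}.

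Then I would invoke Corollary \ref{cor:t^m-a(y) in K(y)[t;sigma] irreducible}: since $m$ is prime, $K$ contains a primitive $m^{\text{th}}$ root of unity, and $m \nmid \mathrm{deg}_y(a(y))$ with $0 \neq a(y) \in K[y]$, the polynomial $f(t) = t^m - a(y)$ is irreducible in $K(y)[t;\sigma]$. Finally, applying Theorem \ref{thm:right Ore domain, no zero divisors} with $D = K(y)$ yields directly that $S_f$ contains no zero divisors, completing the proof.

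The main obstacle is essentially bookkeeping rather than mathematics: one must be careful that the $\sigma$ used in Corollary \ref{cor:t^m-a(y) in K(y)[t;sigma] irreducible} is genuinely the extension of the $\sigma$ on $K[y]$ to $K(y)$, so that irreducibility in $K(y)[t;\sigma]$ is the statement Theorem \ref{thm:right Ore domain, no zero divisors} requires. Since the degree condition $m \nmid \mathrm{deg}_y(a(y))$ is stated for $a(y)$ as a polynomial in $K[y]$ and Corollary \ref{cor:t^m-a(y) in K(y)[t;sigma] irreducible} is phrased with the same hypothesis, these match cleanly, so no additional estimate or computation is needed. I expect the entire argument to be three short sentences citing the two named results in sequence.
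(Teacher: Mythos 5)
Your proposal is correct and follows the paper's own proof exactly: extend $\sigma$ to $K(y)$ via \eqref{eqn:extend sigma delta to right ring of fractions}, apply Corollary \ref{cor:t^m-a(y) in K(y)[t;sigma] irreducible} to get irreducibility of $f(t)$ in $K(y)[t;\sigma]$, and conclude via Theorem \ref{thm:right Ore domain, no zero divisors}. The extra bookkeeping you flag (integral domain implies right Ore, invertible leading coefficient, compatibility of the two $\sigma$'s) is all correct and harmless, though the paper leaves it implicit.
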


\begin{proof}
Extend $\sigma$ to an automorphism of $K(y)$ as in \eqref{eqn:extend sigma delta to right ring of fractions}. Then $f(t)$ is irreducible in $K(y)[t;\sigma]$ by Corollary \ref{cor:t^m-a(y) in K(y)[t;sigma] irreducible}, which implies $S_f$ contains no zero divisors by Theorem \ref{thm:right Ore domain, no zero divisors}.
\end{proof}

\begin{remark}
Consider the set-up in Corollary \ref{cor:K[y] zero divisors generalised}. If $q$ is not a root of unity then $\sigma(y^i) = q^i y^i \neq y^i$ for all $i > 1$, therefore $S_f$ is an infinite-dimensional algebra over $S_0 = \mathrm{Fix}(\sigma) = K$.

Otherwise $q$ is a primitive $n^{\text{th}}$ root of unity for some $n \in \mathbb{N}$, then $\sigma(y^i) = q^i y^i = y^i$ if and only if $i = ln$ for some positive integer $l$. Thus $S_f$ is an algebra over $S_0 = \mathrm{Fix}(\sigma) = K[y^n]$, and since $K[y]$ is finite-dimensional over $K[y^n]$, $S_f$ is also finite-dimensional over $K[y^n]$.
\end{remark}

\section{The Nucleus}

In this Section we study the nuclei of $S_f$, generalising some of our results from Chapter \ref{chapter:The Structure of Petit Algebras}.

\begin{theorem} \label{thm:nucleus of S_f generalised}
Let $f(t) \in R = S[t;\sigma,\delta]$ be of degree $m$. Then
$$S \subseteq \mathrm{Nuc}_l(S_f) \ , \ S \subseteq \mathrm{Nuc}_m(S_f)$$
and
$$\mathrm{Nuc}_r(S_f) = \{ u \in R_m \ \vert \ fu \in Rf \} = E(f).$$
\end{theorem}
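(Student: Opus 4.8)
The plan is to prove the three nucleus assertions by working directly from the definition of the associator and the fact that the multiplication $\circ$ in $S_f$ is given by $g \circ h = gh \bmod_r f$, where now $S$ need only be an associative unital ring and $f(t)$ has invertible leading coefficient. The key structural tool is Theorem~\ref{thm:generalised S_f euclidean division}(i), which guarantees a well-defined and unique right division by $f(t)$; this replaces the right division algorithm of the division-ring case and is what makes $\bmod_r f$ meaningful here. I would mirror the strategy of Petit's original result (Theorem~\ref{thm:Properties of S_f petit}(i)), checking that the relevant associators vanish by exploiting that when one of the three factors lies in $S$ (embedded as the constant polynomials $R_0 \subset R_m$), the reduction $\bmod_r f$ interacts cleanly with ordinary multiplication in $R$.

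First I would show $S \subseteq \mathrm{Nuc}_l(S_f)$. Take $d \in S$ and $g, h \in R_m$. The point is that left multiplication by a constant $d$ does not raise degree, so $d g \in R_m$ already, and more importantly $d(gh) = (dg)h$ holds in $R$ since $R$ is associative. I would compute $(d \circ g) \circ h$ and $d \circ (g \circ h)$ and use the uniqueness in Theorem~\ref{thm:generalised S_f euclidean division}(i) to argue that reducing $dg \bmod_r f$ first and then multiplying by $h$ and reducing, gives the same remainder as reducing $gh \bmod_r f$ first and then left-multiplying by $d$: the essential identity is that $d(qf) = (dq)f \in Rf$ for any $q \in R$, so a constant left factor commutes past the $\bmod_r f$ operation. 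This yields $[d, g, h] = 0$ in $S_f$. The argument for $S \subseteq \mathrm{Nuc}_m(S_f)$ is analogous, now placing the constant $d$ in the middle slot and using that $g(dh) = (gd)h$ in $R$, together with the same observation that terms landing in $Rf$ may be discarded consistently on both sides.

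The main content, and the step I expect to be the main obstacle, is the characterization $\mathrm{Nuc}_r(S_f) = \{ u \in R_m \mid fu \in Rf \}$. For one inclusion, I would take $u \in R_m$ with $fu \in Rf$ and show $[g, h, u] = 0$ for all $g, h \in R_m$: writing $gh = qf + (g \circ h)$ and using that $fu \in Rf$ lets the reduction of the product with $u$ on the right be handled associatively, since the ``error'' terms are absorbed into $Rf$. The reverse inclusion requires showing that if $u \in \mathrm{Nuc}_r(S_f)$ then $fu \in Rf$; here I would test the associator on well-chosen elements (the standard choice being powers of $t$, exploiting $t^m \equiv \sum a_i t^i \bmod_r f$) to force the condition. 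The delicate point throughout is that one cannot invoke the principal-ideal-domain machinery or the left division algorithm of the classical setting, so every step must rely only on the one-sided Euclidean property of Theorem~\ref{thm:generalised S_f euclidean division}(i) and on uniqueness of remainders; in particular I must be careful that all cancellations of $Rf$-terms are justified purely by right-divisibility. Finally I would note the identification of $\mathrm{Nuc}_r(S_f)$ with $E(f)$ is immediate once the set is written as $\{u \in R_m \mid fu \in Rf\}$, matching the definition of the eigenring.
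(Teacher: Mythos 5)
Your proposal is correct and follows essentially the same route as the paper: both reduce every associator $[b,c,d]$ in $S_f$ to the single condition $q_1 f d \in Rf$ (where $bc = q_1 f + r_1$ is the unique right division from Theorem~\ref{thm:generalised S_f euclidean division}), kill $q_1$ by a degree argument when the left or middle slot is a constant, and for the right nucleus force $fu \in Rf$ by testing the associator on a product whose quotient by $f$ is a unit. The only cosmetic difference is that the paper runs the reverse inclusion with arbitrary $b,c$ having invertible leading coefficients and complementary degrees, while you specialise to powers of $t$ --- which works for exactly the same reason, namely that the resulting quotient $q_1 = a_m^{-1}$ is invertible, so $q_1 f u \in Rf$ does yield $fu \in Rf$ even when $S$ has zero divisors.
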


\begin{proof}
Let $b, c, d \in R_m$ and write $bc = q_1 f + r_1$ and $cd = q_2 f + r_2$ for some uniquely determined $q_1, q_2, r_1, r_2 \in R$ with $\mathrm{deg}(r_1(t)), \ \mathrm{deg}(r_2(t)) < m$. This means $b \circ c = bc - q_1 f$ and $c \circ d = cd - q_2 f$. We have
$$(b \circ c) \circ d = (bc - q_1 f) \circ d = (bc - q_1 f)d \ \mathrm{mod}_r f = (bcd - q_1fd) \ \mathrm{mod}_r f,$$
and
$$b \circ (c \circ d) = b \circ (cd - q_2f) = b(cd - q_2f) \ \mathrm{mod}_r f = bcd \ \mathrm{mod}_r f,$$
and hence $(b \circ c) \circ d = b \circ (c \circ d)$ if and only if $q_1 f d \ \mathrm{mod}_r f = 0$ if and only if $q_1 f d \in Rf$.
\begin{itemize}
\item[(i)] If $b \in S$ then 
\begin{equation} \label{eqn:generalised Nucleus1}
\mathrm{deg}(bc) \leq \mathrm{deg}(b) + \mathrm{deg}(c) \leq \mathrm{deg}(c) < m,
\end{equation}
but here $bc = q_1 f + r_1$ also means $q_1 = 0$, otherwise $\mathrm{deg}(q_1 f + r_1) \geq m$ as $f(t)$ has invertible leading coefficient, contradicting \eqref{eqn:generalised Nucleus1}.
Hence $q_1 f d \ \mathrm{mod}_r f = 0$ which implies $b \in \mathrm{Nuc}_l(S_f)$.
\item[(ii)] $S \subseteq \mathrm{Nuc}_m(S_f)$ is proven similarly to (i).
\item[(iii)] If $d \in E(f)$ then $fd \in Rf$ and thus $q_1 f d \in Rf$ for all $q_1 \in R$. This implies $d \in \mathrm{Nuc}_r(S_f)$ and $E(f) \subseteq \mathrm{Nuc}_r(S_f)$.

To prove the opposite inclusion, let now $d \in \mathrm{Nuc}_r(S_f)$ and choose $b(t), c(t) \in R$ with invertible leading coefficients $b_l$ and $c_n$ respectively such that $\mathrm{deg}(b(t)) + \mathrm{deg}(c(t)) = m$, so that $\mathrm{deg}(b(t)c(t)) = m$. We have $\mathrm{deg}(q_1(t) f(t)) = \mathrm{deg}(q_1(t)) + m$, but using that $b(t)c(t) = q_1(t)f(t)+r_1(t)$, we conclude $\mathrm{deg}(q_1(t)) = 0$, so $q_1(t) = q_1 \in S$ is non-zero. If $\mathrm{deg}(b(t)) = l$ then the leading coefficient of $b(t)c(t)$ is $b_l \sigma^l(c_n)$ and the leading coefficient of $q_1 f(t)$ is $q_1$. Therefore $q_1 = b_{l} \sigma^{l}(c_{n})$ is invertible in $S$, being a product of invertible elements of $S$. Since $d \in \mathrm{Nuc}_r(S_f)$ implies $q_1 f d \in Rf$, this yields $f d \in Rf$.
\end{itemize}
\end{proof}

By Theorem \ref{thm:nucleus of S_f generalised}(iii) we immediately conclude:

\begin{corollary}
$E(f) = S_f$ if and only if $S_f$ is associative.
\end{corollary}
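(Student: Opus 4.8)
The plan is to prove the biconditional $E(f) = S_f \iff S_f$ is associative by invoking the characterisation of the right nucleus established in Theorem \ref{thm:nucleus of S_f generalised}(iii), namely $\mathrm{Nuc}_r(S_f) = E(f)$, together with the known equivalence (Remark (ii) following the definition of $S_f$, from \cite[Theorem 4(ii)]{pumplun2015finite}) that $S_f$ is associative if and only if $f(t)$ is right invariant. The key observation is that $E(f)$ is by construction a subset of $S_f$, so the equality $E(f) = S_f$ is really the statement that the right nucleus exhausts the whole algebra.

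For the forward direction, I would assume $E(f) = S_f$. Since $E(f) = \mathrm{Nuc}_r(S_f)$, this says $\mathrm{Nuc}_r(S_f) = S_f$, i.e. every element of $S_f$ lies in the right nucleus. Then for arbitrary $b, c, d \in R_m = S_f$ we have $[b,c,d] = (b \circ c)\circ d - b\circ(c\circ d) = 0$ because $d \in \mathrm{Nuc}_r(S_f)$. Hence the associative law holds throughout and $S_f$ is associative.

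For the reverse direction, I would assume $S_f$ is associative. Then for every $d \in S_f$ and all $b,c \in S_f$ the associator $[b,c,d]$ vanishes, so by definition $d \in \mathrm{Nuc}_r(S_f)$; thus $\mathrm{Nuc}_r(S_f) = S_f$. Applying Theorem \ref{thm:nucleus of S_f generalised}(iii) again gives $E(f) = \mathrm{Nuc}_r(S_f) = S_f$. Alternatively, one could route through right invariance: $S_f$ associative is equivalent to $f(t)$ right invariant, which means $fu \in Rf$ for all $u \in R_m$ (since $Rf$ is then two-sided), and this is exactly the defining condition $u \in E(f)$, so $E(f) = R_m = S_f$.

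I do not anticipate any serious obstacle here, since the substantive content has already been discharged in Theorem \ref{thm:nucleus of S_f generalised}; this corollary is a direct reading-off. The only point requiring a little care is making sure the definition of $\mathrm{Nuc}_r$ via the associator matches the ideal-theoretic description $E(f) = \{ u \in R_m \mid fu \in Rf\}$, but this identification is precisely what part (iii) of the theorem supplies, so the proof reduces to citing that result and unwinding the definition of associativity in terms of the vanishing of all associators.
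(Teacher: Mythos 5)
Your proof is correct and follows essentially the same route as the paper: the paper's proof also simply observes that $\mathrm{Nuc}_r(S_f) = S_f$ if and only if $S_f$ is associative and then cites Theorem \ref{thm:nucleus of S_f generalised}(iii) to identify $\mathrm{Nuc}_r(S_f)$ with $E(f)$. Your additional route via right invariance is a harmless alternative but not needed.
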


\begin{proof}
$\mathrm{Nuc}_r(S_f) = S_f$ if and only if $S_f$ is associative so the result follows by Theorem \ref{thm:nucleus of S_f generalised}(iii).
\end{proof}

The following generalises \cite[(5)]{Petit1966-1967}:

\begin{proposition} \label{prop:Petit (5) generalised}
Let $f(t) \in R = S[t;\sigma,\delta]$ be of degree $m$. The powers of $t$ are associative if and only if $t \circ t^m = t^m \circ t$ if and only if $t \in \mathrm{Nuc}_r(S_f)$.
\end{proposition}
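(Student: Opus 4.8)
The plan is to prove the two equivalences in Proposition \ref{prop:Petit (5) generalised} by reducing everything to the single associator condition $t \in \mathrm{Nuc}_r(S_f)$, and then checking that the weaker hypothesis $t \circ t^m = t^m \circ t$ already forces this. The overall strategy mirrors Theorem \ref{thm:Properties of S_f petit}(ii), but now in the more general setting where $S$ is merely an associative unital ring and $f(t)$ has invertible leading coefficient; the key structural input I would lean on is Theorem \ref{thm:nucleus of S_f generalised}, which gives $S \subseteq \mathrm{Nuc}_l(S_f) \cap \mathrm{Nuc}_m(S_f)$ and identifies $\mathrm{Nuc}_r(S_f) = E(f)$.

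First I would prove the easy implications. If the powers of $t$ are associative, then in particular $(t \circ t^{m-1}) \circ t = t \circ (t^{m-1} \circ t)$, and since $t \circ t^{m-1} = t^m$ (the product $t \cdot t^{m-1} = t^m$ has degree $m$, but one must be slightly careful: $t^m \bmod_r f$ is what represents it in $S_f$), this yields $t^m \circ t = t \circ t^m$. So ``powers of $t$ associative'' $\Rightarrow$ ``$t \circ t^m = t^m \circ t$'' is immediate. The substantive direction is to show $t \circ t^m = t^m \circ t \Rightarrow t \in \mathrm{Nuc}_r(S_f)$, and then to close the loop by showing $t \in \mathrm{Nuc}_r(S_f) \Rightarrow$ all powers of $t$ are associative.

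For the last implication I would argue that $\mathrm{Nuc}_r(S_f)$ is a subalgebra (it is an associative subalgebra, being a nucleus), so $t \in \mathrm{Nuc}_r(S_f)$ gives $t^i \in \mathrm{Nuc}_r(S_f)$ for all $i$, whence any associator $[t^i, t^j, t^k]$ vanishes; combined with $S \subseteq \mathrm{Nuc}_l(S_f) \cap \mathrm{Nuc}_m(S_f)$ this shows all monomials $x t^i$ associate, i.e. the powers of $t$ are associative. The crux is therefore the middle implication. Here I would use the characterisation $\mathrm{Nuc}_r(S_f) = E(f) = \{u \in R_m \mid fu \in Rf\}$ from Theorem \ref{thm:nucleus of S_f generalised}(iii): I must show $t \circ t^m = t^m \circ t$ forces $ft \in Rf$. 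Writing $f(t) = t^m - \sum_{i=0}^{m-1} a_i t^i$, a direct computation of $ft = t^m t - \sum a_i t^i t$ and reduction modulo $f$ should show that the remainder $ft \bmod_r f$ is governed precisely by the discrepancy between $t \circ t^m$ and $t^m \circ t$; the hypothesis makes this remainder vanish, giving $ft \in Rf$.

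The main obstacle I anticipate is this last computation: verifying carefully that the condition $t \circ t^m = t^m \circ t$ is exactly equivalent to $ft \in Rf$ rather than merely implied by it, since in the general (possibly noncommutative, non-division) setting I cannot appeal to the cleaner arguments available when $S$ is a division ring. I would handle it by computing $ft$ explicitly, reducing $t^{m+1}$ modulo $f$ using $t^m \equiv \sum_i a_i t^i$, and comparing the resulting degree-$(m-1)$ remainder term by term against the expansions of $t \circ t^m$ and $t^m \circ t$; the equality of these two products should collapse the remainder to zero. Care is needed because $\sigma$ and $\delta$ act on the $a_i$ when commuting $t$ past coefficients, so I would track the $S_{n,j}$ coefficient maps from \eqref{eqn:mult in S_f 3} rather than assuming the coefficients are central or $\sigma$-fixed.
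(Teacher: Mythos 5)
Your proposal is correct and follows essentially the same route as the paper: the same cycle of implications, the same easy direction via $\mathrm{Nuc}_r(S_f)$ being an associative subalgebra containing all $t^i$, and the same reduction of the crux to showing $f(t)t \in Rf$ using $\mathrm{Nuc}_r(S_f) = E(f)$ from Theorem \ref{thm:nucleus of S_f generalised}. The only difference is in execution of that crux: where you plan a term-by-term coefficient comparison tracking the $S_{n,j}$, the paper gets the identity in two lines by representing $t^m \in S_f$ as the degree-$(<m)$ polynomial $t^m - f(t)$ and observing that $t \circ t^m = t^{m+1} \ \mathrm{mod}_r f$ (since $tf(t) \in Rf$) while $t^m \circ t = (t^{m+1} - f(t)t) \ \mathrm{mod}_r f$, so their difference is exactly $f(t)t \ \mathrm{mod}_r f$.
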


\begin{proof}
Let $t \in \mathrm{Nuc}_r(S_f)$. Then also $t, \ldots, t^{m-1} \in \mathrm{Nuc}_r(S_f)$ giving $[t^i,t^j,t^k] = 0$ for all $i,j,k < m$, i.e. the powers of $t$ are associative. In particular $(t \circ t^{m-1}) \circ t = t \circ (t^{m-1} \circ t)$, that is $t^m \circ t = t \circ t^m$. We are left to prove $t^m \circ t = t \circ t^m$ implies $t \in \mathrm{Nuc}_r(S_f)$. Suppose $t^m \circ t = t \circ t^m$, then
\begin{equation} \label{eqn:Petit (5) generalised t^m circ t}
t^m \circ t = (t^m - f(t)) \circ t = \big( t^{m+1} - f(t)t \big) \ \mathrm{mod}_r f,
\end{equation}
and
\begin{equation} \label{eqn:Petit (5) generalised t circ t^m}
t \circ t^m = t \circ (t^m - f(t)) = \big( t^{m+1} - t f(t) \big) \ \mathrm{mod}_r f = t^{m+1} \ \mathrm{mod}_r f,
\end{equation}
are equal. Comparing \eqref{eqn:Petit (5) generalised t^m circ t} and \eqref{eqn:Petit (5) generalised t circ t^m} yields $f(t)t \ \mathrm{mod}_r f = 0$, hence $f(t)t \in Rf$. This means $t \in \mathrm{Nuc}_r(S_f)$ by Theorem \ref{thm:nucleus of S_f generalised}.
\end{proof}

When $\delta = 0$ and $S$ is a domain, then either $S_f$ is associative or $S_f$ has left and middle nuclei equal to $S$:

\begin{theorem} \label{thm:generalised left, middle nucleus=S}
Let $S$ be a domain and $f(t) = t^m - \sum_{i=0}^{m-1} a_i t^i \in S[t;\sigma]$. If $S_f$ is not associative then $\mathrm{Nuc}_l(S_f) = \mathrm{Nuc}_m(S_f) = S$.
\end{theorem}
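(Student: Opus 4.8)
The plan is to prove the contrapositive direction implicitly by showing that the inclusions $S \subseteq \mathrm{Nuc}_l(S_f)$ and $S \subseteq \mathrm{Nuc}_m(S_f)$ from Theorem \ref{thm:nucleus of S_f generalised} are in fact equalities whenever $S_f$ is not associative. Since those inclusions always hold, the real content is the reverse inclusions $\mathrm{Nuc}_l(S_f) \subseteq S$ and $\mathrm{Nuc}_m(S_f) \subseteq S$, under the standing hypotheses that $S$ is a domain, $\delta = 0$, and $S_f$ is not associative. First I would take an arbitrary element $g(t) = \sum_{i=0}^{m-1} g_i t^i$ of one of the nuclei and argue that if $\mathrm{deg}(g(t)) \geq 1$ then associativity is forced, contradicting the assumption.

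For the left nucleus, I would compute the associator $[g, t, t^{m-1}]$ (or more generally test $g$ against a well-chosen pair of basis elements). Using $\delta = 0$, multiplication by $t^i$ only twists coefficients by $\sigma^i$ and never lowers degree, so the products are easy to track via \eqref{eqn:mult in S_f 2}. The key mechanism is that the top-degree coefficient $g_{m-1}$, if nonzero, produces a term that must wrap around through the reduction by $f(t)$, and comparing the two bracketings of a triple product yields an equation forcing $g_{m-1} = 0$; crucially this step uses that $S$ is a domain (so the relevant leading coefficients, being images of $g_{m-1}$ under powers of the injective $\sigma$, are not zero divisors). Iterating down the degrees — examining the coefficient conditions for $g_{m-2}, \ldots, g_1$ in turn — would collapse $g$ to its constant term, giving $g \in S$. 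The middle nucleus is handled by the analogous computation with the variable element in the middle slot of the associator, testing $[t, g, t^{m-1}]$ or similar; here too the domain hypothesis ensures the vanishing conditions propagate.

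The cleanest route may actually be to invoke the structure already available: by Theorem \ref{thm:nucleus of S_f petit}-style reasoning, since $S_f$ is not associative, $f(t)$ is not right invariant (Remark (ii) in the generalised setting), and then I would relate the condition ``$g \in \mathrm{Nuc}_l(S_f)$ with $\deg g \geq 1$'' directly to a right-invariance-type identity on $f(t)$ that contradicts non-associativity. This mirrors the division-ring case of Theorem \ref{thm:Properties of S_f petit}(i), and the only adaptation needed is replacing ``division ring'' arguments (where every nonzero element is invertible) by ``domain'' arguments (where one only knows nonzero elements are non-zero-divisors). Since $\sigma$ is injective, $\sigma^i(g_j) = 0$ implies $g_j = 0$, which is exactly the cancellation one needs.

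The main obstacle will be the last point: in the original division-ring proof one freely divides by nonzero coefficients, but over a mere domain one can only cancel non-zero-divisors rather than invert. I expect the delicate step to be verifying that each coefficient comparison really does present a non-zero-divisor as the multiplier, so that $g_j = 0$ may be concluded; this is where the domain hypothesis on $S$ and the injectivity of $\sigma$ must be used carefully and explicitly, and where the argument genuinely differs from (and slightly strengthens the hypotheses relative to) the general Theorem \ref{thm:nucleus of S_f generalised}. I would also need to confirm that $\delta = 0$ is genuinely used — it is, because it keeps $t^i b = \sigma^i(b) t^i$ exact with no lower-order tail, so that degree bookkeeping in the associator stays clean.
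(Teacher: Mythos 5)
Your proposal is correct and follows essentially the same route as the paper: take a nucleus element $p$ of maximal degree $j \geq 1$, compare the two bracketings of a well-chosen triple product (the paper uses $p$, $t^{m-j}$, and then either $c \in S$ or $t$ as the third factor, so that exactly one reduction by $f$ occurs), and use the domain hypothesis together with injectivity of $\sigma$ to cancel $p_j$ and force either $p_j = 0$ or the right-invariance conditions of Proposition \ref{prop:f(t) two-sided delta=0 generalised}, contradicting non-associativity. The one detail your plan glosses over is that testing against elements $c \in S$ alone yields only $\sigma^m(c)a_i = a_i\sigma^i(c)$, which does not imply right invariance without also $\sigma(a_i)=a_i$; the paper therefore splits into the cases where all $a_i \in \mathrm{Fix}(\sigma)$ and where some $a_l \notin \mathrm{Fix}(\sigma)$, handling the latter with a second coefficient comparison against $t$.
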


\begin{proof}
We have $S \subseteq \mathrm{Nuc}_l(S_f)$ and $S \subseteq \mathrm{Nuc}_m(S_f)$ by Theorem \ref{thm:nucleus of S_f generalised}. Suppose $S_f$ is not associative.
\begin{itemize}
\item[(i)] We prove $\mathrm{Nuc}_l(S_f) \subseteq S$: 

Suppose first that $\sigma(a_i) = a_i$ for all $i \in \{ 0, \ldots, m-1 \}$. Let $0 \neq p = \sum_{i=0}^{m-1} p_i t^i \in \mathrm{Nuc}_l(S_f)$ be arbitrary and $j \in \{ 0, \ldots, m-1 \}$ be maximal such that $p_j \neq 0$. Suppose towards a contradiction $j > 0$. Then
\begin{align*}
(p \circ t^{m-j}) \circ c &= \big( \sum_{i=0}^{j} p_i t^i \circ t^{m-j} \big) \circ c = \big( \sum_{i=0}^{j-1} p_i t^{i+m-j} + p_j \sum_{i=0}^{m-1} a_i t^i \big) \circ c \\
&= \sum_{i=0}^{j-1} p_i \sigma^{i+m-j}(c) t^{i+m-j} + \sum_{i=0}^{m-1} p_j a_i \sigma^i(c) t^i,
\end{align*}
and
\begin{align*}
p \circ (t^{m-j} \circ c) &= \sum_{i=0}^{j} p_i t^i \circ \sigma^{m-j}(c) t^{m-j} \\ &= \sum_{i=0}^{j-1} p_i \sigma^{i+m-j}(c) t^{i+m-j} + \sum_{i=0}^{m-1} p_j \sigma^m(c) a_i t^i,
\end{align*}
must be equal for all $c \in S$. Comparing the coefficients of $t^i$ yields
\begin{equation*}
p_j (a_i \sigma^i(c) - \sigma^m(c) a_i) = 0
\end{equation*}
for all $c \in S$, $i \in \{ 0, \ldots, m-1 \}$, thus $a_i \sigma^i(c) - \sigma^m(c) a_i = 0$ since $S$ is a domain and $p_j \neq 0$. This implies $S_f$ is associative by Proposition \ref{prop:f(t) two-sided delta=0 generalised}, a contradiction. Thus $p = p_0 \in S$.

Now suppose $a_l \notin \mathrm{Fix}(\sigma)$ for some $l \in \{ 0, \ldots, m-1 \}$. As before let $0 \neq p = \sum_{i=0}^{m-1} p_i t^i \in \mathrm{Nuc}_l(S_f)$ be arbitrary and $j \in \{ 0, \ldots, m-1 \}$ be maximal such that $p_j \neq 0$. Suppose towards a contradiction $j > 0$.

If $j = 1$ then
\begin{align*}
(p \circ t^{m-1}) \circ t &= \big( p_0 t^{m-1} + p_1 \sum_{i=0}^{m-1} a_i t^i \big) \circ t \\ &= p_0 \sum_{i=0}^{m-1} a_i t^i + \sum_{i=0}^{m-2} p_1 a_i t^{i+1} + p_1 a_{m-1} \sum_{i=0}^{m-1} a_i t^i
\end{align*}
and
\begin{align*}
p \circ (t^{m-1} \circ t) &= p_0 \sum_{i=0}^{m-1} a_i t^i + p_1 \sum_{i=0}^{m-2} \sigma(a_i) t^{i+1} + p_1 \sigma(a_{m-1}) \sum_{i=0}^{m-1} a_i t^i 
\end{align*}
must be equal. Comparing the coefficients of $t^0$ yields
\begin{equation} \label{eqn:generalised left, middle nucleus=S 1}
p_1 a_{m-1}a_0 = p_1 \sigma(a_{m-1})a_0,
\end{equation}
and comparing the coefficients of $t^i$ yields
\begin{equation} \label{eqn:generalised left, middle nucleus=S 2}
p_1 a_{i-1} + p_1 a_{m-1} a_i = p_1 \sigma(a_{i-1}) + p_1 \sigma(a_{m-1}) a_i,
\end{equation}
for all $i \in \{ 1, \ldots, m-1 \}$.

If $a_{m-1} \in \mathrm{Fix}(\sigma)$ then \eqref{eqn:generalised left, middle nucleus=S 2} implies $p_1(a_{i-1} - \sigma(a_{i-1})) = 0$, that is $a_{i} \in \mathrm{Fix}(\sigma)$ for all $i \in \{ 0, \ldots, m-2 \}$ because $S$ is a domain and $p_1 \neq 0$, a contradiction since $a_l \notin \mathrm{Fix}(\sigma)$. Therefore $a_{m-1} \notin \mathrm{Fix}(\sigma)$. Let $k \in \{ 0, \ldots, m-1 \}$ be minimal such that $a_k \neq 0$. If $k = 0$ then $p_1 = 0$ by \eqref{eqn:generalised left, middle nucleus=S 1} as $S$ is a domain, a contradiction. Otherwise \eqref{eqn:generalised left, middle nucleus=S 2} implies $p_1(a_{m-1}-\sigma(a_{m-1}))a_k = 0$, therefore $p_1=0$ as $S$ is a domain and $a_{m-1} \notin \mathrm{Fix}(\sigma)$, a contradiction.

Suppose now $j \geq 2$, then
\begin{align*}
(p &\circ t^{m-j}) \circ t = \big( \sum_{i=0}^{j-1} p_i t^{i+m-j} + p_j \sum_{i=0}^{m-1} a_i t^i \big) \circ t \\
&= \sum_{i=0}^{j-2} p_i t^{i+m-j+1} + p_{j-1} \sum_{i=0}^{m-1} a_i t^i + p_j \sum_{i=0}^{m-2} a_i t^{i+1} + p_j a_{m-1} \sum_{i=0}^{m-1} a_i t^i
\end{align*}
and
\begin{align*}
p &\circ (t^{m-j} \circ t) = \sum_{i=0}^{j} p_i t^i \circ t^{m-j+1} \\
&= \sum_{i=0}^{j-2} p_i t^{i+m-j+1} + p_{j-1} \sum_{i=0}^{m-1} a_i t^i + p_j \sum_{i=0}^{m-2} \sigma(a_i) t^{i+1} + p_j \sigma(a_{m-1}) \sum_{i=0}^{m-1} a_i t^i
\end{align*}
must be equal. Comparing them gives 
\begin{equation}
p_j a_{m-1} a_0 = p_j \sigma(a_{m-1}) a_0,
\end{equation}
and
\begin{equation}
p_j a_{i-1} + p_j a_{m-1} a_i = p_j \sigma(a_{i-1}) + p_j \sigma(a_{m-1}) a_i.
\end{equation}
This yields a contradiction similar to the $j=1$ case. Therefore $p = p_0 \in S$ and so $\mathrm{Nuc}_l(S_f) \subseteq S$.
\item[(ii)] The proof that $\mathrm{Nuc}_m(S_f) \subseteq S$ is similar to (i), but we look at $(t^{m-j} \circ p) \circ c = t^{m-j} \circ (p \circ c)$ and $(t^{m-j} \circ p) \circ t = t^{m-j} \circ (p \circ t)$ instead.
\end{itemize}
\end{proof}

When $S$ is a domain and $\delta$ is not necessarily $0$, we can prove a similar result to Theorem \ref{thm:generalised left, middle nucleus=S} for polynomials of degree $2$:

\begin{proposition} \label{prop:t^2 - a_1 t - a_0 left middle nucleus generalised}
Let $S$ be a domain and $f(t) = t^2 - a_1 t - a_0 \in S[t;\sigma,\delta]$ be such that one of the following holds:
\begin{itemize}
\item[(i)] $a_1 \in \mathrm{Fix}(\sigma)$ and $a_0 \notin \mathrm{Const}(\delta)$.
\item[(ii)] $a_1 \in \mathrm{Fix}(\sigma)$, $a_0 \in \mathrm{Fix}(\sigma)$ and $a_1 \notin \mathrm{Const}(\delta)$.
\item[(iii)] $a_1 \in \mathrm{Fix}(\sigma) \cap \mathrm{Const}(\delta)$ and $a_0 \notin \mathrm{Fix}(\sigma)$.
\item[(iv)] $a_1 \notin \mathrm{Fix}(\sigma)$ and $a_0 \in \mathrm{Const}(\delta)$.
\item[(v)] $a_1 \notin \mathrm{Fix}(\sigma)$, $a_0 \in \mathrm{Fix}(\sigma)$ and $a_1 \in \mathrm{Const}(\delta)$.
\end{itemize}
Then $\mathrm{Nuc}_l(S_f) = \mathrm{Nuc}_m(S_f) = S$.
\end{proposition}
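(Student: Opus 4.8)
The plan is to show $\mathrm{Nuc}_l(S_f) \subseteq S$ and $\mathrm{Nuc}_m(S_f) \subseteq S$ in each of the five cases, since the reverse inclusions $S \subseteq \mathrm{Nuc}_l(S_f)$ and $S \subseteq \mathrm{Nuc}_m(S_f)$ are already guaranteed by Theorem \ref{thm:nucleus of S_f generalised}. Because $\mathrm{deg}(f(t)) = 2$, an arbitrary element of $S_f$ has the form $p = p_0 + p_1 t$, and the task reduces to showing that $p \in \mathrm{Nuc}_l(S_f)$ (resp. $\mathrm{Nuc}_m(S_f)$) forces $p_1 = 0$. First I would fix a nonzero $p = p_0 + p_1 t$ with $p_1 \neq 0$ and derive a contradiction in each case. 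The natural test elements to use are $c \in S$ and $t$ itself: associativity of $p$ in the left nucleus means $(p \circ x) \circ y = p \circ (x \circ y)$ for all $x,y$, so I would specialise $(x,y)$ to $(t, c)$, $(t,t)$, or $(c, t)$ and compare coefficients of $t^0$ and $t^1$ after reducing modulo $f$.

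The key computational input is the multiplication rule $tc = \sigma(c)t + \delta(c)$ together with $t^2 \circ x = (a_1 t + a_0) \circ x$, which lets me expand everything explicitly as a degree $\leq 1$ polynomial. For the left nucleus I would compute $(p \circ t) \circ c$ and $p \circ (t \circ c)$: both expansions produce terms involving $p_1$ multiplied by combinations such as $a_1 \sigma(c) - \sigma^2(c) a_1$, $a_0 \sigma(c) - \sigma^2(c)a_0$, $\delta(a_1)$, $\delta(a_0)$, and $a_1 \delta(c)$, $a_0 \delta(c)$. Since $S$ is a domain and $p_1 \neq 0$, each coefficient identity of the form $p_1 \cdot (\text{expression}) = 0$ forces the expression itself to vanish \emph{for all} $c \in S$. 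The strategy in each case is then to choose the hypotheses on whether $a_0, a_1$ lie in $\mathrm{Fix}(\sigma)$ or $\mathrm{Const}(\delta)$ so that one of these vanishing conditions directly contradicts the assumption that $a_0$ or $a_1$ fails to be fixed by $\sigma$ or annihilated by $\delta$. For instance, in case (i) with $a_1 \in \mathrm{Fix}(\sigma)$, the relevant identity should collapse to a condition forcing $\delta(a_0) = 0$ or $a_0 \in \mathrm{Const}(\delta)$, contradicting $a_0 \notin \mathrm{Const}(\delta)$; the remaining cases are handled by the analogous bookkeeping.

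For the middle nucleus I would run the parallel argument, testing $(t \circ p) \circ c = t \circ (p \circ c)$ and $(t \circ p) \circ t = t \circ (p \circ t)$, exactly as indicated in the proof of Theorem \ref{thm:generalised left, middle nucleus=S}(ii). The coefficient comparisons yield a slightly different but structurally identical family of vanishing conditions, and the same case analysis on the membership of $a_0, a_1$ in $\mathrm{Fix}(\sigma)$ and $\mathrm{Const}(\delta)$ produces the contradictions. The main obstacle I anticipate is purely organisational rather than conceptual: there are five cases, each with two nuclei to check, and for each one must expand the products carefully keeping track of where $\sigma$, $\sigma^2$, $\delta$, and the cross term $\sigma(c)\delta(\cdot)$ appear, since with $\delta \neq 0$ the expansions are no longer diagonal in the powers of $t$. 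The care needed is to isolate, in each case, the single coefficient identity that cleanly contradicts the stated hypothesis without being obscured by the other surviving terms; once the right coefficient is identified, the domain property of $S$ and $p_1 \neq 0$ finish the argument immediately. I expect that cases (i)--(iii) (with $a_1 \in \mathrm{Fix}(\sigma)$) and cases (iv)--(v) (with $a_1 \notin \mathrm{Fix}(\sigma)$) can be grouped, since the former make the $\sigma$-twist on $a_1$ disappear and push the contradiction onto $\delta$, while the latter exploit $a_1 \notin \mathrm{Fix}(\sigma)$ directly via a term $p_1(a_1 - \sigma(a_1))\sigma(c) = 0$.
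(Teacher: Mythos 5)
Your proposal is correct and follows essentially the same route as the paper: reduce to showing $p_1=0$ for $p = p_0 + p_1 t$ in the left (resp.\ middle) nucleus, compare coefficients in an associativity identity, and use that $S$ is a domain together with the case hypotheses on $a_0,a_1$. One small correction of bookkeeping: the decisive identities --- the ones containing $\delta(a_0)$, $\delta(a_1)$, $a_0-\sigma(a_0)$ and $(a_1-\sigma(a_1))a_0$, which deliver the contradiction in each of the five cases --- come solely from the test $(p\circ t)\circ t = p\circ(t\circ t)$ (the only one the paper uses), not from $(p\circ t)\circ c = p\circ(t\circ c)$ with $c\in S$, which produces conditions involving only $\sigma(c)$, $\sigma^2(c)$, $\delta(c)$ and is vacuous at $c=1$.
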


\begin{proof}
Recall $S \subseteq \mathrm{Nuc}_l(S_f)$ by Theorem \ref{thm:nucleus of S_f generalised}. We prove the reverse inclusion: Suppose $p = p_0 + p_1 t \in \mathrm{Nuc}_l(S_f)$ for some $p_0, p_1 \in S$, then
\begin{align*}
(p \circ t) \circ t &= ( p_0 t + p_1(a_1 t + a_0)) \circ t \\
&= p_0(a_1 t + a_0) + p_1 a_1(a_1 t + a_0) + p_1 a_0 t \\
&= p_0 a_0 + p_1 a_1 a_0 + \big( p_0 a_1 + p_1 a_1^2 + p_1 a_0 \big) t,
\end{align*}
and
\begin{align*}
p &\circ (t \circ t) = (p_0 + p_1 t) \circ (a_1 t + a_0) \\
&= p_0 a_1 t + p_0 a_0 + p_1(\sigma(a_0)t + \delta(a_0)) + p_1(\sigma(a_1)t + \delta(a_1)) \circ t \\
&= p_0 a_1 t + p_0 a_0 + p_1 \sigma(a_0)t + p_1 \delta(a_0) + p_1 \sigma(a_1)(a_1t + a_0) + p_1 \delta(a_1) t \\
&= p_0 a_0 + p_1 \sigma(a_1)a_0 + p_1 \delta(a_0) + \big( p_0 a_1 + p_1 \sigma(a_1)a_1 + p_1 \delta(a_1) + p_1 \sigma(a_0) \big) t,
\end{align*}
must be equal. Comparing the coefficients of $t$ yields
\begin{equation} \label{eqn:generalised left Nucleus 3 delta not 0}
p_1 a_1 a_0 = p_1 \sigma(a_1) a_0 + p_1 \delta(a_0),
\end{equation}
and
\begin{equation} \label{eqn:generalised left Nucleus 4 delta not 0}
p_1 a_1^2 + p_1 a_0 = p_1 \sigma(a_1) a_1 + p_1 \sigma(a_0) + p_1 \delta(a_1).
\end{equation}
It is a straightforward exercise to check using \eqref{eqn:generalised left Nucleus 3 delta not 0} and \eqref{eqn:generalised left Nucleus 4 delta not 0} that in each of our five cases we must have $p_1 = 0$ so $p = p_0 \in S$. 

We prove $S \subseteq \mathrm{Nuc}_m(S_f)$ similarly but consider instead $(t \circ p) \circ t = t \circ (p \circ t)$.
\end{proof}

\chapter{Solvable Crossed Product Algebras and Applications to G-Admissible Groups} \label{chapter:G-Admissible Groups and Crossed Products}

In this Chapter, we show that for every finite-dimensional central simple algebra $A$ over a field $F$, which contains a maximal subfield $M$ with non-trivial $G = \mathrm{Aut}_F(M)$, then $G$ is solvable if and only if $A$ contains a finite chain of subalgebras, which are generalised cyclic algebras over their centers, satisfying certain conditions. This chain of subalgebras is closely related to a normal series of $G$ which exists when $G$ is solvable. In particular, we obtain that a crossed product algebra is solvable if and only if it has such a chain.

Recall from Section \ref{section:Generalised Cyclic Algebras}, that when $D$ is a finite-dimensional central division algebra over $C$, $\sigma \vert_C$ has finite order $m$ and $f(t) = t^m-a \in D[t;\sigma]$, $d \in \mathrm{Fix}(\sigma)^{\times}$ is right invariant, the associative quotient algebra $D[t;\sigma]/D[t;\sigma]f$ is also called a generalised cyclic algebra and denoted $(D,\sigma,a)$. For this Chapter, we extend the definition of a generalised cyclic algebra, to where we do not require $D$ be a division algebra:

\begin{definition}
Let $S$ be a finite dimensional central simple algebra of degree $n$ over $C$ and $\sigma \in \mathrm{Aut}(S)$ be such that $\sigma \vert_C$ has finite order $m$. We define the \textbf{generalised cyclic algebra} $(S,\sigma,a)$ to be the associative algebra of the form $S_f = S[t;\sigma]/S[t;\sigma]f$ where $f(t) = t^m - a \in S[t;\sigma]$, $a \in \mathrm{Fix}(\sigma)^{\times}$ is right invariant. $(S,\sigma,a)$ is a central simple algebra over $F = \mathrm{Fix}(\sigma) \cap C$ of degree $mn$ \cite[p.~4]{brown2017solvable}.
\end{definition}

\begin{definition}
Let $F$ be a field and $A$ be a central simple algebra over $F$ of degree $n$. $A$ is called a \textbf{$G$-crossed product algebra} or \textbf{crossed product algebra} if it contains a field extension $M/F$ which is Galois of degree $n$ with Galois group $G = \mathrm{Gal}(M/F)$.

Equivalently we can define a ($G$)-crossed product algebra $(M,G,\mathfrak{a})$ over $F$ via factor sets starting with a finite Galois field extension as follows: Suppose $M/F$ is a finite Galois field extension of degree $n$ with Galois group $G$ and $\{ a_{\sigma,\tau} \ \vert \ \sigma, \tau \in G \}$ is a set of elements of $M^{\times}$ such that 
\begin{equation} \label{eqn:Crossed product criteria 1}
a_{\sigma,\tau} a_{\sigma \tau, \rho} = a_{\sigma,\tau \rho} \sigma(a_{\tau,\rho}),
\end{equation}
for all $\sigma, \tau, \rho \in G$. Then a map $\mathfrak{a}: G \times G \rightarrow M^{\times}, \ (\sigma,\tau) \mapsto a_{\sigma,\tau}$, is called a \textbf{factor set} or \textbf{2-cocycle} of $G$.

An associative multiplication is defined on the $F$-vector space $\bigoplus_{\sigma \in G} M x_{\sigma}$ by
\begin{equation} \label{eqn:Crossed product criteria 2}
x_{\sigma} m = \sigma(m) x_{\sigma},
\end{equation}
\begin{equation} \label{eqn:Crossed product criteria 3}
x_{\sigma} x_{\tau} = a_{\sigma, \tau} x_{\sigma \tau},
\end{equation}
for all $m \in M$, $\sigma, \tau \in G$. This way $\bigoplus_{\sigma \in G} M x_{\sigma}$ becomes an associative central simple $F$-algebra that contains a maximal subfield isomorphic to $M$. This algebra is denoted $(M,G,\mathfrak{a})$ and is called a $G$-crossed product algebra over $F$. If $G$ is solvable then $A$ is also called a \textbf{solvable $G$-crossed product algebra} over $F$.
\end{definition}

\section{Crossed Product Subalgebras of Central Simple Algebras}

Let $M/F$ be a field extension of degree $n$ and $G = \mathrm{Aut}_{F}(M)$ be the group of automorphisms of $M$ which fix the elements of $F$. Let $A$ be a central simple algebra of degree $n$ over $F$ and suppose $M$ is contained in $A$, this makes $M$ a maximal subfield of $A$ \cite[Lemma 15.1]{berhuy2013introduction}. Such maximal subfields do not always exist for a general central simple algebra \cite[Remark 15.4]{berhuy2013introduction}, however they always exists for example when $A$ is a division algebra \cite[Corollary 15.6]{berhuy2013introduction}.
We denote by $A^{\times}$ the set of invertible elements of $A$, and for a subset $B$ in $ A$, denote $\mathrm{Cent}_{A}(B)$ the centralizer of $B$ in $A$.

Some of the results in this Chapter, namely Lemma \ref{lem:Petit (26)}, Corollary \ref{cor:Petit (28)} and Theorems \ref{thm:Petit (27)}, \ref{thm:Petit (29)}, are stated for central division algebras $A$ over $F$ by Petit in \cite[\S 7]{Petit1966-1967}, and none of them are proved there. The following  generalises \cite[(27)]{Petit1966-1967} to central simple algebras with a maximal subfield $M$ as above. The result was before  only stated for division algebras and also not in terms of crossed product algebras:


\begin{theorem} \label{thm:Petit (27)}
\begin{itemize}
\item[(i)] $A$ contains a subalgebra $M(G)$ which is a crossed product algebra $(M,G,\mathfrak{a})$ of degree $|G|$ over $\mathrm{Fix}(G)$ with maximal subfield $M$.
\item[(ii)] $A$ is equal to $M(G)$ if and only if $M$ is a Galois extension of $F$. In this case $A$ is a $G$-crossed product algebra over $F$.
\item[(iii)] For any subgroup $H$ of $G$, there is an $F$-subalgebra $M(H)$ of both $M(G)$ and $A$, which is a $H$-crossed product algebra of degree $|H|$ over $\mathrm{Fix}(H)$ with maximal subfield $M$.
\end{itemize}
\end{theorem}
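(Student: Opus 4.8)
The plan is to realize the abstract crossed product structure inside $A$ by using the Skolem--Noether theorem to lift each automorphism of $M$ to conjugation by an invertible element of $A$, and then to read the defining relations \eqref{eqn:Crossed product criteria 2} and \eqref{eqn:Crossed product criteria 3} of a crossed product off these elements.

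First, for part (i), I would argue as follows. Since $A$ is central simple over $F$ and $M$ is a (simple) $F$-subalgebra, for each $\sigma \in G = \mathrm{Aut}_F(M)$ the inclusion $M \hookrightarrow A$ and the map $m \mapsto \sigma(m) \hookrightarrow A$ are two $F$-algebra embeddings of $M$ into $A$; by Skolem--Noether there is $x_\sigma \in A^\times$ with $\sigma(m) = x_\sigma m x_\sigma^{-1}$ for all $m \in M$, which is exactly \eqref{eqn:Crossed product criteria 2}. Fixing one such $x_\sigma$ for every $\sigma$ (with $x_{\mathrm{id}} = 1$), I note that $x_\sigma x_\tau$ and $x_{\sigma\tau}$ induce the same automorphism $\sigma\tau$ of $M$, so $a_{\sigma,\tau} := x_\sigma x_\tau x_{\sigma\tau}^{-1}$ centralizes $M$. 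Because $M$ is a maximal subfield with $[M:F] = \deg A = n$, the double centralizer theorem gives $\mathrm{Cent}_A(M) = M$, whence $a_{\sigma,\tau} \in M^\times$ and \eqref{eqn:Crossed product criteria 3} holds. Expanding the associativity relation $(x_\sigma x_\tau)x_\rho = x_\sigma(x_\tau x_\rho)$ by means of \eqref{eqn:Crossed product criteria 2}, \eqref{eqn:Crossed product criteria 3} and the commutativity of $M$ then yields the cocycle identity \eqref{eqn:Crossed product criteria 1}, so $\mathfrak{a}$ is a factor set of $G$.

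It remains to show that $M(G) := \bigoplus_{\sigma \in G} M x_\sigma$ is a direct sum and an $F$-subalgebra isomorphic to $(M,G,\mathfrak{a})$. Closure under multiplication is immediate from \eqref{eqn:Crossed product criteria 2} and \eqref{eqn:Crossed product criteria 3}. For directness I would run the standard Dedekind independence-of-characters argument: a nontrivial relation $\sum_\sigma m_\sigma x_\sigma = 0$ of minimal length can be shortened by comparing $\bigl(\sum_\sigma m_\sigma x_\sigma\bigr)m$ with $\sigma_1(m)\sum_\sigma m_\sigma x_\sigma$ for a suitable $m \in M$, forcing every $m_\sigma = 0$. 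Since $G = \mathrm{Aut}_F(M)$ is finite, Artin's theorem shows $M/\mathrm{Fix}(G)$ is Galois with group $G$, so $(M,G,\mathfrak{a})$ is genuinely a factor set over $\mathrm{Fix}(G)$ and $M(G)$ is the crossed product $(M,G,\mathfrak{a})$ of degree $|G|$ over $\mathrm{Fix}(G)$ with maximal subfield $M$.

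For part (ii) I would compare dimensions: directness gives $\dim_F M(G) = |G| \cdot [M:F] = |G|\,n$, while $\dim_F A = n^2$, so $M(G) = A$ if and only if $|G| = n = [M:F]$, and $|\mathrm{Aut}_F(M)| = [M:F]$ holds precisely when $M/F$ is Galois; in that case $\mathrm{Fix}(G) = F$, so $A = M(G)$ is a $G$-crossed product over $F$. For part (iii) I would reuse the \emph{same} elements $x_\sigma$: if $H \le G$ then $M(H) := \bigoplus_{\sigma \in H} M x_\sigma$ is closed under multiplication (as $\sigma\tau \in H$), hence an $F$-subalgebra of both $M(G)$ and $A$; restricting $\mathfrak{a}$ to $H \times H$ and applying Artin's theorem to $M/\mathrm{Fix}(H)$ identifies $M(H)$ with the $H$-crossed product $(M,H,\mathfrak{a}|_{H\times H})$ of degree $|H|$ over $\mathrm{Fix}(H)$, with maximal subfield $M$. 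The main obstacle I anticipate is the self-centralizing property $\mathrm{Cent}_A(M) = M$, on which the entire construction of the factor set rests: this is exactly the point where I must invoke the maximality of $M$ together with the double centralizer theorem, rather than any additional structure of $A$.
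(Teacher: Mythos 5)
Your proof is correct, but it takes a genuinely different route from the paper. You build the crossed product from the bottom up: Skolem--Noether produces the units $x_\sigma$ realising each $\sigma\in G$ as an inner automorphism of $A$ restricted to $M$, the self-centralising property $\mathrm{Cent}_A(M)=M$ (from the double centralizer theorem and the maximality of $M$) forces $a_{\sigma,\tau}=x_\sigma x_\tau x_{\sigma\tau}^{-1}$ into $M^\times$, associativity gives the cocycle identity \eqref{eqn:Crossed product criteria 1}, and a Dedekind-independence argument plus Artin's theorem identifies $\bigoplus_{\sigma\in G}Mx_\sigma$ with $(M,G,\mathfrak{a})$ over $\mathrm{Fix}(G)$. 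The paper instead works top-down: it \emph{defines} $M(G)=\mathrm{Cent}_A(\mathrm{Fix}(G))$, invokes the Centralizer Theorem to see this is central simple over $\mathrm{Fix}(G)$, and then observes that $M$ sits inside it as a maximal subfield that is Galois over $\mathrm{Fix}(G)$ with group $G$, so that $M(G)$ is a crossed product by the (cited) equivalence of the two definitions. The two constructions yield the same subalgebra --- each $x_\sigma$ centralises $\mathrm{Fix}(G)$ and both spaces have $F$-dimension $n\lvert G\rvert$ --- but each buys something: your version explicitly produces the elements $x_\sigma$ and the factor set, which is essentially the content of Lemma \ref{lem:Petit (26)} that the paper must establish separately afterwards; the paper's version immediately gives the identification $M(H)=\mathrm{Cent}_A(\mathrm{Fix}(H))$ (recorded in Lemma \ref{lem:Centralizer if Fix(H) in A} and used later), and makes the inclusion $M(H)\subseteq M(G)$ for $H\le G$ a one-line consequence of $\mathrm{Fix}(G)\subseteq\mathrm{Fix}(H)$, whereas for you it follows from reusing the same $x_\sigma$. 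Your parts (ii) and (iii) match the paper's dimension count and restriction-of-factor-set arguments respectively.
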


\begin{proof}
\begin{itemize}
\item[(i)] Define $M(G) = \mathrm{Cent}_{A}(\mathrm{Fix}(G))$, then $M(G)$ is a central simple algebra over $\mathrm{Fix}(G)$ by the Centralizer Theorem for central simple algebras \cite[Theorem III.5.1]{berhuy2013introduction}. Furthermore, since $M$ is a maximal subfield of $M(G)$ and $M/\mathrm{Fix}(G)$ is a Galois field extension with Galois group $G$, we conclude $M(G)$ is a $G$-crossed product algebra.
\item[(ii)] Notice $[M:F] = n$, $A$ has dimension $n^2$ over $F$, and $M(G)$ has a basis $\{ x_{\sigma} | \ \sigma \in G \}$ as a vector space over $M$. If $M$ is not a Galois extension of $F$, then $\vert G \vert < n$ and thus $\{ x_{\sigma} \ \vert \ \sigma \in G \}$ cannot be a  set of generators for $A$ as a vector space over $M$. Conversely, if $M/F$ is a Galois extension, then $\vert G \vert = n$ and since $\{ x_{\sigma} \ \vert \ \sigma \in G \}$ is linearly independent over $M$, counting dimensions yields $M(G) = A$. The rest of the assertion is trivial.
\item[(iii)] For any subgroup $H$ of  $G$, let $M(H) = \mathrm{Cent}_A(\mathrm{Fix}(H))$. Since $\mathrm{Fix}(G) \subseteq \mathrm{Fix}(H)$, we have $M(H) = \mathrm{Cent}_A(\mathrm{Fix}(H))$ is contained in $M(G) = \mathrm{Cent}_A(\mathrm{Fix}(G))$ The proof now follows exactly as in (i).
\end{itemize}
\end{proof}

\begin{remark}
When $A$ has prime degree over $F$ and $M/F$ is not a Galois extension then $M(G) = M$: $M(G)$ is a simple subalgebra of $A$ by Theorem \ref{thm:Petit (27)}(i), therefore $\mathrm{dim}_F(M(G))$ divides $\mathrm{dim}_F(A) = n^2$ by the Centralizer Theorem for central simple algebras \cite[Theorem III.5.1]{berhuy2013introduction}. Now $M(G)$ contains $M$ and $M(G)$ is equal to $A$ if and only if $M/F$ is a Galois extension by Theorem \ref{thm:Petit (27)}(ii). As $n$ is prime and $M/F$ is not Galois, this means $M(G) = M$.
\end{remark}

A close look at the proof of Theorem \ref{thm:Petit (27)} yields the following observations:
\begin{lemma} \label{lem:Centralizer if Fix(H) in A}
\begin{itemize}
\item[(i)] Given any subgroup $H$ of $G$, $M(H)$ is a $H$-crossed product algebra over its center with $M(H) = (M,H,\mathfrak{a}_H)$, where $\mathfrak{a}_H$ denotes the factor set of the $G$-crossed product algebra $M(G) = (M,G,\mathfrak{a})$ restricted to the elements of $H$.
\item[(ii)] For any subgroup $H$ of $G$,
$M(H)$ is the centralizer of $\mathrm{Fix}(H)$ in $A$.
\end{itemize}
\end{lemma}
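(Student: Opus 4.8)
The plan is to prove both parts of Lemma \ref{lem:Centralizer if Fix(H) in A} essentially by unwinding the construction used in the proof of Theorem \ref{thm:Petit (27)}(iii), observing that both statements are already implicit there and only require a little extra bookkeeping. Recall that in that proof we set $M(H) = \mathrm{Cent}_A(\mathrm{Fix}(H))$, which immediately gives part (ii): this is simply the definition of $M(H)$ used in the construction, so the statement of (ii) records this choice explicitly for later reference. The only thing to verify is that the earlier proof genuinely defined $M(H)$ this way and that this definition is well-posed, i.e. that $\mathrm{Fix}(H)$ is a subfield of $A$ whose centralizer is a well-defined subalgebra; this is immediate from $\mathrm{Fix}(G) \subseteq \mathrm{Fix}(H) \subseteq M \subseteq A$ together with the Centralizer Theorem for central simple algebras \cite[Theorem III.5.1]{berhuy2013introduction}.

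For part (i), the plan is to first invoke Theorem \ref{thm:Petit (27)}(iii), which already tells us that $M(H)$ is an $H$-crossed product algebra of degree $|H|$ over $\mathrm{Fix}(H)$ with maximal subfield $M$; hence $M(H) = (M,H,\mathfrak{b})$ for \emph{some} factor set $\mathfrak{b}: H \times H \to M^\times$. The content of (i) is the identification of this factor set as the restriction $\mathfrak{a}_H = \mathfrak{a}|_{H \times H}$ of the factor set $\mathfrak{a}$ of the ambient algebra $M(G) = (M,G,\mathfrak{a})$. To see this, I would recall from Theorem \ref{thm:Petit (27)}(i) that $M(G)$ carries an $M$-basis $\{ x_\sigma \mid \sigma \in G \}$ satisfying the crossed product relations \eqref{eqn:Crossed product criteria 2} and \eqref{eqn:Crossed product criteria 3}, so that $x_\sigma x_\tau = a_{\sigma,\tau} x_{\sigma\tau}$ and $x_\sigma m = \sigma(m) x_\sigma$. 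Since $M(H) \subseteq M(G)$ by Theorem \ref{thm:Petit (27)}(iii), the elements $\{ x_\sigma \mid \sigma \in H \}$ all lie in $M(H)$: indeed each $x_\sigma$ with $\sigma \in H$ centralizes $\mathrm{Fix}(H)$ because $x_\sigma$ acts on $M$ by $\sigma$ and $\sigma$ fixes $\mathrm{Fix}(H)$ pointwise, so $x_\sigma c = \sigma(c) x_\sigma = c x_\sigma$ for all $c \in \mathrm{Fix}(H)$.

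The remaining step is a dimension count: the $\{ x_\sigma \mid \sigma \in H \}$ are $|H|$ elements of $M(H)$, linearly independent over $M$ (being a subset of the $M$-basis of $M(G)$), and $M(H)$ has dimension $|H|$ over $M$ since it has degree $|H|$ over $\mathrm{Fix}(H)$ with $[M : \mathrm{Fix}(H)] = |H|$. Hence $\{ x_\sigma \mid \sigma \in H \}$ is an $M$-basis of $M(H)$, and the multiplication inherited from $M(G)$ gives $x_\sigma x_\tau = a_{\sigma,\tau} x_{\sigma\tau}$ for all $\sigma,\tau \in H$, so the factor set of $M(H)$ is precisely $\mathfrak{a}_H = \mathfrak{a}|_{H \times H}$. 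I do not anticipate a genuine obstacle here; the main point requiring care is justifying that the chosen $x_\sigma$ really do lie in $\mathrm{Cent}_A(\mathrm{Fix}(H))$ and that they span $M(H)$ rather than merely a subalgebra of it, which the degree computation settles. One should also check that $\mathfrak{a}_H$ satisfies the cocycle condition \eqref{eqn:Crossed product criteria 1} on $H$, but this is automatic since it is the restriction of a cocycle on $G$.
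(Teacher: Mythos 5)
Your proposal is correct and follows the same route as the paper, which states this lemma as an observation obtained by "a close look at the proof of Theorem \ref{thm:Petit (27)}": part (ii) is the definition $M(H) = \mathrm{Cent}_A(\mathrm{Fix}(H))$ made in that proof, and part (i) follows by noting that $\{x_\sigma \mid \sigma \in H\}$ lies in $\mathrm{Cent}_A(\mathrm{Fix}(H))$ and spans it over $M$ by the dimension count. Your write-up merely supplies the bookkeeping the paper leaves implicit.
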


The following generalises \cite[(26)]{Petit1966-1967} to any central simple algebra $A$ with a maximal subfield $M$ as above. Again it was previously only stated and not proved for central division algebras:

\begin{lemma} \label{lem:Petit (26)}
\begin{itemize}
\item[(i)] For any $\sigma \in G$ there exists $x_{\sigma} \in A^{\times}$ such that the inner automorphism $$I_{x_{\sigma}}: A \rightarrow A, \ y \mapsto x_{\sigma} y x_{\sigma}^{-1}$$ restricted to $M$ is $\sigma$.
\item[(ii)] Given any $\sigma \in G$, we have $\{ x \in A^{\times} \ \vert \ I_{x} \vert_M = \sigma \} = M^{\times} x_{\sigma}.$
\item[(iii)] The set of cosets $\{ M^{\times} x_{\sigma} \ \vert \ \sigma \in G\}$ with multiplication given by
\begin{equation*} \label{eqn:M^X x_sigma M^times x_tau = M^times x_sigma tau}
M^{\times} x_{\sigma} M^{\times} x_{\tau} = M^{\times} x_{\sigma \tau},
\end{equation*}
is a group isomorphic to $G$, where $\sigma$ and $M^{\times} x_{\sigma}$ correspond under this isomorphism.
\end{itemize}
\end{lemma}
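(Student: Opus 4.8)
The plan is to prove the three parts of Lemma~\ref{lem:Petit (26)} in order, using the Skolem--Noether theorem as the central tool, since it guarantees that every $F$-automorphism of the maximal subfield $M$ extends to an inner automorphism of the central simple algebra $A$.

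First I would establish part (i). Fix $\sigma \in G = \mathrm{Aut}_F(M)$. The inclusion $M \hookrightarrow A$ and the map $M \to A$ given by $m \mapsto \sigma(m)$ are two $F$-algebra homomorphisms from the simple algebra $M$ into the central simple $F$-algebra $A$. By the Skolem--Noether theorem \cite[Theorem~III.4.1]{berhuy2013introduction}, these two embeddings are conjugate, so there exists $x_\sigma \in A^\times$ with $\sigma(m) = x_\sigma m x_\sigma^{-1}$ for all $m \in M$; that is, $I_{x_\sigma}\vert_M = \sigma$.

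Next I would prove part (ii). One inclusion is immediate: if $u \in M^\times$ then $I_{u x_\sigma}\vert_M = I_u\vert_M \circ I_{x_\sigma}\vert_M$, and since $M$ is commutative $I_u\vert_M = \mathrm{id}_M$, so $I_{u x_\sigma}\vert_M = \sigma$, giving $M^\times x_\sigma \subseteq \{x \in A^\times \mid I_x\vert_M = \sigma\}$. For the reverse inclusion, suppose $x \in A^\times$ with $I_x\vert_M = \sigma = I_{x_\sigma}\vert_M$. Then $I_{x_\sigma^{-1} x}\vert_M = \mathrm{id}_M$, so $x_\sigma^{-1} x$ centralizes $M$. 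Since $M$ is a maximal subfield of the central simple algebra $A$, its centralizer is $M$ itself \cite[Theorem~III.5.1 or Lemma~15.1]{berhuy2013introduction}, hence $x_\sigma^{-1} x \in M \cap A^\times = M^\times$ and $x \in M^\times x_\sigma$. This proves equality, and in particular shows the coset $M^\times x_\sigma$ depends only on $\sigma$, not on the choice of $x_\sigma$.

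Finally I would treat part (iii). The well-definedness of the multiplication $M^\times x_\sigma \cdot M^\times x_\tau = M^\times x_{\sigma\tau}$ follows by computing $I_{x_\sigma x_\tau}\vert_M = I_{x_\sigma}\vert_M \circ I_{x_\tau}\vert_M = \sigma \circ \tau = \sigma\tau$, so that $x_\sigma x_\tau \in \{x \in A^\times \mid I_x\vert_M = \sigma\tau\} = M^\times x_{\sigma\tau}$ by part (ii); combined with part (ii) this also shows the product of the two cosets is contained in the single coset $M^\times x_{\sigma\tau}$, and independence of representatives is exactly the content of part (ii). The map $G \to \{M^\times x_\sigma \mid \sigma \in G\}$, $\sigma \mapsto M^\times x_\sigma$ is then a homomorphism by this computation, it is surjective by construction, and it is injective because $M^\times x_\sigma = M^\times x_\tau$ forces $\sigma = I_{x_\sigma}\vert_M = I_{x_\tau}\vert_M = \tau$; hence it is a group isomorphism. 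I expect the main conceptual obstacle to be the clean appeal to the centralizer of $M$ being $M$ itself in part (ii): this is where it matters that $M$ is a \emph{maximal} subfield of the central simple algebra, and I would make sure to cite the Centralizer Theorem precisely so that the double centralizer / dimension count is not left implicit. The remaining steps are essentially formal consequences of Skolem--Noether and the commutativity of $M$.
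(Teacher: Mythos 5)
Your proof is correct, but it reaches parts (i) and (iii) by a different route than the paper. The paper does not invoke Skolem--Noether directly: it first builds the crossed product subalgebra $M(G)=\mathrm{Cent}_A(\mathrm{Fix}(G))$ inside $A$ (Theorem \ref{thm:Petit (27)}) and then reads off the existence of the $x_\sigma$ and the coset multiplication from the defining crossed-product relations \eqref{eqn:Crossed product criteria 2} and \eqref{eqn:Crossed product criteria 3}, so that (i) and (iii) are quoted as consequences of that structure; only (ii) is argued directly, and there your argument and the paper's coincide almost verbatim (both reduce to the fact that $\mathrm{Cent}_A(M)=M$ because $M$ is maximal). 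Your version is more self-contained -- it does not depend on Theorem \ref{thm:Petit (27)} at all, and it makes explicit the homomorphism and injectivity checks in (iii) that the paper leaves implicit -- while the paper's version has the advantage of tying the $x_\sigma$ to the factor set $\mathfrak{a}$, which is how they are used later (e.g.\ $x_\sigma x_\tau = a_{\sigma,\tau}x_{\sigma\tau}$ in the proof of Theorem \ref{thm:Petit (29)}). Of course the two routes are not far apart, since the crossed-product structure of $M(G)$ is itself ultimately a consequence of Skolem--Noether. One cosmetic remark: in (ii) your computation yields $x_\sigma^{-1}x\in M^\times$, i.e.\ $x\in x_\sigma M^\times$; this equals $M^\times x_\sigma$ because $x_\sigma m=\sigma(m)x_\sigma$ and $\sigma$ permutes $M^\times$, a half-line worth including.
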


\begin{proof}
$A$ contains the $G$-crossed product algebra $M(G)$ by Theorem \ref{thm:Petit (27)}, thus (i) and (iii) follows from \eqref{eqn:Crossed product criteria 2} and \eqref{eqn:Crossed product criteria 3}.
For (ii) we have
\begin{align*}
I_{m x_{\sigma}}(y) &= (m x_{\sigma}) y (m x_{\sigma})^{-1} = (m x_{\sigma}) y (x_{\sigma}^{-1} m^{-1}) = m \sigma(y) m^{-1} = \sigma(y)
\end{align*}
for all $m, y \in M^{\times}$, and thus $M^{\times} x_{\sigma} \subset \{ x \in A^{\times} \ \vert \ I_{x} \vert_M = \sigma \}$.

Suppose $u \in \{ x \in A^{\times} \ \vert \ I_{x} \vert_M = \sigma \}$. Then as $u$ and $x_{\sigma}$ are invertible, we can write $u = v x_{\sigma}$ for some $v \in A^{\times}$. We are left to prove that $v \in M^{\times}$. We have
$$\sigma(y) = I_{u}(y) = (v x_{\sigma}) y (v x_{\sigma})^{-1} = v x_{\sigma} y x_{\sigma}^{-1} v^{-1} = v \sigma(y) v^{-1},$$ 
for all $y \in M$, and so $\sigma(y) v = v \sigma(y)$ for all $y \in M$, that is $m v = v m$ for all $m \in M$ since $\sigma$ is bijective. Therefore $v$ is contained in the centralizer of $M$ in $A$, which is equal to $M$ because $M$ is a maximal subfield of $A$.

\end{proof}

The following generalises \cite[(28)]{Petit1966-1967}:

\begin{corollary} \label{cor:Petit (28)}
If $H$ is a cyclic subgroup of $G$ of order $h > 1$ generated by $\sigma$, then there exists $c \in \mathrm{Fix}(\sigma)^{\times}$ such that
$$M(H) \cong (M/\mathrm{Fix}(\sigma), \sigma,c) = M[t;\sigma]/M[t;\sigma](t^h - c),$$
is a cyclic algebra of degree $h$ over $\mathrm{Fix}(\sigma)$.
\end{corollary}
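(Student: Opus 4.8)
The plan is to build on Corollary \ref{cor:Petit (28)}'s setup, where $H = \langle \sigma \rangle$ is cyclic of order $h$. By Theorem \ref{thm:Petit (27)}(iii) and Lemma \ref{lem:Centralizer if Fix(H) in A}, the subalgebra $M(H) = \mathrm{Cent}_A(\mathrm{Fix}(H))$ is an $H$-crossed product algebra of degree $h$ over $\mathrm{Fix}(H) = \mathrm{Fix}(\sigma)$, with maximal subfield $M$. The whole point is that when $H$ is \emph{cyclic}, a crossed product algebra collapses to the much more concrete cyclic algebra $(M/\mathrm{Fix}(\sigma),\sigma,c)$. So the task is to manufacture the right element $c \in \mathrm{Fix}(\sigma)^\times$ and exhibit an explicit isomorphism.

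First I would invoke Lemma \ref{lem:Petit (26)} to fix an element $x_\sigma \in A^\times$ with $I_{x_\sigma}\vert_M = \sigma$, so that $x_\sigma m = \sigma(m) x_\sigma$ for all $m \in M$. The natural candidate for $c$ is $x_\sigma^{\,h}$: since $I_{x_\sigma^{\,h}}\vert_M = \sigma^h = \mathrm{id}_M$ (as $\sigma$ has order $h$), the element $x_\sigma^{\,h}$ centralizes $M$, and because $M$ is a maximal subfield of $A$, its centralizer is $M$ itself. Hence $c := x_\sigma^{\,h} \in M^\times$. Next I would check $c \in \mathrm{Fix}(\sigma)$: conjugating $c$ by $x_\sigma$ gives $x_\sigma c x_\sigma^{-1} = x_\sigma x_\sigma^{\,h} x_\sigma^{-1} = x_\sigma^{\,h} = c$, so $\sigma(c) = I_{x_\sigma}(c) = c$, placing $c$ in $\mathrm{Fix}(\sigma)^\times$ as required.

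Then I would define the map $\Phi: M[t;\sigma]/M[t;\sigma](t^h - c) \to M(H)$ by sending $\sum_{i=0}^{h-1} m_i t^i \mapsto \sum_{i=0}^{h-1} m_i x_\sigma^{\,i}$, extending $\mathrm{id}_M$. The commutation rule $x_\sigma m = \sigma(m) x_\sigma$ matches exactly the skew multiplication $tm = \sigma(m)t$ in $M[t;\sigma]$, and the relation $x_\sigma^{\,h} = c$ matches the quotient relation $t^h = c$; so $\Phi$ respects both defining relations and is therefore an $F$-algebra homomorphism. To see it is an isomorphism, I would argue that $\{x_\sigma^{\,i}\}_{i=0}^{h-1}$ is linearly independent over $M$ (this is part of the crossed product structure from Lemma \ref{lem:Petit (26)}(ii)-(iii), since distinct cosets $M^\times x_{\sigma^i}$ correspond to distinct group elements), hence $\{x_\sigma^{\,i}\}$ is an $M$-basis of $M(H)$ of size $h = [M(H):M]$; as $\Phi$ carries the standard $M$-basis $\{t^i\}$ bijectively onto this basis, it is an $M$-linear bijection, thus an isomorphism. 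Finally, the target of $\Phi$ is by definition the cyclic algebra $(M/\mathrm{Fix}(\sigma),\sigma,c)$, completing the identification.

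The main obstacle I anticipate is the linear independence of $\{x_\sigma^{\,i}\}_{i=0}^{h-1}$ over $M$, equivalently the verification that the degree-$h$ skew polynomial presentation is not degenerate. This should follow cleanly from Lemma \ref{lem:Petit (26)}(iii), which identifies $\{M^\times x_\sigma^{\,i}\}$ with the distinct elements of $H$; distinct cosets force $M$-linear independence of coset representatives, and a dimension count against $\dim_{\mathrm{Fix}(\sigma)} M(H) = h^2$ (with $[M:\mathrm{Fix}(\sigma)] = h$) pins down $\dim_M M(H) = h$. A minor subtlety worth a sentence is confirming $\mathrm{Fix}(H) = \mathrm{Fix}(\sigma)$, which is immediate since $H = \langle\sigma\rangle$, so that the center and base field of the cyclic algebra line up correctly with $M(H)$.
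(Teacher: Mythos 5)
Your proof is correct, but it takes a more explicit route than the paper. The paper's own proof is essentially a one-line citation: it notes that $M(H)$ is an $H$-crossed product algebra by Theorem \ref{thm:Petit (27)}, and then invokes the standard fact that a crossed product over a cyclic group is a cyclic algebra (citing Saltman's lecture notes) to conclude the existence of $c$. You instead reconstruct that standard fact from scratch: you take $x_\sigma$ from Lemma \ref{lem:Petit (26)}, set $c = x_\sigma^{\,h}$, use the maximality of $M$ to land $c$ in $M^\times$, use conjugation by $x_\sigma$ to land it in $\mathrm{Fix}(\sigma)^\times$, and then build the isomorphism with $M[t;\sigma]/M[t;\sigma](t^h-c)$ directly, with the linear independence of $\{x_\sigma^{\,i}\}$ coming from Lemma \ref{lem:Petit (26)}(iii). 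All the steps check out (in particular $x_\sigma$ does lie in $M(H)=\mathrm{Cent}_A(\mathrm{Fix}(\sigma))$ since it conjugates $\mathrm{Fix}(\sigma)$ trivially, and replacing each coset representative $x_{\sigma^i}$ by the $M^\times$-multiple $x_\sigma^{\,i}$ preserves the $M$-basis property of the crossed product). What your approach buys is self-containedness and an explicit formula for $c$; it is worth noting that the paper itself later uses exactly this construction ($c_1 = x_{\sigma_2}^{q_1}$, $\tau_1 = I_{x_{\sigma_2}}$) inside the proof of Theorem \ref{thm:Petit (29)}, so your argument is in effect the degree-one case of that induction, made explicit where the paper chose to outsource it to a reference.
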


\begin{proof}
$M(H)$ is a $H$-crossed product algebra of degree $h$ over $\mathrm{Fix}(\sigma)$ by Theorem \ref{thm:Petit (27)}. Moreover $H$ is a cyclic group and so $M(H)$ is a cyclic algebra of degree $h$ over $\mathrm{Fix}(\sigma)$ (see for example \cite[p.~49]{saltman1999lectures}). This means there exists $c \in \mathrm{Fix}(\sigma)^{\times}$ such that $M(H) \cong (M/\mathrm{Fix}(\sigma), \sigma,c)$.
\end{proof}

In particular, we conclude that if a central division algebra $A$ over $F$ contains a maximal subfield $M$ and non-trivial $\sigma \in \mathrm{Aut}_F(M)$ of order $h$, then it contains a cyclic division algebra of degree $h$, (though not necessarily with center $F$). This is the case even if $A$ is a noncrossed product (i.e. if $A$ is not a crossed product algebra):

\begin{theorem} \label{thm:central division algebra contains cyclic algebra}
(\cite[Theorem 4]{brown2017solvable}).
Let $A$ be a central division algebra of degree $n$ over $F$ with maximal subfield $M$ and non-trivial $\sigma \in \mathrm{Aut}_F(M)$ of order $h$. Then $A$ contains a cyclic division algebra $(M/\mathrm{Fix}(\sigma), \sigma,c)$ of degree $h$ over $\mathrm{Fix}(\sigma)$ as a subalgebra.
\end{theorem}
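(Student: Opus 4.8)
The plan is to deduce Theorem \ref{thm:central division algebra contains cyclic algebra} directly from the machinery already assembled for central simple algebras with a maximal subfield. The key observation is that since $A$ is a \emph{division} algebra, it automatically possesses a maximal subfield, and more importantly $\mathrm{Aut}_F(M)$ can play the role of the group $G$ from the previous results even when $M/F$ is not Galois. So first I would let $H = \langle \sigma \rangle \subseteq \mathrm{Aut}_F(M) = G$ be the cyclic subgroup generated by the given automorphism $\sigma$ of order $h > 1$. Since $H$ is a subgroup of $G$, Theorem \ref{thm:Petit (27)}(iii) furnishes an $F$-subalgebra $M(H) = \mathrm{Cent}_A(\mathrm{Fix}(H))$ of $A$ which is an $H$-crossed product algebra of degree $|H| = h$ over $\mathrm{Fix}(H) = \mathrm{Fix}(\sigma)$, with maximal subfield $M$.

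Next I would invoke Corollary \ref{cor:Petit (28)} verbatim: because $H$ is cyclic of order $h > 1$ generated by $\sigma$, there exists $c \in \mathrm{Fix}(\sigma)^{\times}$ with
$$M(H) \cong (M/\mathrm{Fix}(\sigma), \sigma, c) = M[t;\sigma]/M[t;\sigma](t^h - c),$$
a cyclic algebra of degree $h$ over $\mathrm{Fix}(\sigma)$. This already produces a cyclic \emph{algebra} of the required shape sitting inside $A$; the two things remaining to check are that this cyclic algebra is a subalgebra of $A$ (which is immediate, since $M(H) \subseteq A$ by construction) and that it is actually a \emph{division} algebra rather than merely central simple.

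The one genuine point requiring argument, and the step I expect to be the main obstacle, is the division property. Here I would use that $A$ is a division algebra: any nonzero subalgebra of a division algebra has no nonzero zero divisors, hence $M(H)$, being a finite-dimensional associative $F$-algebra contained in $A$ with no zero divisors, is itself a division algebra (a finite-dimensional algebra over a field with no nonzero zero divisors is a division algebra, as noted in Chapter \ref{chapter:Preliminaries}). Thus $(M/\mathrm{Fix}(\sigma), \sigma, c) \cong M(H)$ is a cyclic division algebra of degree $h$ over $\mathrm{Fix}(\sigma)$, which is exactly the claim.

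In short, the entire proof is a concatenation of three earlier facts---Theorem \ref{thm:Petit (27)}(iii) to extract the crossed-product subalgebra attached to $\langle\sigma\rangle$, Corollary \ref{cor:Petit (28)} to identify it as a cyclic algebra, and the no-zero-divisors criterion for division algebras to upgrade ``cyclic algebra'' to ``cyclic division algebra.'' The only subtlety worth emphasizing in the writeup is that no Galois or crossed-product hypothesis on $A$ itself is needed: the construction lives entirely inside the centralizer $\mathrm{Cent}_A(\mathrm{Fix}(\sigma))$, so the result holds even when $A$ is a noncrossed product, which is precisely the strengthening over the purely division-algebra framing.
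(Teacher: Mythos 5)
Your proposal is correct and follows essentially the same route as the paper, which simply deduces the theorem from Corollary \ref{cor:Petit (28)} applied to $H = \langle\sigma\rangle$; your explicit justification of the division property (a finite-dimensional subalgebra of a division algebra has no zero divisors, hence is itself a division algebra) is the only step the paper leaves implicit.
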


\begin{proof}
This follows immediately from Corollary \ref{cor:Petit (28)}.
\end{proof}

It is well-known that a central division algebra of prime degree over $F$ is a cyclic algebra if and only if it contains a cyclic subalgebra of prime degree (though not necessarily with center $F$) \cite[p.~2]{motiee2016note}. Together with Theorem \ref{thm:central division algebra contains cyclic algebra} this yields the following:

\begin{corollary} \label{cor:central division algebras of prime degree}
(\cite[Corollary 6]{brown2017solvable}).
Let $A$ be a central division algebra over $F$ of prime degree $p$. Then either $A$ is a cyclic algebra or each of its maximal subfields $M$ has trivial automorphism group $\mathrm{Aut}_F(M)$.
\end{corollary}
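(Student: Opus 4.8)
The final statement to prove is Corollary \ref{cor:central division algebras of prime degree}: if $A$ is a central division algebra over $F$ of prime degree $p$, then either $A$ is a cyclic algebra or each of its maximal subfields $M$ has trivial automorphism group $\mathrm{Aut}_F(M)$.

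The plan is to argue by contrapositive on the second alternative. Suppose $A$ is a central division algebra of prime degree $p$ over $F$ and suppose it is \emph{not} the case that every maximal subfield has trivial automorphism group; that is, there exists a maximal subfield $M$ of $A$ with $\mathrm{Aut}_F(M) \neq \{\mathrm{id}\}$. I would then pick a non-trivial $\sigma \in \mathrm{Aut}_F(M)$ and let $h$ denote its order, so $h > 1$. The goal is to conclude $A$ is a cyclic algebra.

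First I would invoke Theorem \ref{thm:central division algebra contains cyclic algebra}, which applies precisely because $A$ is a central division algebra of degree $n = p$ with maximal subfield $M$ and non-trivial $\sigma \in \mathrm{Aut}_F(M)$ of order $h$. This guarantees that $A$ contains a cyclic division algebra $(M/\mathrm{Fix}(\sigma), \sigma, c)$ of degree $h$ over $\mathrm{Fix}(\sigma)$ as a subalgebra. The key observation is that this is a cyclic subalgebra of $A$; since $h$ is the order of a non-trivial automorphism of $M$, and $[M:F] = p$ with $p$ prime, the order $h$ must divide $p$ (as $\mathrm{Aut}_F(M)$ embeds into a group whose order divides $[M:F]$ when $M/F$ is separable, or more directly because the cyclic subalgebra has degree $h$ dividing the degree $p$ of $A$). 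Since $h > 1$ and $h \mid p$ with $p$ prime, we get $h = p$, so the cyclic subalgebra $(M/\mathrm{Fix}(\sigma), \sigma, c)$ has prime degree $p$.

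Having produced a cyclic subalgebra of prime degree $p$ inside $A$, I would then apply the well-known result cited in the excerpt (\cite[p.~2]{motiee2016note}): a central division algebra of prime degree over $F$ is a cyclic algebra if and only if it contains a cyclic subalgebra of prime degree. This immediately yields that $A$ is a cyclic algebra, completing the contrapositive. The main subtlety to handle carefully is the degree bookkeeping: I must ensure that $h$ genuinely equals $p$ rather than merely dividing it, and that the cyclic subalgebra supplied by Theorem \ref{thm:central division algebra contains cyclic algebra} has degree exactly $p$ so that the criterion from \cite{motiee2016note} is applicable. Apart from this divisibility argument the proof is essentially a direct chaining of Theorem \ref{thm:central division algebra contains cyclic algebra} with the cited characterisation, so no serious obstacle is anticipated beyond stating the degree argument cleanly.
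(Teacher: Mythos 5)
Your proposal is correct and follows essentially the same route as the paper: both extract a non-trivial $\sigma \in \mathrm{Aut}_F(M)$ of order $h$, invoke Theorem \ref{thm:central division algebra contains cyclic algebra} to get a cyclic subalgebra of degree $h$ over $\mathrm{Fix}(\sigma)$, use primality of $p$ to force $h = p$ (the paper does this by noting $[M:\mathrm{Fix}(\sigma)]$ must be $1$ or $p$), and conclude via the cited criterion from \cite{motiee2016note}. The only difference is your contrapositive framing, which is purely cosmetic.
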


\begin{proof}
Suppose $G = \mathrm{Aut}_F(M)$ is non-trivial, then there exists a non-trivial $\sigma \in \mathrm{Aut}_F(M)$ of finite order $h$. Thus $A$ contains a cyclic division algebra of degree $h$ over $\mathrm{Fix}(\sigma)$ as a subalgebra. Looking at the possible intermediate field extensions of $M/F$ yields $[M:\mathrm{Fix}(\sigma)]$ is either $1$ or $p$. Since $G$ is not trivial, $[M:\mathrm{Fix}(\sigma)] = h = p$, thus $A$ contains a cyclic subalgebra of prime degree and so is itself a cyclic algebra.
\end{proof}

\section[CSA's Containing Max Subfield with Solvable Automorphism Group]{Central Simple Algebras Containing a Maximal Subfield \texorpdfstring{$M$}{M} with Solvable \texorpdfstring{$F$}{F}-Automorphism Group}

Suppose $G$ is a finite solvable group, then there exists a chain of subgroups
\begin{equation} \label{eqn:Subnormal series}
\{ 1 \} = G_0 < G_1 < \ldots < G_k = G,
\end{equation}
such that $G_{j}$ is normal in $G_{j+1}$ and $G_{j+1}/G_{j}$ is cyclic of prime order $q_{j}$ for all $j \in \{ 0, \ldots, k-1 \}$, i.e. 
\begin{equation}
G_{j+1}/G_{j} = \{ G_{j}, G_{j} \sigma_{j+1}, \ldots \},
\end{equation}
for some $\sigma_{j+1} \in G_{j+1}$. Theorem \ref{thm:Petit (27)}, Corollary \ref{cor:Petit (28)} and Lemma \ref{lem:Petit (26)} lead us to the following generalisation of \cite[(29)]{Petit1966-1967}. As before, the result was previously only stated (and not proved) for central division algebras over $F$, and also without the connection to crossed product algebras:

\begin{theorem} \label{thm:Petit (29)}
Let $M/F$ be a field extension of degree $n$ with non-trivial solvable $G = \mathrm{Aut}_{F}(M)$, and $A$ be a central simple algebra of degree $n$ over $F$ with maximal subfield $M$. Then there exists a chain of subalgebras
\begin{equation} \label{eqn:Petit (29) chain of subalgebras}
M = A_0 \subset A_1 \subset \ldots \subset A_k = M(G) \subseteq A,
\end{equation}
of $A$ which are $G_i$-crossed product algebras over $Z_i = \mathrm{Fix}(G_i)$, and where
\begin{equation} \label{eqn:Petit (29) chain of subalgebras 2}
A_{i+1} \cong A_i[t_i;\tau_i]/A_i[t_i;\tau_i](t_i^{q_i} - c_i),
\end{equation}
for all $i \in \{ 0, \ldots, k-1 \}$, such that
\begin{itemize}
\item[(i)] $q_i$ is the prime order of the factor group $G_{i+1}/G_i$ in the chain \eqref{eqn:Subnormal series},
\item[(ii)] $\tau_i$ is an $F$-automorphism of $A_i$ of inner order $q_i$ which restricts to $\sigma_{i+1} \in G_{i+1}$ which generates $G_{i+1}/G_i$, and
\item[(iii)] $c_i \in \mathrm{Fix}(\tau_i)$ is invertible.
\end{itemize}
\end{theorem}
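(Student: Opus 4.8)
The plan is to build the chain \eqref{eqn:Petit (29) chain of subalgebras} inductively using the subnormal series \eqref{eqn:Subnormal series}, identifying at each stage the algebra $A_i$ with the centralizer $M(G_i) = \mathrm{Cent}_A(\mathrm{Fix}(G_i))$ constructed in Theorem \ref{thm:Petit (27)}. By Theorem \ref{thm:Petit (27)}(iii) and Lemma \ref{lem:Centralizer if Fix(H) in A}, each $M(G_i)$ is a $G_i$-crossed product algebra $(M,G_i,\mathfrak{a}_{G_i})$ over $Z_i = \mathrm{Fix}(G_i)$ with maximal subfield $M$. Since $G_0 = \{1\}$ we have $A_0 = M(G_0) = \mathrm{Cent}_A(M) = M$ (as $M$ is a maximal subfield), and $A_k = M(G)$ by definition, so the endpoints of \eqref{eqn:Petit (29) chain of subalgebras} are correct, and the inclusions $A_i \subset A_{i+1}$ follow because $\mathrm{Fix}(G_{i+1}) \subseteq \mathrm{Fix}(G_i)$ reverses under centralizers. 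This reduces everything to establishing the single inductive step \eqref{eqn:Petit (29) chain of subalgebras 2}, namely that $A_{i+1}$ is a generalised cyclic algebra built from $A_i$.

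For the inductive step, I would fix $i$ and work inside $A_{i+1} = M(G_{i+1})$. The key object is an element $x_{\sigma_{i+1}} \in A^{\times}$ supplied by Lemma \ref{lem:Petit (26)}(i), whose inner automorphism $I_{x_{\sigma_{i+1}}}$ restricts on $M$ to the generator $\sigma_{i+1}$ of $G_{i+1}/G_i$. First I would define $\tau_i = I_{x_{\sigma_{i+1}}}\vert_{A_i}$ and check it is a well-defined $F$-automorphism of $A_i$: since $A_i = \mathrm{Cent}_A(Z_i)$ and $x_{\sigma_{i+1}}$ normalises $Z_i$ (because $\sigma_{i+1}$ restricted to $Z_i$ maps $Z_i = \mathrm{Fix}(G_i)$ into itself, using that $G_i$ is normal in $G_{i+1}$), conjugation by $x_{\sigma_{i+1}}$ carries $A_i$ to itself. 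The normality of $G_i$ in $G_{i+1}$ is exactly what guarantees $\tau_i$ preserves $A_i$, so this is where that hypothesis from \eqref{eqn:Subnormal series} is used. Next I would verify that $\tau_i$ restricts to $\sigma_{i+1}$ on $M \subseteq A_i$, giving property (ii), and compute the order: because $G_{i+1}/G_i$ has prime order $q_i$, the automorphism $\tau_i^{q_i}$ restricts to the identity on the center $Z_i$ of $A_i$, and by the Skolem--Noether theorem $\tau_i^{q_i}$ is then inner on $A_i$, so $\tau_i$ has inner order dividing $q_i$; minimality (inner order exactly $q_i$) follows since $\sigma_{i+1}$ has order $q_i$ modulo $G_i$ and $\tau_i^j\vert_M = \sigma_{i+1}^j \neq \mathrm{id}$ for $0 < j < q_i$.

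With $\tau_i$ in hand I would set $c_i = x_{\sigma_{i+1}}^{q_i}$ and argue it lies in $A_i$: conjugation by $x_{\sigma_{i+1}}$ fixes $x_{\sigma_{i+1}}^{q_i}$, so $c_i$ commutes with $x_{\sigma_{i+1}}$, and combined with the relation $x_{\sigma_{i+1}} m = \sigma_{i+1}(m) x_{\sigma_{i+1}}$ from \eqref{eqn:Crossed product criteria 2} one checks $c_i$ centralises $Z_i$, hence $c_i \in A_i$; moreover $\tau_i(c_i) = x_{\sigma_{i+1}} c_i x_{\sigma_{i+1}}^{-1} = c_i$ shows $c_i \in \mathrm{Fix}(\tau_i)$, and $c_i$ is invertible as a power of an invertible element, giving (iii). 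Finally I would exhibit the isomorphism \eqref{eqn:Petit (29) chain of subalgebras 2}: the map sending $t_i \mapsto x_{\sigma_{i+1}}$ and acting as inclusion on $A_i$ respects the defining relations $t_i a = \tau_i(a) t_i$ and $t_i^{q_i} = c_i$, so it induces a homomorphism $A_i[t_i;\tau_i]/A_i[t_i;\tau_i](t_i^{q_i}-c_i) \to A_{i+1}$; I would conclude it is an isomorphism by a dimension count, since the quotient is free of rank $q_i$ over $A_i$ and $A_{i+1} = M(G_{i+1})$ has $A_i$-dimension $q_i = [G_{i+1}:G_i]$ (both algebras having the same $M$-dimension $|G_{i+1}|$). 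The main obstacle I anticipate is the careful bookkeeping around where $c_i = x_{\sigma_{i+1}}^{q_i}$ actually lands and that it is $\tau_i$-fixed and central-enough for \eqref{eqn:Petit (29) chain of subalgebras 2} to be a genuine generalised cyclic algebra; this hinges delicately on the cocycle identity \eqref{eqn:Crossed product criteria 1} and the normality of $G_i$ in $G_{i+1}$, and is the step most likely to require genuine computation rather than soft arguments.
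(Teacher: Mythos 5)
Your proposal is correct and follows essentially the same route as the paper: identify $A_i$ with the centralizer $M(G_i)$ of $Z_i = \mathrm{Fix}(G_i)$, take $\tau_i$ to be conjugation by $x_{\sigma_{i+1}}$ restricted to $A_i$, set $c_i = x_{\sigma_{i+1}}^{q_i}$, and match $t_i$ with $x_{\sigma_{i+1}}$. The only differences are cosmetic: where you invoke normality of $G_i$ to see that conjugation by $x_{\sigma_{i+1}}$ preserves $Z_i$ and hence its centralizer $A_i$, the paper verifies $\tau_i(A_i) \subseteq A_i$ by an explicit computation on the $M$-basis $\{ x_h \ \vert \ h \in G_i \}$ using the cocycle identity, notes $c_i \in M^{\times}x_{\sigma_{i+1}^{q_i}} \subseteq A_i$ directly, and checks multiplicativity of the isomorphism $y\,x_{\sigma_{i+1}}^{j} \mapsto y\,t_i^{j}$ by hand rather than by your relations-plus-dimension-count argument (which implicitly requires first observing that $t_i^{q_i}-c_i$ is right invariant, a fact your construction of $\tau_i$ and $c_i$ already supplies).
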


Note that the inclusion $M(G) \subseteq A$ in \eqref{eqn:Petit (29) chain of subalgebras} is an equality if and only if $M/F$ is a  Galois extension by Theorem \ref{thm:Petit (27)}. In this case $A$ is a solvable $G$-crossed product algebra.

\begin{proof}
Define $A_i= M(G_i)$ for all $i \in \{ 1, \ldots, k \}$. $A_i$ is a $G_i$-crossed product algebra over $\mathrm{Fix}(G_i)$ by Theorem \ref{thm:Petit (27)}.

$G_1/G_0 \cong G_1$ is a cyclic subgroup of $G$ of prime order $q_0$ generated by some $\sigma_1 \in G$. Let $\tau_0 = \sigma_1$, then there exists $c_0 \in \mathrm{Fix}(\tau_0)^{\times}$ such that $A_1 = M(G_1)$ is $F$-isomorphic to
$$M[t_0;\tau_0]/M[t_0;\tau_0](t_0^{q_0} - c_0),$$
by Corollary \ref{cor:Petit (28)}, which is a cyclic algebra of prime degree $q_0$ over $\mathrm{Fix}(\tau_0)$.

Now $G_1 \triangleleft G_2$ and $G_2/G_1$ is cyclic of prime order $q_1$ with
\begin{equation} \label{eqn:proof petit (29) 1}
G_2/G_1 = \{ (G_1 \sigma_2)^i \ \vert \ i \in \mathbb{Z} \} = \{ G_1,
G_1 \sigma_2, \ldots, G_1 \sigma_2^{q_1-1} \},
\end{equation}
for some $\sigma_2 \in G_2$. Hence we can write $G_2 = \{ h \sigma_2^i \ \vert \ h \in G_1, 0 \leq i \leq q_1-1 \}$ and thus the crossed product algebra $A_2 = M(G_2)$ has a basis $$\{ x_{h \sigma_2^j} \ \vert \ h \in G_1, \ 0\leq j \leq q_1-1 \},$$
as an $M$-vector space. Recall $M^{\times} x_{h \sigma_2^j} = M^{\times} x_{h} x_{\sigma_2^j} = M^{\times} x_{h} x_{\sigma_2}^j$ for all $h \in G_1$ by Lemma \ref{lem:Petit (26)}, and $\{ 1, x_{\sigma_2}, \ldots, x_{\sigma_2}^{q_1-1} \}$ is a basis for $A_2$ as a left $A_1$-module, i.e.
\begin{equation} \label{eqn:proof petit (29) 2}
A_2 = A_1 + A_1 x_{\sigma_2} + \ldots + A_1 x_{\sigma_2}^{q_1-1}.
\end{equation}
We have $G_2 G_1 = G_1 G_2$ as $G_1$ is normal in $G_2$ and so for every $h \in G_1$, we get $\sigma_2 h = h' \sigma_2$ for some $h' \in G_1$. Choose the basis $\{ x_{h} \ \vert \ h \in G_1 \}$ of $A_1$ as a vector space over $M$. By
\eqref{eqn:Crossed product criteria 3} we obtain
\begin{equation} \label{eqn:proof petit (29) 3}
x_{\sigma_2} x_{h} = a_{\sigma_2,h} x_{\sigma_2 h} =
a_{\sigma_2,h} x_{h' \sigma_2} = a_{\sigma_2,h}
(a_{h',\sigma_2})^{-1} x_{h'} x_{\sigma_2}.
\end{equation}
Recall $x_{\sigma_2}\in A^\times$  by Lemma \ref{lem:Petit (26)}. The inner automorphism
 $$\tau_1: A \rightarrow A,\ z \mapsto x_{\sigma_2} z x_{\sigma_2}^{-1}$$
restricts to $\sigma_2$ on $M$. Moreover,
\begin{equation} \label{eqn:proof petit (29) 4}
\begin{split}
\tau_1(x_{h}) &= x_{\sigma_2} x_{h} x_{\sigma_2}^{-1} =
 a_{\sigma_2,h} (a_{h',\sigma_2})^{-1} x_{h'} x_{\sigma_2} x_{\sigma_2}^{-1} \\
 &= a_{\sigma_2,h} (a_{h',\sigma_2})^{-1} x_{h'} \in A_1,
\end{split}
\end{equation}
for all $h \in G_1$, i.e. $\tau_1 \vert_{A_1}(y) \in A_1$ for all $y \in A_1$ and so $\tau_1 \vert_{A_1}$ is an $F$-automorphism of $A_1$. Furthermore, $x_{\sigma_2} x_{h} = \tau_1 \vert_{A_1}(x_{h}) x_{\sigma_2},$ for all $h \in G_1$ by \eqref{eqn:proof petit (29) 3}, \eqref{eqn:proof petit (29) 4}, and $$x_{\sigma_2} m = \sigma_2(m) x_{\sigma_2} = \tau_1 \vert_{A_1}(m) x_{\sigma_2},$$ for all $m \in M$.
We conclude that
\begin{equation} \label{eqn:proof petit (29) 6}
x_{\sigma_2} y = \tau_1 \vert_{A_1}(y) x_{\sigma_2}
\end{equation}
for all $y \in A_1$ by distributivity. Define $c_1 = x_{\sigma_2}^{q_1}$, then $\sigma_2^{q_1} \in G_1$ by \eqref{eqn:proof petit (29) 1} which implies $c_1 \in A_1$.  Furthermore $c_1$ is invertible since $x_{\sigma_2}$ is invertible. Also, $\tau_1 \vert_{A_1}(c_1) = x_{\sigma_2} x_{\sigma_2}^{q_1} x_{\sigma_2}^{-1} = c_1$ which means $c_1 \in \mathrm{Fix}(\tau_1 \vert_{A_1})^\times$. Since $$x_{\sigma_2^{-q_1}} x_{\sigma_2^{q_1}} = a_{\sigma_2^{-q_1},\sigma_2^{q_1}} x_{\mathrm{id}} \in M^{\times},$$ it follows that $c_1^{-1} = x_{\sigma_2^{q_1}}^{-1} \in M^{\times}x_{\sigma_2^{-q_1}} \in A_1$ as $\sigma_2^{-q_1} \in G_1$. Hence $\tau_1 \vert_{A_1}$ has inner order $q_1$, as indeed
$(\tau_1 \vert_{A_1})^{q_1}: A_1 \rightarrow A_1, z \mapsto c_1 z c_1^{-1},$
is an inner automorphism.

Consider the  algebra $$B_2 = A_1[t_1;\tau_1 \vert_{A_1}] /A_1[t_1;\tau_1 \vert_{A_1}](t_1^{q_1} - c_1)$$ with center
\begin{align*}
\mathrm{Cent}(B_2) & \supset \{ b \in A_1 \ \vert \ bh = hb \text{ for
all } h \in B_2 \} = \mathrm{Cent}(A_1) \cap \mathrm{Fix}(\tau_1) \supset
F.
\end{align*}
Now, using \eqref{eqn:proof petit (29) 2} and \eqref{eqn:proof petit (29) 6}, the map
$$\phi: A_2 \rightarrow B_2, \ y x_{\sigma_2}^i \mapsto y t_1^i \qquad \qquad (y \in A_1),$$
can readily be seen to be an isomorphism between $F$-algebras. Indeed, it is clearly bijective and $F$-linear. In addition, we have
\begin{align*}
\phi \big( (y x_{\sigma_2}^i)(z x_{\sigma_2}^j) \big) &= \phi \big( y \tau_1 \vert_{A_1}^i(z) x_{\sigma_2}^i x_{\sigma_2}^j \big) \\
&= \begin{cases}
\phi \big( y \tau_1 \vert_{A_1}^i(z) x_{\sigma_2}^{i+j} \big) & \text{ if } i+j < q_1, \\
\phi \big( y \tau_1 \vert_{A_1}^i(z) x_{\sigma_2}^{q_1} x_{\sigma_2}^{i+j-q_1} \big) & \text{ if } i+j \geq q_1,
\end{cases} \\
&= \begin{cases}
y \tau_1 \vert_{A_1}^i(z) t^{i+j} & \text{ if } i+j < q_1, \\
y \tau_1 \vert_{A_1}^i(z) c_1 t^{i+j-q_1} & \text{ if } i+j \geq q_1,
\end{cases}
\end{align*}
by \eqref{eqn:proof petit (29) 6}, and
\begin{align*}
\phi(y x_{\sigma_2}^i) \circ \phi(z x_{\sigma_2}^j) &= (yt^i) \circ (zt^j) = \begin{cases}
y \tau_1 \vert_{A_1}^i(z) t^{i+j} & \text{if } i+j < q_1, \\
y \tau_1 \vert_{A_1}^i(z) t^{i+j-q_1} c_1 & \text{if } i+j \geq q_1,
\end{cases} \\
&= \begin{cases}
y \tau_1 \vert_{A_1}^i(z) t^{i+j} & \text{ if } i+j < q_1, \\
y \tau_1 \vert_{A_1}^i(z) c_1 t^{i+j-q_1} & \text{ if } i+j \geq q_1,
\end{cases}
\end{align*}
for all $y, z \in A_1$, $i, j \in \{ 0, \ldots, q_1-1 \}$. By distributivity we conclude $\phi$ is also multiplicative, thus an isomorphism between $F$-algebras. Continuing in this manner for $G_2 \triangleleft G_3$ etc. yields the assertion.
\end{proof}

\begin{remarks}
\begin{itemize}
\item[(i)] If $A$ is a division algebra, the algebras $A_i$ in Theorem \ref{thm:Petit (29)} are also division algebras, being subalgebras of the finite-dimensional algebra $A$.
\item[(iii)] The algebras $A_i$ in Theorem \ref{thm:Petit (29)} are associative being subalgebras of the associative algebra $A$. Therefore $t_i^{q_i}-c_i \in A_i[t_i;\tau_i]$ are right invariant for all $i \in \{ 0, \ldots, k-1 \}$ by \cite[Theorem 4]{pumplun2015finite}.
\end{itemize}
\end{remarks}

%

%
%

We obtain the following straightforward observations about Theorem \ref{thm:Petit (29)}:

\begin{corollary} \label{cor:Observations on Theorem Petit(29)}
Let $M/F$ be a field extension of degree $n$ with non-trivial solvable $G = \mathrm{Aut}_{F}(M)$, and $A$ be a central simple algebra of degree $n$ over $F$ with maximal subfield $M$. Consider the algebras $A_i = M(G_i)$ as in Theorem \ref{thm:Petit (29)}.
\begin{itemize}
\item[(i)] $A_i = \mathrm{Cent}_{A}(\mathrm{Fix}(G_i))$ for all $i \in \{ 0, \ldots, k-1 \}$.
\item[(ii)] $A_i$ is a crossed product algebra over $Z_i = \mathrm{Fix}(G_i)$ of degree $|G_i| = \prod_{l=0}^{i-1} q_l$ for all $i \in \{ 1, \ldots, k-1 \}$.
\item[(iii)] $M = Z_0 \supset \ldots \supset Z_{k-1} \supset Z_k \supset F$.
\item[(iv)] $Z_{i-1} / Z_i$ has prime degree $q_{i-1}$ for all $i \in \{ 1, \ldots, k \}$.
\item[(v)] $M/Z_i$ is a Galois field extension and $M$ is a maximal subfield of $A_i$ for all $i \in \{ 0, \ldots, k \}$.
\item[(vi)] \cite[Corollary 10]{brown2017solvable} $A_i$ is a generalised cyclic algebra over $Z_i$ for all $i \in \{ 0, \ldots, k \}$.
\item[(vii)] $A$ contains the cyclic algebra $$(M/\mathrm{Fix}(\sigma_1),\sigma_1,c_0) \cong M[t_0;\sigma_1]/M[t_0;\sigma_1](t_0^{q_0} - c_0),$$ of prime degree $q_0$ over $\mathrm{Fix}(\sigma_1)$.
\end{itemize}
\end{corollary}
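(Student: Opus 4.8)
The plan is to read off all seven statements from the definition $A_i = M(G_i)$ together with Theorem \ref{thm:Petit (27)}, Artin's theorem from Galois theory, and the structure already established in Theorem \ref{thm:Petit (29)}; once these inputs are in place everything is organisational bookkeeping. First, (i) is immediate from Lemma \ref{lem:Centralizer if Fix(H) in A}(ii) applied to $H = G_i$, which gives $A_i = M(G_i) = \mathrm{Cent}_A(\mathrm{Fix}(G_i)) = \mathrm{Cent}_A(Z_i)$. For (ii), Theorem \ref{thm:Petit (27)}(iii) (with Lemma \ref{lem:Centralizer if Fix(H) in A}(i)) shows $A_i = M(G_i)$ is a $G_i$-crossed product algebra over $Z_i = \mathrm{Fix}(G_i)$; its degree equals $[M:Z_i]$, which by Artin's theorem equals $|G_i|$. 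Since the tower $G_0 \triangleleft G_1 \triangleleft \cdots \triangleleft G_k$ has $|G_{j+1}/G_j| = q_j$, multiplicativity of group order yields $|G_i| = \prod_{j=0}^{i-1} q_j$, which I would record.

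Next I would treat the field-theoretic items (iii), (iv), (v) together. Because $G_0 = \{1\}$ and $G_k = G$, we have $Z_0 = \mathrm{Fix}(\{1\}) = M$ and $Z_k = \mathrm{Fix}(G) \supseteq F$; as $G_0 \subseteq G_1 \subseteq \cdots \subseteq G_k$ and taking fixed fields reverses inclusions, the chain in (iii) follows. For (v), Artin's theorem states that for any finite group $H$ of automorphisms of $M$, the extension $M/\mathrm{Fix}(H)$ is Galois with group $H$; applying this to $H = G_i$ shows $M/Z_i$ is Galois, and $M$ is a maximal subfield of the crossed product algebra $A_i$ by Theorem \ref{thm:Petit (27)}(iii). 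Then (iv) drops out of (v) via the tower law: $[Z_{i-1}:Z_i] = [M:Z_i]/[M:Z_{i-1}] = |G_i|/|G_{i-1}| = q_{i-1}$, which is prime.

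For (vi) I would invoke \cite[Corollary 10]{brown2017solvable} directly, as the statement indicates, noting that the isomorphisms \eqref{eqn:Petit (29) chain of subalgebras 2} exhibit each $A_{i+1}$ as $(A_i,\tau_i,c_i)$ with $\tau_i$ of inner order $q_i$ and $c_i \in \mathrm{Fix}(\tau_i)^{\times}$, so the generalised cyclic structure over $Z_i$ propagates up the chain. For (vii), I set $i=0$ in Theorem \ref{thm:Petit (29)}: since $A_0 = M$ and $\tau_0$ restricts to $\sigma_1$, which generates the cyclic group $G_1$ of prime order $q_0$, the isomorphism $A_1 \cong M[t_0;\sigma_1]/M[t_0;\sigma_1](t_0^{q_0}-c_0) = (M/\mathrm{Fix}(\sigma_1),\sigma_1,c_0)$ is exactly Corollary \ref{cor:Petit (28)} for $H = G_1$; as $A_1 \subseteq A_k = M(G) \subseteq A$, this cyclic algebra of degree $q_0$ sits inside $A$.

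The work here is entirely organisational rather than deep: the only points demanding genuine care are keeping the index ranges straight and confirming at each stage that the relevant degree is computed by Artin's theorem, so that ``degree of the crossed product algebra $A_i$ over $Z_i$'' and ``$[M:Z_i]$'' coincide and feed correctly into (ii) and (iv). The single genuinely external ingredient is the generalised cyclic identification in (vi), for which I rely on \cite{brown2017solvable}.
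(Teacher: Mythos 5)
Your proposal is correct and follows essentially the same route as the paper: (i) from Lemma \ref{lem:Centralizer if Fix(H) in A}, (ii) and (iv)--(v) from Theorem \ref{thm:Petit (27)} together with the Artin/Galois degree formula $[M:\mathrm{Fix}(G_i)] = |G_i|$ and the tower law, (iii) from inclusion-reversal of fixed fields, (vi) by citation, and (vii) from Corollary \ref{cor:Petit (28)} applied to $G_1 = \langle\sigma_1\rangle$. The only cosmetic difference is that in (iv) you divide $[M:Z_i]$ by $[M:Z_{i-1}]$ where the paper divides $[Z_{i-1}:F]$ by $[Z_i:F]$; both give $q_{i-1}$ identically.
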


\begin{proof}
\begin{itemize}
\item[(i)] This is Lemma \ref{lem:Centralizer if Fix(H) in A}.
\item[(ii)] $A_i$ has degree $[M:\mathrm{Fix}(G_i)]$ which is equal to $|G_i|$ by the Fundamental Theorem of Galois Theory.
\item[(iii)] Follows from the fact that $\{ 1 \} = G_0 \leq G_1 \leq \ldots \leq G_k = G$ and $Z_i = \mathrm{Fix}(G_i)$.
\item[(iv)] We have $n = [M:F] = [M:Z_i][Z_i:F] = |G_i| [Z_i:F]$ for all $i$ by the Fundamental Theorem of Galois Theory, therefore
$$[Z_{i-1}:Z_i] = \frac{[Z_{i-1}:F]}{[Z_i:F]} = \frac{|G_i|}{|G_{i-1}|} = q_{i-1},$$
for all $i \in \{1,\ldots, k\}$ as required.
\item[(v)] $M/\mathrm{Fix}(G_i)$ is a Galois field extension with Galois group $G_i$ by Galois Theory. The rest of the assertion is trivial by Theorem \ref{thm:Petit (29)}.
\item[(vii)] $G_1 = \langle \sigma_1 \rangle$ is a cyclic subgroup of $G$ of order $q_0$, therefore the result follows by Corollary \ref{cor:Petit (28)}.
\end{itemize}
\end{proof}

\begin{corollary} \label{cor:non-central elements}
(\cite[Corollaries 9, 11]{brown2017solvable}).
Let $A$ be a central division algebra over $F$ containing a maximal subfield $M$ with non-trivial solvable $G = \mathrm{Aut}_F(M)$.
\begin{itemize} 
\item[(i)] There is a non-central element $t_0 \in A$ such that $t_0^{q_0} \in \mathrm{Fix}(\sigma_1)^{\times}$ and $t_0^m \notin \mathrm{Fix}(\sigma_1)$ for all $m \in \{ 1, \ldots, q_0-1 \}$.
\item[(ii)] $A$ contains a chain of generalised cyclic division algebras $A_i$ over intermediate fields $Z_i = \mathrm{Fix}(G_i)$ of $M/F$ as in \eqref{eqn:Petit (29) chain of subalgebras}.
\end{itemize}
\end{corollary}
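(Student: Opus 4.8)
The plan is to read both statements directly off the machinery already assembled in Theorem~\ref{thm:Petit (29)}, Lemma~\ref{lem:Petit (26)} and Corollary~\ref{cor:Observations on Theorem Petit(29)}, so that essentially no new computation is required. Part~(ii) is the quicker of the two: Theorem~\ref{thm:Petit (29)} supplies the chain \eqref{eqn:Petit (29) chain of subalgebras} of subalgebras $A_i = M(G_i)$ of $A$, each of which is a generalised cyclic algebra over $Z_i = \mathrm{Fix}(G_i)$ by Corollary~\ref{cor:Observations on Theorem Petit(29)}(vi). Since $A$ is a finite-dimensional division algebra and each $A_i$ is a subalgebra of $A$, each $A_i$ is itself a division algebra by Remark~(i) following Theorem~\ref{thm:Petit (29)}. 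Combining these two facts immediately yields a chain of generalised cyclic \emph{division} algebras over the intermediate fields $Z_i$ of $M/F$, which is exactly the assertion of (ii).

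For part~(i), the idea is to take $t_0$ to be the distinguished invertible element implementing $\sigma_1$. Recall that $\sigma_1$ generates $G_1 \cong G_1/G_0$, which is cyclic of prime order $q_0$. By Lemma~\ref{lem:Petit (26)}(i) there is $x_{\sigma_1} \in A^{\times}$ with $I_{x_{\sigma_1}} \vert_M = \sigma_1$, and I would set $t_0 = x_{\sigma_1}$. First, $t_0$ is non-central: since $\mathrm{Cent}(A) = F$ and $I_{t_0}\vert_M = \sigma_1 \neq \mathrm{id}$, the element $t_0$ fails to commute with some element of $M$, so $t_0 \notin F$. Next, $t_0^{q_0} = x_{\sigma_1}^{q_0} \in M^{\times} x_{\sigma_1^{q_0}} = M^{\times} x_{\mathrm{id}} = M^{\times}$ by Lemma~\ref{lem:Petit (26)}, and the computation $\sigma_1(t_0^{q_0}) = t_0 \, t_0^{q_0} \, t_0^{-1} = t_0^{q_0}$ shows $t_0^{q_0} \in \mathrm{Fix}(\sigma_1)^{\times}$; equivalently, $t_0^{q_0}$ is the element $c_0$ produced in the base step $G_0 \triangleleft G_1$ of Theorem~\ref{thm:Petit (29)} (see also Corollary~\ref{cor:Petit (28)}).

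The hard part, which I regard as the only genuinely load-bearing step, will be showing $t_0^m \notin \mathrm{Fix}(\sigma_1)$ for every $m \in \{1, \ldots, q_0-1\}$. The key observation is that $\mathrm{Fix}(\sigma_1) \subseteq M$, and any element of the field $M$ commutes with all of $M$, hence induces the identity inner automorphism on $M$. On the other hand $I_{t_0^m}\vert_M = \sigma_1^m$, and since $\sigma_1$ has prime order $q_0$ we have $\sigma_1^m \neq \mathrm{id}$ for every $m \in \{1,\ldots,q_0-1\}$. Thus if $t_0^m$ lay in $\mathrm{Fix}(\sigma_1)$ it would lie in $M$ and force $\sigma_1^m = I_{t_0^m}\vert_M = \mathrm{id}$, a contradiction, establishing~(i). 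The main point of care throughout is bookkeeping: tracking that the $x_{\sigma}$ of Lemma~\ref{lem:Petit (26)} satisfy $M^{\times} x_{\sigma} M^{\times} x_{\tau} = M^{\times} x_{\sigma\tau}$, so that powers of $t_0$ land in the expected cosets, and keeping in mind that ``$\mathrm{Fix}(\sigma_1)$'' denotes the fixed field inside $M$ rather than inside $A$. Once these conventions are pinned down, both parts follow with no further calculation.
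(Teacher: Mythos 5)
Your proposal is correct and follows essentially the same route the paper intends: part (ii) is read off from Theorem \ref{thm:Petit (29)} together with Corollary \ref{cor:Observations on Theorem Petit(29)}(vi) and the remark that subalgebras of a finite-dimensional division algebra are division algebras, while part (i) takes $t_0 = x_{\sigma_1}$ from Lemma \ref{lem:Petit (26)} (equivalently, the generator $t_0$ of the cyclic subalgebra $(M/\mathrm{Fix}(\sigma_1),\sigma_1,c_0)$ of Corollary \ref{cor:Petit (28)}), with the observation that $I_{t_0^m}\vert_M = \sigma_1^m \neq \mathrm{id}$ for $1 \leq m \leq q_0-1$ forcing $t_0^m \notin M \supseteq \mathrm{Fix}(\sigma_1)$. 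All steps check out, including the coset bookkeeping $x_{\sigma_1}^{q_0} \in M^{\times}x_{\mathrm{id}} = M^{\times}$.
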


Our next result generalises \cite[(9)]{petit1968quasi} and characterises all the algebras with a maximal subfield $M/F$ that have a non-trivial solvable automorphism group $G = \mathrm{Aut}_{F}(M)$:

\begin{theorem} \label{thm:Petit (29) Generalised}
Let $M/F$ be a field extension of degree $n$ with non-trivial $G = \mathrm{Aut}_{F}(M)$, and $A$ be a central simple algebra of degree $n$ over $F$ containing $M$. Then $G$ is solvable if there exists a chain of  subalgebras
\begin{equation} \label{eqn:Petit (29) chain of subalgebras Galois}
M = A_0 \subset A_1 \subset \ldots \subset A_k  \subseteq A
\end{equation}
of $A$ which all have maximal subfield $M$,  where $A_k$ is a $G$-crossed product algebra over $\mathrm{Fix}(G)$, and where
\begin{equation} \label{eqn:Petit (29) chain of subalgebras Galois 2}
A_{i+1} \cong A_i[t_i;\tau_i]/A_i[t_i;\tau_i](t_i^{q_i} - c_i),
\end{equation}
for all $i \in \{ 0, \ldots, k-1 \}$,  with
\begin{itemize}
\item[(i)] $q_i$ a prime,
\item[(ii)] $\tau_i$ an $F$-automorphism of $A_i$ of inner order $q_i$ which restricts to an automorphism $\sigma_{i+1} \in G$, and
\item[(iii)] $c_i \in \mathrm{Fix}(\tau_i)^\times$.
\end{itemize}
\end{theorem}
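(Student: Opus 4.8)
The plan is to prove the converse direction to Theorem \ref{thm:Petit (29)}: assuming the existence of the chain \eqref{eqn:Petit (29) chain of subalgebras Galois} with the stated properties, I must deduce that $G = \mathrm{Aut}_F(M)$ is solvable. The natural strategy is to extract from the chain of subalgebras a corresponding chain of subgroups of $G$ and show it is a subnormal series with cyclic factors of prime order, which is exactly the defining property of a solvable group. First I would set $G_i = \mathrm{Gal}(M/\mathrm{Cent}(A_i))$, noting that each $A_i$ has $M$ as a maximal subfield and that $M/\mathrm{Cent}(A_i)$ is Galois (this is where I use that each $A_i$ is a crossed product algebra over its center, so that Galois Theory applies). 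The top of the chain $A_k$ being a $G$-crossed product algebra over $\mathrm{Fix}(G)$ gives $G_k = G$, while $A_0 = M$ gives $G_0 = \{1\}$, so the chain of subgroups runs from the trivial group up to $G$.

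The heart of the argument is to analyse a single step $A_{i+1} \cong A_i[t_i;\tau_i]/A_i[t_i;\tau_i](t_i^{q_i}-c_i)$ and show $G_i \triangleleft G_{i+1}$ with $G_{i+1}/G_i$ cyclic of prime order $q_i$. Here I would use that $\tau_i$ is an $F$-automorphism of $A_i$ of inner order $q_i$ restricting to $\sigma_{i+1} \in G$. Passing to the crossed product structure via Lemma \ref{lem:Petit (26)}, the element $t_i$ (playing the role of $x_{\sigma_{i+1}}$) conjugates $\mathrm{Cent}(A_i)$ by $\sigma_{i+1}$ and conjugates $A_i$ into itself, so $A_{i+1}$ is obtained from $A_i$ by adjoining $t_i$ with $t_i^{q_i} = c_i \in \mathrm{Fix}(\tau_i)^\times$. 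A dimension count using \eqref{eqn:Petit (29) chain of subalgebras Galois 2} shows $[\mathrm{Cent}(A_i):\mathrm{Cent}(A_{i+1})] = q_i$, hence $[G_{i+1}:G_i] = q_i$ by the Fundamental Theorem of Galois Theory applied to $M/\mathrm{Cent}(A_{i+1})$. The subgroup $G_{i+1}/G_i$ is then generated by the image of $\sigma_{i+1}$, making it cyclic of prime order $q_i$, and normality of $G_i$ in $G_{i+1}$ follows because $\tau_i$ normalizes $A_i$, which on the level of fixed fields says $\sigma_{i+1}$ normalizes $G_i$.

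The main obstacle I anticipate is justifying cleanly that $\mathrm{Cent}(A_{i+1}) = \mathrm{Fix}(\tau_i) \cap \mathrm{Cent}(A_i)$ has index exactly $q_i$ in $\mathrm{Cent}(A_i)$, and correspondingly that the inner order condition on $\tau_i$ forces $\sigma_{i+1}$ to have order exactly $q_i$ in $G_{i+1}/G_i$ rather than dividing it trivially. This requires care because $\tau_i$ is only assumed to be an automorphism of $A_i$ of inner order $q_i$, and I must rule out the degenerate possibility that $\sigma_{i+1}$ already lies in $G_i$ (which would collapse the factor). The condition $c_i \in \mathrm{Fix}(\tau_i)^\times$ together with $q_i$ prime is what prevents this collapse, and I would argue that if $\sigma_{i+1}^j \in G_i$ for some $0 < j < q_i$ then $\tau_i^j$ would be inner on $A_i$, contradicting that $\tau_i$ has inner order $q_i$.

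Once each step is established, the chain
\begin{equation*}
\{1\} = G_0 \triangleleft G_1 \triangleleft \cdots \triangleleft G_k = G
\end{equation*}
is a subnormal series with every factor $G_{i+1}/G_i$ cyclic of prime order, so $G$ is solvable by the standard characterisation of finite solvable groups. I would close by remarking that the condition that $A_k$ be a $G$-crossed product algebra over $\mathrm{Fix}(G)$ is precisely what guarantees $G_k$ exhausts all of $G$ rather than a proper subgroup, so that the series we build genuinely certifies solvability of the full automorphism group and not merely of a subgroup.
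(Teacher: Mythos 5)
Your overall strategy is the paper's: set $Z_i=\mathrm{Cent}(A_i)$, put $G_i=\mathrm{Gal}(M/Z_i)$, and turn the algebra chain into a subnormal series $\{1\}=G_0\triangleleft\cdots\triangleleft G_k=G$ with cyclic factors. There is, however, one genuine gap at the foundation. You justify that $M/\mathrm{Cent}(A_i)$ is Galois by saying that each $A_i$ is a crossed product algebra over its center --- but that is not a hypothesis (only $A_k$ is assumed to be a crossed product; the intermediate $A_i$ are only assumed to have $M$ as a maximal subfield), and being a crossed product over the center already presupposes the Galoisness you are trying to establish, so as written the step is circular. The paper fills this in by computing the centers explicitly: Corollary \ref{cor:Comm(S_f) = F generalised} gives $Z_{i}=Z_{i-1}\cap\mathrm{Fix}(\tau_{i-1})$, hence inductively $Z_i=M\cap\mathrm{Fix}(\tau_0)\cap\cdots\cap\mathrm{Fix}(\tau_{i-1})=\mathrm{Fix}(\sigma_1)\cap\cdots\cap\mathrm{Fix}(\sigma_i)=\mathrm{Fix}(H_i)$ for $H_i=\langle\sigma_1,\ldots,\sigma_i\rangle\leq\mathrm{Aut}_F(M)$, and then $M/Z_i$ is Galois with group $H_i$ by Artin's theorem. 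Without this computation there is no Galois correspondence through which to run your index arguments, so this step must be made explicit.

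Granting that repair, your execution of the inductive step differs from the paper's in two instructive ways. The paper proves $G_{j+1}=G_j\langle\sigma_{j+1}\rangle$ by comparing the two $M$-bases $\{x_{\rho\sigma_{j+1}^i}\}$ and $\{x_\sigma \ \vert \ \sigma\in G_{j+1}\}$ of $A_{j+1}$ via Lemma \ref{lem:Petit (26)}, and for solvability it only needs $G_{j+1}/G_j$ to be cyclic --- it never needs the quotient to have order exactly $q_j$, so the ``collapse'' you worry about is harmless for the conclusion. Your alternative, computing $[G_{i+1}:G_i]=[Z_i:Z_{i+1}]=q_i$ from the inner-order hypothesis via Skolem--Noether (if $\tau_i^{\,j}$ fixed $Z_i$ pointwise for $0<j<q_i$ it would be inner, contradicting inner order $q_i$), is valid once $A_i$ is known to be central simple over $Z_i$, and it yields the stronger statement that the factors really have order $q_i$. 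Your normality argument ($\tau_i$ preserves $\mathrm{Cent}(A_i)$, so $\sigma_{i+1}$ stabilises $Z_i$ setwise and hence normalises $\mathrm{Gal}(M/Z_i)$) is in fact cleaner than the paper's coset computation with the $x_\sigma$.
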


\begin{proof}
Suppose there exists a  chain of algebras $A_i$, $i \in \{ 0, \ldots, k \}$ satisfying the above assumptions. Put  $G_k = G$. Each $A_i$ has center $Z_i = Z_{i-1} \cap \mathrm{Fix}(\tau_{i-1})$ by Corollary \ref{cor:Comm(S_f) = F generalised}, so that by induction
$$Z_i = \mathrm{Fix}(\tau_0) \cap \mathrm{Fix}(\tau_1) \cap \dots \cap \mathrm{Fix}(\tau_{i-1}) \supset F,$$
in particular $Z_i \subseteq M$.

$M/Z_i$ is a Galois extension contained in $A_i$: Let $H_i$ be the subgroup of $G$ generated by $\sigma_1, \ldots, \sigma_i$, $i \in \{ 1, \ldots, k \}$. Then
\begin{align*}
Z_i &= \mathrm{Fix}(\tau_0) \cap \mathrm{Fix}(\tau_1) \cap \dots \cap \mathrm{Fix}(\tau_{i-1}) \\ &= M \cap \mathrm{Fix}(\tau_0) \cap \mathrm{Fix}(\tau_1) \cap \dots \cap \mathrm{Fix}(\tau_{i-1}) \\ &= \mathrm{Fix}(\sigma_1) \cap \dots \cap \mathrm{Fix}(\sigma_{i}) = \mathrm{Fix}(H_i),
\end{align*}
so $M/Z_i$ is a Galois field extension by Galois theory.
Put $G_i = \mathrm{Gal}(M/Z_i)$, then each $A_i$ is a $G_i$-crossed product algebra. In particular, $G_i$ is a subgroup of $G_{i+1}$.

We use induction to prove that  each $G_i$, thus $G$, is a solvable group. For $i = 1$,
$$A_1 \cong M[t_0;\sigma_1]/M[t_0;\sigma_1](t_0^{q_0}-c_0)$$
is a cyclic algebra of degree $q_0$ over $\mathrm{Fix}(\sigma_1)$. $G_1 = \langle \sigma_1 \rangle$ is a cyclic group of prime order $q_0$ and therefore solvable.

We assume as induction hypothesis that if there exists a  chain
$$M = A_0 \subset \ldots \subset A_j$$
of  algebras such that \eqref{eqn:Petit (29) chain of subalgebras Galois 2} holds for all $i \in \{ 0, \ldots, j-1 \}$, $j \geq 1$, then $G_j$ is solvable. For the induction step we take a chain of  algebras $M = A_0 \subset \ldots \subset A_j\subset A_{j+1},$
$$A_{i+1} \cong A_i[t_i;\tau_i]/A_i[t_i;\tau_i](t_i^{q_i} -c_i)$$
where $\tau_i$ is an automorphism of $A_i$ of inner order $q_i$ which induces an automorphism  $\sigma_{i+1} \in G$, $c_i \in \mathrm{Fix}(\tau_i)$ is invertible and $q_i$ is prime, for all $i \in \{ 0, \ldots, j \}$. By the induction hypothesis, $G_j$ is a solvable group.

We show that $G_{j+1}$ is solvable: $t_j$ is an invertible element of
$$ A_{j+1}\cong A_j[t_j;\tau_j]/A_j[t_j;\tau_j](t_j^{q_j} - c_j),$$
with inverse $c_j^{-1}t_j^{q_j-1}$. $A_{j}$ is a $G_{j}$-crossed product algebra over $Z_{j}$ with maximal subfield $M$. The $F$-automorphism $\tau_j$ on $A_j$ satisfies $t_j l = \tau_j(l) t_j$ for all $l \in A_j$ which implies the inner automorphism $$I_{t_j}: A \rightarrow A, \ d \mapsto t_j d t_j^{-1}$$
restricts to $\tau_j$ on $A_j$ and so also restricts to $\sigma_{j+1}$ on $M$.

For any $\sigma \in G$ there exists an invertible $x_{\sigma} \in A $ such that the inner automorphism
$$I_{x_{\sigma}}: A \rightarrow A, \ y \mapsto x_{\sigma} y x_{\sigma}^{-1}$$
restricted to $M$ is $\sigma$ by Lemma \ref{lem:Petit (26)}. Hence we have $x_{\sigma_{j+1}} = t_j$ with $x_{\sigma_{j+1}}$ as defined in Lemma \ref{lem:Petit (26)}. We know that $\{ 1, t_j, \ldots, {t_j}^{q_j-1} \}$ is a basis for $A_{j+1}$ as a left $A_j$-module. By \eqref{eqn:Crossed product criteria 3}  we have  $x_{\sigma_{j+1}^2} = a_1{t_j}^2$, $x_{\sigma_{j+1}^3} = a_2 {t_j}^3, \dots$ for suitable $a_i \in M^\times$, so that w.l.o.g. $\{ 1, x_{\sigma_{j+1}}, \ldots, x_{\sigma_{j+1}^{q_j-1}} \}$ is a basis for $A_{j+1}$ as a left $A_j$-module.

Since $A_j$ is a $G_j$-crossed product algebra, it has $\{ x_{\rho} \ \vert \ \rho \in G_j \}$ as $M$-basis, and hence $A_{j+1}$ has  basis $$\{ x_{\rho} x_{\sigma_{j+1}^i} \ \vert \ \rho \in G_j, \ 0 \leq i \leq q_j - 1 \}$$ as a vector space over $M$.

Additionally, $x_{\rho} x_{\sigma_{j+1}^i} \in M^{\times} x_{\rho \sigma_{j+1}^i}$ by Lemma \ref{lem:Petit (26)} (iii) and thus $A_{j+1}$ has the $M$-basis
$$\{ x_{\rho \sigma_{j+1}^i} \ \vert \ \rho \in G_j, \ 0 \leq i \leq q_j-1 \}.$$
Now $A_{j+1}$ is a $G_{j+1}$-crossed product algebra and thus also has the $M$-basis $\{ x_{\sigma} \ \vert \ \sigma \in G_{j+1} \}$. We use these two basis to show that $G_{j+1} = G_{j} \langle \sigma_{j+1} \rangle$: Write
$$x_{\rho \sigma_{j+1}^i} = \sum_{\sigma \in G_{j+1}} m_{\sigma} x_{\sigma}$$
for some $m_{\sigma} \in M$, not all zero. Then
$$x_{\rho \sigma_{j+1}^i} m = \sum_{\sigma \in G_{j+1}} m_{\sigma} x_{\sigma} m = \sum_{\sigma \in G_{j+1}} m_{\sigma} \sigma(m) x_{\sigma},$$
and
$$x_{\rho \sigma_{j+1}^i} m = \rho \sigma_{j+1}^i(m) x_{\rho \sigma_{j+1}^i} = \rho \sigma_{j+1}^i(m) \sum_{\sigma \in G_{j+1}} m_{\sigma} x_{\sigma},$$
for all $m \in M$. Let $\sigma \in G_{j+1}$ be such that $m_{\sigma} \neq 0$, then in particular $$m_{\sigma} \sigma(m) x_{\sigma} = \rho \sigma_{j+1}^i(m)  m_{\sigma} x_{\sigma},$$ for all $m \in M$, that is $\sigma =  \rho \sigma_{j+1}^i$. This means that $\{  \rho \sigma_{j+1}^i \ | \ \rho \in G_j, \ 0 \leq i \leq q_j-1 \} \subseteq G_{j+1}$. Both sets have the same size so must be equal and we conclude $G_{j+1} = G_j \langle \sigma_{j+1} \rangle$.

Finally we prove $G_j$ is a normal subgroup of $G_{j+1}$:  the inner automorphism $I_{x_{\sigma_{j+1}}}$ restricts to the $F$-automorphism $\tau_j$ of $A_j$. In particular, this implies $x_{\sigma_{j+1}} x_{\rho} x_{\sigma_{j+1}}^{-1} \in A_j,$ for all $\rho \in G_j$. Furthermore, $$x_{\sigma_{j+1} \rho \sigma_{j+1}^{-1}} \in M^{\times} x_{\sigma_{j+1}} x_{\rho} x_{\sigma_{j+1}^{-1}} = M^{\times} x_{\sigma_{j+1}} x_{\rho} x_{\sigma_{j+1}}^{-1} \subset A_j,$$ for all $\rho \in G_j$ by Lemma \ref{lem:Petit (26)}.

Hence $\sigma_{j+1} \rho \sigma_{j+1}^{-1} \in G_j$ because $A_j$ is a $G_j$-crossed product algebra. Similarly, we see $\sigma_{j+1}^r \rho \sigma_{j+1}^{-r} \in G_j$ for all $r \in \mathbb{N}$. Let $g \in G_{j+1}$ be arbitrary and write $g = h \sigma_{j+1}^r$ for some $h \in G_j$, $r \in \{ 0, \ldots, q_j-1 \}$ which we can do because $G_{j+1} = G_j \langle \sigma_{j+1} \rangle$. Then
\begin{align*}
g \rho g^{-1} &= ( h \sigma_{j+1}^r) \rho ( h \sigma_{j+1}^r)^{-1} = h (\sigma_{j+1}^r \rho \sigma_{j+1}^{-r})h^{-1} \in G_j,
\end{align*}
for all $\rho \in G_j$ so $G_j$ is indeed normal.

It is well-known that a group $G$ is solvable if and only if given a normal subgroup $H$ of $G$, both $H$ and $G/H$ are solvable. It is clear  now that $G_{j+1}/G_j$ is cyclic and hence solvable, which implies $G_{j+1}$ is solvable as required.
\end{proof}

\section{Solvable Crossed Product Algebras} \label{section:Solvable Crossed Product Algebras}

We now focus on the case when $M/F$ is a Galois field extension:

Suppose $M/F$ is a finite Galois field extension of degree $n$ and $A$ is a central simple algebra of degree $n$ over $F$ with maximal subfield $M$. i.e. now $A$ is a $G$-crossed product algebra where $G = \mathrm{Gal}(M/F)$. We obtain the following as a special case of Theorem \ref{thm:Petit (29)} and Corollary \ref{cor:Observations on Theorem Petit(29)}:

\begin{theorem} \label{thm:Petit (29) solvable crossed product algebra version}
Let $A$ be a $G$-crossed product algebra of degree $n$ over $F$ with maximal subfield $M$ such that $M/F$is a Galois field extension of degree $n$ with non-trivial solvable $G = \mathrm{Gal}(M/F)$. Then there exists a chain of  subalgebras
\begin{equation*} 
M = A_0 \subset A_1 \subset \ldots \subset A_k = M(G) = A
\end{equation*}
of $A$ which are generalised cyclic algebras
\begin{equation*}
A_{i+1} \cong A_i[t_i;\tau_i]/A_i[t_i;\tau_i](t_i^{q_i} - c_i),
\end{equation*}
of degree $\prod_{l=0}^{i-1} q_l$ over $Z_i = \mathrm{Fix}(G_i)$ for all $i \in \{ 0, \ldots, k-1 \}$, such that
\begin{itemize}
\item[(i)] $q_i$ a prime,
\item[(ii)] $\tau_i$ an $F$-automorphism of $A_i$ of inner order $q_i$ which restricts to an automorphism $\sigma_{i+1} \in G$,
\item[(iii)] $c_i \in \mathrm{Fix}(\tau_i)^{\times}$, and
\item[(iv)] $Z_i/ Z_{i-1}$ has prime degree $q_{i-1}$ and $A_i$ is the centralizer of $\mathrm{Fix}(G_i)$ in $A$.
\end{itemize}
\end{theorem}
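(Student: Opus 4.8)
The plan is to obtain this statement as a direct specialisation of Theorem \ref{thm:Petit (29)} together with Corollary \ref{cor:Observations on Theorem Petit(29)}, using the hypothesis that $M/F$ is Galois to force the final inclusion in the chain to become an equality.

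First I would observe that because $M/F$ is a finite Galois extension we have $\mathrm{Aut}_F(M) = \mathrm{Gal}(M/F) = G$, so the standing hypotheses of Theorem \ref{thm:Petit (29)} are satisfied: $M/F$ is a field extension of degree $n$ with non-trivial solvable automorphism group $G$, and $A$ is a central simple algebra of degree $n$ over $F$ containing $M$ as a maximal subfield. Applying Theorem \ref{thm:Petit (29)} yields a chain
$$M = A_0 \subset A_1 \subset \ldots \subset A_k = M(G) \subseteq A$$
of subalgebras of $A$, together with the isomorphisms \eqref{eqn:Petit (29) chain of subalgebras 2} and properties (i)--(iii) of that theorem, namely the primality of $q_i$, the fact that $\tau_i$ is an $F$-automorphism of $A_i$ of inner order $q_i$ restricting to $\sigma_{i+1} \in G$, and $c_i \in \mathrm{Fix}(\tau_i)^\times$. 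These are exactly conditions (i)--(iii) of the present statement.

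Next I would upgrade the inclusion $M(G) \subseteq A$ to an equality. Since $M/F$ is Galois, Theorem \ref{thm:Petit (27)}(ii) gives $A = M(G) = A_k$, so the chain terminates at $A$ itself; this is precisely the content of the remark following Theorem \ref{thm:Petit (29)}. Finally I would read off the remaining assertions from Corollary \ref{cor:Observations on Theorem Petit(29)}: part (vi) shows each $A_i$ is a generalised cyclic algebra over $Z_i$; part (ii) identifies its degree as $|G_i| = \prod_{l=0}^{i-1} q_l$; part (iv) gives that $Z_{i-1}/Z_i$ has prime degree $q_{i-1}$; and part (i) states $A_i = \mathrm{Cent}_A(\mathrm{Fix}(G_i))$, which together give condition (iv) here, the tower being read off from $M = Z_0 \supset \ldots \supset Z_k$ in Corollary \ref{cor:Observations on Theorem Petit(29)}(iii).

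Since the statement is a specialisation of already established results, there is no genuine obstacle. The only point requiring care is the bookkeeping that matches the data $G_i, Z_i, q_i, \tau_i, c_i$ produced by a normal series $\{1\} = G_0 < \ldots < G_k = G$ of the solvable group $G$ to the quantities appearing in the claimed chain, and confirming that it is exactly the Galois hypothesis that collapses $M(G) \subseteq A$ to $M(G) = A$; everything else is a citation.
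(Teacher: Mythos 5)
Your proposal is correct and matches the paper exactly: the paper gives no separate proof of this theorem, introducing it only with the sentence that it is obtained as a special case of Theorem \ref{thm:Petit (29)} and Corollary \ref{cor:Observations on Theorem Petit(29)}, with the Galois hypothesis collapsing $M(G)\subseteq A$ to $M(G)=A$ via Theorem \ref{thm:Petit (27)}(ii), precisely as you argue. Your bookkeeping of which parts of the corollary supply which clauses of the statement is accurate, so nothing further is needed.
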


\begin{theorem} \label{thm:crossed product division condition}
In the set-up of Theorem \ref{thm:Petit (29) solvable crossed product algebra version}, $A$ is a division algebra if and only if
\begin{equation} \label{eqn:crossed product division condition}
b \tau_i(b) \cdots \tau_i^{q_i -1}(b) \neq c_i,
\end{equation}
for all $b \in A_i$, for all $i \in \{ 0, \ldots, k-1 \}$.
\end{theorem}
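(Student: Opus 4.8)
### Proof proposal

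The plan is to prove the chain of generalised cyclic algebras in Theorem \ref{thm:Petit (29) solvable crossed product algebra version} is a division algebra if and only if each successive extension has no zero divisors, and to reduce this zero-divisor condition at each stage to the non-vanishing of the norm-like product \eqref{eqn:crossed product division condition}. The key structural fact to exploit is that each $A_{i+1}$ is built from $A_i$ by a single skew-polynomial quotient $A_i[t_i;\tau_i]/A_i[t_i;\tau_i](t_i^{q_i}-c_i)$, and that these quotients are associative since $t_i^{q_i}-c_i$ is right invariant (Remark following Theorem \ref{thm:Petit (29)}). The strategy is to prove the statement by induction on the length $k$ of the chain, with the inductive step governed by when a generalised cyclic algebra over a division algebra is again a division algebra.

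First I would set up the induction. For $k=0$ the algebra $A_0 = M$ is a field, hence a division algebra, and there is no condition to check, so the base case is vacuous. For the inductive step, assume $A_i$ is a division algebra; I want to show $A_{i+1} = A_i[t_i;\tau_i]/A_i[t_i;\tau_i](t_i^{q_i}-c_i)$ is a division algebra if and only if \eqref{eqn:crossed product division condition} holds, i.e. $b\tau_i(b)\cdots\tau_i^{q_i-1}(b)\neq c_i$ for all $b\in A_i$. Since $A_i$ is a division algebra, $A_i[t_i;\tau_i]$ is an honest skew polynomial ring over a division ring, and $A_{i+1}$ is exactly the Petit algebra $S_{f_i}$ with $f_i(t_i)=t_i^{q_i}-c_i$. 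Because $A_{i+1}$ is associative (the polynomial is right invariant), I would invoke Proposition \ref{prop:S_f associative division iff irreducible}: $A_{i+1}$ is a division algebra if and only if $f_i(t_i)=t_i^{q_i}-c_i$ is irreducible in $A_i[t_i;\tau_i]$. Thus the problem reduces at each level to deciding irreducibility of $t_i^{q_i}-c_i$ over the division ring $A_i$.

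Next I would translate irreducibility of $t_i^{q_i}-c_i$ into the product condition. Since $\tau_i$ has inner order $q_i$ and $q_i$ is prime, and $c_i\in\mathrm{Fix}(\tau_i)^\times$, I expect $t_i^{q_i}-c_i$ to be right invariant and bounded, so that its reducibility is controlled by its right linear factors. By Proposition \ref{prop:rightdivdegree1}, $(t_i-b)\vert_r (t_i^{q_i}-c_i)$ is equivalent to $N_{q_i}(b)=\tau_i^{q_i-1}(b)\cdots\tau_i(b)b = c_i$. The remaining gap is to argue that for these particular polynomials irreducibility is equivalent to the \emph{absence} of right linear factors, not merely a consequence of it; here I would appeal to an analogue of Theorem \ref{thm:Petit(19)} or Theorem \ref{thm:bourbaki}, using that $q_i$ is prime so any proper monic factor of $t_i^{q_i}-c_i$ of degree dividing $q_i$ must be linear. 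Combining this with Proposition \ref{prop:S_f associative division iff irreducible} gives: $A_{i+1}$ is a division algebra $\iff$ $t_i^{q_i}-c_i$ has no right linear factor $\iff$ $b\tau_i(b)\cdots\tau_i^{q_i-1}(b)\neq c_i$ for all $b\in A_i$, which is precisely \eqref{eqn:crossed product division condition} at level $i$.

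Finally I would assemble the induction: $A=A_k$ is a division algebra if and only if every intermediate $A_{i+1}$ is a division algebra (a subalgebra of a division algebra is a division algebra, and conversely if some $A_{i+1}$ fails to be a division algebra it contains zero divisors which persist in $A$), and each of these is equivalent to condition \eqref{eqn:crossed product division condition} holding at level $i$, yielding the conjunction over all $i\in\{0,\ldots,k-1\}$. The main obstacle I anticipate is the step linking irreducibility of $t_i^{q_i}-c_i$ to the vanishing of right linear factors over a \emph{noncommutative} division ring $A_i$: the clean results in Chapter \ref{chapter:Irreducibility Criteria for Polynomials in a Skew Polynomial Ring} such as Theorem \ref{thm:Petit(19)} are stated assuming $F$ contains a primitive $q_i$-th root of unity, which need not hold here, so I would instead lean on the degree-divisibility structure forced by $\tau_i$ having inner order exactly the prime $q_i$ (so that $t_i^{q_i}-c_i$ is invariant and its factorisation is constrained by Theorem \ref{thm:Ore factorisation is similar in pairs}) to rule out irreducible factors of intermediate degree. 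Verifying that $b\mapsto N_{q_i}(b)$ is genuinely the right obstruction, and that no subtler non-linear factorisation can occur, is where the careful work lies.
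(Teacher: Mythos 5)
Your overall strategy matches the paper's proof almost exactly: induct along the chain, use that each $A_{i+1}$ is the associative Petit algebra of the right invariant polynomial $t_i^{q_i}-c_i$ over the division ring $A_i$ (so Proposition \ref{prop:S_f associative division iff irreducible}, or equivalently Theorem \ref{thm:S_f_division_iff_irreducible}, reduces the question to irreducibility of $t_i^{q_i}-c_i$), and handle the forward direction by noting that subalgebras of a finite-dimensional division algebra are division algebras. That skeleton is correct and is what the paper does.

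The genuine gap is the step you yourself flagged: passing from ``$t_i^{q_i}-c_i$ is irreducible in $A_i[t_i;\tau_i]$'' to the norm condition \eqref{eqn:crossed product division condition}. None of the substitutes you propose closes it. Theorems \ref{thm:Petit(19)} and \ref{thm:bourbaki} need $F$ to contain a primitive $q_i$-th root of unity, which is not available here. Theorem \ref{thm:Ore factorisation is similar in pairs} only asserts that the irreducible factors occurring in two \emph{different} complete factorisations are pairwise similar; it says nothing about the factors within a single factorisation having equal degree, so ``inner order $q_i$ prime plus Ore'' does not by itself exclude, say, an irreducible factor of intermediate degree when $q_i>3$. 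The correct tool is the theory of two-sided (invariant, bounded) polynomials: since $\tau_i$ has inner order exactly the prime $q_i$, $\tau_i^{q_i}$ is conjugation by $c_i$, and $c_i\in\mathrm{Fix}(\tau_i)^{\times}$ (all of which the proof of Theorem \ref{thm:Petit (29)} establishes), Jacobson's Theorem 1.3.16 applies and gives directly that $t_i^{q_i}-c_i$ is irreducible if and only if \eqref{eqn:crossed product division condition} holds for all $b\in A_i$ --- with no root of unity hypothesis. This citation is precisely how the paper closes the step in both directions of the equivalence; without it (or a proof of its content), your argument does not go through.
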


\begin{proof}
Suppose $A_k = A$ is a division algebra, then every subalgebra of $A$ must also be a division algebra as $A$ is finite-dimensional. Therefore $A_i$ are division algebras for all $i \in \{ 0, \ldots, k \}$. In particular this means $t_j^{q_j} - c_j \in A_j[t_j;\tau_j]$ are irreducible by Theorem \ref{thm:S_f_division_iff_irreducible}, thus \eqref{eqn:crossed product division condition} holds for all $b \in A_i$ by \cite[Theorem 1.3.16]{jacobson1996finite}.

Conversely suppose \eqref{eqn:crossed product division condition} holds for all $b \in A_i$, for all $i \in \{ 0, \ldots, k-1 \}$. We prove by induction that $A_j$ is a division algebra for all $j \in \{ 0, \ldots, k\}$, then in particular $A = A_k$ is a division algebra:

Clearly $A_0 = M$ is a field so in particular is a division algebra. Assume as induction hypothesis $A_j$ is a division algebra for some $j \in \{ 0, \ldots, k-1 \}$. By the proof of Theorem \ref{thm:Petit (29)}, $\tau_j^{q_j}$ is the inner automorphism $z \mapsto c_j z c_j^{-1}$ on $A_j$ and $\tau_j$ has inner order $q_j$. 
Then $A_{j+1} \cong A_j[t_j;\tau_j]/A_j[t_j;\tau_j](t_j^{q_j} - c_j)$ is a division algebra if and only if $t_j^{q_j} - c_j \in A_j[t_j;\tau_j]$ is irreducible, if and only if
$$b \tau_j(b) \cdots \tau_j^{q_j -1}(b) \neq c_j,$$
for all $b \in A_j$ by \cite[Theorem 1.3.16]{jacobson1996finite}. Thus $A_i$ is a division algebra for all $i \in \{ 0, \ldots, k \}$ by induction.
\end{proof}

The following result follows immediately from Theorem \ref{thm:Petit (29) Generalised}:

\begin{corollary} \label{cor:Characterisation of solvable crossed product algebras}
Let $A$ be a $G$-crossed product algebra of degree $n$ over $F$ with maximal subfield $M$ such that $M/F$is a Galois field extension of degree $n$ with non-trivial $G = \mathrm{Gal}(M/F)$. Then $G$ is solvable if there exists a chain of  subalgebras
\begin{equation*} 
M = A_0 \subset A_1 \subset \ldots \subset A_k  = A
\end{equation*}
of $A$ which all have maximal subfield $M$, and are generalised cyclic algebras
\begin{equation*}
A_{i+1} \cong A_i[t_i;\tau_i]/A_i[t_i;\tau_i](t_i^{q_i} - c_i),
\end{equation*}
over their centers for all $i \in \{ 0, \ldots, k-1 \}$, where $q_i$ a prime, $\tau_i$ an $F$-automorphism of $A_i$ of inner order $q_i$ which restricts to an automorphism $\sigma_{i+1} \in G$, and $c_i \in \mathrm{Fix}(\tau_i)^{\times}$.
\end{corollary}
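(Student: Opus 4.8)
The plan is to obtain this corollary as a direct specialisation of Theorem \ref{thm:Petit (29) Generalised}, observing that in the present situation all the hypotheses of that theorem are met and the conclusion is exactly what we want. The key simplification is that $M/F$ is now assumed to be a Galois field extension with $G = \mathrm{Gal}(M/F)$, rather than merely $G = \mathrm{Aut}_F(M)$ with $[M:F] = n$; since $A$ is a $G$-crossed product algebra of degree $n = |G|$, the algebra $A_k = A$ is precisely the crossed product algebra $M(G)$ over $\mathrm{Fix}(G) = F$. So the chain terminating in $A$ is already a chain terminating in a $G$-crossed product algebra over $\mathrm{Fix}(G)$, and the inclusion $A_k \subseteq A$ of Theorem \ref{thm:Petit (29) Generalised} becomes an equality.

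First I would verify that each hypothesis of Theorem \ref{thm:Petit (29) Generalised} holds. We are given a chain of subalgebras $M = A_0 \subset A_1 \subset \ldots \subset A_k = A$, each with maximal subfield $M$, where each $A_{i+1}$ is a generalised cyclic algebra $A_i[t_i;\tau_i]/A_i[t_i;\tau_i](t_i^{q_i} - c_i)$ over its center, with $q_i$ prime, $\tau_i$ an $F$-automorphism of $A_i$ of inner order $q_i$ restricting to $\sigma_{i+1} \in G$, and $c_i \in \mathrm{Fix}(\tau_i)^{\times}$. These are verbatim the conditions (i), (ii), (iii) of Theorem \ref{thm:Petit (29) Generalised}, together with the form \eqref{eqn:Petit (29) chain of subalgebras Galois 2} of the successive quotients. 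The only point needing a remark is that Theorem \ref{thm:Petit (29) Generalised} requires $A_k$ to be a $G$-crossed product algebra over $\mathrm{Fix}(G)$; here $A_k = A$ is a $G$-crossed product algebra over $F$ by assumption, and since $M/F$ is Galois with Galois group $G$ we have $\mathrm{Fix}(G) = F$, so this requirement is satisfied.

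Having checked the hypotheses, I would apply Theorem \ref{thm:Petit (29) Generalised} to conclude directly that $G = \mathrm{Aut}_F(M)$ is solvable. Because $M/F$ is Galois, $\mathrm{Aut}_F(M) = \mathrm{Gal}(M/F) = G$, so the conclusion of Theorem \ref{thm:Petit (29) Generalised} is exactly the solvability of $\mathrm{Gal}(M/F)$, which is what the corollary asserts. I do not anticipate any genuine obstacle here, since the corollary is a straightforward restatement of the more general theorem under the additional Galois hypothesis; the only care required is the bookkeeping identification $\mathrm{Fix}(G) = F$ and the recognition that ``generalised cyclic algebra over its center'' in the corollary matches the explicit quotient presentation \eqref{eqn:Petit (29) chain of subalgebras Galois 2} demanded by the theorem. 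Thus the proof reduces to the single line: the assumed chain satisfies the hypotheses of Theorem \ref{thm:Petit (29) Generalised}, whence $G$ is solvable.
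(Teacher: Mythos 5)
Your proposal is correct and matches the paper's own proof, which simply states that the corollary follows immediately from Theorem \ref{thm:Petit (29) Generalised}; your additional bookkeeping ($\mathrm{Fix}(G)=F$ because $M/F$ is Galois, so $A_k = A = M(G)$ and the hypotheses of the theorem are met verbatim) is exactly the right justification for that one-line deduction.
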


\begin{remark}
Let $M/F$ be a finite Galois field extension with non-trivial solvable Galois group $G$ and $A$ be a solvable crossed product algebra over $F$ with maximal subfield $M$. Careful reading of \cite[p.~182-187]{albert1939structure} shows that Albert constructs the same chain of algebras
$$A_{i+1} = A_i[t_i;\tau_i]/A_i[t_i;\tau_i](t_i^{q_i}-c_i)$$
inside a solvable crossed product $A$ as we do in Theorem \ref{thm:Petit (29) solvable crossed product algebra version}. However they are not explicitly identified as quotient algebras of skew polynomial rings. We also obtain the converse of Albert's statement in Corollary \ref{cor:Characterisation of solvable crossed product algebras}. Furthermore, neither Theorems \ref{thm:Petit (29)}, \ref{thm:Petit (29) Generalised}, nor Corollaries \ref{cor:Observations on Theorem Petit(29)} and \ref{cor:non-central elements} require $M/F$ to be a Galois field extension, unlike Albert's result which requires $M/F$ to be Galois.
\end{remark}

\section{Some Applications to \texorpdfstring{$G$}{G}-Admissible Groups} \label{section:G-Admissible Groups}

The following definition is due to Schacher \cite{schacher1968subfields}:
\begin{definition}
A finite group $G$ is called \textbf{admissible} over a field $F$, if there exists a $G$-crossed product division algebra over $F$.
\end{definition}

Suppose $G$ is a finite solvable group, then we have a chain of normal subgroups $\{ 1 \} = G_0 \leq \ldots \leq G_k = G$, such that $G_j \triangleleft G_{j+1}$ and $G_{j+1}/G_j$ is cyclic of prime order $q_{j}$ for all $j \in \{ 0, \ldots, k-1 \}$. If $G$ is admissible over $F$, then Theorem \ref{thm:Petit (29)} implies that the subgroups $G_i$ of $G$ in the chain, are admissible over suitable intermediate fields of $M/F$:

\begin{theorem} \label{thm:subgroups of G are admissible}
Suppose $G$ is a finite solvable group which is admissible over a field $F$. Then each $G_i$ is admissible over the intermediate field $Z_i = \mathrm{Fix}(G_i)$ of $M/F$. Furthermore
$[Z_i:F] = \prod_{j=i}^{k-1} q_j$
for all $i \in \{ 1, \ldots, k \}$. In particular $G_{k-1}$ is admissible over $Z_{k-1} = \mathrm{Fix}(G_{k-1})$ which has prime degree $q_{k-1}$ over $F$. 
\end{theorem}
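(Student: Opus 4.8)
The plan is to assemble Theorem \ref{thm:Petit (29)} together with the elementary fact that a subalgebra of a finite-dimensional division algebra is itself a division algebra. First I would unpack the hypothesis: since $G$ is admissible over $F$, there exists a $G$-crossed product division algebra $A$ over $F$, which by definition contains a maximal subfield $M$ with $M/F$ Galois of degree $n = |G|$ and $\mathrm{Gal}(M/F) = G$. I would then fix the chain $\{1\} = G_0 \triangleleft G_1 \triangleleft \cdots \triangleleft G_k = G$ with each quotient $G_{j+1}/G_j$ cyclic of prime order $q_j$, which exists precisely because $G$ is solvable.

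Next I would apply Theorem \ref{thm:Petit (29)} (equivalently its crossed-product specialisation Theorem \ref{thm:Petit (29) solvable crossed product algebra version}, valid here since $M/F$ is Galois) to produce the chain $M = A_0 \subset A_1 \subset \cdots \subset A_k = M(G) = A$, in which each $A_i = \mathrm{Cent}_A(\mathrm{Fix}(G_i))$ is, by Corollary \ref{cor:Observations on Theorem Petit(29)}, a $G_i$-crossed product algebra over $Z_i = \mathrm{Fix}(G_i)$ with maximal subfield $M$ and with $M/Z_i$ Galois. Since $Z_i = \mathrm{Fix}(G_i)$, this forces $\mathrm{Gal}(M/Z_i) = G_i$. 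The key additional input is the remark following Theorem \ref{thm:Petit (29)}: because $A$ is a division algebra and each $A_i$ is a subalgebra of the finite-dimensional algebra $A$, each $A_i$ is itself a division algebra. Combining these, $A_i$ is a $G_i$-crossed product division algebra over $Z_i$, which is exactly the assertion that $G_i$ is admissible over $Z_i$.

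Finally, for the degree formula I would argue purely by Galois theory. Since $M/F$ is Galois of degree $|G| = \prod_{j=0}^{k-1} q_j$ and $M/Z_i$ is Galois with group $G_i$ of order $\prod_{j=0}^{i-1} q_j$, the tower law gives
$$[Z_i : F] = \frac{[M:F]}{[M:Z_i]} = \frac{|G|}{|G_i|} = \prod_{j=i}^{k-1} q_j.$$
Specialising to $i = k-1$ yields $[Z_{k-1}:F] = q_{k-1}$, a prime, and the admissibility of $G_{k-1}$ over $Z_{k-1}$ is simply the $i = k-1$ case already established.

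I do not expect a genuine obstacle here, as the substance is entirely carried by Theorem \ref{thm:Petit (29)} and the division-algebra remark; the theorem is essentially a repackaging of those results in the language of admissibility. The only point demanding a moment's care is verifying that the phrase ``$A_i$ is a $G_i$-crossed product division algebra over $Z_i$'' matches the definition of admissibility verbatim, i.e.\ that $M$ really is a maximal subfield of $A_i$ that is Galois over $Z_i$ with group exactly $G_i$ — all of which is recorded in Corollary \ref{cor:Observations on Theorem Petit(29)}(v) and the identification $Z_i = \mathrm{Fix}(G_i)$.
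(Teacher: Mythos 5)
Your proposal is correct and follows essentially the same route as the paper: both apply Theorem \ref{thm:Petit (29) solvable crossed product algebra version} to the $G$-crossed product division algebra furnished by admissibility, observe that the resulting subalgebras $A_i$ are $G_i$-crossed product \emph{division} algebras over $Z_i = \mathrm{Fix}(G_i)$ with maximal subfield $M$, and read off the degree formula from the Galois correspondence. The paper's proof is merely terser; your explicit appeal to the remark that subalgebras of a finite-dimensional division algebra are division algebras, and your tower-law computation of $[Z_i:F]$, fill in exactly the steps the paper leaves implicit.
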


\begin{proof}
As $G$ is $F$-admissible there exists a $G$-crossed product division algebra $D$ over $F$. By Theorem \ref{thm:Petit (29) solvable crossed product algebra version}, there also exists a chain of $G_i$-crossed product division algebras $A_i$ over $Z_i$ with maximal subfield $M$, and $M/Z_i$ is a Galois field extension with $G_i = \mathrm{Gal}(M/Z_i)$. This means $G_i$ is $Z_i$-admissible.
\end{proof}

\begin{example}
Let $G = \mathbf{S_4}$, then $G$ is $\mathbb{Q}$-admissible \cite[Theorem 7.1]{schacher1968subfields}, so there exists a finite-dimensional associative central division algebra $D$ over $\mathbb{Q}$, with maximal subfield $M$ such that $M/\mathbb{Q}$ is a finite Galois field extension and $\mathrm{Gal}(M/\mathbb{Q}) = G$. Furthermore, $G$ is a finite solvable group, indeed we have the subnormal series
$$\{ \mathrm{id} \} \lhd \langle (12)(34) \rangle \lhd \mathbf{K} \lhd \mathbf{A_4} \lhd \mathbf{S_4},$$
where $\mathbf{K}$ is the Klein four-group and $\mathbf{A_4}$ is the alternating group. We have
$$\mathbf{S_4}/\mathbf{A_4} \cong \mathbb{Z}/2\mathbb{Z}, \ \mathbf{A_4}/\mathbf{K} \cong \mathbb{Z}/3\mathbb{Z},$$
$$\mathbf{K}/ \langle (12)(34) \rangle \cong \mathbb{Z}/2\mathbb{Z} \quad \text{ and} \quad \langle (12)(34) \rangle /\{ \mathrm{id} \} \cong \mathbb{Z}/2\mathbb{Z}.$$
By Theorem \ref{thm:Petit (29) solvable crossed product algebra version}, there exists a corresponding chain of division algebras
$$M = A_0 \subset A_1 \subset A_2 \subset A_3 \subset A_4 = D$$
over $\mathbb{Q}$, such that
$$A_{i+1} \cong A_i[t_i;\tau_i]/A_i[t_i;\tau_i](t_i^{q_i} - c_i),$$
for all $i \in \{ 0, 1, 2, 3 \}$, where $\tau_i$ is an automorphism of $A_i$, whose restriction to $M$ is $\sigma_{i+1} \in G$, $c_i \in \mathrm{Fix}(\tau_i)^{\times}$ and $\tau_i$ has inner order $2, 2, 3, 2$ for $i = 0, 1, 2, 3$ respectively. Moreover $q_0 = q_1 = q_3 = 2$ and $q_2 = 3$, and $A_i$ has degree $\prod_{l=0}^{i-1} q_l$ over its center $Z_i$ for all $i \in \{ 1, 2, 3, 4 \}$ by Corollary \ref{cor:Observations on Theorem Petit(29)}. In addition, by Theorem \ref{thm:subgroups of G are admissible} we conclude:
\begin{itemize}
\item[(i)] $\mathbf{A_4}$ is admissible over $Z_3$, where $Z_3 \subset M$ is a quadratic field extension of $\mathbb{Q}$.
\item[(ii)] $\mathbf{K}$ is admissible over $Z_2\subset M$, where $Z_2$ is a simple field extension of $Z_3$ of degree $q_2 = 3$. Therefore $Z_2$ is a simple field extension of $\mathbb{Q}$ of degree $q_2 q_3 = 6$ as any finite field extension of $\mathbb{Q}$ is simple.
\item[(iii)] $\langle (12)(34) \rangle$ is admissible over $Z_1 \subset M$, where $Z_1$ is a simple field extension of $Z_2$ of degree $q_1 = 2$. Therefore $Z_1$ is a simple extension of $\mathbb{Q}$ of degree $q_1 q_2 q_3 = 12$.
\end{itemize}
\end{example}

Schacher proved that for every finite group $G$, there exists an algebraic number field $F$ such that $G$ is admissible over $F$ \cite[Theorem 9.1]{schacher1968subfields}. Combining this with Theorem \ref{thm:Petit (29) solvable crossed product algebra version} we obtain:

\begin{corollary} 
Let $G$ be a finite solvable group. Then there exists an algebraic number field $F$ and a $G$-crossed product division algebra $D$ over $F$. Furthermore, there exists a chain of crossed product division algebras 
$$M = A_0 \subset A_1 \subset \ldots \subset A_k = D$$
over $F$, such that
$$A_{i+1} \cong A_i[t_i;\tau_i]/A_i[t_i;\tau_i](t_i^{q_i} - c_i),$$
for all $i \in \{ 0, \ldots, k-1 \}$, and satisfying
\begin{itemize}
\item[(i)] $q_i$ is the prime order of the factor group $G_{i+1}/G_i$ in the subnormal series \eqref{eqn:Subnormal series} which exists because $G$ is solvable,
\item[(ii)] $\tau_i$ is an automorphism of $A_i$ of inner order $q_i$ which restricts to $\sigma_i \in G$ on $M$,
\item[(iii)] $c_i \in \mathrm{Fix}(\tau_i)$ is invertible.
\end{itemize}
\end{corollary}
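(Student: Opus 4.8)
The plan is to combine two already-established results: Schacher's theorem that every finite group is admissible over some algebraic number field (cited as \cite[Theorem 9.1]{schacher1968subfields}) and the structural decomposition Theorem \ref{thm:Petit (29) solvable crossed product algebra version}. The statement is essentially a concatenation of these, so the proof should be short and mostly a matter of verifying that the hypotheses of the structural theorem are met and that the conclusion is specialised correctly.

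First I would invoke Schacher's result to produce an algebraic number field $F$ together with a $G$-crossed product division algebra $D$ over $F$. By definition of admissibility, $D$ contains a maximal subfield $M$ with $M/F$ a finite Galois extension of degree $n = |G|$ and $\mathrm{Gal}(M/F) \cong G$. This furnishes precisely the setup required by Theorem \ref{thm:Petit (29) solvable crossed product algebra version}: a central simple (indeed division) algebra over $F$ whose maximal subfield $M$ is Galois over $F$ with non-trivial solvable Galois group $G$ (solvability being part of the hypothesis on $G$, and non-triviality being harmless since if $G = \{1\}$ the chain is trivial).

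Next I would apply Theorem \ref{thm:Petit (29) solvable crossed product algebra version} directly to $D$. Because $G$ is solvable it admits a subnormal series \eqref{eqn:Subnormal series} with each $G_{i+1}/G_i$ cyclic of prime order $q_i$, and the theorem then yields the chain
\begin{equation*}
M = A_0 \subset A_1 \subset \ldots \subset A_k = M(G) = D,
\end{equation*}
with $A_{i+1} \cong A_i[t_i;\tau_i]/A_i[t_i;\tau_i](t_i^{q_i}-c_i)$ for each $i$, where $q_i$ is the prime order of $G_{i+1}/G_i$, $\tau_i$ is an $F$-automorphism of $A_i$ of inner order $q_i$ restricting to $\sigma_{i+1} \in G$, and $c_i \in \mathrm{Fix}(\tau_i)$ is invertible. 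I would then note, via the first remark following Theorem \ref{thm:Petit (29)}, that since $D$ is a division algebra each subalgebra $A_i$ is also a division algebra, being a subalgebra of the finite-dimensional division algebra $D$; together with Corollary \ref{cor:Observations on Theorem Petit(29)}(v) (which gives that $M/Z_i$ is Galois and $M$ is a maximal subfield of each $A_i$) this shows each $A_i$ is a crossed product division algebra, matching the asserted conclusion. Properties (i), (ii), (iii) of the statement are then exactly the correspondingly labelled properties delivered by the theorem.

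I do not expect a substantive obstacle here, since all the hard work resides in the already-proved Theorem \ref{thm:Petit (29) solvable crossed product algebra version} and in Schacher's cited theorem. The only point requiring a little care is bookkeeping: ensuring the prime $q_i$ in the statement is indeed read off from the factor group $G_{i+1}/G_i$ of the subnormal series associated to the solvable $G$ (as opposed to some other ordering), and confirming that the indices $\sigma_i$ versus $\sigma_{i+1}$ are consistent with the conventions fixed in \eqref{eqn:Subnormal series} and Theorem \ref{thm:Petit (29)}. This amounts to aligning notation rather than proving anything new, so the proof is a straightforward assembly of the two cited results.
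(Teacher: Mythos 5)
Your proposal is correct and matches the paper's own proof, which likewise cites Schacher's admissibility theorem to obtain $F$ and $D$ and then applies Theorem \ref{thm:Petit (29) solvable crossed product algebra version} directly. The extra remarks you add about each $A_i$ being a division algebra inside the finite-dimensional division algebra $D$ are consistent with the remark following Theorem \ref{thm:Petit (29)} and do not change the argument.
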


\begin{proof}
Such a field $F$ and division algebra $D$ exist by \cite[Theorem 9.1]{schacher1968subfields}. The assertion then follows by Theorem \ref{thm:Petit (29) solvable crossed product algebra version}.
\end{proof}

In \cite[Theorem 1]{sonn1983admissibility}, Sonn proved that a finite solvable group is admissible over $\mathbb{Q}$ if and only if all its Sylow subgroups are metacyclic, i.e. if every Sylow subgroup $H$ of $G$ has a cyclic normal subgroup $N$, such that $H/N$ is also cyclic. Combining this with Theorem \ref{thm:Petit (29) solvable crossed product algebra version} yields:

\begin{corollary} \label{cor:Q-admissible solvable petit (29) construction}
Let $G$ be a finite solvable group such that all its Sylow subgroups are metacyclic. Then there exists a $G$-crossed product division algebra $D$ over $\mathbb{Q}$, and a chain of crossed product division algebras 
$$M = A_0 \subset A_1 \subset \ldots \subset A_k = D$$
over $\mathbb{Q}$, such that
$$A_{i+1} \cong A_i[t_i;\tau_i]/A_i[t_i;\tau_i](t_i^{q_i} - c_i),$$
for all $i \in \{ 0, \ldots, k-1 \}$, such that
\begin{itemize}
\item[(i)] $q_i$ is the prime order of the factor group $G_{i+1}/G_i$ in the subnormal series \eqref{eqn:Subnormal series} which exists because $G$ is solvable,
\item[(ii)] $\tau_i$ is an automorphism of $A_i$ of inner order $q_i$ which restricts to $\sigma_i \in G$ on $M$,
\item[(iii)] $c_i \in \mathrm{Fix}(\tau_i)$ is invertible.
\end{itemize}
\end{corollary}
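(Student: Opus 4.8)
The plan is to combine the two substantial results already established: Sonn's theorem characterising which finite solvable groups are admissible over $\mathbb{Q}$, and Theorem \ref{thm:Petit (29) solvable crossed product algebra version}, which produces the desired chain of generalised cyclic division algebras inside any solvable crossed product division algebra. Since the Corollary is explicitly stated to follow from these, the proof is genuinely short and structural rather than computational; the work was all done in proving Theorem \ref{thm:Petit (29) solvable crossed product algebra version} and in citing Sonn.

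First I would invoke \cite[Theorem 1]{sonn1983admissibility}: the hypothesis that $G$ is a finite solvable group all of whose Sylow subgroups are metacyclic is precisely the necessary and sufficient condition for $G$ to be admissible over $\mathbb{Q}$. Admissibility means by definition that there exists a $G$-crossed product division algebra $D$ over $\mathbb{Q}$; unpacking the definition, $D$ contains a maximal subfield $M$ with $M/\mathbb{Q}$ Galois of degree $n = |G|$ and $\mathrm{Gal}(M/\mathbb{Q}) \cong G$. This secures the existence of $D$, which is the first clause of the statement.

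Next I would apply Theorem \ref{thm:Petit (29) solvable crossed product algebra version} to this $D$. The hypotheses match exactly: $D$ is a $G$-crossed product algebra over $\mathbb{Q}$ with maximal subfield $M$, where $M/\mathbb{Q}$ is Galois with non-trivial solvable Galois group $G$ (non-trivial since $G$ is admissible and hence $|G| = n \geq 2$; one may note the trivial-group case is vacuous or handled separately). The theorem then delivers the chain $M = A_0 \subset A_1 \subset \ldots \subset A_k = D$ with each $A_{i+1} \cong A_i[t_i;\tau_i]/A_i[t_i;\tau_i](t_i^{q_i} - c_i)$, where $q_i$ is the prime order of $G_{i+1}/G_i$ in the subnormal series \eqref{eqn:Subnormal series}, $\tau_i$ is an $\mathbb{Q}$-automorphism of $A_i$ of inner order $q_i$ restricting to $\sigma_{i+1} \in G$, and $c_i \in \mathrm{Fix}(\tau_i)^{\times}$. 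Because $D$ is a division algebra and each $A_i$ is a finite-dimensional subalgebra of $D$, each $A_i$ is itself a division algebra, so the chain consists of crossed product division algebras as claimed.

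I do not expect any serious obstacle here, since the Corollary is a direct specialisation of the general machinery already built. The only minor points to be careful about are: matching the indexing conventions between the $\sigma_i$ appearing in the statement and the $\sigma_{i+1}$ appearing in Theorem \ref{thm:Petit (29) solvable crossed product algebra version} (a harmless relabelling), and confirming that the metacyclic-Sylow hypothesis is used solely through Sonn's theorem to guarantee $\mathbb{Q}$-admissibility rather than entering the construction of the chain directly. Thus the entire proof reduces to two citations, and I would write it as: apply \cite[Theorem 1]{sonn1983admissibility} to obtain $D$, then apply Theorem \ref{thm:Petit (29) solvable crossed product algebra version} to $D$.

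\begin{proof}
Since $G$ is a finite solvable group all of whose Sylow subgroups are metacyclic, $G$ is admissible over $\mathbb{Q}$ by \cite[Theorem 1]{sonn1983admissibility}. Hence there exists a $G$-crossed product division algebra $D$ over $\mathbb{Q}$; that is, $D$ contains a maximal subfield $M$ with $M/\mathbb{Q}$ Galois and $\mathrm{Gal}(M/\mathbb{Q}) = G$. Applying Theorem \ref{thm:Petit (29) solvable crossed product algebra version} to $D$ yields a chain of subalgebras
$$M = A_0 \subset A_1 \subset \ldots \subset A_k = D,$$
with $A_{i+1} \cong A_i[t_i;\tau_i]/A_i[t_i;\tau_i](t_i^{q_i} - c_i)$ for all $i \in \{ 0, \ldots, k-1 \}$, satisfying (i), (ii) and (iii). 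As each $A_i$ is a finite-dimensional subalgebra of the division algebra $D$, each $A_i$ is itself a division algebra, which gives the assertion.
\end{proof}
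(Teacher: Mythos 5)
Your proof is correct and matches the paper's own argument exactly: cite Sonn's theorem to obtain the $G$-crossed product division algebra $D$ over $\mathbb{Q}$, then apply Theorem \ref{thm:Petit (29) solvable crossed product algebra version} to produce the chain. The extra remarks on indexing and on subalgebras of a finite-dimensional division algebra being division algebras are harmless elaborations of what the paper leaves implicit.
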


\begin{proof}
Such a division algebra $D$ exists by \cite[Theorem 1]{sonn1983admissibility}. The assertion then follows by Theorem \ref{thm:Petit (29) solvable crossed product algebra version}.
\end{proof}

\section[How to Construct Abelian Crossed Product Division Algebras]{How to Construct Crossed Product Division Algebras Containing a Given Abelian Galois Field Extension as a Maximal Subfield} \label{section:How to Construct Crossed Product Division Algebras Containing a Given Abelian Galois Field Extension as a Maximal Subfield}

Let $M/F$ be a Galois field extension of degree $n$ with abelian Galois group $G = \mathrm{Gal}(M/F)$. We now show how to canonically construct crossed product division algebras of degree $n$ over $F$ containing $M$ as a subfield. This generalises a result by Albert in which $n = 4$ and $G \cong \mathbb{Z}_2 \times \mathbb{Z}_2$ \cite[p.~186]{albert1939structure}, cf. also \cite[Theorem 2.9.55]{jacobson1996finite}: For $n = 4$ every central division algebra containing a quartic abelian extension $M$ with Galois group $\mathbb{Z}_2 \times \mathbb{Z}_2$ can be obtained this way \cite[p.~186]{albert1939structure}, that means as a generalised cyclic algebra $(D,\tau,c)$ with $D$ a quaternion algebra over its center.

Another way to construct such a crossed product algebra is via generic algebras, using a process going back to Amitsur and Saltman \cite{amitsur1978generic}, described also in \cite[\S 4.6]{jacobson1996finite}.

As $G$ is a finite abelian group, we have a chain of  subgroups
$$\{ 1 \} = G_0 \leq \ldots \leq G_k = G,$$
such that $G_{j} \triangleleft G_{j+1}$ and $G_{j+1} / G_{j}$ is cyclic of prime order $q_{j} > 1$ for all $j \in \{ 0, \ldots, k-1 \}$. We use this chain to construct the algebras we want:

$G_1 = \langle \sigma_1 \rangle$ is cyclic of prime order $q_0 > 1$ for some $\sigma_1 \in G$. Let $\tau_0 = \sigma_1$. Choose any $c_0 \in F^{\times}$ that satisfies $z \tau_0(z) \cdots \tau_0^{q_0-1}(z) \neq c_0$ for all $z \in M$ and define $f(t_0)=t_0^{q_0}-c_0\in M[t_0;\tau_0].$
Since  $\tau_0$ has order $q_0$, we have $\tau_0^{q_0}(z) c_0 = z c_0 = c_0 z$ for all $z \in M$, and so $f(t_0)$ is right invariant by Theorem \ref{thm:Properties of S_f petit}. Therefore we see that
$$A_1 = M[t_0;\tau_0] /M[t_0;\tau_0](t_0^{q_0}-c_0)$$
is an associative algebra which is cyclic of degree $q_0$ over $\mathrm{Fix}(\tau_0)$. Moreover, $f(t_0)$ is irreducible by \cite[Theorem 2.6.20(i)]{jacobson1996finite} and therefore $A_1$ is a division algebra.

Now $G_2/G_1$ is cyclic of prime order $q_1$, say $G_2/G_1 = \{\sigma_2^i G_1 \ \vert \ i \in \mathbb{Z} \}$ for some $\sigma_2 \in G_2$ where $\sigma_2^{q_1} \in G_1$. As $\sigma_2^{q_1} \in G_1$ we have $\sigma_2^{q_1} = \sigma_1^{\mu}$ for some $\mu \in \{ 0, \ldots, q_0-1 \}$. Define $c_1 = l_1 t_0^{\mu}$ for some $l_1 \in F^{\times}$ and define the map
$$\tau_1: A_1\rightarrow A_1, \ \sum_{i=0}^{q_0-1} m_i t_0^i \mapsto \sum_{i=0}^{q_0-1} \sigma_2(m_i) t_0^i,$$
which is an automorphism of $A_1$ by a straightforward calculation.

Denote the multiplication in $A_1$ by $\circ$. Then
$$\tau_1(c_1) = \sigma_2(l_1) t_0^{\mu} = l_1 t_0^{\mu} = c_1.$$
We have
\begin{align*}
\tau_1^{q_1} \Big( \sum_{i=0}^{q_0-1} m_i t_0^i \Big) \circ c_1
&= \sum_{i=0}^{q_0-1} \sigma_2^{q_1}(m_i) t_0^i \circ l_1
t_0^{\mu} = \sum_{i=0}^{q_0-1} l_1 \sigma_1^{\mu}(m_i) t_0^i
\circ t_0^{\mu}
\end{align*}
and
\begin{align*}
c_1 \circ \sum_{i=0}^{q_0-1} m_i t_0^i &= l_1 t_0^{\mu}
\circ \sum_{i=0}^{q_0-1} m_i t_0^i = \sum_{i=0}^{q_0-1} l_1
\sigma_1^{\mu}(m_i) t_0^{\mu} \circ t_0^i
\end{align*}
for all $m_i \in M$. Hence $\tau_1^{q_1}(z) \circ c_1 = c_1 \circ z$ for all $z \in A_1$ and $\tau_1(c_1) = c_1$, thus $f(t_1)=t_1^{q_1}-c_1\in A_1[t_1;\tau_1]$ is right invariant by Proposition \ref{prop:f(t) two-sided delta=0 generalised} and
$$A_2 = A_1[t_1;\tau_1]/A_1[t_1;\tau_1](t_1^{q_1}-c_1)$$
is a finite-dimensional associative algebra over $$\mathrm{Comm}(A_2) \cap A_1 = \mathrm{Fix}(\tau_1) \cap \mathrm{Cent}(A_1) = \mathrm{Fix}(\tau_1) \cap \mathrm{Fix}(\tau_0) \supset F$$ by Proposition \ref{prop:Comm(S_f) delta=0 generalised}(iii).

Again, $G_3/G_2$ is cyclic of prime order $q_2$, say $G_3/G_2 = \{\sigma_3^i G_2 \ \vert \ i \in \mathbb{Z} \}$ for some $\sigma_3 \in G$ with $\sigma_3^{q_2} \in G_2$. Write $\sigma_3^{q_2} = \sigma_2^{\lambda_1} \sigma_1^{\lambda_0}$ for some $\lambda_1 \in \{ 0, \ldots, q_1-1 \}$ and $\lambda_0 \in \{ 0, \ldots, q_0-1 \}$. The map
$$H_{\sigma_3}: A_1 \rightarrow A_1, \ \sum_{i=0}^{q_0-1} m_i t_0^i \mapsto \sum_{i=0}^{q_0-1} \sigma_3(m_i)t_0^i,$$
is an automorphism of $A_1$ by a straightforward calculation. Define
$$\tau_2: A_2 \rightarrow A_2, \ \sum_{i=0}^{q_1-1} x_i t_1^i \mapsto \sum_{i=0}^{q_1-1} H_{\sigma_3}(x_i) t_1^i \ (x_i \in A_1).$$
Then a straightforward calculation using that $H_{\sigma_3}$ commutes with $\tau_1$ and $H_{\sigma_3}(c_1) = c_1$ shows that $\tau_2$ is an automorphism of $A_2$. Define $c_2 = l_2 t_0^{\lambda_0} t_1^{\lambda_1}$ for some $l_2 \in F^{\times}$. Denote the multiplication in $A_i$ by $\circ_{A_i}$ and let $x_i = \sum_{j=0}^{q_0-1} y_{ij} t_0^j \in A_1$, $y_{ij} \in M$, $i \in \{ 0, \ldots, q_1-1 \}$. Then
$$\tau_2(c_2) = \tau_2(l_2 t_0^{\lambda_0} t_1^{\lambda_1}) = H_{\sigma_3}(l_2 t_0^{\lambda_0}) t_1^{\lambda_1} = l_2 t_0^{\lambda_0} t_1^{\lambda_1} = c_2.$$
Furthermore we have
\begin{align*}
\tau_2^{q_2} \Big( \sum_{i=0}^{q_1-1} x_i t_1^i \Big) \circ_{A_2} c_2
&= \sum_{i=0}^{q_1-1} H_{\sigma_3}^{q_2}(x_i) t_1^i \circ_{A_2} l_2
t_0^{\lambda_0} t_1^{\lambda_1} \\ &= \sum_{i=0}^{q_1-1}
\sum_{j=0}^{q_0-1} \sigma_3^{q_2}(y_{ij}) t_0^j t_1^i \circ_{A_2} l_2
t_0^{\lambda_0} t_1^{\lambda_1} \\ &= \sum_{i=0}^{q_1-1}
\sum_{j=0}^{q_0-1} \sigma_2^{\lambda_1}(\sigma_1^{\lambda_0}(y_{ij}))
t_0^j t_1^i \circ_{A_2} l_2 t_0^{\lambda_0} t_1^{\lambda_1} \\ &=
\sum_{i=0}^{q_1-1} \Big( \sum_{j=0}^{q_0-1}
\sigma_2^{\lambda_1}(\sigma_1^{\lambda_0}(y_{ij})) t_0^j \circ_{A_1}
\tau_1^i(l_2 t_0^{\lambda_0}) \Big) t_1^i \circ_{A_2} t_1^{\lambda_1}
\\ &= \sum_{i=0}^{q_1-1} \Big( \sum_{j=0}^{q_0-1} l_2
\sigma_2^{\lambda_1}(\sigma_1^{\lambda_0}(y_{ij})) t_0^j \circ_{A_1}
t_0^{\lambda_0} \Big) t_1^i \circ_{A_2} t_1^{\lambda_1},
\end{align*}
and
\begin{align*}
c_2 \circ_{A_2} \sum_{i=0}^{q_1-1} x_i t_1^i &= l_2 t_0^{\lambda_0}
t_1^{\lambda_1} \circ_{A_2} \sum_{i=0}^{q_1-1} x_i t_1^i =
\sum_{i=0}^{q_1-1} \big( l_2 t_0^{\lambda_0} \circ_{A_1}
\tau_1^{\lambda_1}(x_i) \big) t_1^{\lambda_1} \circ_{A_2} t_1^i \\ &=
\sum_{i=0}^{q_1-1} \sum_{j=0}^{q_0-1} \big( l_2 t_0^{\lambda_0}
\circ_{A_1} \sigma_2^{\lambda_1}(y_{ij}) t_0^j \big) t_1^{\lambda_1}
\circ_{A_2} t_1^i \\ &= \sum_{i=0}^{q_1-1} \Big( \sum_{j=0}^{q_0-1}
l_2 \sigma_1^{\lambda_0}(\sigma_2^{\lambda_1}(y_{ij}))
t_0^{\lambda_0} \circ_{A_1} t_0^j \Big) t_1^{\lambda_1} \circ_{A_2}
t_1^i.
\end{align*}
Hence $\tau_2^{q_2}(z) \circ_{A_2} c_2 = c_2 \circ_{A_2} z$ for all $z \in A_2$ and $\tau_2(c_2) = c_2$, therefore $f(t_2)=t_2^{q_2}-c_2\in A_2[t_2;\tau_2]$ is right invariant by Proposition \ref{prop:f(t) two-sided delta=0 generalised} and thus
$$A_3 = A_2[t_2;\tau_2]/A_2[t_2;\tau_2](t_2^{q_2}-c_2)$$
is a finite-dimensional associative  algebra over
$$\mathrm{Comm}(A_3) \cap A_2 = \mathrm{Fix}(\tau_2) \cap \mathrm{Cent}(A_2) = \mathrm{Fix}(\tau_0) \cap \mathrm{Fix}(\tau_1) \cap \mathrm{Fix}(\tau_2) \supset F$$
by Proposition \ref{prop:Comm(S_f) delta=0 generalised}(iii). Continuing in this manner we obtain a chain $M = A_0 \subset \ldots \subset A_k$ of finite-dimensional associative algebras
$$A_{i+1} = A_i[t_i;\tau_i]/A_i[t_i;\tau_i](t_i^{q_i}-c_i)$$
over
$$\mathrm{Fix}(\tau_i) \cap \mathrm{Cent}(A_i) = \mathrm{Fix}(\tau_0) \cap \mathrm{Fix}(\tau_1) \cap \dots \cap \mathrm{Fix}(\tau_i) \supset F,$$
for all $i \in \{ 0, \ldots, k-1 \}$, where $\tau_0 = \sigma_1$ and $\tau_i$ restricts to $\sigma_{i+1}$ on $M$ for all $i \in \{ 0, \ldots,k-1 \}$. Moreover,
$$[A_i:M] = [A_i: A_{i-1}] \cdots [A_1:M] =\prod_{l=0}^{i-1} q_l$$
hence
$$[A_k:F] = \big( \prod_{l=0}^{k-1} q_l\big) n = n^2,$$
and $A_k$ contains $M$ as a subfield.

Let us furthermore assume that each $c_i$ above, $i \in \{ 0, \ldots, k-1 \}$, is successively chosen such that
\begin{equation} \label{eqn:last}
z \tau_i(z) \cdots \tau_i^{q_i-1}(z)\neq c_i
\end{equation}
for all $z \in A_i$, then using that $\tau_i$ has inner order $q_i$ and $f(t_i) = t_i^{q_i} - c_i \in A_i[t_i;\tau_i]$ is an irreducible twisted polynomial, we conclude $A_{i+1}$ is a division algebra by \cite[Theorem 1.3.16]{jacobson1996finite}.

\begin{lemma} \label{lem:tau_i has inner order q_i}
For all $i \in \{ 0, \ldots, k-1 \}$, $\tau_i:A_i \rightarrow A_i$ has inner order $q_i$.
\end{lemma}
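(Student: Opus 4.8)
The plan is to verify the two requirements in the definition of inner order separately: that $\tau_i^{q_i}$ is an inner automorphism of $A_i$ (forcing the inner order to divide $q_i$), and that $\tau_i$ itself is \emph{not} inner (forcing the inner order to exceed $1$). Since $q_i$ is prime, these two facts together pin the inner order down to exactly $q_i$.

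First I would record that $\tau_i^{q_i}$ is inner. By construction $c_i$ is an invertible element of $A_i$, and, exactly as checked for $i=1,2$ in the computations immediately preceding this lemma and continued in the same manner for general $i$, one has $\tau_i^{q_i}(z)\circ c_i = c_i\circ z$ for all $z\in A_i$. Composing on the right with $c_i^{-1}$ and using associativity of $A_i$ gives $\tau_i^{q_i}(z)=c_i\,z\,c_i^{-1}$, that is $\tau_i^{q_i}=I_{c_i}$. Now let $d$ be the inner order of $\tau_i$. Writing $q_i = ad+r$ with $0\le r<d$, the map $\tau_i^{r}=\tau_i^{q_i}\circ(\tau_i^{d})^{-a}$ is a composite of inner automorphisms, hence inner; minimality of $d$ forces $r=0$, so $d\mid q_i$. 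As $q_i$ is prime, $d\in\{1,q_i\}$.

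Next I would rule out $d=1$, i.e. show $\tau_i$ is not inner. The key elementary fact is that any inner automorphism $I_u$ of a ring fixes the centre pointwise, since $u\,z\,u^{-1}=z$ whenever $z$ is central. In the construction it was shown that $A_i$ is an algebra over $\mathrm{Fix}(\tau_0)\cap\cdots\cap\mathrm{Fix}(\tau_{i-1})=\mathrm{Fix}(\sigma_1)\cap\cdots\cap\mathrm{Fix}(\sigma_i)=\mathrm{Fix}(G_i)=Z_i$ (using Proposition \ref{prop:Comm(S_f) delta=0 generalised}(iii)), so in particular $Z_i\subseteq\mathrm{Cent}(A_i)$, where $G_i=\langle\sigma_1,\dots,\sigma_i\rangle$. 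Since $G_{i+1}/G_i$ is cyclic of prime order $q_i>1$ generated by $\sigma_{i+1}G_i$, we have $\sigma_{i+1}\notin G_i=\mathrm{Gal}(M/Z_i)$, and by the Fundamental Theorem of Galois Theory $\sigma_{i+1}$ therefore does not fix $Z_i=\mathrm{Fix}(G_i)$ pointwise. Because $\tau_i$ restricts to $\sigma_{i+1}$ on $M\supseteq Z_i$, there is a central element $z\in Z_i\subseteq\mathrm{Cent}(A_i)$ with $\tau_i(z)=\sigma_{i+1}(z)\neq z$. Hence $\tau_i$ moves the centre of $A_i$ and cannot be inner, so $d\neq1$ and thus $d=q_i$.

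The only delicate bookkeeping is the identification of $Z_i$ with $\mathrm{Fix}(G_i)$ and of $G_i$ with $\langle\sigma_1,\dots,\sigma_i\rangle$, together with the inclusion $Z_i\subseteq\mathrm{Cent}(A_i)$; all three are already implicit in the construction, so the argument reduces to combining them with the two elementary facts invoked above. I would also note that the argument covers $i=0$ uniformly: there $A_0=M$ is a field whose only inner automorphism is the identity, $\tau_0=\sigma_1$ has order $q_0$, and $Z_0=M$ is genuinely moved by $\sigma_1$, so again the inner order is $q_0$.
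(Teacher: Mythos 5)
Your proof is correct, and your first half (showing $\tau_i^{q_i}=I_{c_i}$ is inner, so the inner order divides the prime $q_i$) matches the paper's use of the identity $\tau_i^{q_i}(z)\circ c_i=c_i\circ z$. Where you genuinely diverge is in ruling out inner order $1$. The paper argues by contradiction: it assumes $\tau_i$ fixes $\mathrm{Cent}(A_i)$ pointwise, invokes Skolem--Noether to conclude $\tau_i=I_u$ for some unit $u$, expands $u$ in the left $A_{i-1}$-basis $\{1,t_{i-1},\dots\}$ of $A_i$, and descends through the whole tower $A_i\supset A_{i-1}\supset\cdots\supset M$ to force $\sigma_{i+1}=\sigma_1^{\eta_1}\circ\cdots\circ\sigma_i^{\eta_i}\in G_i$, contradicting the choice of $\sigma_{i+1}$; it then needs Skolem--Noether a second time to convert ``$\tau_i$ moves the centre'' into ``$\tau_i$ is not inner'' having the right inner order. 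You instead exhibit directly a central element that $\tau_i$ moves: $Z_i=\mathrm{Fix}(G_i)\subseteq\mathrm{Cent}(A_i)$, and by the Galois correspondence $\sigma_{i+1}\notin G_i=\mathrm{Gal}(M/Z_i)$ must move some $z\in Z_i$; since an inner automorphism fixes the centre pointwise, $\tau_i$ is not inner. This reaches the same underlying contradiction ($\sigma_{i+1}\in G_i$) but routes it through the Galois correspondence rather than an explicit basis computation, and your ``not inner'' step avoids Skolem--Noether entirely --- a real economy, since ``inner automorphisms fix the centre'' needs no hypotheses on $A_i$, whereas the paper's appeal to Skolem--Noether requires $A_i$ to be finite-dimensional central simple over its centre. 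The only bookkeeping you must make explicit is the inclusion $Z_i\subseteq\mathrm{Cent}(A_i)$, which follows from $t_jz=\tau_j(z)t_j=zt_j$ for $z\in\mathrm{Fix}(\tau_j)$ together with $\mathrm{Fix}(\tau_j)\cap M=\mathrm{Fix}(\sigma_{j+1})$, and both facts are indeed already contained in the construction (the paper records that $A_{i+1}$ is an algebra over $\mathrm{Fix}(\tau_0)\cap\cdots\cap\mathrm{Fix}(\tau_i)$). Your separate treatment of $i=0$ is also fine.
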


\begin{proof}
The automorphism $\tau_0 = \sigma_1: M \rightarrow M$ has inner order $q_0$.

Fix $i \in \{ 1, \ldots, k-1 \}$. $A_i$ is finite-dimensional over $F$, so it is also finite-dimensional over its center $\mathrm{Cent}(A_i) \supset F$. Recall that $\tau_i^{q_i}(z)  c_i = c_i  z$ for all $z \in A_i$, in particular $\tau_i^{q_i}\vert_{\mathrm{Cent}(A_i)} = id $. As $q_i$ is prime this means either $\tau_i \vert_{\mathrm{Cent}(A_i)} = id $ or $\tau_i \vert_{\mathrm{Cent}(A_i)}$ has order $q_i>1$.

Assume  that $\tau_i \vert_{\mathrm{Cent}(A_i)} = id $, then $\tau_i$ is an inner automorphism of $A_i$ by the Theorem of Skolem-Noether, say $\tau_i(z) = uzu^{-1}$ for some invertible $u \in A_i$, for all $z \in A_i$. In particular $\tau_i(m)= \sigma_{i+1}(m) = umu^{-1}$ for all $m \in M$. Write
$u = \sum_{j=0}^{q_{i-1}-1} u_j t_{i-1}^j$
for some $u_j \in A_{i-1}$, thus
\begin{align*}
\sigma_{i+1}(m)u &= \sigma_{i+1}(m) \sum_{j=0}^{q_{i-1}-1} u_j t_{i-1}^j =
\sum_{j=0}^{q_{i-1}-1} u_j t_{i-1}^j m \\ &= \sum_{j=0}^{q_{i-1}-1}
u_j \tau_{i-1}^j(m) t_{i-1}^j = \sum_{j=0}^{q_{i-1}-1} u_j
\sigma_{i}^j(m) t_{i-1}^j.
\end{align*}
for all $m \in M$. Choose $\eta_{i}$ with $u_{\eta_{i}} \neq 0$ then
\begin{equation} \label{eqn:tau_i is not inner 1}
\sigma_{i+1}(m) u_{\eta_{i}} = u_{\eta_{i}}
\sigma_{i}^{\eta_{i}}(m),
\end{equation}
for all $m \in M$.

If $i = 1$ we are done. If $i \geq 2$ then we can also write $u_{\eta_{i}} = \sum_{l=0}^{q_{i-2}-1} w_l t_{i-2}^l$ for some $w_l \in A_{i-2}$, therefore \eqref{eqn:tau_i is not inner 1} yields
\begin{align*}
\sigma_{i+1}(m) \sum_{l=0}^{q_{i-2}-1} w_l t_{i-2}^l &= \sum_{l=0}^{q_{i-2}-1} w_l t_{i-2}^l \sigma_{i}^{\eta_{i}}(m) = \sum_{l=0}^{q_{i-2}-1} w_l \tau_{i-2}^l(\sigma_{i}^{\eta_{i}}(m)) t_{i-2}^l \\ 
&= \sum_{l=0}^{q_{i-2}-1} w_l \sigma_{i-1}^l(\sigma_{i}^{\eta_{i}}(m)) t_{i-2}^l,
\end{align*}
for all $m \in M$. Choose $\eta_{i-1}$ with $w_{\eta_{i-1}} \neq 0$, then $$\sigma_{i+1}(m) w_{\eta_{i-1}} = w_{\eta_{i-1}} \sigma_{i-1}^{\eta_{i-1}}(\sigma_{i}^{\eta_{i}}(m)),$$ for all $m \in M$.

Continuing in this manner we see that there exists $s \in M^{\times}$ such that
$$\sigma_{i+1}(m) s = s \sigma_1^{\eta_1}(\sigma_2^{\eta_2}(\cdots (\sigma_{i}^{\eta_{i}}(m) \cdots ),$$
for all $m \in M$, hence
$$\sigma_{i+1}(m) = \sigma_1^{\eta_1}(\sigma_2^{\eta_2}(\cdots (\sigma_{i}^{\eta_{i}}(m) \cdots ),$$
for all $m \in M$ where $\eta_j \in \{ 0, \ldots, q_{j-1}-1 \}$ for all $j \in \{ 1, \ldots, i \}$. But $\sigma_{i+1} \notin G_i$ and thus
$$\sigma_{i+1} \neq \sigma_1^{\eta_1} \circ \sigma_2^{\eta_2} \circ \cdots \circ \sigma_{i}^{\eta_{i}},$$
a contradiction.

It follows that  $\tau_i \vert_{\mathrm{Cent}(A_i)}$ has order  $q_i>1$. By the Skolem-Noether Theorem the kernel of the restriction map $\mathrm{Aut}(A_i) \rightarrow \mathrm{Aut}(\mathrm{Cent}(A_i))$ is the group of inner automorphisms of $A_i$, and so $\tau_i$ has inner order $q_i$.
\end{proof}

\begin{proposition}
$\mathrm{Cent}(A_k) = F$.
\end{proposition}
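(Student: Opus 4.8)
The plan is to pin down $\mathrm{Cent}(A_k)$ by working with the natural $M$-basis of $A_k$ coming from the iterated construction and exploiting that $G$ is abelian. Since each $A_{i+1}$ is free of rank $q_i$ over $A_i$ with basis $\{1, t_i, \ldots, t_i^{q_i-1}\}$, iterating shows $A_k$ is free over $M$ with basis the monomials $T_{\mathbf{j}} = t_0^{j_0} t_1^{j_1}\cdots t_{k-1}^{j_{k-1}}$, $0 \le j_i \le q_i - 1$. First I would record the commutation rule of these monomials against $M$: because $t_i a = \tau_i(a) t_i$ for $a \in A_i$ and $\tau_i|_M = \sigma_{i+1}$, an easy induction gives $T_{\mathbf{j}} m = \big(\sigma_1^{j_0}\sigma_2^{j_1}\cdots\sigma_k^{j_{k-1}}\big)(m)\, T_{\mathbf{j}}$ for all $m \in M$, where the product is a well-defined element of $G$ since $G$ is abelian.

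Next, since $A_k$ is associative we have $\mathrm{Nuc}(A_k) = A_k$ and hence $\mathrm{Cent}(A_k) = \mathrm{Comm}(A_k)$; it therefore suffices to find the elements commuting with all of $M$ and with each $t_i$. Writing $z = \sum_{\mathbf{j}} m_{\mathbf{j}} T_{\mathbf{j}}$ and comparing coefficients in $zm = mz$ via the commutation rule yields $m_{\mathbf{j}}\big[(\sigma_1^{j_0}\cdots\sigma_k^{j_{k-1}})(m) - m\big] = 0$ for all $m \in M$ and each $\mathbf{j}$. As $M$ is a field, every non-zero $m_{\mathbf{j}}$ forces $\sigma_1^{j_0}\cdots\sigma_k^{j_{k-1}} = \mathrm{id}$ on $M$. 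The coset decomposition $G_{i+1} = \bigsqcup_{l=0}^{q_i-1} G_i\sigma_{i+1}^l$ shows every element of $G$ is uniquely of the form $\sigma_1^{j_0}\cdots\sigma_k^{j_{k-1}}$ with $0 \le j_i \le q_i-1$, so the identity occurs only for $\mathbf{j} = \mathbf{0}$. Hence commuting with $M$ already forces $z = m_{\mathbf{0}} \in M$; that is, $\mathrm{Cent}_{A_k}(M) = M$.

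Finally, for $z \in M$, commuting with $t_i$ reads $\sigma_{i+1}(z) t_i = t_i z = z t_i$, i.e. $z \in \mathrm{Fix}(\sigma_{i+1})$, for each $i \in \{0,\ldots,k-1\}$. Since $G_{i+1} = G_i\langle\sigma_{i+1}\rangle$ gives $G = \langle \sigma_1, \ldots, \sigma_k\rangle$, the intersection $\bigcap_{i=1}^{k}\mathrm{Fix}(\sigma_i)$ equals $\mathrm{Fix}(G) = F$ by the fundamental theorem of Galois theory. Thus $z \in F$, giving $\mathrm{Cent}(A_k) \subseteq F$; together with the inclusion $F \subseteq \mathrm{Cent}(A_k)$ already established in the construction this yields $\mathrm{Cent}(A_k) = F$. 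I do not expect a serious obstacle: the only points needing care are the bookkeeping for the iterated $M$-basis and its commutation relation, and the uniqueness of the coset representatives $\sigma_1^{j_0}\cdots\sigma_k^{j_{k-1}}$, both of which become routine once the abelianness of $G$ is used to make the composite automorphism well defined.
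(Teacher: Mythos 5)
Your proof is correct, and it takes a genuinely different route from the one in the paper. The paper descends one layer at a time: writing $z = \sum_i z_i t_{k-1}^i$ with $z_i \in A_{k-1}$, it argues that a non-zero $z_i$ with $i \geq 1$ would be invertible and would make $\tau_{k-1}^i$ inner, contradicting Lemma \ref{lem:tau_i has inner order q_i}; iterating gives $z \in M$, and then a single well-chosen monomial $t_0^{i_0}\cdots t_{k-1}^{i_{k-1}}$ (corresponding to some $\rho \in G$ with $\rho(z) \neq z$) finishes the job. You instead expand $z$ at once over the full $M$-basis of monomials $T_{\mathbf{j}}$ and use only the commutation against $M$: the key input is the uniqueness of the representation $\sigma_1^{j_0}\cdots\sigma_k^{j_{k-1}}$ of elements of $G$, which kills every coefficient except $m_{\mathbf{0}}$ because $M$ is a field. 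This buys two things: you prove the stronger intermediate fact $\mathrm{Cent}_{A_k}(M) = M$ directly, and you avoid any appeal to the inner-order lemma or to invertibility of non-zero coefficients (which in the paper implicitly rests on simplicity of $A_{k-1}$), so your argument is independent of whether the $c_i$ satisfy \eqref{eqn:last}. The final step (commuting with each $t_i$ forces $z \in \bigcap_i \mathrm{Fix}(\sigma_{i+1}) = \mathrm{Fix}(G) = F$) is essentially equivalent to the paper's, just organised generator by generator. All the steps you flag as needing care — freeness of $A_k$ over $M$ on the monomials, the commutation rule, and the coset uniqueness — do go through as you describe.
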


\begin{proof}
$F \subset \mathrm{Cent}(A_k)$ by construction. Let now
$$z = z_0 + z_1 t_{k-1} + \ldots + z_{q_{k-1}-1} t_{k-1}^{q_{k-1}-1} \in \mathrm{Cent}(A_k)$$
where $z_i \in A_{k-1}$. Then $z$ commutes with all $l \in A_{k-1}$, hence $l z_i = z_i \tau_{k-1}^i(l)$ for all $i \in \{ 0, \ldots, q_{k-1}-1 \}$. This implies $z_0 \in \mathrm{Cent}(A_{k-1})$ and $z_i = 0$ for all $i \in \{ 1, \ldots, q_{k-1}-1 \}$, otherwise $z_i$ is invertible and $\tau_{k-1}^i$ is inner, a contradiction by Lemma \ref{lem:tau_i has inner order q_i}. Thus $z = z_0 \in \mathrm{Cent}(A_{k-1})$. A similar argument shows $z \in \mathrm{Cent}(A_{k-1})$ and continuing in this manner we conclude $z \in  M = \mathrm{Cent}(A_0)$.

Suppose, for a contradiction, that $z \notin F$. Then $\rho(z) \neq z$ for some $\rho \in G$. Since the $\sigma_{i+1}$ were chosen so that they generate the cyclic factor groups $ G_{i+1}/G_i,$ we can write $\rho = \sigma_1^{i_0} \circ\sigma_2^{i_1} \circ\cdots \circ \sigma_{k}^{i_{k-1}}$ for some $i_s \in \{ 0, \ldots, q_s-1 \}$. We have
\begin{align*}
t_0^{i_0} t_1^{i_1} \cdots t_{k-1}^{i_{k-1}} z &=
\sigma_1^{i_0}(\sigma_2^{i_1}(\cdots(\sigma_{k}^{i_{k-1}}(z) \cdots
) t_0^{i_0} t_1^{i_1} \cdots t_{k-1}^{i_{k-1}} \\ &= \rho(z)
t_0^{i_0} t_1^{i_1} \cdots t_{k-1}^{i_{k-1}} \neq z t_0^{i_0}
t_1^{i_1} \cdots t_{k-1}^{i_{k-1}},
\end{align*}
contradicting that $z \in \mathrm{Cent}(A_k)$. Therefore $\mathrm{Cent}(A_k) \subset F$.
\end{proof}

This yields a recipe for constructing a $G$-crossed product division algebra $A = A_k$ over $F$ with maximal subfield $M$ provided it is possible to find suitable $c_i$'s satisfying \eqref{eqn:last}.

\bibliographystyle{abbrv}
\bibliography{Resultssofar}
\end{document}